\documentclass[11pt]{amsart}
\usepackage{amsfonts}
\usepackage{amsmath,amssymb}
\usepackage{mathrsfs}
\usepackage{amssymb, amsmath, mathrsfs,graphicx}
\usepackage{subfigure,wrapfig}

\setlength{\textheight 23 cm}
\setlength{\textwidth 14.2 cm}
\hoffset -1.0 cm

\setlength{\parskip}{10pt}
\renewcommand{\theequation}{\arabic{section}.\arabic{equation}}
\newtheorem{theo}{Theorem}[section]
\newtheorem{pro}{Proposition}[section]
\newtheorem{lem}{Lemma}[section]
\newtheorem{cor}{Corollary}[section]
\newtheorem{defi}{Definition}[section]
\newcommand{\eop}{\hspace{\fill} $\;\;\;  \Box$}
\font\ui=cmcsc11

\font\uj=cmssbx10 scaled \magstep1
\DeclareMathDelimiter{\Norm}{\mathord}{largesymbols}{"3E}{largesymbols}{"3E}

\begin{document}
\title[Arnold diffusion in a priori stable systems]{Arnold diffusion
in nearly integrable\\ Hamiltonian systems}
\author[C.-Q. Cheng]{Chong-Qing CHENG}
\address{Department of Mathematics\\ Nanjing University\\ Nanjing 210093,
China\\ }
\email{chengcq@nju.edu.cn}

\begin{abstract} In this paper, Arnold diffusion is proved to be generic phenomenon in nearly integrable convex Hamiltonian systems with three degrees of freedom:
$$
H(x,y)=h(y)+\epsilon P(x,y), \qquad x\in\mathbb{T}^3,\ y\in\mathbb{R}^3.
$$
Under typical perturbation $\epsilon P$, the system admits ``connecting" orbit that passes through any two prescribed small balls in the same energy level $H^{-1}(E)$ provided $E$ is bigger than the minimum of the average action, namely, $E>\min\alpha$.
\end{abstract}
\maketitle \tableofcontents
\section{\ui Introduction}
\setcounter{equation}{0}

For nearly integrable Hamiltonian systems, the set of KAM tori has a relatively large Lebesgue measure in phase space. For systems with two degrees of freedom, it implies the dynamical stability: all orbits are stable, the variation of actions stays small for all the time as each 2-dimensional KAM torus separates the 3-dimensional energy level. However, this is a special property of lower-dimensional space, KAM torus of $n$-dimension does not separate $(2n-1)$-dimensional energy level if $n>2$. It is conceivable that the complement of all $n$-dimensional invariant tori forms dense and connected set in phase space. This would mean that by arbitrary small changes of the initial states one would find orbits along which the action variables ultimately escape. The underlying phenomenon is now called ``Arnold diffusion".

\noindent{\bf Conjecture} (\cite{Ar2,AKN}): {\it The typical case in a higher-dimensional problems is topological instability: through an arbitrarily small neighborhood of any point there passes a phase
trajectories along which the slow variables drift away from the initial value by a quantity of order 1.}

Since the celebrated example of Arnold \cite{Ar1} was published half a century ago, there are many works for the study of this problem. In recent years, it has become clear that diffusion is a typical phenomenon in so called {\it a priori} unstable  systems, refer to  \cite{Be3,CY1,CY2,DLS,LC,Tr}. The {\it a priori} unstable  condition  guarantees the existence of normally hyperbolic cylinder, from which one derives certain regularity of the barrier functions.  The genericity of the diffusion is obtained by using the regularity  \cite{CY1,CY2}.  There are also many works for the study of the problem, for instance, see  \cite{Bs,BCV,Fo,DH1,DH2,GL,GR1,GR2,KL1,KL2,X,Zha}.

General perturbation of integrable Hamiltonian is usually called {\it a priori} stable system. A bit away from strong complete resonance in such systems, some pieces of normally hyperbolic cylinder still exist and the method for {\it a priori} unstable system can also be applied \cite{BKZ,Be4}.  For {\it a priori} stable systems with three degrees of freedom, a notable difficulty occurs at the point of double resonance, around which the cylinder for prescribed single resonance may disappear. The averaged system has two homoclinic orbits associated with different classes in $H_1(\mathbb{T}^2,\mathbb{Z})$. As the energy decreases, the periodic orbit on the cylinder approaches these two homoclinic orbits simultaneously. Thus, the transition chain in $H^1(\mathbb{T}^2,\mathbb{R})$ for the single resonance may break. To solve this difficulty, Mather suggested a path in $H^1(\mathbb{T}^2,\mathbb{R})$ to cross double resonance, along which one moves the cohomology class in the channel determined by the prescribed homology class and switches it to the channel determined by one of these two classes when it is getting close to the double resonance \cite{Ma6}.

In this paper, the path we choose to construct transition chain is different from that suggested by Mather. We find an annulus surrounding the flat of double resonance in $H^1(\mathbb{T}^2,\mathbb{R})$, which has a foliation of circles. Each of these circles is actually a transition chains, possibly, of incomplete intersection. Although the annulus is not so thick, each single resonance path extends into. It allows us to use one of these circles connecting one single resonance path to another. In this way, we find a path of transition chain along which the diffusion orbits are constructed by variational method.

\subsection{Statement of the main result}
We consider nearly integrable Hamiltonian systems with 3 degrees of freedom:
\begin{equation}\label{introeq1}
H(x,y)=h(y)+\epsilon P(x,y), \qquad (x,y)\in\mathbb{T}^3\times\mathbb{R}^3,
\end{equation}
where $h$ is assumed to strictly convex, namely, the Hessian matrix $\partial^2h/\partial y^2$ is positive definite. It is also assumed that $\min h=0$, both $h$ and $P$  are $C^r$-function with $r\ge 8$.

For $E>0$, let $H^{-1}(E)=\{(x,y):H(x,y)=E\}$ denote the energy level set, $B\subset\mathbb{R}^3$ denote a ball in $\mathbb{R}^3$ such that $\bigcup_{E'\le E+1}h^{-1}(E')\subset B$. Let $\mathfrak{S}_a,\mathfrak{B}_a\subset C^r(\mathbb{T}^3\times B)$ denote a  sphere and a ball with radius $a>0$ respectively: $F\in\mathfrak{S}_a$ if and only $\|F\|_{C^r}=a$ and $F\in\mathfrak{B}_a$ if and only $\|F\|_{C^r}\le a$. They inherit the topology from $C^r(\mathbb{T}^3\times B)$.

For perturbation $P$ independent of $y$ (classical mechanical system) we use the same notation $\mathfrak{S}_a,\mathfrak{B}_a\subset C^r(\mathbb{T}^3)$ to denote a sphere and a ball with radius $a>0$

Let $\mathfrak{R}_a$ be a set residual in $\mathfrak{S}_a$, each $P\in\mathfrak{R}_a$ is associated with a set $R_P$ residual in the interval $[0,a_P]$ with $a_P\le a$. A set $\mathfrak{C}_a$ is said cusp-residual in $\mathfrak{B}_a$ if
$$
\mathfrak{C}_a=\{\lambda P:P\in\mathfrak{R}_a,\lambda\in R_P\}.
$$

Let $\Phi_H^t$ denote the Hamiltonian flow determined by $H$. Given an initial value $(x,y)$, $\Phi_H^t(x,y)$ generates an orbit of the Hamiltonian flow $(x(t),y(t))$.  An orbit $(x(t),y(t))$ is said to visit $B_{\delta}(y_0)\subset\mathbb{R}^3$ if there exists $t\in\mathbb{R}$ such that $y(t)\in B_{\delta}(y_0)$ a ball centered at $y_0$ with radius $\delta$.

\begin{theo}\label{mainthm}
Given any two balls $B_{\delta}(x_0,y_0),B_{\delta}(x_k,y_k)\subset\mathbb{T}^3\times\mathbb{R}^3$ and finitely many small balls $B_{\delta}(y_i)\subset \mathbb{R}^3$ $(i=0,1,\cdots,k)$, where $y_i\in h^{-1}(E)$ with $E>0$ and $\delta>0$ is small, there exists a cusp-residual set $\mathfrak{C}_{\epsilon_0}$ such that for each $\epsilon P\in\mathfrak{C}_{\epsilon_0}$, the Hamiltonian flow $\Phi_H^t$ admits orbits which, on the way between passing through $B_{\delta}(x_0,y_0)$ and $B_{\delta}(x_k,y_k)$, visit the balls $B_{\delta}(y_i)$ in any prescribed order.
\end{theo}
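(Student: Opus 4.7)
The plan is to reduce the theorem to constructing a single transition chain in cohomology space whose projection in action space passes close to $y_0,y_1,\ldots,y_k$ in the prescribed order, and then to shadow it by a true orbit using the variational methods of Mather--Bernard--Cheng--Yan type. The cusp-residual hypothesis on $\epsilon P$ suffices because the class of cusp-residual sets is closed under finite intersections, so one may impose the finitely many non-degeneracy conditions needed for the construction simultaneously.

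The first step is to choose a continuous path $\Gamma$ in $H^1(\mathbb{T}^3,\mathbb{R})$, lying in the slice of $\alpha$-dual classes to $h^{-1}(E)$, passing through the cohomologies associated with each $y_i$ in the prescribed order. Where $\Gamma$ lies in the KAM region no diffusion cost is incurred. Along each single-resonance segment one averages the normal form to obtain a slow $2{+}1/2$-DOF mechanical system; for typical $P$ it carries a normally hyperbolic weakly invariant cylinder $\Pi$, and the associated Aubry/Ma\~n\'e sets of the original system lie on $\Pi$ with the upper semi-continuity needed for shadowing. Transition along this segment is produced by alternating $c$-equivalence steps inside $\Pi$ with a finite number of type-$h$ bifurcation jumps at cohomologies where two static classes coexist, as in the a priori unstable theory.

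The main obstacle is the crossing of double-resonance points. As explained in the introduction, along a prescribed class $g\in H_1(\mathbb{T}^2,\mathbb{Z})$ the cylinder may collapse: the periodic orbit on $\Pi$ approaches two different homoclinic loops of the slow mechanical system as the slow energy decreases. The new ingredient is an annulus $\mathcal{A}$ surrounding the flat of double resonance in $H^1(\mathbb{T}^2,\mathbb{R})$, foliated by small circles, each of which is itself a transition chain (possibly of incomplete intersection), and into which every incident single-resonance channel penetrates. Producing $\mathcal{A}$ together with its foliation requires a detailed variational analysis of the averaged mechanical system on $\mathbb{T}^2$: one must identify sufficiently many homoclinic and heteroclinic connections between the low-action periodic orbits, verify non-degeneracy of the associated barrier functions, and show uniform hyperbolicity of the surviving invariant pieces across $\mathcal{A}$. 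One then selects a single circle of the foliation to glue the incoming single-resonance transition chain to the outgoing one. This is the hardest step; its proof will occupy the bulk of the paper.

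With the global transition chain along $\Gamma$ in place, the existence of an orbit visiting each $B_{\delta}(y_i)$ in order follows from the standard variational shadowing theorem. Genericity is obtained in two layers. Each of the finitely many conditions used along $\Gamma$ (non-degenerate minima of the $\alpha$-function at the cohomologies of $y_0,\ldots,y_k$; finiteness and non-degeneracy of static classes at each bifurcation cohomology; existence of the annulus foliation at every visited double resonance) is Ma\~n\'e-generic on the sphere $\mathfrak{S}_a$ for every $a\le\epsilon_0$, yielding a residual $\mathfrak{R}_a\subset\mathfrak{S}_a$. For each $P\in\mathfrak{R}_a$, the set of amplitudes $\lambda\in[0,a_P]$ for which the small-parameter estimates (KAM, averaging, persistence of normally hyperbolic manifolds, and the annulus construction) are simultaneously valid contains a residual subset $R_P$. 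The union $\{\lambda P:P\in\mathfrak{R}_{\epsilon_0},\lambda\in R_P\}$ is the desired cusp-residual set $\mathfrak{C}_{\epsilon_0}$.
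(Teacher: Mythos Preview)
Your outline matches the paper's architecture closely: a resonant path in cohomology, normally hyperbolic cylinders along single-resonance segments, an annulus around each double-resonance flat into which the cylinders' channels extend, a generalized transition chain glued from these pieces, and then the variational shadowing theorem (Theorem~\ref{constructionthm1}) to produce the orbit. The cusp-residual bookkeeping is also essentially right.

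One point where your description diverges from what the paper actually does: you write that producing the annulus ``requires \ldots\ verify non-degeneracy of the associated barrier functions, and show uniform hyperbolicity of the surviving invariant pieces across $\mathcal{A}$.'' The paper's mechanism in the annulus is different and in a sense simpler. What is proved (Section~5, Theorems~\ref{beltthm1}--\ref{beltthm2}) is only that for generic potential and every $c$ in the annulus, the Ma\~n\'e set $\mathcal{N}(c)$ restricted to a transversal section does \emph{not} cover the section; equivalently, $\arg\min(U_c^- - U_c'^+)$ misses an open set. No transversality of stable/unstable manifolds and no hyperbolicity of periodic pieces inside $\mathcal{A}$ is required. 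This non-covering alone yields $c$-equivalence along each circle $\{c_3=\text{const}\}$ of the annulus (because $\mathbb{V}_c^\perp$ then contains the tangent direction of the circle), and that is what produces the type-$c$ local connecting orbits across the double resonance. The hyperbolicity and barrier-function regularity arguments you mention are used only in the \emph{channels} (the type-$h$ part, Section~8.3), not in the annulus. The distinction matters because it is precisely the point where the paper departs from Mather's suggested path-switching strategy.

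A second minor correction: the sentence ``Where $\Gamma$ lies in the KAM region no diffusion cost is incurred'' has no counterpart in the paper---the path $\Gamma$ is chosen to lie entirely on a resonance web (Section~8.1), never in the KAM region. Also, in the cusp-residual argument the paper exploits the rescaling $y\mapsto\sqrt\lambda\,y$, $t\mapsto t/\sqrt\lambda$ to make the homogenized double-resonance system independent of $\lambda$; this is why conditions ({\bf H1})--({\bf H5}) are open-dense on the unit sphere $\mathfrak{S}_1$ uniformly in the amplitude, with only the countably many conditions of ({\bf H6}) requiring the residual-in-$\lambda$ step.
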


This is the main result of the paper. It is generic not only in usual sense, but also in the sense of Ma\~n\'e, namely, it is a typical phenomenon when the system is perturbed by potential.

The same result for time-periodic systems can be proved by using the same method. The statement of the result is: for typical time-periodic perturbations of integrable Hamiltonian with 2-degrees of freedom, the Hamiltonian flow admits orbits passing through any prescribed two balls $B_{\delta}(x_0,y_0)$ and $B_{\delta}(x_k,y_k)$ in the phase space and finitely many small balls $B_{\delta}(y_i)\subset \mathbb{R}^2$ $(i=0,1,\cdots,k)$ in the action variable space. The proof is easier from technical point of view, one can see it in the proof.

Similar result was announced by Mather earlier \cite{Ma4}. Recently, two other groups (Kaloshin and Zhang, Marco \cite{Mac}) announced, using different approach, a similar result for time-periodic systems with two degrees of freedom.

The result obtained here is stronger than what was formulated in \cite{Ar2}. By dropping the requirement that orbit passes two prescribed balls in the phase space and using the same construction, one can get an orbit that visits these balls $B_{\delta}(y_i)\subset \mathbb{R}^3$ $(i=0,1,\cdots,k)$ infinitely many times with  any prescribed order, as it was announced in \cite{Ma4}. Indeed, as finitely many balls are given, there exists a path with finite length passing through finitely many resonance layers and connecting any two of these balls directly. It does not damage the cusp-residual property.

\subsection{Brief introduction of Mather theory}
We use variational method to prove the result, which is based on Mather theory. This theory is established for Tonelli Lagrangian.

\begin{defi}
Let $M$ be a closed manifold. A $C^2$-function $L$: $TM\times\mathbb{T}\to\mathbb{R}$ is called Tonelli Lagrangian if it satisfies the following conditions:

\noindent{\ui Positive definiteness}. For each $(x,t)\in M\times\mathbb{T}$, the Lagrangian function is strictly convex in velocity: the Hessian $\partial_{\dot x\dot x}L$ is positive definite.

\noindent{\ui Super-linear growth}. We assume that $L$ has fiber-wise superlinear growth: for each $(x,t)\in M\times\mathbb{T}$, we have $L/\|\dot x\|\to\infty$ as $\|\dot x\|\to \infty$.

\noindent{\ui Completeness}. All solutions of the Lagrangian equations are well defined for the whole $t\in\mathbb{R}$.
\end{defi}

For autonomous systems, the completeness is automatically satisfied, since each orbit entirely stays in certain compact energy level set.

Let $\eta_c(x)$ denote a closed 1-form $\langle\eta_c(x),dx\rangle$ evaluated at $x$, with its first co-homology class $[\langle\eta_c(x),dx\rangle]=c\in H^1(M,\mathbb{R})$. We introduce a Lagrange multiplier $\eta_c=\langle\eta_c(x),\dot x\rangle$. Without danger of confusion, we call it closed 1-form also.

For each $C^1$ curve $\gamma$: $\mathbb{R}\to M$ with period $k$, there is unique probability measure $\mu_{\gamma}$ on $TM\times\mathbb{T}$ so that the following holds
$$
\int_{TM\times\mathbb{T}}fd\mu_{\gamma}=\frac 1k\int_0^kf(d\gamma(s),s)ds
$$
for each $f\in C^0(TM\times\mathbb{T},\mathbb{R})$, where we use the notation $d\gamma=(\gamma,\dot\gamma)$. Let
$$
\mathfrak{H}^*=\{\mu_{\gamma} |\ \gamma\in C^1(\mathbb{R},M)\ \text{\rm is periodic of}\ k\}.
$$
The set $\mathfrak{H}$ of holonomic probability measures is the closure of $\mathfrak{H}^*$ in the vector space of continuous linear functionals. One can see that $\mathfrak{H}$ is convex.

For each $\nu\in\mathfrak{H}$ the action $A_c(\nu)$ is defined as follows
$$
A_c(\nu)=\int (L-\eta_c)d\nu.
$$
It is proved in \cite{Ma1,Me} that for each co-homology class $c$ there exists at least one invariant probability measure $\mu_c$ minimizing the action over $\mathfrak{H}$
$$
A_c(\mu_c)=\inf_{\nu\in\mathfrak{H}}\int (L-\eta_c)d\nu,
$$
called $c$-minimal measure. Let $\mathfrak{H}_c\subset\mathfrak{H}$ be the set of $c$-minimal measures, the Mather set $\tilde{\mathcal {M}}(c)$ is defined as
$$
\tilde{\mathcal{M}}(c)=\bigcup_{\mu_c\in\mathfrak{H}_c}\text{\rm supp} \mu_c.
$$
The $\alpha$-function is defined as $\alpha(c)=-A_c(\mu_c): H^1(M,\mathbb{R})\to\mathbb{R}$, it is convex, finite everywhere with super-linear growth. Its Legendre transformation $\beta: H_1(M,\mathbb{R})\to\mathbb{R}$ is called $\beta$-function
$$
\beta(\omega)=\max_c (\langle\omega,c\rangle -\alpha(c)).
$$
It is also convex, finite everywhere with super-linear growth (see \cite{Ma1}).

Note that $\int\lambda d\mu_{\gamma}=0$ for each exact 1-form $\lambda$ and each $\mu_{\gamma} \in\mathfrak{H}^*$. Therefore, for each measure $\mu\in\mathfrak{H}$ one can define its rotation vector
$\omega(\mu)\in H_1(M,\mathbb{R})$ such that
$$
\langle[\lambda],\omega(\mu)\rangle=\int\lambda d\mu,
$$
for every closed 1-form $\lambda$ on $M$. For a closed curve $\gamma$: $[0,k]\to M$ its rotation vector is defined as
$$
[\gamma]=\frac{\bar{\gamma}(k)-\bar{\gamma}(0)}k,
$$
where $\bar{\gamma}$ stands for a curve in the lift of $\gamma$ to the universal covering $\mathbb{R}^n$. Let $\gamma_k$: $[0,k]\to M$ be a closed curve such that $[\gamma_k]=\omega_k$ and
$$
\frac 1kA(\gamma_k)=\inf_{[\gamma]=\omega_k}\frac 1k\int_0^kL(d\gamma(t),t)dt.
$$
The curve determines a periodic orbit $(\gamma_k,\dot\gamma_k)$ of $\phi^t_L$, the Lagrange flow determined by the Lagrangian $L$, it supports an invariant measure $\mu_k$ whose rotation vector is $\omega_k$. The measure $\mu_k$ is not necessarily minimal for $\beta(\omega_k)$. Nevertheless, if we choose a sub-sequence of closed curves $\{\gamma_{k_i}\}$ such that
$$
\lim_{k_i\to\infty}\frac 1{k_i}A(\gamma_{k_i})=\liminf_{k\to\infty}\inf_{[\gamma]=\omega_k}\frac
1k\int_0^kL(d\gamma(t),t)dt,
$$
and if $[\gamma_{k_i}]\to\omega$, then
$$
\lim_{k_i\to\infty}\frac 1{k_i}A(\gamma_{k_i})=\beta(\omega).
$$
Clearly, there is at least one invariant measure $\mu$ such that $\mu_{k_i}\rightharpoonup\mu$, and $\mu$ is a holonomic probability measure with the prescribed rotation vector $\omega(\mu)=\omega$. According to the definition of holonomic measure, and due to the work in \cite{Me}, we have
$$
\beta(\omega)=\inf _{\nu\in\mathfrak{H}_{\omega}}\int\ell d\nu
$$
where $\mathfrak{H}_{\omega}$ is a set of holonomic probability measures with the given rotation vector $\omega$, not necessarily invariant for $\phi_L^t$.

The Fenchel-Legendre transformation $\mathscr{L}_{\beta}$: $H_1(M,\mathbb{R})\to H^1(M,\mathbb{R})$ is defined by the following relation
$$
c\in\mathscr{L}_{\beta}(\rho)\ \ \iff \ \ \alpha(c)+\beta(\rho)=\langle c,\rho\rangle.
$$

The concept of semi-static curves is introduced by Mather and Ma\~n\'e (cf. \cite{Ma2,Me}). A curve
$\gamma$: $\mathbb{R}\to M$ is called $c$-semi-static if in time-1-periodic case we have
$$
[A_c(\gamma)|_{[t,t']}]=F_c((\gamma(t),t),(\gamma(t'),t'))
$$
where
\begin{equation}\label{introeq2}
[A_c(\gamma)|_{[t,t']}]=\int_{t}^{t'}\Big (L(d\gamma(t),t)-\eta_c(d\gamma(t))\Big ) dt+\alpha(c)(t'-t),\notag
\end{equation}
$$
F_c((x,t),(x',t'))=\inf_{\stackrel {\tau=t\ \text{\rm mod}\, 1}{\scriptscriptstyle \tau '=t'\text{\rm mod}\,1}} h_c((x,\tau),(x',\tau')),
$$
in which
\begin{equation}\label{introeq3}
h_c((x,\tau),(x',\tau'))=\inf_{\stackrel{\stackrel {\xi\in C^1}{\scriptscriptstyle \xi(\tau)=x}}{\scriptscriptstyle \xi(\tau')=x'}}[A_c(\xi)|_{[\tau,\tau']}].\notag
\end{equation}
In autonomous case, the period can be considered as any positive number. Consequently, the notation of semi-static curve in this case is somehow simpler
$$
[A_c(\gamma)|_{(t,t')}]=F_c(\gamma(t),\gamma(t')),
$$
where
\begin{equation}\label{introeq4}
F_c(x,x')=\inf_{\tau>0}h_c((x,0),(x',\tau)).\notag
\end{equation}

\noindent{\bf Convention}: Let $I\subseteq\mathbb{R}$ be an interval $($either bounded or unbounded$)$. A continuous map $\gamma$: $I\to M$ is called curve. If it is differentiable, the map $d\gamma=(\gamma,\dot\gamma)$: $I\to TM$ is called orbit. When the implication is clear without danger of confusion, we use the same symbol to denote the graph, $\gamma:=\cup_{t\in I}(\gamma(t),t)$ is called curve and $d\gamma:=\cup_{t\in I}(\gamma(t),\dot\gamma(t),t)$ is called orbit. In autonomous system, the terminology also applies to the image: $\gamma:=\cup_{t\in I}\gamma(t)$ is called curve and $d\gamma:=\cup_{t\in I}(\gamma(t),\dot\gamma(t))$ is called orbit.

A semi-static curve $\gamma\in C^1(\mathbb{R},M)$ is called $c$-static if, in addition, the relation
$$
[A_c(\gamma)|_{(t,t')}]=-F_c((\gamma(t'),\tau'),(\gamma(t),\tau))
$$
holds in time-1-periodic case and
$$
[A_c(\gamma)|_{(t,t')}]=-F_c(\gamma(t'),\gamma(t))
$$
holds in autonomous case. An orbit $X(t)=(d\gamma(t), t\, \text{\rm mod}\ 2\pi)$ is called $c$-static (semi-static) if $\gamma$ is $c$-static (semi-static). We call the Ma\~n\'e set $\tilde{\mathcal {N}}(c)$ the union of $c$-semi-static orbits
$$
\tilde{\mathcal{N}}(c)=\bigcup\{d\gamma:\gamma\ \text{\rm is}\ c\text{\rm -semi static}\}
$$
and call the Aubry set $\tilde{\mathcal {A}}(c)$ the union of $c$-static orbits
$$
\tilde{\mathcal{A}}(c)=\bigcup\{d\gamma :\gamma\ \text{\rm is}\ c\text{\rm -static}\}.
$$

We use $\mathcal {M}(c)$, $\mathcal {A}(c)$ and $\mathcal {N}(c)$ to denote the standard projection of $\tilde{\mathcal {M}}(c)$, $\tilde{\mathcal {A}}(c)$ and $\tilde{\mathcal {N}}(c)$ from $TM\times\mathbb{T}$ to $M\times\mathbb{T}$ respectively. They satisfy the inclusion relation
$$
\tilde{\mathcal{M}}(c)\subseteq\tilde{\mathcal{A}}(c)\subseteq\tilde{\mathcal{N}}(c).
$$
It is showed in \cite{Ma1,Ma2} that the inverse of the projection is Lipschitz when it is restricted to $\mathcal {A}(c)$ as well as to $\mathcal {M}(c)$. By adding subscript $s$ to $\mathcal{N}$, i.e. $\mathcal{N}_s$ we denote its time-$s$-section. This principle also applies to $\tilde{\mathcal{N}}(c)$, $\tilde{\mathcal{A}}(c)$, $\tilde{\mathcal{M}}(c)$, $\mathcal{A}(c)$ and $\mathcal{M}(c)$ to denote their time-$s$-section respectively. For autonomous systems, these sets are defined without the time component.

On the time-1-section of Aubry set a pseudo-metric $d_c$ is introduced by Mather in \cite{Ma2}, its definition relies on the quantity $h_c^{\infty}$. Let
\begin{equation}\label{introeq5}
h_c^{\infty}((x,s),(x',s'))=\liminf_{\stackrel{\stackrel{s=t\ \text{\rm mod}\ 1}{\scriptscriptstyle t'=s'\ \text{\rm mod}\ 1}}{\scriptscriptstyle t'-t\to\infty}}h_c((x,t),(x',t')),\notag
\end{equation}
\begin{equation}\label{introeq6}
h^{\infty}_c(x,x')=\liminf_{k\to\infty}h_c((x,0),(x',k)).\notag
\end{equation}
The pseudo-metric $d_c$ on Aubry set is defined as
$$
d_c((x,t),(x',t'))=h_c^{\infty}((x,t),(x',t'))+h_c^{\infty}((x',t'),(x,t)).
$$
With the pseudo-metric $d_c$ one defines equivalence class in Aubry set. The equivalence $(x,t)\sim (x',t')$ implies $d_c((x,t),(x',t'))=0$, with which one can define quotient Aubry set $\mathcal{A}(c)/\sim$. Its element is called Aubry class, denoted by $\mathcal{A}_i(c)$, its lift to $TM\times\mathbb{T}$ is denoted by $\tilde{\mathcal{A}}_i(c)$. Thus,
$$
\mathcal{A}(c)= \bigcup_{i\in\Lambda}\mathcal{A}_i(c), \qquad \tilde{\mathcal{A}}(c)=\bigcup_{i\in\Lambda}\tilde{\mathcal{A}}_i(c).
$$
In \cite{Ma5} Mather constructed an example with some quotient Aubry set homeomorphic to an interval. However, it is proved generic in \cite{BC} that, for system with $n$ degrees of freedom, each $c$-minimal measure contains not more than $n+1$ ergodic components. In this case, each Aubry set contains at most $n+1$ classes.

The definition of semi-static curve as well as of Ma\~n\'e set depends on which configuration manifold under our consideration. Let $\pi:\bar M\to M$ be a finite covering, a curve $\gamma$: $\mathbb{R}\to M$ is said semi-static in $\bar M$ if each curve in its lift $\bar\gamma$ is semi-static in $\bar M$. Accordingly, we define $\tilde{\mathcal{N}}(c,\bar M)$ ($\mathcal{N}(c,\bar M)$) as the set containing all $c$-semi-static orbits (curves) in $\bar M$. We use the symbol $\mathcal{N}(c)$ when $M$ is defaulted as the configuration manifold.

It is possible that $\pi\mathcal{N}(c,\bar M)\supsetneq\mathcal{N}(c,M)$. For instance, if $N\subset M$ is a open region such that $H_1(M,N,\mathbb{Z})\neq H_1(M,\mathbb{Z})$, $\mathcal{N}\subset N$
and the lift of $N$ in $\bar M$ has more than one connected component, then this phenomenon takes place.  But we have

\begin{pro} Let $\pi:\bar M\to M$ be a finite covering space, then
$$
\pi\mathcal{A}(c,\bar M)=\mathcal{A} (c,M).
$$
\end{pro}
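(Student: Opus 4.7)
The inclusion $\pi\mathcal{A}(c,\bar M)\subseteq \mathcal{A}(c,M)$ is the direct direction. Any $c$-static curve $\bar\gamma$ in $\bar M$ projects to $\gamma=\pi\bar\gamma$ with the same action (the Lagrangian and $\eta_c$ lift naturally). Lifting invariant probability measures between $TM$ and $T\bar M$ along the finite cover preserves the integral of $L-\eta_c$, so $\alpha_{\bar M}(\pi^{*}c)=\alpha_M(c)$; and path-lifting gives, for any $\bar x\in\pi^{-1}(x)$,
\[
F_c^M(x,x')\;=\;\min_{\bar x''\in\pi^{-1}(x')}F_c^{\bar M}(\bar x,\bar x''),
\]
which together with the $c$-static identity for $\bar\gamma$ forces the $c$-static identity for $\gamma$.

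For the reverse inclusion $\mathcal{A}(c,M)\subseteq \pi\mathcal{A}(c,\bar M)$, I would use the Peierls-barrier characterization $\mathcal{A}(c)=\{x: h_c^\infty(x,x)=0\}$. The same lifting argument, applied to curves of growing length and combined with finite-fiber pigeonhole to commute $\liminf$ with the minimum, yields
\[
h_c^{M,\infty}(x,x')\;=\;\min_{\bar x'\in\pi^{-1}(x')}h_c^{\bar M,\infty}(\bar x,\bar x'),\qquad \bar x\in\pi^{-1}(x).
\]
So for $x\in\mathcal{A}(c,M)$ there exists $\bar x'\in\pi^{-1}(x)$ with $h_c^{\bar M,\infty}(\bar x,\bar x')=0$; the task is to promote this to $h_c^{\bar M,\infty}(\bar x'',\bar x'')=0$ for some lift $\bar x''$.

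Without loss of generality I may assume $\pi$ is Galois: otherwise pass to the Galois closure $\tilde M\to M$, a finite cover factoring through $\bar M$ and Galois over both $M$ and $\bar M$, and apply the Galois case to each stage. In the Galois case with deck group $G$, write $\bar x'=g\bar x$ with $g\in G$ of order $m$. By $G$-invariance of $\bar L$ and $\bar\eta_c$, $h_c^{\bar M,\infty}(g^{j-1}\bar x,g^{j}\bar x)=h_c^{\bar M,\infty}(\bar x,g\bar x)=0$ for every $j$, and telescoping via the triangle inequality for $h_c^{\bar M,\infty}$ (from concatenation of curves),
\[
0\;\le\; h_c^{\bar M,\infty}(\bar x,\bar x)\;=\;h_c^{\bar M,\infty}(\bar x,g^{m}\bar x)\;\le\;\sum_{j=1}^{m}h_c^{\bar M,\infty}(g^{j-1}\bar x,g^{j}\bar x)\;=\;0.
\]
Hence $\bar x\in\mathcal{A}(c,\bar M)$, so $x\in\pi\mathcal{A}(c,\bar M)$.

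The main obstacle is this second direction, and within it the non-Galois case, which the Galois-closure reduction resolves via the $G$-equivariance of the barrier. A secondary technical point is the $\liminf$--$\min$ commutation in the Peierls-barrier identity, which depends crucially on the finiteness of the cover.
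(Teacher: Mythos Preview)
Your argument is correct. For the direction $\pi\mathcal{A}(c,\bar M)\subseteq\mathcal{A}(c,M)$ you do exactly what the paper does: project a closed curve $\bar\xi$ of small $c$-action through $\bar x$ down to $M$; the action is preserved, so $x=\pi\bar x$ satisfies $h_c^\infty(x,x)=0$. In fact the paper's proof stops there and only establishes this inclusion explicitly, presumably treating the other as known.

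Your treatment of the reverse inclusion via Peierls barriers is sound, including the finite-fiber pigeonhole for the $\liminf$--$\min$ identity and the telescoping with deck transformations in the Galois case. The detour through the Galois closure is correct but not really needed: for an arbitrary finite cover, a closed curve $\xi$ at $x$ with $[A_c(\xi)]<\delta$ induces, by monodromy, a permutation $\sigma$ of the finite fiber $\pi^{-1}(x)$; iterating $\xi$ $m=\mathrm{ord}(\sigma)$ times lifts to a \emph{closed} curve at $\bar x$ with action $m\,[A_c(\xi)]<m\delta$, which already gives $h_c^{\bar M,\infty}(\bar x,\bar x)=0$. This is the same mechanism as your $g^m=\mathrm{id}$ step, just read off directly from monodromy rather than from deck transformations, and it spares you the two-stage reduction.
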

\begin{proof}
Pick up any $\bar x\in\pi\mathcal{A}(c,\bar M)$ and any small $\delta>0$, by definition, there exists sufficiently large $T>0$ as well as a curve $\bar\xi:[0,T]\to\bar M$ such that $\bar\xi(0)=\bar\xi(T)=\bar x$ and $[A_c(\bar\xi)]<\delta$. Let $\xi$ denote the project of $\bar\xi$ down to $M$, clearly, we have
$[A_c(\xi)]=[A_c(\bar\xi)]<\delta$. Let $x=\pi\bar x$, clearly, $x\in\mathcal{A}(c)$.
\end{proof}

\subsection{Outline of the proof}
We use variational method to prove the result. Since the work of Mather \cite{Ma2,Ma3}, the variational method has become a powerful tool for the study of dynamical instability in positive definite Lagrange systems with multiple degrees of freedom.

In the study of Arnold diffusion in {\it a priori} stable systems with three degrees of freedom, mainly due to the work of Mather \cite{Ma6}, it has been widely known that the main difficulty takes place near double resonance. To describe what puzzled us and to explain the strategy of our proof, let us recall the example of Arnold and previous study on {\it a priori} unstable systems.

In the example of Arnold, there exists a 2-dimensional cylinder in the phase space, which is invariant and normally hyperbolic for the time-1-map determined by the Hamiltonian flow. This cylinder is foliated into a family of invariant circles, each of them has stable and unstable manifold which intersect each other transversally. Consequently, the unstable manifold of some circle intersects the stable manifold of other circles nearby, it implies the existence of a sequence of successively connected heteroclinic orbits. This structure is called transition chain by Arnold. Diffusion orbits are then constructed shadowing these heteroclinic orbits.

Such argument heavily depends on the geometric structure and variational method turns out to have wider range of application. Let us interpret the proof by variational language. Each invariant circle is the Aubry set for certain cohomology class, the stable as well as the unstable manifold is actually the graph of the differential of the weak KAM solution. Expressed as the difference of backward and forward weak KAM, the barrier function reaches its minimum at primary intersection points of these two manifolds. These homoclinic orbits and the Aubry set constitute the Ma\~n\'e set in certain finite covering space. For positive definite systems, the transversal intersection implies the minimality of the homoclinic orbits as well as local heteroclinic orbits, along which the Lagrange action reaches the minimum among all those curves with the same boundary conditions. The diffusion orbits are obtained by searching for global minimizers which generate orbits shadowing a sequence of local heteroclinic orbits.

The variational arguments still work even if there does not exist such nice geometric structure, provided the following conditions are satisfied for each cohomology class

{1, \it the Aubry set is lower dimensional: $H_1(M,\mathcal{A}(c),\mathbb{Z})\neq 0$};

{2, \it the stable ``manifold" intersects the unstable ``manifold" transversally}.

However, it turns out very difficult to verify whether the stable ``manifold" intersects the unstable ``manifold" transversally for each cohomology class along a path of first cohomology class. As uncountably many stable and unstable ``manifolds" have to be considered, one can not verify the genericity of the transversal intersection by taking the intersection of countably many open-dense sets. One possible way is to study some regularity of barrier functions with respect to some parameter, with which one obtains the finiteness of Hausdorff dimensions of the set of barrier functions. As such regularity is obtained in the case when a normally hyperbolic cylinder exists we are succeeded in solving the problem in {\it a priori} unstable case \cite{CY1,CY2,LC}. In fact, the regularity was obtained in \cite{CY1} only for those barrier functions for which the minimal measure is supported on an invariant circle. It was extended in \cite{Zho1} to all other barrier functions, which allows us to construct diffusion orbits of which the picture looks like what was constructed by Arnold, while in our previous work, the constructed orbits keep close to the cylinder when they pass through strong resonance (Birkhoff instability region). The ``gap" problem was then solved.

Intuitively, diffusion orbits in {\it a priori} stable systems may be constructed along some resonant path. In terms of rotation vector (first homology class), each point on this curve satisfies at least one resonant condition for the system with $3$ degrees of freedom. In integrable systems, each resonant path corresponds to an invariant cylinders without any hyperbolicity. Under generic perturbations, it breaks into many pieces of normally hyperbolic cylinder, but may disappear around double resonant points. It implies a bad consequence: we lost a handhold to get certain regularity of barrier functions in suitable parameter. It then becomes unclear whether there is a transition chain near double resonance.

In terms of first cohomology, strong double resonance corresponds to a convex disc with size $O(\sqrt{\epsilon})$ if the perturbation is of order $O(\epsilon)$, each piece of normally hyperbolic cylinder corresponds to a channel which extends to a small neighborhood of the disc. But it is unclear whether these channels are connected to the disc of double resonance.

The method we use to overcome this difficulty bases on following discoveries:

First, each double resonant disc is surrounded by an annulus foliated into a family of paths, along each of these paths, there is another invariant (a coordinate component of the cohomology class) besides the average action. For each class in this annulus, the intersection of stable ``manifold" with  unstable ``manifold" is nontrivial although it may not be transversal. This annulus has width of order $O(\epsilon)$.

Next,  incomplete intersection of the stable and unstable ``manifold" of an Aubry set does not implies that it can not be connected to any other Aubry set nearby. It does if they are $c$-equivalent.

Finally, the channels of normally hyperbolic cylinder reach to somewhere $\epsilon^{1+\delta}$-close to double resonant disc ($\delta>0$), i.e. it has overlap with the annulus. For each class in the channel, the relative homology of the Aubry set is non-trivial.

Therefore, we are able to find a path close to prescribed one, for each class on the path, the Aubry set is connected to another one nearby if the class is also on the path. All of these connecting orbits are minimal in local sense. The diffusion orbits are constructed shadowing these successively connected orbits.

We organize the proof in following way. Section 2 is used to establish the concept of elementary weak KAM, with which one has simpler expression of barrier function. It thus becomes easier to study genericity of transition chain. Section 3, 4 and 5 are devoted to study the structure of Ma\~n\'e set and of Aubry set. Since these sets are symplectic invariants \cite{Be2}, we do it by studying the normal form which is put into the appendix. The truncated Hamiltonian of the normal form is a system with two degrees of freedom. In Section 3, we study the dynamics around the double resonance, and the modulus continuity of the period on energy (average action). With these preliminary works, normally hyperbolic cylinder is shown to get very close to the double resonance in Section 4, the existence of annulus of $c$-equivalence is established in Section 5. Section 6 is devoted to establish two types of local connecting orbits. The local minimality of these local connecting orbits are naturally given, it enables us to construct global connecting orbit shadowing these local connecting orbits. It is obtained by searching for the minimizer of certain modified Lagrangian, which is done in Section 7. Finally in Section 8, we verify the cusp-residual property of the transition chain in nearly integrable Hamiltonian systems with three degrees of freedom. Consequently, the main result of this paper is proved.

\section{\ui Elementary Weak KAM and Barrier}
\setcounter{equation}{0}

The concept of elementary weak KAM solution is introduced for $c$-minimal measure with finitely many ergodic components, this condition has been shown to be generic in \cite{BC}. Each ergodic component $\mu^i_c$ determines a pair of elementary weak KAM $u^{\pm}_{c,i}$, with which we introduce a barrier function
$$
B_{c,i,j}=u^-_{c,i}-u^+_{c,j}.
$$
With this formula, it is easier to show that, generically, the set $\arg\min B_{c,i,j}\backslash(\mathcal{A}(c)+\delta)$ is totally disconnected. This property is crucial for the construction of diffusion orbits.

\subsection{Elementary weak KAM}
The concept of $c$-semi-static curves can be extended to the curves only defined on $\mathbb{R}^{\pm}$, which are called forward or backward $c$-semi-static curves respectively.  Usually one uses $\gamma^-_c(t,x,\tau)$: $(-\infty,\tau]\to M$ to denote backward $c$-semi-static curve such that $\gamma^-_c(\tau)=x$, and uses $\gamma^+_c(t,x,\tau)$: $[\tau,\infty)\to M$ to denote forward $c$-semi-static curve such that $\gamma^+_c(\tau)=x$. In autonomous case, one uses the notation $\gamma^{\pm}_c(t,x)$ such that $\gamma^{\pm}(0,x)=x$. Let
\begin{align}
\tilde{\mathcal{N}}^+(c)&=\{(x,\dot x,\tau)\in TM\times\mathbb{T}: \pi_x\phi^t_L (z,\tau)|_{[\tau,+\infty)}\ \text{\rm is \it c\rm-semi-static}\},\notag\\
\tilde{\mathcal{N}}^-(c)&=\{(x,\dot x,\tau)\in TM\times\mathbb{T}: \pi_x\phi^t_L (z,\tau)|_{(-\infty,\tau]}\ \text{\rm is \it
c\rm-semi-static}\},\notag
\end{align}
where $0\le\tau<1$, $\pi_x(x,\dot x)=x$ denotes the standard projection along the tangent fiber and $\phi^t_L(x,\dot x,\tau)$ denotes the orbit of the Lagrangian flow with the initial value $(x,\dot x)$ at the time $\tau$. The corrsponding orbits are called forward (backward) semi-static orbit set respectively. These two sets are upper semi-continuous for the cohomology class.
\begin{pro}\label{weakpro1}\text{\rm (see \cite{Bo})}
If the Lagrangian $L$ is of Tonelli type, for each point $(x,\tau)\in M\times\mathbb{T}$, there is at least one $\gamma^{\pm}_c(t,x,\tau)$ which is forward $($backward$)$ semi-static curve.
\end{pro}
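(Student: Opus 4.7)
The plan is to construct $\gamma^+_c(\cdot, x, \tau)$ as a limit of finite-interval minimizers with endpoints receding to infinity; the backward case $\gamma^-_c$ is symmetric under time reversal, so I only discuss the forward one.

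First I would invoke Tonelli's theorem on each finite window. Fix a sequence $T_n \to \infty$ and select endpoints $y_n \in M$ together with $C^1$ minimizers $\xi_n : [\tau, \tau + T_n] \to M$ with $\xi_n(\tau) = x$, $\xi_n(\tau + T_n) = y_n$, satisfying
$$
[A_c(\xi_n)|_{[\tau, \tau+T_n]}] = h_c((x, \tau), (y_n, \tau + T_n)).
$$
The existence of such $\xi_n$ is guaranteed by the direct method for Tonelli Lagrangians. The points $y_n$ and times $T_n$ are chosen so that $h_c((x,\tau),(y_n,\tau+T_n))$ lies within $1/n$ of the infimum defining $F_c((x,\tau\bmod 1), (y_n, \tau + T_n \bmod 1))$; this is possible since $h_c$ and $F_c$ differ only by an infimum over residues mod $1$ and $F_c$ is finite.

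Next I would establish compactness. Standard a priori estimates for Tonelli minimizers, using the superlinear growth of $L$ together with the compactness of the $M$-factor, give a uniform bound on $\|\dot\xi_n(t)\|$ on every compact subinterval of $[\tau, \infty)$; conservation of energy along Euler-Lagrange orbits (or the analogous uniform bound in time-periodic case) propagates the estimate across long intervals. Arzel\`a-Ascoli together with a diagonal extraction then yields a subsequence $\xi_{n_k}$ converging in $C^1$ on compact subsets to a $C^1$ curve $\gamma^+ : [\tau, \infty) \to M$ with $\gamma^+(\tau) = x$.

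The remaining step is to verify that $\gamma^+$ is $c$-semi-static. For any $t < t'$ in $[\tau, \infty)$ the additivity of the modified action forces $\xi_{n_k}|_{[t,t']}$ to be a minimizer among all $C^1$ curves with the same endpoints, so $[A_c(\xi_{n_k})|_{[t,t']}] = h_c((\xi_{n_k}(t),t),(\xi_{n_k}(t'),t'))$; passing to the limit via lower semi-continuity of the action and joint continuity of $h_c$ gives
$$
[A_c(\gamma^+)|_{[t,t']}] = h_c((\gamma^+(t), t),(\gamma^+(t'), t')).
$$
The hard part is upgrading $h_c$ to $F_c$ in the above equality, which is what the definition of semi-static actually requires. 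One settles this by a diagonal selection: choose subsequences realizing $F_c$ on a countable dense set of pairs $(t,t') \in [\tau,\infty)^2$, use the fact that $F_c$ is itself the infimum of a jointly continuous family, and combine with the inequality $F_c \le h_c$ (which always holds) to conclude equality everywhere in the limit. Once this is done, $\gamma^+$ is by definition a forward $c$-semi-static curve through $(x,\tau)$, completing the proof.
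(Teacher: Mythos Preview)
The paper does not supply a proof of this proposition; it is quoted from Bolotin \cite{Bo} (and is also a consequence of Fathi's weak KAM theorem). So there is no ``paper's own proof'' to compare against, and your attempt has to stand on its own.

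Your overall scheme --- take finite-time Tonelli minimizers $\xi_n$ on $[\tau,\tau+T_n]$ with $\xi_n(\tau)=x$, get uniform $C^1$ bounds, extract a diagonal limit $\gamma^+$ --- is the standard one and is fine. The limit $\gamma^+$ will indeed satisfy
\[
[A_c(\gamma^+)|_{[t,t']}]=h_c\big((\gamma^+(t),t),(\gamma^+(t'),t')\big)
\]
for all $t<t'$, since each $\xi_n$ is a minimizer on subintervals and $h_c$ is continuous.

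The gap is exactly where you flag ``the hard part'': upgrading $h_c$ to $F_c$. Your proposed diagonal selection over pairs $(t,t')$ does not work. Once a subsequence is fixed and $\gamma^+$ is obtained, the curve is a single object; you cannot re-extract different subsequences for different pairs $(t,t')$ and still be talking about the same limit curve. The inequality $F_c\le h_c$ goes the wrong way for what you need: you must show $[A_c(\gamma^+)|_{[t,t']}]\le F_c(\ldots)$, and nothing in your construction forces this. Relatedly, your specification of $y_n$ is circular (you choose $y_n$ to make $h_c$ close to $F_c$, but $F_c$ already depends on $y_n$).

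The standard fix is to anchor the far endpoint in the Aubry set: take $y_n=z$ for a fixed $z\in\mathcal{A}(c)$ (which is nonempty for Tonelli $L$). If the limit $\gamma^+$ failed to be semi-static on some $[t,t']$, there would be an integer shift $k\ne 0$ and a cheaper curve $\zeta:[t,t'+k]\to M$ joining $\gamma^+(t)$ to $\gamma^+(t')$. Splicing $\zeta$ into $\xi_n$ produces a curve from $(x,\tau)$ to $(z,\tau+n+k)$ with action strictly smaller than $h_c((x,\tau),(z,\tau+n))$ by a fixed amount; the recurrence of $z$ in the Aubry set (i.e., $h_c^\infty((z,s),(z,s))=0$) lets one close up the time shift at negligible cost and derive a contradiction with minimality. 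Alternatively, and more directly, invoke the existence of a forward weak KAM solution $u^+_c$ and take $\gamma^+$ to be a $(u^+_c,L_c,\alpha(c))$-calibrated curve through $(x,\tau)$; calibrated curves are automatically semi-static.
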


As both the $\omega$-limit set of $d\gamma_c^+$ and the $\alpha$-limit set of $d\gamma_c^-$ are in the Aubry set one can define
$$
W^{\pm}_c=\bigcup_{(x,\tau)\in M\times\mathbb{T}}
\left\{x,\tau,\frac{d\gamma^{\pm}_c(\tau,x,\tau)}{dt}\right\},
$$
and call $W^+_c$ the stable set, $W^-_c$ the unstable set of the $c$-minimal measure respectively. If  $\dot\gamma^-(\tau,x,\tau)=\dot\gamma^+(\tau,x,\tau)$ holds for some $(x,\tau)\in M\times\mathbb{T}$, passing through the point $(x,\tau,\dot\gamma^-_c(\tau,x,\tau))$ the orbit is either in the Aubry set or homoclinic to this Aubry set.

When the Aubry set contains only one class, the stable as well as the unstable set has its own generating function $u_c^{\pm}$ such that $W^{\pm}_c=\text{\rm Graph}(du_c^{\pm})$ holds almost everywhere \cite{Fa1,E}. These functions are weak KAM solutions, which are the fixed points of so called Lax-Oleinik operator. We use $u_c^{\pm}$ to denote the weak KAM solution for the Lagrangian $L-\eta_c$, where $\eta_c$ is a closed form with $[\eta_c]=c$. These functions are Lipschitz, thus differentiable almost everywhere. At each differentiable point $(x,\tau)$, $(x,\tau,\partial_xu^-(x,\tau))$ uniquely determines backward $c$-semi static curve $\gamma^-_x$: $(-\infty, \tau]\to M$ such that $\gamma^-_x(\tau)=x$, $\dot\gamma^-_x(\tau)=\partial_yH(x,\tau,\partial_xu^-(x,\tau))$. Similarly, $(x,\tau,\partial_xu^+(x,\tau))$ uniquely determines forward $c$-semi static curve $\gamma^-_x$: $[\tau,\infty)\to M$ such that $\gamma^+_x(\tau)=x$, $\dot\gamma^+_x(\tau)=\partial_yH(x,\tau,\partial_xu^+(x,\tau))$.

Given a class $c\in H^1(M,\mathbb{R})$, we use $\{\tilde{\mathcal{A}}_c^i\}_{i\in\Lambda} \subset TM$ to denote the set of Aubry classes, use $\{\mathcal{A}^i_c\} _{i\in\Lambda}\subset M$ to denote the projected set along the tangent fibers, where $\Lambda$ is the subscript set: $\tilde{\mathcal{A}}(c)=\cup_{i\in\Lambda}\tilde{\mathcal{A}}_c^i$. We also use the notation $\tilde{\mathcal{M}}_c^i=\text{\rm supp}\mu_c^i$ where $\mu_c^i$ is an ergodic component of the $c$-minimal measure $\mu_c$. Let
$\mathcal{M}_c^i=\pi\tilde{\mathcal{M}}_c^i$.
\begin{pro}\label{weakpro2} {\rm (\cite{Fa2})}
Let $u^{\pm}_c$ and $u'^{\pm}_c$ be two weak-KAM solutions for $c$. Their difference keeps constant when they are restricted on an Aubry class $(u^{\pm}_c-u'^{\pm}_c)|_{\mathcal{A}_c^i}=\text{constant}$.
\end{pro}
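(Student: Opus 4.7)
The plan is to combine two standard ingredients from weak KAM theory: the domination inequality satisfied by any weak KAM solution, and the vanishing of the Mather pseudo-metric on each Aubry class.

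First I would recall the domination property. A backward weak KAM solution $u^-_c$ is, by definition, dominated by $L-\eta_c+\alpha(c)$, so that along any $C^1$ curve $\xi\colon[\tau,\tau']\to M$ one has
$$
u^-_c(\xi(\tau'),\tau')-u^-_c(\xi(\tau),\tau)\le [A_c(\xi)|_{[\tau,\tau']}].
$$
Minimizing over connecting curves gives $u^-_c(x,t)-u^-_c(y,s)\le h_c((y,s),(x,t))$, and passing to the $\liminf$ used to define $h_c^{\infty}$ yields, for all $(x,t),(y,s)\in M\times\mathbb{T}$,
$$
u^-_c(x,t)-u^-_c(y,s)\le h_c^{\infty}((y,s),(x,t)).
$$
The corresponding inequality for a forward weak KAM solution reads $u^+_c(y,s)-u^+_c(x,t)\le h_c^{\infty}((y,s),(x,t))$. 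Both statements specialize to the autonomous setting by omitting the $\mathbb{T}$-factor.

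Next, by the very definition of the equivalence relation used to form Aubry classes, any two points $(x,t),(y,s)$ lying in the same class $\mathcal{A}_c^i$ satisfy
$$
h_c^{\infty}((x,t),(y,s))+h_c^{\infty}((y,s),(x,t))=d_c((x,t),(y,s))=0.
$$
Applying the backward domination inequality to $u^-_c$ and to $u'^-_c$ with endpoints interchanged,
$$
u^-_c(x,t)-u^-_c(y,s)\le h_c^{\infty}((y,s),(x,t)),\qquad u'^-_c(y,s)-u'^-_c(x,t)\le h_c^{\infty}((x,t),(y,s)),
$$
and summing, one obtains $(u^-_c-u'^-_c)(x,t)-(u^-_c-u'^-_c)(y,s)\le 0$. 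Exchanging the roles of the two points produces the reverse inequality, hence $u^-_c-u'^-_c$ is constant on $\mathcal{A}_c^i$. The case of two forward solutions is handled identically using the corresponding form of the domination inequality.

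There is no real obstacle here: the only points requiring care are choosing the correct form of the domination inequality in the time-periodic versus autonomous case and using the finiteness of $h_c^{\infty}$ between points of the projected Aubry set, both of which are classical. The argument is essentially Fathi's short proof \cite{Fa2} transported into the notational framework just established in this section.
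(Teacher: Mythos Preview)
Your argument is correct and is exactly the classical proof from Fathi's book: the paper itself does not supply a proof of this proposition but simply cites \cite{Fa2}, and what you have written is precisely the domination-plus-vanishing-pseudometric argument found there. One minor remark: both backward and forward weak KAM solutions are dominated functions, so the \emph{same} inequality $u(x,t)-u(y,s)\le h_c^{\infty}((y,s),(x,t))$ holds for $u=u^+_c$ as well; there is no need for a separate form in the forward case, and your concluding sentence already acknowledges this.
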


Recall the definition of $h_c^{\infty}$ in the introduction. We use the symbol $h^{\infty}_L$ to denote the quantity defined in the same way for $L$ with $c=0$, and drop the subscript $L$ when it is clearly defined. The quantity $h_c^{\infty}(z,z')$ is a weak-KAM solution if we consider it as the function of $z$ or of $z'$. Let us consider the case that the $c$-minimal invariant measure has finitely many ergodic components. In this case, this function has some kind of continuity.
\begin{theo}\label{weakthm1}
Let $\{L_{\delta}\}$ be a sequence of Lagrangian, converging to $L$ in $C^2$-topology as $\delta\to 0$
when they are restricted on any bounded regions of $TM\times\mathbb{T}$. We assume that the minimal measure for $L$ consists of finitely many ergodic components $\mu^1,\mu^2$, $\cdots,\mu^m$ and the distance from $(x,\tau)\in\mathcal{M}^i$ to the Aubry set for $L_{\delta}$, $d((x,\tau), \mathcal{A}_{L_{\delta}})\to 0$ as $\delta\to 0$. Then
$$
\lim_{\delta\to 0}h_{L_\delta}^{\infty}((x,\tau),(x',\tau'))=h^{\infty}((x,\tau),(x',\tau')).
$$
\end{theo}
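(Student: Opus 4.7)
The plan is to decompose the Peierls barrier through the (finitely many) Mather classes and to establish continuity of each piece under the $C^2$-perturbation. I would split the argument into three steps: a decomposition formula, an upper-semicontinuity estimate, and a lower-semicontinuity estimate.

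First, using that the $L$-minimal measure has finitely many ergodic components with Mather classes $\mathcal{M}^1,\ldots,\mathcal{M}^m$, I would establish the triangle-type factorization
$$
h^{\infty}((x,\tau),(x',\tau'))=\min_{1\le i\le m}\bigl\{h^{\infty}((x,\tau),(p_i,s_i))+h^{\infty}((p_i,s_i),(x',\tau'))\bigr\},
$$
valid for any choice of base points $(p_i,s_i)\in\mathcal{M}^i$. The $\ge$ direction follows because any long near-minimizing sequence realizing the $\liminf$ in $h^{\infty}$ must enter an arbitrarily small neighborhood of some Aubry class, hence of some $\mathcal{M}^i$; the $\le$ direction is Ma\~n\'e's concatenation argument, using that inside an Aubry class the cost of sliding between nearby points is arbitrarily small. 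The same formula holds for each $L_{\delta}$ with base points $(p_i^{\delta},s_i^{\delta})\in\tilde{\mathcal{A}}_{L_{\delta}}$ chosen so that $(p_i^{\delta},s_i^{\delta})\to(p_i,s_i)$ as $\delta\to 0$, which is guaranteed by hypothesis. It therefore suffices to prove, for each $i$ separately, that $h_{L_{\delta}}^{\infty}((x,\tau),(p_i^{\delta},s_i^{\delta}))\to h^{\infty}((x,\tau),(p_i,s_i))$ and the symmetric statement for the right factor. One also needs $\alpha_{L_{\delta}}(c)\to\alpha(c)$, which follows from the standard stability of the Mather action under $C^2$-convergence of Tonelli Lagrangians together with the hypothesis on Aubry sets.

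For the upper bound, fix $\eta>0$ and choose a competitor curve $\xi:[t,T]\to M$ with $\xi(t)=x$, $\xi(T)$ within $\eta$ of $p_i$, and $L$-action (including the $\alpha(c)$-correction) below $h^{\infty}((x,\tau),(p_i,s_i))+\eta/2$. Modify $\xi$ by a short correcting segment to reach $(p_i^{\delta},s_i^{\delta})$ instead. Since $L_{\delta}\to L$ in $C^2$ on the bounded region containing the orbit of $\xi$, the $L_{\delta}$-action of the modified curve differs from the $L$-action of $\xi$ by $O(T\,\|L_{\delta}-L\|_{C^{0}})+O(\eta)$; because $T$ and $\xi$ are fixed \emph{before} letting $\delta\to 0$, this gives $h_{L_{\delta}}^{\infty}\le h^{\infty}+\eta$ for all sufficiently small $\delta$. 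For the lower bound, take near-minimizers $\xi_{\delta}:[0,T_{\delta}]\to M$ for $L_{\delta}$ with the correct endpoints. Standard Tonelli a priori estimates give uniform velocity bounds over unit time-windows. If $T_{\delta}$ stays bounded along a subsequence, Arzel\`a--Ascoli extracts a limit curve $\xi^{*}$ connecting $(x,\tau)$ to $(p_i,s_i)$ whose $L$-action is at most $\liminf h_{L_{\delta}}^{\infty}$, and by definition of $h^{\infty}$ this action is bounded below by $h^{\infty}((x,\tau),(p_i,s_i))$. If $T_{\delta}\to\infty$, a quantitative Ma\~n\'e lower bound forces $\xi_{\delta}$ to spend all but a bounded amount of time inside an arbitrarily small neighborhood of $\mathcal{A}_{L_{\delta}}$, which by hypothesis is close to $\bigcup_i\mathcal{M}^i$; cutting $\xi_{\delta}$ at a moment where it is near some $\mathcal{M}^j$ reduces to the bounded case for each piece, and the decomposition formula closes the argument.

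The main obstacle will be the case $T_{\delta}\to\infty$ in the lower-semicontinuity step: one must know, uniformly in $\delta$, that a near-minimizing curve of length $T_{\delta}$ spends a fraction $1-o(1)$ of its time inside a small neighborhood of the Mather set of $L$. This uses the convergence $\alpha_{L_{\delta}}(c)\to\alpha(c)$ together with the $C^2$-closeness of $L_{\delta}$ to $L$ to make Ma\~n\'e's classical non-escape estimate quantitative and $\delta$-uniform; the standing hypothesis that $d((x,\tau),\mathcal{A}_{L_{\delta}})\to 0$ for each $(x,\tau)\in\mathcal{M}^i$ is precisely what allows the uniformity to hold. Once this uniform estimate is in place, the three steps combine to give $\lim_{\delta\to 0}h_{L_{\delta}}^{\infty}=h^{\infty}$ as stated.
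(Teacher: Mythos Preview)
Your reduction step contains a genuine gap. You claim that base points $(p_j^{\delta},s_j^{\delta})\in\mathcal{A}_{L_{\delta}}$ with $(p_j^{\delta},s_j^{\delta})\to(p_j,s_j)$ exist for \emph{every} class $\mathcal{M}^j$, ``guaranteed by hypothesis''. But the hypothesis only asserts $d((x,\tau),\mathcal{A}_{L_{\delta}})\to 0$ for the one specific point $(x,\tau)\in\mathcal{M}^i$; nothing prevents another component $\mathcal{M}^j$ ($j\neq i$) from lying far from $\mathcal{A}_{L_{\delta}}$ (think of $L_{\delta}=L+\delta g$ with $g\ge 0$ supported near $\mathcal{M}^j$, which kills $\mu^j$ as a minimizer). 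Without those base points you cannot write the decomposition formula for $L_{\delta}$, and the reduction to sub-problems collapses. A related issue: in your $T_{\delta}\to\infty$ case you propose a single cut near some $\mathcal{M}^j$ and then ``reduce to the bounded case for each piece'', but the two resulting pieces can themselves have unbounded length, so the recursion does not terminate.

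The paper's argument is organized differently and never needs the Aubry structure of $L_{\delta}$. For $\limsup\le$, the asymmetric hypothesis is used at the \emph{starting} point only: since $(x,\tau)$ is near $\mathcal{A}_{L_{\delta}}$, one has $\liminf_{k}h_{L_{\delta}}^{k}(x,x)\to 0$, so one prepends an arbitrarily long cheap $L_{\delta}$-loop at $x$ to a fixed finite-time $L$-minimizer $\gamma^{k}$ from $x$ to $x'$ with $[A_L(\gamma^k)]$ close to $h^{\infty}(x,x')$. For $\liminf\ge$, the hypothesis on $(x,\tau)$ is not used at all. Instead the long $L_{\delta}$-minimizer $\gamma_{\delta}^{k_{\ell}}$ is parsed into an alternating sequence of \emph{sojourn} intervals (times when the curve stays near a single $\mathcal{M}^{j}$) and \emph{transition} intervals; the key lemma is that each transition interval has length $\le K(\epsilon)$ uniformly in $\delta$, because otherwise the empirical measures along transitions would produce an extra ergodic $L$-minimal component. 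Each sojourn piece, having both endpoints $\epsilon$-close to the same $L$-Aubry class, is then replaced by an $L$-curve of cost $O(\epsilon)$, and the bounded transition pieces are kept. This yields an $L$-competitor from $x$ to $x'$ whose action exceeds $h_{L_{\delta}}^{k_{\ell}}(x,x')$ by $O(m\epsilon)+O(mK\|L-L_{\delta}\|)$, which gives the lower bound. Your $T_{\delta}\to\infty$ paragraph gestures at this, but the essential content is the uniform bound $K(\epsilon)$ on transitions together with the replacement of sojourns by $L$-curves (not $L_{\delta}$-curves), rather than any factorization through $\mathcal{A}_{L_{\delta}}$.
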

\begin{proof} We only need to prove it on the time-1-section, e.g.  for $\tau=\tau'=0$. So we omit the notation for the component $\tau$. For each $\epsilon>0$, there exists $k>0$ such that $|h^{\infty}(x,x') -h^{k}(x,x')|<\epsilon$. Let $\alpha$ and $\alpha_{\delta}$ denote the minimal average action of $L$ and $L_{\delta}$ respectively. Let $\gamma^k$: $[0,k]\to M$ be the curve such that $\gamma^k(0)=x$, $\gamma^k(k)=x'$ and
$$
[A_L(\gamma^k)]=\int_0^{k}L (d\gamma^{k}(t),t)dt+k\alpha=h^{k}(x,x').
$$
For any $k'>k$, let $\zeta$: $[0,k']\to M$ be an absolutely continuous curve such that $\zeta(0)=x$, $\zeta(t-k'+k)=\gamma^k(t)$ for $t\in[k'-k,k']$ and $[A_{L_{\delta}}(\zeta|_{[0,k'-k]})]= \int_0^{k'-k}L_{\delta} (d\zeta(t),t)dt+(k'-k)\alpha_{\delta}=h_{L_{\delta}}^{k'-k}(x,x)$. Therefore, we have
\begin{align*}
[A_{L_{\delta}}(\zeta)]&\le h_{L_\delta}^{k'-k}(x,x)+h^{k}(x,x')+k|\alpha-\alpha_{\delta}|\\
& +\left|\int_0^k(L-L_{\delta})(d\gamma^k(t),t)dt\right|.
\end{align*}
Since $d(x, \mathcal{A}_{L_{\delta}}|_{t=0})\to 0$ as $\delta\to 0$ we see that $\liminf_{k'-k}h_{\delta}^{k'-k}(x,x) \to 0$ as $\delta\to 0$. Since $\alpha$ is continuous in the Lagrangian and $\epsilon$ is arbitrarily small we see that
\begin{equation*}
\limsup_{\delta\to 0}h_{L_\delta}^{\infty}(x,x')\le h^{\infty}(x,x').
\end{equation*}

So, in order to complete the proof, we only need to show
\begin{equation}\label{weakeq2}
\liminf_{\delta\to 0}h_{L_\delta}^{\infty}(x,x')\ge h^{\infty}(x,x').
\end{equation}
Let $\gamma_{\delta}^{k_{\ell}}$: $[0,k_{\ell}]\to M$ be a curve such that $\gamma_{\delta}^{k_{\ell}}(0)=x$,
$\gamma_{\delta}^{k_{\ell}}(k_{\ell})=x'$ and
$$
[A_{\delta}(\gamma_{\delta}^{k_{\ell}})]=h_{L_\delta}^{k_{\ell}}(x,x')\to h_{L_\delta}^{\infty}(x,x'),
$$
where $k_{\ell}\to\infty$ is a sequence of integers. Let $O_{\epsilon}(S)$ denote the $\epsilon$-neighborhood of the set $S$. For small $\epsilon>0$, there exist some $j$ with $1\le j\le m$, an integer $k_j\in [0,k_{\ell}]$ and $x_j\in\mathcal {M}^j_{0}=\mathcal {M}^j|_{t=0}$ such that $\gamma_{\delta}^{k_{\ell}}(k_j)\in O_{\epsilon}(x_j)$ provided $k_{\ell}$ is sufficiently large.

Let us consider those ergodic components of minimal measure for $L$ of which the support is approached by $d\gamma^{k_{\ell}}_{\delta}$ as $\delta\to 0$: $\{d\gamma^{k_{\ell}}\}\cap O_{\epsilon} (\tilde{\mathcal{M}}^j)\neq\varnothing$.  We number some $j$ as $j_1$ if some $x_1\in\mathcal{M}^{j_1}_{0}$ exists such that $\gamma^{k_{\ell}}_{\delta}(k_1)\in O_{\epsilon}(x_1)$ and  for each $k<k_1$, $\gamma^{k_{\ell}}_{\delta}(k)$ does not fall into $\epsilon$-neighborhood of any $\mathcal{M}^{j}_{0}$. Let $k'_1\ge k_1$ be the integer such that $\gamma^{k_{\ell}}_{\delta}(k'_1)\in O_{\epsilon}(x_1)$ and $\gamma^{k_{\ell}}_{\delta}(k)\notin O_{\epsilon}(x_1)$ for all $k>k'_1$. We number some $j_2\ne j_1$ if some $k_2>k'_1$ and some $x_2\in\mathcal{M}^{j_2}_{0}$ exists such that $\gamma^{k_{\ell}}_{\delta}(k_2)\in O_{\epsilon}(x_2)$, let $k'_2\ge k_2$ be the integer such that $\gamma^{k_{\ell}}_{\delta}(k'_2)\in O_{\epsilon}(x_2)$ and $\gamma^{k_{\ell}}_{\delta}(k')\notin O_{\epsilon}(x_2)$ for all $k>k'_2$. Inductively,  one obtains $x_i\in\mathcal{M}^{j_i}_{0}$ ($i=1,2\cdots m'\le m$) and
$$
0\le k_1\le k'_1\le\cdots\le k_{m'}\le k'_{m'}\le k_{\ell}.
$$
Obviously, there exist small $\varepsilon=\varepsilon(\epsilon)>0$ and large integer $K=K(\epsilon)$ such that
$$
|k_j-k'_{j-1}|\le K, \qquad \forall\ k_{\ell}\to\infty
$$
provided $|L_{\delta}-L|<\varepsilon$. Otherwise, there would exist also an ergodic component $\nu$ of the minimal invariant measure such that $\nu\neq\mu^j$ for all $0\le j\le m$, but it is absurd.

Given small $\epsilon>0$, let $k_{\ell}$ be the integer such that $|h_{L_\delta}^{k_{\ell}}(x,x') -h_{L_\delta}^{\infty}(x,x')|<\epsilon$. Let $\bar x_j=\gamma_{\delta}^{k_{\ell}}(k_j)$, $\tilde x_j= \gamma_{\delta}^{k_{\ell}}(k'_j)$, we choose an absolutely continuous curve $\zeta_j$: $[0,k_{\ell}^j]\to M$ such that $\zeta_j(0)=\bar x_j$, $\zeta_j(k_{\ell}^j)=\tilde x_j$ and $[A(\zeta_j)]=h^{k_{\ell}^j}(\bar x_j,\tilde x_j)$. As $\bar x_j, \tilde x_j\in O_{\epsilon}(x_j)$ we can choose sufficiently large $k_{\ell}^j$ such that
$$
|h^{k_{\ell}^j}(\bar x_j,\tilde x_j)|< C\epsilon,
$$
where $C=C(L)$ is a constant depending on $L$ only. As $\|\bar x_j-\tilde x_j\|\le 2\epsilon$, for any positive integer $i$ we have
$$
h^{i}_{L_\delta}(\bar x_j,\tilde x_j)>-C\epsilon.
$$
For any large integer $k'\in\mathbb{Z}$ with $k'\ge k$, we construct an absolutely continuous curve $\zeta$: $[0,k']\to M$ joining $x$ with $x'$ such that
$$
\zeta(t)=\begin{cases}
\gamma_{\delta}^{k_{\ell}}(t-\tau_{j-1}), &\text{\rm if}\ k'_{j-1}+\tau_{j-1}\le t\le k_j+\tau_{j-1},\\
\zeta_j(t-k_j-\tau_{j-1}), & \text{\rm if}\ k_j+\tau_{j-1}\le t\le k'_j+\tau_j
\end{cases}
$$
where $\tau_j=\sum_{\jmath=1}^j(k_{\ell}^{\jmath}-k'_{\jmath}+k_{\jmath})$, $k'=k_{\ell}+\tau_{m'}$. The action of $L$ along this curve is easily estimated
\begin{eqnarray*}
h^{k'}(x,x')-h_{L_\delta}^k(x,x')&\le &[A(\zeta)]-h_{L_\delta}^k(x,x')\\
&\le & 2m'(C\epsilon +K|\alpha-\alpha_{\delta}|)\\
& &+\sum _{j=1}^{m'}\left|\int_{k'_{j-1}}^{k_j}(L-L_{\delta})(d\gamma_{\delta}^{\ell}(t),t)dt\right|.
\end{eqnarray*}
As $|k_j-k_{j-1}|\le K$, and $K$ is independent of $\delta$ when $\delta$ is sufficiently close to $0$, we see that the inequality (\ref{weakeq2}) holds. This completes the proof.
\end{proof}
\begin{cor}
Let $\{c_i\}$ be a sequence of cohomology classes such that $c_i\to c$. The $c$-minimal measure is assumed consisting of finitely many ergodic components $\mu_c^1$, $\mu_c^2$, $\cdots, \mu_c^m$, $(x,\tau)\in\mathcal{M}_{c}^j$ and $d((x,\tau),\mathcal{M}(c_i))\to 0$ for some $0\le j\le m$, as $c_i\to c$. Then
$$
\lim_{c_i\to c}h_{c_i}^{\infty}((x,\tau),(x',\tau'))=h_c^{\infty}((x,\tau),(x',\tau')).
$$
\end{cor}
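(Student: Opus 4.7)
The plan is to reduce the corollary directly to Theorem~\ref{weakthm1} by turning the variation of the cohomology class into a variation of the Lagrangian. Given $c_i\to c$ in $H^1(M,\mathbb{R})$, I would choose a continuous selection $c\mapsto\eta_c$ of smooth closed 1-forms representing each class (this is possible because $H^1(M,\mathbb{R})$ is finite dimensional, so we may fix a basis of harmonic representatives and let the coefficients vary linearly with the cohomology class). Then $\eta_{c_i}\to\eta_c$ in $C^\infty$, and hence in particular in $C^2$, on any bounded region of $TM\times\mathbb{T}$.

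Next I would set
$$
L_{\delta}:=L-\eta_{c_i},\qquad L_0:=L-\eta_c,
$$
where $\delta$ parametrizes the sequence via $c_i\to c$. These Lagrangians are Tonelli (adding a closed 1-form does not affect convexity or superlinearity), and $L_\delta\to L_0$ in $C^2$ on bounded sets. By construction, the Aubry, Mather and Mañé sets of $L_\delta$ coincide with $\tilde{\mathcal{A}}(c_i)$, $\tilde{\mathcal{M}}(c_i)$, $\tilde{\mathcal{N}}(c_i)$ of $L$, and the barrier quantities satisfy $h^{\infty}_{L_\delta}=h^{\infty}_{c_i}$ and $h^{\infty}_{L_0}=h^{\infty}_c$ (by definition, adding the closed 1-form to $L$ is how the $c$-subscripted action is formed).

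The hypotheses of Theorem~\ref{weakthm1} are then verified as follows. The $L_0$-minimal measure is the $c$-minimal measure of $L$, which by assumption decomposes into the finitely many ergodic components $\mu^1_c,\dots,\mu^m_c$. The assumption $d((x,\tau),\mathcal{M}(c_i))\to 0$ together with the inclusion $\mathcal{M}(c_i)\subset \mathcal{A}(c_i)=\mathcal{A}_{L_\delta}$ gives $d((x,\tau),\mathcal{A}_{L_\delta})\to 0$, which is exactly the remaining hypothesis of the theorem. Applying Theorem~\ref{weakthm1} then yields
$$
\lim_{c_i\to c}h^{\infty}_{c_i}((x,\tau),(x',\tau'))=\lim_{\delta\to 0}h^{\infty}_{L_\delta}((x,\tau),(x',\tau'))=h^{\infty}_{L_0}((x,\tau),(x',\tau'))=h^{\infty}_c((x,\tau),(x',\tau')),
$$
which is the stated identity.

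I do not expect a serious obstacle here; the corollary is essentially a restatement of the theorem in which the parameter $\delta$ is the cohomology class. The only minor point requiring care is the smoothness of the selection $c\mapsto\eta_c$, so that the $C^2$-convergence hypothesis of the theorem is actually met; using harmonic representatives (with respect to a fixed metric on $M$) settles this cleanly. The conclusion that the ergodic decomposition of the limit measure is preserved is not needed, since only lower/upper semicontinuity of $h^\infty$ in the direction $c_i\to c$ is invoked through the proof of Theorem~\ref{weakthm1}.
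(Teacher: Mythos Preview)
Your proposal is correct and is exactly the intended argument: the paper states the corollary immediately after Theorem~\ref{weakthm1} without a separate proof, so the implied reasoning is precisely the reduction you carry out, setting $L_\delta=L-\eta_{c_i}$ with a continuously chosen family of closed $1$-forms and invoking the theorem. Your care about the smooth selection of representatives and the inclusion $\mathcal{M}(c_i)\subset\mathcal{A}(c_i)$ fills in the only details the paper leaves implicit.
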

From the proof one can see  that the function $h_c^{\infty}$ is lower semi-continuous in $c$ if the $c$-minimal measure is assumed to have finitely many ergodic components:
$$
\liminf_{c'\to c}h_{c'}^{\infty}(z,z')\ge h_c^{\infty}(z,z').
$$

Let us introduce the concept of {\it elementary} weak KAM solution if the $c$-minimal measure has finitely many ergodic components. One can choose finitely many non-negative functions $g_i$: $M\times\mathbb{T}\to\mathbb{R}$ such that its support has no intersection with a small neighborhood of $\mathcal{M}_c^i$ and the minimal measure for the Lagrangian $L_{c,i,\epsilon}=L_c+\epsilon g_i$ is uniquely supported on $\mathcal{M}_c^i$. By the theory of weak KAM (\cite{Fa2}), there is exactly one pair of weak KAM solutions denoted by $u_{c,i,\epsilon}^{\pm}$ and
$$
h^{\infty}_{L_{c,i,\epsilon}}(z,z')=u_{c,i,\epsilon}^{-}(z')-u_{c,i,\epsilon}^{+}(z).
$$
Let $z\in\mathcal{M}_c^i$, in virtue of Theorem \ref{weakthm1}, one has
$h^{\infty}_{L_{c,i,\epsilon}}(z,z')\to h^{\infty}_{c}(z,z')$ as $\epsilon\to 0$. Since $g_i=0$ in the neighborhood of $\mathcal{M}_c^i$, $u_{c,i,\epsilon}^{+}(z)$ remains unchanged as $\epsilon\to 0$. Thus, there is a Lipschitz function $u^{-}_{c,i}$ such that $u_{c,i,\epsilon}^{-} \to u^{-}_{c,i}$ as $\epsilon\to 0$. Clearly this $u^{-}_{c,i}$ is a weak-KAM solution for $L_c$. Similarly, we can see that $u_{c,i,\epsilon}^{+}\to u^{+}_{c,i}$ as $\epsilon\to 0$.
\begin{defi} {\ui (elementary weak-KAM solution)}.
Assume that the minimal measure for $L_c$ consists of finitely many ergodic components $\mu_c^1$, $\mu_c^2,\cdots, \mu_c^m$. A weak KAM solution $u_{c,i}^{\pm}$ of $L_c$ is called {\it elementary} for $\mu_c^i$ if $u_{c,i}^{\pm}=\lim_{\epsilon\to 0}u_{c,i,\epsilon}^{\pm}$ where $u_{c,i,\epsilon}^{\pm}$ is the weak KAM solution of $L_{c,i,\epsilon}$, of which the minimal measure $\mu=\mu_c^i$ is uniquely ergodic and $L_{c,i,\epsilon}\to L_c$ as $\epsilon\to 0$.
\end{defi}

It is not necessary that $u_{c,i}^{\pm}$ is a pair of conjugate weak KAM. Clearly, if $(x,t)\in\mathcal{M}_c^i$
\begin{align}
&u_{c,i}^{-}(x',t')=h_c^{\infty}((x,t),(x',t'))+u_{c,i}^{+}(x,t),\notag\\
&u_{c,i}^{+}(x',t')=u_{c,i}^{-}(x,t)-h_c^{\infty}((x',t'),(x,t)).\notag
\end{align}

These elementary weak KAM solutions generate all weak KAM solutions in the following sense.
\begin{pro}\label{weakpro3}
Assume the minimal measure consists of $m$ ergodic components. For each weak KAM solution $u^{\pm}$, there exist $m'$  $(m'\le m)$ constants $d^{\pm}_1, \cdots,d^{\pm}_{m'}$ and $m'$ open domains $D^{\pm}_1,\cdots, D^{\pm}_{m'}$ such that they do not overlap each other, $M=\cup_{1\le i\le m'}\bar{D}^{\pm}_i$ and
\begin{equation}\label{weakeq4}
u^{\pm}|_{D^{\pm}_i}=u_i^{\pm}+d^{\pm}_i, \qquad \forall 1\le i\le m'.
\end{equation}
\end{pro}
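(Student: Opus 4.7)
The plan is to identify each domain $D^-_i$ as the locus of points $(x,t)$ whose backward calibrated curves (for $u^-$) accumulate onto the $i$-th Aubry class $\mathcal{A}^i_c$, and then to pin down the constant $d^-_i$ using the representation formula for the elementary weak KAM solution. I work through the minus case; the plus case is symmetric. Since $u^-$ is itself a weak KAM solution for $L_c$, Proposition \ref{weakpro2} immediately produces, on each Aubry class $\mathcal{A}^i_c$, a constant $d^-_i$ with $u^--u^-_{c,i}\equiv d^-_i$ on $\mathcal{A}^i_c$. The task is to extend this identification off $\mathcal{A}(c)$ and to count how many classes actually appear.

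Fix $(x,t)\in M\times\mathbb{T}$. By Proposition \ref{weakpro1} there is a backward calibrated curve $\gamma^-:(-\infty,t]\to M$ with $\gamma^-(t)=x$; its $\alpha$-limit set in $TM\times\mathbb{T}$ is nonempty, compact, $\phi^r_L$-invariant and connected, and is contained in $\tilde{\mathcal{A}}(c)$. Since there are only finitely many Aubry classes, each class is clopen in $\mathcal{A}(c)$, so the $\alpha$-limit must lie entirely inside a single class $\mathcal{A}^i_c$. Choosing $(y,s)\in\mathcal{A}^i_c$ in that limit, together with a sequence $s_n\to-\infty$ with $(\gamma^-(s_n),s_n\bmod 1)\to(y,s)$, calibration yields
\begin{equation}\label{planpo1}
u^-(x,t)-u^-(\gamma^-(s_n),s_n\bmod 1)=[A_c(\gamma^-|_{[s_n,t]})]=h_c((\gamma^-(s_n),s_n\bmod 1),(x,t)),
\end{equation}
and passing to the limit $n\to\infty$ produces $u^-(x,t)=u^-(y,s)+h^\infty_c((y,s),(x,t))$. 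The parallel identity holds for $u^-_{c,i}$: domination gives $u^-_{c,i}(x,t)-u^-_{c,i}(y,s)\le h^\infty_c((y,s),(x,t))$, and for the reverse direction I fix any $(z,r)\in\mathcal{M}^i_c$ and apply the formula stated just before the proposition at both $(x,t)$ and $(y,s)$, obtaining $u^-_{c,i}(x,t)=u^+_{c,i}(z,r)+h^\infty_c((z,r),(x,t))$ and $u^-_{c,i}(y,s)=u^+_{c,i}(z,r)+h^\infty_c((z,r),(y,s))$; the triangle inequality $h^\infty_c((z,r),(x,t))\le h^\infty_c((z,r),(y,s))+h^\infty_c((y,s),(x,t))$ then delivers the required bound.

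Substituting $u^-_{c,i}(x,t)=u^-_{c,i}(y,s)+h^\infty_c((y,s),(x,t))$ together with $u^-(y,s)=u^-_{c,i}(y,s)+d^-_i$ back into \eqref{planpo1} gives the pointwise equality $u^-(x,t)=u^-_{c,i}(x,t)+d^-_i$. Setting $\bar D^-_i:=\{(x,t):u^-(x,t)=u^-_{c,i}(x,t)+d^-_i\}$, which is closed by continuity, what I have established is $M\times\mathbb{T}=\bigcup_i\bar D^-_i$; after discarding the empty classes and reindexing the surviving ones as $1,\dots,m'\le m$, taking interiors $D^-_i:=\mathrm{int}(\bar D^-_i)$ and reassigning any boundary overlap to a single index produces the pairwise disjoint open domains called for in \eqref{weakeq4}. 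The hardest step I anticipate is the passage to the limit in \eqref{planpo1}: one must argue true convergence (not merely a $\liminf$ bound) of $[A_c(\gamma^-|_{[s_n,t]})]$ to $h^\infty_c((y,s),(x,t))$, which relies on the minimality of the calibrated pieces combined with the continuity properties of $h^\infty_c$ in the base point supplied by Theorem \ref{weakthm1} and its corollary; a secondary delicate point is confining the $\alpha$-limit to a single Aubry class, for which finiteness of the number of classes is indispensable.
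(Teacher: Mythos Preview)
Your proposal is correct and essentially matches the paper's argument: follow a backward calibrated curve from each point into a single Aubry class, then combine Proposition~\ref{weakpro2} with the representation formula for $u^-_{c,i}$ to read off $d^-_i$. The paper differs only cosmetically, restricting first to differentiability points of $u^-$ (where the backward calibrated curve is unique) before closing up; your worry about the limit in your equation~\eqref{planpo1} is overstated, since domination already gives $u^-(x,t)-u^-(y,s)\le h^\infty_c((y,s),(x,t))$, and the reverse inequality follows from calibration together with Lipschitz continuity of $h^\infty_c$ in its first argument---no appeal to Theorem~\ref{weakthm1} is needed.
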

\begin{proof} It is deduced from the Lipschitz property of $u^-$ that it is differentiable almost every where. Let $x$ be a point where $u^-$ is differentiable, $du^-(x)$ determines a unique backward semi static orbit $d\gamma_c^i$: $(-\infty,0]\to M$ whose $\alpha$-limit set is in certain Aubry class $\tilde{\mathcal{A}}_c^i$. By definition we have
$$
u^-(x)-u^-(\gamma_c^i(-t))=\int_{-t}^0L_{c}(d\gamma_c^i(s),s)ds+\alpha(c)t.
$$
Let $t_k\to\infty$ such that $\gamma_c^i(-t_k)\to x'\in\mathcal{A}_c^i$, it follows from Proposition \ref{weakpro2} that
\begin{equation}\label{weakeq5}
u^-(x)=h_c^{\infty}(x',x)+u^+(x')=u_{c,i}^{-}(x)+d_i.
\end{equation}
If $x^*\in M$ is another point where $du^-(x^*)$ determines a backward semi-static orbits whose $\alpha$-limit set is also contained in $\tilde{\mathcal{A}}_c^i$, we then obtain (\ref{weakeq5}) for $u^-(x^*)$ with the same $d_i$. All these points constitute a set connected with $\mathcal{A}_c^i$. There are not more than $m$ connected sets such that (\ref{weakeq4}) holds.
\end{proof}
\begin{theo} Let $c_i\to c$ be a sequence of cohomology and assume that the minimal measure consists of finitely many ergodic components for each $c_i$ and $c$. Let $\tilde{\mathcal{M}}^j_{c_i}$, $\tilde{\mathcal{M}}^j_c$ be the support  for the ergodic minimal measure $\mu^j_{c_i}$ and $\mu^j_c$ respectively, let $u^-_{c_i}$ and $u^-_{c}$ be the corresponding elementary weak KAM solution. If $\mu^j_{c_i}\rightharpoonup\mu^j_c$ as $c_i\to c$, then
$u^-_{c_i}\rightarrow u^-_{c}$ in $C^0$-topology.
\end{theo}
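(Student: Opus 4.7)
The plan is to exploit the uniqueness of the elementary weak KAM solution up to an additive constant and to represent it directly by the Peierls barrier. The desired $C^0$-convergence will then reduce to the pointwise convergence of $h^{\infty}_{c_i}$ already established in the corollary to Theorem \ref{weakthm1}, combined with a straightforward equicontinuity argument.

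First I would pin down the free constants. Pick $z_j\in\mathcal{M}_c^j$; since $\mu^j_{c_i}\rightharpoonup\mu^j_c$, the standard lower semicontinuity of supports under weak convergence of measures produces a sequence $z_{j,i}\in\mathcal{M}^j_{c_i}$ with $z_{j,i}\to z_j$. Using the identity
$$u^-_{c,j}(z')=h^{\infty}_c(z_j,z')+u^+_{c,j}(z_j)$$
displayed right after the definition of elementary weak KAM, together with $h^{\infty}_c(z_j,z_j)=0$ (because $z_j$ lies on a $c$-static orbit), one can normalize so that $u^+_{c,j}(z_j)=0$. Carrying out the analogous normalization at each $z_{j,i}$ yields the compact representations
$$u^-_{c_i,j}(\cdot)=h^{\infty}_{c_i}(z_{j,i},\cdot),\qquad u^-_{c,j}(\cdot)=h^{\infty}_c(z_j,\cdot).$$

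Next I would establish equicontinuity. Since the $c_i$ stay in a bounded neighborhood of $c$ in $H^1(M,\mathbb{R})$, the modified Tonelli Lagrangians $L-\eta_{c_i}$ form a family with uniform convexity and uniform $C^2$-bounds on any compact subset of $TM\times\mathbb{T}$; the standard weak KAM estimates then furnish a Lipschitz constant $K$ for $u^-_{c_i,j}$ that is independent of $i$. Combined with the normalization, the family $\{u^-_{c_i,j}\}$ is uniformly bounded and equi-Lipschitz on the compact manifold $M\times\mathbb{T}$. For pointwise convergence I would split
$$u^-_{c_i,j}(z')=\bigl[h^{\infty}_{c_i}(z_{j,i},z')-h^{\infty}_{c_i}(z_j,z')\bigr]+h^{\infty}_{c_i}(z_j,z').$$
Because $h^{\infty}_{c_i}(\cdot,z')$ is, up to sign, a forward weak KAM solution for the system indexed by $c_i$, the same uniform Lipschitz constant $K$ controls it, so the bracket is bounded by $K\|z_{j,i}-z_j\|\to 0$. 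The remaining term converges to $h^{\infty}_c(z_j,z')=u^-_{c,j}(z')$ by the corollary to Theorem \ref{weakthm1}, whose hypothesis $d(z_j,\mathcal{M}(c_i))\to 0$ is fulfilled by the points $z_{j,i}$. Arzel\`a--Ascoli then upgrades pointwise convergence to uniform convergence.

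The main obstacle I anticipate is controlling, uniformly in $i$, the Lipschitz constant of $h^{\infty}_{c_i}(\cdot,z')$ in its first argument: this must be in hand before invoking the corollary, which by itself only delivers pointwise continuity in the parameter. Once this uniform Tonelli-type estimate is verified via the standard bound on the Legendre transform for the family $L-\eta_{c_i}$, the chain of inequalities above closes immediately and the theorem follows.
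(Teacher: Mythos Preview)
Your proposal is correct and follows essentially the same route as the paper: represent the elementary weak KAM solution via $u^-_{c,j}(z')=h^\infty_c(z_j,z')+\text{const}$ and invoke the continuity of $h^\infty$ in the parameter established in Theorem~\ref{weakthm1} (and its corollary). The paper's proof is a one-liner citing exactly these two ingredients; your normalization, splitting off the moving base point, and Arzel\`a--Ascoli step are the natural details one supplies to make that one-liner rigorous.
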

\begin{proof} It follows from the continuity of $h_c^{\infty}(x,x')$ in $c$ shown in Theorem \ref{weakthm1} and the definition of the elementary weak KAM solution.
\end{proof}

In terms of conjugate pair of weak KAM solution, one has a definition of Ma\~n\'e set in \cite{Fa2}. For the purpose of this paper, we would like to use elementary weak KAM solution. Recall the definition of the barrier function in \cite{Ma2}:
$$
B_c^*(x)=\min_{\xi,\zeta\in\mathcal{M}_0(c)}\{h_c^{\infty}(\xi,x)-h_c^{\infty}(x,\zeta)+h_c^{\infty} (\xi,\zeta)\}.
$$
If the minimal measure consists of finitely many ergodic components, we introduce barrier functions in terms of elementary weak KAM solutions: given $z=(x,\tau)\in M\times\mathbb{T}$, we set
\begin{equation}\label{weakeq6}
B_{c,i,j}(z)=u^-_{c,i}(z)-u^+_{c,j}(z).
\end{equation}
which measures the minimum of the action along those curves passing through $z$ and joining $\mathcal{M}^i_c$ to $\mathcal{M}^j_c$. For autonomous systems, this barrier function is independent of time. Obviously, each $c$-semi static curve corresponds to a minimum of $u^-_{i,c}-u^+_{j,c}$ if its $\alpha$-limit set intersects $\tilde{\mathcal{M}}_c^i$ and its $\omega$-limit set intersects $\tilde{\mathcal{M}}_c^j$.

\subsection{Minimal homoclinic orbits to Aubry Set}
To extend the concept of elementary weak KAM solution to universal covering space, let us reveal some properties of minimal homoclinic orbit to Aubry set.

Given a curve $\gamma$: $\mathbb{R}\to M$, we call $d\gamma=(\gamma,\dot\gamma)$ a homoclinic orbit to some Aubry set $\tilde{\mathcal{A}}$ if it does not stay in the Aubry set, but its $\omega$-limit set as well as the $\alpha$-limit set is contained in the Aubry set:
$$
\alpha(d\gamma)\subseteq\tilde{\mathcal{A}}\ \ \ \ \ \text{\rm and}\
\ \ \ \ \omega(d\gamma)\subseteq\tilde{\mathcal{A}}.
$$
Correspondingly, we call $\gamma$ homoclinic curve. The existence of homoclinic orbits to Aubry sets has been studied in a few papers, see \cite{Bo,Be1,Cui,Zhe,Zho2}.

The existence of homoclinic orbits is closely related to the issue whether the $\check{\rm C}$ech homology group $H_1(M,\mathcal{A},\mathbb{R})$ is non-trivial ($H_1(M\times\mathbb{T},\mathcal{A},\mathbb{R})$ for time-periodically dependent Lagrangian). It is defined as the inverse limit $\lim_{\mathcal{A}\subset U}H_1(M,U,\mathbb{R})$, where $U$ is an open neighborhood of $\mathcal{A}$. There exists a small open neighborhood $U_0$ of $\mathcal {A}$ such that ${\rm rank}H_1(M,U,\mathbb{R})={\rm rank}H_1(M,\mathcal{A},\mathbb{R})$ provided $U\subseteq U_0$.

Let $\bar M$ be a covering of $M$ such that $\pi_1(\bar M)=\text{\rm ker}(\mathscr{H}: \pi_1(M)\to H_1(M,\mathbb{R}))$ where $\mathscr{H}$ denotes the Hurewicz homomorphism. The group of Deck transformation of this covering space is
$$
H=im(\mathscr{H}:\pi_1(M)\to H_1(M,\mathbb{R})).
$$
Let $U$ be an open neighborhood of $\mathcal{A}$ such that ${\rm rank}H_1(M,U,\mathbb{Z})={\rm rank} H_1(M,\mathcal{A},\mathbb{R})$. Let $K=i_*H_1(U,\mathbb{Z}) \subset H$ and $G=H/K$, then $G$ is a free Abel group. To each orbit $(\gamma,\dot\gamma)$: $\mathbb{R}\to M$ homoclinic to $\tilde{\mathcal{A}}$, an element $[\gamma]\in G$ is associated.

If the group $G$ is non-trivial, there is a flat $\mathbb{F}$ of the $\alpha$-function containing the cohomology class. A set $\mathbb{F}\subset H^1(M,\mathbb{R})$ is called flat if the function $\alpha$ is affine when it is restricted on $\mathbb{F}$, not affine for any set properly contains $\mathbb{F}$. The dimension of this flat is not smaller than $r={\rm rank} H_1(M,\mathcal{A},\mathbb{Z})$ and the Aubry set is the same for all classes in the interior of $\mathbb{F}$ (see \cite{Ms}).

In this paper, we are interested in so-called {\it minimal} homoclinic orbits. Let $\check{M}$ be a covering manifold of $M$ such that $\pi_1(\check{M})=\pi_1(U)$. A curve $\gamma$: $\mathbb{R}\to M$ is called $\check{M}$ semi-static if the lift of $\gamma$ to $\check{M}$, $\check{\gamma}$: $\mathbb{R}\to\check{M}$ is semi-static. A homoclinic orbit $d\gamma$ is called {\it minimal} if the lift $\check{\gamma}$: $\mathbb{R} \to\check{M}$ is semi-static.

\begin{theo}\label{homothm1}
If there is only one Aubry class and $\text{\rm rank}\,H_1(M,\mathcal{A},\mathbb{R})=r>0$, then there are at least $r+1$ minimal homoclinic orbits. If $\mathcal{M}(c)\supsetneq \mathcal{M}(c')$ for $c\in\partial\mathbb{F}$ and $c'\in int\mathbb{F}$, then there are infinitely many $c$-minimal homoclinic orbits.
\end{theo}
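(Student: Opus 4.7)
My plan is to work in the covering $\check M$, whose deck transformation group $G$ is free abelian of rank $r$. Fix a lift $\tilde A\subset\check M$ of the single Aubry class $\mathcal A$; the full preimage of $\mathcal A$ decomposes as $\bigsqcup_{g\in G}g\tilde A$, and a minimal homoclinic orbit of class $g\in G\setminus\{0\}$ is by definition the projection of a $c$-semi-static curve in $\check M$ with $\alpha$-limit in $\tilde A$ and $\omega$-limit in $g\tilde A$. The first statement reduces to producing such orbits for at least $r+1$ distinct classes.

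For each nonzero $g\in G$ introduce the Peierls-type cost
$$
\mathcal C(g)=\inf_{\check x\in\tilde A}h_c^\infty(\check x,g\check x),
$$
which is subadditive under concatenation: $\mathcal C(g+g')\le\mathcal C(g)+\mathcal C(g')$. Call $g$ \emph{indecomposable} when strict inequality holds in every nontrivial splitting $g=g_1+g_2$. For such a $g$, an action-minimizing sequence of curves realizing $\mathcal C(g)$ is precompact on compact subsets of $\check M$ by Tonelli coercivity together with the semi-continuity of $h_c^\infty$ supplied by Theorem \ref{weakthm1}; indecomposability rules out escape through any intermediate copy $g'\tilde A$, so a subsequential limit is a $c$-semi-static curve from $\tilde A$ to $g\tilde A$, yielding a minimal homoclinic orbit of class $g$. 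The main technical issue I anticipate here is controlling minimizing sequences in the non-compact covering $\check M$; this should be handled by an a priori energy bound combined with the uniform upper estimate $\mathcal C(g)\le [A_c(\xi)]$ for any test curve $\xi$ of class $g$.

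Next I would show that the indecomposable classes generate $G\cong\mathbb Z^r$ as an abelian semigroup. For any decomposable $g$, pick an optimal splitting $g=g_1+g_2$ with $\mathcal C(g)=\mathcal C(g_1)+\mathcal C(g_2)$ and recurse on the strictly smaller summands; termination follows from strict positivity of $\mathcal C$ on $G\setminus\{0\}$, which in turn comes from the positive separation between $\tilde A$ and $g\tilde A$ for $g\ne 0$. A convex-geometry observation then closes the count: a finite set that generates $\mathbb Z^r$ under non-negative integer combinations must positively span $\mathbb R^r$ and therefore cannot lie in any closed half-space, forcing its cardinality to be at least $r+1$. Choosing one minimal homoclinic orbit per indecomposable class yields the lower bound $r+1$.

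For the second assertion, fix $c\in\partial\mathbb F$ with $\mathcal M(c)\supsetneq\mathcal M(c')$ for $c'\in\mathrm{int}\,\mathbb F$. The excess ergodic components of $\mu_c$ (absent from $\mu_{c'}$) each supply their own elementary weak KAM pair $u_{c,i}^\pm$, and each pair $(i,j)$ of distinct components defines a barrier $B_{c,i,j}$ as in \eqref{weakeq6} whose minimizer off the Aubry set furnishes a $c$-semi-static connecting orbit. Because the excess components can be parametrized continuously by their rotation vectors, which sweep out a positive-dimensional face of the $\beta$-function on the relative boundary of $\mathbb F$, varying the parameter yields a continuous family of pairwise distinct barriers, whose minimizers remain separated by the Lipschitz graph property of the Aubry set. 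Selecting infinitely many parameter values then produces infinitely many $c$-minimal homoclinic orbits. The delicate point I expect to confront here is verifying that each such barrier minimizer genuinely leaves the single Aubry class rather than collapsing onto a static curve between Mather components; this should follow from the strict Peierls inequality between distinct ergodic components on $\partial\mathbb F$, combined with the indecomposability argument above.
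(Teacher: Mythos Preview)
Your argument for the first assertion is essentially the paper's (and Bernard's \cite{Be1}) argument, just in different language: the paper also works with the lifted Peierls cost $h_g^\infty$ on $G\cong\mathbb Z^r$, shows that any minimizing sequence for a fixed class $g$ accumulates on finitely many minimal homoclinic curves $\gamma_1,\dots,\gamma_i$ with $\sum[\gamma_j]=g$, and then observes that since these homology classes must generate $G$ over $\mathbb Z_+$ there are at least $r+1$ of them. Your ``indecomposable'' classes are exactly the paper's $G_m$ (the set of classes realized by a single minimal homoclinic orbit), and your convex-geometry count is the same $r+1$ lower bound.

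Your argument for the second assertion has a genuine gap. The hypothesis $\mathcal M(c)\supsetneq\mathcal M(c')$ gives you at least one extra ergodic component for $c\in\partial\mathbb F$, but nothing like a continuous family ``parametrized by rotation vectors sweeping out a positive-dimensional face''; there may well be exactly one extra component. More seriously, the barriers $B_{c,i,j}$ between distinct Mather components produce \emph{heteroclinic} semi-static orbits joining those components, not homoclinic orbits to the single Aubry class, so even a family of such barriers would not deliver the conclusion. The mechanism the paper invokes (citing \cite{Zhe,Zho2}) is different and ties back to your own Part~1 machinery: the extra $c$-minimal measure has nonzero rotation vector in $G\otimes\mathbb R$, and by shadowing long segments of its support one produces, for infinitely many pairs $g,g'\in G$, the strict inequality $h_{g+g'}^\infty<h_g^\infty+h_{g'}^\infty$. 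By Proposition~\ref{homopro1} this forces $G_m$ to be infinite, hence infinitely many types of minimal homoclinic orbits. You should replace the barrier-family argument by this strict-subadditivity route.
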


The existence of at least $r+1$ homoclinic orbits is proved in \cite{Be1}, they are actually minimal. The infinity of minimal homoclinic orbits are proved in \cite{Zhe,Zho2}. Let us briefly describe how to find these $r+1$ minimal homoclinic orbits. Given a point $x\in\mathcal{A}$ and $g\in G$, we denote by $\xi_k$: $[-k,k]\to M$ the minimizer of
$$
h_g^k(x)=\inf_{\stackrel {\xi_k(-k)=\xi_k(k)=x} {\scriptscriptstyle [\xi_k]=g}}\int_{-k}^kL(\xi_k(s),\dot\xi_k(s))ds+2k\alpha
$$
Obviously, the set $\{\|\dot\xi_k(t)\|:\, t\in [-k,k]\}$ is uniformly bounded for $k\in\mathbb{Z}_+$. Because of positive definiteness of $L$,  the set $\{\|\ddot{\xi}_k(t)\|:\, t\in [-k,k]\}$ is also uniformly bounded for each $k$. Let
$$
h_g^{\infty}(x)=\liminf_{k\to\infty}h_g^k(x),
$$
there exists a subsequence of $k_j$ such that $h_g^{k_j}(x)\to h_g^{\infty}(x)$. The quantity $h_g^{\infty}$ keeps constant on each Aubry class. By diagonal extraction argument we can find a subsequence of $\xi_{k_j}$ which $C^1$-uniformly converges, on each compact interval, to a $C^1$-curve $\gamma$: $\mathbb{R}\to M$. In this sense, $\gamma$: $\mathbb{R}\to M$ is called an accumulation point of $\{\xi_{k_j}\}$. Each accumulation point is $\check M$ semi-static and there is at least one accumulation point $\gamma_1$ with non-zero homology $[\gamma_1]\neq 0$.

As the relative homology of the Aubry set is non-trivial, some $a>0$ exists such that $h_g^{\infty}\ge a$ holds for each class $g\in G$ and $h_g\to\infty$ as $|g|\to\infty$. Therefore, for each $g\in G$, there are finitely many accumulation points of $\{\xi_{k_j}\}$ with non-zero homology, denoted by $\gamma_1,\cdots,\gamma_i$. Clearly $\sum_{j=1}^i[\gamma_j]=g$. As $G$ is $r$-dimensional, at least $r+1$ geometrically different minimal homoclinic orbits exist.

Let us look at these homoclinic orbits from another point of view. For certain finite covering manifold, the lift of the Aubry set has several connected components (several Aubry classes). These Aubry classes are connected by semi-static orbits \cite{CP}. The projection of these semi-static orbits are nothing else but minimal homoclinic orbits. For a finite covering manifold $\tilde\pi$: $\tilde M\to M$, the fiber $\tilde\pi^{-1}x$ contains finitely many points. For a closed curve $\phi$: $[0,1]\to M$ such that $\phi(0)=\phi(1)=x$, there is a lift of $\bar\phi$ such that $\bar\phi(0)=\bar x_0\in \tilde\pi^{-1}x$. By the monodromy theorem, $\bar\phi(1)\in\tilde\pi^{-1}x$ is uniquely determined by its class $[\phi]\in\pi_1(M)$. Let $g_1,g_2,\cdots,g_{r}$ be the generators of $G$, $\phi_1,\phi_2,\cdots,\phi_r$ be closed path so that $[\phi_i]=g_i$, $\phi_i(0)=x$ for $i=1,2,\cdots,r$. If $\tilde M$ is chosen so that $\bar\phi_i(0)=\bar x_0$ and $\bar\phi_i(1)\neq \bar\phi_j(1)$, there will be at least $2r$ Aubry classes for this covering manifold. Among the semi-static orbits connecting different Aubry classes for the covering manifold, there are at least $r+1$ orbits whose projection is different from each other.

Let $G_{m}\subset G$ be defined such that $g\in G_m$ if and only if some minimal homoclinic orbit $d\gamma$ exists such that $[\gamma]=g$. We say that there are $k$-types of minimal homoclinic orbits if $G_m$ contains exactly $k$ elements.

\begin{theo}\label{homothm2}
If $M=\mathbb{T}^n$, $H_1(M,\mathcal{A},\mathbb{Z})\neq 0$ and $\mathcal{A}$ contains a set homeomorphic to $\mathbb{T}^{n-1}$, there exist exactly two types of minimal homolcinic orbits.
\end{theo}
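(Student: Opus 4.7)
The plan is to combine the topology of the codimension-one subtorus $\mathbb{T}^{n-1}\subseteq\mathcal{A}$ with the decomposition of minimizing sequences provided by Theorem~\ref{homothm1} to pin down $G_m$ exactly.

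First I would identify the group $G$. The inclusion $\mathbb{T}^{n-1}\hookrightarrow\mathbb{T}^n$ sends $H_1(\mathbb{T}^{n-1};\mathbb{Z})$ onto a rank-$(n-1)$ sublattice of $H_1(\mathbb{T}^n;\mathbb{Z})\cong\mathbb{Z}^n$. Because $H_1(M,\mathcal{A};\mathbb{Z})\neq 0$, the image of $i_*:H_1(\mathcal{A})\to H_1(M)$ cannot have full rank and hence has rank exactly $n-1$; the same holds for $K=i_*H_1(U;\mathbb{Z})$ once $U$ is a sufficiently small open neighborhood of $\mathcal{A}$. Thus $r=1$ and $G=H/K\cong\mathbb{Z}$, and Theorem~\ref{homothm1} already gives $|G_m|\geq r+1=2$. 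The remaining task is to prove $G_m\subseteq\{+1,-1\}$.

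Next, pass to the covering $\check M\to M$ with deck group $G\cong\mathbb{Z}$. By the choice of $K$, the subtorus $\mathbb{T}^{n-1}$ lifts to a disjoint family of codimension-one closed sets $\tilde{\mathbb{T}}^{n-1}_k\subset\tilde{\mathcal{A}}_k$, one in each lifted Aubry class, and this family separates $\check M$ into consecutive slabs indexed by $\mathbb{Z}$. Suppose a minimal homoclinic orbit $\gamma$ with $[\gamma]=g\geq 2$ existed; lift to $\tilde\gamma:\mathbb{R}\to\check M$ with $\alpha$-limit in $\tilde{\mathcal{A}}_0$ and $\omega$-limit in $\tilde{\mathcal{A}}_g$. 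For each $1\leq k\leq g-1$, by continuity and separation $\tilde\gamma$ either crosses $\tilde{\mathbb{T}}^{n-1}_k$ transversally at some finite time $t_k$, giving $x_k=\tilde\gamma(t_k)\in\tilde{\mathcal{A}}_k$, or accumulates asymptotically on $\tilde{\mathcal{A}}_k$; the latter would force $[\tilde\gamma]=k<g$, contradicting the assumption, so every intermediate crossing is at a finite time.

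Using the elementary weak KAM solutions attached to each class and the vanishing of $h_c^\infty$ within one class, the semi-static action of $\tilde\gamma$ decomposes at each crossing, and $\mathbb{Z}$-translation invariance of the deck action gives
$$
[A_c(\tilde\gamma)]=\sum_{k=0}^{g-1}h_c^\infty(\tilde{\mathcal{A}}_k,\tilde{\mathcal{A}}_{k+1})=g\,h_c^\infty(\tilde{\mathcal{A}}_0,\tilde{\mathcal{A}}_1),
$$
so the sub-additive inequality is saturated. Now return to the construction in Theorem~\ref{homothm1}: let $\xi_m$ minimize $h_g^m(x)$ for a base point $x\in\mathcal{A}$. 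Saturation above forces the lifts $\tilde\xi_m$ to spend an unbounded amount of time within every $\epsilon$-neighborhood of each intermediate $\tilde{\mathcal{A}}_k$ as $m\to\infty$. Any $C^1$ subsequential limit, extracted by Arzel\`a--Ascoli on compact intervals after reparametrization, therefore has $\omega$- or $\alpha$-limit inside one of these intermediate classes, so its lifted class belongs to $\{0,\pm 1\}$. In other words, the class-$g$ minimizing sequence degenerates into a concatenation of $\pm 1$-orbits; no single semi-static orbit of class $|g|\geq 2$ survives. Together with $|G_m|\geq 2$ from step two, this forces $G_m=\{+1,-1\}$, i.e.\ exactly two types.

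The main obstacle is the last step: rigorously ruling out the simultaneous existence of a single $\check M$-semi-static orbit that crosses all $g-1$ intermediate tori transversally at finite times while still achieving the saturated action $g\,h_c^\infty(\tilde{\mathcal{A}}_0,\tilde{\mathcal{A}}_1)$. The delicate point is to use the Lipschitz graph property of the Aubry set together with the strict positivity of each inter-class Peierls cost to control the time that the minimizing $\xi_m$ can spend near each $\tilde{\mathcal{A}}_k$, and then to track the reparametrizations carefully enough to prevent the limit from being a genuine class-$g$ orbit.
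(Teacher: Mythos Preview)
Your identification of $G\cong\mathbb{Z}$ and the lower bound $|G_m|\ge 2$ from Theorem~\ref{homothm1} are correct, and your observation that a putative minimal homoclinic $\gamma$ with $[\gamma]=g\ge 2$ must meet an intermediate copy of $\mathbb{T}^{n-1}$ at some finite time $t_0$ is the right geometric starting point. But the argument you build on top of this crossing has a real gap. The saturation identity $[A_c(\tilde\gamma)]=g\,h_c^\infty(\tilde{\mathcal A}_0,\tilde{\mathcal A}_1)$ is automatic for any $\check M$-semi-static curve (each sub-arc is itself semi-static), so it carries no information that could rule out a genuine class-$g$ orbit. Your subsequent claim that saturation forces the minimizers $\xi_m$ to linger unboundedly long near each intermediate $\tilde{\mathcal A}_k$ is asserted without justification, and in fact begs the question: if a single semi-static orbit of class $g$ exists, it \emph{is} the limit of the $\xi_m$, and nothing in your setup prevents it from crossing each intermediate torus transversally in bounded time. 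The paragraph you flag as ``the main obstacle'' is not a technicality to be filled in but the entire content of the theorem.

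The paper's argument is shorter and uses the crossing point $x=\gamma(t_0)\in\mathcal A$ directly, without any passage to minimizing sequences or action decomposition. The key point you are circling around but not exploiting is this: by the Lipschitz graph property of the Aubry set, there is a \emph{unique} velocity $v$ with $(x,v)\in\tilde{\mathcal A}$, and since $(\gamma(t_0),\dot\gamma(t_0))\notin\tilde{\mathcal A}$ (else $\gamma$ would be static and stay in $\mathcal A$ forever), one has $\dot\gamma(t_0)\neq v$. This velocity mismatch is a ``corner'' in the sense of Mather's curve-shortening lemma: one can pick a static curve $\xi$ with $\xi(s_0),\xi(s_1)$ arbitrarily close to $(x,v)$ and $[A_c(\xi|_{[s_0,s_1]})]$ arbitrarily small, then splice $\gamma|_{(-\infty,t^-]}\ast\zeta_1\ast\xi|_{[s_0^+,s_1^-]}\ast\zeta_2\ast\gamma|_{[t^+,\infty)}$ with short minimizing connectors $\zeta_1,\zeta_2$. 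Because the two velocity directions at $x$ are distinct, the connector actions are strictly smaller than the corresponding pieces of $\gamma$ and $\xi$, producing a curve of the same class $g$ with strictly smaller action --- contradicting minimality of $\gamma$. This is a local, one-step construction; no limiting procedure or control of time spent near intermediate classes is needed.
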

\begin{proof}
In this case $G=\mathbb{Z}$. By the condition, we can assume that each standard generator $e_i\in H_1(\mathbb{T}^n,\mathbb{Z})$ with $i>1$ can be represented by a closed curve in $\mathcal{A}$. Let $g=ke_1$ with $k>1$. If there is a minimal homoclinic orbits $(\gamma,\dot\gamma)$ such that $[\gamma]=g$, there must be some points $x=\gamma(t_0)\in\mathcal{A}$ but $(\gamma(t_0),\dot\gamma(t_0))\notin\tilde{\mathcal{A}}$.

As $x\in\mathcal{A}$, there is a unique vector $v$ such that $(x,v)\in\tilde{\mathcal{A}}$. Given any $\epsilon>0$, there is static curve $\xi$: $\mathbb{R}\to M$ and $s_0<s_1$ such that $\xi(s_0),\xi(s_1)$ are in $\epsilon$-neighborhood of $x$, $\|\dot\xi(s_0)-v\|<\epsilon$, $\|\dot\xi(s_1)-v\|<\epsilon$ and
$[A(\xi)|_{[s_0,s_1]}]<\epsilon$.

Let $\tau_1^-=t_0-t^->0$, $\tau_2^+=t^+-t_0>0$, $\tau_1^+=s_0^+-s_0>0$ and $\tau_2^-=s_1-s_1^->0$ be suitably small numbers. We join $\gamma(t^-)$ to $\xi(s_0^+)$ by the curve $\zeta_1$: $[-\tau_1^-,\tau_1^+]\to M$ which minimizes the action
$$
[A(\zeta_1)|_{[-\tau_1^-,\tau_1^+]}]=\inf_{\stackrel{\zeta(-\tau_1^-)=\gamma(t^-)} {\scriptscriptstyle \zeta(\tau_1^+)= \xi(s_0^+)}}\int_{-\tau_1^-}^{\tau_1^+}L(\zeta(s),\dot\zeta(s))ds +(\tau_1^++\tau_1^-)\alpha,
$$
and join $\xi(s_1^-)$ to $\gamma(t^+)$ by the curve $\zeta_2$: $[-\tau_2^-,\tau_2^+]\to M$ which minimizes the action
$$
[A(\zeta_2)|_{[-\tau_2^-,\tau_2^+]}]=\inf_{\stackrel{\zeta(-\tau_2^-)=\xi(s_1^-)} {\scriptscriptstyle \zeta(\tau_2^+)= \gamma(t^+)}}\int_{-\tau_2^-}^{\tau_2^+}L(\zeta(s),\dot\zeta(s))ds +(\tau_2^++\tau_2^-)\alpha.
$$
We define a continuous curve $\gamma'$: $\mathbb{R}\to M$ by
$$
\gamma'(t)=\begin{cases}
\gamma(t), &t\in(-\infty, t^-],\\
\zeta_1(t-\Delta_1),&t-\Delta_1\in [-\tau_1^-,\tau_1^+],\\
\xi(t-\Delta_2),&t-\Delta_2\in [s_0^+,s_1^-],\\
\zeta_2(t-\Delta_3),&t-\Delta_3\in [-\tau_2^-,\tau_2^+],\\
\gamma(t-\Delta_4),& t-\Delta_4\in [t^+,\infty),
\end{cases}
$$
where $\Delta_1=t^-+\tau_1^-$, $\Delta_2=t^-+\tau_1^-+\tau_1^+-s_0^+$, $\Delta_3=t^-+\tau_1^-+\tau_1^+ -s_0^++s_1^-+\tau_2^-$ and $\Delta_4=t^-+\tau_1^-+\tau_1^+ -s_0^++s_1^-+\tau_2^-+\tau_2^+-t^+$. By exploiting the {\it curve shorten} lemma in Riemannian geometry as did in \cite{Ma2} we find that
$$
[A(\gamma)|_{[t^-,t^+]}]+[A(\xi)|_{[s_0,s_0^+]\cup[s_1^-,s_1]}] >
[A(\zeta_1)|_{[-\tau_1^-,\tau_1^+]}]+[A(\zeta_2)|_{[-\tau_2^-,\tau_2^+]}]
$$
if $\xi(s_0)=\xi(s_1)=x$ and $\dot\xi(s_0)=\dot\xi(s_1)=v\neq\dot\gamma(t_0)$. As $x\in\mathcal{A}$, $(\xi(s_0),\dot\xi(s_0))$, $(\xi(s_1),\dot\xi(s_1))$ can be arbitrarily close to $(x,v)$ by choosing suitable $s_0$ and $s_1$, this inequality still hold in our case. Note that the quantity $[A(\xi)|_{[s_0,s_1]}]$ can be arbitrarily close to zero, we see that
$$
[A(\gamma)|_{[t_{-1},t_1]}]>[A(\gamma')|_{[t_{-1},t_1+\Delta_4]}]
$$
if $t_{-1}<t^-$ and $t_1>t^+$. As $[\gamma']=[\gamma]$, this property contradicts the fact that $\gamma$ is minimal. On the other hand, from Theorem \ref{homothm1}, we obtain the existence of 2 minimal homoclinic orbits. This completes the proof.
\end{proof}

For each class $g\in G$, we define
$$
h_g^k(x,x)=\inf_{\stackrel {\xi(0)=\xi(k)=x}{\scriptscriptstyle \xi\in C^1,[\xi]=g}}\int_{0}^{k} L(\xi(s),\dot\xi(s))ds +k\alpha,
$$
$$
h_g^{\infty}(x,x)=\liminf_{k\to\infty}h_g^k(x,x).
$$
It is easy to see that $h_g^{\infty}(x,x)\to\infty$ as $\|g\|\to\infty$. Indeed, it follows from the fact $H_1(\mathbb{T}^n,\mathcal{A},\mathbb{Z}) \neq 0$ that  $h^{\infty}_g(x,x)>0$ for any $g\ne 0$. If $h_g^{\infty}(x,x)$ remains bounded as $\|g\|\to\infty$, there would be a minimal measure whose support is obviously not contained in $\tilde{\mathcal{A}}$, but it is absurd.

If the Aubry set contains only one class, as a function of $x$, $h_g^{\infty}(x,x)$ keeps constant on the Aubry set. So it makes sense let $h_g^{\infty}=h_g^{\infty}(x,x)$ for $x\in\mathcal{A}$. Obviously, one has
$$
h_{g_1+g_2}^{\infty}\le h_{g_1}^{\infty}+h_{g_2}^{\infty}.
$$
and
\begin{pro}\label{homopro1}
If there is an infinite sequence $\{g_i\}\subset G$ such that
$$
h_{g_i+g_{i'}}^{\infty}< h_{g_i}^{\infty}+h_{g_{i'}}^{\infty},
$$
then $G_m$ contains infinitely many elements.
\end{pro}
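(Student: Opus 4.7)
The plan is to argue by contradiction. Assume $G_m=\{g_1^*,\dots,g_N^*\}$ is finite with action values $a_k:=h^{\infty}_{g_k^*}$. The proof of Theorem \ref{homothm1} identifies $h^{\infty}_g$ with the total action of the decomposition of a $\check{M}$-semi-static accumulation of the minimizers of $h_g^k(x,x)$ into minimal homoclinic orbits, so that
$$
h^{\infty}_g=\min\Bigl\{\sum_k n_k a_k : n_k\in\mathbb{Z}_{\ge 0},\ \sum_k n_k g_k^*=g\Bigr\},
$$
the minimum being attained by that very decomposition. For each $g_i$ in the given sequence I fix such an optimal representation $(n_k^i)$.

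Next I translate the strict subadditivity combinatorially. The representation $(n_k^i+n_k^{i'})$ of $g_i+g_{i'}$ has cost $h^{\infty}_{g_i}+h^{\infty}_{g_{i'}}$, so the strict inequality produces a non-zero lattice vector $\kappa\in\ker\bigl(\mathbb{Z}^N\to G,\,(n_k)\mapsto\sum_k n_k g_k^*\bigr)$ with $\sum_k\kappa_k a_k<0$ and $(n_k^i+n_k^{i'})+\kappa_k\ge 0$ for every $k$, while by optimality of the individual representations neither $(n_k^i)+\kappa$ nor $(n_k^{i'})+\kappa$ is component-wise non-negative. In particular, the cheaper representation of $g_i+g_{i'}$ cannot arise by any naive splitting of the underlying minimizer through the Aubry set that reproduces the individual optima for $g_i$ and $g_{i'}$.

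The decisive step is to convert this combinatorial rigidity into the assertion $g_i+g_{i'}\in G_m$: that is, the minimizing accumulation orbit for $h^{\infty}_{g_i+g_{i'}}$ is a single $\check{M}$-semi-static curve. I would argue as in the proof of Theorem \ref{homothm2}: if the minimizer visited $\mathcal{A}$ at an intermediate time, splitting it there would yield two $\check{M}$-semi-static subarcs of classes $g',g''\in G_m$ with $g'+g''=g_i+g_{i'}$; re-grouping these subarcs and inserting short static bridges along $\mathcal{A}$ (at arbitrarily small action cost, by the Riemannian curve-shortening lemma) would produce representations of $g_i$ and $g_{i'}$ strictly cheaper than $(n_k^i)$ and $(n_k^{i'})$, contradicting their optimality. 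Since the $g_i$ are pairwise distinct, the family $\{g_1+g_i:i\ge 2\}$ then supplies infinitely many distinct elements of $G_m$, contradicting $|G_m|=N<\infty$.

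The main obstacle is precisely this reduction to a single-orbit minimizer. When the classes of $G_m$ are linearly dependent in $G$, several non-equivalent kernel vectors $\kappa$ coexist, so the choice of re-grouping of fragments must be compatible simultaneously with the kernel vector witnessing strict subadditivity, with Mather's curve-shortening lemma applied at every potential Aubry crossing, and with the precise combinatorial data $(n_k^i),(n_k^{i'})$ describing the individual optima. A careful case analysis of the finitely many intermediate Aubry visits of the minimizer for $g_i+g_{i'}$, combined with the uniform bound on the number of excursions following from $h^{\infty}_g\to\infty$ as $\|g\|\to\infty$, is what I expect to push this reduction through.
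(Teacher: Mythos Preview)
The paper states this proposition without any proof, placing it immediately after the subadditivity inequality $h_{g_1+g_2}^{\infty}\le h_{g_1}^{\infty}+h_{g_2}^{\infty}$ as though it were an evident consequence of the decomposition of $h_g^{\infty}$ into actions of minimal homoclinic pieces; it is not cited anywhere else in the paper, so there is no detailed argument to compare against.

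Your proposal has a real gap at the decisive step. You claim that strict subadditivity forces $g_i+g_{i'}\in G_m$, arguing that otherwise one could split the minimizer for $g_i+g_{i'}$ at an Aubry crossing and regroup the fragments into strictly cheaper representations of $g_i$ and $g_{i'}$. But the optimal decomposition of $g_i+g_{i'}$ into elements of $G_m$ need not admit any non-negative splitting into representations of $g_i$ and of $g_{i'}$. In the combinatorial model $G=\mathbb{Z}^2$, $G_m=\{(1,0),(0,1),(1,1)\}$ with costs $1,1,\tfrac32$, taking $g_i=(2,1)$ and $g_{i'}=(1,2)$ gives $h_{(3,3)}^{\infty}=\tfrac{9}{2}<5$, while the unique optimal representation $3\cdot(1,1)$ of $(3,3)$ cannot be split in $\mathbb{Z}_{\ge 0}^3$ into vectors projecting to $(2,1)$ and $(1,2)$. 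So your regrouping step fails, and indeed $(3,3)\notin G_m$.

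More fundamentally, your whole strategy uses only the integer-programming identity $h_g^{\infty}=\min\bigl\{\sum_k n_k a_k : n_k\ge 0,\ \sum_k n_k g_k^*=g\bigr\}$, and that identity alone cannot yield the proposition. With $G=\mathbb{Z}$, $G_m=\{2,-3\}$, and both costs equal to $1$, one computes $h_{2k+1}^{\infty}=k+3$ and $h_{2m}^{\infty}=m$ for $k,m\ge 0$; the odd positive integers $g_i=2i-1$ then form an infinite sequence with $h_{g_i+g_{i'}}^{\infty}=i+i'-1<i+i'+4=h_{g_i}^{\infty}+h_{g_{i'}}^{\infty}$ for every pair, although $|G_m|=2$. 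Hence any correct argument must invoke genuine dynamical constraints on which finite subsets of $G$ can actually arise as $G_m$ for a Tonelli Lagrangian (for instance the primitivity enforced in the codimension-one case of Theorem~\ref{homothm2}), and your proposal neither identifies nor exploits any such constraint.
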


The definition of $h_g^k(x,x)$ can be extended $h^k_g(x,x')$ for $x\ne x'$. Let us recall that the covering space  $\check{\pi}$: $\check{M}\to M=\mathbb{T}^n$ is defined such that $\pi_1(\check{M})=\pi_1(U)$, where $U$ is a open neighborhood of $\mathcal {A}\subset\mathbb{T}^n$ so that $H_1(M,U,\mathbb{R})=H_1(M,\mathcal{A}, \mathbb{R})$. Let $D=\{\bar x: \bar x_i\in [0,1)\}\subset\mathbb{R}^n$ be the fundamental domain for $\mathbb{T}^n$ and use the same symbol to denote its projection to $\check M$ as well. For each closed path $\phi$: $[0,1]\to M$, there is a unique curve $\check{\phi}$ in the lift of $\phi$ such that  $\check{\phi}(0)\in D$. Because of the monodromy theorem, $\check{\phi}(1)\in\check M$ is uniquely determined by the homological type $[\phi]\in H_1(\mathbb{T}^n,U,\mathbb{Z})$.

For a curve $\xi$: $[0,k]\to M$ with $\xi(0)=x$, $\xi(k)=x'$, we denote by $\check{\xi}$ the curve in the lift of $\xi$ such that $\check{\xi}(0)\in D$. We say $[\xi]=g$ if $[\phi]=g$ holds for any closed curve $\phi$ such that $\check\phi(0)\in D$ and $\check\phi(1)=\check\xi(k)$.  Therefore, the following is well-defined:
$$
h_g^{k}(x,x')=\inf_{\stackrel{\stackrel{\stackrel {\xi(0)=x}{\scriptscriptstyle \xi(k)=x'}}{\scriptscriptstyle
[\xi]=g}}{\scriptscriptstyle \xi\in C^1}}\int_{0}^{k}L(\xi(s),\dot\xi(s))ds +k\alpha,
$$
$$
h_g^{\infty}(x,x')=\liminf_{k\to\infty}h_g^k(x,x').
$$
Clearly, $h_g^{\infty}(x,x')\to\infty$ as $\|g\|\to\infty$.  Indeed, let $\check x,\check x'\in D$ such that $\check{\pi}\check x=x, \check{\pi}\check x'=x'$, let $\check{\zeta}$: $[0,1]\to\bar M$ be a straight line such that $\check{\zeta}(0)=x'$, $\check{\zeta}(1)=x$ and denoted by $\zeta$ the projection of
$\check{\zeta}$ down to $M$, we obviously have that
$$
h_g^{k+1}(x,x)\le h_g^k(x,x')+[A(\zeta)]
$$
holds for each class $g$. As $[A(\zeta)]$ is a finite number, we verify the claim.

\begin{pro}\label{homopro2}
There exists positive number $a>0$ such that $h^{\infty}_g (x,x')\ge \|g\|a$ holds for each
$(x,x')\in\mathbb{T}^n\times\mathbb{T}^n$ and for large $\|g\|$.
\end{pro}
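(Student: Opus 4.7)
The strategy is to bound $h_g^k(x,x')$ from below uniformly in the period $k$ by a positive multiple of $\|g\|$, and then take $\liminf_{k\to\infty}$. I would split the analysis into two complementary regimes in $k$.

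In the short-period regime $k\le\kappa\|g\|$, I use super-linearity of the Lagrangian in velocity. Any competitor $\xi:[0,k]\to\mathbb{T}^n$ with $\xi(0)=x$, $\xi(k)=x'$ and $[\xi]$ equal to a lift $\tilde g\in H_1(\mathbb{T}^n,\mathbb{Z})$ of $g$ lifts to the universal cover as a curve whose endpoints differ by $\tilde g+\tilde x'-\tilde x$, so $\int_0^k\|\dot\xi\|\,ds\ge\|\tilde g\|-D_0$. Combining $L(x,v)\ge A\|v\|-C_A$ with the identity $\int_\xi\eta_c=\langle c,\tilde g\rangle+O(1)$ (since $\eta_c$ lifts on $\mathbb{R}^n$ to $\langle c,\cdot\rangle+df$ with $f$ periodic) and infimizing over the choice of lift $\tilde g$ gives
$$h_g^k(x,x')\ge(A-\|c\|)\|g\|_G+k(\alpha(c)-C_A)-D,$$
where the quotient norm $\|g\|_G=\inf_{\tilde g\mapsto g}\|\tilde g\|$ on $G$ is comparable to $\|g\|$. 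Choosing $A$ so that $A-\|c\|>0$ is large and $\kappa<(A-\|c\|)/(2(C_A-\alpha(c)))$ (enlarging $A$ to ensure $C_A>\alpha(c)$), this already yields $h_g^k\ge\tfrac12(A-\|c\|)\|g\|-D$ throughout this regime.

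In the long-period regime $k>\kappa\|g\|$, I would use the $\beta$-function. The holonomic measure $\mu_\xi$ associated with a closed curve $\xi$ of period $k$ representing $\tilde g$ has rotation vector $\tilde g/k$, so $\int L\,d\mu_\xi\ge\beta(\tilde g/k)$ and
$$h_g^k(x,x')\ge k\bigl[\beta(\tilde g/k)-\langle c,\tilde g/k\rangle+\alpha(c)\bigr]-O(1).$$
By Fenchel duality the bracket is nonnegative, vanishing only on $\partial\alpha(c)=\mathscr L_\beta^{-1}(c)$, which consists of rotation vectors of $c$-minimal measures. Since these measures are supported in $\tilde{\mathcal M}(c)\subset\tilde{\mathcal A}(c)\subset U$, their rotation vectors lie in the real span of $K=i_*H_1(U,\mathbb{Z})$; but $g\ne 0$ in $G=H/K$ forces every lift $\tilde g$ of $g$ to have transverse component of norm $\ge c_0\|g\|$, so $\tilde g/k$ stays at distance $\ge c_0\|g\|/k$ from $\partial\alpha(c)$ in the transverse directions. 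The corner structure of $\beta$ at $\partial\alpha(c)$, inherited from $c$ sitting in the interior of the flat $\mathbb{F}$ of $\alpha$ associated with $\mathcal A$, yields a Fenchel gap of order $\|g\|/k$, and multiplying by $k$ produces $h_g^k\ge a'\|g\|$ uniformly in this regime as well.

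Combining the two estimates and passing to $\liminf_{k\to\infty}$ gives the desired bound $h_g^\infty(x,x')\ge a\|g\|$ for all sufficiently large $\|g\|$, with $a=\min\bigl(\tfrac12(A-\|c\|),a'\bigr)$. The main obstacle is the quantitative corner estimate in the long-period regime: one needs the Fenchel gap $\beta(\omega)-\langle c,\omega\rangle+\alpha(c)$ to be bounded below by a positive multiple of the distance from $\omega$ to $\partial\alpha(c)$ in the directions transverse to $\partial\alpha(c)$. This reduces to checking that at any $\omega_*\in\partial\alpha(c)$ the subdifferential $\partial\beta(\omega_*)\supseteq\mathbb{F}$ strictly contains $c$ in every direction picked out by $G$, a statement that the earlier structural analysis of $\mathcal A$ (in particular Theorem \ref{homothm2}, which places the Mather set inside a codimension-one affine subspace) makes available.
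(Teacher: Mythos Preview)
Your approach is genuinely different from the paper's. The paper proceeds by contradiction: assuming $h_{g_i}^{T_i}(x,x)\le\epsilon_i\|g_i\|$ along some sequence, it decomposes the minimizer $\gamma_i$ at successive returns to the neighborhood $U$ of $\mathcal{A}$ with nonzero relative homology. Either some segment has unbounded time-length (and then the associated probability measures converge to a minimal measure not supported in $\tilde{\mathcal{A}}$, a contradiction), or all segments have bounded length, whence there are $\gtrsim\|g\|$ of them; each one closes up, via a nearly-static curve inside $\mathcal{A}$, to a loop of nontrivial $G$-class and hence action $\ge a'>0$, and summing yields the linear lower bound. The case $x\ne x'$ is reduced to $x=x'$ by a straight segment of bounded action.

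The heart of your argument is the corner estimate $\beta(\omega)-\langle c,\omega\rangle+\alpha(c)\ge\epsilon\|\pi_G(\omega)\|$, and this is precisely where there is a genuine gap. That estimate is \emph{equivalent} to the assertion that $c+tv$ remains in a face of $\alpha$ (with the same value of $\alpha$) for every $v\in K^\perp$, $\|v\|=1$, and $|t|\le\epsilon$ --- in other words, that $c$ lies in the relative interior of the flat in all $K^\perp$-directions. The paper, citing Massart~[Ms], records only that $c$ lies in \emph{some} flat of dimension $\ge r$; it does not claim interiority, and establishing it from $\mathcal{A}(c)\subset U$ alone (as opposed to $\mathcal{N}(c)\subset U$) amounts to the very inequality you are trying to prove, so the argument is circular as stated. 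Your appeal to Theorem~\ref{homothm2} does not help: that theorem assumes $\mathcal{A}$ contains a copy of $\mathbb{T}^{n-1}$, a hypothesis absent here, and its conclusion concerns the homology types of minimal homoclinic orbits, not the face structure of $\alpha$ or $\beta$.

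A minor point: since $h_g^\infty=\liminf_{k\to\infty}h_g^k$, for each fixed $g$ one is eventually in the regime $k>\kappa\|g\|$; your short-period analysis therefore never contributes to the limit and can be dropped entirely.
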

\begin{proof}
Obviously, there exists a positive number $a'>0$ such that $h^{T}_g (x,x)\ge a'$ holds for each $g\ne 0$ and each $T>0$. If the proposition does not hold, for any small $\epsilon_i>0$ there would exists $g_i$ and $T_i$ such that
$$
h^T_{g_i}(x,x)\le\epsilon_i\|g_i\|, \qquad \forall\, T\ge T_i.
$$
Let $\gamma_i$: $[0,T_i]\to M$ be the minimizer of $h^{T_i}_{g_i} (x,x)$. Let $0=t_{i,0}<t_{i,1}<\cdots<t_{i,m_i}=T_i$ be a sequence so that $\gamma_i(t_{i,j})\in U$ and $H_1(\mathbb{T}^n,U,\mathbb{Z})\ni[\gamma_i|_{[t_{i,j},t_{i,j+1}]}]\ne 0$ and there does not exist $t'\in (t_{i,j},t_{i,j+1})$ such that $\gamma_i(t')\in U$, both $[\gamma_i|_{[t_{i,j},t']}]\ne 0$ and $[\gamma_i|_{[t',t_{i,j+1}]}]\ne 0$.

There are two possibilities for this sequence. Either some $t_{i,j}<t_{i,j+1}$ exists such that $t_{i,j+1}-t_{i,j}\to\infty$ as $i\to\infty$ or $t_{i,j+1}-t_{i,j}$ remains bounded for all $i,j$.

In the first case, let $\mu_i=d\gamma_i|_{[t_{i,j},t_{i,j+1}]}^*\nu_i$ where $\nu_i$ is a probability measure evenly distributed on the interval $[t_{i,j},t_{i,j+1}]$. By weak$^*$-compactness a probability measure $\mu$ exists such that $\mu_i\rightharpoonup\mu$. Clearly, $\mu$ is invariant for the Lagrange flow, $\int Ld\mu=0$ and the support of $\mu$ is not contained in the Aubry set. But it is absurd.

In the second case, one has $m_i\ge C\|g\|$. By choosing sufficiently small neighborhood $U$ of $\mathcal{A}$, the distance $d(\gamma_i(t_{i,j}),\mathcal{A})<\epsilon$ can be sufficiently small. As there is only one Aubry class, there is a closed curve $\zeta$ and a sequence of time $t'_j$ ($j=0,1,\cdots,m_i)$ such that $[A(\zeta)]<\epsilon$ and $d(\zeta(t'_j),\gamma_i(t_{i,m-j}))<\epsilon$. Let $\zeta'_j$ be the minimizer connecting $\gamma_i(t_{i,j})$ to $\gamma_j(t_{i,j-1})$, we have $|[A(\zeta'_j)]-[A(\zeta|_{[t'_{i,j-1}-t'_{i,j}]})]|\le C'\epsilon$, where the constant depends only on the Lagrangian. By construction, the curve $\gamma_i|_{[t_{i,j}-t_{i,j-1}]}\ast\zeta'_j$ is a closed curve with $[\gamma_i|_{[t_{i,j}-t_{i,j-1}]}\ast\zeta'_j]\ne 0$ and $[A(\gamma_i|_{[t_{i,j}-t_{i,j-1}]}\ast\zeta'_j)]>a'$. Therefore,
\begin{align*}
h^{T_i}_g(x,x)&\ge h^{T_i}_g(x,x)+[A(\zeta)]-\epsilon\\
&\ge\sum_{j} [A(\gamma_i|_{[t_{i,j}-t_{i,j-1}]}\ast\zeta'_j)]-(CC'\|g\|+1)\epsilon\\
&\ge C\|g\|a'-(CC'\|g\|+1)\epsilon.
\end{align*}
It contradicts the assumption. This proves the proposition in the case that $x=x'$.

For $x\ne x'$, we use a straight line connecting $x'$ to $x$. The action along this line is bounded. Therefore the proposition is also true for $x\ne x'$.
\end{proof}

\subsection{Globally elementary weak KAM solutions}
For the configuration space $\mathbb{T}^n$, each weak KAM solution is 1-periodic in $x_i$ for  $i=1,2,\cdots n$, where $(x_1,x_2,\cdots,x_n)=x$ denotes the configuration coordinate. If a finite covering of $\mathbb{T}^n$ is considered to be configuration space, weak KAM solution may not be 1-periodic for each coordinate.

We assume that the minimal measure contains finitely many ergodic components  $\mu_c^1$, $\mu_c^2,\cdots, \mu_c^m$ for the cohomology class $c$. In this case, the elementary weak KAM solution for each $\mu_c^i$ is well-defined. The lift of $\mu_c^i$ to a finite covering $k\mathbb{T}^n$ may contain several ergodic components. For instance, if $\mathcal {M}\subset\{|x_1|\le\delta\}\times \mathbb{T}^{n-1}$, then there are two ergodic components in the lift of $\mathcal {M}$ for $2\mathbb{T}\times \mathbb{T}^{n-1}$. However, there are cases that the minimal measure is always uniquely ergodic for any finite covering manifold, for instance, if the measure is supported on a KAM torus.

Given $k=(k_1,k_2,\cdots,k_n)\in\mathbb{Z}^n$ with $k_i\ge 1$ for each $i=1,2,\cdots,n$, we define an equivalence relation $\sim_k$ in $\mathbb{R}^n$: we say $x\sim_k x'$ if $x_i-x'_i=2jk_i$ for some $j\in\mathbb{Z}$ ($i=1,2,\cdots n$). Clearly, $\pi_k$: $M_k =\mathbb{R}^n/\sim_k\to\mathbb{T}^n$ is a finite covering of $\mathbb{T}^n$. In the following, we shall also use the symbols: $\pi_{\infty,k}$: $\mathbb{R}^n\to M_k$ and $\pi_{\infty}$: $\mathbb{R}^n\to\mathbb{T}^n$ to denote the projection. For a bounded domain $\Omega\subset\mathbb{R}^n$, if the topology of $\pi_{\infty,k}\Omega\subset M_k$ is trivial, we use the same symbol to denote its projection $\Omega:=\pi_{\infty,k}\Omega$.

Let $\mathcal{M}_{\infty}$ and $\mathcal{M}_{k}$ be the lift of Mather set $\mathcal{M}$ to the universal covering space as well as to $M_k$ respectively. The connected components are denoted by $\mathcal{M}_{\infty}^i$ and $\mathcal{M}_{k}^i$ correspondingly. Obviously, the unit cube $D=[0,1)^n$ intersects finitely many connected components of $\mathcal{M}_{\infty}$, denoted by $\mathcal{M}_{\infty}^i$ with $i=0,1,\cdots,i_m$ ($i_m\ge m$).

Let $d_k=\min\{k_1,k_2,\cdots k_n\}$. Some $R_D>0$ exists such that for any $k\in\mathbb{Z}^n$ with $d_k\ge R_D$, $\pi_{\infty,k}\mathcal{M}_{\infty}^i\neq \pi_{\infty,k}\mathcal{M}_{\infty}^j$ holds for $0\le i,j\le i_m$ and $i\neq j$. In this case, we use the notation $\mathcal{M}_{k}^j=\pi_{\infty,k} \mathcal{M}_{\infty}^j$ for $0\le j\le i_m$. Let $u_{k,j}^{\pm}$ denote the elementary weak KAM for $\mathcal{M}_{k}^j$ with respect to the configuration manifold $M_k$.

\begin{lem}\label{globallem1}
For each bounded region $\Omega\subset\mathbb{R}^n$, there exists $R_{\Omega}>0$ such that for any $k, k'\in\mathbb{Z}^n$ with $d_k,d_{k'}\ge\max\{R_{\Omega},R_D\}$,
$$
u^{\pm}_{k,j}|_{\Omega}=u_{k',j}^{\pm}|_{\Omega}+\text{\rm constant}
$$
holds for each $j=0,1,\cdots,i_m$.
\end{lem}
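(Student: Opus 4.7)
The plan is to reduce the lemma to a statement about the Peierls barrier $h^\infty_{M_k}$, lift everything to the universal cover $\mathbb{R}^n$, and use the linear growth estimate of Proposition \ref{homopro2} to argue that for large $d_k$ the ``non-trivial'' lifts of the minimizers contribute nothing on the bounded region $\Omega$. Fix once and for all a base point $\tilde x^j_0\in\mathcal{M}^j_\infty\cap D$. The identity recorded right after the definition of elementary weak KAM gives
$$u^-_{k,j}\bigl(\pi_{\infty,k}x\bigr)=h^\infty_{M_k}\bigl(\pi_{\infty,k}\tilde x^j_0,\pi_{\infty,k}x\bigr)+u^+_{k,j}\bigl(\pi_{\infty,k}\tilde x^j_0\bigr),$$
so the lemma reduces to showing that, on $\Omega$, $h^\infty_{M_k}(\pi_{\infty,k}\tilde x^j_0,\pi_{\infty,k}x)$ is independent of $k$ once $d_k$ is sufficiently large.

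To see this, I would lift paths in $M_k$ to $\mathbb{R}^n$: a path in $M_k$ from $\pi_{\infty,k}\tilde x^j_0$ to $\pi_{\infty,k}x$ lifts uniquely to a path in $\mathbb{R}^n$ from $\tilde x^j_0$ to $x+\gamma$ with $\gamma\in\Gamma_k=2k_1\mathbb{Z}\times\cdots\times 2k_n\mathbb{Z}$. Denoting by $\tilde h^T$ the minimum action on $\mathbb{R}^n$ (with the lifted closed form $\tilde\eta_c$ and the additive $T\alpha(c)$), one has
$$h^\infty_{M_k}\bigl(\pi_{\infty,k}\tilde x^j_0,\pi_{\infty,k}x\bigr)=\liminf_{T\to\infty}\ \inf_{\gamma\in\Gamma_k}\tilde h^T(\tilde x^j_0,x+\gamma).$$
The trivial choice $\gamma=0$ gives an upper bound $\tilde h^\infty(\tilde x^j_0,x)\le C_\Omega$, uniform in $x\in\Omega$, produced by an explicit $C^1$ competitor joining $\tilde x^j_0$ to $x$ and concatenated with a long sojourn near $\mathcal{M}^j_\infty$. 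Any competitor ending at $x+\gamma$ descends to a path in $\mathbb{T}^n$ whose lifted displacement has norm at least $\|\gamma\|-\|x-\tilde x^j_0\|$, so by Proposition \ref{homopro2} its action is at least $a(\|\gamma\|-\|x-\tilde x^j_0\|)$ for $\|\gamma\|$ large. Consequently the infimum is realized only for $\gamma\in\Gamma_k$ of norm at most $R_\Omega:=C_\Omega/a+\sup_{x\in\Omega}\|x-\tilde x^j_0\|+1$, and requiring $d_k\ge R_\Omega$ forces $\gamma^\ast=0$. Hence
$$h^\infty_{M_k}\bigl(\pi_{\infty,k}\tilde x^j_0,\pi_{\infty,k}x\bigr)=\tilde h^\infty(\tilde x^j_0,x),$$
a quantity independent of $k$, and comparing $k$ with $k'$ yields
$$u^-_{k,j}(\pi_{\infty,k}x)-u^-_{k',j}(\pi_{\infty,k'}x)=u^+_{k,j}(\pi_{\infty,k}\tilde x^j_0)-u^+_{k',j}(\pi_{\infty,k'}\tilde x^j_0),$$
a constant in $x\in\Omega$. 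The statement for $u^+_{k,j}$ follows by applying the same argument to the time-reversed Lagrangian.

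The main technical delicacy is the exchange of $\liminf_{T\to\infty}$ with $\inf_{\gamma\in\Gamma_k}$ in the $M_k$-Peierls barrier: one must restrict the infimum to a uniformly bounded subset of $\Gamma_k$ before passing to the limit in $T$. This is controlled by applying Proposition \ref{homopro2} termwise, so that every $\gamma\in\Gamma_k$ with $\|\gamma\|>R_\Omega$ contributes at least $a\,d_k-O_\Omega(1)$ to $\tilde h^T(\tilde x^j_0,x+\gamma)$ for all sufficiently large $T$, and hence cannot compete with $\gamma=0$ in the $\liminf$. A secondary point is to check that the constant $a$ in Proposition \ref{homopro2} can be taken uniform as $\tilde x^j_0$ ranges over the compact set $\mathcal{M}^j_\infty\cap D$, which is immediate from the proof of that proposition.
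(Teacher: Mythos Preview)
Your route is genuinely different from the paper's: you work directly with the Peierls-barrier representation $u^-_{k,j}=h^\infty_{M_k}(\text{base point},\cdot)+\text{const}$ and lift to $\mathbb{R}^n$, while the paper goes back to the definition of the elementary weak KAM as $\lim_{\epsilon\to0}u^-_{k,j,\epsilon}$, first reduces to the uniquely ergodic case, follows the backward semi-static curves $\bar\gamma_{k,\bar x,\epsilon}$ of the perturbed Lagrangians $L_{k,\epsilon}$ in the universal cover, and analyzes their $\epsilon\to0$ limit as concatenations of $L$-semi-static curves, arriving at formula~(\ref{globaleq3}).

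There is, however, a real gap in your application of Proposition~\ref{homopro2}. That proposition gives linear growth of $h^\infty_g$ in $\|g\|$ for $g$ in the \emph{quotient} $G=H_1(M,\mathbb{Z})/K$ (where $K=i_*H_1(U,\mathbb{Z})$ for a neighborhood $U$ of the Aubry set), not in the Euclidean norm of a lattice vector $\gamma\in\Gamma_k\subset\mathbb{Z}^n$. When the Aubry class has nontrivial topology ($K\neq0$) there are $\gamma\in\Gamma_k\setminus\{0\}$ with $\|\gamma\|$ of order $d_k$ but $\|[\gamma]_G\|$ bounded independently of $d_k$. A concrete instance: take $L=\tfrac12|\dot x|^2+\bigl(1-\cos 2\pi(x_1-x_2)\bigr)$ on $\mathbb{T}^2$, so that the Aubry set is the diagonal circle, $K=\{(m,m)\}$ and $G\cong\mathbb{Z}$; for $k=(N,N+1)$ the vector $\gamma=(2N,2N+2)\in\Gamma_k$ has $[\gamma]_G=-2$, and a direct computation (the $u=x_1-x_2$ pendulum decouples from $v=x_1+x_2$) gives $\tilde h^\infty\bigl((0,0),(1.5,0)+\gamma\bigr)<\tilde h^\infty\bigl((0,0),(1.5,0)\bigr)$ for every $N$. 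Thus $\gamma=0$ is not the minimizer, your claimed lower bound $a\bigl(\|\gamma\|-\|x-\tilde x^j_0\|\bigr)$ is violated, and the asserted identity $h^\infty_{M_k}(\pi_{\infty,k}\tilde x^j_0,\pi_{\infty,k}x)=\tilde h^\infty(\tilde x^j_0,x)$ fails on any $\Omega$ containing $(1.5,0)$. What is missing is control of the image of $\Gamma_k$ in $G$; this is precisely where the paper's perturbative analysis of which component $\mathcal{M}^{j,\ell}_\infty$ the curve $\bar\gamma_{k,\bar x,\epsilon}$ approaches has to do its work.
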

\begin{proof}
We only need to study the case that the minimal measure is uniquely ergodic. If there are finitely many ergodic components, we obtain this result by perturbing the Lagrangian so that it is uniquely ergodic and applying Theorem \ref{weakthm1}.

Each weak KAM solution for $\mathbb{T}^n$ is a weak KAM solution for any $M_k$. If the lift of the minimal measure to any finite covering space is still uniquely ergodic, the elementary weak KAM solution remains the same.

Let us consider the case that there are more than one connected component in $\mathcal{M}_k$ with $d_k\ge R_D$. Remember $\mathcal {M}_{k}^j=\pi_{\infty,k}\mathcal{M}_{\infty}^j$ for $0\le j\le m$ where $\mathcal{M}_{\infty}^j$ intersects the fundamental domain $[0,1)^n$. Considered as a function defined in $\mathbb{R}^n$, the elementary weak KAM solution $u^-_{k,j}$ determined by $\mathcal {M}_{k}^j$ is $k_i$-periodic in the $i$-th coordinate. By the definition of elementary weak KAM solution, a sequence of functions $u^-_{k,j,\epsilon}$ exists such that $u^-_{k,j,\epsilon}\to u^-_{k,j}$ as $\epsilon\to 0$, where $u^-_{k,j,\epsilon}$ is the weak KAM solution for the Lagrangian $L_{k,\epsilon}: TM_k\to\mathbb{R}$. This Lagrangian satisfies the following conditions:

1, it is the same as $L$ when it is restricted on the tangent bundle of a neighborhood $U$ of $\mathcal{M}_{k}^j$, i.e. $L_{k,\epsilon}|_{TU}=L|_{TU}$;

2, the minimal measure is uniquely ergodic whenever $\epsilon\neq 0$, supported on $\mathcal{M}_{k}^j$;

3, $L_{k,\epsilon}\to L$ as $\epsilon\to 0$.

Starting from each $x\in M_k$, there exists at least one backward semi-static curve for $L_{k,\epsilon}$, $\gamma_{k,x, \epsilon}$: $(-\infty,0]$ with $\gamma_{k,x,\epsilon}(0)=x$. Clearly, $\pi\alpha(d\gamma_{k,x,\epsilon})\cap \mathcal{M}_{k}^j\ne\varnothing$. Let $t_i\to\infty$ be the sequence so that $\gamma_{k,x,\epsilon}(-t_i)\to x_0\in\mathcal{M}_{k}^j$, let $\alpha$ stand for the average action, then we have
\begin{equation}\label{globaleq1}
u^-_{k,\epsilon}(x)-u^-_{k,\epsilon}(x_0)=\lim_{t_i\to\infty}\int_{-t_i}^0L_{k,\epsilon} (d\gamma_{k,x,\epsilon}(s))ds+t_i\alpha.
\end{equation}

Again, the lift of $\mathcal{M}_k^j$ to the universal covering space may contain many connected components, denoted by $\mathcal{M}_{\infty}^{j,\ell}$, among which only $\mathcal{M}_{\infty}^{j,0}$ intersects the fundamental domain $D$.

Let $D_k=\{\bar x: \bar x_i\in [-k_i,k_i)\}\subset\mathbb{R}^n$ so that $\pi_{\infty,k}$: $D_k\to M_k$ is an injection and $\pi_{\infty,k}D_k=M_k$. Let $\bar x\in D_k$ be the points such that $\pi_{\infty,k}\bar x=x$. Let $\bar\gamma_{k,\bar x,\epsilon}$ be the lift of $\gamma_{k,x,\epsilon}$ to the universal covering space so that $\bar\gamma_{k,\bar x,\epsilon}(0)=\bar x$. It is possible that $\pi\alpha(d\bar\gamma_{k,\bar x,\epsilon}) \cap\mathcal{M}_{\infty}^{j,0}=\varnothing$. The curve may approach to another connected component of $\mathcal{M}_{\infty}^{j,\ell}$. Let $\Omega_d=\{x:\max_i|x_i|\le d\}\subset\mathbb{R}^n$. Note that $L_{k,\epsilon}$ is a small perturbation of $L$. In virtue of Proposition \ref{homopro2} we claim that $\bar\gamma_{k,\bar x,\epsilon}$ approaches to $\mathcal{M}_{\infty}^{j,0}$ provided $\bar x\in\Omega_d$, $\epsilon$ is suitably small and $d_k$ is sufficiently large. Let us assume the contrary, i.e. $\gamma_{k,\bar x,\epsilon}$ approaches to another connected component of $\mathcal{M}_{\infty}$. In this case, $\|[\pi_k\gamma_{k,\bar x,\epsilon}]\|$ would be sufficiently large provided $d_k$ is sufficiently large. By Proposition \ref{homopro2} the action of $L$ along  $\pi_k\gamma_{k,\bar x,\epsilon}$
$$
\int L(d\pi_k\gamma_{k,\bar x,\epsilon}(t),t)dt\ge\|[\pi_k\gamma_{k,\bar x,\epsilon}]\|a
$$
with certain $a>0$. As $L_{k,\epsilon}$ is a small perturbation of $L$, the action of $L_{k,\epsilon}$ along $\pi_k\gamma_{k,\bar x,\epsilon}$ would approach infinity as $d_k\to\infty$. The absurdity verifies the claim.

The set $\{\dot{\gamma}_{k,x,\epsilon}(0)\}$ is compact as $\epsilon\to 0$. For each accumulation point $v$, there is a subsequence of $\epsilon\to 0$ such that $\dot\gamma_{k,x,\epsilon}(0) \to v$. The initial value $(x,v)$ uniquely determines an orbit $(\gamma_{k,x},\dot\gamma_{k,x})$ of $L$. The curve $\gamma_{k,x}$: $(-\infty,0]\to M_k$ is a backward semi-static curve for $L$ which may not approach to $ \mathcal{M}_{k,0}$. When $\epsilon\to 0$, $\gamma_{k,x,\epsilon}$ may approach not only one but a family of semi-static curves for $L$ including the curves connecting different connected components of $\mathcal{M}_k$. More precisely, there might be several connected components
$\mathcal{M}_{k}^{i_0}=\mathcal{M}_{k}^{j},\mathcal{M}_{k}^{i_1},\cdots,\mathcal{M}_{k}^{i_{\imath}}$ and semi-static curves $\gamma_{\ell,\ell+1}$ of $L$ for $M_k$ ($\ell=0,1,\cdots \imath-1$) such that $\pi\alpha(d\gamma_{\ell,\ell+1})\cap\mathcal{M}_{k}^{i_{\ell}}\ne\varnothing$,
$\pi\omega(d\gamma_{\ell,\ell+1})\cap\mathcal{M}_{k}^{i_{\ell+1}}\ne\varnothing$, $\pi\alpha(d\gamma_{k,x})\cap\mathcal{M}_{k}^{i_{\imath}}\ne\varnothing$ and each $\gamma_{\ell,\ell+1}$ is approached by $\gamma_{k,x,\epsilon}$ as $\epsilon\to 0$. These curves have their natural projection down to
$\mathbb{T}^n$, denoted by the same symbol.

We define the quantity $A_{i,j}$: $\mathcal{M}^i\times\mathcal{M}^j \to\mathbb{R}$
\begin{equation}\label{globaleq2}
[A_{i,j}(x_i,x_j)]=\inf_{\stackrel{\stackrel{\gamma(0)=x_i}{\scriptscriptstyle \gamma(k)=x_j}}{\scriptscriptstyle k\in\mathbb{Z}_+}}\int^{k}_{0} L(d\gamma(s))ds+k\alpha.
\end{equation}
By definition of weak KAM, for almost every point $x$, $(x,\partial _xu_{k,\epsilon}^-(x))$ uniquely determines a backward semi-static curve $\gamma_{k,x,\epsilon}$. Since this semi-static curve approaches to several curves: $\gamma_{k,x,\epsilon}\to\gamma_{1,2}\ast\cdots\ast \gamma_{\imath-1,\imath}\ast\gamma_{k,x}$ we obtain that
\begin{align}\label{globaleq3}
u^-_{k,0}(x)-u^-_{k,0}(x_0)=&\lim_{t_i\to\infty}\int_{-t_i}^0L(d\gamma_{x}(s))ds+t_i\alpha\\
&+\sum_{i=0}^{\imath-1}[A_{j_i,j_{i+1}}(x_i,x_{i+1})]\notag
\end{align}
where $t_i\to\infty$ is a sequence such that $\gamma_{k,x}(-t_i)\to x_{\imath}\in \mathcal{M}_{k,i_{\imath}}$, $x_j\in\mathcal{M}_{k,i_j}$.

For each $x\in\Omega_d$, the backward semi-static curve $\gamma_{k,x,\epsilon}$ approaches to $\mathcal{M}_{k}^j$ provided $d_k$ is sufficiently large. For different $k,k'$ satisfying this condition, $\gamma_{k,x,\epsilon}$ and $\gamma_{k',x,\epsilon}$ may converge to different curves, $\gamma_{k,x,\epsilon}\to\gamma_{0,1}\ast\cdots\ast\gamma_{\imath-1,\imath}\ast\gamma_{k,x}$ and
$\gamma_{k',x,\epsilon}\to\gamma'_{0,1}\ast\cdots\ast\gamma'_{\imath'-1,\imath'}\ast\gamma'_{k,x}$ as $\epsilon\to 0$. But the action of $L$ along $\gamma_{0,1}\ast\cdots\ast\gamma_{\imath-1,\imath}\ast\gamma_{k,x}$ is the same as along $\gamma'_{0,1}\ast\cdots\ast\gamma'_{\imath'-1,\imath'}\ast\gamma'_{k,x}$. Indeed, the action of $L_{k,\epsilon}$ along $\tilde\gamma_{k,x,\epsilon}$ is almost the same as the action of $L_{k',\epsilon}$ along $\tilde\gamma_{k',x,\epsilon}$ provided the perturbation is sufficiently small. Therefore, we obtain from the formula \ref{globaleq3} that
$$
\bar u_{k,0}|_{\Omega}=\bar u_{k',0}|_{\Omega}+\text{\rm constant}
$$
if both $d_k$ and $d_{k'}$ are sufficiently large.
\end{proof}

\begin{defi}
The function $\bar u_i:\mathbb{R}^n\to\mathbb{R}$ is called globally elementary weak KAM solution for $\mathcal{M}_{\infty}^{j}$ if for each bounded domain $\Omega\subset\mathbb{R}^n$, there exists $R_{\Omega}>0$ such that for any $k\in\mathbb{Z}^n$ with $d_k\ge R_{\Omega}$,
$$
\bar u_{k,j}|_{\Omega}=\bar u_j|_{\Omega}+\text{\rm constant}
$$
holds for each elementary weak KAM solution $\bar u_{k,j}$: $M_k\to\mathbb{R}$ for $\mathcal{M}_{k}^j$.
\end{defi}
From Lemma \ref{globallem1}, we obtain the existence of a globally elementary weak KAM solution for each $\mathcal{M}_{\infty}^j$.

To investigate the properties of globally elementary weak KAM solution, let us  consider a special case first, namely, the Mather set contains a connected component homeomorphic to $\mathbb{T}^{n-1}$. In this case, each $\mathcal{M}^i$ divided $\mathbb{R}^n$ into two parts, denoted by $R^-$ and $R^+$.

\begin{theo}\label{globalthm1}
If the Mather set contains a connected component contains a $\mathcal{M}_{\infty}^i$ homeomorphic to $\mathbb{T}^{n-1}$, then the globally elementary weak KAM solution $\bar u^{\pm}_i$: $\mathbb{R}^n\to\mathbb{R}$ has a decomposition
$$
\bar u^{\pm}_i=v^{\pm}_i+w^{\pm}_i,
$$
where $v^{\pm}_i$ is periodic and $w^{\pm}_i$ is affine when they are restricted in the half space $R^+$ as well as in another half space $R^-$.
\end{theo}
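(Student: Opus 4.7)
My plan is to decompose $\bar u_i^\pm$ by analysing how it transforms under $\mathbb{Z}^n$-translations that either stabilise $\mathcal{M}_\infty^i$ or are transverse to it. Since $\mathcal{M}_\infty^i\cong\mathbb{T}^{n-1}$ is a connected component of the lift of $\mathcal{M}^i\subset\mathbb{T}^n$, its homology determines a rank-$(n-1)$ sublattice $\Lambda\subset\mathbb{Z}^n$ consisting of precisely those $y$ with $T_y\mathcal{M}_\infty^i=\mathcal{M}_\infty^i$. I pick $e\in\mathbb{Z}^n$ with $\mathbb{Z}^n=\Lambda\oplus\mathbb{Z}e$ and $e$ pointing from $R^-$ into $R^+$, so that $T_{je}\mathcal{M}_\infty^i$ lies in $R^+$ for $j>0$ and in $R^-$ for $j<0$.

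For $y\in\Lambda$, $T_y$-invariance of $L$ together with $T_y\mathcal{M}_\infty^i=\mathcal{M}_\infty^i$ shows that $\bar u_i^\pm\circ T_y$ is again a globally elementary weak KAM for $\mathcal{M}_\infty^i$. Applying uniqueness up to a constant (Proposition~\ref{weakpro3}) on each cover $M_k$ and passing to the limit via Lemma~\ref{globallem1} yields $\bar u_i^\pm(x+y)-\bar u_i^\pm(x)=\lambda^\pm(y)$ for a constant depending only on $y$; the map $\lambda^\pm\colon\Lambda\to\mathbb{R}$ is a group homomorphism, hence extends to a linear functional $\ell^\pm$ on $\mathbb{R}^n$.

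For the transverse direction, $T_e$ does not stabilise $\mathcal{M}_\infty^i$, so uniqueness fails globally; here I use the separation of $\mathbb{R}^n$ by $\mathcal{M}_\infty^i$. For $x,\,x+e\in R^+$, translation-invariance of $L$ gives
$$
\bar u_i^-(x+e)=\inf_{x^{**}\in T_{-e}\mathcal{M}_\infty^i}h_L^\infty(x^{**},x).
$$
Since $T_{-e}\mathcal{M}_\infty^i\subset R^-$ while $x\in R^+$, every minimiser crosses $\mathcal{M}_\infty^i$ at some $x_*$, and along such a minimiser the splitting $h_L^\infty(x^{**},x)=h_L^\infty(x^{**},x_*)+h_L^\infty(x_*,x)$ holds. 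Interchanging infima, $\inf_{x^{**}}h_L^\infty(x^{**},x_*)$ is (up to normalisation) the globally elementary weak KAM $\bar u_{i,-}^-$ for $T_{-e}\mathcal{M}_\infty^i$ evaluated at $x_*\in\mathcal{M}_\infty^i$; by Proposition~\ref{weakpro2} this is constant on the Aubry class $\mathcal{M}_\infty^i$, say equal to $\kappa^+$. Hence $\bar u_i^-(x+e)-\bar u_i^-(x)=\kappa^+$ throughout $R^+$. The symmetric argument on $R^-$, using $T_e\mathcal{M}_\infty^i\subset R^+$ (so curves from it to $x\in R^-$ cross $\mathcal{M}_\infty^i$), gives a generally different constant $\kappa^-$ on $R^-$.

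I then define $w_i^-$ as the continuous function on $\mathbb{R}^n$ equal to $\ell^-(x)+\kappa^+\,t(x)$ on $R^+$ and $\ell^-(x)+\kappa^-\,t(x)$ on $R^-$, where $t$ is the signed $\mathbb{Z}e$-coordinate normalised to vanish on $\mathcal{M}_\infty^i$; this $w_i^-$ is affine on each of $R^+$ and $R^-$. Then $v_i^-:=\bar u_i^--w_i^-$ is $\Lambda$-periodic by the tangential step and $T_e$-periodic on each half-space by the transverse step, hence $\mathbb{Z}^n$-periodic on $\mathbb{R}^n$; the construction for $\bar u_i^+$ is analogous. I expect the delicate point to be the splitting of the action at $x_*$: one must verify that the sub-paths produced by cutting a minimiser at $\mathcal{M}_\infty^i$ are themselves minimisers so that the triangle inequality is actually an equality — a consequence of the Aubry property of $\mathcal{M}_\infty^i$, but one that warrants care, especially when the crossing $x_*$ is not unique and when handling the normalisation constants coming from the elementary weak KAM associated with the neighbouring sheets $T_{\pm e}\mathcal{M}_\infty^i$.
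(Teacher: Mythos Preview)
Your approach is correct and runs parallel to the paper's, but the justification of the key step differs. The paper's proof first invokes Theorem~\ref{homothm2} (when $\mathcal{A}$ contains a copy of $\mathbb{T}^{n-1}$ there are exactly two homological types of minimal homoclinic orbits, with actions $h_+,h_-$), numbers the sheets $\{\mathcal{M}_\infty^j\}_{j\in\mathbb{Z}}$, and then asserts that for $x\in\Pi_j$ with $x-x_0\in\mathbb{Z}^n$ and $x_0\in\Pi_0$ one has $u^-_{k,0}(x)=u^-_{k,0}(x_0)+jh_+$ (and $(1{+}j)h_-$ for $j<0$); the affine part is then $h_\pm\langle x-x_0,\nu\rangle$ along rays. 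Your route replaces the appeal to Theorem~\ref{homothm2} by a direct separation argument: you compare the elementary solutions for $\mathcal{M}_\infty^i$ and for $T_{-e}\mathcal{M}_\infty^i$ on $R^+$, use domination of one and calibration of the other along a backward semi-static curve, and evaluate the difference on the Aubry class $\mathcal{M}_\infty^i$ via Proposition~\ref{weakpro2} to get the constant $\kappa^+$. Your $\kappa^\pm$ are exactly the paper's $h_\pm$, so the two arguments converge; yours is a weak-KAM reformulation of the same underlying fact and avoids citing the homoclinic classification explicitly.

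Two small points to tidy up. First, your function $t(x)$ cannot be simultaneously affine and vanish on the generically non-flat hypersurface $\mathcal{M}_\infty^i$; simply take $t(x)=\langle x,\nu\rangle$ with $\nu$ the primitive integer covector annihilating $\Lambda$, and drop the continuity of $w_i^-$ across $\mathcal{M}_\infty^i$ (the statement only claims the decomposition on each half separately). Second, your $\lambda^\pm$ actually vanishes on $\Lambda$: for $y\in\Lambda$ and $x_0\in\mathcal{M}_\infty^i$ one has $x_0+y\in\mathcal{M}_\infty^i$ as well, and $\bar u_i^\pm$ is constant on this Aubry class by Proposition~\ref{weakpro2}, so $\lambda^\pm(y)=\bar u_i^\pm(x_0+y)-\bar u_i^\pm(x_0)=0$. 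This simplifies your $w_i^\pm$ to a multiple of $\langle\cdot,\nu\rangle$ on each half, matching the paper's $h_\pm\langle x-x_0,\nu\rangle$.
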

\begin{proof}
We only need to consider the case that the minimal measure is uniquely ergodic, as we did in the proof of Lemma \ref{globallem1}. According to Theorem \ref{homothm2}, there are exactly two types of minimal homoclnic orbits to the Aubry set, we pick up two representative elements $\gamma_-$, $\gamma_+$: $\mathbb{R}\to\mathbb{T}^n$. Let
$$
h_{\pm}=\liminf_{t_i^{\pm}\to\infty}\int_{-t_i^-}^{t_i^+}L(d\gamma_{\pm}(t),t)dt+ (t_i^-+t_i^+)\alpha,
$$
where $t_i^{\pm}$ is chosen such that $\gamma_{\pm}(t_i^+)\to 0$ and $\gamma_{\pm}(-t_i^-)\to 0$.

As the set $\mathcal{M}^i$ is co-dimension one, we are able to number all connected components by $\mathcal{M}^i_{\infty}$ ($i=\cdots -1,0,1,2,\cdots$) such that any path from $\mathcal{M}^{i-1}_{\infty}$ to $\mathcal{M}^{i+1}_{\infty}$ must pass through $\mathcal{M}^i_{\infty}$. Denote by $\Pi_i$ the strip bounded by $\mathcal{M}^i_{\infty}$ and $\mathcal{M}^{i+1}_{\infty}$. $\mathcal{M}^0_{\infty}$ separates $\mathbb{R}^n$ into two parts, denoted by $D^-$ and $D^+$ such that $\mathcal{M}_{\infty}^{-1}\subset D^-$ and $\mathcal{M}_{\infty}^1\subset D^+$.

Let $\bar\gamma_{\pm}$ denote a curve in the lift of $\gamma_{\pm}$ to $\mathbb{R}^n$ such that $\alpha(d\bar\gamma_{\pm})\subset\tilde{\mathcal {M}}_{\infty}^0$. Then, either $\omega(d\bar\gamma_{-})\subset\tilde{\mathcal{M}}_{\infty}^{-1}$, $\omega(d\bar\gamma_{+})\subset\tilde{\mathcal{M}}_{\infty}^1$, or $\omega(d\bar\gamma_{+})\subset\tilde{\mathcal{M}}_{\infty}^{-1}$,
$\omega(d\bar\gamma_{-})\subset\tilde{\mathcal{M}}_{\infty}^1$. We only need to study one case, let's say, the first case.

Given a bounded domain $\Omega\subset\mathbb{R}^n$. From the definition of globally elementary weak KAM solution, we see that
$$
\bar u^-_0|_{\Omega}=u^-_{k,0}|_{\Omega}
$$
whenever $d_k$ is suitably large. Clearly, the function $\bar u_0$ is periodic when it is restricted $\Omega\cap\Pi_i$, i.e. $u^-_{k,0}(x)=u^-_{k,0}(x')$ if $x'-x\in\mathbb{Z}^n$ and $x,x'\in\Omega\cap\Pi_i$.

For each $x\in\Omega\cap\Pi_i$ with $i>0$, there exists at least one point $x_0\in\Omega\cap\Pi_0$ such that $x-x_0\in\mathbb{Z}^n$. By definition, we find that $u^-_{k,0}(x)=u^-_{k,0}(x_0)+ih_+$. Obviously, $u^-_{k,0}(x)=u^-_{k,0}(x_0)+(1+i)h_-$ if $x\in\Omega\cap\Pi_i$ with $i<0$.

Pick up a point $x_0\in\mathcal{M}_{\infty}^0$. For each non-zero integer vector $k\in\mathbb{Z}^n$, the point $x=x_0+k$ stays in certain $\mathcal{M}_{\infty}^i$. Along the ray $x=x_0+tk$ with $t>0$, we define
$$
v^-_0(x)= \begin{cases} u^-_0(x)-u^-_0(x_0)-tih_+,\hskip 0.5 true cm \text{if} \ i>0;\\
u^-_0(x)-u^-_0(x_0)-tih_-, \hskip 0.5 true cm\text{if} \ i>0,
\end{cases}
$$
Clearly, $u^-_0-v^-_0$ is affine and $v^-_0$ is periodic when they are restricted in $D^-$ as well as in $D^+$.
\end{proof}

Given an ergodic component of a minimal measure with higher co-dimensions, it is unclear what condition guarantees the decomposition of the globally elementary weak KAM solutions. It appears closely related to the problem whether there are infinitely many types of minimal homoclinic orbits to the Aubry class.

\begin{pro}\label{globalpro1}
Let $u^{\pm}_i$ be the globally elementary weak KAM solution for $\mathcal{M}^i$. Then, $u^{\pm}_i$ remains bounded on the whole ray $\{x_0+tg:t\in\mathbb{R}_+\}$ for each $g\in H_1(\mathcal{A}^i, \mathbb{Z})$, where $\mathcal{A}^i\supset\mathcal{M}^i$ is an Abury class; for $g\in H_1(M,\mathcal{A},\mathbb{Z})/K$,  $u^{\pm}_i$ grows up linearly, or asymptotically linearly on the ray $\{x_0+tg:t\in\mathbb{R}_+\}$.
\end{pro}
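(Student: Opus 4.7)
The plan is to reduce both assertions to the representation of $\bar u_i^{\pm}$ as action integrals along backward (resp.\ forward) semi-static curves whose $\alpha$-limit (resp.\ $\omega$-limit) lies in the lift $\mathcal{M}_{\infty}^i$. Boundedness in part one and linear growth in part two are then read off from the algebraic-topological type of the ray $\{x_0+tg\}$ in the universal cover $\mathbb{R}^n$.

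Fix first $g\in H_1(\mathcal{A}^i,\mathbb{Z})$. Pick a closed curve $\phi\subset\mathcal{A}^i$ representing $g$; because $\phi$ is made of static arcs, its reduced action $\int(L-\eta_c)d\phi+T\alpha(c)$ vanishes. Lifting $\phi$ to $\mathbb{R}^n$ from $\bar x_0^*\in\mathcal{M}_\infty^i$ terminates at $\bar x_0^*+g\in\mathcal{M}_\infty^i$, so $\bar u_i^{\pm}(\bar x_0^*+g)=\bar u_i^{\pm}(\bar x_0^*)$. For arbitrary $x\in\mathbb{R}^n$, pick a backward $c$-semi-static curve $\bar\gamma$ from $x$ with $\alpha$-limit in $\mathcal{M}_\infty^i$; by $\mathbb{Z}^n$-periodicity of $L$, the translate $\bar\gamma+g$ has identical action and accumulates on $\mathcal{M}_\infty^i+g=\mathcal{M}_\infty^i$. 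Substituting into
$$
\bar u_i^-(x)=\bar u_i^-(\bar x_0^*)+\liminf_{t\to\infty}\Big(\int_{-t}^{0}(L-\eta_c)(d\bar\gamma(s))\,ds+t\alpha(c)\Big)
$$
yields $\bar u_i^-(x+g)=\bar u_i^-(x)$, and analogously $\bar u_i^+(x+g)=\bar u_i^+(x)$. Writing $t=n+s$ with $n\in\mathbb{Z}_{\ge0}$ and $s\in[0,1)$ and using the uniform Lipschitz constant of weak KAM solutions yields $|\bar u_i^{\pm}(x_0+tg)-\bar u_i^{\pm}(x_0)|\le K\|g\|$, so $\bar u_i^{\pm}$ is bounded along the whole ray.

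For the second part, fix a nonzero $g\in H_1(M,\mathcal{A},\mathbb{Z})/K$. For $x_0^*\in\mathcal{M}_\infty^i$ and $n\in\mathbb{Z}_+$, take a backward $c$-semi-static curve $\bar\gamma_n$ from $x_0^*+ng$ to $\mathcal{M}_\infty^i$. The projection of $\bar\gamma_n$ to $\check M$ is still semi-static by the way the globally elementary weak KAM solution is built in Lemma~\ref{globallem1}, so its relative homology class in $H/K$ is precisely $ng$. Hence
$$
\bar u_i^-(x_0^*+ng)-\bar u_i^-(x_0^*)\ge h_{ng}^{\infty}(x_0^*,x_0^*+ng),
$$
and Proposition~\ref{homopro2} (applied to $L-\eta_c$) bounds the right side below by $a\|ng\|=na\|g\|$ for large $n$, with $a>0$ depending only on $L$ and $c$. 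The trivial Lipschitz upper bound $|\bar u_i^{\pm}(y)-\bar u_i^{\pm}(y')|\le K\|y-y'\|$ gives the matching upper bound $nK\|g\|$. Filling in non-integer $t$ by Lipschitz interpolation yields the asserted (asymptotically) linear growth.

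The main obstacle is the homological identification just used: one must check that every backward semi-static curve from $x_0^*+ng$ to $\mathcal{M}_\infty^i$ really carries relative class $ng\ne 0$ in $G=H/K$, rather than drifting off along some $h\in K$ and giving a trivially bounded contribution. This is where the construction of the globally elementary weak KAM solution via Lemma~\ref{globallem1}, as a limit of weak KAM solutions of perturbations $L_{k,\epsilon}$ whose minimal measure is forced onto the chosen component, becomes essential: in that set-up the backward semi-static curves of the perturbed Lagrangian stay $\epsilon$-close to $\mathcal{M}_\infty^i$, and because $g\notin K$ the prescribed endpoint forces the limiting relative class to be exactly $ng$.
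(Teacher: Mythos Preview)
Your argument is essentially correct and close in spirit to the paper's proof, but the two diverge in how the second assertion is handled.

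For the first part, both you and the paper exploit that curves inside the Aubry class $\mathcal{A}^i$ carry zero reduced action, so translates by $g\in H_1(\mathcal{A}^i,\mathbb{Z})$ do not change $\bar u_i^{\pm}$; Lipschitz interpolation then gives boundedness along the ray. This is the same argument.

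For the second part, the paper avoids your ``main obstacle'' entirely. Rather than following a backward semi-static curve and arguing that its relative homology class must be $ng$, the paper simply invokes the identity
\[
\bar u_i^-(x)-\bar u_i^-(x^*)=h_g^{\infty}(\pi_\infty x,\pi_\infty x)
\]
for $x,x^*\in\mathcal{M}_\infty^i$ with $[x-x^*]=g$, which was built into the construction of the globally elementary weak KAM in Lemma~\ref{globallem1}. The lower bound then follows immediately from Proposition~\ref{homopro2}. For the upper bound the paper uses subadditivity $h_g^{\infty}\le\sum j_i h_{g_i}^{\infty}$ over a finite generating set, whereas you use the global Lipschitz constant of weak KAM solutions; both work, and yours is arguably simpler. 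Your route through the homological identification of the semi-static curve is not wrong, but it recreates work already absorbed into the definition of $\bar u_i^\pm$, and the justification you sketch in the final paragraph (via the perturbed Lagrangians $L_{k,\epsilon}$) is exactly what Lemma~\ref{globallem1} packaged for you.
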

\begin{proof}
For arbitrarily large $t$, there exists $x\in\mathcal{M}_{\infty}^i$ such that $\text{\rm dist}(x_0+tg,x)\le 2$ and $\mathcal{M}_{\infty}^i\cap D\neq\varnothing$ where $D$ is the unit cube containing the origin. Let $x^*=\pi_{\infty}x$, $\bar\xi$ be a curve connecting $x^*$ to $x$, $\xi=\pi_{\infty}\bar\xi$, then $[\xi]\in H_1(\mathcal{A},\mathbb{Z})$. By definition,
$$
\inf_{[\xi]\in H_1(\mathcal{A},\mathbb{Z})}\inf_{\stackrel{\xi(0)=\xi(k)}{\scriptscriptstyle k\in\mathbb{Z}_+}}\int_0^k L(d\xi(t),t)dt=0
$$
it proves the first conclusion.

For the second, one can see from Proposition \ref{homopro2} that it grows up at least linearly. Given $g\in H_1(M,\mathcal{A}^i,\mathbb{Z})/K$ finitely many elements $g_0,g_1,\cdots,g_r\in H_1(\mathbb{T}^n,\mathbb{Z})$
exists such that for each $g=\sum_{i+0}^rj_ig_i$ with $j_i\in\mathbb{Z}_+$. Thus,
$$
h^{\infty}_g(x,x)\le\sum_{i=0}^r j_ih^{\infty}_{g_i}(x,x).
$$
For each $\pi_{\infty}x\in\mathcal {M}^i$, $[x-x^*]=g$, we have
$$
u(x)-u(x^*)=h_g^{\infty}(\pi_{\infty}x,\pi_{\infty}x).
$$
This completes the proof.
\end{proof}

\section{\ui Dynamics around fixed point}
\setcounter{equation}{0}
Given a Tonelli Lagrangian $L$: $T\mathbb{T}^n\to\mathbb{R}$, let $c_0\in\arg\min\alpha$. Any minimal measure with zero-rotation vector must be $c_0$-minimal measure. In this section we study the dynamics around the Mather set for the class $c_0$. The motivation comes from following argument.

Let us consider the normal form of a nearly integrable Hamiltonian
$$
H(p,q)=h_0(p)+\epsilon P(p,q), \qquad (p,q)\in\mathbb{R}^{d}\times\mathbb{T}^{d}.
$$
around a complete resonant point. Let $\omega(y)=\nabla h_0(y)$ denote the frequency vector of the unperturbed system. A frequency $\omega$ is called complete resonant of (minimal) period $T$ if $T\omega\in\mathbb{Z}^d$ and $t\omega\notin\mathbb{Z}^d$ for each $t\in (0,T)$. By finitely many steps of KAM iteration and one step of linear coordinate transformation on torus, one obtains a normal form of nearly integrable Hamiltonian (see Appendix A)
$$
\tilde H(\tilde x,\tilde y)=\tilde h(\tilde y)+\epsilon \tilde Z(x,\tilde y)+\epsilon \tilde R(\tilde x,\tilde y)
$$
where $\tilde x=(x,x_d)$, $\tilde y=(y,y_d)$, $(x,y)\in\mathbb{T}^{d-1}\times\mathbb{R}^{d-1}$, $\tilde H$ is well-defined in $(\tilde x,\tilde y)\in\mathbb{T}^{d}\times B_d(\tilde y^*)$, $\partial\tilde h(\tilde y^*)=(0,\omega_{d})$ and $\epsilon\tilde R$ is a higher order term.

Since $\partial_{y_{d}}\tilde h(\tilde y^*)=\omega_{d}\ne 0$, there exists some function $Y(x,y,\tau)$ solving the equation $\tilde H(x,-\tau,y,Y(x,y,\tau))=E$ provided $E>\min\alpha_{\tilde H}$, which defines a time-periodic Hamiltonian system with $(d-1)$-degrees of freedom. Here $\tau=-x_{d}$ plays the role of time.  One can write
$$
Y(x,y)=h(y)+\epsilon Z(x,y)+\epsilon R(x,y,\tau)
$$
where $\epsilon R$ is a higher order term of $\epsilon$ and $\partial h(y^*)=0$, i.e. the complete resonance reduces to zero frequency. Omitting the higher order term, one obtains Hamiltonian with $d-1$ degrees of freedom
$$
\bar Y(x,y)=h(y)+\epsilon Z(x,y).
$$
It determines a Lagrangian we shall study in this section.

\subsection{Flat of the $\alpha$-function}
By definition, a subset is called a flat of certain $\alpha$-function if, restricted on this set, the $\alpha$-function is affine, and no longer affine on any set properly containing the flat. As $\alpha$-function is convex with super-linear growth, each flat is a convex and bounded set. Given an $n$-dimensional flat $\mathbb{F}$, a subset in $\partial \mathbb{F}$ is called an edge if it is contained in a $(n-1)$-dimensional hyperplane. Since each flat is convex, each edge is also convex.

\begin{theo}\label{flatthm1}
Given a class $c_0\in H^1(\mathbb{T}^n,\mathbb{R})$, if the minimal measure is uniquely ergodic, supported on a hyperbolic fixed point, then there exists an $n$-dimensional flat $\mathbb{F}_0\subset H^1(\mathbb{T}^n,\mathbb{R})$ such that this point supports a $c$-minimal measure for all $c\in\mathbb{F}_0$.
\end{theo}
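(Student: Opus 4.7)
The plan is to show that $\alpha(c)=H(x_0,p_0)=:H_0$ holds for every $c$ in an open neighborhood of $c_0$ in $H^1(\mathbb{T}^n,\mathbb{R})$, where $p_0=\partial L/\partial\dot x(x_0,0)$; convexity of $\alpha$ then forces this neighborhood into a single affine flat $\mathbb{F}_0$ of dimension $n$, on which the Dirac mass at the fixed point realises the infimum in the definition of $\alpha(c)$ and is therefore $c$-minimal. The lower bound $\alpha(c)\ge H_0$ is trivial, since the Dirac $\delta_{(x_0,0)}$ is holonomic, has rotation vector $0$, and gives $c$-action $L(x_0,0)=-H_0$ for every $c$.

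For the matching upper bound I would use Fathi's variational formula: $\alpha(c)\le H_0$ is equivalent to the existence of a Lipschitz function $u_c:\mathbb{T}^n\to\mathbb{R}$ with $H(x,\eta_c(x)+du_c(x))\le H_0$ a.e., where $\eta_c$ is a chosen closed representative of the class $c$. At $c=c_0$ weak-KAM theory produces a Lipschitz critical subsolution $u_0$ with $\eta_{c_0}(x_0)+du_0(x_0)=p_0$; the Fathi--Siconolfi regularization upgrades $u_0$ to a $C^{1,1}$ subsolution that is strict off the projected Aubry set $\mathcal{A}(c_0)$. Assuming $\mathcal{A}(c_0)=\{x_0\}$ (see the obstacle paragraph below), $u_0$ satisfies $H(x,\eta_{c_0}+du_0)<H_0$ for every $x\ne x_0$.

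For $c=c_0+\delta c$ with $|\delta c|$ small, I set
$$u_c := u_0-\psi_{\delta c},\qquad \psi_{\delta c}(x):=\chi(x)\,\langle\delta c,\,x-x_0\rangle,$$
with $\chi\in C^\infty(\mathbb{T}^n,[0,1])$ a cutoff equal to $1$ on an inner neighborhood $V''\ni x_0$ and supported in a slightly larger $V'\supset\overline{V''}$ (so $d\chi(x_0)=0$ automatically). Then $d\psi_{\delta c}\equiv\delta c$ on $V''$ while $|d\psi_{\delta c}|\le C|\delta c|$ globally, and the new one-form
$$\eta_c+du_c \;=\;(\eta_{c_0}+du_0)+(\delta c-d\psi_{\delta c})$$
coincides with $\eta_{c_0}+du_0$ on $V''$, so $H(\cdot,\eta_c+du_c)\le H_0$ there with equality exactly at $x_0$. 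On the compact complement $\mathbb{T}^n\setminus V''$ (which avoids $x_0$), the strict subsolution property of $u_0$ gives a uniform margin $H(\cdot,\eta_{c_0}+du_0)\le H_0-\varepsilon'$, which absorbs the deviation $C|\delta c|$ provided $|\delta c|<\varepsilon'/C'$. Hence $u_c$ is a critical subsolution and $\alpha(c)\le H_0$, completing the identification $\alpha(c)=H_0$ on a full neighborhood of $c_0$.

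The main obstacle I anticipate is the identification $\mathcal{A}(c_0)=\{x_0\}$ needed to make $u_0$ strict on all of $\mathbb{T}^n\setminus\{x_0\}$ rather than only outside a larger Aubry set. Unique ergodicity of the minimal measure forces any additional static orbit in $\mathcal{A}(c_0)$ to be homoclinic to $(x_0,0)$, and the hyperbolicity at $(x_0,p_0)$ together with strict convexity of $L$ should force the renormalised two-sided action of such a homoclinic to be strictly positive, contradicting $c_0$-staticity. Alternatively, once the Aubry set has been reduced to a single point, one can conclude directly from the Massart-type inequality $\dim\mathbb{F}_0\ge\operatorname{rank}H_1(\mathbb{T}^n,\mathcal{A}(c_0),\mathbb{Z})$ recalled earlier in the paper, which immediately returns the full value $n$.
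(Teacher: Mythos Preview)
Your argument hinges on the identification $\mathcal{A}(c_0)=\{x_0\}$, and this is where it breaks: the hypothesis that the $c_0$-minimal measure is uniquely ergodic and supported on a hyperbolic fixed point does \emph{not} force the Aubry set to be a singleton. For the pendulum $L=\tfrac12\dot x^2+1-\cos x$ on $\mathbb{T}^1$, the flat of $\alpha$ is an interval $[-a^*,a^*]$; at the boundary point $c_0=a^*$ the unique minimal measure is still the Dirac mass at the saddle, yet the rightward separatrix has $c_0$-action exactly zero and lies in $\mathcal{A}(c_0)$. Your proposed remedy---that hyperbolicity forces the renormalised action of a homoclinic to be strictly positive---is precisely what fails here: hyperbolicity only yields $A(g)+A(-g)>0$ (the barrier $u_0^--u_0^+$ is positive near the fixed point), so one of the two directions can perfectly well have $A(g)=0$. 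Once $\mathcal{A}(c_0)$ contains a homoclinic exiting every neighborhood $V''$ of $x_0$, the Fathi--Siconolfi subsolution is not strict on $\mathbb{T}^n\setminus V''$, your margin $\varepsilon'$ does not exist, and the cutoff construction cannot absorb the $\delta c$ perturbation. Equivalently, you are trying to prove $c_0\in\text{int}\,\mathbb{F}_0$, which is simply false in this example. The Massart alternative fails for the same reason: homoclinics in $\mathcal{A}(c_0)$ kill homology classes in $H_1(\mathbb{T}^n,\mathcal{A}(c_0),\mathbb{Z})$ and the rank can drop below $n$.

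The paper confronts exactly this obstacle. It proves the key inequality $A(g)+A(-g)>0$ via the local positivity of $u_0^--u_0^+$, concludes that the set $\mathbb{G}_0=\{g:A(g)=0\}$ spans a cone contained in an open half-space, and then \emph{moves away from $c_0$}: for $c$ with $-c$ in the dual cone and $|c|$ small, upper semi-continuity of the Ma\~n\'e set forces any competing $\mu_c$ to have rotation vector in $\text{span}_{\mathbb{R}_+}\mathbb{G}_0$, whence $A_c(\mu_c)=A(\mu_c)-\langle c,\rho(\mu_c)\rangle>0$, a contradiction. At this new $c$ every minimal homoclinic has strictly positive $c$-action, so $\mathcal{A}(c)=\{x_0\}$, and now your subsolution argument (or an even simpler openness argument) produces a full $n$-dimensional neighborhood inside the flat. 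The essential step you are missing is this preliminary displacement from $c_0$ to a better base point.
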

\noindent{\bf Remark}: The condition of this theorem does not exclude topological non-triviality of the Aubry set. An example is the product of $n$ pendulums. The Aubry set covers the whole torus
$\mathbb{T}^n$ if the Lagrangian $L$ is replaced by $L-\langle c,\dot x\rangle$ with $c$ being on the boundary of the flat.
\begin{proof}
By translation one can assume that the fixed point is at $(x,\dot x)=(0,0)$, by adding a closed 1-form and a constant to the Lagrangian, one can assume $c_0=0$ and $L(0,0)=0$.

To each closed curve $\xi$: $[-T,T]\to\mathbb{T}^n$ with $\xi(-T)=\xi(T)$ a first homology class $[\xi]=g\in H(\mathbb{T}^n,\mathbb{Z})$ is associated. We consider the quantity
$$
A(g)=\liminf_{T\to\infty}\inf_{\stackrel {\xi(-T)=\xi(T)}{\scriptscriptstyle [\xi]=g}} \int_{-T}^TL(d\xi(t))dt.
$$
By the condition assumed on $L$, one has that $A(g)\ge0$ for any $g\ne 0$. There exist at least $n+1$ irreducible classes $g_i\in H(\mathbb{T}^n,\mathbb{Z})$ and $n+1$ minimal homoclinic orbits $d\gamma_i$ such that $A([g_i])=A(\gamma_i)$ \cite{Be1}. Clearly, $H_1(\mathbb{T}^n,\mathbb{Z})$ can be generated by
the homology classes of all minimal homoclinic curves over $\mathbb{Z}_+$.

We abuse the notation $g$ to denote homology class $g\in H_1(\mathbb{T}^n,\mathbb{Z})$ or to denote a point $g\in\mathbb{Z}^n$. For each curve $\bar\gamma_T$: $[-T,T]\to\mathbb{R}^n$ with $\bar\gamma_T(-T)=0$ and $\bar\gamma_T(T)=g$, one has
$$
A(g)=\liminf_{T\to\infty}\inf_{\stackrel {\bar\xi(-T)=0}{\scriptscriptstyle \bar\xi(T)=g}} \int_{-T}^TL(d\bar\xi(t))dt.
$$
Recall the definition of globally elementary weak-KAM and note that the point $x=0$ is the support of the minimal measure. Let $u_0^-$ ($u_0^+$) denote the backward (forward) globally elementary weak-KAM for $\mathcal{M}^0=\{x=0\}$, we have
\begin{equation}
A(g)=u_0^-(g)-u_0^-(0),\qquad A(-g)=u_0^+(0)-u_0^+(g).\notag
\end{equation}
By setting $u_0^-(0)=u_0^+(0)$, we claim
\begin{equation}\label{flateq1}
A(g)+A(-g)=u^-_0(g)-u^+_0(g)>0.
\end{equation}

The quantity $A(g)$ is achieved may not by a curve connecting the origin to $g\in\mathbb{Z}^n$, but may by the conjunction of several curves  $\bar\gamma_1\ast\bar\gamma_2\ast\cdots\ast\bar\gamma_m$. Let $\bar\gamma_i$: $\mathbb{R}\to\mathbb{R}^n$ denote a curve ($i=1,\cdots,m$), the conjunction implies that $\bar\gamma_i(-\infty)=\bar\gamma_{i-1}(\infty)$. Let $\gamma_i=\pi_{\infty}\bar\gamma_i$, where $\pi_{\infty}:\mathbb{R}^n\to\mathbb{T}^n$ denotes the standard projection. In this case, $\gamma_1,\cdots,\gamma_m$ are minimal homoclinic curves such that $g=\sum_{i=1}^m[\gamma_i]$. By the definition of elementary weak-KAM, each $\bar\gamma_i$ is a $(u_0^-,L)$-calibrated curve. Let $g_i=\sum_{j=1}^i[\gamma_j]$. Obviously, some large $t_0>0$ exists such that $(\bar\gamma_i(t),\dot{\bar\gamma}_i(t))$ stays in the local stable manifold of the point $(x,\dot x)=(g_i,0)$ whenever $t\ge t_0$. Therefore, some constant $C_i$ exists such that
\begin{equation}\label{flateq2}
u^-_0(\bar\gamma_i(t))=u^+_{g_i}(\bar\gamma_i(t))+C_i,\qquad \forall\ t\ge t_0,
\end{equation}
where we use $u^-_{g_i}$ and $u^+_{g_i}$ to denote the globally elementary-KAM based on $x=g_i$. Clearly, $u^+_{g_i}$ and $u^-_{g_i}$ generate the local stable and unstable manifold around the point $(x,\dot x)=(g_i,0)$ respectively.

Because that $u^{\pm}_{g_i}$ is $L$-dominate function, for $x\in B_{\delta}(g_i)$ with suitably small $\delta>0$ we have (see Theorem 5.1.2 in \cite{Fa2})
\begin{equation}\label{flateq2.1}
u^+_{g_i}(x)-u^{+}_{g_i}(g_i)\le u^{\pm}_{0}(x)-u^{\pm}_0(g_i)\le u^-_{g_i}(x)-u^{-}_{g_i}(g_i).
\end{equation}
Remember that $u_0^-\ge u_0^+$. If
$$
A(g)+A(-g)=u_0^-(g)-u_0^+(g)=0,
$$
substituting $u^+_{g_i}$ in (\ref{flateq2.1}) by the expression in (\ref{flateq2}) we see  that some $t_0$ exists so that
$$
u_0^+(\bar\gamma_m(t))= u^-_0(\bar\gamma_m(t)), \qquad \forall\ t\ge t_0.
$$
Here, $t_0$ is chosen so that $\bar\gamma_m(t))\in B_{\delta}(g_m)$ for $t\ge t_0$.
Since $u_0^+$ is an $L$-dominate function and $\bar\gamma_i$ is a $(u_0^-,L)$-calibrated curve for each $1\le i\le m$,
\begin{align*}
u_0^+(\bar\gamma_i(t_0))-u_0^+(\bar\gamma_i(t_1))&\le\int_{t_1}^{t_0}L(d\bar\gamma_i(s))ds,\\
u_0^-(\bar\gamma_i(t_0))-u_0^-(\bar\gamma_i(t_1))&=\int_{t_1}^{t_0}L(d\bar\gamma_i(s))ds
\end{align*}
hold for any $t_1\le t_0$. This induces that $u_0^-(\bar\gamma_m(t))=u_0^+(\bar\gamma_m(t))$ holds for all $t\in\mathbb{R}$, and induces in turn the inequality for $i=m-1,m-2,\cdots$, and finally we have
$$
u_0^-(\bar\gamma_1(t))=u_0^+(\bar\gamma_1(t)), \qquad \forall\ t\in\mathbb{R}.
$$
Since $u_0^-(x)\ge u_0^+(x)$ holds in a small neighborhood of $0$
\begin{equation*}
u_0^-(\bar\gamma_1(t))= u_0^+(\bar\gamma_1(t)), \qquad \forall\ t\in\mathbb{R}.
\end{equation*}

On the other hand, as the fixed point $\{x=0\}$ is hyperbolic, some $\delta>0$ exists such that
\begin{equation}
u^-_0(x)-u^+_0(x)>0, \qquad \forall\ x\in B_{\delta}(0)\backslash\{0\},\notag
\end{equation}
if we set $u^-_0(0)=u^+_0(0)$. This contradiction proves the formula (\ref{flateq1}).

Let
$$
\mathbb{G}_0=\{g\in H_1(\mathbb{T}^n,\mathbb{Z}):\exists\ \gamma: \mathbb{R}\to\mathbb{T}^n\ s.t. \ [\gamma]=g,\ A(\gamma)=0\}.
$$
$\mathbb{G}_0$ is said to generate a rational direction $g\in\mathbb{Z}^n$ over $\mathbb{Z}_+$ if there exist $k,k_i\in\mathbb{Z}_+$ and $g_i\in \mathbb{G}_0$ such that
$$
kg=\sum k_ig_i.
$$
It is an immediate consequence of the formula (\ref{flateq1}) that once $\mathbb{G}_0$ generates a rational direction $g\in\mathbb{Z}^n$ over $\mathbb{Z}_+$, then it can not generate the direction $-g$ over $\mathbb{Z}_+$. Therefore, the set
$$
\text{\rm span}_{\mathbb{R}_+}\mathbb{G}_0=\{\Sigma a_ig_i:\ g_i\in\mathbb{G}_0,\ a_i\ge 0\}
$$
is a cone properly restricted in half space. Thus, there exists an $n$-dimensional cone $\mathbb{C}_0$ such that
$$
\langle c,g\rangle >0,\qquad \forall\ c\in\mathbb{C}_0,\ g\in \text{\rm span}_{\mathbb{R}_+}\mathbb{G}_0.
$$

Since the minimal measure for zero cohomology class is supported on the fixed point, $\tilde{\mathcal{N}}(0)$ is composed of those minimal homoclinic orbits along which the action equals zero. According to the upper semi-continuity of Ma\~n\'e set in cohomology class, any minimal measure $\mu_c$ is supported by a set lying in a small neighborhood of these homoclinic orbits if $|c|$ is very small. Consequently. we have $\rho(\mu_c)\in\text{\rm span}_{\mathbb{R}_+}\mathbb{G}_0$, where $\rho(\mu_c)$ denotes the rotation vector of $\mu_c$.

Let us consider a cohomology class $c$ such that $-c\in\mathbb{C}_0$ and $|c|\ll 1$. We claim that the $c$-minimal measure is also supported on the fixed point. Indeed, if it is not true, we would have positive average action of $L$: $ A(\mu_c)>0$, since the minimal measure for zero class is assumed unique and supported on the fixed point. By the choice of $c$ one has that $\langle c,\rho(\mu_c)\rangle<0$. Thus, one obtains
$$
A_c(\mu_c)=A(\mu_c)-\langle c,\rho(\mu_c)\rangle>0=A_c(\mu),
$$
it deduces absurdity. For this class $c$, the action of the Lagrangian $L_c=L-\langle c,\dot x\rangle$ along any minimal homoclinic curve $\gamma$ is positive,
$$
A(\gamma)-\langle c,[\gamma]\rangle >0,
$$
namely, the Aubry set for this class is also a singleton. Consequently, $\mu_{c'}$ is also supported on this point if $c'$ is sufficiently close to $c$. This verifies the existence of $n$-dimensional flat.
\end{proof}

\noindent{\bf Eigenvalues of the fixed point}

Let us consider the eigenvalues of the fixed point by assuming the hyperbolicity, denoted by $\lambda_i$ $(i=1,2,\cdots, 2n)$. Under the hyperbolic assumption, half of these have positive real part, other half have negative real part. In general, these eigenvalues may have non-zero imaginary part. But in nearly integrable systems, all eigenvalues are real.

\begin{pro}\label{flatpro1}
If the Lagrangian is a small perturbation of integrable one $L=\ell(\dot x)+\epsilon P(x,\dot x)$ where $\ell$ is positive definite in $\dot x$, then for generic $P$ and for sufficiently small $\epsilon$, all eigenvalues at the fixed point are real and different.
\end{pro}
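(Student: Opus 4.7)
The strategy is to linearize the Euler--Lagrange equation at $(x^*,0)$, perform a $\sqrt\epsilon$ rescaling to extract a leading eigenvalue problem with real symmetric structure, and then secure genericity via a local perturbation that keeps the fixed point in place.

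Writing $x=x^*+\xi$ and using $L_x(x^*,0)=0$, the linearized Euler--Lagrange equation at $(x^*,0)$ takes the form
\begin{equation*}
C\ddot\xi+(B^T-B)\dot\xi-A\xi=0,
\end{equation*}
with $A=L_{xx}$, $B=L_{x\dot x}$, $C=L_{\dot x\dot x}$ at $(x^*,0)$. Symmetry of $A,C$ and antisymmetry of $B^T-B$ make the characteristic polynomial $p(\lambda)=\det(\lambda^2C+\lambda(B^T-B)-A)$ even, recording the Hamiltonian pairing $\lambda\leftrightarrow-\lambda$. For $L=\ell(\dot x)+\epsilon P(x,\dot x)$ one has $A=\epsilon H_1$, $B=\epsilon H_2$, $C=C_0+\epsilon H_3$ with $C_0=D^2\ell(0)>0$ and $H_i$ the appropriate second derivatives of $P$ at $(x^*,0)$. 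Substituting $\lambda=\sqrt\epsilon\,\mu$ and dividing by $\epsilon^n$, the characteristic equation reduces to
\begin{equation*}
\det\bigl(\mu^2C_0-H_1+\sqrt\epsilon\,\mu(H_2^T-H_2)+\epsilon\mu^2H_3\bigr)=0.
\end{equation*}
At $\epsilon=0$ this is $\det(\mu^2C_0-H_1)=0$, so the leading values of $\mu^2$ are the eigenvalues of $C_0^{-1}H_1$, which is similar to the symmetric matrix $\widetilde H=C_0^{-1/2}H_1C_0^{-1/2}$; hence they are real. Hyperbolicity, inherited from Theorem~\ref{flatthm1}, rules out purely imaginary $\lambda$ and therefore forces all these $\mu^2$ to be strictly positive, so at $\epsilon=0$ one obtains $2n$ real eigenvalues $\pm\sqrt{\mu_i^2}$ paired by the Hamiltonian symmetry.

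The genericity step is to force the eigenvalues of $\widetilde H$ to be pairwise distinct. The only subtlety is that the fixed point $x^*=x^*(P)$ varies with $P$, but this is easily handled: given a base $P_0$ with fixed point $x^*_0$ and any symmetric matrix $\Delta$, the perturbation $P=P_0+\tfrac12(x-x^*_0)^T\Delta(x-x^*_0)\chi(x)$ with $\chi$ a cut-off equal to $1$ near $x^*_0$ preserves $x^*_0$ as a fixed point, leaves $C_0,H_2,H_3$ unchanged, and shifts $H_1\mapsto H_1+\Delta$. Thus $P\mapsto\widetilde H(P)$ is locally a submersion onto the space of $n\times n$ real symmetric matrices, and the locus of such matrices with a repeated eigenvalue is a real-algebraic subset of codimension $\ge 2$. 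Its preimage in $C^r$ is therefore nowhere dense, and covering $\mathfrak{B}_a$ by countably many such local slices produces a residual subset of $P$ for which $\widetilde H(P)$ is simple.

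Reality and distinctness then persist for small $\epsilon>0$: the coefficients of the rescaled characteristic polynomial in $\mu$ are real polynomials in $\sqrt\epsilon$, so each simple real root $\pm\sqrt{\mu_i^2}$ at $\sqrt\epsilon=0$ extends by the implicit function theorem to a real branch $\mu(\sqrt\epsilon)$; a simple real root cannot turn complex without first colliding with another real root, which distinctness forbids. Hence $\lambda=\sqrt\epsilon\,\mu$ gives $2n$ real distinct eigenvalues for all sufficiently small $\epsilon$. The main obstacle I anticipate is the genericity paragraph---turning the cut-off perturbation into a rigorous Sard--Smale / transversality argument in the infinite-dimensional $C^r$ setting and confirming that the resulting set is genuinely residual rather than merely dense---but the linear-algebraic core of the argument above is straightforward.
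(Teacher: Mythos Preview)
Your proposal is correct and follows essentially the same approach as the paper: linearize the Euler--Lagrange equation, rescale $\lambda=\sqrt\epsilon\,\mu$, reduce to the generalized eigenvalue problem $\det(\mu^2 C_0-H_1)=0$ at leading order, invoke genericity for simple eigenvalues, and argue persistence for small $\epsilon$. The only notable difference is in the persistence step: where you use the implicit function theorem for simple real roots, the paper instead observes that if a complex root $\sigma+i\omega$ appeared then the Hamiltonian symmetry would force all four of $\pm\sigma\pm i\omega$ to be roots, yielding more than $2n$ roots of a degree-$2n$ polynomial---a slightly slicker counting argument, but yours is equally valid.
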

\begin{proof}
Let $A=\partial^2_{\dot x\dot x}L$, $B=\partial^2_{x\dot x}P$, $C=\partial^2_{xx}P$ evaluated at the fixed point. As the minimal measure is supported on a fixed point, $C$ is positive definite. We consider the linearized equation and assume the solution with the form of $x=\xi \exp{\sqrt{\epsilon}\lambda t}$,
then
\begin{equation}\label{flateq3}
\left |\lambda^2 A-\sqrt{\epsilon}\lambda(B-B^t)-C\right |_{n\times n}=0.
\end{equation}
Let $A_0=\partial^2_{\dot x\dot x}\ell$, evaluated at the fixed point. For generic $C$, all solutions of the equation
\begin{equation}\label{flateq4}
\left |\lambda^2 A_0-C\right |_{n\times n}=0.
\end{equation}
are real and different from each other: $\lambda=\pm\lambda_1,\pm\lambda_2,\cdots,\pm\lambda_n$,
$\lambda_i\neq\lambda_j$ if $i\neq j$. Since (\ref{flateq3}) is a small perturbation of (\ref{flateq4}), all solutions of (\ref{flateq3}) are different, and consequently, real. If there was a complex solution $\lambda=\sigma+i\omega$, $\pm\sigma\pm i\omega$ would be solution also, which is guaranteed by the Hamiltonian structure. It implies the existence of more than $2k$ solutions, but it is absurd.
\end{proof}

\noindent{\bf The shape of the flat}

Here, we are concerned about the flat $\mathbb{F}_0=\mathscr{L}_{\beta}(0)$. It is a $n$-dimensional flat if the $c$-minimal measure is supported on the hyperbolic fixed point for each $c\in\text{\rm int}\mathbb{F}_0$. By coordinate translation, we assume it is at the origin: $(\dot x,x)=(0,0)$. Correspondingly, in canonical coordinates the fixed point is also at the origin $(x,y)=(0,0)$. Let $(\xi_i^{\pm},\eta_i^{\pm})$ denote the eigenvector for $\pm\lambda_i$, where $\xi_i^{\pm}$ is for the $x$-coordinates, $\eta_i^{\pm}$ is for the $y$-coordinates. We assume

1, all eigenvalues are real number and different;

2, all minimal homoclinic curves approach to the fixed point in the direction $\xi_1^{\pm}$ as $t\to\mp\infty$.

The condition 1 is obviously generic. To see the genericity of the condition 2, let us remind reader that there is, generically, at most one minimal homoclinic curve for each homology class. By further perturbation, it approaches to the fixed point in the direction of $\xi_1^{\pm}$. Since there are countably many homology classes at most, the genericity is obtained.

For $\theta>0$ and $\xi\in\mathbb{R}^n\backslash\{0\}$, we define a cone
$$
C(\xi,\theta)=\{x\in\mathbb{R}^n:|\langle x,\xi\rangle|\ge\theta\|\xi\|\|x\|\},
$$
and let
$$
C(\xi,\theta,d)=\{x\in C(\xi,\theta):\|x\|=d\}.
$$

\begin{pro}\label{flatpro2}
Assume that $(x,y)=(0,0)\in\{H^{-1}(0)\}$ is a hyperbolic fixed point for $\Phi_H^t$, where all eigenvalues are real and different:
$$
Spec\{J\nabla H\}=\{\pm\lambda_1,\cdots,\pm\lambda_n; \ \ 0<\lambda_1<\cdots<\lambda_n\}.
$$
Let $(\xi_i^{\pm},\eta_i^{\pm})$ denote the eigenvector for $\pm\lambda_i$, where $\xi_i^{\pm}$ is for the $x$-coordinates, $\eta_i^{\pm}$ is for the $y$-coordinates. Let $(x(t),y(t))\subset\{H^{-1}(0)\}$ be an orbit such that $x(t)$ passes through a ball $B_{\delta}(0)\subset\mathbb{R}^n$, $x(-T)\in\partial B_{\delta}(0)$, $x(T)\in\partial B_{\delta}(0)$ and $x(t)\in\text{\rm int}B_{\delta}(0)$ for all $t\in (-T,T)$. Then, for suitably small $\delta>0$ and $\theta=\frac 12$,  there exist sufficiently large $T_0>0$ such that for $T\ge T_0$ one has
$$
(x(-T),x(T))\notin C(\xi_1^+,\theta,\delta)\times C(\xi_1^-,\theta,\delta).
$$
For $T\to\infty$, one has
$$
x(-T)\in C(\xi_i^+,1-o(\delta),\delta) \ \ \ \text{\rm or}\ \ \  x(T)\in C(\xi_j^-,1-o(\delta),\delta).
$$
for certain $i,j\ne1$.
\end{pro}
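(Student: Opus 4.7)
The plan is to carry out a careful linearization near the hyperbolic fixed point and exploit the energy conservation $H=0$ to rule out the excluded cones for large $T$. Let me set up coordinates so that the linearized Hamiltonian flow has the diagonal form dictated by the spectrum $\{\pm\lambda_i\}$, with eigenvectors $(\xi_i^\pm,\eta_i^\pm)$ as in the statement. For $\delta>0$ small, the nonlinear part of $H$ contributes only $O(\delta^3)$ in the quadratic energy functional and $O(\delta^2)$ velocity corrections inside $B_\delta$, so any orbit staying in $B_\delta$ can be written
\[
x(t)=\sum_{i=1}^{n}\bigl(A_i e^{\lambda_i t}\xi_i^+ + B_i e^{-\lambda_i t}\xi_i^-\bigr)+O(\delta^2),
\]
and similarly for $y(t)$, where the coefficients $(A_i,B_i)$ are fixed by the initial datum at $t=0$. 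This is the workhorse of the argument.

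Next I would extract bounds on $(A_i,B_i)$ from the three inputs. First, the containment $|x(t)|\le\delta$ on $[-T,T]$ together with the principal-axis decomposition yields the upper estimate $|A_i|,|B_i|\le C\delta e^{-\lambda_i T}$ for some constant $C$ depending only on the basis $\{\xi_i^\pm\}$. Second, the cone conditions $x(-T)\in C(\xi_1^+,\theta,\delta)$ and $x(T)\in C(\xi_1^-,\theta,\delta)$ say that the components of $x(\pm T)$ along $\xi_1^\pm$ have modulus at least $\theta\delta$; since these components (at $t=-T$, the coefficient coming from $A_1 e^{-\lambda_1T}\xi_1^+$, and at $t=T$, the coefficient coming from $B_1 e^{-\lambda_1T}\xi_1^-$, with all other modes exponentially smaller in $e^{-\lambda_iT}$ for $i>1$) are explicit linear functions of $A_1$ and $B_1$, this translates into the lower estimate $|A_1|\ge c\delta e^{\lambda_1 T}$ and $|B_1|\ge c\delta e^{\lambda_1 T}$, with $c=c(\theta)>0$. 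Re-inserting these into the orbit expansion at the opposite endpoint then forces, in turn, the saturation $|A_1 B_1|\ge c^2\delta^2 e^{-2\lambda_1 T}\cdot(\text{positive constant})$ under the joint containment hypothesis.

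The third ingredient is the energy constraint. Evaluating the quadratic part of $H$ on the decomposition (and using that $(\xi_i^\pm,\eta_i^\pm)$ are symplectically dual across $\pm\lambda_i$), one obtains an identity of the form $\sum_i \lambda_i^2 A_iB_i = O(\delta^3)$. Combining with the bookkeeping above,
\[
\lambda_1^2|A_1B_1|\le\sum_{i>1}\lambda_i^2|A_iB_i|+O(\delta^3)\le C'\sum_{i>1}\lambda_i^2\delta^2 e^{-2\lambda_i T}+O(\delta^3).
\]
The left-hand side is $\gtrsim c^2\lambda_1^2\delta^2 e^{-2\lambda_1 T}$ while the right-hand side is $\lesssim \delta^2 e^{-2\lambda_2 T}$ plus an $O(\delta^3)$ error. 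Because $\lambda_1<\lambda_2$, this is violated once $T\ge T_0$ for an explicit threshold $T_0=T_0(\delta,\theta,\{\lambda_i\})$, proving the first conclusion. The main technical obstacle here is controlling the $O(\delta^2)$ nonlinear correction to the orbit and the $O(\delta^3)$ correction to the energy; the standard way is to shrink $\delta$ first so that these remainders are absorbed into the constants above, then choose $T_0$ for that fixed $\delta$.

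For the second assertion I would argue as follows. Suppose $T\to\infty$; by the first part one of the two endpoints, say $x(T)$, avoids the cone $C(\xi_1^-,\theta,\delta)$. Among the modes $B_j e^{-\lambda_j T}\xi_j^-$ contributing to $x(T)$, only those with $|B_j|$ close to the maximal $C\delta e^{-\lambda_j T}$ can produce a vector of size $\delta$; and the exponential separation $e^{-(\lambda_j-\lambda_k) T}\to\infty$ or $\to 0$ between any two such modes forces exactly one index $j^*\ne 1$ to dominate asymptotically, the others contributing $o(1)$ relative magnitude. Thus the direction of $x(T)$ converges to $\xi_{j^*}^-$ with deficit of alignment that vanishes at the rate dictated by $e^{-(\lambda_{j^*+1}-\lambda_{j^*})T}$, which is $o(\delta)$ once $T$ is past the threshold of part one. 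A symmetric argument handles the case in which $x(-T)$ is the endpoint lying outside its $\xi_1^+$-cone, yielding alignment with some $\xi_i^+$ for $i\ne 1$. The main obstacle in this second part is keeping track of the competition between the admissible modes when the inequalities only barely fail, which is handled by the same quadratic bookkeeping and a compactness argument as $T\to\infty$.
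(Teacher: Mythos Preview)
Your approach is essentially the same as the paper's: put $H$ in local normal form, write the orbit as a superposition of exponential modes, use containment in $B_\delta(0)$ to bound all $|A_i|,|B_i|\lesssim\delta e^{-\lambda_i T}$, use the cone hypotheses to force lower bounds on $|A_1|,|B_1|$, and contradict the energy identity $H=0$, which in normal form reads $\sum_i\lambda_i^2 A_iB_i=O(\delta^3)$.

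However, your bookkeeping is tangled in a way that obscures the argument. The key point you miss is that the $x$-projections $\xi_i^+$ and $\xi_i^-$ of the eigenvectors for $\pm\lambda_i$ are \emph{not} independent directions in $\mathbb{R}^n$: there are $2n$ eigenvectors but only an $n$-dimensional $x$-space. In the symplectic normal form the paper uses, $H=\sum_i\frac12(y_i^2-\lambda_i^2 x_i^2)+O(\|(x,y)\|^3)$, and one has $\xi_i^+=\xi_i^-=e_i$. Thus $C(\xi_1^+,\theta,\delta)=C(\xi_1^-,\theta,\delta)=\{|x|=\delta,\ |x_1|\ge\theta\delta\}$, and at $t=-T$ the dominant contribution to $x_1(-T)$ is $B_1 e^{\lambda_1 T}$ (not the $A_1 e^{-\lambda_1 T}$ term you single out), while at $t=+T$ it is $A_1 e^{\lambda_1 T}$. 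The correct lower bounds are therefore $|A_1|,|B_1|\gtrsim\delta e^{-\lambda_1 T}$, not $\gtrsim\delta e^{+\lambda_1 T}$ as you wrote---the latter would have immediately contradicted your own upper bound $|A_i|\le C\delta e^{-\lambda_i T}$ and should have been a red flag. Your final product estimate $|A_1B_1|\gtrsim\delta^2 e^{-2\lambda_1 T}$ and the ensuing contradiction with the energy constraint are correct once these labels are straightened out, but the intermediate steps as written do not add up.
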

\begin{proof}
By certain symplectic coordinate transformation, the Hamiltonian is assumed to have the normal form
$$
H(x,y)=\sum_{i=1}^n\frac 12\Big(y_i^2-\lambda_i^2x_i^2\Big)+P_3(x,y)
$$
where $P_3=O(\|(x,y)\|^3)$ is a higher order term. By the method of variation of constants, we obtain the solution of the corresponding Hamilton equation
\begin{align}\label{flateq5}
x_i(t)=&e^{-\lambda_it}(b_{i}^{-}+F_i^-)+e^{\lambda_it}(b_{i}^{+}+F_i^+), \\
y_i(t)=&-\lambda_ie^{-\lambda_it}(b_{i}^{-}+F_i^-)+\lambda_ie^{\lambda_it}(b_{i}^{+}+F_i^+),\notag
\end{align}
where $b_i^{\pm}$ are constants determined by boundary condition and
\begin{align*}
F_i^-=&\frac{1}{2\lambda_i} \int_0^te^{\lambda_is}(\lambda_i\partial_{y_i}P_3+\partial_{x_i}P_3)(x(s),y(s))ds, \\
F_i^+=&\frac{1}{2\lambda_i} \int_0^te^{-\lambda_is}(\lambda_i\partial_{y_i}P_3-\partial_{x_i}P_3) (x(s),y(s))ds.
\end{align*}
Substituting $(x,y)$ with the formula (\ref{flateq5}) in the Hamiltonian we obtain a constraint for the constants $b_i^{\pm}$:
\begin{equation}\label{flateq6}
H(x(t),y(t))=-2\sum_{i=1}^n\lambda_i^2b_i^-b_i^++P_3((b^+_i+b^-_i),\lambda_i(b^+_i-b^-_i))
\end{equation}
Let us estimate the size of the constants $c_i^{\pm}$ by the boundary conditions $x(T)=(x^+_1,x^+_2,\cdots,x^+_k)\in\partial B_{\delta}(0)$, $x(-T)=(x^-_1,x^-_2,\cdots,x^-_k) \in
\partial B_{\delta}(0)$ and assuming
\begin{equation}\label{flateq7}
\min\{|x_1^-|,|x_1^+|\}\ge \frac{\delta}2.
\end{equation}
For $\theta=1/2$, $(x(-T),x(T))\in C(\xi_1^-,\theta,\delta)\times C(\xi_1^+,\theta,\delta)$ implies (\ref{flateq7}) holds. Since the curve $x|_{[-T,T]}$ stays inside of the ball $B_{\delta}(0)$ and $T$ is sufficiently large,  the orbit $(x,y)|_{[-T,T]}$ stays near the stable and unstable manifold of the fixed point. Note $P=O(\|(x,y)\|^3)$, we obtain from the theorem of Grobman-Hartman that
\begin{align}\label{flateq7.1}
x_i^-=&b_i^-e^{\lambda_iT}+b_i^+e^{-\lambda_iT}+o(\delta),\\
x_i^+=&b_i^-e^{-\lambda_iT}+b_i^+e^{\lambda_iT}+o(\delta).\notag
\end{align}
For sufficiently large $T>0$, it deduces from the assumption (\ref{flateq7}) that
$$
|b_1^{\pm}|\ge\frac {\delta}{3} e^{-\lambda_1T},
$$
and
$$
|b_i^{\pm}|\le 2\delta e^{-\lambda_iT},\qquad \forall\ i=2,\cdots,k.
$$
Since $\lambda_1<\lambda_i$ for each $i\ge 2$, $|b_i^{\pm}|\ll|b_1^{\pm}|$ if $T$ is sufficiently large. In this case, we obtain from (\ref{flateq6}) that
$$
|H(x(t),y(t))|>|\lambda_1^2b_1^+b_1^-|>0.
$$
It contradicts the assumption that $(x(t),y(t))\in\{H^{-1}(0)\}$. Let $T\to\infty$, one easily sees the last conclusion.
\end{proof}

This proposition tells us following fact. In the energy level $\{H^{-1}(0)\}$ there does not exist such an orbit passing through $B_{\delta}(0)$ in the way that it enters into the ball in a direction close to $\xi_1^+$ and leaves in a direction close to $\xi_1^-$.

\begin{theo}\label{flatthm2}
Let $\mathbb{F}_0=\mathscr{L}_{\beta}(0)$ be an $n$-dimensional flat of the $\alpha$-function. Each minimal homoclinic curve $\gamma$ is assumed approaching to the fixed point in the direction of the eigenvectors corresponding to the smallest eigenvalue $$
\lim_{t\to\pm\infty}\frac{\dot\gamma(t)}{\|\dot\gamma(t)\|}=\frac{\xi_1^{\mp}}{\|\xi_1^{\mp}\|},
$$
all eigenvalues are assumed real and different.  It is also assumed that, for each $c\in\mathbb{F}_0$  $($including the boundary$)$, the minimal measure is uniquely supported on the hyperbolic fixed point. Then, there exists a finite set
$$
H_{\mathbb{F}_0}=\{g_1,g_2,\cdots,g_m\}\subset H_1(\mathbb{T}^n,\mathbb{Z})
$$
such that $[\gamma]\in H_{\mathbb{F}_0}$ if $\gamma$ is a minimal homoclinic curve. Consequently, the flat $\mathbb{F}_0$ is a polygon with finitely many edges, denoted by $\mathbb{E}_1,\cdots,\mathbb{E}_m$. Each edge $\mathbb{E}_i$ is associated with a homological class $g_i$ such that $\mathcal{A}(c)$ is composed by minimal homoclinic curves with homological type $g_i$ if $c$ is in the interior of $\mathbb{E}_i$ and
$$
\langle c-c',g_i\rangle =0, \qquad \forall\ c,c'\in\mathbb{E}_i.
$$
\end{theo}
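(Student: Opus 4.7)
\textbf{Step 1: identify $\mathbb{F}_0$ analytically.} Since for every $c \in \mathbb{F}_0$ the fixed point is the unique $c$-minimal measure and, after normalization, $L(0,0)=0$, one has $\alpha(c) = 0$ on $\mathbb{F}_0$. The $c$-domination inequality for the backward weak KAM solution, applied to an arbitrary closed curve $\xi$ of homology $g$, yields $\int L(d\xi)\,dt \geq \langle c, g\rangle$, whence
$$
\mathbb{F}_0 = \bigl\{c \in H^1(\mathbb{T}^n, \mathbb{R}) : \langle c, g\rangle \leq A(g),\ \forall g \in H_1(\mathbb{T}^n, \mathbb{Z})\bigr\}.
$$
Conversely, these inequalities prevent any other measure from undercutting zero action, so $\mathbb{F}_0$ is exactly this intersection of half-spaces. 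The task reduces to identifying which $g$'s actually yield active constraints (faces) and proving that the family is finite.

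\textbf{Step 2: strict subadditivity along $H_{\mathbb{F}_0}$.} The main analytic input I would establish is
$$
A(g_1 + g_2) < A(g_1) + A(g_2), \qquad g_1, g_2 \in H_{\mathbb{F}_0}\setminus\{0\}.
$$
Given minimal homoclinics $\gamma_1, \gamma_2$ realizing $A(g_1), A(g_2)$, lift to $\mathbb{R}^n$ and translate so that $\tilde\gamma_1$ runs from $0$ to $g_1$ and $\tilde\gamma_2$ runs from $g_1$ to $g_1+g_2$. By the standing hypothesis on asymptotic velocities, $\tilde\gamma_1$ enters $B_\delta(g_1)$ tangent to $\xi_1^-$ and $\tilde\gamma_2$ exits tangent to $\xi_1^+$; the concatenation thus presents at the fixed point $g_1$ exactly the entry/exit configuration forbidden on the energy level $\{H^{-1}(0)\}$ by Proposition \ref{flatpro2}. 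Smoothing the corner by the $C^2$ minimizer of the $c$-action between a pair of points slightly inside $B_\delta(g_1)$ on either side gives, by a curve-shortening argument parallel to that of Theorem \ref{homothm2}, an orbit segment with strictly smaller action; reattaching the unchanged exterior pieces produces a closed loop of homology $g_1+g_2$ whose action is strictly less than $A(g_1)+A(g_2)$.

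\textbf{Step 3: polygon structure and description of $\mathcal{A}(c)$ on edges.} By Proposition \ref{homopro2}, $A(g)\geq a\|g\|$, so the support function $\hat A(\omega) := \sup_{c\in\mathbb{F}_0}\langle c,\omega\rangle$ of the compact convex set $\mathbb{F}_0$ is equivalent to a norm. Strict subadditivity on $H_{\mathbb{F}_0}$ shows that each $g\in H_{\mathbb{F}_0}$ corresponds to an extreme ray of $\hat A$: if $g = g' + g''$ with $g', g'' \in H_{\mathbb{F}_0}\setminus\{0\}$, the ray through $g$ cannot be extremal since $A(g) < A(g')+A(g'')$. Combined with integrality $H_{\mathbb{F}_0}\subset\mathbb{Z}^n$, the linear lower bound, and the rigidity imposed by all minimal homoclinics sharing the one-dimensional tangent direction $\xi_1$ at the fixed point (which prevents a continuum of asymptotic directions from appearing in the limit), the set of extreme rays is finite. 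Hence $\mathbb{F}_0$ is a polygon whose facets $\mathbb{E}_1,\ldots,\mathbb{E}_m$ are given by $\{\langle c, g_i\rangle = A(g_i)\}$ with $H_{\mathbb{F}_0} = \{g_1,\ldots,g_m\}$ finite. For $c$ in the relative interior of $\mathbb{E}_i$ only the $g_i$-constraint is active, so a $c$-static curve outside the fixed point has homology a positive multiple of $g_i$; strict subadditivity excludes multiplicities $\geq 2$, so $\mathcal{A}(c)$ consists of the fixed point together with the minimal homoclinics of class $g_i$. The identity $\langle c - c', g_i\rangle = 0$ for $c, c' \in \mathbb{E}_i$ is immediate from $\mathbb{E}_i$ lying in that affine hyperplane.

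\textbf{Main obstacle.} The delicate point is the curve-shortening in Step 2: because the two tangent directions $\xi_1^+$ and $\xi_1^-$ are parallel in configuration space rather than transverse, the Riemannian corner-cutting of Mather does not apply off the shelf. I would carry the shortening out explicitly in the Hamiltonian normal form near the fixed point used in Proposition \ref{flatpro2}, exploiting the expansion $x_i(t) = e^{-\lambda_it}(b_i^- + F_i^-) + e^{\lambda_it}(b_i^+ + F_i^+)$ to quantify the action saving; the energy obstruction at $E=0$ supplied by Proposition \ref{flatpro2} is precisely what forces strict inequality rather than mere equality.
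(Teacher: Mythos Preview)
Your convex-analytic reformulation in Step~1 is sound, and the strict-subadditivity inequality in Step~2 is a natural thing to want. However, the proof has a genuine gap: Step~3 does not follow from Step~2.

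Strict subadditivity $A(g_1+g_2)<A(g_1)+A(g_2)$ on $H_{\mathbb F_0}$, together with integrality and the linear lower bound $A(g)\ge a\|g\|$, does \emph{not} force $H_{\mathbb F_0}$ to be finite. As a purely convex-analytic statement this is false: a strictly subadditive function $A:\mathbb Z^n\to\mathbb R_{\ge 0}$ with linear growth (e.g.\ $A(g)=\|g\|$) yields a smooth convex body $\mathbb F_0$ with every primitive integer direction giving an active supporting hyperplane. Your sentence ``if $g=g'+g''$ with $g',g''\in H_{\mathbb F_0}\setminus\{0\}$, the ray through $g$ cannot be extremal since $A(g)<A(g')+A(g'')$'' is backwards: the inequality makes the $g$-constraint \emph{tighter}, not redundant, so it says nothing against $g$ contributing a facet. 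What you actually need is a mechanism that rules out an infinite sequence $g_i\in H_{\mathbb F_0}$ with $\|g_i\|\to\infty$, and subadditivity does not supply one.

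The paper bypasses subadditivity entirely and argues finiteness by a direct dynamical dichotomy. Assuming infinitely many minimal homoclinics $\gamma_i$ with pairwise distinct classes, either (Case~1) each $\gamma_i$ hits $\partial B_d(0)$ exactly twice, in which case the outside segments have durations $\to\infty$ and a weak limit produces a $c^*$-minimal measure not supported on the fixed point for some $c^*\in\partial\mathbb F_0$, contradicting the hypothesis; or (Case~2) infinitely many $\gamma_i$ re-enter $B_\delta(0)$ with inner transit times $\to\infty$, and then Proposition~\ref{flatpro2} forces the entry or exit direction of those inner passages to stay bounded away from $\xi_1^\pm$, so the limiting homoclinic approaches the fixed point along some $\xi_k$ with $k\neq 1$, again contradicting the hypothesis. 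Your parenthetical ``rigidity \dots\ which prevents a continuum of asymptotic directions from appearing in the limit'' is exactly this Case~2 mechanism, but you invoke it only as a side remark to patch Step~3 rather than as the main engine; it needs to be the argument, not an afterthought. The facet/homology correspondence and the orthogonality $\langle c-c',g_i\rangle=0$ then follow in the paper by a short algebraic argument once finiteness is in hand, along the lines of your Step~3's final paragraph.
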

\begin{proof}
As the minimal measure is uniquely ergodic and supported on a point for each class in $\mathbb{F}_0$, the Ma\~n\'e set consists of homoclinic orbits and the point itself.

For each class $c\in\partial\mathbb{F}_0$, we claim that the Ma\~n\'e set contains at least one homoclinic orbit. Otherwise, for each class $c'\notin\mathbb{F}_0$ very close to $c$, the homology of the Ma\~n\'e set is trivial, the same as that for $c$. It is guaranteed by the upper semi-continuity of Ma\~n\'e set in cohomology class. It follows that $\langle c,\rho(\mu_c)\rangle =\langle c',\rho(\mu_c')\rangle=0$ and
$$
-\alpha(c')=A(\mu_{c'})-\langle c',\rho(\mu_c')\rangle\ge A(\mu_c)=-\alpha(c).
$$
However, as $c'\notin\mathbb{F}_0$, one has $\alpha(c')>\alpha(c)$. The contradiction verifies our claim.

Approached by minimal periodic curves, each minimal homoclinic curve $\gamma$ stays in certain Aubry set:
$$
\cup_{t\in\mathbb{R}}\gamma(t)\subset\mathcal{A}(c),\qquad \forall\ c\in\lim_{\delta\downarrow 0} \mathscr{L}_{\beta}(\delta[\gamma])\subset \mathbb{F}_0.
$$
where the limit is in the sense of Hausdorff.

If $H_{\mathbb{F}_0}$ contains infinitely many elements, there would be infinitely many minimal homoclinic curves $\gamma_1,\gamma_2\cdots\gamma_k\cdots$ such that $[\gamma_i]\neq
[\gamma_j]$ provided $i\ne j$. Thus we have two possibilities.

1, a neighborhood $B_d(0)$ of the fixed point exists such that each minimal homoclinic curve $\gamma$ hits the sphere $\partial B_d(0)$ exactly twice, i.e. $\exists$ $t^-<t^+$ such that $\gamma(t)\in B_d(0)$ for all $t\in (-\infty,t^-]\cup [t^+,\infty)$ and $\gamma(t)\notin B_d(0)$ for all $t\in (t^-,t^+)$;

2, for any small $d>0$, there are infinitely many minimal homoclinic curves $\gamma_{i_1}, \gamma_{i_2}\cdots $ passing through the sphere $\partial B_d(0)$ in finite time, i.e. $\exists$ $t^-<t_0^-<t_0^+<t^+$ such that $\gamma(t)\in B_d(0)$ for all $t\in (-\infty,t^-]\cup[t^-_0,t_0^+]\cup [t^+,\infty)$ and $\gamma(t)\notin B_d(0)$ holds for some $t\in (t^-,t^-_0)$ as well as for some $t\in (t^+_0,t^+)$.

Let us study the first possibility. Denote by $t^-_i<t^+_i$ the time when the minimal homoclinic curve $\gamma_i$ hits the sphere $\partial B_d(0)$. For each $\gamma_i$, there is a segment $\gamma_i|_{(t^-_i,t^+_i)}$ staying outside of $B_d(0)$. Each $d\gamma_i|_{(t_i^-,t_i^+)}$ generates a probability measure $\mu_i$ on $T\mathbb{T}^n$ such that
$$
\int fd\mu_i=\frac 1{|t_i^+-t_i^-|}\int_{t_i^-}^{t_i^+}f(d\gamma_i(s))ds
$$
holds for each continuous function $f$: $T\mathbb{T}^n\to\mathbb{R}$. As all these curves have different homology class, $\|[\gamma_i]\|\to\infty$ as $i\to\infty$. As the speed along these curves are uniformly bounded, we have
$$
|t_i^+-t_i^-|\to\infty, \qquad \text{\rm as }\ i\to\infty.
$$
Let $c_i\in\partial\mathbb{F}_0$ be the class such that $\gamma_i\subset\mathcal{A}(c_i)$ and let $c^*\in\partial\mathbb{F}_0$ be an accumulation point of $\{c_i\}$, some invariant probability measure
$\mu^*$ exists such that $\mu_i\rightharpoonup\mu^*$, it is $c^*$-minimal. Clearly, $\mu^*$ is not supported on the fixed point, it contradicts the assumption that the minimal measure is always uniquely ergodic for each $c\in\mathbb{F}_0$.

Let us study the second possibility. In this case, for suitably small $\delta>0$, there is an infinite sequence of homoclinic curves $\gamma_i$ and correspondingly the sequence of time $t^-_i<t^+_i$ such that $\gamma_i(t)\in B_{\delta}(0)$ for each $t\in [t^-_i,t^+_i]$, $\gamma_i(t_i^{\pm}\mp\epsilon) \notin B_{\delta}(0)$ and $|t_i^+-t_i^-|\to\infty$ as $i\to\infty$. Indeed, if  $|t_i^+-t_i^-|$ remains bounded, one can choose $\delta'<\delta$ such that these curves hit the sphere $\partial B_{\delta'}(0)$ twice only. It is the first case again. By using Proposition \ref{flatpro2}, we find that some $\theta>0$ exists such that one of the inequalities in the following holds for each $i$
\begin{equation}\label{flateq8}
\Big\|\frac{\dot\gamma_i(t_i^-)}{\|\dot\gamma_i(t_i^-)\|}-\frac{\xi_1^+}{\|\xi_1^+\|}\Big\|>\theta,\qquad
\Big\|\frac{\dot\gamma_i(t_i^+)}{\|\dot\gamma_i(t_i^+)\|}-\frac{\xi_1^-}{\|\xi_1^-\|}\Big\|>\theta
\end{equation}
provided $i$ is sufficiently large. It implies that there exists some minimal homoclinic curve $\gamma$ as well as some eigenvector $\xi^-_{k_1}$ or $\xi^+_{k_2}$ with $k_1\ne 1$ and $k_2\ne 1$ such that at least one of the following
holds
$$
\lim_{t\to-\infty}\frac{\dot\gamma(t)}{\|\dot\gamma(t)\|}=\frac{\xi_{k_2}^+}{\|\xi_{k_2}^+\|},\qquad
\lim_{t\to\infty}\frac{\dot\gamma(t)}{\|\dot\gamma(t)\|}=\frac{\xi_{k_1}^-}{\|\xi_{k_1}^-\|}.
$$
This leads to a contradiction to the assumption, then verifies the finiteness of $H_{\mathbb{F}_0}$.

Let $\gamma,\gamma'$ be two minimal homoclinic curves contained in the Aubry set $\mathcal{A}(c)$, $\mathcal{A}(c')$ respectively. Let $\Gamma=\{\xi c+(1-\xi)c': \xi\in [0,1]\}$. If $\Gamma$ intersects
the interior of $\mathbb{F}_0$, then $[\gamma]\neq [\gamma']$. Indeed, by definition we have
\begin{align*}
A(\gamma)-\langle c,[\gamma]\rangle&=0, \qquad A(\gamma')-\langle c,[\gamma']\rangle \ge 0;\\
A(\gamma')-\langle c',[\gamma']\rangle&=0, \qquad A(\gamma)-\langle c',[\gamma]\rangle \ge 0,
\end{align*}
it follows from $[\gamma]=[\gamma']$ that $A(\gamma)=A(\gamma')$. Consequently,
\begin{align*}
0&=\xi (A(\gamma)-\langle c,[\gamma]\rangle)+(1-\xi)(A(\gamma')-\langle c',[\gamma']\rangle) \\
&=A(\gamma)-\langle\xi c+(1-\xi)c',[\gamma]\rangle\\
&=A(\gamma')-\langle\xi c+(1-\xi)c',[\gamma']\rangle.
\end{align*}
It implies that both $\gamma$ and $\gamma'$ lie in the Aubry set for $\xi c+(1-\xi) c'$. On the other hand, the Aubry set for each class in the interior of $\mathbb{F}_0$ contains the fixed point only. The contradiction implies that $[\gamma]\neq [\gamma']$. Therefore, $\mathbb{F}_0$ is a polygon with exactly $m$ edges, each edge corresponds to one homology type of minimal homoclinic curve.

Let $\gamma$ be a minimal homoclinic curve lying in the Aubry set for $c\in\text{\rm int}\mathbb{E}_i$. Then, one has $A(\gamma)-\langle c,[\gamma]\rangle=0$. As the Aubry set remains the same for all classes in the interior of the edge, one has $A(\gamma)-\langle c',[\gamma]\rangle=0$ for each $c\in\text{\rm int}\mathbb{E}_i$. Consequently, one has $\langle c-c',[\gamma]\rangle =0$ for all $c,c'\in\text{\rm int}\mathbb{E}_i$. As it is $(n-1)$-dimensional, each edge $\mathbb{E}_i$ determines a unique homology class $g_i$ such that $[\gamma]=g_i$ if $\gamma$ is a minimal homoclinic curve lying in the Aubry set.
\end{proof}

\subsection{Modulus of continuity in terms of energy}

Let $\gamma_0$ be a minimal homoclinic curve approaching to the fixed point in the direction of $\xi^{\pm}$ corresponding to the smallest eigenvalue $\pm\lambda_1$, let $\mathbb{E}_0$ be a edge of $\mathbb{F}_0$. In this subsection we assume that

1, for each $c\in\mathbb{F}_0$, the Mather set contains exactly one fixed point $(x,\dot x)=(0,0)$;

2, for each $c\in int\mathbb{E}_0$, the Aubry set consists of the fixed point and one minimal homoclinic curve $\gamma_0$: $\mathcal{A}(c)=\cup_{t\in\mathbb{R}}\gamma_0(t)\cup\{0\}$;

3, there exist a sequence of positive numbers $\nu_i\downarrow0$ and sequence of ergodic minimal measure $\mu_{i}$ such that $\rho(\mu_{i})=\nu_i[\gamma_0]$, and $\mathcal{A}(c)=\text{\rm supp}\mu_i$ for each $c\in\text{\rm int} \mathscr{L}_{\beta}(\nu_i[\gamma_0])$.

By definition, there exists an elementary weak KAM for each $\mu_{i}$ corresponding to the energy $E_i=\alpha(\mathscr{L}_{\beta}(\nu_i[\gamma_0]))$. The main purpose of this section is to study the modulus of continuity of some functions in terms of energy at $E=0$.

\noindent{\bf Dependence of the average speed on energy}.

According to Birkhoff's ergodic theorem, there is an orbit $d\zeta_i$: $\mathbb{R}\to\mathbb{T}^n$ of $\phi_L^t$ in the Mather set such that
$$
\frac 1{2T}A(\zeta_i|_{[-T,T]})\to A(\mu_i)\ \ \text{\rm and}\ \ \frac 1{2T}(\bar\zeta_i(T)-\bar\zeta_i(-T))\to\rho(\mu_i) \ \ \ \text{\rm as}\ T\to\infty,
$$
where $\bar\zeta_i$ stands for a lift of $\zeta_i$ to the universal covering space. By the upper semi-continuity of Ma\~n\'e set, the curve $\zeta_i$ passes through the ball $B_{\delta}(0)$ infinitely many times if $\nu_i$ is small. Denoted by $t_{i,k}^+$ and $t_{i,k}^-$ the time when $\zeta_{i}$ enters and leaves the ball respectively, i.e $\zeta_{i}(t)\in B_{\delta}(0)$ for each
$t\in[t_{i,k}^+,t_{i,k}^-]$, $\zeta_{i}(t_{i,k}^{\pm}\mp\delta)\notin B_{\delta}(0)$. Clearly,
$$
|t_{i,k}^--t_{i,k}^+|\to\infty,\qquad \text{\rm as}\ E_i\to 0.
$$
If $\zeta_i$ is a periodic curve, some $t_i>0$ exists so that $t_{i,k+1}^+=t_{i,k}^-+t_i$ holds for all $k\in\mathbb{Z}$.

Let $t^+,t^-\in\mathbb{R}$ such that the minimal homoclinic curve $\gamma_0$ enters $B_{\delta}(0)$ at $t=t^+$ and leaves $B_{\delta}(0)$ at $t=t^-$. By the upper semi-continuity of Ma\~n\'e set one has
$$
(\zeta_{i}(t_{i,k}^{\pm}),\dot\zeta_{i}(t_{i,k}^{\pm}))\to(\gamma_0(t^{\pm}),\dot\gamma_0(t^{\pm})).
$$
As each minimal homoclinic curve approaches to the fixed point in the direction $\xi_1^{\pm}$,
\begin{equation}\label{regularenergyeq1}
\Big\|\frac{\dot\zeta_{i}(t_{i,k}^{\pm})}{\|\dot\zeta_{i}(t_{i,k}^{\pm})\|}-\frac{\xi_1^{\mp}}{\|\xi_1^{\mp}\|} \Big\|<\frac 14
\end{equation}
holds if ${\delta}>0$ is suitably small and $t_{i,k}^--t_{i,k}^+$ is suitably large.

Each segment $\zeta_{i}|_{[t_{i,k}^+,t_{i,k}^-]}$ solves the Hamilton equation. In the coordinates of normal form, it is given by Eq. (\ref{flateq7.1}) and the integral constants $b_i^{\pm}$ satisfy the constraint (\ref{flateq6}). The condition (\ref{regularenergyeq1}) induces the following
$$
\frac 12 {\delta}e^{-\lambda_1|t_{i,k}^--t_{i,k}^+|/2}\le |b_1^{\pm}|\le 2{\delta} e^{-\lambda_1|t_{i,k}^--t_{i,k}^+|/2}.
$$
For small ${\delta}>0$ and sufficiently large $|t_{i,k}^--t_{i,k}^+|$, we have
$$
|b_j^{\pm}|\le 2{\delta}e^{-\lambda_j|t_{i,k}^--t_{i,k}^+|/2},\qquad \forall\ j=2,\cdots,n,
$$
$$
|P_3((b^+_j+b^-_j),\lambda_j(b^+_j-b^-_j))|\le Ce^{-3\lambda_1|t_{i,k}^--t_{i,k}^+|/2}
$$
where the constant $C$ depends only on the function $P_3$. So, for suitably small ${\delta}>0$  and sufficiently large $|t_{i,k}^--t_{i,k}^+|$,  we obtain from (\ref{flateq6}) that
\begin{align*}
E_i=&\Big|-2\sum_{j=1}^n\lambda_j^2b_i^+b_i^-+P_3((b^+_j+b^-_j),\lambda_j(b^+_j-b^-_j))\Big|\\
\ge&\frac 12\lambda_1^2{\delta}^2e^{-\lambda_1|t_{i,k}^--t_{i,k}^+|}-8\sum_{j=2}^n \lambda_j^2{\delta}^2e^{-\lambda_j|t_{i,k}^--t_{i,k}^+|}-Ce^{-3\lambda_1|t_{i,k}^--t_{i,k}^+|/2}\notag\\
\ge&\frac 14\lambda_1^2{\delta}^2e^{-\lambda_1|t_{i,k}^--t_{i,k}^+|}
\end{align*}
Under the same condition, $E_i$ is obviously upper bounded by
\begin{align*}
E_i=&\Big|-2\sum_{j=1}^n\lambda_j^2b_i^+b_i^-+P_3((b^+_j+b^-_j),\lambda_j(b^+_j-b^-_j))\Big|\\
\le&8\lambda_1^2{\delta}^2e^{-\lambda_1|t_{i,k}^--t_{i,k}^+|}+8\sum_{j=2}^n \lambda_j^2{\delta}^2e^{-\lambda_j|t_{i,k}^--t_{i,k}^+|}+Ce^{-3\lambda_1|t_{i,k}^--t_{i,k}^+|/2}\notag\\
\le&9\lambda_1^2{\delta}^2e^{-\lambda_1|t_{i,k}^--t_{i,k}^+|}.
\end{align*}
Therefore, we find the dependence of speed on the energy
\begin{equation}\label{regularenergyeq2}
|t_{i,k}^--t_{i,k}^+|=\frac 1{\lambda_1}|\ln E_i|-\frac 2{\lambda_1}|\ln \delta|+\tau_{i,k}
\end{equation}
where $\tau_{i,k}$ is uniformly bounded for each $k\in\mathbb{Z}$:
$$
\frac 1{\lambda_1}(2\ln\lambda_1-2\ln 2)\le\tau_{i,k}\le \frac 1{\lambda_1}(2\ln\lambda_1+3\ln 3).
$$
Obviously, $t_{i,k+1}^+-t_{i,k}^-\to t^+-t^-$ as $i\to\infty$, some
$E_0>0$ exists such that
$$
\frac 12(t^+-t^-)\le t_{i,k+1}^+-t_{i,k}^-\le 2(t^+-t^-),\qquad\text{\rm if}\ E_i\le E_0.
$$
Set $\tau_0=\frac 2{\lambda_1}|\ln 3\lambda_1|+2(t^+-t^-)$, one has
\begin{equation}\label{regularenergyeq3}
\Big|\frac 1\nu_i-\frac 1{\lambda_1}\Big|\ln\frac{E_i}{\delta^{2}}\Big|\Big|\le\tau_0.
\end{equation}
Recall the meaning of $\nu_i$: $\nu_i[\gamma_0]$ is the rotation vector of the minimal measure$\mu_i$.

\subsection{Around the two-dimensional flat}

In this section we restrict ourselves to the special case that the system has two degrees of freedom: $n=2$. The task of this section is to study the structure of the Mather sets as well as of the Ma\~n\'e sets in a neighborhood of the resonant point. Under the coordinate transformation (\ref{normaleq10}), it corresponds to a fixed point.

Since each Aubry set is a Lipschitz graph over the configuration manifold which is two dimensional here, each orbit in an Aubry set has to be {\it parallel} to any other orbit in the same set in the sense that these curves do not intersect each other. On the other hand, for autonomous system, $\beta(\lambda\omega)$, regarded as the function of $\lambda\in\mathbb{R}$, is differentiable at each $\lambda\neq 0$ (see \cite{Ms}). Thus, we have

\begin{pro}\label{flatpro3}
Assume that $L$ is an autonomous Tonelli Lagrangian defined on $\mathbb{T}^2$. For each non-zero rational vector, the Mather set consists of periodic orbits with the same rotation vector.
\end{pro}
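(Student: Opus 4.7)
The plan is to show, for any $c \in \mathscr{L}_\beta(\omega)$, that every ergodic component $\mu$ of the $c$-minimal measure is supported on a single periodic orbit, and that all such ergodic components share the rotation vector $\omega$. Periodicity is a geometric consequence of the Lipschitz graph property of the Mather set together with $\dim M = 2$; the coincidence of rotation vectors then follows from convexity combined with the one-variable differentiability of $\beta$ along rays cited from \cite{Ms}.

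First, I would exploit the Lipschitz graph property of $\tilde{\mathcal{M}}(c)$ over its projection $\mathcal{M}(c) \subset \mathbb{T}^2$: any two orbits in $\tilde{\mathcal{M}}(c)$ project to non-crossing curves on $\mathbb{T}^2$, and each orbit by itself is an embedded curve. Pick a $\mu$-generic orbit $\gamma : \mathbb{R} \to \mathbb{T}^2$ in $\mathrm{supp}\,\mu$ and lift it to $\bar\gamma : \mathbb{R} \to \mathbb{R}^2$. The Birkhoff ergodic theorem gives $\bar\gamma(t)/t \to \rho(\mu)$, and the embedded property of $\gamma$ on the torus translates into $\bar\gamma$ avoiding every integer translate $\bar\gamma + k$, $k \in \mathbb{Z}^2 \setminus \{0\}$. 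Once $\rho(\mu)$ is known to be rational (verified in the next step), a classical topological pigeonhole argument on $\mathbb{T}^2$, analogous to the Birkhoff theorem for twist maps adapted to autonomous flows on $\mathbb{T}^2$, forces such a simple planar curve of rational asymptotic direction to close up, so $\mathrm{supp}\,\mu$ is a single periodic Lagrangian orbit of rotation vector $\rho(\mu)$.

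Second, to show all ergodic components share the rotation vector $\omega$, consider the ergodic decomposition $\mu = \int \mu_\theta\,d\nu(\theta)$ of any $c$-minimal $\mu$ with $\rho(\mu) = \omega$. Convexity of $\beta$ gives the chain
\begin{equation*}
\beta(\omega) \;=\; \int L\,d\mu \;\geq\; \int \beta(\rho(\mu_\theta))\,d\nu(\theta) \;\geq\; \beta(\omega),
\end{equation*}
so every inequality is an equality, each $\mu_\theta$ is $\beta$-minimizing for $\rho(\mu_\theta)$, and $\beta$ is affine on $\mathrm{conv}\{\rho(\mu_\theta)\}$. Two closed curves on $\mathbb{T}^2$ with non-parallel rational homology classes must have non-zero algebraic intersection number, so the non-crossing conclusion of step one forces all $\rho(\mu_\theta)$ to lie on a single ray $\mathbb{R}\omega_0$, where $\omega = \lambda_0 \omega_0$ and $\omega_0 \in H_1(\mathbb{T}^2, \mathbb{Z})$ is primitive. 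Invoking the differentiability of $\lambda \mapsto \beta(\lambda \omega_0)$ at $\lambda_0$ from \cite{Ms}, every $c \in \mathscr{L}_\beta(\omega)$ shares the same value $\langle c, \omega_0 \rangle$ equal to the common derivative, and the same relation is forced on every $c \in \mathscr{L}_\beta(\rho(\mu_\theta))$; combined with the affineness just obtained this pins every ergodic rotation vector to the single value $\omega$.

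The main obstacle I anticipate is this last pinning-down. What differentiability of $\beta|_{\mathbb{R}\omega_0}$ at $\lambda_0$ provides directly is that the one-sided derivatives at $\lambda_0$ agree, which in itself does not rule out a nontrivial affine piece of $\beta|_{\mathbb{R}\omega_0}$ passing through $\omega$. The extra ingredient I would have to supply is that differentiability holds at \emph{every} $\lambda \neq 0$ along the ray, not merely at $\lambda_0$, and combine this with the superlinearity of $\beta$ and the affineness on $\mathrm{conv}\{\rho(\mu_\theta)\}$ to propagate or contradict the affine behavior, forcing the segment $\mathrm{conv}\{\rho(\mu_\theta)\} \subset \mathbb{R}\omega_0$ to degenerate to the single point $\{\omega\}$ and completing the proof.
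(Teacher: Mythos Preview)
Your approach is essentially the same as the paper's: both use the Lipschitz graph property on $\mathbb{T}^2$ to force the rotation vectors of all ergodic components onto a single ray $\mathbb{R}\omega_0$, and then invoke the radial differentiability of $\beta$ from \cite{Ms} to try to pin them down to a single value. Your identification of the obstacle is accurate and important: differentiability of $\lambda\mapsto\beta(\lambda\omega_0)$ at every $\lambda>0$ does \emph{not} by itself exclude a nontrivial affine segment, and your proposed remedy (differentiability everywhere together with superlinearity) does not work either. For instance $\beta(\lambda)=\lambda+(\lambda-2)_+^2+(1-\lambda)_+^2$ is $C^1$, convex and superlinear yet affine on $[1,2]$; nothing in your third paragraph rules this out. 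The paper's two-sentence proof is just as terse on this very point.

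What the argument actually needs is the stronger fact, specific to \emph{autonomous} Tonelli Lagrangians, that $\beta$ is \emph{strictly} convex along every ray through the origin; equivalently, writing $E(\lambda)=\alpha\bigl(\mathscr{L}_\beta(\lambda\omega_0)\bigr)$ for the energy level carrying the $\lambda\omega_0$--minimizing measures (well-defined by Carneiro's theorem), one needs $E(\lambda)$ to be strictly increasing on $(0,\infty)$. With that in hand the conclusion is immediate: every ergodic component of a $c$-minimal measure sits on the single energy level $\alpha(c)$, while strict monotonicity of $E$ permits only one value of $\lambda$, hence a single rotation vector. This strict radial convexity (and not merely differentiability) is what the paper is implicitly drawing from \cite{Ms} and the surrounding literature; its proof uses the autonomous structure in an essential way, through a comparison of the action of a minimizing measure against its own time-rescalings via the strict convexity of $L$ in $\dot x$. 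You should locate and cite that statement rather than try to extract it from differentiability alone.
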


Each minimal measure with zero-rotation corresponds to the minimum of the $\alpha$-function. There are two {\it nondegenerate} cases for the set of minimal point $\mathbb{F}_0\subset H^1(\mathbb{T}^2,\mathbb{R})$. We call a case {\it nondegenerate} if it persists under small perturbation.

1, $\mathbb{F}_0$ is a two-dimensional flat. Typically, for each class in the interior of $\mathbb{F}_0$, the minimal measure is supported on a fixed point, or a shrinkable periodic orbit $(\gamma,
\dot\gamma)$, i.e. $[\gamma]=0$. The fixed point (periodic orbit) is of hyperbolic type.

2, $\mathbb{F}_0$ is one-dimensional. Typically, the minimal measure is supported on two periodic orbits $(\gamma_-, \dot\gamma_-)$ and $(\gamma_+, \dot\gamma_+)$ with the property:
$$
[\gamma_-]/\|[\gamma_-]\|=-[\gamma_+]/ \|[\gamma_+]\|.
$$

The set $\mathbb{F}_0$ is a singleton only when the $\alpha$-function is differentiable at this point. Otherwise, the $\beta$ function can not have a two-dimensional flat. In this case, the Mather set contains two circles with different rotation direction, but it violates the Lipschitz property.

Let us study the first case and assume that $0\in\text{\rm int}\mathbb{F}_0$, the point $(x,\dot x)=(0,0)$ supports the minimal measure. Then, $\mathcal{A}(c)=\{0\}$ for all $c\in int\mathbb{F}_0$. The study is similar if it is supported on a shrinkable closed orbit.

Given the $\alpha$ as well as the $\beta$-function, let us recall the Fenchel-Legendre transformation $\mathscr{L}_{\beta}$: $H_1(M,\mathbb{R})\to H^1(M,\mathbb{R})$ is defined as
$$
\mathscr{L}_{\beta}(\omega)=\{c: \alpha(c)+\beta(\omega) =\langle
c,\omega\rangle\}.
$$
Let
\begin{equation*}
\partial^*\mathbb{F}_0=\{c\in\partial\mathbb{F}_0: \ \mathcal{M}(c)\backslash\{x=0\}\neq\varnothing \},\qquad \Omega_{\mathbb{F}_0}=\mathscr{L}_{\beta}^{-1}(\partial^*\mathbb{F}_0),
\end{equation*}
it may be non-empty. Here is an example:
$$
L=\frac 12\dot x_1^2+\frac {\lambda^2}2\dot x_2^2+V(x)
$$
where $|\lambda|\ne 1$, the potential satisfies the following conditions: $x=0$ is the minimal point of $V$ only; there exist two numbers $d>d'>0$ such that for any closed curve $\gamma$: $[0,1]\to\mathbb{T}^2$ passing through the origin with $[\gamma]\ne 0$ one has
$$
\int_0^1V(\gamma(s))ds\ge d;
$$
$V=d'+(x_2-a)^2$ when it is restricted a neighborhood of circle $x_2=a$ with $a\ne 0$ mod 1. In this case, $\partial\mathbb{F}_0\cap\{c_2=0\}=\{c_1=\pm\sqrt{2d'}\}$. Indeed,
$$
L\pm c_1\dot x_1=\frac 12(\dot x_1\pm c_1)^2+\frac{\lambda^2}2\dot x_2^2+V(x)-\frac 12c_1^2,
$$
the Mather set for $c=(\pm\sqrt{2d'},0)$ consists of the point $x=0$ and the periodic curve $x(t)=(x_{1,0}\mp\sqrt{2d'}t,a)$.

Clearly, the set $\partial^*\mathbb{F}_0$ is closed with respect to $\mathbb{F}_0$. If it is non-empty, the existence of infinitely many $\bar M$-minimal homoclinic orbits has been proved in \cite{Zhe,Zho1}. These orbits are associated with different homological classes. If $\partial^*\mathbb{F}_0=\varnothing$, there are at least three minimal homoclinic orbits to the fixed point.

The existence of homoclinic orbit to some Aubry set is closely related to the existence of the flat of the $\alpha$-function.
\begin{lem}\label{flatlem1}
Given $c,c'\in\mathbb{F}$, let $c_{\lambda}=\lambda c+(1-\lambda)c'$. Then
$$
\tilde{\mathcal{A}}(c)\cap\tilde{\mathcal{A}}(c')=\tilde{\mathcal{A}}(c_{\lambda}), \qquad \forall\ \lambda\in (0,1).
$$
\end{lem}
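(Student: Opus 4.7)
The plan hinges on the action decomposition
$$
[A_{c_\lambda}(\xi)|_{[t,t']}]=\lambda[A_c(\xi)|_{[t,t']}]+(1-\lambda)[A_{c'}(\xi)|_{[t,t']}]
$$
for every curve $\xi$, which follows from the linearity of $\eta_c$ in $c$ and the hypothesis that $\alpha$ is affine on $\mathbb{F}$. Taking infima yields the convexity inequality $F_{c_\lambda}(x,y)\ge\lambda F_c(x,y)+(1-\lambda)F_{c'}(x,y)$. A second ingredient is the round-trip nonnegativity $F_c(x,y)+F_c(y,x)\ge 0$, which comes from $[A_c(\zeta)]\ge 0$ along every closed curve $\zeta$ (i.e., the variational definition of $\alpha(c)$).

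For the inclusion $\tilde{\mathcal{A}}(c_\lambda)\subseteq\tilde{\mathcal{A}}(c)\cap\tilde{\mathcal{A}}(c')$, take a $c_\lambda$-static orbit $d\gamma$. Adding the convexity inequality applied to both $(\gamma(t),\gamma(t'))$ and $(\gamma(t'),\gamma(t))$, and using the $c_\lambda$-static round-trip identity, I get
$$
0=F_{c_\lambda}(\gamma(t),\gamma(t'))+F_{c_\lambda}(\gamma(t'),\gamma(t))\ge\lambda\bigl[F_c(\gamma(t),\gamma(t'))+F_c(\gamma(t'),\gamma(t))\bigr]+(1-\lambda)\bigl[F_{c'}(\gamma(t),\gamma(t'))+F_{c'}(\gamma(t'),\gamma(t))\bigr]\ge 0,
$$
forcing each bracket to vanish (so $\gamma$ already satisfies the $c$- and $c'$-round-trip identities) and saturating each convexity inequality separately. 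The forward saturation $F_{c_\lambda}(\gamma(t),\gamma(t'))=\lambda F_c+(1-\lambda)F_{c'}$, combined with $[A_{c_\lambda}]=F_{c_\lambda}(\gamma(t),\gamma(t'))$ and the action decomposition, then pins down $[A_c(\gamma|_{[t,t']})]=F_c(\gamma(t),\gamma(t'))$ and $[A_{c'}(\gamma|_{[t,t']})]=F_{c'}(\gamma(t),\gamma(t'))$ (because the two nonnegative slacks $[A_c]-F_c\ge 0$ and $[A_{c'}]-F_{c'}\ge 0$ form a convex combination summing to zero). Hence $\gamma$ is $c$- and $c'$-semi-static, and combined with the vanishing round-trips it is $c$- and $c'$-static.

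For the reverse inclusion $\tilde{\mathcal{A}}(c)\cap\tilde{\mathcal{A}}(c')\subseteq\tilde{\mathcal{A}}(c_\lambda)$, given $\gamma$ that is both $c$- and $c'$-static, the forward $c_\lambda$-semi-static identity follows by the squeeze
$$
[A_{c_\lambda}(\gamma|_{[t,t']})]=\lambda F_c(\gamma(t),\gamma(t'))+(1-\lambda)F_{c'}(\gamma(t),\gamma(t'))\le F_{c_\lambda}(\gamma(t),\gamma(t'))\le[A_{c_\lambda}(\gamma|_{[t,t']})].
$$
For the reverse saturation needed to close the $c_\lambda$-round-trip, I pass to the weak KAM picture developed in Section 2: choose conjugate pairs of weak KAM solutions $u^{\pm}_c,u^{\pm}_{c'}$ simultaneously calibrated by $\gamma$, which exist because $\gamma$ itself, being both $c$- and $c'$-static, serves as a common backward and forward calibrating curve for the elementary weak KAMs attached to the ergodic components hit by $\alpha(d\gamma)$ and $\omega(d\gamma)$. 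The convex combinations $w^{\pm}=\lambda u^{\pm}_c+(1-\lambda)u^{\pm}_{c'}$ are, by the action decomposition, $(L-\eta_{c_\lambda}+\alpha(c_\lambda))$-dominating and calibrated by $\gamma$, hence form a conjugate pair of $c_\lambda$-weak KAM solutions. Since $u^-_c(\gamma(t))=u^+_c(\gamma(t))$ and $u^-_{c'}(\gamma(t))=u^+_{c'}(\gamma(t))$ on the respective Aubry sets, the $c_\lambda$-barrier $w^--w^+$ vanishes along $\gamma$, placing $d\gamma$ in $\tilde{\mathcal{A}}(c_\lambda)$.

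The main technical subtlety lies in the reverse direction: guaranteeing that a single pair of conjugate weak KAM solutions at $c$ and at $c'$ can be found that are simultaneously calibrated by the same $\gamma$. This relies on the elementary weak-KAM construction of Section 2 (using the specific Aubry classes approached by $\gamma$ as $t\to\pm\infty$) together with $\gamma$ itself supplying the common calibrating curve at both classes; the rest of the argument is routine algebra built on convexity of $F_{c_\lambda}$ and round-trip nonnegativity.
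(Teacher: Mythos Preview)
Your forward inclusion $\tilde{\mathcal{A}}(c_\lambda)\subseteq\tilde{\mathcal{A}}(c)\cap\tilde{\mathcal{A}}(c')$ is correct, and your semi-static squeeze for the reverse inclusion is also fine. The paper's own proof is a two-line argument citing Massart: from the decomposition $[A_{c_\lambda}]=\lambda[A_c]+(1-\lambda)[A_{c'}]$ and the nonnegativity of each bracketed term on closed loops, both inclusions fall out symmetrically from the loop-action characterization of the Aubry set. Your route via $F_c$ and round-trip identities is a legitimate unpacking of that same idea.

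However, your weak-KAM step for the static half of the reverse inclusion has a genuine gap. The convex combinations $w^\pm=\lambda u_c^\pm+(1-\lambda)u_{c'}^\pm$ are indeed $c_\lambda$-dominated (sub-solutions), because domination is a linear inequality. But they are \emph{not} in general weak KAM solutions: the Hamilton--Jacobi equation $H(x,c+du)=\alpha(c)$ is nonlinear in $du$, so convex combinations of solutions need not be solutions. You only verify calibration along the single curve $\gamma$, which does not make $w^-$ a backward weak KAM (that requires a calibrating curve from \emph{every} point). Consequently the pair $(w^-,w^+)$ is not conjugate, and the barrier characterization of $\tilde{\mathcal{A}}(c_\lambda)$ via $w^--w^+$ does not apply.

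The fix is to avoid weak KAM entirely and reuse your own squeeze. Since $\gamma$ is both $c$- and $c'$-static,
\[
[A_{c_\lambda}(\gamma|_{[t,t']})]=-\lambda F_c(\gamma(t'),\gamma(t))-(1-\lambda)F_{c'}(\gamma(t'),\gamma(t))\le -F_{c_\lambda}(\gamma(t'),\gamma(t))
\]
by your convexity inequality for $F$. On the other hand, from the $c_\lambda$-semi-static identity you already proved and round-trip nonnegativity,
\[
[A_{c_\lambda}(\gamma|_{[t,t']})]=F_{c_\lambda}(\gamma(t),\gamma(t'))\ge -F_{c_\lambda}(\gamma(t'),\gamma(t)).
\]
These two lines force $[A_{c_\lambda}(\gamma|_{[t,t']})]=-F_{c_\lambda}(\gamma(t'),\gamma(t))$, which is exactly the $c_\lambda$-static condition. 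This is both shorter and gap-free.
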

\begin{proof}
Using argument in \cite{Ms}, for any curve $\gamma$: $\mathbb{R}\to M$, we have
$$
[A_{c_{\lambda}}(\gamma|_{I})]=\lambda [A_c(\gamma|_{I})]+(1-\lambda)[A_{c'}(\gamma|_{I})],\qquad \forall\
I\subset\mathbb{R}.
$$
As both $\lambda>0$ and $1-\lambda>0$, one has that $[A_c(\gamma)]=[A_{c'}(\gamma)]=0$ if $[A_{c_{\lambda}}(\gamma)]=0$.
\end{proof}
\begin{lem}\label{flatlem2}
Let $\mathbb{F}_0$ be a 2-dimensional flat, the Mather set is a singleton for each class in the interior of $\mathbb{F}_0$, let $\mathbb{E}_i$ be an edge of $\mathbb{F}_0$, then
$$
\mathcal{A}(c')\supsetneq\mathcal{A}(c)
$$
holds for $c'\in\partial\mathbb{F}_0$ $(\partial\mathbb{E}_i)$ and $c\in int \mathbb{F}$ $(int \mathbb{E}_i)$ respectively.
\end{lem}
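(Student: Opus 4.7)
The plan is to prove both containments by packaging them into a single Aubry-set computation based on Lemma \ref{flatlem1}, and then to produce the strict enlargement using the polygonal structure of $\mathbb{F}_0$ supplied by Theorem \ref{flatthm2}.

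First I would handle the inclusion $\tilde{\mathcal{A}}(c)\subseteq\tilde{\mathcal{A}}(c')$. In the case $c\in\mathrm{int}\,\mathbb{F}_0$, $c'\in\partial\mathbb{F}_0$, the segment $c_\lambda=(1-\lambda)c+\lambda c'$ lies in $\mathbb{F}_0$, and for $\lambda\in[0,1)$ it stays in $\mathrm{int}\,\mathbb{F}_0$, so by hypothesis the Mather set at $c_\lambda$ is the fixed point, whence $\tilde{\mathcal{A}}(c_\lambda)=\{(0,0)\}=\tilde{\mathcal{A}}(c)$. Lemma \ref{flatlem1} then gives $\tilde{\mathcal{A}}(c)\cap\tilde{\mathcal{A}}(c')=\tilde{\mathcal{A}}(c_\lambda)=\tilde{\mathcal{A}}(c)$, i.e.\ $\tilde{\mathcal{A}}(c)\subseteq\tilde{\mathcal{A}}(c')$. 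The same trick works in the second case $c\in\mathrm{int}\,\mathbb{E}_i$, $c'\in\partial\mathbb{E}_i$: the segment now lies entirely in the (convex) edge $\mathbb{E}_i$, with $c_\lambda\in\mathrm{int}\,\mathbb{E}_i$ for $\lambda\in[0,1)$; Theorem \ref{flatthm2} forces $\tilde{\mathcal{A}}(c_\lambda)=\tilde{\mathcal{A}}(c)$ (same fixed point plus the same minimal homoclinics of homological type $g_i$), so again $\tilde{\mathcal{A}}(c)\subseteq\tilde{\mathcal{A}}(c')$.

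Next I would argue strict containment. For the first case, if $c'$ lies in the relative interior of some edge $\mathbb{E}_i$, Theorem \ref{flatthm2} immediately provides at least one minimal homoclinic curve of type $g_i$ inside $\mathcal{A}(c')$, while $\mathcal{A}(c)=\{0\}$. If instead $c'$ is a vertex, choose any $c''\in\mathrm{int}\,\mathbb{E}_i$ for some edge $\mathbb{E}_i$ adjacent to $c'$: the segment $(1-\lambda)c''+\lambda c'$ lies in $\mathbb{E}_i$, with interior points giving $\tilde{\mathcal{A}}(c_\lambda)=\tilde{\mathcal{A}}(c'')$, and Lemma \ref{flatlem1} yields $\tilde{\mathcal{A}}(c'')\subseteq\tilde{\mathcal{A}}(c')$; since $\mathcal{A}(c'')$ already strictly contains $\{0\}$, so does $\mathcal{A}(c')$. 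For the second case, $c'\in\partial\mathbb{E}_i$ is a vertex of the polygon, hence also the endpoint of a second edge $\mathbb{E}_j$ with $j\neq i$. Picking $c'''\in\mathrm{int}\,\mathbb{E}_j$ and running the argument above inside $\mathbb{E}_j$ gives $\tilde{\mathcal{A}}(c''')\subseteq\tilde{\mathcal{A}}(c')$; but $\mathcal{A}(c''')$ contains minimal homoclinic curves of type $g_j$, whereas $\mathcal{A}(c)$ only contains those of type $g_i\neq g_j$ (Theorem \ref{flatthm2} assigns distinct homology classes to distinct edges). Hence $\mathcal{A}(c')\supsetneq\mathcal{A}(c)$.

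The step I expect to require the most care is the strict-containment argument at a vertex, because we need to rule out the possibility that the Aubry orbits contributed by the two adjacent edges coincide. This is handled cleanly by the assignment $\mathbb{E}_i\mapsto g_i\in H_1(\mathbb{T}^n,\mathbb{Z})$ in Theorem \ref{flatthm2}: distinct edges carry distinct homology classes, so a minimal homoclinic of type $g_j$ cannot lie in $\mathcal{A}(c)$, which by the same theorem consists only of the fixed point together with minimal homoclinics of type $g_i$. A minor verification, used implicitly above, is that $\mathbb{F}_0$ has at least two edges meeting at every vertex; this is automatic since $\mathbb{F}_0$ is a bounded two-dimensional convex polygon.
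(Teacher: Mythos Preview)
Your argument has a genuine gap: it leans on Theorem~\ref{flatthm2}, but that theorem carries strictly stronger hypotheses than Lemma~\ref{flatlem2}. Theorem~\ref{flatthm2} assumes that for every $c\in\mathbb{F}_0$ \emph{including the boundary} the minimal measure is supported on the fixed point (and also assumes the eigenvalue/approach-direction conditions), whereas Lemma~\ref{flatlem2} only assumes the Mather set is a singleton on $\mathrm{int}\,\mathbb{F}_0$. In particular the lemma must cover the case $c'\in\partial^*\mathbb{F}_0$, where there is a $c'$-minimal measure $\mu_{c'}$ with $\rho(\mu_{c'})\neq 0$ and the clean polygon-with-edge-homology picture of Theorem~\ref{flatthm2} is not available. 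The paper handles this case directly: for $c'\in\partial^*\mathbb{F}_0$ the extra minimal measure already forces $\mathcal{A}(c')\supsetneq\mathcal{A}(c)$.

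For the edge case your vertex argument likewise presupposes the polygonal structure and the bijection $\mathbb{E}_i\mapsto g_i$ from Theorem~\ref{flatthm2}, which you cannot invoke here. The paper instead takes $c^*\in\partial\mathbb{F}_0\setminus\mathbb{E}_i$ arbitrarily close to $c'$, uses Lemma~\ref{flatlem1} along the segment from $c$ to $c^*$ (which crosses $\mathrm{int}\,\mathbb{F}_0$) to get $\mathcal{A}(c)\cap\mathcal{A}(c^*)=\mathcal{A}(c_0)$, and then shows by a direct action comparison that any $\zeta\in\mathcal{A}(c^*)\setminus\mathcal{A}(c_0)$ satisfies $[\zeta]\neq g(\mathbb{E}_i)$; passing to the limit $c^*\to c'$ produces a point of $\mathcal{A}(c')$ not in $\mathcal{A}(c)$. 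This argument works without any polygonal structure and is what you need to supply.
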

\begin{proof}
As the Mather set is a singleton for each $c\in\text{\rm int}\mathbb{F}_0$, each orbit in the Aubry set is either the fixed point itself, or a homoclinic orbit to the point with zero first homology. Indeed, let $[\gamma]$ denote its first homology of the homoclinic curve $\gamma$ in the Aubry set, then
$$
\int_{-\infty}^{\infty}L(d\gamma(t))dt-\langle c,[\gamma]\rangle=0
$$
holds for each $c\in\text{\rm int}\mathbb{F}_0$. It follows that $\langle c-c',[\gamma]\rangle=0$ for $c,c'\in\text{\rm int}\mathbb{F}_0$. Since $\mathbb{F}_0$ shares the same dimension of the configuration space, $[\gamma]=0$. In fact, for classical mechanical system, the Aubry set consists of the fixed point only for $c\in\text{\rm int}\mathbb{F}_0$. If $c'\in\partial\mathbb{F}_0\backslash\partial^*\mathbb{F}_0$, as shown in the proof of Theorem \ref{flatthm2}, the Aubry set $\mathcal{A}(c')$ contains at least one minimal homoclinic curve with non-zero first homology. If $c'\in\partial^*\mathbb{F}_0$, the certain $c'$-minimal measure $\mu_{c'}$ exists with $\rho(\mu_{c'})\ne 0$. In both cases, $\mathcal{A}(c')\supsetneq\mathcal{A}(c)$ if $c\in\text{\rm int}\mathbb{F}_0$.

Let $\mathbb{E}_i$ be an edge. For $c\in\text{\rm int}\mathbb{E}_i$, the Aubry set contains one or more homoclinic curves, all of them share the same homology class, denoted by $g(\mathbb{E}_i)$ which is of course non-zero. If $\mathcal{M}(c)$ contains other curves, these curves also share the same rotation vector as $\langle c-c',g(\mathbb{E}_i)\rangle=0$ holds for $c,c'\in\text{\rm int}\mathbb{E}_i$.

Let $c'\in\partial\mathbb{E}_i$ and $c\in\text{\rm int}\mathbb{E}_i$, one chooses $c^*\in\partial\mathbb{F}_0 \backslash\mathbb{E}_i$ arbitrarily close to $c'$. As the straight line connecting $c$ to $c^*$ passes through the interior of $\mathcal{F}_0$, we obtain from Lemma \ref{flatlem1} that $\mathcal{A}(c)\cap \mathcal{A}(c^*)=\mathcal{A}(c_0)$ with $c_0\in int\mathbb{E}_i$. For any curve $\zeta$ contained in $\mathcal{A}(c^*)\backslash\mathcal{A}(c_0)$, it follows from the formulation
$$
0=\int (L(d\zeta(t))-\langle c^*,\dot\zeta\rangle)dt=\int (L(d\zeta(t))-\langle c,\dot\zeta\rangle)dt+\langle c-c^*,[\zeta]\rangle
$$
that $\langle c-c^*,[\zeta]\rangle\neq0$ holds. We claim $[\zeta]\neq g(\mathbb{E}_i)$. Let us assume the contrary and consider the case that $\zeta$ is a homoclinic curve and $\mathcal{A}(c)$ contains a homocilinic curve $\gamma$. In this case, by assuming that $\alpha(c)=0$ for $c\in\mathbb{F}_0$, we have
$$
\int_{-\infty}^{\infty} L(d\zeta)dt -\langle c^*,[\zeta]\rangle=0, \qquad \int_{-\infty}^{\infty} L(d\gamma)dt -\langle c,g(\mathbb{E}_i)\rangle=0.
$$
Since the class $c^*$ is not on the straight line containing $\mathbb{E}_i$, we have $\langle c^*-c,g(\mathbb{E}_i)\rangle\ne0$. If $\langle c^*-c,g(\mathbb{E}_i)\rangle>0$ we would have
$$
\int_{-\infty}^{\infty} L(d\gamma)dt-\langle c^*,[\gamma]\rangle=\int_{-\infty}^{\infty} L(d\gamma)dt-\langle c,[\gamma]\rangle- \langle c^*-c,g(\mathbb{E}_i)\rangle<0
$$
If $[\zeta]=g(\mathbb{E}_i)$ and $\langle c^*-c,g(\mathbb{E}_i)\rangle<0$ we would have
$$
\int_{-\infty}^{\infty} L(d\zeta)dt-\langle c,[\zeta]\rangle=\int_{-\infty}^{\infty} L(d\gamma)dt-\langle c^*,[\zeta]\rangle+ \langle c^*-c,g(\mathbb{E}_i)\rangle<0
$$
Both cases are absurd as $\alpha(c)=\alpha(c^*)=0$. Because $[\zeta]\neq g(\mathbb{E}_i)$, some $x^*\in\mathcal{A}(c^*)$ remains far away from $\mathcal{A}(c)$. Let $c^*\to c'$, the accumulation point of these points does not fall into $\mathcal{A}(c)$, it implies  $\mathcal{A}(c')\supsetneq\mathcal{A}(c)$. The proof is similar if $\xi$ as well as $\gamma$ is a curve lying in the Mather set.
\end{proof}

Recall the definition of $G_m$ in the section 2: a first homology class $g\in G_m$ if and only if there exists a minimal homoclinic orbit $d\gamma$ such that $[\gamma]=g$. Let $G_{m,c}\subset G_m$ be defined such that $g\in G_{m,c}$ if and only if there exists a minimal homoclinic orbit $d\gamma$ in $\tilde{\mathcal{A}}(c)$ such that $[\gamma]=g$. We say that there are $k$-types of minimal homoclinic orbits in $\tilde{\mathcal{A}}(c)$ if $G_{m,c}$ contains exactly $k$ elements. For an edge we define $G_{m,\mathbb{E}_i}=G_{m,c}$ for each $c\in int \mathbb{E}_i$, from the proof of Lemma \ref{flatlem2} one can see that it makes sense.
\begin{theo}\label{flatthm3}
Let $\mathbb{F}_0$ be a two dimensional flat, $\mathcal{M}(c_0)$ is a singleton for $c_0\in int\mathbb{F}_0$. Let $\mathbb{E}_i$ denote an edge of $\mathbb{F}_0$ (not a point), then

1, either $\mathbb{E}_i\cap\partial^*\mathbb{F}_0 =\varnothing$ or $\mathbb{E}_i\subset\partial^*\mathbb{F}_0$;

2, if $\mathbb{E}_i\cap\partial^*\mathbb{F}_0 =\varnothing$, then $G_{m,\mathbb{E}_i}$ contains exactly one element, if $\mathbb{E}_i\subset\partial^*\mathbb{F}_0$, all curves in $\mathcal{M}(\mathbb{E}_i)\backslash\{0\}$ have the same rotation vector;

3, if $c\in\partial\mathbb{E}_i$ and $c\notin\partial^*\mathbb{F}_0$ then $G_{m,c}$ contains exactly two elements;

4, if $\mathbb{E}_i, \mathbb{E}_j\subset\partial^*\mathbb{F}_0$, then either $\mathbb{E}_i$ and $\mathbb{E}_j$ are disjoint, or $\mathbb{E}_i=\mathbb{E}_j$;

5, if $\mathbb{E}_i\subset\partial^*\mathbb{F}_0$, $\mathcal{M}(c)=\mathcal{M}(c')$ holds for $c\in\partial \mathbb{E}_i$ and $c'\in int \mathbb{E}_i$.
\end{theo}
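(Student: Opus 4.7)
The plan is to dispatch the five items in the order (1), (4), (2), (5), (3), since each uses the previous. Two ingredients do most of the work: the identity
$A_{c'}(\mu)-A_c(\mu)=\langle c-c',\rho(\mu)\rangle$,
valid whenever $c,c'$ lie in a flat of $\alpha$, and the Lipschitz graph property of the Mather/Aubry set, which in dimension two is restrictive enough to rule out most non-uniqueness.

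For (1), fix $c\in\mathbb{E}_i\cap\partial^*\mathbb{F}_0$ and an ergodic component $\mu\in\mathcal{M}(c)$ not supported on the fixed point. For any $c'\in\mathbb{E}_i$, minimality at $c'$ gives $A_{c'}(\mu)\ge -\alpha(c')=-\alpha(c)=A_c(\mu)$, hence $\langle c-c',\rho(\mu)\rangle\ge 0$; taking $c'$ on both sides of $c$ along $\mathbb{E}_i$ forces equality, so $\rho(\mu)$ is orthogonal to the direction of $\mathbb{E}_i$ and $\mu$ is $c'$-minimal for every $c'\in\mathbb{E}_i$, whence $\mathbb{E}_i\subset\partial^*\mathbb{F}_0$. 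For (4), if two distinct edges $\mathbb{E}_i,\mathbb{E}_j\subset\partial^*\mathbb{F}_0$ share a point $c$, the previous step places in $\mathcal{M}(c)$ two periodic orbits with rotation vectors $\omega_i,\omega_j$ respectively orthogonal to the directions of $\mathbb{E}_i,\mathbb{E}_j$, so $\omega_i$ and $\omega_j$ are linearly independent. Their projections to $\mathbb{T}^2$ are closed curves with non-zero algebraic intersection and must meet at a point with distinct velocities, contradicting the Lipschitz graph property of $\tilde{\mathcal{M}}(c)\subset\tilde{\mathcal{A}}(c)$.

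For (2), the case $\mathbb{E}_i\cap\partial^*\mathbb{F}_0=\varnothing$ reduces to Theorem \ref{flatthm2}'s argument: the Mather set on $\mathbb{E}_i$ is $\{0\}$, so every minimal homoclinic in $\mathcal{A}(c)$ for $c\in\text{int}\,\mathbb{E}_i$ has homology annihilating the direction of $\mathbb{E}_i$, and the finiteness-of-edges reasoning of Theorem \ref{flatthm2} produces a unique primitive class $g_i$ with $G_{m,\mathbb{E}_i}=\{g_i\}$. If instead $\mathbb{E}_i\subset\partial^*\mathbb{F}_0$, (1) places every periodic component of $\mathcal{M}(\mathbb{E}_i)\setminus\{0\}$ in $\mathcal{M}(c)$ for each $c\in\mathbb{E}_i$ with rotation vector in the one-dimensional subspace $(\mathbb{E}_i)^{\perp}$, and the graph argument of (4), restricted now to that line, forbids distinct rotation vectors; hence a common $\omega$ is shared. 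Claim (5) is then immediate: for $\mathbb{E}_i\subset\partial^*\mathbb{F}_0$ and any $c,c'\in\mathbb{E}_i$ (including $c\in\partial\mathbb{E}_i$), (1) and (2) give the same periodic orbits in $\mathcal{M}(c)$ and $\mathcal{M}(c')$ and the fixed point lies in both, so $\mathcal{M}(c)=\mathcal{M}(c')$. At such a boundary point $c$ no second edge can contribute extra periodic orbits: a second edge containing $c$ and lying in $\partial^*\mathbb{F}_0$ is ruled out by (4), and one avoiding $\partial^*\mathbb{F}_0$ is ruled out by (1).

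The hardest item is (3). The inclusion $\{g_i,g_j\}\subseteq G_{m,c}$ is routine: at $c\in\partial\mathbb{E}_i\cap\partial\mathbb{E}_j$ with $c\notin\partial^*\mathbb{F}_0$, (1) forces both $\mathbb{E}_i$ and $\mathbb{E}_j$ to avoid $\partial^*\mathbb{F}_0$; by (2) and Lemma \ref{flatlem2}, $\mathcal{A}(c')$ for $c'\in\text{int}\,\mathbb{E}_i$ (resp.\ $\text{int}\,\mathbb{E}_j$) contains minimal homoclinics of type $g_i$ (resp.\ $g_j$), and a limit $c'\to c$ using upper semi-continuity of the Ma\~n\'e set together with the hyperbolic estimates of Proposition \ref{flatpro2} places these classes in $G_{m,c}$. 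The main obstacle is the reverse inclusion: one must show no third homology class $g\neq g_i,g_j$ supports a $c$-static minimal homoclinic. Any such $g$ lies in the normal cone to $\mathbb{F}_0$ at $c$, which is spanned over $\mathbb{R}_+$ by $g_i$ and $g_j$, so $g=ag_i+bg_j$ with $a,b>0$; subadditivity gives $h^\infty_g\le ah^\infty_{g_i}+bh^\infty_{g_j}=\langle c,g\rangle$, and minimality $A_c(\gamma)\ge 0$ forces equality. A minimizer is therefore realized only as a broken concatenation of copies of $\gamma_i$ and $\gamma_j$, which is $c$-semi-static but fails the round-trip identity $F_c(\gamma(t),\gamma(t'))+F_c(\gamma(t'),\gamma(t))=0$ required for being $c$-static. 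Using the modulus-of-continuity estimates of Section 3.2 and the exponential behaviour near the hyperbolic fixed point, one checks that the round-trip action is strictly positive unless $[\gamma]\in\{g_i,g_j\}$, which excludes every $g$ in the interior of the normal cone and closes the argument.
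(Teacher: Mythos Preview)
Your arguments for (1), (4), and (5) are essentially the paper's. There is one small lacuna in (1): the two-sided move ``take $c'$ on both sides of $c$ along $\mathbb{E}_i$'' requires $c\in\text{int}\,\mathbb{E}_i$; when $c\in\partial\mathbb{E}_i$ you cannot go both ways. The paper closes this case differently: since $\mathcal{A}(c)\supseteq\mathcal{A}(c')$ for $c'\in\text{int}\,\mathbb{E}_i$, the Lipschitz graph property of $\tilde{\mathcal{A}}(c)$ forces $\rho(\mu_c)$ to be parallel to $g_i$ (otherwise the periodic support of $\mu_c$ would cross the $g_i$-homoclinic away from the fixed point), and then $\langle c-c',\rho(\mu_c)\rangle=0$ for all $c'\in\mathbb{E}_i$ follows at once. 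Similarly, in your second half of (2) the graph argument alone does not forbid two rotation vectors $\nu g$ and $\nu' g$ with $\nu\ne\nu'$ (parallel closed curves on $\mathbb{T}^2$ can be disjoint); the paper instead deduces the common rotation vector directly from the orthogonality constraint $\langle c-c',\rho\rangle=0$ for all $c,c'\in\mathbb{E}_i$.

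The substantive divergence is (3). The paper's proof is a single sentence invoking Lemma~\ref{flatlem2}: the strict inclusion $\mathcal{A}(c)\supsetneq\mathcal{A}(c')$ for $c'\in\text{int}\,\mathbb{E}_i$, together with the observation in that lemma's proof that the surplus carries homology $\neq g_i$, already produces the second element. You instead try to bound $G_{m,c}$ from above by excluding a third class $g=ag_i+bg_j$ with $a,b>0$, and here there is a genuine gap. From the equality $h^\infty_g=ah^\infty_{g_i}+bh^\infty_{g_j}$ you may conclude that the broken concatenation of $\gamma_i$'s and $\gamma_j$'s is \emph{a} minimizer, not that it is the \emph{only} one; a smooth minimal homoclinic $\gamma$ of type $g$ with the same action could still exist and be $c$-static. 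Your final sentence asserts that ``the round-trip action is strictly positive unless $[\gamma]\in\{g_i,g_j\}$'' and cites Section~3.2, but nothing there yields strict positivity of $F_c(\gamma(t),\gamma(t'))+F_c(\gamma(t'),\gamma(t))$ for an arbitrary smooth homoclinic of composite homology. In short, you have argued that the concatenation is not $c$-static, which is not what is at stake; the hypothetical smooth $\gamma$ is untouched.
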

\begin{proof}
For the conclusion 1, as $\mathcal{A}(c)=\mathcal{A}(c')$ if $c,c'\in\text{\rm int}\mathbb{E}_i$ \cite{Ms}, we only need to consider $c\in\partial\mathbb{E}_i$. If it is not true, there would exist an invariant measure $\mu_c$, not supported on the singleton and minimizing the action
$$
\int Ld\mu_c-\langle \rho(\mu_c),c\rangle=-\alpha(c),
$$
but not minimizing the $c'$-action for $c'\in\text{\rm int}\mathbb{E}_i$. As the configuration space is $\mathbb{T}^2$, the Lipschitz graph property of Aubry set will be violated if the rotation vector of the measure $\rho(\mu_c)$ is not parallel to $g\in G_{m,\mathbb{E}_i}$. So, $\langle\rho(\mu_c), c-c'\rangle=0$ holds for  $c'\in\text{\rm int}\mathbb{E}_i$, thus $\mu_c$ also minimizes the action for $c'\in\text{\rm int}\mathbb{E}_i$. This leads to a contradiction. Since $\partial^*\mathbb{F}_0$ is closed, once $\text{\rm int}\mathbb{E}_i\subset\partial^*\mathbb{F}_0$, then whole edge is also contained in $\partial^*\mathbb{F}_0$.

The conclusion 2 follows from the fact that $\langle c-c',[\gamma]\rangle=0$ holds for any $c,c'\in\text{\rm int}\mathbb{E}_i$ and any $\gamma\in\mathcal{A}(c)$, the conclusion 3 follows from that $\mathcal{A}(c)\varsupsetneq\mathcal{A}(c')$ if $c'\in\text{\rm int}\mathbb{E}_i$.

If the conclusion 4 was not true, for the cohomology class in $\mathbb{E}_i\cap \mathbb{E}_j$ the Mather set would contain two closed circles with different homology, but it violates the Lipschitz graph property of Aubry set. With the same reason we have the conclusion 5.
\end{proof}

By this theorem, each edge $\mathbb{E}_i\subset\partial^*\mathbb{F}_0$ aslo uniquely determines a class $g(\mathbb{E}_i)$ so that for each $c\in int \mathbb{E}_i$, the rotation vector of each $c$-minimal measure has the form $\nu g(\mathbb{E}_i)$ ($\nu>0$). For brevity, we also use the notation $\mathcal{M}(\mathbb{E}_i)= \mathcal{M}(c)$ for $c\in \mathbb{E}_i$.

\begin{figure}[htp] 
  \centering
  \includegraphics[width=6.6cm,height=3.2cm]{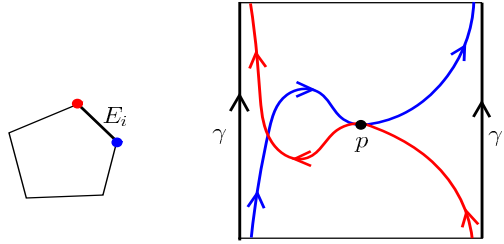}
  \caption{$\mathbb{E}_i\subset\partial^*\mathbb{F}_0$, $\mathcal{M}(\mathbb{E}_i)=\{0\}\cup
  \{\gamma\}$. The blue curve is in $\mathcal{A}(c)$ for $c$ at one end point
  of $\mathbb{E}_i$, the red curve is in $\mathcal{A}(c')$ for $c'$ at another end
  point of $\mathbb{E}_i$.}
  \label{Fig1}
\end{figure}
Given two homology classes $g,g'\in H_1(\mathbb{T}^2,\mathbb{Z})$, we call them adjacent if $g\in G_{m,\mathbb{E}}$, $g'\in G_{m,\mathbb{E}'}$, $\mathbb{E}\cap\partial^*\mathbb{F}_0=\varnothing$, $\mathbb{E}'\cap\partial^*\mathbb{F}_0=\varnothing$, $\mathbb{E}$ and $\mathbb{E}'$ are adjacent. The special topology of two-dimensional torus induces some restrictions on adjacent homologies.
\begin{lem}\label{flatlem3}
Let $\mathbb{E},\mathbb{E}'\subset\partial\mathbb{F}_0\backslash\partial^*\mathbb{F}_0$ be two adjacent edges and assume $c\in \mathbb{E}\cap \mathbb{E}'$. If $(m,n)=g\in G_{m,\mathbb{E}}$ and $(m',n')=g'\in G_{m,\mathbb{E}'}$, then one has that $m'n-mn'=\pm 1$.
\end{lem}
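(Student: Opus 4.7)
The plan is to combine the Lipschitz graph property of the Aubry set with a planar-tiling argument in the universal cover $\pi_{\infty}:\mathbb{R}^2\to\mathbb{T}^2$. Since $\mathbb{E},\mathbb{E}'\subset\partial\mathbb{F}_0\setminus\partial^*\mathbb{F}_0$ are adjacent edges meeting at $c$, conclusions 2 and 3 of Theorem \ref{flatthm3} produce two distinct minimal homoclinic curves $\gamma,\gamma'\in\mathcal{A}(c)$ with $[\gamma]=g=(m,n)$ and $[\gamma']=g'=(m',n')$. The defining relation $\langle c-c',g(\mathbb{E})\rangle=0$ for $c,c'\in\mathbb{E}$ makes $g(\mathbb{E})$ transverse to the affine span of $\mathbb{E}$, so two distinct adjacent edges determine linearly independent classes in $H_1(\mathbb{T}^2,\mathbb{R})$; hence $mn'-m'n\ne 0$.

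The Lipschitz graph property then forces $\gamma$ and $\gamma'$ to be disjoint outside the fixed point $\{0\}$: any coincidence $\gamma(t_0)=\gamma'(s_0)\ne 0$ would equate velocities and identify the two orbits by Euler--Lagrange uniqueness, while a finite-time visit to $0$ would contradict homoclinicity. Picking lifts $\bar\gamma$ from $0$ to $g$ and $\bar\gamma'$ from $0$ to $g'$ in $\mathbb{R}^2$, the translates $\{\bar\gamma+k,\bar\gamma'+k\}_{k\in\mathbb{Z}^2}$ form a planar graph $\tilde\Gamma$ with vertex set $\mathbb{Z}^2$ whose edges are pairwise non-crossing off $\mathbb{Z}^2$. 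Let $R$ denote the bounded Jordan region enclosed by the four arcs $\bar\gamma$, $\bar\gamma'+g$, $\bar\gamma+g'$, $\bar\gamma'$ (the last two with reversed orientation).

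The crux is to show that $\overline{R}\cap\mathbb{Z}^2=\{0,g,g',g+g'\}$. An interior lattice point $p=\alpha g+\beta g'$ with $0<\alpha,\beta<1$ would spawn a translated lift $\bar\gamma+p$ starting at $p\in R$ and ending at the exterior point $p+g=(1+\alpha)g+\beta g'$, forcing it to cross $\partial R$; but any such crossing would pit a lift of $\gamma$ or $\gamma'$ against another, contradicting the non-crossing established above. A non-corner lattice point on a boundary arc, say on $\bar\gamma$ itself, would mean $\gamma$ returns to the fixed point at finite time, again contradicting homoclinicity.

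Since the translates $\{R+\lambda:\lambda\in\Lambda\}$ with $\Lambda=\mathbb{Z}g+\mathbb{Z}g'$ are exactly the faces of $\tilde\Gamma$ and therefore tile $\mathbb{R}^2$, every lattice point of $\mathbb{Z}^2$ must appear as a corner of some tile and thus lies in $\Lambda$. Hence $[\mathbb{Z}^2:\Lambda]=|mn'-m'n|=1$, yielding $m'n-mn'=\pm 1$. The most delicate technical point I anticipate is justifying rigorously that $\tilde\Gamma$ really is a planar graph whose bounded faces are the $\Lambda$-translates of $R$, because all four branches at every lattice vertex share the tangent direction $\xi_1^{\pm}$; this I would handle by a local combinatorial argument that fixes the cyclic order of the four branches at each corner from the Lipschitz graph and minimality data, or by a small isotopy near each vertex that separates the branches without changing combinatorial type.
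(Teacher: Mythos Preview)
Your tiling approach is sound in spirit and closely related to the paper's argument, which is more direct. Having used the Lipschitz graph property to see that $\gamma$ and $\gamma'$ meet only at the fixed point, the paper considers the infinite concatenated chains $\cdots\ast(\bar\gamma+(j-1)g)\ast(\bar\gamma+jg)\ast\cdots$ and the analogous $g'$-chains; these are properly embedded curves in $\mathbb{R}^2$ running in the linearly independent directions $g,g'$, so any $g$-chain must cross any $g'$-chain, and crossings occur only at lattice points. Taking the $g$-chain through $0$ (lattice points $\mathbb{Z}g$) against the $g'$-chain through an arbitrary $k\in\mathbb{Z}^2$ (lattice points $k+\mathbb{Z}g'$) gives $k\in\mathbb{Z}g+\mathbb{Z}g'$ immediately. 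Your no-interior-lattice-point step is a local version of the same crossing phenomenon.

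There is, however, a circularity in your write-up. Your justification that $p+g$ is exterior to $R$ via ``$p=\alpha g+\beta g'$ with $0<\alpha,\beta<1$'' treats $R$ as the straight parallelogram, which it is not; what actually gives $p+g\notin\overline R$ is that the $\Lambda$-translates of $\mathrm{int}(R)$ are pairwise disjoint (so $p\in\mathrm{int}(R)$ forces $p+g\in\mathrm{int}(R+g)$, disjoint from $\overline R$). That disjointness is a property of the subgraph $\tilde\Gamma_\Lambda$ built from $\Lambda$-translates of $\bar\gamma,\bar\gamma'$ only, whose faces are indeed the $\Lambda$-translates of $R$. Your assertion that the $\Lambda$-translates of $R$ are the faces of the full graph $\tilde\Gamma$ (with $\mathbb{Z}^2$-translates) presupposes $\Lambda=\mathbb{Z}^2$: if $\Lambda\subsetneq\mathbb{Z}^2$ the extra edges would subdivide $R$. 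Reorder: first establish the $\Lambda$-tiling via $\tilde\Gamma_\Lambda$, then run the interior-lattice-point argument, then conclude. The tangency issue you flag at the end is not the real obstacle.
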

\begin{proof}
The Aubry set $\tilde{\mathcal{A}}(c)$ contains homoclinic orbits with two two classes $(m,n)$ and $(m',n')$, both are irreducible. Guaranteed by the Lipschitz graph property, these curves intersect each other only at the fixed point. In the universal covering space $\mathbb{R}^2$, each curve in the lift of the homoclinic curves are determined by the equation
$$
mx_1+nx_2=k,\qquad m'x_1+n'x_2=k'.
$$
The solution of the equations corresponds to the intersection point which are lattice points in $\mathbb{Z}^2$ for any $(k,k')\in\mathbb{Z}^2$. To guarantee this property, the necessary and sufficient condition is $mn'-m'n=\pm 1$.
\end{proof}

For each indivisible homological class $0\neq g\in H_1(\mathbb{T}^2,\mathbb{Z})$, either $\mathscr{L}_{\beta}(\lambda g)\notin\partial\mathbb{F}_0$ for any $\nu>0$, or some $\lambda_0>0$ exists such that $\mathscr{L}_{\beta}(\lambda_0 g)\in\partial^*\mathbb{F}_0$.

In the first case, $\mathscr{L}_{\beta}(\lambda g)\to\partial\mathbb{F}_0\backslash\partial ^*\mathbb{F}_0$ as $\lambda\downarrow 0$, at least one periodic curve $\gamma_{\lambda}\subset\mathcal{M}(c)$ exists for $c\in\mathscr{L}_{\beta}(\lambda g)$ with $\lambda>0$. It is impossible that $d(c,\mathscr{L}_{\beta}(\lambda g))\to 0$ holds for $c\in\partial^*\mathbb{F}_0$, as in that case certain $c$-minimal measure $\mu_c$ would exist so that $\rho(\mu_c)$ is not parallel to $[\gamma]$. It will violate the Lipschitz property. Generically, $(\gamma_{\lambda},\dot\gamma_{\lambda})$ is hyperbolic and $\mathscr{L}_{\beta} (\lambda g)$ is an interval if $\lambda>0$. If $g\in G_{m,\mathbb{E}_i}$, then $\mathscr{L}_{\beta}(\lambda g)$ approaches to certain edge $\mathbb{E}_i$. If $g=k_ig_i+k_{i+1}g_{i+1}$ with indivisible $(k_i,k_{i+1})\in\mathbb{Z}^2_+$, $g_i\in G_{m,\mathbb{E}_i}$, $g_{i+1}\in G_{m,\mathbb{E}_{i+1}}$, $\mathbb{E}_i$ and $\mathbb{E}_{i+1}$ are two adjacent edges. As $\lambda\downarrow 0$, the interval will shrink to a vertex where $\mathbb{E}_i$ is joined to $\mathbb{E}_{i+1}$, and we have a sequence of closed orbits $\{d\gamma_{\lambda}\} =\{\cup_t(\gamma_{\lambda}(t),\dot\gamma_{\lambda}(t))\}$. Its Kuratowski upper limit set is obviously in the Aubry set for certain $c\in\partial\mathbb{F}_0\backslash\partial^*\mathbb{F}_0 $, thus, consists of minimal homoclinic orbits to the fixed point. As $c$ approaches to the vertex, the Mather set approaches to a set of figure eight:
$$
\mathcal{M}(c)\to \gamma_i\ast\gamma_{i+1},
$$
where $\gamma_{\ell}\subset\mathcal{A}(E_{\ell})$ is a minimal homoclinic orbit such that $[\gamma_{\ell}]=g_{\ell}$ for ${\ell}=i,j$.
\begin{figure}[htp]
  \centering
  \includegraphics[width=7cm,height=3.0cm]{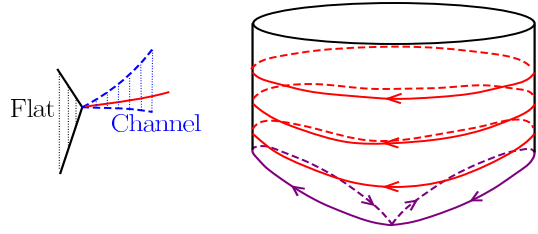}
  \caption{For each $c$ on the red line in the channel, the Aubry set is a closed orbit in the cylinder, it approaches to a curve of figure eight.}
  \label{fig2}
\end{figure}

To be more precise, let us study this phenomenon in the finite covering space $\bar M=\bar k_1\mathbb{T} \times\bar k_2\mathbb{T}$ where $\bar k_m=k_ig_{im}+k_{i+1}g_{i+1,m}$ for $m=1,2$ if we write $g_j=(g_{j1},g_{j2})$ for $j=i,i+1$. Let $\sigma$: $\{1,2,\cdots,k_i+k_{i+1}\}\to \{i,{i+1}\}$ be a permutation such that the cardinality $\#\sigma^{-1}(i)=k_i$ and $\#\sigma^{-1}({i+1})=k_{i+1}$. The lift of homoclinic curve $\gamma_i$ as well as $\gamma_{i+1}$ to $\bar M$ contains several curves, not closed. Pick up one curve $\bar\gamma_{\sigma(1)}$ in the lift of $\gamma_{\sigma(1)}$, it determines a unique curve $\bar\gamma_{\sigma(2)}$ such that the end point of $\bar\gamma_{\sigma(1)}$ is the starting point of $\bar\gamma_{\sigma(2)}$, and so on. See Figure \ref{fig3}.
\begin{figure}[htp]
  \centering
  \includegraphics[width=7.0cm,height=4.2cm]{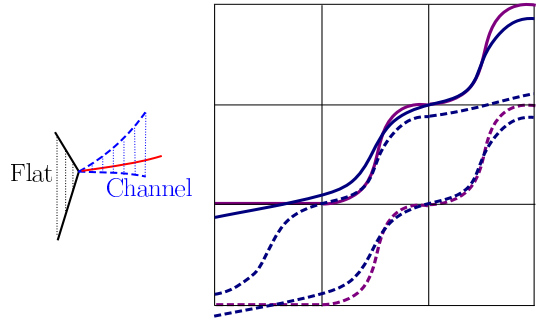}
  \caption{$[\gamma_1]=(1,0)$, $[\gamma_2]=(1,1)$, $k_1=1$, $k_2=2$. For each class $c$ on the red line, $\rho(\mu_c)=\lambda([\gamma_1]+2[\gamma_2])$. The solid blue line represent a periodic curve, the solid purple line represent the conjunction of the minimal homoclinic curves. The dashed lines represent the image of the Deck-transformation.}
  \label{fig3}
\end{figure}
We are going to show below that there exists a unique permutation $\sigma$ such that one Aubry class in $\mathcal{A}(c,\bar M)$
$$
\mathcal{A}_i(c,\bar M)\to \bar\gamma_{\sigma(1)}\ast
\bar\gamma_{\sigma(2)}\ast\cdots\ast\bar\gamma_{\sigma(k_i+k_{i+1})}
$$
as $c$ approaches to the vertex along the path in the channel.
As the minimal curve $\gamma_{\lambda}$ is periodic with the homological class $[\gamma_{\lambda}]=k_ig_i+k_{i+1}g_{i+1}$, the permutation $\sigma$: $\mathbb{Z}\to \{i,i+1\}$ is $(k_i+k_{i+1})$-periodic. Since $k_i$ is prime to $k_{i+1}$, we have $k_i=k_{i+1}=1$ if $k_i=k_{i+1}$.
\begin{lem}\label{flatlem4}
The permutation is uniquely determined by $k_i$ and $k_{i+1}$. If $k_{i}>k_{i+1}$, the following holds for $j=1,\cdots,k_i+k_{i+1}$
\begin{align*}
&\sigma(j+j_0)=i,  \hskip 1.0 true cm \text{\rm if}\ \ (a_j)\ne 0; \\
&\sigma(j+j_0)=i+1,\hskip 0.35 true cm \text{\rm if}\ \ (a_j)=0.
\end{align*}
\end{lem}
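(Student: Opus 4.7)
The plan is to show that the ordering of the minimal homoclinic curves in the concatenation is forced by the Lipschitz graph property of the Aubry set, and that this geometric constraint identifies the permutation with a Sturmian-like word of slope $k_{i+1}/(k_i+k_{i+1})$, which is combinatorially unique up to the cyclic shift $j_0$.

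First I would lift the picture to the universal cover $\mathbb{R}^2$. For each $c$ in the channel on the ray pointing in the direction of $k_i g_i+k_{i+1}g_{i+1}$, the minimal closed curve $\gamma_\lambda$ lifts to a curve $\bar\gamma_\lambda$ going from a base point $\bar x$ to the lattice point $\bar x + k_i g_i+k_{i+1}g_{i+1}$. As $\lambda\downarrow 0$ this lift converges, in the sense of Kuratowski, to a broken curve $\bar\gamma_{\sigma(1)}\ast\cdots\ast\bar\gamma_{\sigma(k_i+k_{i+1})}$ where each piece is the lift of one of the two minimal homoclinic curves $\gamma_i,\gamma_{i+1}$. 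The issue is to determine the order $\sigma$.

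Next I would use the Lipschitz graph property of the Aubry set. In the finite covering $\bar M$, all the translates of the concatenated curve under Deck transformations belong to the same Aubry class, so by the Lipschitz property they project injectively onto the configuration space. Lifted to $\mathbb{R}^2$, this forces the partial-sum lattice points $\bar x+\sum_{\ell\le j}g_{\sigma(\ell)}$ to stay within distance one (in the direction transverse to $k_ig_i+k_{i+1}g_{i+1}$) of the straight segment joining $\bar x$ to $\bar x + k_ig_i+k_{i+1}g_{i+1}$. Any permutation whose partial sums stray further would produce a translated copy of some $\bar\gamma_{\sigma(\ell)}$ that crosses another copy, contradicting the graph property at the fixed point or along the homoclinic branches (here Lemma \ref{flatlem3}, which says $g_ig_{i+1}$ form a unimodular basis, is essential: it guarantees that transverse drifts are measured by integers and hence are either $0$ or $\ge 1$).

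Then I would translate this geometric ``no drift greater than one'' condition into the combinatorics of Sturmian words. Writing $a_j$ for the indicator that the $j$-th partial sum lies strictly above the straight line (equivalently, the jump of $\lfloor j k_{i+1}/(k_i+k_{i+1})\rfloor$ at step $j$), the condition forces $\sigma(j+j_0)=i+1$ precisely on the positions $j$ where $a_j=0$ and $\sigma(j+j_0)=i$ elsewhere, for a suitable cyclic shift $j_0$ corresponding to the choice of the first piece. Uniqueness then follows from the standard fact that the Sturmian word of a rational slope with prescribed numerator/denominator is uniquely determined up to cyclic shift; since $\gcd(k_i,k_{i+1})=1$ when both are $>1$ and trivially when one equals one, this gives the unique permutation claimed.

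The main obstacle will be the second step: rigorously converting the Lipschitz graph property into the ``no drift greater than one'' bound on the partial sums. One has to be careful because the limiting broken curve is only $C^0$, not $C^1$, at the junctions, so the graph property must be invoked on nearby smooth minimizers $\gamma_\lambda$ and then passed to the limit. This requires the control on convergence $\mathcal M(c)\to \gamma_i\ast\gamma_{i+1}$ established just before the lemma, together with the unimodularity of $(g_i,g_{i+1})$ from Lemma \ref{flatlem3}, to conclude that any admissible translate either coincides with a piece of the broken curve or is separated from it by at least one full lattice unit.
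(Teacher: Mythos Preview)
Your outline is correct in spirit and reaches the same Sturmian conclusion as the paper, but the route differs in one important respect. The paper does \emph{not} work with the limiting broken curve and the Lipschitz graph property of its Aubry set; instead it works with the smooth minimal periodic curve $\gamma_\lambda$ for $\lambda>0$ small, \emph{before} passing to the limit. Because $\gamma_\lambda$ is action-minimizing on $\mathbb{T}^2$, it has no self-intersection; this single topological fact replaces your ``no drift greater than one'' bound entirely. Concretely, the paper chooses two small sections $I^\pm$ near the fixed point, split into subsections $I^\pm_i,I^\pm_{i+1}$ through which the homoclinic curves $\gamma_i,\gamma_{i+1}$ pass, and records for each passage of $\gamma_\lambda$ through $I^\pm$ which subsection is hit. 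Non-self-intersection forces a rigid alternation pattern. Then, after a coordinate change $T:\mathbb{T}^2\to\mathbb{T}^2$ with $T_*=\mathrm{id}$ on homology that straightens $\gamma_\lambda$ to (a curve close to) a straight line of slope $k_{i+1}/k_i$, the order of crossings of the horizontal and vertical boundary lines of the fundamental square is read off directly from the jumps of $\lfloor m\,k_{i+1}/k_i\rfloor$, which is exactly the Sturmian word you invoke abstractly.

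The payoff of the paper's choice is precisely that it sidesteps the obstacle you flagged: there is no need to pass the graph property through a $C^0$ limit at the corners, nor to compare Deck translates of a broken curve. Everything happens on a single smooth embedded curve $\gamma_\lambda$, and the combinatorics is then purely that of a straight line in a square. Your approach would also go through, but you would have to do real work to justify that drifting partial sums produce genuine intersections of Aubry-class translates in $\bar M$ (and not merely tangencies at the fixed point), whereas the paper's argument gets this for free from the embeddedness of $\gamma_\lambda$.
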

\begin{proof}
By the assumption, there exists only one minimal homoclinic curve $\gamma_j$ such that $[\gamma_j]=g_j$ for $j=i,i+1$. Because of the lemma \ref{flatlem3}, we can assume that $g_i=(1,0)$ and $g_{i+1}=(0,1)$ by choosing suitable coordinates on $\mathbb{T}^2$. We choose two sections $I^-$ and $I^+$ in a small neighborhood of the origin such that, emanating from the origin, these homoclinic curves pass through $I^-$ and $I^+$ successively before they return back to the origin as $t\to\infty$. In the section $I^{\pm}$ we choose disjoint subsections $I^{\pm}_i$ and $I^{\pm}_{i+1}$ such that the curve $\gamma_j$ passes through $I^{\pm}_{j}$ for $j=i,i+1$.

Let $\gamma_{\lambda}$ be the minimal periodic curve with rotation vector $\lambda g$. For small $\lambda>0$, $\gamma_{\lambda}$ falls into a small neighborhood of these two homoclinic curves. So it has to pass either through $I^{\pm}_i$ or through $I^{\pm}_{i+1}$. Let $t_{\ell}^{\pm}$ be the time for $\gamma_{\lambda}$ passing through $I^{\pm}$ with $\cdots<t_{{\ell}-1}^-<t_{\ell}^+<t_{\ell}^-<t_{{\ell}+1}^+<\cdots$, and it does not tough these sections whenever $t\ne t_k^{\pm}$. By definition, the period of the curve equals $t_{k_1+k_2}^{\pm}-t_0^{\pm}$.   If the curve intersects $I^+_i$ at $t^+_{\ell}$ and intersects $I^-_{i+1}$ at $t^-_{\ell}$, then the segment $\gamma_{\lambda}|_{[t_{{\ell}-1}^-,t_{\ell}^+]}$ keeps close to $\gamma_i$ and $\gamma_{\lambda}|_{[t_{\ell}^-,t_{\ell+1}^+]}$ keeps close to $\gamma_{i+1}$, so one has $\gamma_{\lambda}(t^-_{\ell-1})\in I_i^-$ and $\gamma_{\lambda}(t^+_{\ell+1})\in I^+_{i+1}$.
\begin{figure}[htp]
  \centering
  \includegraphics[width=7.8cm,height=3.5cm]{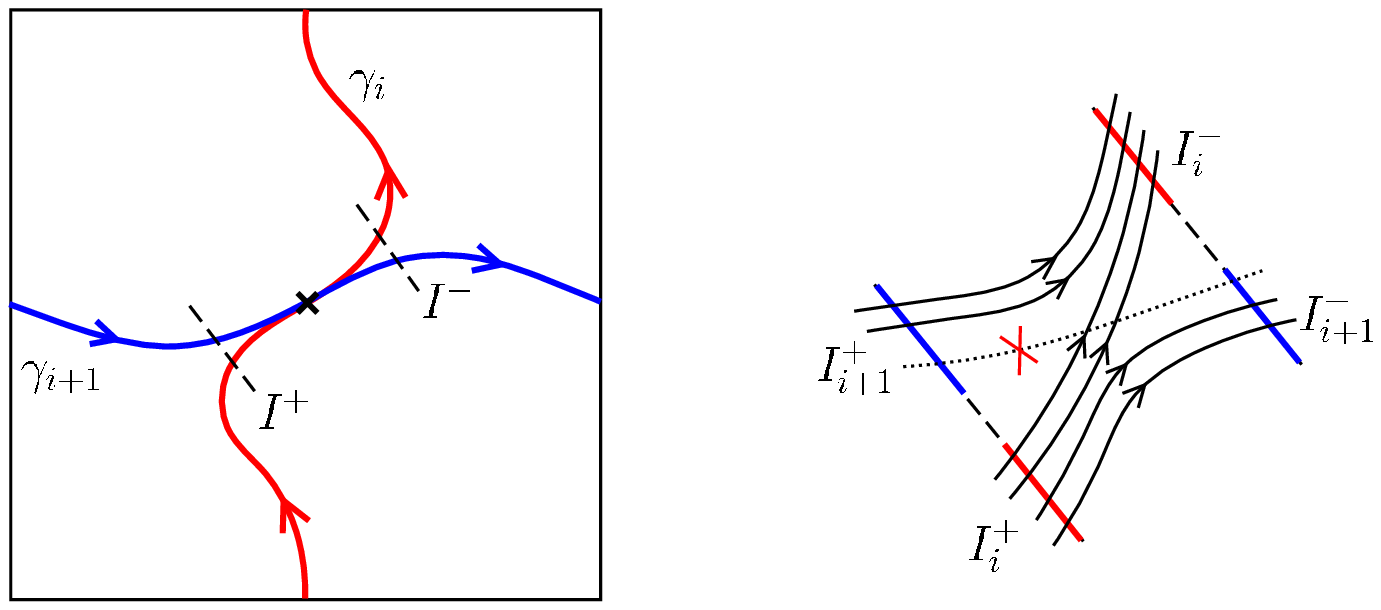}
  \label{fig4}
\end{figure}
As the curve $\gamma_{\lambda}$ is minimal, it does not have self-intersection. Thus, once there exists $t_j^{\pm}$ such that $\gamma_{\lambda}(t^+_j)\in I^+_i$ and $\gamma_{\lambda}(t^-_j)\in I^-_i$, then there does not exist $t_{j'}^{\pm}$ such that $\gamma_{\lambda}(t^+_{j'})\in I^+_{i+1}$ and $\gamma_{\lambda}(t^-_{j'})\in I^-_{i+1}$. Therefore, there is a set $J\subset\{1,2,\cdots,k_i+k_{i+1}\}$ with cardinality $\#(J)=k_i-k_{i+1}$ such that for $j\in J$ one has $\gamma_{\lambda}(t^{\pm}_{j})\in I^{\pm}_{i}$, for $j\notin J$ one either has $\gamma_{\lambda}(t^{+}_{j})\in I^{+}_{i}$ and $\gamma_{\lambda}(t^{-}_{j})\in I^{-}_{i+1}$ or has $\gamma_{\lambda}(t^{+}_{j})\in I^{+}_{i+1}$ and $\gamma_{\lambda}(t^{-}_{j})\in I^{-}_{i}$.

By introducing coordinate transformation on $T$: $\mathbb{T}^2\to\mathbb{T}^2$ such that  $T_*g=g$ $\forall\,g\in H_1(\mathbb{T}^2,\mathbb{Z})$, let us think the curve $T\gamma_{\lambda}$ as a straight line projected down to the unit square, a fundamental domain of $\mathbb{T}^2$.  Starting from a point $z^h_0=(x_0,0)$, the line successively reaches to the points $z^h_1=(x_1,0),\cdots,z^h_m=(x_m,0),\cdots,z^h_{k_i}=z^h_{0}$ where $x_m=(x_0+mk_{i+1}/k_i\mod 1,0)$ with small $x_0>0$. To connect the point $(x_{m-1},0)$ to the point $(x_m,1)$, the curve $T\gamma_{\lambda}$ does not touch the vertical boundary lines if
$$
\Big[(m-1)\frac{k_{i+1}}{k_i}\Big]=\Big[m\frac{k_{i+1}}{k_i}\Big],
$$
where $[a]$ denote the largest integer not bigger than the number $a$, and it has to pass through the vertical lines at some point $z^v_m=(0\mod 1,y_m)$ if
$$
\Big[(m-1)\frac{k_{i+1}}{k_i}\Big]+1=\Big[m\frac{k_{i+1}}{k_i}\Big].
$$
We define an order $\prec$ for these $k_i+k_{i+1}$ points such that $z^h_j\prec z^h_k$ iff $j<k$ and $z^h_j\prec z^v_{j+1}\prec z^h_{j+1}$ iff $[jk_i/k_{j+1}]+1=[(j+1)k_i/k_{j+1}]$.

Returning back to the original coordinates, the curve $\gamma_{\lambda}$ falls into a neighborhood of the curves $\gamma_{i}$ and $\gamma_{i+1}$, intersects the horizontal line $\Gamma_h=T^{-1}\{(x_1,x_2):x_1=\frac12\mod 1\}$ at $T^{-1}z_j^h$ and intersects the vertical line $\Gamma_v=T^{-1}\{(x_1,x_2):x_2=\frac12\mod 1\}$ at $T^{-1}z_j^v$, $[\Gamma_h]=g_{i+1}$ and $[\Gamma_v]=g_i$. Naturally, the map $T$ induces the order among these points: $T^{-1}z^{h,v}_j\prec T^{-1}z^{h,v}_{\ell}$ if and only if $z^{h,v}_j\prec z^{h,v}_{\ell}$. If the curve passes the point $T^{-1}z^h_j$ at $t\in(t^-_j,t^+_{j+1})$, the segment $\gamma_{\lambda}|_{[t^-_j,t^+_{j+1}]}$ falls into a neighborhood of $\gamma_i$, otherwise, it falls into a neighborhood of $\gamma_{i+1}$. In this way, we obtained a unique permutation $\sigma$ up to a translation.
\begin{figure}[htp]
  \centering
  \includegraphics[width=7cm,height=3.5cm]{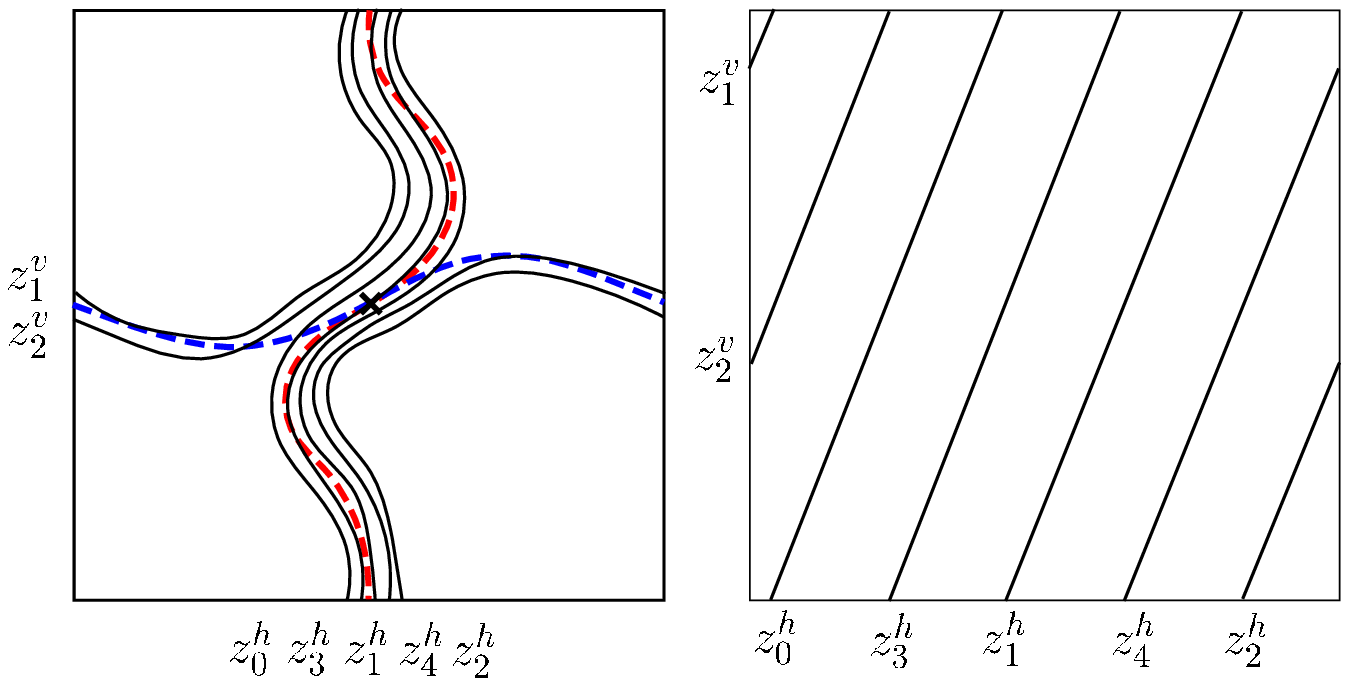}
  \label{fig5}
\end{figure}
\end{proof}

In the second case, some $\lambda_0>0$ exists such that $\mathscr{L}_{\beta}(\lambda g)\in\partial^* \mathbb{F}_0$. It is typical that certain edge $\mathbb{E}_i$ exists such that $\mathscr{L}_{\beta}(\lambda g)=\mathbb{E}_i\subset\partial^*\mathbb{F}_0$, the $c$-minimal measure is supported on a hyperbolic periodic orbit for each $c\in\mathbb{E}_i$. Thus, there is a channel $\mathbb{C}$ endding at $\mathbb{E}_i$ such that the family of periodic orbits $\cup_{c\in\mathbb{C}}\tilde{\mathcal{M}}(c)$ constitutes a normally hyperbolic cylinder.
\begin{figure}[htp]
  \centering
  \includegraphics[width=7cm,height=3.5cm]{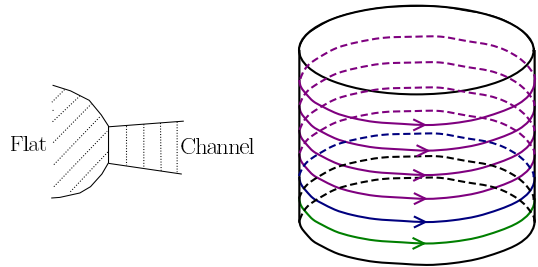}
    \caption{For each $c\in\mathbb{C}$, the Aubry set is a closed orbit in the cylinder (in purple),  the closed circle in blue is in the Aubry set for $c\in\mathbb{F}_0\cap\mathbb{C}$, the closed orbit in green is not global minimum.}
  \label{fig6}
\end{figure}

In the following, we shall study the dynamics on certain energy level of truncated normal form $H(x,y,y_3)=E$. Since one obtains the condition $\partial_{y_3}H\neq 0$ from the normal form, some function $y_3=Y(x,y)$ solves the equation $H=E$. Treat $Y(x,y)$ as the new Hamiltonian and let $\tau=-x_3$ be the time, one obtains a system with two degrees of freedom, which is equivalent to the dynamics on the energy level $\{H^{-1}(E)\}$.

\begin{theo}\label{flatthm5}
For the Hamiltonian $H(x,y,x_n,y_n)$ we assume that $\partial_{y_n}H\neq 0$ on $\{H^{-1}(E)\}\cap\{y_n\in [y_n^-,y_n^+]\}$. Let $y_n=Y(x,y,\tau)$ be the solution of $H=E$ $(\tau=-x_n)$. Let $\alpha_H$ and $\alpha_G$ be the $\alpha$-function for $L_H$ and $L_G$ respectively, where
$$
L_H(x,x_n,\dot x,\dot x_n)=\max_{y,y_n}\langle (\dot x,\dot x_n),(y,y_n)\rangle -H(x,y,x_n,y_n),
$$
$$
L_Y(x,y,\tau)=\max_{y}\langle \dot x,y\rangle -Y(x,y,\tau),
$$
Then for $\alpha_Y(c)\in [y_n^-,y_n^+]$ we have $(c,\alpha_Y(c))\in \alpha_H^{-1}(E)$.
\end{theo}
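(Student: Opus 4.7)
The plan is to exploit the Maupertuis--Jacobi reduction implicit in the hypothesis $\partial_{y_n}H\neq 0$: on the energy level $\{H=E\}$, orbits of $\Phi_H^t$ lying in the slab $\{y_n\in[y_n^-,y_n^+]\}$ are in one-to-one correspondence with orbits of the time-periodic Hamiltonian flow $\phi_Y^\tau$, via the change of independent variable $\tau=-x_n$ with $d\tau/dt=-\partial_{y_n}H$ along the orbit. Since $\partial_{y_n}H$ has a fixed sign on the slab, this is a genuine $C^1$ time reparametrisation between the two flows, so invariant measures transfer in both directions (after normalising by the factor $T^{-1}=(\partial_{y_n}H)\mu\text{-average}$).

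The key pointwise identity is obtained by writing $L_H=\langle y,\dot x\rangle+y_n\dot x_n-H$, substituting $H\equiv E$ and $y_n=Y(x,y,\tau)$, and using $dx_n=-d\tau$:
\begin{equation*}
L_H\,dt \;=\; \langle y,dx\rangle \,-\,Y\,d\tau \,-\, E\,dt,
\qquad
L_Y\,d\tau \;=\; \langle y,dx\rangle \,-\, Y\,d\tau.
\end{equation*}
Combining these and subtracting the closed $1$-form $\eta_c\cdot\dot x\,dt+c_n\dot x_n\,dt=\eta_c\,dx-c_n\,d\tau$, one gets the global action identity
\begin{equation*}
\int(L_H-\eta_c\dot x-c_n\dot x_n)\,dt \;=\; \int(L_Y-\eta_c\,dx/d\tau)\,d\tau+c_n\,\Delta\tau-E\,\Delta t
\end{equation*}
along any $H$-orbit on $\{H=E\}$. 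The first step is then: take a $c$-minimal invariant measure $\mu_Y$ for $\phi_Y^\tau$ with average Lagrangian $-\alpha_Y(c)$, lift it to an invariant probability measure $\mu_H$ on $\{H=E\}$ via the reparametrisation, and insert it into the identity above with $c_n=\alpha_Y(c)$. The $c_n\,\Delta\tau$ and $-\alpha_Y(c)\,\Delta\tau$ contributions cancel, leaving $\int(L_H-\eta_c\dot x-c_n\dot x_n)\,d\mu_H=-E$, which gives the inequality $\alpha_H(c,\alpha_Y(c))\ge E$.

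The reverse inequality goes symmetrically: let $\mu_H^*$ be a $(c,\alpha_Y(c))$-minimal measure for $H$, necessarily supported on a single energy surface $\{H=E^*\}$ with $E^*=\alpha_H(c,\alpha_Y(c))$. Apply the same reduction with $E$ replaced by $E^*$, obtaining the reduced Hamiltonian $Y^{(E^*)}$; push $\mu_H^*$ forward to a measure for $\phi_{Y^{(E^*)}}^\tau$ and reverse the identity to conclude $\alpha_{Y^{(E^*)}}(c)\le \alpha_Y(c)$. Monotonicity of $Y^{(E)}$ in $E$ (which is immediate from implicit differentiation of $H=E$ together with $\partial_{y_n}H\neq 0$: $\partial_E Y^{(E)}=1/\partial_{y_n}H$) translates into strict monotonicity of $\alpha_{Y^{(E)}}(c)$ in $E$, which forces $E^*=E$. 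The principal obstacle is the careful bookkeeping of the time-reparametrisation of measures (verifying invariance and that the normalising factor $T$ is finite and positive), together with ensuring that the reduced minimiser actually lies in the interior of the slab $[y_n^-,y_n^+]$ so the implicit definition of $Y$ is not spoiled; both issues are handled by the hypothesis $\alpha_Y(c)\in[y_n^-,y_n^+]$ combined with upper semicontinuity of the Ma\~n\'e set.
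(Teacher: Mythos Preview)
Your overall strategy---the pointwise action identity together with a two-way measure correspondence---is sound, and the first half (lifting a $c$-minimal measure of $Y$ to $\{H=E\}$ to obtain $\alpha_H(c,\alpha_Y(c))\ge E$) is correct. The problem is the second half: pushing the $(c,c_n)$-minimal measure $\mu_H^*$ forward to $Y^{(E^*)}$ yields an invariant measure whose $(L_{Y^{(E^*)}}-\eta_c)$-action equals $-c_n=-\alpha_Y(c)$. Since $-\alpha_{Y^{(E^*)}}(c)$ is the \emph{infimum} of such actions, you get $\alpha_{Y^{(E^*)}}(c)\ge\alpha_Y(c)$, not $\le$. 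With the correct sign and the monotonicity you state (say $\partial_{y_n}H>0$, so $\alpha_{Y^{(E)}}(c)$ is increasing), this only reproduces $E^*\ge E$, and the argument does not close.

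The fix is short: either (i) observe that the pushforward of $\mu_H^*$ is in fact \emph{minimal} for $Y^{(E^*)}$---because any strictly better measure $\nu$ for $Y^{(E^*)}$ would lift to a measure on $\{H=E^*\}$ with $(L_H-\eta_{\tilde c})$-action strictly below $-E^*=-\alpha_H(\tilde c)$, contradicting minimality of $\mu_H^*$---which gives $\alpha_{Y^{(E^*)}}(c)=\alpha_Y(c)$ directly and hence $E^*=E$ by strict monotonicity; or (ii) assume $E^*>E$, lift a $c$-minimal measure for $Y^{(E^*)}$ to $\{H=E^*\}$, and compute its action to be $(c_n-\alpha_{Y^{(E^*)}}(c))(T_\tau/T_t)-E^*<-E^*$, again a contradiction.

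By contrast, the paper's proof is just the action identity
\[
[A_Y(\gamma)]=\int\Big(\Big\langle\frac{dx}{d\tau},y-c\Big\rangle-y_n+\alpha_Y(c)\Big)d\tau
=\int\big(\langle\dot{\tilde x},\tilde y-\tilde c\rangle-H+E\big)\,dt=[A_H(\tilde\gamma)],
\]
together with the (implicit) observation that the reparametrisation $\tau\leftrightarrow t$ is a bijection between orbits on $\{H=E\}$ and orbits of $Y$, so minimal curves correspond and the normalising constants match: $\alpha_H(c,\alpha_Y(c))=E$. No separate inequalities and no monotonicity argument are invoked. Your route is more explicit about why the correspondence forces equality, which is pedagogically useful, but it needs the correction above to be complete.
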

\begin{proof}
Let $\tilde c=(c,\alpha_Y(c))$, $\tilde\gamma=(\gamma,\gamma_n)$, $\tilde x=(x,x_n)$ and $\tilde y=(y,y_n)$. Let $\gamma$ be $c$-minimal curve for the Lagrange flow $\phi_{L_F}^t$,  $\tilde\gamma$ is then $\tilde c$-minimal curve for the Lagrange flow $\phi_{L_H}^t$ if $\gamma_n=x_n$ and $\tilde\gamma$ is re-parameterized $\tau\to t$. If $x=x(\tau)$ is a solution of $\phi_{L_F}^t$, one obtains $y=y(\tau)$ from the Hamiltonian equations. Since $H(\tilde x(t),\tilde y(t))\equiv E$, we find
\begin{align*}
[A_Y(\gamma)]&=\int\Big(\Big\langle\frac{dx}{d\tau},y-c\Big\rangle-y_n+\alpha_Y(c)\Big)d\tau\\
&=\int(\langle\dot{\tilde x},\tilde y-\tilde c\rangle -H+E)dt\\
&=[A_H(\tilde\gamma)].
\end{align*}
This completes the proof.
\end{proof}
Let $\pi_3: \mathbb{R}^3\to\mathbb{R}^{n-1}$ be the projection $\pi_3\tilde x=x$. By this theorem, $\pi_3^{-1}:H^1(\mathbb{T}^{2},\mathbb{R})\to \alpha_H(E)$ is a homeomorphism for $c\in\mathbb{F}_0+d$, the $d$-neighborhood of the flat $\mathbb{F}_0$. Thus, what we obtained in this subsection have their counterpart in the energy level set $\{H^{-1}(E)\}$ where the class $\tilde c\in\pi_3^{-1}(\mathbb{F}_0+d)\cap\alpha^{-1}_H(E)$.

\section{\ui Normally hyperbolic invariant cylinder}
\setcounter{equation}{0}

In this section, normally hyperbolic invariant cylinder is proved to exist in certain neighborhood of double resonant point. It uses the normal form which is obtained in the appendix where several steps of KAM iteration and one step of linear coordinate transformation were carried out. All these coordinate transformations are symplectic. Since Aubry set and Ma\~n\'e set are symplectic invariants \cite{Be2}, it is good enough to study these objects by considering the normal form.

\subsection{Homogenized Hamiltonian}
The normal form of the Hamiltonian takes the form
\begin{equation}\label{homogenizedeq1}
H=\tilde h(\tilde y)+\epsilon\tilde Z(x,\tilde y)+\epsilon\tilde R(\tilde x,\tilde y),
\end{equation}
where $\tilde x=(x,x_3)=(x_1,x_2,x_3)$ and $\tilde y=(y,y_3)=(y_1,y_2,y_3)$. Since $h$ is positive definite, a unique curve exists along which $\partial_{y}h=(0,0)$. This curve passes through the energy level $\{h^{-1}(\tilde E)\}$ transversally at a unique point $\tilde y_0$. As $\tilde E>\min\alpha$, one has $\partial_{y_3}h(\tilde y_0)=\omega_3\ne 0$. By coordinate translation, we assume $\tilde y_0=0$.

As the dynamics we are going to study is restricted on an energy level $\{H^{-1}(\tilde E)\}$, it can be reduced to a system with two and half degrees of freedom. Since $\omega_3\ne 0$, the equation $H(\tilde x,\tilde y)=\tilde E$ uniquely determines a smooth function $y_3=y_3(x,y,x_3)$ in certain neighborhood of $y_3=0$. Treating $-\omega_3 y_3=Y$ as a new Hamiltonian and $\omega_3^{-1}x_3=\tau$ as a new time variable, we obtain a time-periodic system with two degrees of freedom. Correspondingly, we have the normal form
\begin{equation}\label{homogenizedeq2}
Y(x,y,\tau)=h(y)+\epsilon Z(x,y)+\epsilon R(x,y,\tau)
\end{equation}
where $h(0)=0$, $\partial_yh(0)=0$ and $\epsilon R$ is as small as $\epsilon\tilde R$.

Each $\text{\bf k}\in\mathbb{Z}^2$ determines a resonant curve $\Gamma_{\text{\bf k}}=\{y\in\mathbb{R}^2:\langle\text{\bf k},\partial_yh(y)\rangle=0\}$. Normally hyperbolic cylinder is searched when $y$ varies along this curve. Recall that the normal form remains valid in the domain $\{\|y\|\le O(\epsilon^{\kappa})\}$ ($\frac 16<\kappa\le\frac 13$). The $\sqrt{\epsilon}$-neighborhood of the curve is covered by as many as $O(\epsilon^{-\kappa+\frac 12})$ small balls with radius $O(\sqrt{\epsilon})$. Given a ball centered at $y=y_j\in\Gamma_{\text{\bf k}}$, by rescaling variables $y-y_j=\sqrt{\epsilon}p$, $s=\sqrt{\epsilon}\tau$, one obtains an equivalent Hamiltonian equation
\begin{align*}
\frac{dx}{ds}&=\frac {\omega_j}{\sqrt{\epsilon}}+A_jp+\Big(\frac{\partial h(\sqrt{\epsilon}p)} {\epsilon\partial p}-\frac {\omega_j}{\sqrt{\epsilon}}-A_jp\Big) +\frac{\partial Z}{\partial p}
+\frac{\partial R}{\partial p},\\
\frac{dp}{ds}&=-\frac{\partial Z}{\partial x}-\frac{\partial R}{\partial x}
\end{align*}
which corresponds to the Hamiltonian
$$
G_{\epsilon}=\frac 1{\sqrt{\epsilon}}\langle\omega_j,p\rangle+\frac 12\langle A_jp,p\rangle+V_j(x)+ Z_{\epsilon}(x,\sqrt{\epsilon}p)+R_{\epsilon}(x,\sqrt{\epsilon}p,s/\sqrt{\epsilon})
$$
where $\omega_j=\partial h(y_j)$, $A_j=\partial^2 h(y_j)$, $V_j(x)=Z(x,y_j)$ and
$$
Z_{\epsilon}=\frac{1}{\epsilon}h(y_j+\sqrt{\epsilon}p)-\frac 1{\sqrt{\epsilon}}\langle\omega_j,p\rangle-\langle A_jp,p\rangle+ Z(x,\sqrt{\epsilon}p+y_j)-Z(x,y_j).
$$
According to Appendix A, one has $Z_{\epsilon}=O(\sqrt{\epsilon})$ and $\|R_{\epsilon}\|_{C^2}=O(\epsilon^{3\sigma-2\rho})$, where the $C^2$-norm is with respect to $(x,p)$ only. To guarantee the covering property shown in the appendix, one choose $\sigma=\frac 17$, so we have $3\sigma-2\rho=\frac 1{21}>0$. Obviously, one has
\begin{pro}\label{homogenizationpro}
Each orbit $(x(s),p(s))$ of the Hamiltonian flow $\Phi^s_{G_{\epsilon}}$ uniquely determines an orbit $(x(\tau),y(\tau))=(x(s/\sqrt{\epsilon}),y_j+\sqrt{\epsilon}p(s/\sqrt{\epsilon}))$ of $\Phi^{\tau}_{G}$. If $G_{\epsilon}(x(s),p(s))=E_{\epsilon}$ and $G(x(\tau),y(\tau))=E$, then $E=\epsilon E_{\epsilon}$.
\end{pro}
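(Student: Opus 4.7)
The statement is a bookkeeping verification that the rescaling of phase coordinates and time is precisely the change of variables intertwining the flows of $Y$ and $G_\epsilon$. The plan is to start from the Hamilton equations of $Y(x,y,\tau)=h(y)+\epsilon Z(x,y)+\epsilon R(x,y,\tau)$ in the original time $\tau$, apply the linear symplectic-up-to-a-scale change of variables $y=y_j+\sqrt{\epsilon}\,p$ (with $x$ unchanged), and rescale time by $s=\sqrt{\epsilon}\,\tau$. The key observation is that the tautological one-form transforms as $y\,dx-Y\,d\tau=(y_j+\sqrt{\epsilon}\,p)\,dx-\sqrt{\epsilon}^{-1}Y\,ds$, and after dividing by $\sqrt{\epsilon}$ and discarding the exact form $y_j\,dx/\sqrt{\epsilon}$ (which does not affect the Euler-Lagrange/Hamilton equations) one obtains $p\,dx-\epsilon^{-1}Y\,ds$. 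This already identifies the new Hamiltonian as $\epsilon^{-1}Y$ (modulo a constant to be absorbed by the definition of $Z_\epsilon$).

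First I would compute the equations explicitly: from $\dot x=\partial_y Y$ and $\dot y=-\partial_x Y$ in $\tau$, chain-rule gives
\[
\frac{dx}{ds}=\frac{1}{\sqrt{\epsilon}}\,\partial_y Y\Big|_{y=y_j+\sqrt{\epsilon}p},\qquad \frac{dp}{ds}=-\frac{1}{\epsilon}\,\partial_x Y\Big|_{y=y_j+\sqrt{\epsilon}p}.
\]
Expanding $\partial_y h(y_j+\sqrt{\epsilon}p)=\omega_j+\sqrt{\epsilon}A_jp+O(\epsilon)$, I would verify that the right-hand side of the first equation equals exactly the expression displayed in the paper and recognize it as $\partial_p G_\epsilon$; the second equation is immediately $-\partial_x Z-\partial_x R=-\partial_x G_\epsilon$ since the pure-momentum terms of $G_\epsilon$ contribute nothing. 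This confirms that $(x(s),p(s))$ solves $\Phi^s_{G_\epsilon}$ iff $(x(\tau),y_j+\sqrt{\epsilon}p(\tau))$ solves $\Phi^\tau_Y$, yielding the claimed bijection between orbits.

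For the energy identity I would compare values of the two Hamiltonians on corresponding points. Substituting the Taylor expansion $h(y_j+\sqrt{\epsilon}p)=h(y_j)+\sqrt{\epsilon}\langle\omega_j,p\rangle+\tfrac{\epsilon}{2}\langle A_jp,p\rangle+O(\epsilon^{3/2})$ into $Y/\epsilon$ and matching with the definition of $Z_\epsilon$ (which is constructed precisely to absorb the remainder $h(y_j+\sqrt{\epsilon}p)/\epsilon-\langle\omega_j,p\rangle/\sqrt{\epsilon}-\tfrac12\langle A_jp,p\rangle$ together with the $Z$-correction), one obtains $G_\epsilon(x,p,s)=\epsilon^{-1}Y(x,y_j+\sqrt{\epsilon}p,s/\sqrt{\epsilon})$ modulo the constant $h(y_j)/\epsilon$ which is absorbed into the definition (or equivalently into a choice of reference energy). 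Consequently, if $G_\epsilon\equiv E_\epsilon$ on the orbit and $Y\equiv E$ on the corresponding orbit, the energy relation $E=\epsilon E_\epsilon$ follows by evaluating both identities at a single point.

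There is no genuinely hard step here; the only care needed is the bookkeeping of the factor $\sqrt{\epsilon}$ appearing in both the symplectic form $dx\wedge dy=\sqrt{\epsilon}\,dx\wedge dp$ and in the time rescaling, which together account for the factor $\epsilon^{-1}$ in front of $Y$. A minor point to keep straight is that the time $s/\sqrt{\epsilon}$ argument of $R_\epsilon$ is exactly $\tau$, so $R_\epsilon(x,\sqrt{\epsilon}p,s/\sqrt{\epsilon})=R(x,y_j+\sqrt{\epsilon}p,\tau)$ by definition, which makes the identification of the perturbation term immediate rather than requiring any estimate; the quantitative smallness of $R_\epsilon$ stated before the proposition is used only for later purposes and plays no role in the present verification.
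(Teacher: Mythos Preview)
Your proposal is correct and is exactly the routine verification the paper has in mind: the paper gives no proof at all, introducing the proposition with the words ``Obviously, one has'', so your chain-rule computation together with the identification $G_\epsilon=\epsilon^{-1}Y$ (up to the constant $h(y_j)/\epsilon$, which vanishes in the main case $y_j=0$ since $h(0)=0$) is precisely the expanded form of what the author leaves implicit. The only remark is that the paper's displayed formula for $Z_\epsilon$ appears to have a minor typo (the coefficient of $\langle A_jp,p\rangle$ should be $\tfrac12$ for the identity $G_\epsilon=\epsilon^{-1}Y$ to hold exactly), but this does not affect your argument.
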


The Hamiltonian $G_{\epsilon}$ is a small perturbation of the homogenized Hamiltonian
$$
\bar G=\frac 1{\sqrt{\epsilon}}\langle\omega_j,p\rangle+\frac 12\langle A_jp,p\rangle+V_j(x),
$$
which determines the Lagrangian
$$
\bar L=\frac 12\Big\langle A^{-1}_j\Big(\dot x-\frac{\omega_j}{\sqrt{\epsilon}}\Big),\dot x-\frac{\omega_j}{\sqrt{\epsilon}}\Big\rangle-V_j(x).
$$
Let us first consider the case when $y_j=0$, it follows that $\omega_j=0$. The variable $y$ is restricted in the domain $\|y\|\le K\sqrt{\epsilon}$, where the higher order term is bounded by $\|R_{\epsilon}\|_{C^2}=O(\epsilon^{5\sigma-\frac 16})$ (see (\ref{normaleq1})). Correspondingly, let $A=A_j$ and $V=V_j$ for $y_j=0$:
$$
\bar G=\frac 12\langle Ap,p\rangle+V(x), \qquad \bar L=\frac 12\langle A^{-1}\dot x,\dot x\rangle-V(x).
$$
For this Hamiltonian system, the maximal point of $V$ determines a stationary solution which corresponds to a minimal measure of $\bar L$, where the matrix
$$
\left (\begin{matrix}0 & A\\
-\partial^2_xV & 0
\end{matrix}\right )
$$
has 4 real eigenvalue $\pm\lambda_1,\pm\lambda_2$.  By translation of coordinates, it is generic that

({\bf H1}): {\it $V$ attains its maximum at $x=0$ only, the Hessian matrix of $V$ at $x=0$ is negative definite. All eigenvalues are different: $-\lambda_2<-\lambda_1<0<\lambda_1<\lambda_2$}.

Such a hypothesis leads to certain hyperbolicity of minimal homoclinic orbits. Let us consider the case: for $c\in\partial\mathbb{F}_0$ the Aubry set $\mathcal{A}(c)=\cup_{t\in\mathbb{R}}\zeta(t)$, where $\zeta$: $\mathbb{R}\to M$ is a minimal homoclinic curve. By the assumption {\bf H1}, the fixed point $z=(x,y)=0$ has its locally stable manifold $W^+$ as well as the locally unstable manifold $W^-$. They
intersect each other transversally at the origin. As each homoclinic orbit entirely stays in the stable as well as in the unstable manifolds, along such orbit their intersection can not be transversal in the standard definition, but transversal module the curve:
$$
T_xW^-\oplus T_xW^+=T_xH^{-1}(E)
$$
holds for $x$ is on minimal homoclinic curves. Without danger of confusion, we also call the intersection transversal.

If we denote by $\Lambda^+_i=(\Lambda_{xi},\Lambda_{yi})$ the eigenvector corresponding to the eigenvalue $\lambda_i$, where $\Lambda_{xi}$ and $\Lambda_{yi}$ are for the $x$- and $y$-coordinate respectively, then the eigenvector for $-\lambda_i$ will be  $\Lambda^-_i=(\Lambda_{xi},-\Lambda_{yi})$. it is also a generic condition that

({\bf H2}): {\it with each $g\in H_1(\mathbb{T}^2,\mathbb{Z})$, there is at most one minimal orbit associated, the stable manifold intersects the unstable manifold transversally along each minimal homoclinic orbit. Each minimal homoclinic orbit approaches to the fixed point along the direction $\Lambda_1$: $\dot\gamma(t)/\|\dot\gamma(t)\| \to\Lambda_{x1}$ as $t\to\pm\infty$.}

Recall the set $\partial^*\mathbb{F}_0$. If $c\in\partial^*\mathbb{F}_0\subset\partial\mathbb{F}_0$, the $c$-minimal measure consists of two or more ergodic components. Because of Theorem \ref{flatthm3} and the countability of homology classes of all homoclinic curves, we have another generic condition

({\bf H3}): {\it For each $c\in\partial^*\mathbb{F}_0$, the Aubry set  does not contain minimal curve homoclinic to the origin $($fixed point$)$.}

\subsection{Cylinder for truncated Hamiltonian: near double resonance}

Let us start with a Hamiltonian with two and half degrees of freedom:
\begin{equation}\label{cylindereq3}
G_{\epsilon}=\frac 12\langle Ap,p\rangle+V(x)+Z_{\epsilon}(x,p)+R_{\epsilon}(x,\sqrt{\epsilon}p,s/\sqrt{\epsilon})
\end{equation}
where $(x,\sqrt{\epsilon}p,s/\sqrt{\epsilon})\in\mathbb{T}^2\times\mathbb{R}^2\times\mathbb{T}$, $Z_{\epsilon}=O(\sqrt{\epsilon})$, $\|R_{\epsilon}\|_{C^2}=O(\epsilon^{5\sigma-\frac 16})$ (see Theorem \ref{normalthm1}) where the $C^2$-norm is with respect to $(x,p)$. Recall the homogenized Hamiltonian as well as the homogenized Lagrangian
$$
\bar G(x,p)=\frac 12\langle Ap,p\rangle+V(x),\qquad \bar L(x,\dot x)=\frac 12\langle A^{-1}\dot x,\dot x\rangle-V(x)
$$
where $\dot x=\frac {dx}{ds}$.

By the assumption ({\bf H1}), the fixed point $(x,\dot x)=0=\tilde{\mathcal{M}}(c)$ each $c\in\mathbb{F}_0$ which is a 2-dimensional flat. As classified before, for each $c\in\partial\mathbb{F}_0\backslash \partial^*\mathbb{F}_0$ the Aubry set consists of minimal homoclinic orbits plus the fixed point.

Given an irreducible class $g\in H_1(\mathbb{T}^2,\mathbb{Z})$, it is not necessary that some minimal homoclinic curve $\gamma$ exists such that $[\gamma]=g$. Let us consider the $c$-minimal measure for $c\in\mathscr{L}_{\beta}(\nu g)$ with $\nu>0$. As we shall see later, it is generic that the minimal measure is supported on at most two periodic orbits, denoted by $d\gamma_{\nu}$, both are hyperbolic, namely, it has the stable and unstable manifold. These periodic orbits constitute a two-dimensional cylinder. However, it appears not reasonable to assume the normal hyperbolicity for $\Phi_{\bar G}^s$ in usual sense as the speed along the orbit may undergo large variation, especially, when it is very close to some homoclinic orbit. Therefore, one can not see the separation of the spectrum of $D\Phi_{\bar G}^s$  in normal and in tangent direction.  As the first step, let us study the case when $c\to\partial\mathbb{F}_0$.

There are two cases alternatively as $\nu$ decreases: $\mathscr{L}_{\beta}(\nu g)\to\partial \mathbb{F}_0$ as $\nu$ decrease to zero, or $\exists$ $\nu_0>0$ such that $\mathscr{L}_{\beta} (\nu_0g)\in\partial\mathbb{F}_0$.

In the first case, the cohomology class approaches to some edge $\mathbb{E}_i\subset\partial \mathbb{F}_0\backslash\partial^* \mathbb{F}_0$ or to some vertex where two adjacent edges $\mathbb{E}_i,\mathbb{E}_{i+1}\subset\partial\mathbb{F}_0\backslash \partial^*\mathbb{F}_0$ joint together. Under the hypothesis ({\bf H2}), for class $c$ in the interior of the edge, the Aubry set $\mathcal{A}(c)$ contains exactly one minimal homoclinic curve and the point of the origin. Let $\gamma_j$ be the minimal homoclinic curve related to $\mathbb{E}_j$ for $j=i,i+1$, it determines the minimal homoclinic orbit $(x_i(s),p_i(s))\subset\bar G^{-1}(0)$. Denote by $g_j=[\gamma_j]$ the homology class. If $g=g_i$, $\gamma_{\nu}\to\gamma_i$ as $\nu$ decreases to zero.  If there exist two positive integers $k_i,k_{i+1}$ such that $g=k_ig_i+k_{i+1}g_{i+1}$, the curve $x_{\nu}$ approaches to the set $\cup_{t\in\mathbb{R}}x_i(t)\cup x_{i+1}(t)$ (figure eight) as $\nu\to 0$, folding $k_i$ and $k_{i+1}$-times along $x_i$ and $x_{i+1}$ respectively.

The periodic curve $\gamma_{\nu}$ determines a periodic orbit $(\gamma_{\nu},y_{\nu})$ in the phase space. It stays in certain energy level set $H^{-1}(E)$. For $g=k_ig_i+k_{i+1}g_{i+1}$ and suitably small $E>0$, by the study in Section 3.2  (Eq.(\ref{regularenergyeq2})), the period $T$ is related to the energy by the formula
$$
T=T(E,g)=\tau_{E,g}-\frac 1{\lambda_1}(k_i+k_{i+1})\ln E
$$
where $\tau_{E,g}\to k_i\tau_{E,g_i}+k_{i+1}\tau_{E,g_{i+1}}$ as $E\to 0$, both $\tau_{E,g_i}$ and $\tau_{E,g_{i+1}}$ is bounded.

To study the dynamics around the minimal homoclinic orbits $z_{\ell}=(x_{\ell},p_{\ell})$  ($\ell=i,i+1$), we use a new canonical coordinates $(x,p)$ such that, restricted in a small neighborhood of $z=0$, one has the form
$$
\bar G=\frac 12(p_1^2-\lambda_1^2x_1^2)+\frac 12(p_2^2-\lambda_2^2x_2^2)+P_3(x)
$$
where $P_3(x)=O(\|x\|^3)$. Without losing generality, we assume $x_{\ell,1}(s)\downarrow 0$ as $s\to -\infty$, $x_{\ell,1}(s)\uparrow 0$ as $s\to \infty$ and $\dot x_{\ell}(s)/ \|\dot x_{\ell}(s)\|\to(1,0)$ as $s\to\pm\infty$. Here the notation is taken as granted: $x_{\ell}=(x_{\ell,1},x_{\ell,2})$. We choose 2-dimensional disk lying in $\bar G^{-1}(E)$
$$
\Sigma^{\mp}_{E,\delta}=\{(x,p)\in\mathbb{R}^4:\|(x,p)\|\le d,\bar G(x,p)=E, x_1=\pm\delta\}.
$$
Because of the special form of $\bar G$, one has
$$
\Sigma^{\mp}_{0,\delta}=\{x_1=\pm\delta, p_1^2+p_2^2-\lambda_2^2x_2^2 =\lambda_1^2\delta^2 -2P_3(\pm\delta,x_2),\|(x,p)\|\le d\}.
$$
Let $W^-$ ($W^+$) denote the unstable (stable) manifold of the fixed point which entirely stays in the energy level set $\bar G^{-1}(0)$. If $P_3=0$, the tangent vector of $W^-\cap\Sigma^-_{0,\delta}$ has the form $(0,\pm 1,0,\pm\lambda_2)$. So, the tangent vector of $W^-\cap\Sigma^-_{0,\delta}$ takes the form
\begin{equation*}\label{tangentvector}
v_{\delta}^-=(v_{x_1},v_{x_2},v_{p_1},v_{p_2})=(0,\pm 1,p_{1,\delta},\pm\lambda_2+p_{2,\delta})\in T_{z^-_{\delta}}(W^-\cap\Sigma^-_{0,\delta})
\end{equation*}
where both $p_{1,\delta}$ and $p_{2,\delta}$ are small.

Denote by $T^{\pm}_{\delta,\ell}$ the time when the homoclinic orbit $z_\ell(s)$ passes through $\Sigma^{\pm}_{0,\delta}$. As $\partial_{y_1}\bar G>0$ holds at the point $z_{\ell}\cap\{x_1=\pm\delta\}$, both homoclinic orbits $z_i(s)$ and $z_{i+1}(s)$ approach in the same direction to the fixed point, the section $\Sigma^+_{0,\delta}$ as well as $\Sigma^-_{0,\delta}$ intersects these two homoclinic orbits transversally. Let $z^{\pm}_{\delta,\ell}$ denote the intersection point of $z_\ell(s)$ with $\Sigma^{\pm}_{0,\delta}$. In a small neighborhood of that point $B_{\varepsilon}(z^-_{\delta,\ell})$, one obtains a map $\Psi_{0,\delta}$: $\Sigma^-_{0,\delta}\cap B_{\varepsilon}(z^-_{\delta,\ell})\to \Sigma^+_{0,\delta}$  in following way, starting from a point $z$ in this neighborhood, there is a unique orbit which moves along $z_{\ell}(s)$ and comes to a point $\Psi_{0,\delta}(z)\in \Sigma^+_{0,\delta}$ after a time approximately equal to $T_{\delta,\ell}^+-T_{\delta,\ell}^-$.

Let us fix small $D>0$. There exists $C_0>1$ (depending on $D$) such that
$$
C_0^{-1}\le \|D\Psi_{0,D}(z^-_{D,\ell})|_{T(W^-\cap\Sigma^-_{0,D})}\|, \|D\Psi_{0,D}^{-1}(z^+_{D,\ell})|_{T(W^+\cap\Sigma^+_{0,D})}\|\le C_0
$$
holds for both $\ell=i$ and $\ell=i+1$. Clearly, one has $C_0\to\infty$ as $D\to 0$.

As the homoclinic curves approach to the origin in the direction of $(1,0)$ in $x$-space, for small $\delta\ll D$, there exists a constant $\mu_1>0$ such that $\mu_1\downarrow 0$ as $D\to 0$ and
$$
\frac {1}{\lambda_1+\mu_1}\ln\Big(\frac{D}{\delta}\Big)\le T^{-}_{D,\ell}-T^{-}_{\delta,\ell}, T^+_{\delta, \ell}-T^+_{D,\ell}\le \frac {1}{\lambda_1-\mu_1}\ln\Big(\frac{D}{\delta}\Big).
$$
The Hamiltonian flow $\Phi^t_{\bar G}$ defines a map $\Psi^-_{0,\delta,D}$: $\Sigma^-_{0,\delta}\to\Sigma^-_{0,D}$ and a map $\Psi^+_{0,\delta,D}$: $\Sigma^+_{0,D}\to\Sigma^+_{0,\delta}$: emanating from a point in $\Sigma^-_{0,\delta}$ ($\Sigma^+_{0,D}$) there exists a unique orbit which arrives $\Sigma^-_{0,D}$ ($\Sigma^+_{0,\delta}$) after a time bounded by the last formula.

Restricted in the ball $B_D$, let us consider the variational equation of the flow $\Psi_{\bar G}^s$ along the homoclinic orbit $z_j(s)$. It follows from the normal form of the homogenized Hamiltonian $\bar G$ that the tangent vector $(\Delta x,\Delta p)=(\Delta x_1,\Delta x_2,\Delta p_1,\Delta p_2)$ satisfies the variational equation
\begin{equation}\label{cylindereq4}
\Delta\dot  x_i=\Delta p_i, \qquad \Delta\dot  p_i=\lambda_i^2\Delta x_i-\Psi_{1i}(x_{\ell}(s))\Delta x_1-\Psi_{2i}(x_{\ell}(s)) \Delta x_2, \ \ \ i=1,2
\end{equation}
where $\Psi_{ij}=\partial_{x_i}\partial_{x_j}P_3$. Clearly, $|\Psi_{ij}(x_{\ell}(s))|\le C_1\|x_{\ell}(s)\|$ with $C_1>0$ if $\|x_{\ell}(s)\|$ is small. Since the homoclinic orbit approaches to the fixed point in the direction of $(\dot x,\dot p)=(1,0,\lambda_1^2,0)$, one has
$$
De^{-(\lambda_1+\mu_1)(s-T^+_{D,\ell})}\le\|x(s)|_{[T^+_{D,\ell},\infty)}\|\le De^{-(\lambda_1-\mu_1)(s-T^+_{D,\ell})}.
$$
For the initial value $\Delta z(T^+_{D,\ell})=(\Delta x(T^+_{D,\ell}),\Delta p(T^+_{D,\ell}))$ satisfying the condition
$$
|\langle\Delta z(T^+_{D,\ell}),v^-_{\delta}\rangle|\ge 2/3\|\Delta z(T^+_{D,\ell})\|\|v^-_{\delta}\|
$$
$(v^-_{\delta}=(0,\pm 1,p_{1,\delta},\pm\lambda_2+p_{2,\delta}))$ one obtains from the hyperbolicity that
$$
C_2^{-1}\|\Delta z(T^+_{D,\ell})\|e^{(\lambda_2-\mu_1)(T^+_{\delta,\ell}-T^+_{D,\ell})}\le\|\Delta z(T^+_{\delta,\ell})\|\le C_2\|\Delta z(T^+_{D,\ell})\|e^{(\lambda_2+\mu_1)(T^+_{\delta,\ell}-T^+_{D,\ell})}
$$
holds for some constant $C_2>1$ depending on $\lambda_i$ as well as on $P$.
Therefore, for each vector $v\in T_{z_D^+}\Sigma^+_{0,D}$ which is nearly parallel to $T_{z_D^+}(W^-\cap\Sigma^+_{0,D})$: $|\langle v,v'\rangle|\ge\frac 23\|v\|\|v'\|$ holds for $v'\in T_{z_D^+}(W^-\cap\Sigma^+_{0,D})$ we obtain from the last two formulae that
$$
C_2^{-1}\Big(\frac D{\delta}\Big)^{\frac{\lambda_2}{\lambda_1}-\mu_2}\le
\lim_{\|v\|\to 0}\frac{\|D\Psi^+_{0,\delta,D}(z^+_{D,\ell})v\|}{\|v\|}\le C_2\Big(\frac D{\delta}\Big)^{\frac{\lambda_2}{\lambda_1}+\mu_2}.
$$
Similarly, one has
$$
C_3^{-1}\Big(\frac D{\delta}\Big)^{\frac{\lambda_2}{\lambda_1}-\mu_2}\le \|D\Psi^-_{0,\delta,D}(z^-_{\delta,\ell})|_{T_{z^-_{\delta}}(W^-\cap\Sigma^-_{0,\delta})}\|\le C_3\Big(\frac D{\delta}\Big)^{\frac{\lambda_2}{\lambda_1}+\mu_2},
$$
where $C_3>1$ also depends on $\lambda_i$ as well as on $P$, $\mu_2>0$ and $\mu_2\to 0$ as $D\to 0$.

By the construction, the 2-dimensional disk $\Sigma^{-}_{0,\delta}$ intersects the unstable manifold $W^-$ along a curve. Let $\Gamma^{-}_{\delta,\ell}\subset W^{-}\cap\Sigma^{-}_{0,\delta}$ be a very short segment of the curve, passing through the point $z^{-}_{\delta,\ell}$. Pick up a point $z^*_\ell$ on the homoclinic orbit $z_\ell$ far away from the fixed point and take a 2-dimensional disk $\Sigma^*_\ell\subset\bar G^{-1}(0)$ containing the point $z^*_\ell$ and transversal to the flow $\Phi^s_{\bar G}$ in the sense that $T_{z^*_\ell}\bar G^{-1}(0)=\text{\rm span}(T_{z^*_\ell} \Sigma_\ell,J\nabla\bar G(z^*_\ell))$. The Hamiltonian flow $\Phi^s_{\bar G}$ sends each point of $\Gamma^{-}_{\delta,\ell}$ to this disk provided it is close to $z^-_\ell$. In this way, one obtains a map $\Psi^{-,*}_{\delta,\ell}$: $\Sigma^-_{0,\delta}\to\Sigma^*_\ell$. Let $\Gamma^{-,*}_{\delta,\ell}= \Psi^{-,*} _{\delta,\ell} \Gamma^{-}_{\delta,\ell}$. According to the assumption ({\bf H2}), one has $T_{z^*_{\ell}}\bar G^{-1}(0)=\text{\rm span}(T_{z^*_{\ell}}W^+,T_{z^*_{\delta,\ell}}W^-)$.  Thus, one also has $T_{z^*_{\ell}}\bar G^{-1}(0)=\text{\rm span}(T_{z^*_{\ell}}W^+,T_{z^*_{\ell}} \Gamma^{-,*}_{\ell})$. It follows from the $\lambda$-lemma that $\Psi_{0,\delta}(\Gamma^-_{\delta,\ell})$ keeps $C^1$-close to $W^-\cap\Sigma^+_{0,\delta}$ at the point $z^+_{\delta,\ell}$ and $\Psi^{-1}_{0,\delta}(\Gamma^+_{\delta,\ell})$ keeps $C^1$-close to $W^+\cap\Sigma^-_{0,\delta}$ at the point $z^-_{\delta,\ell}$ provided $\delta>0$ is sufficiently small. As $\Psi_{0,\delta}=\Psi^-_{0,\delta,D}\circ\Psi_{0,D}\circ\Psi^+_{0,\delta,D}$, one obtains
$$
C_4^{-1}\left(\frac D{\delta}\right)^{2(\frac {\lambda_2}{\lambda_1}-\mu_2)}\le \|D\Psi_{0,\delta}(z^-_{\delta})|_{T_{z^-_{\delta}}(W^-\cap\Sigma^-_{0,\delta})}\|
\le C_4\left(\frac D{\delta}\right)^{2(\frac {\lambda_2}{\lambda_1}+\mu_2)},
$$
and
$$
C_4^{-1}\left(\frac D{\delta}\right)^{2(\frac {\lambda_2}{\lambda_1}-\mu_2)}\le \|D\Psi_{0,\delta}^{-1} (z^+_{\delta}) |_{T_{z^+_{\delta}}(W^+\cap\Sigma^+_{0,\delta})}\|
\le C_4\left(\frac D{\delta}\right) ^{2(\frac {\lambda_2}{\lambda_1}+\mu_2)},
$$
where $C_4=C_0C_2C_3>1$. See the figure below.

\begin{figure}[htp]
  \centering
  \includegraphics[width=6.5cm,height=6.7cm]{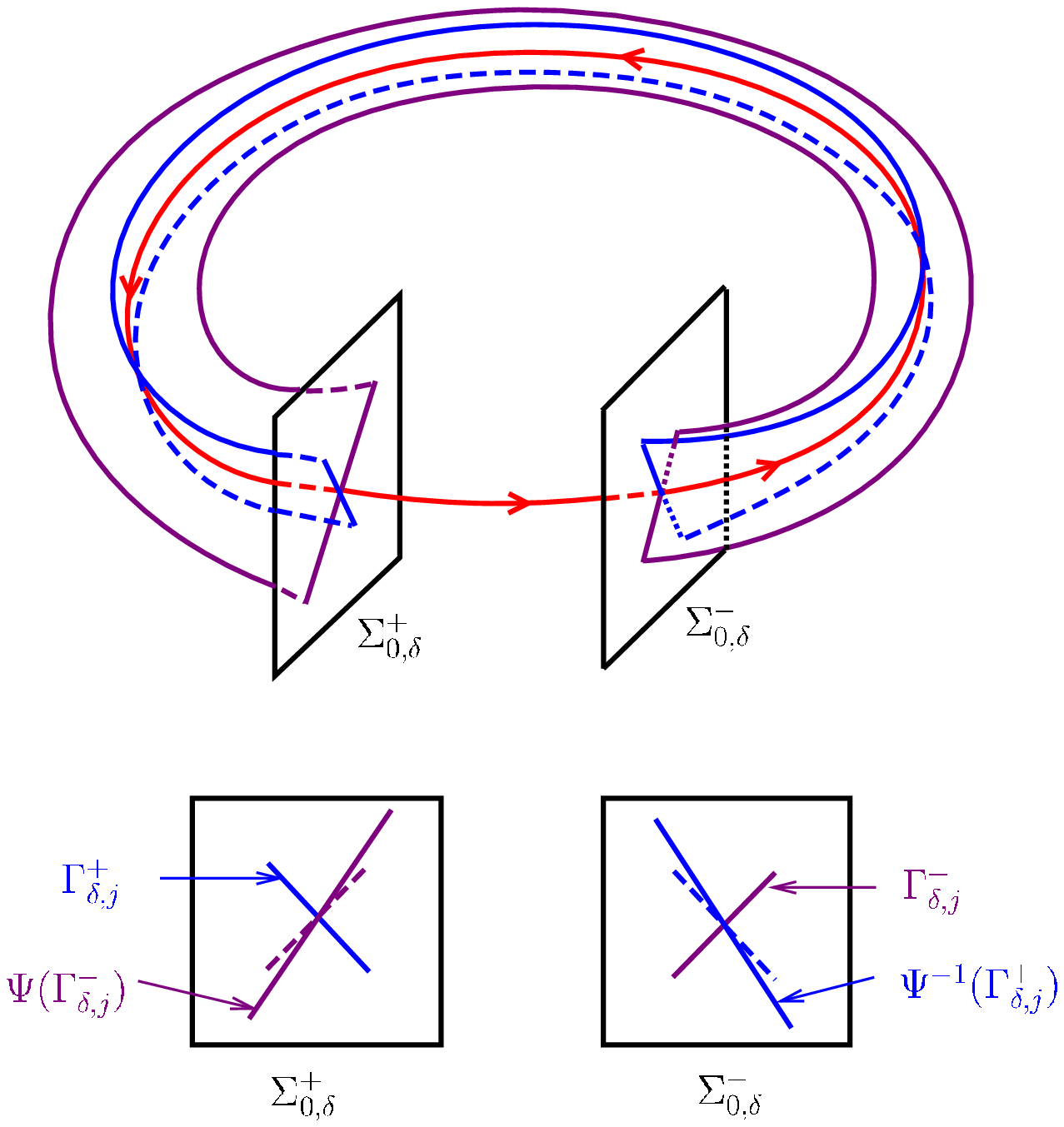}
  \caption{}
  \label{fig7}
\end{figure}

Recall the definition, $\Sigma^{\pm}_{E,\delta}$ is a two-dimensional disk lying in the energy level set $\bar G^{-1}(E)$. For $E>0$ sufficiently small, $\Sigma^{\pm}_{E,\delta}$ is $C^{r-1}$-close to $\Sigma^{\pm}_{0,\delta}$ respectively. Let $z_{E}(s)=(x_{E}(s),p_{E}(s))$ be the minimal periodic orbit staying in the energy level set $\bar G^{-1}(E)$, it approaches to the homoclinic orbit as $E$ decreases to zero. Thus, for sufficiently small $E>0$, it passes through the section $\Sigma^{-}_{E,\delta}$ as well as $\Sigma^{+}_{E,\delta}$ $k_1+k_2$ times for one period. We number these points as $z^{\pm}_{E,k}$ ($k=1,2,\cdots k_1+k_2$) by the role that emanating from a point $z^-_{E,k}$, the orbit reaches to the point $z^+_{E,k+1}$ after time $\Delta t^-_{E,k}$, then to the point $z^-_{E,k+1}$ and so on. Note that $\Delta t^-_{E,k}$ remains bounded uniformly for any $E>0$.  Restricted on small neighborhoods of these points, denoted by $B_d(\bar z^{\pm}_{E,k})$, the flow $\Phi_{\bar G}^t$ defines a local diffeomorphism $\Psi_{E,\delta}$: $\Sigma^{-}_{E,\delta}\supset B_d(\bar z^{-}_{E,k})\to \Sigma^{+}_{E,\delta}$. Because of the smooth dependence of ODE solutions on initial data, a small $\varepsilon>0$ exists such that, for the vector $v^{\pm}$ $\varepsilon$-parallel to $T_{z^{\pm}_{\delta}}(W^{\pm}\cap\Sigma^{\pm}_{0,\delta})$ in the sense that $|\langle v^{\pm},v^{\pm}_0\rangle|\ge (1-\varepsilon) \|v^{\pm}\|\|v^{\pm}_0\|$ holds for some $v^{\pm}_0\in T_{z^{\pm}_{\delta}}(W^{\pm}\cap\Sigma^{\pm}_{0,\delta})$, we obtain from the hyperbolicity of $\Psi_{0,\delta}$ (see the formulae above Figure \ref{fig7}) that
$$
C_5^{-1}\left(\frac D{\delta}\right)^{2(\frac {\lambda_2}{\lambda_1}-\mu_3)}\le \frac {\|D\Psi_{E,\delta}(z^-_{E,k})v^-\|}{\|v^-\|}
\le C_5\left(\frac D{\delta}\right)^{2(\frac {\lambda_2}{\lambda_1}+\mu_3)},
$$
and
$$
C_5^{-1}\left(\frac D{\delta}\right)^{2(\frac {\lambda_2}{\lambda_1}-\mu_3)}\le \frac {\|D\Psi_{E,\delta}^{-1}(z^+_{E,k})v^+\|}{\|v^+\|}
\le C_5\left(\frac D{\delta}\right) ^{2(\frac {\lambda_2}{\lambda_1}+\mu_3)}
$$
where $C_5\ge C_4>1$, $0<\mu_3\to 0$ as $D\to 0$. If the vector $v^-$ is chosen  $\varepsilon$-parallel to $T_{z^{-}_{\delta}}(W^{-}\cap\Sigma^{-}_{0,\delta})$ then the vector $D\Psi_{E,\delta}(z^-_{E,k})v^-$ is $\varepsilon$-parallel to $T_{z^{+}_{\delta}}(W^{+}\cap\Sigma^{+}_{0,\delta})$.

For $E>0$, the Hamiltonian flow $\Phi_{\bar G}^t$ defines local diffeomorphism $\Psi^+_{E,\delta,\delta}$: $\Sigma^{+}_{E,\delta}\supset B_d(\bar z^{+}_{E,k})\to\Sigma^{-}_{E,\delta}$. To make $\Psi^+_{E,\delta,\delta}(B_d(\bar z^{+}_{E,k}))\subset\Sigma^{-}_{E,\delta}$ one has $d\to 0$ as $E\to 0$. According to the study in Section 3.2 (cf. formula (\ref{regularenergyeq2})), starting from $\Sigma^{+}_{E,\delta}$, the periodic orbit comes to $\Sigma^{-}_{E,\delta}$ after a time approximately equal to
$$
T=\frac 1{\lambda_1}\Big|\ln\Big(\frac{\delta^2}{E}\Big)\Big|+\tau_{\delta}
$$
in which $\tau_{\delta}$ is uniformly bounded as $\delta\to 0$. Given a vector $v$, we use $v_i$ denote the $(x_i,p_i)$-component. For a vector $v^+$ $\varepsilon$-parallel to $T_{z^+_{0,\delta}}(W^-\cap\Sigma^+_{0,\delta})$, there is $C>0$ such that $\|v^+_2\|\ge C\|v^+_1\|$. From Eq.(\ref{cylindereq3}) one obtains
\begin{align}\label{cylindereq5}
\|v^+_2\|e^{(\lambda_2-\mu)T}\le&\|D\Psi^+_{E,\delta,\delta}(z^+_{E,k})v^+_2\|\le\|v^+_2\|e^{(\lambda_2+ \mu)T},\\
\|v^+_1\|e^{(\lambda_1-\mu)T}\le&\|D\Psi^+_{E,\delta,\delta}(z^+_{E,k})v^+_1\|\le\|v^+_1\|e^{(\lambda_1+ \mu)T}\notag
\end{align}
where $0<\mu\to 0$ as $\delta\to0$. It follows that the vector $D\Psi^+_{E,\delta,\delta}(z^+_{E,k})v^+$ is $\varepsilon$-parallel to $T_{z^-_{0,\delta}}(W^-\cap\Sigma^-_{\delta})$ and
$$
C_6^{-1}\Big(\frac {\delta^2}E\Big)^{\frac{\lambda_2}{\lambda_1}-\mu_4}
\le\frac{\|D\Psi^+_{E,\delta,\delta}(z^+_{E,k})v^+\|}{\|v^+\|}
\le C_6\Big(\frac {\delta^2}E\Big)^{\frac{\lambda_2}{\lambda_1}+\mu_4}
$$
where $C_6>1$ and $\mu_4\downarrow 0$ as $\delta\downarrow 0$. Similarly, for a vector $v^-$ $\varepsilon$-parallel to $T_{z^-_{0,\delta}}(W^+\cap\Sigma^-_{0,\delta})$, one sees that the vector $D{\Psi^+_{E,\delta,\delta}(z^-_{E,j})}^{-1}v^-$ is $\varepsilon$-parallel to $T_{z^-_{0,\delta}}(W^-\cap\Sigma^-_{0,\delta})$ and
$$
C_6^{-1}\Big(\frac {\delta^2}E\Big)^{\frac{\lambda_2}{\lambda_1}-\mu_4}
\le\frac{\|D{\Psi^+_{E,\delta,\delta}}^{-1}(z^-_{E,k})v^-\|}{\|v^-\|}
\le C_6\Big(\frac {\delta^2}E\Big)^{\frac{\lambda_2}{\lambda_1}+\mu_4}.
$$

The composition of the two maps constitutes a Poin\'care map $\Phi_{E,\delta}=\Psi^+_{E,\delta,\delta}\circ\Psi_{E,\delta}$, it maps a small neighborhood of the point $z^-_{E,k}$ in $\Sigma^{-}_{E,\delta}$ to a small neighborhood of the point $z^-_{E,k+1}$ in $\Sigma^{-}_{E,\delta}$. For a vector $v^-$ $\varepsilon$-parallel to $T_{z^-_{0,\delta}}(W^-\cap\Sigma^-_{0,\delta})$ the vector $D\Phi_{E,\delta}(z^-_{E,k})v^-$ is still $\varepsilon$-parallel to $T_{z^-_{0,\delta}}(W^-\cap\Sigma^-_{0,\delta})$
\begin{equation}\label{cylindereq6}
\Lambda^{-1}\left(\frac {D^2}{E}\right)^{\frac{\lambda_2}{\lambda_1}-\mu_5}
\le\frac {\|D\Phi_{E,\delta}(z^-_{E,k})v^-\|}{\|v^-\|}
\le \Lambda\left(\frac{D^2}{E}\right)^{\frac {\lambda_2}{\lambda_1}+\mu_5},
\end{equation}
and for a vector $v^+$ $\varepsilon$-parallel to $T_{z^-_{0,\delta}}(W^+\cap\Sigma^-_{0,\delta})$ the vector $D\Phi_{E,\delta}^{-1}(z^-_{E,k})v^+$ is still $\varepsilon$-parallel to $T_{z^-_{0,\delta}}(W^+\cap\Sigma^-_{0,\delta})$
\begin{equation}\label{cylindereq7}
\Lambda^{-1}\left(\frac {D^2}{E}\right)^{\frac{\lambda_2}{\lambda_1}-\mu_5}
\le\frac{\|D\Phi_{E,\delta}^{-1}(z^-_{E,k})v^+\|}{\|v^+\|}
\le \Lambda\left(\frac {D^2}{E}\right)^{\frac{\lambda_2}{\lambda_1}+\mu_5}
\end{equation}
holds for each $k$, where $\Lambda\ge C_5C_6>1$, $0<\mu_5\to 0$ as $D\to 0$. Therefore, each point $z^-_{E,k}$ is a hyperbolic fixed point for the map $\Phi^{k_i+k_{i+1}}_{E,\delta}$, $\{z^-_{E,k}:k=1,\cdots,k_i+k_{i+1}\}$ is a hyperbolic orbit of $\Phi_{E,\delta}$. By Lemma \ref{flatlem4}, these points are uniquely ordered, $k_i+k_{i+1}$ is the minimal period. As these points approach to the fixed point as $E\downarrow0$, the hyperbolicity guarantees the uniqueness. It also guarantees the smooth continuation of periodic orbits. Therefore, we have

\begin{lem}\label{cylinderlem1}
Assume the conditions \text{\rm ({\bf H1})} and \text{\rm ({\bf H2})} and let $g\in H_1(\mathbb{T}^2,\mathbb{Z})$ be a class. If $\mathscr{L}_{\beta}(\nu g)\to\partial \mathbb{F}_0$ as $\nu\downarrow 0$, then there exists $E'>0$ such that for each  $c\in\mathscr{L}_{\beta}(\nu g)$ with $\alpha(c)=E\in (0,E']$ the Mather set $\tilde{\mathcal{A}}(c)$ consists of exactly one periodic orbit.

Let $\Sigma_E\subset\bar G^{-1}(E)$ be a two-dimensional disk transversally intersecting the orbit at $x_1=\delta$ such that $T_z\bar G^{-1}(E)=\text{\rm span}(T_z\Sigma_E,J\nabla\bar G(z))$ for $z\in\Sigma_E$ and let $\Phi_{E}$: $\Sigma_E\to \Sigma_E$ be the return map naturally determined by the flow $\Phi_{\bar G}^t$, there exists some $\lambda>1,C>0$ independent of $E\le E'$ such that
$$
\|D\Phi_{E}(z_{E,0})v^-\|\ge C E^{-\lambda}\|v^-\|,\qquad \forall\ v^-\in T_{z_{E,0}}W^-_E;
$$
$$
\|D\Phi_{E}(z_{E,0})v^+\|\le C^{-1}E^{\lambda}\|v^-\|,\qquad \forall\ v^+\in T_{z_{E,0}}W^+_E,
$$
where $z_{E,0}$ is the point where the periodic orbit intersects $\Sigma_{E}$, $W^{\pm}_E$ denotes the stable $($unstable$)$ manifold of the periodic orbit. Therefore, the periodic orbits for $E\in (0,E']$ constitute a smooth cylinder.
\end{lem}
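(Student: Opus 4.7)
The plan is to identify $\Sigma_E$ in the statement with $\Sigma^-_{E,\delta}$ constructed above, and to exhibit the single periodic orbit of $\tilde{\mathcal{A}}(c)$ as the period-$(k_i+k_{i+1})$ orbit $\{z^-_{E,k}\}$ of $\Phi_{E,\delta}$, so that the return map in the statement is $\Phi_E:=\Phi_{E,\delta}^{\,k_i+k_{i+1}}$ and $z_{E,0}=z^-_{E,1}$. Here the decomposition $g=k_ig_i+k_{i+1}g_{i+1}$ (with $k_{i+1}=0$ and $k_i=1$ in the edge case where $\mathscr{L}_\beta(\nu g)\to \mathbb{E}_i$) is supplied by the vertex/edge to which $\mathscr{L}_\beta(\nu g)$ converges, and the winding pattern of $\gamma_\nu$ around the figure eight is fixed by Lemma \ref{flatlem4}.

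The hyperbolic estimates in the statement are an almost immediate consequence of iterating (\ref{cylindereq6})--(\ref{cylindereq7}): a vector $v^-\in T_{z_{E,0}}W^-_E$ stays $\varepsilon$-parallel to the unstable direction at every return, so
\[
\|D\Phi_E(z_{E,0})v^-\|\ \ge\ \Lambda^{-(k_i+k_{i+1})}\Bigl(\tfrac{D^2}{E}\Bigr)^{(k_i+k_{i+1})(\lambda_2/\lambda_1-\mu_5)}\|v^-\|,
\]
and the reciprocal bound holds for $D\Phi_E^{-1}$ on $T_{z_{E,0}}W^+_E$. Choosing $D$ small makes $\mu_5$ arbitrarily small; since the hypothesis \textbf{(H1)} gives $\lambda_2/\lambda_1>1$, we may take $\lambda:=(k_i+k_{i+1})(\lambda_2/\lambda_1-\mu_5)>1$ and $C$ independent of $E$, since $\Lambda,\mu_5,D$ are all fixed before letting $E\downarrow 0$. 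This proves the expansion/contraction bounds.

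Uniqueness of the minimal orbit is argued as follows. Any $c$-minimal measure with $c\in\mathscr{L}_\beta(\nu g)$ has rotation vector $\nu g$, so by Proposition \ref{flatpro3} its support is a union of periodic orbits of homology $g$. By Lemma \ref{flatlem4} combined with the Lipschitz graph property on $\mathbb{T}^2$, any such periodic orbit must follow the unique permutation $\sigma$ of excursions along $\gamma_i$ and $\gamma_{i+1}$, and therefore intersects $\Sigma^-_{E,\delta}$ in cone neighborhoods of the points $z^-_{E,k}$ already constructed. The hyperbolicity of $\Phi_E$ at $z_{E,0}$ isolates it, and applying the implicit function theorem to $(z,E)\mapsto \Phi_E(z)-z$ (whose linearization in $z$ is invertible because neither $1$ nor its reciprocal is an eigenvalue) produces a $C^{r-1}$ branch $E\mapsto z_{E,0}$ that agrees with the Kuratowski limit $\gamma_\nu\to\gamma_i\ast\gamma_{i+1}$ as $E\downarrow 0$. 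A second minimal branch would yield, for each small $E$, a geometrically distinct hyperbolic fixed point of $\Phi_E$ lying in the same cone, which upon continuation back to $E=0$ would violate the uniqueness of the ordering $\sigma$. Hence the branch is unique and the union $\bigcup_{E\in(0,E']}d\gamma_E$ is a smooth $2$-dimensional cylinder in the phase space.

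The main obstacle is that the period $T\sim\lambda_1^{-1}|\ln(\delta^2/E)|$ diverges as $E\downarrow 0$, so naive continuation estimates for the return map are useless. The mechanism that saves us is the strict inequality $\lambda_2>\lambda_1$ from \textbf{(H1)}: the transverse expansion rate $\lambda_2$ strictly exceeds the rate $\lambda_1$ along the homoclinic, so the unstable multiplier of $\Phi_E$ grows like $E^{-\lambda}$ with $\lambda>1$ even as $T\to\infty$. This gap is precisely what closes both the uniqueness argument and the smooth persistence of the cylinder down to the resonant vertex, and is why the two generic nondegeneracy hypotheses enter the proof in an essential way.
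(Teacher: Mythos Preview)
Your proposal is correct and follows essentially the same approach as the paper: the paper's argument is precisely the long computation preceding the lemma statement, culminating in the observation that each $z^-_{E,k}$ is a hyperbolic fixed point of $\Phi_{E,\delta}^{k_i+k_{i+1}}$, that Lemma \ref{flatlem4} fixes the ordering, and that hyperbolicity plus the approach to the unique homoclinic figure-eight forces uniqueness and smooth continuation. Your explicit identification $\lambda=(k_i+k_{i+1})(\lambda_2/\lambda_1-\mu_5)$ and your slightly more detailed uniqueness argument (via the implicit function theorem and the impossibility of a second branch with the same permutation $\sigma$) spell out what the paper leaves to the reader, but the strategy and the essential ingredients are the same.
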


In the second case, some $\nu_0>0$ exists such that $\mathscr{L}_{\beta}(\nu_0g)\in \partial^*\mathbb{F}_0$. It is typical that $\mathscr{L}_{\beta}(\nu_0g)$ is an edge of $\mathbb{E}_i\subset\partial^*\mathbb{F}_0$, where each Mather set consists of a hyperbolic periodic orbit $z_0(s)\subset\bar G^{-1}(0)$ and the fixed point. The uniqueness of minimal periodic orbit for $\nu$ close to $\nu_0$ follows from the implicit function theorem.

Given $g\in H_1(\mathbb{T}^2,\mathbb{Z})$, the set $\cup_{\nu>0}\mathscr{L}_{\beta}(\nu g)$ is a channel in $H^1(\mathbb{T}^2,\mathbb{Z})$. There is a path $\Gamma_g\subset\cup_{\nu>0} \mathscr{L}_{\beta}(\nu g)$ reaching to the boundary $\partial\mathbb{F}_0$, along which the $\alpha$-function monotonely decreases to the minimum as the cohomology class approaches to $\partial\mathbb{F}_0$. With the argument as above and Theorem \ref{AppenHyperTh1} shown in the appendix, we find the following condition also generic

({\bf H4}): {\it Given a class $g\in H_1(\mathbb{T}^2,\mathbb{Z})$ and an energy $E^*>0$, there are at most finitely many $E_i\in (0,E^*]$ such that for $c\in\Gamma_g$ with $\alpha(c)=E_i$ the Mather set consists of two periodic orbits, for all other $c\in\Gamma_g$ with $\alpha(c)\ne E_i$, the Mather set consists of exactly one periodic orbit. All these periodic orbits are hyperbolic.}

We call these $\{E_i\}$ bifurcation points. Let $E_1>0$ be the smallest one. Each energy $E<E_1$ uniquely determines a hyperbolic periodic orbit $\{x_E(s),p_E(s):s\in\mathbb{R}\}$. Thus, we introduce a notation
$$
\Pi_{E_0,E_1,g}=\{(x_{E}(s),p_{E}(s)):[x_{E}]=g,E\in [E_0,E_1],s\in\mathbb{R}\},
$$
with small $E_0>0$. It is a two-dimensional cylinder composed of a family of periodic orbits, may approach to a curve of figure-of-eight as $E_0$ decreases to zero. Obviously, the cylinder is invariant for the Hamiltonian flow $\Phi_{\bar G}^t$ of $\bar G$. Let $T(E)$ denote the period of the periodic orbit in $\bar G^{-1}(E)$, one has
$$
\int_{\Pi_{E_0,E_1,g}}\omega=\int_{E_0}^{E_1}\int_{0}^{T(E)}dE\wedge dt>0.
$$

The cylinder may be slant and crumpled, not standard. To see how the symplectic area is related to the usual area of the cylinder, let us  study the dependence of the fixed point $(x_2(E),p_2(E))$ of the Poincar\' e return map $\Phi_{E,\delta}$ on $E$. By definition, the fixed point is a solution of the equation
\begin{equation}\label{cylindereq8}
\Phi_{E,\delta}(x_2(E),p_2(E))-(x_2(E),p_2(E))=0.
\end{equation}
Emanating from a point $(\delta,p_1,x_2,p_2)\in \bar G^{-1}(E)$ the orbit reach a point  $z\in\{x_1=-\delta\}$ after a time $\tau(E,\delta)$ which remains bounded as $E\downarrow 0$. Let $z'\in\{x_1=-\delta\}$ be the point corresponding to $(\delta,p'_1,x_2,p_2)\in \bar G^{-1}(E')$, obtained in the same way. The difference of the $(x_2,y_2)$-coordinate of these two points is bounded by $d_0|p_1-p'_1|$ where $d_0$ depends on $\delta$.  Let $(\Delta x, \Delta y)$ be the solution of the variational equation (\ref{cylindereq3}) along the periodic solution $(x_{E}(s),p_{E}(s))$ passing through a neighborhood of the origin, let $s_0<s_1$ be the time such that the first coordinate $x_{E,1}(s_0)=-\delta$ and $x_{E,1}(s_1)=\delta$, the quantity $s_1-s_0$ is bounded by (\ref{regularenergyeq2}). In virtue of the formula (\ref{cylindereq5}), it yields
$$
\|(\Delta x, \Delta y)(s_1)\|\le C_7E^{-\frac{\lambda_2}{\lambda_1}-\mu_6}\|(\Delta x, \Delta y)(s_0)\|
$$
where $0<\mu_6\to 0$ as $\delta\to 0$. Therefore, we find that
$$
\Big\|\frac{\partial\Phi_E}{\partial p_1}\Big\|\le C_8E^{-\frac{\lambda_2}{\lambda_1}-\mu_6}.
$$
As the quantity $\|\frac{\partial\Phi_E}{\partial (x_2,p_2)}\|$ is bounded by (\ref{cylindereq6}), the quantity for the inverse of $\Phi_E$ is bounded by (\ref{cylindereq7}), we obtain from the equation (\ref{cylindereq8}) that
\begin{equation}\label{cylindereq9}
\Big\|\frac{\partial x_2}{\partial p_1}\Big\|,\Big\|\frac{\partial p_2}{\partial p_1}\Big\|\le C_9E^{-2\mu_6}.
\end{equation}
It yields a relation between the symplectic area $\omega$ and the usual area $S$ of the cylinder
\begin{equation}\label{cylindereq10}
|\omega|\ge C_{10}E^{2\mu_6}|S|.
\end{equation}

\begin{theo}\label{cylinderthm1}
We assume the conditions \text{\rm ({\bf H1}, {\bf H2}, {\bf H4})}. For each $E_0\in (0,E_1]$, the cylinder $\Pi_{E_0,E_1,g}$ is normally hyperbolic for the map $\Phi_{\bar G}^{\Delta t_{E}}$, where $\Delta t_{E_0}=2\lambda_1^{-1}|\ln E_0|$.
\end{theo}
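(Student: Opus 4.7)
The plan is to exhibit on $\Pi_{E_0,E_1,g}$ an invariant splitting
$T_zM=T_z\Pi_{E_0,E_1,g}\oplus E^s(z)\oplus E^u(z)$ and then compare the normal rates to the tangential rates under $D\Phi_{\bar G}^{\Delta t_{E_0}}$, using the hyperbolicity bounds (\ref{cylindereq6})--(\ref{cylindereq7}) and the slant bound (\ref{cylindereq9}) already at hand. \emph{Invariance} of $\Pi_{E_0,E_1,g}$ is automatic: by (H4) and Lemma \ref{cylinderlem1}, each slice $\bar G^{-1}(E)\cap \Pi_{E_0,E_1,g}$ is a single hyperbolic periodic orbit, and $\bar G$ is a first integral; smooth dependence on $E$ follows by applying the implicit function theorem to (\ref{cylindereq8}) with the uniform hyperbolicity of $\Phi_{E,\delta}$. \emph{The splitting} is defined at $z=(x_E(s),p_E(s))$ by taking $T_z\Pi_{E_0,E_1,g}=\mathrm{span}\{X_{\bar G}(z),\partial_E z\}$ and $E^u(z),E^s(z)$ to be the tangent lines to the one-dimensional unstable and stable manifolds of the hyperbolic periodic orbit through $z$; flow-invariance of each summand is immediate.

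The heart of the argument is the rate comparison over the window $[0,\Delta t_{E_0}]$. The time $\Delta t_{E_0}=2\lambda_1^{-1}|\ln E_0|$ is chosen so that, by the period estimate (\ref{regularenergyeq2}), it exceeds one full traversal of the Poincar\'e section $\Sigma^{-}_{E,\delta}$ for \emph{every} orbit in $\Pi_{E_0,E_1,g}$, including the slowest one (at energy $E_0$). Iterating (\ref{cylindereq6})--(\ref{cylindereq7}) along such a trajectory yields uniform constants $K>0$ and an exponent $\lambda_*:=\lambda_2/\lambda_1-\mu_5>1$ (by choosing $D$ small) with
$$
\|D\Phi_{\bar G}^{\Delta t_{E_0}}|_{E^u(z)}\|\ge K^{-1}E_0^{-\lambda_*},\qquad \|D\Phi_{\bar G}^{\Delta t_{E_0}}|_{E^s(z)}\|\le K E_0^{\lambda_*},
$$
uniformly for $z\in\Pi_{E_0,E_1,g}$. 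On the tangent bundle, the vector field direction $X_{\bar G}$ is carried to itself with bounded norm; the transverse-in-cylinder direction $\partial_E z$ is controlled by the slant estimate (\ref{cylindereq9}), giving $\|D\Phi_{\bar G}^{\Delta t_{E_0}}|_{T\Pi_{E_0,E_1,g}}\|\le C\,E_0^{-2\mu_6}$ with $\mu_6\to 0$ as $\delta\to 0$. Choosing $D$ and $\delta$ small enough that $\lambda_*-2\mu_6>\tfrac12(\lambda_2/\lambda_1-1)>0$, the ratios
$$
\frac{\|D\Phi_{\bar G}^{\Delta t_{E_0}}|_{T\Pi_{E_0,E_1,g}}\|}{m\bigl(D\Phi_{\bar G}^{\Delta t_{E_0}}|_{E^u}\bigr)}\le CK\,E_0^{\lambda_*-2\mu_6}\longrightarrow 0,
$$
and its stable analogue, provide the strict domination required in the definition of normal hyperbolicity for the map $\Phi_{\bar G}^{\Delta t_{E_0}}$.

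The main obstacle is uniformity as $E\downarrow 0$: at the bottom of the cylinder the periodic orbit degenerates toward the figure-of-eight assembled from the homoclinic curves of (H2), the return period blows up, and the stable/unstable manifolds of nearby orbits become ever more tangent to one another, so a priori the tangential slant $E_0^{-2\mu_6}$ could catch up with the normal rates. The logarithmic time scaling $\Delta t_{E_0}\sim|\ln E_0|$ is designed precisely to guarantee that the per-passage exponential factor $(D^2/E_0)^{\lambda_2/\lambda_1}$ appearing in (\ref{cylindereq6}) is realized at least once along every orbit, converting it into an honest power $E_0^{-\lambda_*}$ that strictly beats the subpolynomial slant. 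The delicate points left to verify are: (i) the control of the cumulative tangential growth across multiple Poincar\'e transits, which requires carefully tracking that (\ref{cylindereq9}) is not amplified by the same exponential factor; and (ii) enough H\"older regularity of the splitting $E^s\oplus E^u$ to feed the invariant-manifold theory used in the subsequent sections on $c$-equivalence and connecting orbits.
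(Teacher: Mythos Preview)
Your outline has the right architecture (invariant splitting, normal bounds from (\ref{cylindereq6})--(\ref{cylindereq7}), then a rate comparison), but the tangential estimate is wrong, and this is where the whole difficulty lies. You claim that ``the vector field direction $X_{\bar G}$ is carried to itself with bounded norm''. It is carried to itself as a vector field, $D\Phi_{\bar G}^t X_{\bar G}(z)=X_{\bar G}(\Phi_{\bar G}^t z)$, but the \emph{norm} of $X_{\bar G}$ along the periodic orbit at energy $E$ drops to $O(\sqrt E)$ during each passage near the hyperbolic fixed point. Hence $\|D\Phi_{\bar G}^{\Delta t_{E_0}}|_{T\Pi}\|$ already picks up a genuine power of $E_0^{-1}$ from the flow direction alone; the bound $C\,E_0^{-2\mu_6}$ you extract from (\ref{cylindereq9}) is far too strong. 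In fact (\ref{cylindereq9}) controls only the \emph{slant} of the cylinder (how the fixed point $(x_2,p_2)$ of the return map depends on $p_1$), and the paper uses it solely to pass from symplectic area to Euclidean area via (\ref{cylindereq10}), not to bound the tangential growth directly.

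The paper's argument is genuinely different at this point: it shows that the arc-length variation along the flow direction is between $O(E_0^{1+\mu_7})$ and $O(E_0^{-1-\mu_7})$, and then uses the symplectic-area relation (\ref{cylindereq10}) to get the full tangential bound $\Lambda_1^{-1}E_0^{1+\nu}\le\|D\Phi_{\bar G}^s|_{T\Pi}\|\le\Lambda_1 E_0^{-1-\nu}$ in (\ref{cylindereq11}). The comparison that actually yields normal hyperbolicity is then $\lambda_2/\lambda_1-\nu>1+\nu$, which uses the separation $\lambda_2>\lambda_1$ of ({\bf H1}) in an essential way; your inequality $\lambda_*-2\mu_6>0$ would be far too weak once the correct tangential exponent is in place. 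Finally, you do not treat the upper portion $E\in[E',E_1]$ of the cylinder, where the periodic orbits stay away from the fixed point and (\ref{cylindereq6})--(\ref{cylindereq7}) are unavailable; the paper handles this range separately by iterating the return map until $\Lambda_2^m\ge 2\Lambda_1$ and choosing $E_0$ small enough that $\Delta t_{E_0}\ge mT(E')$.
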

\begin{proof}
For any $0<E_0<E_1$, the cylinder $\Pi_{E_0,E_1,g}$ is a 2-dimensional symplectic sub-manifold, invariant for the Hamiltonian flow $\Phi^s_{\bar G}$.  However, it is not clear whether this cylinder is normally hyperbolic for the time-1-map $\Phi_{\bar G}=\Phi^s_{\bar G}|_{s=1}$, as it is possible that
\begin{align*}
m(D\Phi_{\bar G}|_{T\Pi_{E_0,E_1,g}})=&\inf\{|D\Phi_{\bar G}v|:v\in T\Pi_{E_0,E_1,g}, |v|=1\}<1,\\
&\|D\Phi_{\bar G}|_{T\Pi_{E_0,E_1,g}}\|>1,
\end{align*}
and we do not have the estimate on the norm of $D\Phi_{\bar G}$ acting on the normal bundle.

As the first step of the proof, let us search for the normal hyperbolicity of $\Phi^s_{\bar G}$ with large $s$ for the cylinder $\Pi_{E_0,E',g}$, where $E'$ denotes the largest value such that the formulae (\ref{cylindereq6}) and (\ref{cylindereq7}) hold for each $E\le E'$. From these formulae, one sees that the smaller the energy reaches, the stronger hyperbolicity the map $\Phi_{E,\delta}$ obtains. The strong hyperbolicity is obtained by passing through small neighborhood of the fixed point. However, on the other hand, the smaller the energy decreases, the longer the return time becomes.

Let $\Delta t_{E,k}$ denote the time interval  such that, starting from $z^-_{E,k}$, the periodic orbit comes to $z^-_{E,k+1}$ after time $\Delta t_{E,k}$. In virtue of the study in Section 3, Eq. (\ref{regularenergyeq2}),
$$
\Delta t_{E,k}\approx\tau_{E,g_{\ell}}-\lambda_1^{-1}\ln E, \qquad \ell=i, \ \text{\rm or}\ i+1,
$$
where $\tau_{E,g_{\ell}}$ is uniformly bounded and we take $\tau_{E,g_{\ell}}$  if the segment of the periodic orbit keeps close to the homoclinic orbit $\gamma_{\ell}$. Note $\Delta t_{E}=\frac 2{\lambda_1}|\ln E|$ is much larger than $\max_k\Delta t_{E,k}$. Thus, starting from any point $z$ on the minimal periodic orbit $z_{E}(s)$, $\Phi_{\bar G}^s(z)$ passes through the neighborhood of the fixed point after time $\Delta t_{E}$. It implies that the map $\Phi_{\bar G}^s|_{s=T_{E}}$ obtains strong hyperbolicity on normal bundle.

To measure how the map $D\Phi^s_{\bar G}$ acts on the tangent bundle, let us study how the map $\Phi^s_{\bar G}$ elongates or shortens small arc of the periodic orbit. As the orbit passes through the neighborhood of the origin $O_{\delta}(0)$ in a time approximately equal to $-\lambda_1^{-1}\ln \delta^{-2}E$ the variation of the length of short arc is between $O(E_0^{1+\mu_7})$ and $O(E_0^{-1-\mu_7})$, where $\mu_7>0$ is small. Because of the relation between the symplectic area $\omega$ and the usual area $S$ of the cylinder, given by the formula (\ref{cylindereq10}), the variation of $\|D\Phi^s_{\bar G}\|$, restricted on the tangent bundle of the cylinder, is between $O(E_0^{1+\mu_7+2\mu_6})$ and $O(E_0^{-1-\mu_7-2\mu_6})$. Because of periodicity, it is independent of $s$.

Thus, the normally hyperbolic property becomes clear: the tangent bundle of $M$ over $\Pi_{E_0,E_1,g}$ admits $D\Phi_{\bar G}^s|_{s=T_{E}}$-invariant splitting
$$
T_zM=T_zN^+\oplus T_z\Pi_{E_0,E_1,g}\oplus T_zN^-
$$
and some $\Lambda_1\ge 1$, $\Lambda_2\ge 1$ and small $\nu>0$ exist such that
\begin{equation}\label{cylindereq11}
\Lambda_1^{-1}E_0^{1+\nu}<\frac {\|D\Phi^s_{\bar G}(z)v\|}{\|v\|}<\Lambda_1E_0^{-1-\nu},\qquad \forall\ v\in T_z\Pi_{E_0,E_1,g},
\end{equation}
\begin{equation*}
\frac{\|D\Phi_{\bar G}^{s}(z)v\|}{\|v\|}\le \Lambda_2 E_0^{\frac{\lambda_2}{\lambda_1}-\nu},\qquad \forall\ v\in T_zN^+,
\end{equation*}
\begin{equation*}
\frac{\|D\Phi_{\bar G}^{s}(z)v\|}{\|v\|}\ge\Lambda_2^{-1} E_0^{-\frac{\lambda_2}{\lambda_1}+\nu}, \qquad \forall\ v\in T_zN^-,
\end{equation*}
hold for $s\ge\Delta t_{E}$ (cf. (\ref{cylindereq6}) and (\ref{cylindereq7})). Note that $\lambda_2/\lambda_1-\nu>1+\nu$ provided $\nu>0$ is suitably small. The formula (\ref{cylindereq11}) satisfies the definition of normal hyperbolicity.

For each $E\in[E' ,E_1]$, let $z_{E}(s)$ be the minimal periodic orbit, $\Sigma_{E}\subset\bar G^{-1}(E)$ be a 2-dimensional disk intersecting $z_{E}(s)$ transversally at the point $z_{E,0}$, $\Phi_{E}$: $\Sigma_{E}\to\Sigma_{E}$ be the Poincar\'e return map. By the generic property \text{\rm ({\bf H4})}, $z_{E,0}$ is the hyperbolic fixed point of $\Sigma_{E}$ and $\Lambda_{2}>1$ exists such that
\begin{equation*}
\|D\Phi_{E}(z_{E,0})v^-\|\ge\Lambda_{2}\|v^-\|, \qquad \forall\ v^-\in T_{z_{E,0}}(W^-_{E}\cap\Sigma_{E}),
\end{equation*}
\begin{equation*}
\|D\Phi_{E}(z_{E,0})v^+\|\le\Lambda_{2}^{-1}\|v^+\|, \qquad \forall\ v^+\in T_{z_{E,0}}(W^+_{E}\cap\Sigma_{E}).
\end{equation*}
As the cylinder is foliated into periodic orbits and $\Phi_{\bar G}$ preserves the symplectic form, some $\Lambda_{1}\ge 1$ exists such that
\begin{equation*}
\Lambda_{1}^{-1}\|v\|\le\|D\Phi_{\bar G}^{s}(z)v\|\le\Lambda_{1}\|v\|, \qquad \forall\ v^-\in T_{z}\Pi_{E',E_1,g}
\end{equation*}
holds for any $s>0$. Choosing $m\in\mathbb{N}$ such that $\Lambda_{2}^m\ge 2\Lambda_{1}$, one obtains the normal hyperbolicity for $\Phi_{\bar G}^{s}(z)$ with $s\ge mT(E')$, where $T(E')$ is the period of the periodic solution in $\bar G^{-1}(E')$,  $z\in\Pi_{E',E_1,g}\cap \bar G^{-1}(E)$ with $E\in [E',E_1]$. By choosing suitably small $E_0>0$, we have $\Delta t_{E_0}\ge mT(E')$.
\end{proof}

\subsection{Persistence of cylinder: near double resonance}

To apply the theorem of normally hyperbolic manifold \cite{HPS} to the Hamiltonian $G_{\epsilon}$ of (\ref{cylindereq3}), we note that the Hamiltonian $\bar G_{\epsilon}=\bar G+Z_{\epsilon}$ is autonomous with two degrees of freedom. As $Z_{\epsilon}=O(\sqrt{\epsilon})$, and because of the non-degeneracy assumption for $V$ (({\bf H1},{\bf H2})), we see that for each suitably small $\epsilon>0$, the map $\Phi_{\bar G_{\epsilon}}$ admits invariant cylinder also, denoted by  $\Pi_{E_0,E_1,g}$ still, with the normally hyperbolic properties (see Formulae (\ref{cylindereq11})), independent of the size of $\epsilon$.

Let us consider the persistence of $\Pi_{E'_0,E_1,g}$ with $E'_0=\epsilon^{2d}$ with $d>0$. As the perturbation depends on time $s$, we use $\Phi^{s,s_0}_{G_{\epsilon}}$ to denote the map from the time $s_0$-section to the time $s$-section, omit the symbol $s_0$ if $s_0=0$. Since these hyperbolic properties are posed for the map $\Phi^{s,s_0}_{\bar G_{\epsilon}}$ with large $s-s_0$, one has to measure how large the quantity $\|\Phi_{\bar G_{\epsilon}}^{s,s_0}-\Phi_{G_{\epsilon}}^{s,s_0}\|$ will be. As $\bar G_{\epsilon}$ is autonomous, $\Phi_{\bar G_{\epsilon}}^{s,s_0}=\Phi_{\bar G_{\epsilon}}^{s-s_0}$.

\begin{lem}\label{cylinderlem2}
Let the equation $\dot z=F_{\epsilon}(z,t)$ be a small perturbation of $\dot z=F_0(z,t)$, let $\Phi_{\epsilon}^t$ and $\Phi_0^t$ denote the flow determined by these two equations respectively. Then
$$
\|\Phi_{\epsilon}^t-\Phi_{0}^t\|_{C^1}\le \frac BA(1-e^{-At})e^{2At}
$$
where $A=\max_{t,\lambda=\epsilon,0}\|F_{\lambda}(\cdot,t)\|_{C^2}$ and $B=\max_t\|(F_{\epsilon}-F_0)(\cdot,t)\|_{C^1}$.
\end{lem}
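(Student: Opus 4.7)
The plan is a two-step Gronwall estimate: first control the $C^0$ distance between the two flows, then differentiate the variational equation and control the $C^1$ part, feeding the $C^0$ bound into the right-hand side.

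First I would set $\Delta(t) = \Phi_\epsilon^t(z) - \Phi_0^t(z)$ and write
\begin{equation*}
\dot\Delta = F_\epsilon(\Phi_\epsilon^t,t) - F_0(\Phi_0^t,t) = \bigl(F_\epsilon - F_0\bigr)(\Phi_\epsilon^t,t) + \bigl(F_0(\Phi_\epsilon^t,t) - F_0(\Phi_0^t,t)\bigr).
\end{equation*}
The first term is bounded in norm by $B$ and the second, using the mean value theorem and $\|DF_0\|\le A$, by $A|\Delta|$. Hence $|\dot\Delta|\le B + A|\Delta|$, and Gronwall yields $|\Delta(t)|\le \tfrac{B}{A}(e^{At}-1)$, the standard $C^0$ estimate with $\Delta(0)=0$.

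Next, for the derivative I would use the variational equation. Setting $M_\lambda(t) = D_z\Phi_\lambda^t(z)$ for $\lambda \in \{0,\epsilon\}$, each solves $\dot M_\lambda = DF_\lambda(\Phi_\lambda^t,t)\,M_\lambda$ with $M_\lambda(0) = I$, so $\|M_\lambda(t)\|\le e^{At}$. Let $N(t) = M_\epsilon(t) - M_0(t)$. A direct computation, splitting in the same spirit as before, gives
\begin{equation*}
\dot N = \bigl[DF_\epsilon(\Phi_\epsilon^t,t) - DF_0(\Phi_\epsilon^t,t)\bigr]M_\epsilon + \bigl[DF_0(\Phi_\epsilon^t,t)-DF_0(\Phi_0^t,t)\bigr]M_\epsilon + DF_0(\Phi_0^t,t)\,N.
\end{equation*}
The first bracket has norm $\le B$, the second $\le A\,|\Delta|\le B(e^{At}-1)$ using the $C^0$ estimate and the bound on the second derivative of $F_0$, and $\|M_\epsilon\|\le e^{At}$. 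Thus
\begin{equation*}
\|\dot N\| \le \bigl(B + B(e^{At}-1)\bigr)e^{At} + A\|N\| = B e^{2At} + A\|N\|.
\end{equation*}
Applying Gronwall with $N(0)=0$,
\begin{equation*}
\|N(t)\| \le \int_0^t B e^{2As}\,e^{A(t-s)}\,ds = \frac{B}{A}(e^{At}-1)e^{At} = \frac{B}{A}(1-e^{-At})e^{2At},
\end{equation*}
which together with the $C^0$ estimate (dominated by the same quantity) gives the claimed $C^1$ bound.

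There is no real obstacle beyond bookkeeping; the main subtlety is simply making sure the two uses of the constant $A$ are legitimate, i.e.\ that the $C^2$ control of $F_\lambda$ (and not merely $C^1$) is what lets us bound $\|DF_0(\Phi_\epsilon^t)-DF_0(\Phi_0^t)\|$ by $A|\Delta|$, which is why the hypothesis takes $A=\max_{\lambda,t}\|F_\lambda\|_{C^2}$ rather than $C^1$.
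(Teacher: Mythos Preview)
Your proof is correct and follows essentially the same approach as the paper: a Gronwall estimate for the $C^0$ distance, then the variational equation plus Gronwall for the derivative, with the $C^0$ bound fed into the inhomogeneous term to produce $B e^{2At}$. The only cosmetic difference is the bookkeeping of the splitting (you factor out $DF_0(\Phi_0^t)N$ where the paper factors out $\partial_z F_\epsilon(z_\epsilon)\,\Delta\delta z$), but both lead to the same differential inequality and the same final bound.
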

\begin{proof} Let $z_{\lambda}(t)$ denote the solution of the equations $\dot z=F_{\lambda}(z,t)$ for $\lambda =\epsilon,0$ respectively, and $z_{\epsilon}(0)=z(0)$. Let $\Delta z(t)=z_{\epsilon}(t)-z(t)$, then $\Delta z(0)=0$ and
$$
\Delta\dot z=\partial_zF_{\epsilon}((\nu z+(1-\nu)z_{\epsilon})(t),t)\Delta z+(F_{\epsilon}-F_0)(z(t),t)
$$
where $\nu=\nu(t)\in [0,1]$. Therefore, one has
$$
\|\Delta\dot z\|\le\max\|\partial_zF_{\epsilon}\|\|\Delta z\|+\max\|F_{\epsilon}-F_0\|.
$$
Let $\Delta z=y-\frac BA$, we have $\dot y\le Ay$. It follows from Gronwell's inequality that
$$
\|\Delta z(t)\|\le\frac BA(e^{At}-1).
$$

Along the orbit $z_{\lambda}(t)$, the differential of the flow $\Phi^t_{\lambda}$  obviously satisfies the equation
$$
\frac d{dt}D\Phi^t_{\lambda}=\partial_zF_{\lambda}(z_{\lambda}(t),t)D\Phi^t_{\lambda},\qquad \lambda=\epsilon, 0.
$$
Therefore, for each tangent vector $v$ attached to $z_{\lambda}(0)$ one has
$$
\|D\Phi^t_{\lambda}v\|\le\|v\|e^{At}.
$$

To study the differential of $\Phi_{\epsilon}^t-\Phi_{0}^t$, let us consider the equation of secondary variation. Let $\delta z_{\lambda}$ be the solution of the variational equation $\delta\dot z_{\lambda}=\partial_zF_{\lambda}(z_{\lambda}(t),t)\delta z_{\lambda}$ for $\lambda=\epsilon,0$ respectively, where $z_{\lambda}(t)$ solves the equation $\dot z_{\lambda}= F_{\lambda}(z_{\lambda},t)$ and $z_{\epsilon}(0)=z(0)$.  To measure the size $\Delta\delta z=\delta z_{\epsilon}-\delta z$ with the condition $z_{\epsilon}(0)=z(0)$, we make use of the relations such as $v=\delta z_{\epsilon}(0)=\delta z(0)$, $\|\delta z(t)\|\le\|v\|e^{At}$ and find that
\begin{align*}
\Big\|\frac {d(\Delta\delta z)}{dt}\Big\|\le &\max\|\partial _zF_{\epsilon}\|\|\Delta\delta z\| +\max\|\partial^2_zF\|\|\Delta z(t)\|\|\delta z(t)\| \\
&+\max\|\partial_z(F_{\epsilon}-F)\|\|\delta z(t)\| \\
\le &A\Delta\delta z+B\|v\|e^{2At}.
\end{align*}
Let $\Delta\delta z=y+\frac BA\|v\|e^{2At}$, we have $\dot y\le Ay$. Using Gronwell's inequality again, one obtains an upper bound of the variation of the differential
$$
\|\Delta\delta z(t)\|\le\frac BA\|v\|(1-e^{-At})e^{2At}.
$$
Note that $v$ represents initial tangent vector, it completes the proof.
\end{proof}

Let us applying this lemma to the Hamiltonian $G_{\epsilon}$. Treating $R_{\epsilon}$ as the function of $(x,p)$ we find that there exist some constants $C_{11},C_{12}>0$ independent of $\epsilon$ such that
$$
\max_s\|J\nabla\bar G_{\epsilon}-J\nabla G_{\epsilon}\|_{C^1}\le\max\Big|\frac{\partial^2 R'_{\epsilon}} {\partial x\partial p}\Big|\le C_{11}\epsilon^{5\sigma-\frac 16}
$$
as $\|\epsilon R_{\epsilon}(\cdot,s)\|_2=O(\epsilon^{\frac 56+5\sigma})$ (see Theorem \ref{normalthm1}). Since the function $\bar G_{\epsilon}$ comes from $h+\epsilon Z$ which is $C^r$-smooth ($r\ge 8$), one has $\max_s\|J\nabla\bar G_{\epsilon}\|_{C^2}=\max_s\|\bar G_{\epsilon}\|_{C^3}<C_{12}$. For $s-s_0=\frac 2{\lambda_1}|\ln\epsilon^{2d}|$ one obtains from Lemma \ref{cylinderlem2} that
$$
\|\Phi^{s,s_0}_{\bar G_{\epsilon}}-\Phi^{s,s_0}_{G_{\epsilon}}\|_{C^1}\le \frac{C_{11}}{C_{12}}\epsilon^{5\sigma-\frac 16-\frac {8C_{12}d}{\lambda_1}}
$$
If the condition $0<d<\frac{\lambda_1}{8C_{12}}(5\sigma-\frac 16)$ is satisfied,
then $\|\Phi^{s,s_0}_{\bar G_{\epsilon}}-\Phi^{s,s_0}_{G_{\epsilon}}\|_{C^1}\to 0$ as $\epsilon\to 0$. It allows one to apply the theorem of normally hyperbolic manifold to obtain the existence of invariant cylinder for the flow $\Phi^{s,s_0}_{G_{\epsilon}}$ in the extended phase space $\mathbb{T}^2\times\mathbb{R}^2\times\sqrt{\epsilon}\mathbb{T}$, which is a small deformation of $\Pi_{E'_0,E_1,g}\times\sqrt{\epsilon}\mathbb{T}$.

Be aware of the fact that $\Pi_{E'_0,E_1,g}$ is a cylinder with boundary, normally hyperbolic and invariant for $\Phi^s_{\bar G_{\epsilon}}$, where $s=\frac 2{\lambda_1}|\ln \epsilon^{2d}|$, we do not expect that the whole cylinder survives small perturbation, it may lose some part close to the boundary. To measure to what range the cylinder survives, we see that the variation of the energy along each orbit of $\Phi^{s,s_0}_{ G_{\epsilon}}$ is bounded by
\begin{equation}\label{cylindereq12}
\Big |\frac d{ds}G_{\epsilon}(z(s),s)\Big |=|\partial _sG_{\epsilon}(z,s)|=\frac 1{\sqrt{\epsilon}}\Big|\frac{\partial R_{\epsilon}}{\partial\tau}\Big|\le C_{13}\epsilon^{5\sigma-\frac 16}
\end{equation}
here, the estimate $\|\epsilon R_{\epsilon}\|_{1}\le O(\epsilon^{\frac 43+5\sigma})$ is used (see Theorem \ref{normalthm1}). Assume that the number $d$ satisfies the condition
\begin{equation}\label{cylindereq13}
d<\min\Big\{\frac 12\Big(5\sigma-\frac 16\Big),\frac{\lambda_1}{8C_{12}}\Big(5\sigma-\frac 16\Big),\frac 12\Big\}
\end{equation}
one sees that, starting from the energy level $G_{\epsilon}^{-1}(\epsilon^{d})$, after a time of $s-s_0=\frac {2}{\lambda_1}|\ln\epsilon^{2d}|$, the orbit of $\Phi^{s,s_0}_{G_{\epsilon}}$ can not reach the energy level $G_{\epsilon}^{-1}(\epsilon^{2d})$ if $\epsilon$ is suitably small so that $2\epsilon^d(1+2C_{13}\lambda^{-1}|\ln\epsilon^{2d}|)<1$. Indeed, under such condition one has
\begin{align}\label{cylindereq14}
G_{\epsilon}(z(s),s)&\ge G_{\epsilon}(z(s_0),s_0)-\int_{s_0}^s\Big |\frac d{dt}G_{\epsilon}(z(t),t)\Big|dt\\
&\ge\epsilon^d-\frac {2C_{13}}{\lambda_1}\epsilon^{2d}|\ln\epsilon^{2d}|>\frac12\epsilon^d+\epsilon^{2d}.\notag
\end{align}

To use the theorem of normally hyperbolic invariant manifold, let us introduce a modified Hamiltonian. Let $u$: $\mathbb{R}\to\mathbb{R}_+$ be a smooth function so that $u=0$ for $t\le 1$ and $u=1$ for $t\ge 2$,
$$
G'_{\epsilon}=\bar G_{\epsilon}+u\Big(\frac 2{\epsilon^{d}}\Big(\bar G_{\epsilon}-\epsilon^{2d}\Big)+1\Big)R_{\epsilon}
$$
it coincides with $G_{\epsilon}$ for $(x,p)\in\bar G_{\epsilon}^{-1}(E)$ with $E\ge\frac 12 \epsilon^d+ \epsilon^{2d}$ and coincides with $\bar G_{\epsilon}$ for $(x,p)\in\bar G_{\epsilon}^{-1}(E)$ with $E\le\epsilon^{2d}$. For small $d$ satisfying the condition (\ref{cylindereq13}) and small $\epsilon$, the cylinder $\Pi_{E'_0,E_1,g}$ survives the perturbation $\Phi^{s,s_0}_{G'_{\epsilon}}\to\Phi^{s,s_0}_{\bar G_{\epsilon}}$ and the bottom remains invariant for $\Phi^{s,s_0}_{G'_{\epsilon}}$.

By the definition, one has $G_{\epsilon}=G'_{\epsilon}$ for $(x,p)\in\bar G_{\epsilon}^{-1}(E)$ with $E\ge\frac 12\epsilon^d+\epsilon^{2d}$ and it follows from (\ref{cylindereq14}) that $G_{\epsilon}(\Phi^{s,s_0}_{G'_{\epsilon}}(x,p),s)\ge\frac12\epsilon^d+\epsilon^{2d}$ provided $G_{\epsilon}(x,p,s_0)\ge\epsilon^d$ and $s\in [0,\frac 2{\lambda_1}|\ln\epsilon^{2d}|]$. So, for $E_0=\epsilon^d$ the invariant cylinder $\Pi_{E_0,E_1,g}\times\sqrt{\epsilon}\mathbb{T}$ persists under the perturbation $\Phi^{s,s_0}_{\bar G_{\epsilon}}\to\Phi^{s,s_0}_{G_{\epsilon}}$, denoted by $\tilde\Pi_{E_0,E_1,g}$. A point $(x,p,s)\in\tilde\Pi_{E_0,E_1,g}$ implies $G_{\epsilon}(x,p,s)\in[E_0,E_1]$. The invariance is in the sense that, emanating from any point in $\tilde\Pi_{E_0,E_1,g}$, the orbit has to pass through the bottom of the cylinder if it is going to leave the cylinder.

\noindent{\bf Location of Aubry set in the cylinder}

As the working space here is phase space, we say that an Aubry set $\tilde{\mathcal{A}}(c)$ is located in the cylinder $\tilde\Pi_{E_0,E_1,g}$ if for each $c$-static curve $\gamma$, the orbit in the phase space $(x(s)=\gamma(s), p(s)=\partial_{\dot x}L_{G_{\epsilon}}(\gamma(s), \dot\gamma(s),s),s)\in\tilde\Pi_{E_0,E_1,g}$.

Recall the Hamiltonian $G_{\epsilon}$ defined in (\ref{cylindereq3}) and note the Hamiltonian $\bar G_{\epsilon}=G_{\epsilon}-R_{\epsilon}$ is autonomous. Let $\alpha_{G_{\epsilon}}$ and $\alpha_{\bar G_{\epsilon}}$ denote the $\alpha$-function for the Lagrangians determined by $G_{\epsilon}$ and $\bar G_{\epsilon}$ respectively.

Since $\Pi_{0,E_1,g}$ is a hyperbolic cylinder, invariant for the Hamiltonian flow $\Phi^t_{\bar G_{\epsilon}}$, the channel $\mathbb{W}_g=\cup_{\nu\in (0,\nu_1]}\mathscr{L}_{\beta}(\nu g)$ has a foliation into a family of segments of line (one-dimensional flat), where $\nu_{1}>0$ is chosen so that $\alpha_{\bar G_{\epsilon}}(c)\le E_1$ for each $c\in\mathscr{L}_{\beta}(\nu g)$ if $\nu\le\nu_1$. We claim that the $\alpha$-function $\alpha_{\bar G_{\epsilon}}$ is smooth in $\mathbb{W}_g$. Indeed, restricted on each of these flats the function $\alpha_{\bar G_{\epsilon}}$ keeps constant, while restricted on a line $\Gamma_g$ orthogonal to these flats, the function is smooth because $\bar G_{\epsilon}$ can be treated as a Hamiltonian with one degree of freedom when it is restricted on the cylinder. If $g=k_ig_i+k_{i+1}g_{i+1}$ we consider the Hamiltonian in the finite covering space $\bar M=\bar k_1\mathbb{T} \times\bar k_2\mathbb{T}$ where $\bar k_m=k_ig_{im}+k_{i+1}g_{i+1,m}$ for $m=1,2$ if we write $g_j=(g_{j1},g_{j2})$ for $j=i,i+1$. In that space there are $k_i+k_{i+1}$ fixed points. Because of Hartman's theorem for two-dimensional system, it is $C^1$-conjugate to a linear equation $\dot x=\lambda_1y$, $\dot y=\lambda_1x$ around each fixed point. Therefore, for small $E>0$ some $C^1$-function $\tau_g(E)$ exists such that the period of the frequency $\nu g$ is $T_{\nu g}=\lambda_1^{-1}(k_i+k_{i+1})(-\ln E+\tau_g(E))$ (see (\ref{regularenergyeq2})). Since $\partial\alpha_{\bar G_{\epsilon}}=\nu g$
\begin{equation}\label{cylindereq15}
\frac{\lambda_1}{-\ln E+\tau_g(E)}=\|\partial\alpha_{\bar G_{\epsilon}}\|,\qquad \forall\ c\in \Gamma_g,
\end{equation}
As $\alpha_{\bar G_{\epsilon}}(c)=E$ remains constant when it is restricted on each of its flats which are orthogonal to the line $\Gamma_g$, we find that for $0<\alpha_{\bar G_{\epsilon}}\ll 1$
\begin{equation}\label{cylindereq16}
\langle\partial^2\alpha_{\bar G_{\epsilon}}v,v\rangle=\frac{\lambda_1^2(1-\alpha_{\bar G_{\epsilon}} \tau'_g(\alpha_{\bar G_{\epsilon}}))} {\alpha_{\bar G_{\epsilon}}(-\ln\alpha_{\bar G_{\epsilon}}+ \tau_g(\alpha_{\bar G_{\epsilon}}))^3}>0,
\end{equation}
where $v$ is the direction of $\Gamma_g$ and $\|v\|=1$. It follows that
\begin{equation}\label{cylindereq17}
\alpha_{\bar G_{\epsilon}}(c)-\alpha_{\bar G_{\epsilon}}(c_{\omega}) \ge\langle\omega,c-c_{\omega}\rangle+\frac12\langle\partial^2\alpha_{\bar G_{\epsilon}}(c)v,v\rangle|c-c_{\omega}|^2
\end{equation}
holds for $\omega=\nu g$, $c_{\omega}\in\mathscr{L}_{\beta}(\omega)\cap\Gamma_g$ $c\in\Gamma_g$, $\alpha_{\bar G_{\epsilon}}(c)>\alpha_{\bar G_{\epsilon}}(c_{\omega})$.

Let $c^*$ be the class so that $\alpha_{G_{\epsilon}}(c)=\alpha_{\bar G_{\epsilon}}(c^*)$. To measure the difference of $c^*-c$, we find $\frac 12|\langle c-c^*,\omega(c^*)\rangle|\le|\alpha_{\bar G_{\epsilon}}(c^*)-\alpha_{\bar G_{\epsilon}}(c)|=|\alpha_{G_{\epsilon}}(c)-\alpha_{\bar G_{\epsilon}}(c)|$. As the $\alpha$-function undergoes small variation: $|\alpha_L(c)-\alpha_{L'}(c)|\le\varepsilon$ for small perturbation $L'\to L$ with $\|L'-L\|_{C^1}\le\varepsilon$ \cite{Ch}, we find $|\alpha_{G_{\epsilon}}(c)-\alpha_{\bar G_{\epsilon}}(c)|\le \epsilon^{2d}$ when $c$ is restricted on the path $\Gamma_g$.  Therefore, we obtain that
\begin{equation}\label{cylindereq18}
|\langle c^*-c,\omega(c^*)\rangle|\le 2\epsilon^{2d}.
\end{equation}

\begin{lem}
In the Aubry set for $c\in\Gamma_g\cap\alpha^{-1}_{G_{\epsilon}}(E)$ with $E\ge 2\epsilon^{d}$, any orbit  does not hit the energy level set $G_{\epsilon}^{-1}(E)$ with $E\le\epsilon^d$.
\end{lem}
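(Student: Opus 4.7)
The plan is to argue by contradiction, exploiting the very slow energy drift of $G_\epsilon$-orbits together with the $c$-action minimizing character of the Aubry set. Suppose some $c$-static curve $\gamma$ has its phase orbit $z(s)=(x(s),p(s))$ hitting $G_\epsilon^{-1}(E')$ for some $E'\le \epsilon^d$ at time $s_0$, while $\alpha_{G_\epsilon}(c)=E\ge 2\epsilon^d$. From the energy-variation bound (\ref{cylindereq12}), namely $|\tfrac{d}{ds}G_\epsilon(z(s),s)|\le C_{13}\epsilon^{5\sigma-1/6}$, the orbit needs at least time $\Delta s\ge \epsilon^d/(C_{13}\epsilon^{5\sigma-1/6}) = C_{13}^{-1}\epsilon^{-(5\sigma-1/6-d)}$ to traverse from the regime $\{G_\epsilon\ge 2\epsilon^d\}$ to $\{G_\epsilon\le \epsilon^d\}$. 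Under the constraint (\ref{cylindereq13}) on $d$, one has $5\sigma-1/6-d>d$, so $\Delta s\gg \epsilon^{-d}\gg \lambda_1^{-1}|\ln \epsilon^{2d}|$, i.e.\ this transit is longer than several expected return periods on the cylinder.

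Next, I would pass to the modified Hamiltonian $G'_\epsilon$ introduced before the statement, which coincides with $G_\epsilon$ on $\{\bar G_\epsilon\ge \tfrac12\epsilon^d+\epsilon^{2d}\}$ and with $\bar G_\epsilon$ on $\{\bar G_\epsilon\le \epsilon^{2d}\}$. On the latter region $G'_\epsilon$ is autonomous, and the whole cylinder $\tilde\Pi_{\epsilon^d,E_1,g}$ is invariant and normally hyperbolic for $\Phi^{s,s_0}_{G'_\epsilon}$ by the preceding subsection. Then using Lemma \ref{cylinderlem2} on any bounded sub-interval of the assumed excursion, together with the fact that $G'_\epsilon\equiv\bar G_\epsilon$ below energy $\epsilon^{2d}$, one sees that during the low-energy dip the true orbit of $G_\epsilon$ is close to an orbit of $\bar G_\epsilon$ lying near the homoclinic skeleton of the fixed point.

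Now I construct a competitor. Choose $c^*$ with $\alpha_{\bar G_\epsilon}(c^*)=\alpha_{G_\epsilon}(c)=E$; by (\ref{cylindereq18}) and the convexity estimate (\ref{cylindereq17})--(\ref{cylindereq16}), $c^*$ is $O(\epsilon^{2d})$-close to $c$ in the action pairing with $\omega(c^*)$. Let $z_{E}^*(s)$ be the $c^*$-minimal periodic orbit of $\bar G_\epsilon$ in the cylinder; by the normal hyperbolicity of Theorem \ref{cylinderthm1} and the $\lambda$-lemma I splice a stretch of $z_E^*$ in place of the dip of $\gamma$, matching endpoints at the entry/exit times of the excursion up to exponentially small corrections. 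Comparing $c$-actions: the excursion portion of $\gamma$ has positive contribution $\alpha(c)\Delta s$ from the $+\alpha(c)\cdot$time term of $[A_c]$ but contributes almost no progress in the $g$-direction (because near the fixed point $\dot x$ is small), whereas the spliced cylinder piece advances by $\nu g\cdot\Delta s$ with $\nu\approx\lambda_1/|\ln E|$ by (\ref{cylindereq15}), producing a saving of order $\epsilon^d\Delta s - O(\epsilon^{2d}\Delta s)$ in $[A_c]$, which is positive for small $\epsilon$. This contradicts the $c$-staticity of $\gamma$.

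The main obstacle is the splicing step: the two endpoints of the dip of $\gamma$ lie near the stable/unstable directions of the fixed point, and one must connect them to a chosen iterate of the periodic orbit on the cylinder with cost negligible compared to $\epsilon^d\Delta s$. Because at the bottom of the cylinder both the period and the hyperbolic expansion rates diverge as negative powers of $E$, one needs the quantitative normal-hyperbolicity bounds (\ref{cylindereq11}) together with the smooth $c$-dependence of periodic orbits (following from (\ref{cylindereq9})) to produce a connecting curve whose excess action is dominated by the gain. An auxiliary difficulty is that the time-periodic remainder $R_\epsilon$ can, a priori, destroy the matching over the long time $\Delta s$; this is why the passage to $G'_\epsilon$ and the careful localization to time-scales on which Lemma \ref{cylinderlem2} is effective are essential.
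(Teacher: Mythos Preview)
Your approach has the right contradiction-via-action-comparison spirit, but the paper's argument is far more local and avoids precisely the splicing difficulty you flag. Instead of a long excursion, the paper looks at a \emph{single} near-return: assuming the orbit hits $G_\epsilon^{-1}(\epsilon^d)$ at $s_0$ and lies in the perturbed cylinder, it nearly closes up after a time $S\approx \lambda_1^{-1}(k_i+k_{i+1})|\ln\epsilon^d|$, chosen to be an integer multiple of $\sqrt\epsilon$. By (\ref{cylindereq12}) the energy drift over $[s_0,s_0+S]$ is $O(\epsilon^{2d}|\ln\epsilon^d|)$, hence $\|(x,p)(s_0+S)-(x,p)(s_0)\|\le O(\epsilon^{2d(1-\mu_6)}|\ln\epsilon^d|)$, and $c$-staticity alone forces $\big|[A_c(\gamma)|_{[s_0,s_0+S]}]\big|\le O(\epsilon^{2d(1-\mu_6)}|\ln\epsilon^d|)$. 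On the other hand, writing $[A_c]=\int(\langle y-c,\dot x\rangle+\alpha_{G_\epsilon}(c)-G_\epsilon)\,ds$ and comparing the circulation term $\int\langle y,dx\rangle$ to that of the $\bar G_\epsilon$-periodic orbit at level $\epsilon^d$ (class $c'$ with $\alpha_{\bar G_\epsilon}(c')=\epsilon^d$), the convexity bounds (\ref{cylindereq15})--(\ref{cylindereq17}) give the opposite estimate $[A_c]\ge C\epsilon^d$. This is a direct contradiction; no competitor curve is constructed.

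Your route has two genuine gaps. First, the premise that the orbit traverses from $\{G_\epsilon\ge 2\epsilon^d\}$ down to $\{G_\epsilon\le\epsilon^d\}$ is not established: $\alpha_{G_\epsilon}(c)=2\epsilon^d$ is a minimal-measure average, not a pointwise value along any given static curve, so a separate ergodic argument would be needed before your transit-time bound applies. Second, the splicing step you yourself identify as the main obstacle is real and is exactly what the paper's one-period device sidesteps. Matching the excursion endpoints to a stretch of the $c^*$-periodic orbit at energy $2\epsilon^d$ requires aligning both the $g$-winding count and the time-phase over a window of length $\epsilon^{-(5\sigma-1/6-d)}$, while the original orbit (at energies ranging between $\epsilon^d$ and $2\epsilon^d$) and the competitor have incommensurate effective periods; bounding the connection cost by $o(\epsilon^d\Delta s)$ does not follow from (\ref{cylindereq11}) or Lemma~\ref{cylinderlem2} as invoked. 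The detour through $G'_\epsilon$ is also unnecessary here --- that modification was introduced only to prove persistence of the cylinder, not for the action estimate.
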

\begin{proof}
If the lemma does not hold, there would exist an orbit $(x(s),p(s))$ in the Aubry set for $c\in\Gamma_g\cap\alpha^{-1}_{G_{\epsilon}}(2\epsilon^{d})$, which hits the energy level $G_{\epsilon}^{-1}(\epsilon^d)$ at the time $s=s_0\mod\sqrt{\epsilon}$, i.e. $G_{\epsilon}(x(s_0),y(s_0),s_0)=\epsilon^d$. Since the orbit entirely stays in the invariant cylinder and the perturbation is of order $\epsilon^{2d}$, it returns back to the neighborhood of $(x(s_0),y(s_0))$ after a time $S=\lambda_1^{-1}(k_i+k_{i+1})\ln\epsilon^d+\tau_{\epsilon}$ where $\tau_{\epsilon}$ remains bounded as $\epsilon\to 0$. To see how close it could be, we obtain from (\ref{cylindereq12}) that
\begin{align}\label{cylindereq19}
&|G_{\epsilon}(x(S+s_0),p(S+s_0),S+s_0)-G_{\epsilon}(x(s_0),p(s_0),s_0)|\\
&\le \int_{s_0}^{S+s_0}\Big |\frac d{ds}G'_{\epsilon}(z(s),s)\Big|ds\le C_{15}\epsilon^{2d}|\ln\epsilon^{d}|. \notag
\end{align}
As $\bar G_{\epsilon}^{-1}(E)\cap\Pi_{0,E_1,g}$ is an invariant circle for $\Phi^t_{\bar G_{\epsilon}}$, the perturbed cylinder is $O(\epsilon^{2d})$-close to the original one \cite{BLZ} and the cylinder may be crumpled but at most up to the order $O(E^{-2\mu_6})$ (cf. (\ref{cylindereq9})), some large $k\in\mathbb{Z}$ exists such that
$S=k\sqrt{\epsilon}$  and
$$
\|(x(S+s_0),p(S+s_0))-(x(s_0),p(s_0))\|\le C_{14}\epsilon^{2d(1-\mu_6)}|\ln\epsilon^{d}|.
$$
Since the curve $x(s)$ is assumed $c$-static, it follows that
\begin{equation}\label{cylindereq20}
\Big|\int_{s_0}^{S+s_0}(L_{G_{\epsilon}}(x(s),\dot x(s),s)-\langle c,\dot x(s)\rangle+\alpha_{G_{\epsilon}}(c))ds \Big|\le C_{16}\epsilon^{2d(1-\mu_6)}|\ln\epsilon^{d}|.
\end{equation}

As the cylinder $\Pi_{E_0,E_1,g}\times\sqrt{\epsilon}\mathbb{T}$ is $\epsilon^{2d}$-close to $\tilde\Pi_{E_0,E_1,g}$, there is a $c'$-minimal orbit $(x'(s),y'(s))$ of the Hamiltonian flow $\Phi^s_{\bar G_{\epsilon}}$ on $\Pi_{E_0,E_1,g}$ such that $\alpha_{\bar G_{\epsilon}}(c')=\epsilon^d$ and $\|(x'(s_0),y'(s_0))-(x(s_0),y(s_0))\|\le O(\epsilon^{2d(1-\mu_6)})$. Let $\Gamma_x=\bigcup_{s=s_0}^{s_0+S}(x(s),y(s))$ and $\Gamma_{x'}=\bigcup_{s=s_0}^{s_0+S'}(x'(s),y'(s))$ where $S'$ is the period of $x'(s)$, we have an estimate on the Hausdorff distance
$d_H(\Gamma_x,\Gamma_{x'})\le O(\epsilon^{2d(1-\mu_6)}|\ln\epsilon^d|)$. Consequently, we have
$$
\int_{\Gamma_x}\langle y,dx\rangle-\int_{\Gamma_{x'}}\langle y,dx\rangle=O(\epsilon^{2d(1-\mu_6)}|\ln\epsilon^d|).
$$
As $\bar G_{\epsilon}(x'(s),y'(s))\equiv\alpha_{G_{\epsilon}}(c')$ we have
\begin{align}
0&=\int(L_{\bar G_{\epsilon}}(x'(t),\dot x'(t))-\langle c',\dot x'(t)\rangle+\alpha_{\bar G_{\epsilon}}(c'))dt \notag\\
&=\int \langle y'(s)-c',\dot x'(s)\rangle ds\notag
\end{align}
Let $\bar x(s)$ be the lift of $x(s)$ to the universal covering space, it follows that
\begin{align}
&\int_{s_0}^{S+s_0}\langle y(s)-c,\dot x(s)\rangle ds\notag\\
=&\int_{s_0}^{S+s_0}\langle y(s)-c',\dot x(s)\rangle ds-\int_{s_0}^{S'+s_0}\langle y'(s)-c',\dot x'(s)\rangle ds \notag\\
&-\langle c-c',\bar x(S+s_0)-\bar x(s_0)\rangle\notag\\
=&\int_{\Gamma_x}\langle y,dx\rangle-\int_{\Gamma_{x'}}\langle y',dx'\rangle+O(\epsilon^{2d(1-\mu_6)}|\ln\epsilon^d|)\notag\\
&-\langle c-c',\bar x(S+s_0)-\bar x(s_0)\rangle\notag\\
=&-\langle c-c',\bar x(S+s_0)-\bar x(s_0)\rangle+O(\epsilon^{2d(1-\mu_6)}|\ln\epsilon^{2d}|)\notag
\end{align}
Since it follows from (\ref{cylindereq19}) that
$$
\alpha_{G_{\epsilon}}(c)-G_{\epsilon}(x(s),y(s),s)\ge\alpha_{G_{\epsilon}}(c)-\alpha_{\bar G_{\epsilon}}(c')-O(\epsilon^{2d}|\ln\epsilon^{d}|)
$$
holds for all $s\in[s_0,S+s_0]$, we find
\begin{align}\label{cylindereq21}
&\int_{s_0}^{S+s_0}(L_{G_{\epsilon}}(x(s),\dot x(s),s)-\langle c,\dot x(s)\rangle+\alpha_{G_{\epsilon}} (c))ds\\
=&\int_{s_0}^{S+s_0}\Big(\langle y(s)-c,\dot x(s)\rangle+(\alpha_{G_{\epsilon}}(c)- G_{\epsilon}(x(s),y(s),s))\Big)ds\notag\\
\ge&\, (\alpha_{G_{\epsilon}}(c)-\alpha_{\bar G_{\epsilon}}(c'))S-\langle c-c',\bar x(S+s_0)-\bar x(s_0)\rangle-O(\epsilon^{2d}|\ln\epsilon^{2d}|)\notag\\
\ge&\, C_{17}\epsilon^d.\notag
\end{align}
To verify the second inequality, let $c^*$ be the class such that $\alpha_{\bar G_{\epsilon}}(c^*)=\alpha_{G_{\epsilon}}(c)$, then we obtain from the formula (\ref{cylindereq18}) that
$$
|c^*-c|\le 3\lambda_1^{-1}\epsilon^{2d}|\ln\epsilon^d|.
$$
For small $\epsilon$ such that $\epsilon^d\ge 4\epsilon^{2d}$ we find from (\ref{cylindereq15}) that
$$
|c'-c^*|\ge \frac 1{\|\partial\alpha_{\bar G_{\epsilon}}\|}\Big(\alpha_{\bar G_{\epsilon}}(c')-\alpha_{\bar G_{\epsilon}}(c^*)\Big)\ge C_{18}\epsilon^{d}|\ln\epsilon^{d}|
$$
holds for $c^*,c'\in\Gamma_g$ and $\alpha_{\bar G_{\epsilon}}(c')>\alpha_{\bar G_{\epsilon}}(c^*)$. Therefore, one obtains from (\ref{cylindereq16}) and (\ref{cylindereq17}) that
$$
\alpha_{\bar G_{\epsilon}}(c')-\alpha_{\bar G_{\epsilon}}(c^*)-\langle c'-c^*,\omega\rangle\ge C_{19}\frac{\epsilon^{d}}{|\ln\epsilon^{d}|},
$$
from which one obtains the second inequality of (\ref{cylindereq21}) from the first one.
As $\mu_6$ is very small, the formula (\ref{cylindereq21}) contradicts (\ref{cylindereq20}). It completes the proof.
\end{proof}

For $d>0$ satisfying the condition (\ref{cylindereq13}), going back to the original coordinates ($E\to\epsilon E$, $y=\sqrt{\epsilon}p$ and $s=\sqrt{\epsilon}\tau$), we obtain

\begin{theo}\label{cylinderthm2}
For an irreducible class $g\in H_1(\mathbb{T}^2,\mathbb{Z})$, there exists a 3-dimensional cylinder $\tilde \Pi_{E_0,E_1,g}\subset\mathbb{R}^2 \times\mathbb{T}^3$ associated with a channel $\mathbb{W}_g\subset H^1(\mathbb{T}^2,\mathbb{R})$ such that

1, the cylinder $\tilde\Pi_{E_0,E_1,g}$ is a small deformation of the cylinder
$$
\{(x_{E}(\tau+\tau^*),y_{E}(\tau+\tau^*),\tau^*): [x_{E}]=g,(\tau,\tau^*)\in\mathbb{T}^2,E\in[E_0,E_1]\},
$$
where $E_0=\epsilon^{\frac 1d}$, $x_{E}$ is the minimal periodic curve for $L_{\bar Y}-\langle c,\dot x\rangle+E$, the function $\bar Y$ solves the equation $(\tilde h+\epsilon\tilde Z)(x,y,\bar Y(x,y))=\tilde E$, $c\in\mathbb{W}_g$ and $y_E=\partial_{\dot x}L_{\bar Y}(x_E,\dot x_E)$;

2, the cylinder is invariant for the Hamiltonian flow $\Phi^{\tau,\tau_0}_{Y}$: $\forall$ $(x,y,\tau_0)\in \tilde\Pi_{E_0,E_1,g}$, if $\Phi^{\tau,\tau_0}_{Y}(x,y)\notin\tilde\Pi_{\epsilon,g}$ holds for $\tau>\tau_0$ then $\exists$ $\tau'\in(\tau_0,\tau)$ such that $\Phi^{\tau',\tau_0}_{Y}(x,y)$ is on the boundary of $\tilde\Pi_{E_0,E_1,g}$, where $Y$ solves the equation $H(x,y,-\tau,Y(x,y,\tau))=\tilde E$;

3, $\tilde\Pi_{E_0,E_1,g}$ is normally hyperbolic for $\Phi^{\tau,\tau_0}_{Y}$ with $\tau-\tau_0=\frac{2d}{\lambda_1}\frac {\ln\epsilon}{\sqrt{\epsilon}}$;

4, this channel reaches to a small neighborhood of the flat $\mathbb{F}_0$ in the sense
$$
\min_{c\in\mathbb{W}_g}\alpha_Y(c)-\min_{c\in H^1(\mathbb{T}^2,\mathbb{R})}\alpha_Y(c)=2\epsilon^{1+d}.
$$
For each $c\in\mathbb{W}_g$ with $\alpha(c)\ge 2\epsilon^{1+d}$, the Aubry set entirely stays in the cylinder.
\end{theo}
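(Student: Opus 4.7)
The statement is a packaging of the preceding technical work. The plan is to execute the rescaling argument that was already set up and then invoke Theorem \ref{cylinderthm1}, Lemma \ref{cylinderlem2}, the cut-off construction around (\ref{cylindereq14}), and the Aubry set localization lemma in sequence.

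First I would reduce to the fast-time Hamiltonian (\ref{cylindereq3}) via the usual trick: the equation $H(\tilde x,\tilde y)=\tilde E$ defines $y_3=Y(x,y,-\tau)$ (with $\tau=x_3/\omega_3$), giving the time-periodic system (\ref{homogenizedeq2}) with two degrees of freedom, and then the rescaling $y=\sqrt\epsilon p$, $s=\sqrt\epsilon\tau$ transforms everything into $G_\epsilon$. Crucially, by Proposition \ref{homogenizationpro} one has the exact correspondence of orbits and the energy relation $E_{\text{orig}}=\epsilon E_{\text{rescaled}}$. Thus, producing the desired cylinder is equivalent to producing an invariant, normally hyperbolic cylinder for $\Phi^{s,s_0}_{G_\epsilon}$ that is foliated by the minimal periodic orbits of the autonomous truncation $\bar G_\epsilon=\bar G+Z_\epsilon$ in rescaled coordinates.

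Next, I would apply Theorem \ref{cylinderthm1} to $\bar G_\epsilon$. Since $Z_\epsilon=O(\sqrt\epsilon)$ and the non-degeneracy hypotheses \textrm{({\bf H1})}--\textrm{({\bf H4})} persist under a perturbation of that size, one obtains, for the autonomous flow $\Phi^s_{\bar G_\epsilon}$, a normally hyperbolic invariant cylinder $\Pi_{E_0',E_1,g}$ foliated by hyperbolic periodic orbits with homology $g$, together with the quantitative splitting (\ref{cylindereq11}) for the map $\Phi^s_{\bar G_\epsilon}$ with $s=\Delta t_{E_0'}=\frac 2{\lambda_1}|\ln E_0'|$. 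Here one takes $E_0'=\epsilon^{2d}$ with $d$ satisfying (\ref{cylindereq13}).

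The third step is persistence under the time-dependent remainder $R_\epsilon$. Applying Lemma \ref{cylinderlem2} with $A=O(1)$ and $B=O(\epsilon^{5\sigma-1/6})$, one gets
$$
\|\Phi^{s,s_0}_{\bar G_\epsilon}-\Phi^{s,s_0}_{G_\epsilon}\|_{C^1}\le \frac{C_{11}}{C_{12}}\epsilon^{5\sigma-\frac 16-\frac{8C_{12}d}{\lambda_1}},
$$
so the condition (\ref{cylindereq13}) on $d$ makes this $C^1$ difference vanish as $\epsilon\to 0$ over the required time scale $s-s_0=\frac{2}{\lambda_1}|\ln\epsilon^{2d}|$. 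To actually apply the normally hyperbolic invariant manifold theorem of \cite{HPS}, which requires a globally defined perturbation, I introduce the cut-off Hamiltonian $G'_\epsilon$ already constructed above (which interpolates between $G_\epsilon$ on the region $\bar G_\epsilon\ge\tfrac12\epsilon^d+\epsilon^{2d}$ and $\bar G_\epsilon$ on $\bar G_\epsilon\le\epsilon^{2d}$). The full cylinder $\Pi_{E_0',E_1,g}\times\sqrt\epsilon\mathbb{T}$ is invariant for $\Phi^{s,s_0}_{G'_\epsilon}$, and hence persists as a normally hyperbolic invariant cylinder. The energy drift bound (\ref{cylindereq12}) combined with (\ref{cylindereq14}) then shows that any $\Phi^{s,s_0}_{G'_\epsilon}$-orbit starting at energy $\ge\epsilon^d$ cannot drop below energy $\epsilon^{2d}$ in the relevant time window; on this portion $G'_\epsilon\equiv G_\epsilon$, so the cylinder above the level $E_0:=\epsilon^d$ is also invariant for the true flow $\Phi^{s,s_0}_{G_\epsilon}$ in the sense stated in item (2). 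This yields $\tilde\Pi_{E_0,E_1,g}$ and items (1), (2), (3) in rescaled coordinates.

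For item (4), the convexity of $\alpha_{\bar G_\epsilon}$ along the path $\Gamma_g$ (inequalities (\ref{cylindereq15})--(\ref{cylindereq17})) together with the $C^0$-stability $|\alpha_{G_\epsilon}-\alpha_{\bar G_\epsilon}|\le\epsilon^{2d}$ of the $\alpha$-function yields (\ref{cylindereq18}), and then the lemma just proved says that for $c\in\Gamma_g$ with $\alpha_{G_\epsilon}(c)\ge 2\epsilon^d$ every $c$-static orbit is trapped in $\tilde\Pi_{E_0,E_1,g}$. The channel $\mathbb W_g$ is defined as $\bigcup_{\nu\in(0,\nu_1]}\mathscr{L}_{\beta}(\nu g)$, and the lower bound $\alpha\ge 2\epsilon^d$ on the rescaled scale becomes $\alpha\ge 2\epsilon^{1+d}$ in the original coordinates. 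Finally I would translate all quantities back: $E\mapsto \epsilon E$, $y=\sqrt\epsilon p$, $\tau=s/\sqrt\epsilon$, which turns the time window $s=\frac{2d}{\lambda_1}|\ln\epsilon|$ into $\tau=\frac{2d}{\lambda_1}\frac{|\ln\epsilon|}{\sqrt\epsilon}$ as stated in item (3), and produces the four conclusions verbatim.

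The only real obstacle is the simultaneous bookkeeping of the small parameters $\sigma$, $\rho$, $d$, $\mu_i$: the hyperbolic exponents in (\ref{cylindereq11}) degrade as $E_0\downarrow 0$, while the $C^1$-perturbation bound from Lemma \ref{cylinderlem2} grows exponentially in the time window $\frac{2}{\lambda_1}|\ln\epsilon^{2d}|$. One must verify that the algebraic inequality (\ref{cylindereq13}) on $d$ is compatible with $5\sigma-\tfrac16>0$ (which holds for $\sigma=\tfrac17$) and still leaves the hyperbolicity of $\Phi^s_{\bar G_\epsilon}$ dominant over the perturbation $\|\Phi^{s,s_0}_{G_\epsilon}-\Phi^{s,s_0}_{\bar G_\epsilon}\|_{C^1}$; this is precisely the content of the choice of $d$ in (\ref{cylindereq13}) and is the point at which the proof is tightest.
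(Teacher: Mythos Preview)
Your proposal is correct and follows essentially the same approach as the paper: the theorem is indeed a repackaging of the work in Sections~4.2--4.3, and the paper's proof amounts to the single line ``For $d>0$ satisfying the condition (\ref{cylindereq13}), going back to the original coordinates ($E\to\epsilon E$, $y=\sqrt{\epsilon}p$ and $s=\sqrt{\epsilon}\tau$), we obtain\ldots''. You have correctly identified the sequence---Theorem~\ref{cylinderthm1} for $\bar G_\epsilon$, Lemma~\ref{cylinderlem2} for the $C^1$ perturbation bound, the cutoff $G'_\epsilon$ together with the energy-drift estimate (\ref{cylindereq14}) to handle the boundary, and the Aubry-set localization lemma for item~(4)---and the final coordinate translation $s=\sqrt\epsilon\,\tau$ giving the time scale in item~(3).
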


Let us consider the autonomous Hamiltonian $H$ with the form of (\ref{homogenizedeq1}). Let $\tilde E>\min\alpha_H$, $Y(x,y,\tau)$ be the function solves the equation $H(x,y,x_3,Y(x,y,-x_3))=\tilde E$. Then $Y$ has the form of (\ref{homogenizedeq2}). The energy $E$ of $G$ corresponds to the coordinate $y_3$ for the autonomous Hamiltonian $H$.  Applying Theorem \ref{flatthm5} one obtains
\begin{theo}\label{cylinderthm3}
Assume $\tilde E>\min\alpha_H$. Given an irreducible $g\in H_1(\mathbb{T}^2,\mathbb{Z})$, there is a $3$-dimensional cylinder $\tilde\Pi_{E_0,E_1,g}\subset H^{-1}(\tilde E)$ associated with a channel $\tilde{\mathbb{W}}\subset \alpha^{-1}(\tilde E)$ such that

1, the cylinder $\tilde\Pi_{E_0,E_1,g}$ is a small deformation of the cylinder
$$
\{(x_{E}(x_3),y_{E}(x_3),x^*_3,E): [x_{E}]=g,(x_3,x^*_3)\in\mathbb{T}^2,E\in[E_0,E_1]\};
$$

2, the cylinder $\tilde\Pi_{E_0,E_1,g}$ is invariant for the Hamiltonian flow $\Phi^t_{H}$: for each $(\tilde x,\tilde y)\in \tilde\Pi_{E_0,E_1,g}$, if $\Phi^t_{H}(\tilde x,\tilde y)\notin\tilde\Pi_{\epsilon,g}$ holds for certain $t>0$ then $\exists$ $t'\in(0,t)$ such that $\Phi^{t'}_{H}(\tilde x,\tilde y)$ is on the boundary of $\tilde\Pi_{E_0,E_1,g}$;

3, $\tilde\Pi_{E_0,E_1,g}$ is normally hyperbolic for $\Phi^t_H$  with $t=\frac{2d}{\lambda_1}\frac {\ln\epsilon}{\sqrt{\epsilon}}$;

4, for each $\tilde c=(c,c_3)\in\tilde{\mathbb{W}}$ with $c_3\ge 2\epsilon^{1+d}$, the Aubry set is contained in that cylinder $\tilde{\mathcal{A}}(c)\subset\tilde\Pi_{E_0,E_1,g}$.
\end{theo}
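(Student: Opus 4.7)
The plan is to reduce the autonomous three-degree-of-freedom system to the time-periodic two-degree-of-freedom system already handled by Theorem \ref{cylinderthm2}, and then lift the invariant cylinder back into $H^{-1}(\tilde E)$. First I would localize: since $\tilde E > \min\alpha_H$, the unique curve $\{\partial_y h = 0\}$ hits the level set $\tilde h^{-1}(\tilde E)$ transversally at a point with $\omega_3 = \partial_{y_3}\tilde h \ne 0$. By the implicit function theorem the equation $H(x,y,x_3,y_3)=\tilde E$ has a unique $C^r$ solution $y_3 = -\omega_3 Y(x,y,-x_3)$ defined on an $\epsilon^{\kappa}$-neighborhood, where $Y$ has the form (\ref{homogenizedeq2}). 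Reparameterizing by $\tau = -x_3$ (modulo the $\omega_3$-factor) produces a time-periodic Hamiltonian with two degrees of freedom to which Theorem \ref{cylinderthm2} applies directly.

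Next I would invoke Theorem \ref{cylinderthm2} for the irreducible class $g$: it produces a channel $\mathbb W_g \subset H^1(\mathbb T^2,\mathbb R)$ and a $3$-dimensional invariant cylinder $\tilde\Pi_{E_0,E_1,g}^{Y} \subset\mathbb T^2\times\mathbb R^2\times\mathbb T$ obtained as a deformation of the family of periodic orbits $(x_E(\tau+\tau^*),y_E(\tau+\tau^*),\tau^*)$, normally hyperbolic for the time-$t$-map with $t = \tfrac{2d}{\lambda_1}\tfrac{\ln\epsilon}{\sqrt\epsilon}$. I would then define the lifted cylinder in the full phase space by
\[
\tilde\Pi_{E_0,E_1,g} = \{(x,y,-\omega_3\tau,\,-\omega_3 Y(x,y,\tau))\,:\,(x,y,\tau)\in\tilde\Pi_{E_0,E_1,g}^{Y}\}\subset H^{-1}(\tilde E).
\]
Statement 1 of the theorem is then immediate: an orbit of the reduced system lifts to an orbit of $\Phi^t_H$ on $H^{-1}(\tilde E)$ with $x_3 = -\omega_3\tau$ and $y_3 = -\omega_3 Y$, so the family of periodic curves in the reduced cylinder lifts to the claimed family. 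Statement 2 (flow-invariance with escape only through the boundary) follows from the same correspondence, since the reduction is a smooth conjugacy between $\Phi^{\tau,\tau_0}_Y$ and the restriction of $\Phi^t_H$ to $H^{-1}(\tilde E)$ after the time rescaling $t = \tau/\omega_3$. Statement 3 transfers similarly: normal hyperbolicity on the reduced side with time $t_Y = \tfrac{2d}{\lambda_1}|\ln\epsilon|/\sqrt\epsilon$ gives the same splitting and the same exponential bounds on the ambient side for the conjugate time $t = t_Y$ (up to the fixed factor $\omega_3$ absorbed in the constants).

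For statement 4, I would use Theorem \ref{flatthm5} to identify the relevant cohomology classes. That theorem produces, for the energy level $H^{-1}(\tilde E)$ with $\partial_{y_3}H \ne 0$, a homeomorphism $c \mapsto (c,\alpha_Y(c))$ from a neighborhood of $\mathbb F_0$ in $H^1(\mathbb T^2,\mathbb R)$ onto a piece of $\alpha_H^{-1}(\tilde E)$, and identifies $\tilde c$-minimal curves for $L_H$ with $c$-minimal curves for $L_Y$ after reparametrization. So I set $\tilde{\mathbb W} = \{(c,\alpha_Y(c)) : c\in\mathbb W_g\}\subset\alpha_H^{-1}(\tilde E)$ and use Theorem \ref{cylinderthm2}(4) in the reduced system, which places $\tilde{\mathcal A}_Y(c)$ inside $\tilde\Pi^Y_{E_0,E_1,g}$ whenever $\alpha_Y(c)\ge 2\epsilon^{1+d}$. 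Under the identification this is exactly the condition $c_3\ge 2\epsilon^{1+d}$, and the corresponding Aubry set in the autonomous system lies in the lifted cylinder.

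The main obstacle is technical rather than conceptual: making sure the Legendre/Fenchel correspondence of Theorem \ref{flatthm5} is compatible with the $\alpha$-function bounds used in Theorem \ref{cylinderthm2}(4), so that the cutoff $\alpha_Y(c)\ge 2\epsilon^{1+d}$ (expressed in the rescaled $G_\epsilon$-energy $E \in[\epsilon^d,\epsilon]$ before unscaling $E\to\epsilon E$) lines up cleanly with the $c_3\ge 2\epsilon^{1+d}$ condition on the third coordinate of $\tilde c$. Once this bookkeeping is done, and one keeps track of the rescalings $y-y_j=\sqrt{\epsilon}p$, $s=\sqrt\epsilon\tau$ used in $\S$4.1, together with the reparametrization $t=\tau/\omega_3$ used here, all four conclusions fall out of the corresponding statements of Theorems \ref{cylinderthm2} and \ref{flatthm5}.
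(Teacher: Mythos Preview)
Your proposal is correct and follows exactly the paper's approach: the paper simply observes that the reduction $H(x,y,x_3,Y(x,y,-x_3))=\tilde E$ puts $Y$ in the form (\ref{homogenizedeq2}), that the energy $E$ of the reduced system corresponds to the coordinate $y_3$ (hence $c_3$) in the autonomous system, and then states that Theorem~\ref{cylinderthm3} follows by applying Theorem~\ref{flatthm5} to transfer the conclusions of Theorem~\ref{cylinderthm2}. Your write-up in fact supplies more detail than the paper itself, which dispatches the result in a single sentence.
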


By {\bf H4}, it is generic that there are finitely many bifurcation points, denoted by $c_1,c_2,\cdots,c_m\in\Gamma_g$ corresponding to the energy $E_1,E_2,\cdots E_m$. For each point $c\neq c_i$, the $c$-minimal measure for the Hamiltonian $\bar G_{\epsilon}$ is uniquely supported on a hyperbolic periodic orbit. For the class $c_i$, the Mather set is composed of two hyperbolic periodic orbits. The periodic orbits for $c\in (c_{i},c_{i+1})$ constitute a piece of cylinder $\Pi_i$. As these periodic orbits are hyperbolic, the cylinder $\Pi_i$ can be extended a little bit consisting of periodic orbits which are hyperbolic also, but do not support minimal measure. They are local minimal. These cylinders are clearly invariant and normally hyperbolic for the Hamiltonian flow $\Phi_{\bar G}^t$ with suitably large $t$. Applying the theorem of normally hyperbolic manifold, one can see that there exists some $\tilde\Pi_{i,\epsilon}$ which is invariant for the Hamiltonian flow determined by $G$, and keeps close to $\Pi_{i}\times\sqrt{\epsilon}\mathbb{T}$. For each $c\in\Gamma_g$, the Mather set for $G$ stays in the cylinder. Except for finitely many $c_{i,\epsilon}$ very close to $c_{i}$, the time-$\sqrt{\epsilon}$-section of the Mather set for other $c\in\Gamma_{g}$ is an invariant circle, or periodic points or Aubry-Mather set in the cylinder, for bifurcation point $c=c_{i,\epsilon}$, the Mather set consists of two parts, one stays in $\tilde\Pi_{i,\epsilon}$ another one stays in $\tilde\Pi_{i+1,\epsilon}$.

\subsection{Transition from double to single resonance}
Some normally hyperbolic invariant cylinder has been shown to reach $O(\epsilon^{\frac12+d})$-neighborhood of the double resonant point. This cylinder extends to the place a bit far away from the double resonant point. To see how to transit from double resonance to single resonance, let us homogenize the Hamiltonian in a region $\|y-y_j\|\le O(\sqrt{\epsilon})$ and choose different $y_j$. Recall that the normal form remains valid in the domain $\{\|y\|\le O(\epsilon^{\kappa})\}$ ($\frac 16<\kappa\le\frac 13$). The $\sqrt{\epsilon}$-neighborhood of the curve is covered by as many as $O(\epsilon^{\kappa-\frac 12})$ small balls with radius $O(\sqrt{\epsilon})$. Such approach is based on the following:
\begin{pro}
For nearly integrable Lagrangian $L(x,\dot x,t)=\ell(\dot x)+\epsilon\ell_1(x,\dot x,t)$, each orbit in Mather set  $(\gamma,\dot\gamma)$
$$
\|\dot\gamma(t)-\dot\gamma(0)\|\le O(\sqrt{\epsilon}),\qquad \forall\ t\in\mathbb{R}.
$$
\end{pro}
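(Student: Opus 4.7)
The plan rests on combining three ingredients: a variational $L^2$ estimate, the slow drift of momentum from Euler–Lagrange, and Mather's Lipschitz graph theorem. I would first show that velocities of Mather orbits are concentrated near the rotation vector $\omega$ in $L^2(d\mu)$, then upgrade this to a pointwise bound valid on the entire support.

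For the $L^2$ estimate, let $\mu$ be a $c$-minimal ergodic measure with rotation vector $\omega$, and consider the holonomic measure $\mu_0 \in \mathfrak{H}_\omega$ carried by constant-velocity curves $x(t) = x_0 + \omega t$. Then $\beta(\omega) = \int L\,d\mu \leq \int L\,d\mu_0 = \ell(\omega) + \epsilon \int \ell_1\,d\mu_0$; strict convexity of $\ell$ (with modulus $c_0 > 0$) and $\int \dot x\,d\mu = \omega$ give
\begin{equation*}
\int \ell(\dot x)\,d\mu \geq \ell(\omega) + c_0 \int \|\dot x - \omega\|^2\,d\mu,
\end{equation*}
so $\int \|\dot x - \omega\|^2\,d\mu \leq C\epsilon$. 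For the pointwise upgrade I would use that Euler–Lagrange yields $\dot y = \epsilon\,\partial_x \ell_1$ for the momentum $y = \partial_{\dot x}L$, so $\|y(t) - y(s)\| \leq C\epsilon|t-s|$; combined with bi-Lipschitzness of the Legendre transform on the compact velocity region containing the Mather set, this gives $\|\dot\gamma(t) - \dot\gamma(s)\| \leq C'\epsilon(1 + |t-s|)$. Birkhoff's theorem applied to $\|\dot x - \omega\|^2$ then shows that $A_K = \{t : \|\dot\gamma(t) - \omega\|^2 > K\epsilon\}$ has density $\leq C/K$ along $\mu$-typical orbits; choosing $K$ large, every $t$ has some $t_0$ within distance $O(1)$ outside $A_K$, and the slow-drift estimate forces $\|\dot\gamma(t) - \omega\| \leq O(\sqrt\epsilon)$ pointwise $\mu$-a.e. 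Mather's graph theorem makes $\dot\gamma$ a Lipschitz function of $(\gamma, t)$ on the projected Mather set, so continuity propagates the bound to all of $\tilde{\mathcal{M}}(c)$, and $\|\dot\gamma(t) - \dot\gamma(0)\| \leq O(\sqrt\epsilon)$ follows.

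The hardest step will be the pointwise upgrade. The $L^2$ bound alone does not preclude brief large excursions of $\dot\gamma$ from $\omega$, and the ergodic density argument only controls the proportion of bad times, not their clustering. The slow-drift estimate is essential to propagate good-time values to nearby bad times within unit intervals, but extending this uniformly over every orbit in the support (rather than merely $\mu$-a.e.) hinges on the Lipschitz graph structure of the Mather set.
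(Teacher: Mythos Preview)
The paper does not prove this proposition; it simply cites Bernstein--Katok for the time-$1$-map and asserts the flow version without argument. So there is no proof in the paper to compare against, and your proposal is an attempt to supply one.

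Your $L^2$ estimate $\int \|\dot x - \omega\|^2\,d\mu \le C\epsilon$ and the slow-drift bound $\|\dot\gamma(t)-\dot\gamma(s)\|\le C'\epsilon(1+|t-s|)$ are both correct, and the extension from $\mu$-typical orbits to the full support via the Lipschitz graph is sound. The genuine gap is exactly the step you flag as hardest: you claim that once $A_K=\{t:\|\dot\gamma(t)-\omega\|^2>K\epsilon\}$ has density at most $C/K$, every $t$ lies within $O(1)$ of some $t_0\notin A_K$. But small \emph{Ces\`aro} density does not bound the length of individual bad intervals; a set of density $C/K$ can contain arbitrarily long runs. Worse, the slow drift works against you here rather than for you: if $\|\dot\gamma(t_0)-\omega\|=M$, then $\|\dot\gamma(t)-\omega\|\ge M/2$ persists for $|t-t_0|\le M/(2C'\epsilon)$, so a deviation of size $M$ forces a bad interval of length of order $M/\epsilon$, not $O(1)$. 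Integrating $\|\dot\gamma-\omega\|^2$ over one such interval and comparing with the Birkhoff average over a window of length $T$ gives only $M^3/\epsilon \lesssim \epsilon T$, hence $M\lesssim(\epsilon^2 T)^{1/3}$, which is vacuous unless you control the recurrence time $T$ to the excursion. Neither the density bound nor the graph property supplies that control --- density governs the \emph{proportion} of bad time, not its clustering, and the graph property concerns spatial continuity, not temporal recurrence rates. The Bernstein--Katok argument does not attempt this $L^2$-to-$L^\infty$ bootstrap; it exploits global minimality through direct comparison with explicit periodic competitors to obtain the pointwise bound.
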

The result is proved in \cite{BK} for time-1-map. It is also true for the Hamiltonian flow  i.e.  $\|y(t)-y(0)\|\le O(\sqrt{\epsilon})$. It makes sense for us to homogenize the Hamiltonian in the range $\|y-y_j\|\le K\sqrt{\epsilon}$ with suitably large $K>0$.

Using new variables $y-y_j=\sqrt{\epsilon}p$ and $s=\sqrt{\epsilon}\tau$, the homogenized Hamiltonian equation turns out to be the following form
$$
\frac{dx}{ds}=\frac{\omega}{\sqrt{\epsilon}}+Ap,\qquad
\frac{dp}{ds}=-\frac{\partial V}{\partial x}(x,y_j),
$$
where $A=\partial^2h(y_j)$, the frequency $\omega=\partial h(y_j)$ satisfies a resonant condition. The corresponding Lagrangian reads
$$
L(\dot x,x)=\frac 12\Big\langle A^{-1}\Big(\dot x-\frac{\omega}{\sqrt{\epsilon}}\Big),\Big(\dot x-\frac{\omega} {\sqrt{\epsilon}}\Big)\Big\rangle-V(x)
$$

With the potential $V(x)$ on the torus one associates its time average $[V]$ along the orbits of the linear flow defined by $\omega$:  $x\to x+\omega t$
$$
[V](x)=\frac 1T\int_0^TV(x+\omega t)dt,
$$
where $T$ is the period of the frequency $\omega$. The function $[V]$ is then defined on a circle. Let $x_0$ be the maximal point of $[V]$, it corresponds to a circle on $\mathbb{T}^2$. The averaged Hamiltonian is also associated with a Lagrangian
$$
[L](\dot x,x)=\frac 12\Big\langle A^{-1}\Big(\dot x-\frac{\omega}{\sqrt{\epsilon}}\Big),\Big(\dot x-\frac{\omega} {\sqrt{\epsilon}}\Big)\Big\rangle-[V](x).
$$
Let $T_{\omega,\epsilon}$ be the period of the frequency $\omega/\sqrt{\epsilon}$, $\xi_{\omega,\epsilon}$: $[0,T_{\omega,\epsilon}]\to\mathbb{T}^2$ be the minimizer of the action
$$
\inf_{[\xi]=g_{\omega}}\int_0^{T_{\omega,\epsilon}}[L](d\xi(s))ds,
$$
then it is a curve of maximal points of $[V]$ with constant speed $\dot\xi_{\omega,\epsilon}= \omega/\sqrt{\epsilon}$. Consider $[V]$ as a function defined on $\mathbb{T}^2/\xi_{\omega,\epsilon}$ and denote by $[V]''$ the second derivative for the variable of $\mathbb{T}^2/\xi_{\omega,\epsilon}$, where we use $\xi_{\omega,\epsilon}$ to denote the circle $\cup_{t\in[0,T_{\omega,\epsilon}]} \xi_{\omega,\epsilon}(t)$.

Let $\xi_{\omega,\epsilon}+\delta$ denote a translation of $\xi_{\omega,\epsilon}$ such that $d(\xi_{\omega,\epsilon}+\delta,\xi_{\omega,\epsilon})=\delta$ and let $\gamma_{\omega,\epsilon}$: $[0,T_{\omega,\epsilon}]\to\mathbb{T}^2$ be the minimizer of the action
$$
\inf_{[\xi]=g_{\omega}}\int_0^{T_{\omega,\epsilon}}L(d\xi(s))ds,
$$
then we have
\begin{pro}
Assume $-[V]$ is non-degenerate at its minimal point $-[V]''>\Lambda$, and assume some $\lambda>0$ exists so that $T_{\omega,\epsilon}=\epsilon^{\lambda}$. Then there exist some constants $D,D'>0$ such that the minimizer $\gamma_{\omega,\epsilon}$ entirely stays in $D\epsilon^{\lambda}$-neighborhood of the circle $\xi_{\omega,\epsilon}+\delta$, i.e. $d(\gamma(s), \xi_{\omega,\epsilon}+\delta)< D\epsilon^{\lambda}$ holds for each $s\in [0,\epsilon^{\lambda}]$ and $|\delta|\le D'\epsilon^{\lambda/2}$.
\end{pro}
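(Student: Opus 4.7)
The plan is to introduce Fermi-type coordinates $(u,v)$ around the circle of maxima of $[V]$, so that $\xi_{\omega,\epsilon}(s)=(\|\omega\|s/\sqrt\epsilon,0)$ and the minimiser is $\gamma(s)=(\|\omega\|s/\sqrt\epsilon+\alpha(s),\beta(s))$, both periodic of period $T_{\omega,\epsilon}=\epsilon^{\lambda}$. Setting $\delta:=\bar\beta=T_{\omega,\epsilon}^{-1}\int_{0}^{T_{\omega,\epsilon}}\beta\,ds$, the claim reduces to proving $|\bar\beta|=O(\epsilon^{\lambda/2})$ and $\|\beta-\bar\beta\|_{L^{\infty}}=O(\epsilon^{\lambda})$. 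The upper bound comes from using $\xi_{\omega,\epsilon}+\delta$ as a competitor: being a linear-flow orbit with $\dot\xi=\omega/\sqrt\epsilon$, its kinetic term vanishes and $V$ averages exactly to $[V](\delta)$ over the period, yielding $\int L(d\gamma)\,ds\le -T_{\omega,\epsilon}[V](0)=-T_{\omega,\epsilon}\max[V]$. For the lower bound, split $V=[V]+W$ with $W$ of zero mean along linear-flow orbits; positive definiteness of $A^{-1}$ (say with constant $c_{0}$) and $-[V]''(0)\ge\Lambda$ give
\begin{equation*}
\int L(d\gamma)\,ds\ge -T_{\omega,\epsilon}\max[V]+\frac{c_{0}}{2}\int(\dot\alpha^{2}+\dot\beta^{2})\,ds+\frac{\Lambda}{2}\int\beta^{2}\,ds-C\int|\beta|^{3}\,ds-\int W(\gamma)\,ds,
\end{equation*}
producing a coercive inequality in $(\dot\alpha,\dot\beta,\beta)$ whose only non-benign right-hand side is $\int W(\gamma)\,ds$.

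The decisive step is showing that this oscillating integral is of size $O(T_{\omega,\epsilon}^{3/2}\|\dot\beta\|_{L^{2}})$. Because $W$ has zero $x_{1}$-mean, write $W=\partial_{x_{1}}U$ with $U$ bounded; the chain rule rewrites
\begin{equation*}
W(\gamma(s))=\frac{\sqrt\epsilon}{\|\omega\|+\sqrt\epsilon\dot\alpha(s)}\Bigl(\frac{d}{ds}U(\gamma(s))-\partial_{x_{2}}U(\gamma(s))\,\dot\beta(s)\Bigr).
\end{equation*}
The total-derivative term integrates to zero by periodicity of $\gamma$ modulo its homology class, and using $\sqrt\epsilon/\|\omega\|=T_{\omega,\epsilon}$ together with Cauchy-Schwarz gives $\bigl|\int W(\gamma)\,ds\bigr|\le CT_{\omega,\epsilon}^{3/2}\|\dot\beta\|_{L^{2}}$. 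Young's inequality absorbs this as $(c_{0}/4)\|\dot\beta\|_{L^{2}}^{2}+C'T_{\omega,\epsilon}^{3}$, yielding $\|\dot\alpha\|_{L^{2}}^{2}+\|\dot\beta\|_{L^{2}}^{2}+\Lambda\|\beta\|_{L^{2}}^{2}\le C''\epsilon^{3\lambda}$. The Poincar\'e-type bounds $|\bar\beta|\le T_{\omega,\epsilon}^{-1/2}\|\beta\|_{L^{2}}$ and $\|\beta-\bar\beta\|_{L^{\infty}}\le T_{\omega,\epsilon}^{1/2}\|\dot\beta\|_{L^{2}}$ then deliver $|\bar\beta|=O(\epsilon^{\lambda})$ and $\|\beta-\bar\beta\|_{L^{\infty}}=O(\epsilon^{2\lambda})$, both comfortably within the bounds of the proposition; analogous estimates for $\alpha$ follow from the same inequality.

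The main obstacle is closing a bootstrap: the linearisation $[V](\beta)=\max[V]-\tfrac{1}{2}|[V]''|\beta^{2}+O(\beta^{3})$ and the expansion of $1/(\|\omega\|+\sqrt\epsilon\dot\alpha)$ are legitimate only while $\|\beta\|_{L^{\infty}}$ and $\sqrt\epsilon\|\dot\alpha\|_{L^{\infty}}$ are a priori small. I would first extract a crude $o(1)$ control from the elementary upper bound $\int L(d\gamma)\,ds\le -T_{\omega,\epsilon}\max[V]$ combined with the strict concavity of $V$ away from its maxima and standard regularity of the Euler-Lagrange extremal; only after this qualitative step is in hand does the quantitative argument above apply, at which point the cubic remainder $\int|\beta|^{3}\,ds\le\|\beta\|_{L^{\infty}}\|\beta\|_{L^{2}}^{2}$ is genuinely subcritical and absorbable into $\Lambda\|\beta\|_{L^{2}}^{2}/2$ on the coercive side.
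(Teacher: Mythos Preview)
Your approach is correct and yields sharper bounds than the proposition actually asserts, but it is a genuinely different route from the paper's. The paper proceeds by two elementary comparisons and never introduces Fermi coordinates, the splitting $V=[V]+W$, or any $L^{2}$/Poincar\'e machinery. In step~1 the paper argues by contradiction: if the transverse oscillation of $\gamma_{\omega,\epsilon}$ exceeded $2D\epsilon^{\lambda}$, then over the period $\epsilon^{\lambda}$ the mean-value theorem together with the Euler--Lagrange bound $|\ddot\gamma|\le C$ would produce a sub-interval of length independent of $\epsilon$ on which $\|\dot\gamma-\omega/\sqrt{\epsilon}\|>D/2$; for $D$ large this makes the kinetic action alone exceed the total action of the linear orbit $\xi_{\omega,\epsilon}$, contradicting minimality. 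In step~2, knowing $\gamma$ stays $D\epsilon^{\lambda}$-close to some translate $\xi_{\omega,\epsilon}+\delta$, one compares $A(\gamma)$ with $A(\xi_{\omega,\epsilon})$ directly: the kinetic term is nonnegative, the potential difference splits into $[V](x_0)-[V](x_0+\delta)\ge\tfrac{\Lambda}{2}\delta^{2}$ times the period plus an error $O(\epsilon^{2\lambda})$ coming from $|\gamma(s)-(\xi_{\omega,\epsilon}(s)+\delta)|\le D\epsilon^{\lambda}$, which for $|\delta|>D'\epsilon^{\lambda/2}$ again contradicts minimality.

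What each approach buys: the paper's argument is shorter and completely avoids estimating the oscillatory integral $\int W(\gamma)\,ds$; your integration-by-parts via $W=\partial_{x_{1}}U$ is the elegant extra ingredient that upgrades the conclusion to $|\delta|=O(\epsilon^{\lambda})$ and transverse oscillation $O(\epsilon^{2\lambda})$. One small correction to your bootstrap sketch: it is $[V]$, not $V$, that is strictly concave near its maximum (and uniquely maximised globally); the preliminary $o(1)$ control you need comes from combining the crude energy bound $\|(\dot\alpha,\dot\beta)\|_{L^{2}}^{2}\le CT_{\omega,\epsilon}$ with $|\ddot\gamma|\le C$ (giving $\|\dot\alpha\|_{L^{\infty}}=O(T_{\omega,\epsilon}^{1/3})$, hence $\sqrt{\epsilon}\dot\alpha/\|\omega\|=o(1)$, so the denominator expansion is already legitimate), then feeding this into your IBP estimate and the global bound $[V](\beta)\le[V](0)-c\min(\beta^{2},\delta_{0}^{2})$. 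With that order of operations the bootstrap closes cleanly.
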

\begin{proof}
Since the minimizer solves the Lagrange equation, its second derivative remains bounded. Thus, as the first step, we claim that the minimizer stays entirely in $D\epsilon^{\lambda}$-neighborhood of $\xi_{\omega,\epsilon}+\delta$, a translation of the circle $\xi_{\omega,\epsilon}$. If not, the oscillation of $\gamma_{\omega,\epsilon}$ in the direction perpendicular to $\omega$ would not be smaller than $2D\epsilon^{\lambda}$. As the average speed is $O(\epsilon^{-\lambda})$, there would be a point on the minimizer where $\|\dot{\gamma}_{\omega,\epsilon} -\omega/\sqrt{\epsilon}\|>D$. Since the potential is bounded $|V|<C_{20}$, one obtains that $A_L(\gamma_{\omega,\epsilon})\ge (C_{21}D^2-C_{20})\epsilon^{\lambda}>C_{20}\epsilon^{\lambda}$ if we choose $D^2>2C_{21}^{-1}C_{20}$. On the other hand, the action along the curve $\gamma(t)=x_0+\omega/\sqrt{\epsilon}t$ would be not bigger then $C_{20}\epsilon^{\lambda}$. The contradiction implies our claim.

Let us compare the action of $L$ along the curve $\gamma_{\omega,\epsilon}$ with that along the curve $\xi_{\omega,\epsilon}$. If $|\delta|>D'\epsilon^{\lambda/2}$, by what we have proved, some $x_1\in\mathbb{T}^2/\xi_{\omega,\epsilon}$ exists such that
$$
|\gamma_{\omega,\epsilon}(t)-(x_1+\omega/\sqrt{\epsilon}t)|\le D\epsilon^{\lambda}, \qquad |(x_1-x_0)/\xi_{\omega,\epsilon}|\ge D'\epsilon^{\lambda/2}.
$$
It follows that
\begin{align*}
A(\gamma_{\omega,\epsilon})-A(\xi_{\omega,\epsilon})=&\frac 12\int_0^{\epsilon^{\lambda}} \Big\langle A^{-1}\Big(\dot\gamma_{\omega,\epsilon}(t)-\frac{\omega}{\sqrt{\epsilon}}\Big), \Big(\dot\gamma_{\omega,\epsilon}(t)-\frac{\omega}{\sqrt{\epsilon}}\Big)\Big\rangle dt\\
&-\int_0^{\epsilon^{\lambda}}\Big(V(\gamma_{\omega,\epsilon}(t))-V\Big(x_0+\frac{\omega} {\sqrt{\epsilon}} t\Big) \Big)dt\\
>&-\int_0^{\epsilon^{\lambda}}\Big(V(\gamma_{\omega,\epsilon}(t))-V\Big(x_1+\frac{\omega} {\sqrt{\epsilon}} t\Big) \Big)dt\\
&+(-[V](x_1)+[V](x_0))\epsilon^{\lambda}\\
>&\Big(\frac 12{C_{22}}D'^2-|\max\partial V|\Big)\epsilon^{2\lambda}
\end{align*}
it contradicts the minimality of the curve $\gamma_{\omega,\epsilon}$ if $D'>0$ is chosen suitably large. \end{proof}

Recall the picture of minimal periodic orbit close to double resonance, one can see from this proposition how the shape of the periodic orbit changes when it moves away from double resonance to single resonance.

\section{\ui Annulus of incomplete intersection}
\setcounter{equation}{0}

Let us also start with the Hamiltonian $G_{\epsilon}$ defined by Formula (\ref{cylindereq3}), it has two and half degrees of freedom. Given any two homology class $g,g'\in H_1(\mathbb{T}^2,\mathbb{Z})$, The theorem \ref{cylinderthm2} confirms the existence of two wedge-shaped regions $\mathbb{W}$ and $\mathbb{W}'$ which reach to the boundary of the annulus
$$
\mathbb{A}_0=\Big\{c\in H^1(M,\mathbb{R}):0<\alpha_{G_{\epsilon}}(c)-\min\alpha_{G_{\epsilon}}<D\epsilon^{1+d}
\Big\},
$$
For each class in $\mathbb{W}$ and $\mathbb{W}'$, the Aubry set lies in the normally hyperbolic cylinder and, by the result for {\it a priori} unstable systems, can be connected to other Aubry set lying in the cylinder under certain generic conditions. However, it seems unclear whether these two wedges can reach to the flat $\mathbb{F}_0$. Thus, a notable difficulty rises as these cylinders are separated by an annulus $\mathbb{A}_0$ around the flat $\mathbb{F}_0$, it is the problem of crossing double resonance.

It is the goal of this section to find an annulus $\mathbb{A}\supsetneq\mathbb{A}_0$ where those two wedge-shaped regions are plugged into and for each class in that annulus, the stable set of the Aubry set ``intersects" the unstable set non-trivially, possibly incomplete. In other words, for each class in this region, the Ma\~n\'e set does not cover the whole configuration space.

\subsection{The Ma\~n\'e set for $c\in\partial^* \mathbb{F}_0$}

As the first step, let us consider the Hamiltonian $\bar G$ and study all cases when the Ma\~n\'e set covers the whole configuration space.

For each $c\in\partial^* \mathbb{F}_0$, except for the minimal measure $\mu$ supported on the fixed point $(x,\dot x)=0$, some minimal measure exists with non-zero rotation vector. In the covering space $\bar\pi$: $\mathbb{R}^2\to\mathbb{T}^2$, a disk $B_{\delta}(0)$ is contained in a strip bounded by two $c$-static curves $\xi_c$ and $\xi'_c$, both curves are in the Mather set: $\bar\pi\xi_c, \bar\pi\xi'_c\subset\mathcal{M}(c)$ no other $c$-static curve in the Mather set touches the interior of this strip. Let $U^{\pm}_c$ and $U'^{\pm}_c$ denote the elementary weak KAM solution determined by $\xi_c$ and $\xi'_c$ respectively, we investigate what happens when $U^-_c-U'^+_c=0$ holds in this strip. As the configuration space is two dimensional, for each $x$ in this region, $(x,y)=(x,\partial U^-_c(x))=(x,\partial U'^+_c(x))$ uniquely determines a $c$-semi static curve which lies entirely in this strip. The $c$-semi static curves considered here are all determined by $U_c^-=U_c'^+$. It is possible that some curve approaches to the origin as $t\to\infty(-\infty)$, in this case, because of {\bf H3}, it approaches to the curve $\xi_c$ ($\xi_c'$) as $t\to -\infty$($\infty$).

Supported on the fixed point, the measure $\mu$ is minimal for all $c\in\mathbb{F}_0$. Thus, there always exists some semi-static curve $\gamma_c^{\pm}$ connecting the fixed point to the support of other $c$-minimal measure $\mu_c$
$$
\lim_{t\to\pm\infty}\gamma_c^{\pm}(t)=0, \ \ \text{\rm and}\ \ \lim_{t\to\mp\infty}\gamma_c^{\pm}(t)\to\pi_x\text{\rm supp}\mu_c.
$$
As all eigenvalues are assumed different, generically, all minimal homoclinic curves approach to the fixed point in the direction $\Lambda_{1,x}$, associated to the smallest eigenvalue:
$$
\lim_{t\to\pm\infty}\frac {\dot\gamma_{c_i}^{\pm}(t)}{\|\dot\gamma_{c_i}^{\pm}(t)\|}
=\pm\Lambda_{1,x}.
$$
But this does not exclude the possibility that some $c$-semi static curves approach to the point in the direction of $\Lambda_{2,x}$. It provides us a criterion to classify the cases when the Ma\~n\'e set covers the whole configuration manifold.

\noindent{\bf Case 1}: no $c$-semi static curve approaches the origin in the direction of $\Lambda_{1,x}$. In this case, as $|\lambda_1|<|\lambda_2|$, there exist exactly two semi-static curves $\gamma^{\pm}_c$ such that $\gamma^{\pm}_c(t)\to 0$ as $t\to\pm\infty$. They approach the origin in the direction $\Lambda_{2,x}$ and
$$
\lim_{t\to\infty}\frac {\dot\gamma_{c}^{+}(t)}{\|\dot\gamma_{c}^{+}(t)\|}
=\lim_{t\to-\infty}\frac {\dot\gamma_{c}^{-}(t)}{\|\dot\gamma_{c}^{-}(t)\|}.
$$

Other cases are classified under the condition that there exist some $c$-static curves approaching the origin in the direction of $\Lambda_{1,x}$. Since the curves $\xi_c$ as well as $\xi'_{c}$ is disjoint with the origin, some number $\delta>0$ exists such that these two curves do not hit the ball $B_{\delta}(0)$. The number $\delta$ seems depending on $c$. Let $\gamma_c^+$ be a semi-static curve approaching the origin as $t\to\infty$, it intersects the circle $\partial B_{\delta}(0)$ at some point. Let $I^{\pm}\subset\partial B_{\delta}(0)$ be such a set that passing through each point $x\in I^{\pm}$ a $c$-semi static curve approaches to the origin, as $t\to\pm\infty$, in the direction of $\Lambda_{1,x}$. By assumption, the set $I^+$ is not empty. Obviously, $I^+$ does not occupy the whole circle and can be made closed by adding at most two points, through which some semi-static curves approach the origin in the direction of $\Lambda_{2,x}$.

Passing through a point $x\in \partial B_{\delta}(0)$ very close to $I^+$, there is a unique $c$-semi static curve, determined by $U^-_c=U_c'^+$. Because of Proposition \ref{flatpro2}, the curve $\gamma_c$ will get very close to the origin and leave in a direction far away from $\Lambda_{1,x}$. Let $I^+_i$ be a connected component of $I^+$, it may be a point or an interval. If it is a point, let $x_i,x'_i\in\partial B_{\delta}(0)$ be two sequences of points such that they approach $I^+_i$ from different sides. Let $\gamma_i$ ($\gamma'_i$) be the semi static curve passing through $x^+_i$ ($x'^+_i$), it shall intersect the circle $\partial B_{\delta}(0)$ at a point $x^-_i$ ($x'^-$) respectively. Some $x^-,x'^-\in\partial B_{\delta}(0)$ exist so that $x^-_i\to x^-$, $x'^-_i\to x'^-$ as $i\to\infty$.

If $x^-=x'^-$, it determines a $c$-semi static curve approaches the origin as $t\to-\infty$. Because of Proposition \ref{flatpro2}, it approaches in the direction of $\Lambda_{2,x}$. This leads to

\noindent{\bf Case 2}: there exists exactly one $c$-semi static curve approaching origin in the direction of $\Lambda_{2,x}$.

If $x^-\ne x'^-$, let $I^-_i$ denote the arc bounded by these two points, not containing $I^+_i$. One can see from the proof of Proposition \ref{flatpro2} that the angle of this arc is not smaller than $\pi/2$. Passing from each point in the interior of the arc, the $c$-semi static curve approaches to the origin as $t\to -\infty$ and these curves constitute a sector. Since the fixed point is hyperbolic, it has its stable and unstable manifolds $W_0^{\pm}$. Thus, some some generating function $U^{\pm}$ and $r>0$ exist such that $W_0^{\pm}|_{B_{r}(0)}=\text{\rm graph}dU^{\pm} |_{B_{r}(0)}$. As the orbits determined by the curves entirely lie in the unstable manifold of the fixed point, one has $U_c^-=U'^+_c=U^-$ in the sector. Therefore, the size of the sector-shaped region is independent of the size of $\delta$. This leads to

\noindent{\bf Case 3}: in the disk $B_{r}(0)$ there is a sector-shaped region with the field angle not smaller than $\pi/2$. In this sector, one has $U_c^-=U'^+_c=U^-$.

Let $\gamma^+_c$ ($\gamma^-_c$) be $c$-semi static curve passing through a point in $I_i^+$ ($I^-_i$) respectively, then they approach the origin in opposite direction as $t\to\pm\infty$ respectively, i.e.
$\lim_{t\to\infty}\dot\gamma_c^+(t)\|\dot\gamma_c^+(t)\|^{-1}=\lim_{t\to-\infty}\dot\gamma_c^-(t) \|\dot\gamma_c^-(t)\|^{-1}$. To verify this claim, let us assume the contrary. Thus, these two curves cut the ball $B_{\delta}(0)$ into two parts, one is sharp wedge-shaped, denoted by $W$.
\begin{figure}[htp]
  \centering
  \includegraphics[width=5.0cm,height=2.5cm]{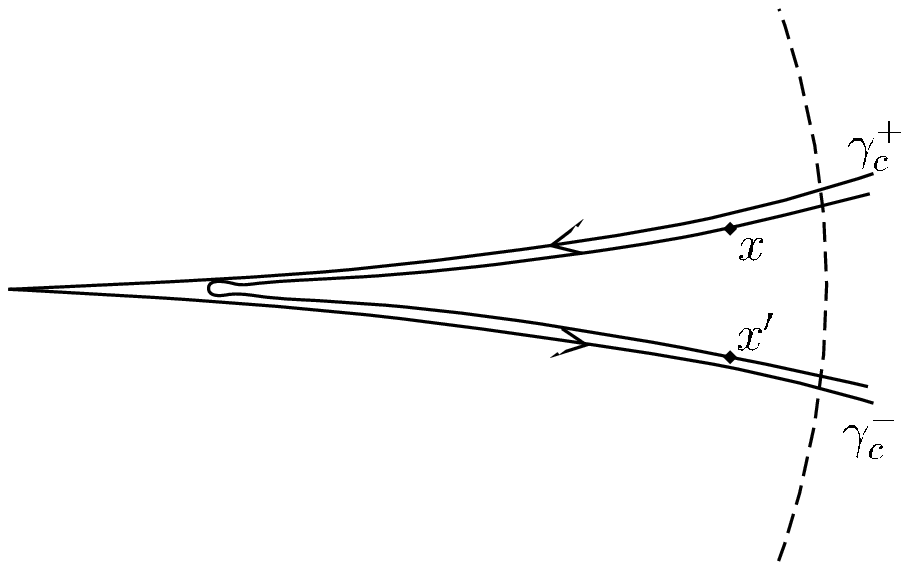}
  \label{fig8}
\end{figure}
We choose a $c$-semi static curve lying in $W$ and keeping very close to the curves $\gamma_c^-$ and $\gamma_c^+$. In canonical coordinates such that $\bar G=\frac 12(p_1^2-\lambda_1x_1^2)+\frac 12(p_2^2-\lambda_2x_2^2)+O(\|(x,p)\|^3)$, the set $W$ has a vertex at the origin. As both $\gamma_c^{+}$ and $\gamma_c^{-}$ approach the origin in the direction of $\Lambda_{1,x}$, there exists $\delta_1\le\delta$ such that $|x_2|\le |x_1|^3$ if $(x_1,x_2)\in W$ and $|x_1|\le\delta_1$. Since the fixed point is hyperbolic, it has local stable and unstable manifold, determined by the generating functions $U^+$ and $U^-$ respectively. Restricted in $W$, these functions satisfy the condition
$$
U^-(x)-U^-(0)\ge\frac{\lambda_1^2}{3}\|x\|^2,\qquad U^+(0)-U^+(x)\ge\frac{\lambda_1^2}{3}\|x\|^2, \ \ \ \forall\ \|x\|\le\delta.
$$
Pick up two points $x$ and $x'$ very close to $\gamma_c^{\pm}$ respectively, through which some $c$-semi static curve $\gamma_c$ passes, namely, some $t'>t$ exist such that $\gamma_c(t)=x$ and $\gamma_c(t')=x'$. Note the orbit determined by $\gamma_c^+$ ($\gamma_c^+$) lies in the stable (unstable) manifold, by definition ones has
$$
A[\gamma_c|_{[t,t']}]\ge\frac 34\Big(U_c^-(x')-U_c^+(x)\Big)\ge\frac {\lambda_1^2}4(\|x'\|^2+\|x\|^2).
$$
If we choose $x$ sharing the same first coordinate with $x'$ and connect them with a straight line $\zeta$: $[0,|x_2-x'_2|]\to\mathbb{T}^2$, then $|\dot\zeta|\le O(1)$ and the action along this curve one has $A[\zeta]\le O(\|x\|^3)$. It contradicts the minimality of $\gamma_c$, thus the claim is proved.

We claim that $I^+$ has only one connected component. Otherwise, there would be two connected component $I^+_k$ and $I^+_i$. By the definition, passing through a point $x\in I^+_j$ ($x'\in I^+_k$) there is a $c$-semi static curve $\gamma_x$ ($\gamma_{x'}$) which approaches the curve $\xi_c$ as $t\to\infty$ and approaches the origin as $t\to\infty$. These two curves divided the strip into two parts $S=S_1\cup S_2$, where $S_1$ is such a strip that passing through any point $x^*\in S_1$, the $c$-semi static curve will approach the origin as $t\to\infty$. However, there exists a point $x^*\in S_1\cap(\partial B_{\delta}\backslash I^+)$, it implies that passing thorough $x^*$, the $c$-semi-static curve will approach the curve $\xi'_c$, namely, it would intersect either $\gamma_x$ or $\gamma_{x'}$. It is absurd. Thus, we obtain the left picture in Figure \ref{fig9}.
\begin{figure}[htp]
  \centering
  \includegraphics[width=9.7cm,height=3.7cm]{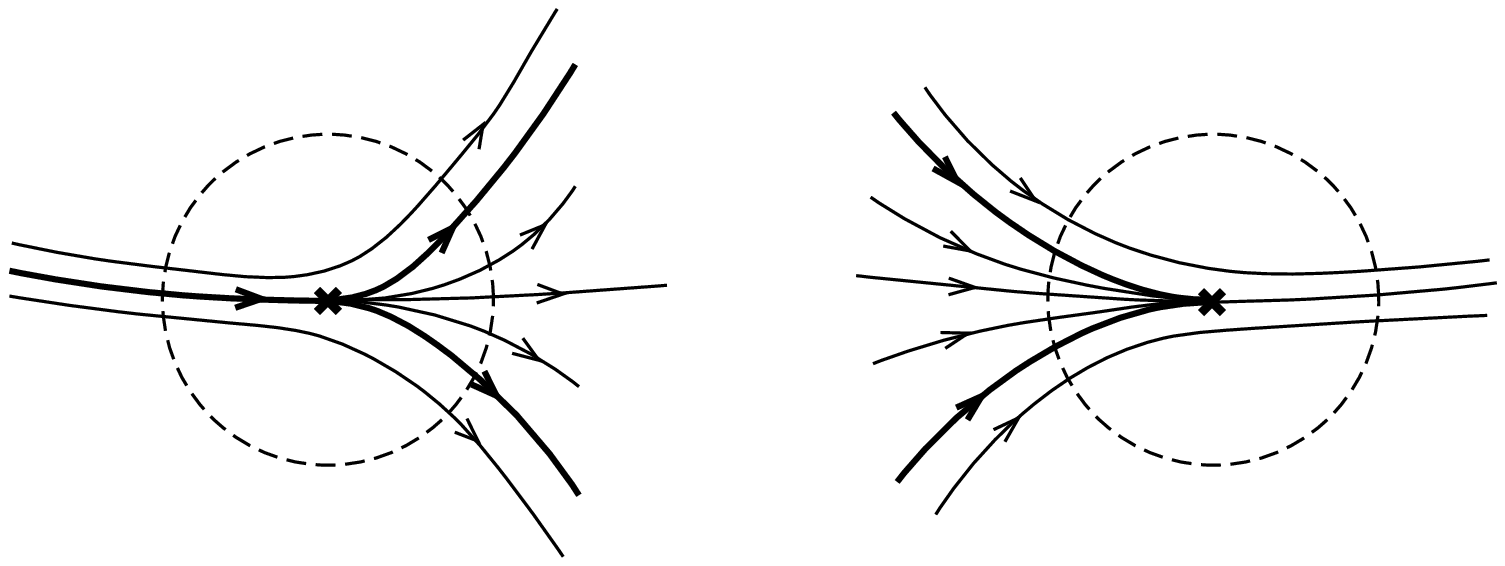}
  \caption{}
  \label{fig9}
\end{figure}

By similar argument applying to the set $I^-$, we have  either the case 2 again or

\noindent{\bf Case 4}: in the disk $B_{r}(0)$ there is a sector-shaped region with the field angle not smaller than $\pi/2$, where $U_c^-=U'^+_c=U^+$, see the right picture in Figure \ref{fig9}.

We claim that all of these cases do not occur for generic potential $V$. The first two cases takes place at most for four invariant measures, as there are only four curves which approaches the origin in the direction of $\Lambda_{2,x}$. Each of these curves approaches at most one Mather set. These Mather sets correspond to at most four edges of $\mathbb{F}_0$. Let $V_{\delta}-V$ be a non-negative function such that its support does not touch these four curves as well as the support of the minimal measure. By perturbing the potential $V\to V_{\delta}$, one can see that the Mather set remains unchanged, but the Ma\~n\'e set does not cover $\mathbb{T}^2$ for each of these four cohomology classes.

Both case 3 and 4 take place also for at most four Mather sets, as each sector-shaped region has the field angle not smaller than $\pi/2$, and the orbit determined by $(x,y)=(x,\partial_xU^{\pm})$  approaches one Mather set only. Let us destruct it one by one. If some sector-shaped region $S^+\subset B_r(0)$ exists where $U_c^-=U_c'^+=U^+$, $\pi_x\text{\rm supp}\mu_c\cap S^+=\varnothing$. We divide it into three sub-sectors $S^+=S^+_1\cup S^+_2\cup S^+_3$, each of which is composed of $c$-semi static curves approaching the origin as $t\to\infty$ and $S^+_1$ is disjoint with $S^+_3$.  We introduce another potential $V_{\delta}$ such that  the function $V_{\delta}-V$ is non-negative, $\text{\rm supp}(V_{\delta}-V)\subset S^+_2\backslash B_{r_1}$ ($r_1<r$).

For the perturbed Lagrangian determined by $\bar G_{\delta}=\frac 12\langle Ap,p\rangle+V_{\delta}(x)$, the minimal measure for the class $c$ is the same as that for unperturbed Hamiltonian. Let $U^-_{c,\delta}$, $U'^+_{c,\delta}$ be the elementary weak KAM solutions of the perturbed Hamiltonian, associated to the minimal measure $\mu_c$ and $\mu'_c$ respectively, one has
$$
\arg\min(U^-_{c,\delta}-U'^+_{c,\delta})\cap \text{\rm supp}(V_{\delta}-V)=\varnothing,\qquad
\arg\min(U^-_{c,\delta}-U'^+_{c,\delta})\supset S^+_1\cup S^+_3.
$$

Under such perturbation, there might be another cohomology class $c'$ such that $U_{c'}^--U'^+_{c'}=0$ holds on the whole torus and a sector $S^-$ exists where $U_{c'}^-=U'^+_{c'}=U^-$. Note $\pi_x\text{\rm supp}\mu_{c'}\cap S^-=\varnothing$, we split it into three sub-sectors $S^-=S^-_1\cup S^-_2\cup S^-_3$, each of which is composed by $c'$-semi static curves approaching to the origin as $t\to -\infty$ and $S^-_1$ is disjoint with $S^-_3$.  We introduce again a perturbed potential $V_{\delta}$ such that  the function $V'_{\delta}-V$ is non-negative, $\text{\rm supp}(V'_{\delta}-V)\subset S^-_2\backslash B_{r_1}$ ($r_1<r$).

For the new perturbed Lagrangian, the minimal measure for the class $c'$ is the same as that for unperturbed one. Let $U^-_{c',\delta}$, $U'^+_{c',\delta}$ be the elementary weak KAM solutions of the perturbed Hamiltonian, determined by $\xi_{c'}$ and $\xi'_{c'}$ respectively, one also has
$$
\arg\min(U^-_{c',\delta}-U'^+_{c',\delta})\cap \text{\rm supp}(V_{\delta}-V)=\varnothing,\qquad
\arg\min(U^-_{c',\delta}-U'^+_{c',\delta})\supset S^-_1\cup S^-_3.
$$

For suitably small $r>0$, the Hamiltonian flow determined by $\bar G$ is well approximated by its linearized flow when they are restricted in the ball $B_r(0)$. For the linearized flow,  if $(x(t),y(t))$ is an orbit in the unstable manifold, $(x(-t),-y(-t))$ is an orbit in the stable manifold. Therefore, some sectors $\check S^{\pm}_k$ ($k=1,3$) exist so that $\check S^{\pm}_k\subset S^{\pm}_k$, $\check S^{-}_k\cap S_2^+=\varnothing$ and $\check S^{+}_k\cap S_2^-=\varnothing$ hold for $k=1,3$, each $\check S^{+}_k$ consists of $c$-semi static curves which approach the origin as $t\to\infty$ with
$$
\arg\min(U^-_{c,\delta}-U'^+_{c,\delta})\supset\check S^{+}_1\cup\check S^{+}_3,\ \ \ \arg\min(U^-_{c,\delta}-U'^+_{c,\delta})\cap \text{\rm supp}(V_{\delta}-V)=\varnothing,
$$
each $\check S^{-}_k$ consists of $c'$-semi static curves which approach the origin as $t\to -\infty$ and
$$
\arg\min(U^-_{c',\delta}-U'^+_{c',\delta})\supset\check S^{-}_1\cup\check S^{-}_3,\ \ \ \arg\min(U^-_{c',\delta}-U'^+_{c',\delta})\cap \text{\rm supp}(V_{\delta}-V)=\varnothing.
$$
Since there are at most two sectors corresponding to unstable manifold and two sectors corresponding to stable manifold, there are at most four pairs of static curves $(\xi_{c_i},\xi'_{c_i})$ ($i=1,2,3,4$) for which the case 3 and 4 takes place.

These four pairs of static curves $(\xi_{c_i},\xi'_{c_i})$ ($i=1,2,3,4$) corresponds to four edges (points) contained in $\partial^*\mathbb{F}_0$. By construction, the Ma\~n\'e set does not cover the whole torus $\mathbb{T}^2$ for each cohomology class one these four edges.  For any other class $c\in\partial^*\mathbb{F}_0$, the Ma\~n\'e set can not cover the whole torus also. Otherwise, there would be a sector $S^{\pm}$ of $B_r(0)$ where $U_c^-=U_c'^+=U^{\pm}$, but it is absurd since some $\check S^{\pm}_i\subset S^{\pm}$ where $U_{c_i}^-=U_{c_i}'^+=U^{\pm}$ holds for some $i\in (1,2,3,4)$. For each $x\in\check S^{\pm}_i$, $(x,v=\partial_y\bar G(x,\partial U^{\pm}(x))$ determines an orbit of the Lagrangian flow which approaches both to the support of $\mu_{c_i}$ and to the support of $\mu_{c}$ as $t\to\pm\infty$, it is impossible. Therefore, we have

\begin{lem}\label{beltlem1}
It is an open and dense condition for the potential $V$ that for all class $c\in\partial^*\mathbb{F}_0$, the Ma\~n\'e set does not cover the torus: $\mathcal{N}(c)\subsetneq\mathbb{T}^2$.
\end{lem}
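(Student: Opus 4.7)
The plan is to argue by contradiction: suppose some $c\in\partial^*\mathbb{F}_0$ admits $\mathcal{N}(c)=\mathbb{T}^2$, and exhibit an arbitrarily small $C^r$ perturbation $V_\delta$ of $V$ that destroys this property without touching the minimal measures or the combinatorial structure of $\partial^*\mathbb{F}_0$. By hypothesis ({\bf H3}) the curves $\xi_c,\xi'_c\subset\mathcal{M}(c)$ avoid a ball $B_\delta(0)$ around the fixed point, so in the strip between them the equality $U^-_c=U'^+_c$ must hold everywhere in $B_\delta(0)$. The case analysis preceding the lemma statement (Cases 1--4) exhausts the geometry of this equality at the fixed point, and the job is to verify that each case occurs only for finitely many $c$ and can be destroyed by a single non-negative bump perturbation, with these perturbations being mutually compatible.

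First I would handle Cases 1 and 2, in which some $c$-semi-static curve approaches $0$ tangent to $\pm\Lambda_{2,x}$. Since only two such directions exist and each carries at most one minimal orbit leaving the ball, only finitely many classes in $\partial^*\mathbb{F}_0$ can produce Case 1 or 2. For each such $c_i$, pick a non-negative bump $V_\delta-V$ whose support is disjoint from $\pi_x\mathrm{supp}\,\mu_{c_i}$ and from the finitely many candidate semi-static curves in direction $\Lambda_{2,x}$. This preserves $\mathcal{M}(c_i)$, hence the elementary weak KAM solutions $U^\pm_{c_i,\delta}$ based on $\xi_{c_i},\xi'_{c_i}$, but strictly increases the action of any curve crossing the bump, so $U^-_{c_i,\delta}>U'^+_{c_i,\delta}$ on the bump's support and therefore $\mathcal{N}_\delta(c_i)\subsetneq\mathbb{T}^2$.

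Cases 3 and 4 are the main obstacle. Here the orbits through the sector $S^\pm\subset B_r(0)$ coincide with the local stable/unstable manifold of the fixed point, so $U^-_c=U'^+_c=U^\pm$ on $S^\pm$. Proposition \ref{flatpro2} forces the angular width of each sector to be $\ge\pi/2$, bounding the number of disjoint $S^+$'s (resp.\ $S^-$'s) by two, hence at most four bad classes. For a bad $c$ with sector $S^+$, split $S^+=S^+_1\cup S^+_2\cup S^+_3$ with $S^+_1$ disjoint from $S^+_3$, and perturb by $V_\delta$ with $\mathrm{supp}(V_\delta-V)\subset S^+_2\setminus B_{r_1}$. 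The subtlety, as flagged in the paragraph preceding the lemma, is that the perturbation might create a new bad class $c'$ with sector $S^-$. To handle both simultaneously I would use the time-reversal symmetry of the linearized Hamiltonian flow near $0$: for small enough $r$ one can find nested sub-sectors $\check S^\pm_k\subset S^\pm_k$ ($k=1,3$) with $\check S^-_k\cap S^+_2=\varnothing$ and $\check S^+_k\cap S^-_2=\varnothing$, so that both bump perturbations leave enough $c$- and $c'$-semi-static curves in $\check S^\pm_1\cup\check S^\pm_3$ to keep $U^-_{\cdot,\delta}=U'^+_{\cdot,\delta}$ intact outside their own supports while breaking the equality on the supports themselves.

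Finally I would verify that the condition \emph{``$\mathcal{N}(c)\subsetneq\mathbb{T}^2$ for all $c\in\partial^*\mathbb{F}_0$''} is open and dense in $V$. Density follows from the finite-step elimination above, each step by a bump of arbitrarily small $C^r$-norm. Openness uses the upper semi-continuity of the Mañé set jointly in $(V,c)$ on the compact set $\partial^*\mathbb{F}_0$, together with the fact that $\partial^*\mathbb{F}_0$ itself depends upper semi-continuously (in Hausdorff distance) on $V$ in the two-dimensional flat case assumed throughout this section. The hardest technical point is this last joint continuity at the boundary of $\mathbb{F}_0$; I would handle it by observing that for $c$ slightly outside $\mathbb{F}_0$ the Aubry set is already supported on a hyperbolic periodic orbit (Lemma \ref{cylinderlem1}), so only the transition $c\to\partial\mathbb{F}_0$ matters, and there the elementary weak KAM continuity of Theorem \ref{weakthm1} applies.
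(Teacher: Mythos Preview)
Your proposal is essentially correct and closely parallels the paper's argument: both rely on the case analysis (Cases 1--4) preceding the lemma, the finiteness bounds on bad classes (at most four curves in direction $\Lambda_{2,x}$ for Cases 1--2; at most four sectors of angular width $\geq\pi/2$ for Cases 3--4), the bump perturbations supported away from the Mather sets, and the time-reversal compatibility between the $S^+$ and $S^-$ perturbations.

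There is one point where your density argument is less complete than the paper's. You treat ``finite-step elimination'' as if perturbing for the originally bad classes automatically prevents any \emph{other} $c\in\partial^*\mathbb{F}_0$ from becoming bad afterwards; the $S^+/S^-$ compatibility you discuss only addresses interference between perturbations of opposite sign, not the appearance of a genuinely new bad class. The paper closes this with a direct argument in its final paragraph: if, after perturbation, some other class $c$ had $\mathcal{N}(c)=\mathbb{T}^2$, then by the same case analysis it would possess a sector $S^\pm$ where $U_c^-=U_c'^+=U^\pm$; this sector necessarily contains one of the preserved sub-sectors $\check S^\pm_i$, and the orbit through any $x\in\check S^\pm_i$ determined by $(x,\partial U^\pm(x))$ would then be simultaneously $c$- and $c_i$-semi-static, hence approach both $\mathrm{supp}\,\mu_c$ and $\mathrm{supp}\,\mu_{c_i}$---impossible since these are distinct. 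This uniqueness-of-limit observation is what makes the elimination terminate in a single round rather than an iteration whose convergence you would otherwise need to justify.

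Your openness discussion goes beyond what the paper supplies (the paper treats openness as implicit from upper semi-continuity and says nothing about the $V$-dependence of $\partial^*\mathbb{F}_0$). Your concern there is legitimate but not addressed in the paper either.
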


\subsection{The Ma\~n\'e set for $c\in\partial\mathbb{F}_0\backslash\partial^* \mathbb{F}_0$}
This set contains at most countably many vertexes. Indeed, if both $\partial\mathbb{F}_0\backslash\partial^*\mathbb{F}_0$ and $\partial^*\mathbb{F}_0$ are non-empty, there do exist countably many vertexes (cf. Theorem \ref{flatthm3}).

Let $E_i\subset\partial\mathbb{F}_0\backslash\partial^*\mathbb{F}_0$ be an edge joined to other two edges at the vertex $c_i$, $c_{i+1}$ respectively. By Theorem \ref{flatthm3}, the Aubry set for $c_j$ consists of two minimal homoclinic curves $\gamma_{j-1}$ and $\gamma_j$. Denote by $g_j\in\mathbb{Z}^2$ the homology class of $\gamma_j$, then the matrix $(g_{j-1},g_j)$ is uni-module. By introducing suitable coordinates on $\mathbb{T}^2$, we can assume $g_i=(1,0)$. In this coordinate system, $g_{i-1}=(k,1)$ and $g_{i+1}=(k',-1)$.

\begin{figure}[htp] 
  \centering
  \includegraphics[width=7.0cm,height=2.5cm]{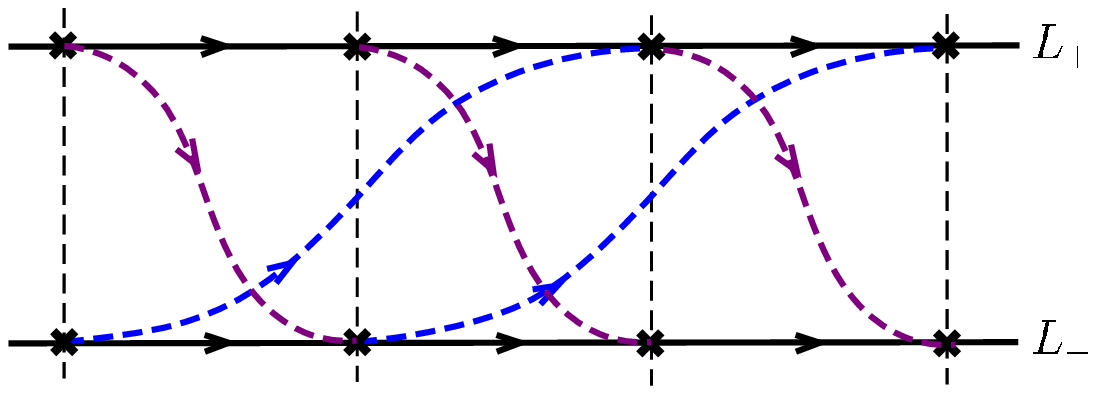}
  \caption{}
  \label{fig10}
\end{figure}
In this figure, each unit square represents a fundamental domain of $\mathbb{T}^2$ in the universal covering space, the horizontal line represents the lift of the homoclinic curve $\gamma_i$, which stays in the Aubry set for each $c\in E_i$. The blue dashed lines represent the lift of the $\gamma_{i-1}$ which stays in the Abury set for the class at one end-point of $E_i$. The purple dashed lines represents the lift of the $\gamma_{i+1}$ which stays in the Abury set for the class at another end-point of $E_i$.

Let us consider weak KAM solution $U_{i,\pm}^{\pm}$ in the strip bounded by the lines $L_+$ and $L_-$. According to Lemma \ref{cylinderlem1}, for $E-\min\alpha>0$ very small, there exists an interval $I_E\subset H^1(\mathbb{T}^2,\mathbb{R})$ such that for each $c\in I_E$, the Mather set consists of only one closed curve $\gamma_E$ such that $[\gamma_E]=g_i$. In the universal covering space, let $\bar\gamma_E$ be a component of the lift of $\gamma_E$ which approaches $L_+$ as $E\downarrow\min\alpha$. Let $U^{\pm}_{i,E}$ be the elementary weak KAM solution determined by $\bar\gamma_E$. As $E\downarrow\min\alpha$, $U^{\pm}_{i,E}\to U^{\pm}_{i,+}$ in $C^0$-topology. The function $U^{\pm}_{i,-}$ is obtained in the same way. The function
$U_{i,\pm}^{-}$ determines backward semi-static curves approaching to the line $L_{\pm}$ as the time approaches to minus infinity, $U_{i,\pm}^{+}$ determines forward semi-static curves approaching to the line $L_{\pm}$ as the time approaches to positive infinity respectively.

If we remove the coercive condition on these weak KAM solutions that $(x,\partial U^{\pm}(x))$ determines a backward (forward) semi-static curve approaching $L_+$ ($L_-$), then the weak KAM depends on $c\in E_i$ (no longer elementary). As $A_c(\gamma_{i\pm1})>0$ for each $c\in\text{\rm int}E_i$, starting from a point close to the line $L_+$ ($L_-$), the backward (forward) semi-static curve will approach to $L_+$ ($L_-$).

Let $c_{\lambda}=\lambda c_i+(1-\lambda)c_{i+1}$. For each $\lambda\in (0,1)$, by Proposition \ref{weakpro3}, the strip is divided into two connected parts $D_{\lambda}^+$ and $D_{\lambda}^-$ such that $U_{c_\lambda}^{+}|_{D_{\lambda}^+}=U^{+}_{i,+}$ and $U_{c_\lambda}^{+}|_{D_{\lambda}^-}=U^{+}_{i,-}$. Let $\gamma_{x,{\pm}}^+$ be the forward semi-static curve determined by $U_{i,\pm}^+$, starting from the point $x$. If $x\in D_{\lambda}^+$, the curve $\gamma^+_{x,+}$ is calibrated for $U^+_{i,+}$ which induces
\begin{align}\label{belteq-1}
A_L(\gamma_{x,{+}}^+)-\langle \gamma_{x,{+}}^+(\infty)-x,c_{\lambda}\rangle&= U^+_{c_{\lambda}}(\gamma^+_{x,+}(\infty))-U^+_{c_{\lambda}}(x)\\
&<A_L(\gamma_{x,{-}}^+)-\langle \gamma_{x,{-}}^+(\infty)-x,c_{\lambda}\rangle,\notag
\end{align}
where both $\gamma^+_{x,+}(\infty)$ and $\gamma^+_{x,-}(\infty)$ exist. Note  $\pi_{\infty}\gamma^+_{x,+}(\infty)= \pi_{\infty}\gamma^+_{x,-}(\infty)$ where $\pi_{\infty}:\mathbb{R}^2\to\mathbb{T}^2$ is the standard projection. One can see from Figure \ref{fig10} that
$$
\langle \gamma_{x,{+}}^+(\infty)-\gamma_{x,{-}}^+(\infty),c_{i}-c_{i+1}\rangle>0.
$$
It follows that the inequality (\ref{belteq-1}) also holds for $\lambda'>\lambda$, which implies that $x\in D_{\lambda'}^+$ also. Clearly, $D_{\lambda}^+$ expands and $D_{\lambda}^-$ shrinks as $\lambda$ increases. As the limit, we see that $D_{1}^-$ and $D_{0}^+$ occupies the whole strip.  Therefore, we have:

\begin{pro}\label{beltpro1}
Assume $E_i\subset\partial\mathbb{F}_0\backslash\partial^*\mathbb{F}_0$ be an edge joined to other two edges at the vertex $c_i$, $c_{i+1}$ respectively. Let  $U^{\pm}_j$ be the globally elementary weak KAM for $c_{\lambda}=c_j$ with $j=i,i+1$. Then, for each $c=\lambda c_i+(1-\lambda)c_{i+1}\in E_i$, the weak KAM solution is completely determined by $U^{\pm}_i$ and $U^{\pm}_{i+1}$ in the following sense: there is a partition $\mathbb{T}^2=D_{i,\lambda}^{\pm}\cup D_{i+1,\lambda}^{\pm}$ such that $D^{\pm}_{j,\lambda}$ is connected,
$$
U_{c_{\lambda}}^{\pm}|_{D_{j,\lambda}^{\pm}}=U_j^{\pm}|_{D_{j,\lambda}^{\pm}},
$$
where $D_{i,\lambda}^{\pm}\subset D_{i,\lambda'}^{\pm}$ and $D_{i+1,\lambda}^{\pm}\supset D_{i+1,\lambda'}^{\pm}$ if
$\lambda<\lambda'$, and $D_{i+1,0}^{\pm}=D_{i,1}^{\pm}=\mathbb{T}^2$.
\end{pro}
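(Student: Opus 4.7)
The plan is to extend the strip analysis carried out immediately before the statement from the universal cover $\mathbb{R}^2$ to $\mathbb{T}^2$, and from the forward case to the backward one.

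First I would fix coordinates as in the pre-statement figure so that the two bounding lines $L_+,L_-$ are adjacent lifts of the homoclinic curve $\gamma_i$, and the strip $\Sigma\subset\mathbb{R}^2$ between them contains the lattice of lifts of the fixed point. For $c_\lambda\in\operatorname{int} E_i$ the Mather set is the fixed point alone, so in the cover every $c_\lambda$-forward-calibrated ray is asymptotic to some lattice Mather component. The pre-statement inequality
\[
A_L(\gamma^+_{x,+})-\langle\gamma^+_{x,+}(\infty)-x,\,c_\lambda\rangle
<
A_L(\gamma^+_{x,-})-\langle\gamma^+_{x,-}(\infty)-x,\,c_\lambda\rangle
\]
then cleanly partitions $\Sigma$ into those $x$ whose forward calibrated ray drifts toward $L_+$ (``type $i$'') and those drifting toward $L_-$ (``type $i+1$''). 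I would let $D^+_{j,\lambda}\subset\mathbb{T}^2$ be the projection of the type-$j$ set.

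Next I would derive the identification $U^+_{c_\lambda}|_{D^+_{j,\lambda}}=U^+_j|_{D^+_{j,\lambda}}$. On each $D^+_{j,\lambda}$ the forward calibrated ray of $U^+_{c_\lambda}$ is literally the same curve as the forward calibrated ray of $U^+_j$, since both are forward semi-static rays asymptotic to the same Mather lift. Integrating $L-\eta_{c_\lambda}$ along this shared ray and invoking Proposition \ref{weakpro3} (applied with a single ergodic component) forces the two weak KAM solutions to agree on $D^+_{j,\lambda}$ after the normalization $U^+_{c_\lambda}=U^+_j$ at one interior basepoint.

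Monotonicity in $\lambda$ comes from rewriting membership in $D^+_{i,\lambda}$ as
\[
\Phi(x)+\lambda\,\langle\gamma^+_{x,+}(\infty)-\gamma^+_{x,-}(\infty),\,c_i-c_{i+1}\rangle<0,
\]
where the pairing is strictly positive because of the relative positions of the lifts of $\gamma_{i-1},\gamma_i,\gamma_{i+1}$ drawn in the strip figure. So increasing $\lambda$ strictly decreases the left-hand side, giving $D^+_{i,\lambda}\subset D^+_{i,\lambda'}$ for $\lambda<\lambda'$. The limit cases $D^+_{i,1}=D^+_{i+1,0}=\mathbb{T}^2$ follow because at a vertex $c_j$ the Mather measure has a single ergodic component, so by Proposition \ref{weakpro3} the globally elementary $U^+_j$ is the unique weak KAM up to constant and therefore labels the whole torus as type $j$. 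Connectedness of each $D^+_{j,\lambda}$ follows from the Lipschitz graph (non-crossing) property of calibrated rays: its boundary is a union of semi-static curves, so the complementary open regions are connected. The backward case $U^-$ and the partition $D^-_{j,\lambda}$ is identical by applying the time-reversed inequality to backward rays.

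The hardest step, I expect, is the identification of $U^+_{c_\lambda}$ with $U^+_j$ as literal equality of functions on $D^+_{j,\lambda}$, rather than merely as equality of their calibrated rays: the globally elementary weak KAM for $c_j$ and the weak KAM for $c_\lambda$ are defined for different closed $1$-forms, and matching them requires a careful choice of representatives $\eta_{c_\lambda},\eta_{c_j}$ and an absorption of the discrepancy $\eta_{c_\lambda}-\eta_{c_j}=(\lambda-1)(\eta_{c_{i+1}}-\eta_{c_i})$ into the additive constant allowed by Proposition \ref{weakpro3}. Once that bookkeeping is handled, the rest of the argument is a direct transcription of the strip analysis to the quotient $\mathbb{T}^2$.
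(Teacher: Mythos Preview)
Your proposal follows the paper's approach closely: work in the strip between the lifts $L_+,L_-$, partition by the drift direction of the forward calibrated ray, invoke Proposition~\ref{weakpro3} to identify $U^+_{c_\lambda}$ piecewise with the two pinned solutions, and read off monotonicity from the sign of $\langle\gamma^+_{x,+}(\infty)-\gamma^+_{x,-}(\infty),\,c_i-c_{i+1}\rangle>0$ in inequality~(\ref{belteq-1}). This is exactly what the paper does in the paragraphs immediately preceding the proposition.

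Two remarks. First, your justification of the limit case $D^+_{i,1}=\mathbb{T}^2$ via ``single ergodic component $\Rightarrow$ unique weak KAM'' is not the paper's argument and is slightly off: the partition tracks which \emph{lift} of the fixed point the ray approaches, and in the cover there are several such lifts even though the base Mather measure is uniquely ergodic. The paper instead obtains the limit directly from the monotonicity together with the observation that $A_{c_\lambda}(\gamma_{i\pm 1})>0$ for $c_\lambda\in\mathrm{int}\,E_i$, so points near either boundary line always lie in the ``correct'' region; letting $\lambda\to 1$ then squeezes $D^+_{i+1,\lambda}$ to nothing.

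Second, your flagged ``hardest step'' about matching weak KAMs for different $1$-forms is a real issue, but the paper sidesteps it by never comparing $U^+_{c_\lambda}$ with a weak KAM for $c_j$. Instead it defines the two building blocks $U^{\pm}_{i,+}$ and $U^{\pm}_{i,-}$ directly in the strip as $E\downarrow\min\alpha$ limits of elementary solutions based at lifts approaching $L_+$ and $L_-$; both are then weak KAMs for the \emph{same} $c_\lambda$ (in the cover the cohomology discrepancy is exact), so the piecewise identity is literal. Only afterward are $U^{\pm}_{i,\pm}$ identified with the vertex solutions $U^{\pm}_j$ via the limit $\lambda\to 0,1$. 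Framing your proof this way removes the bookkeeping you were worried about.
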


In virtue of this proposition, we see that, for all $c\in\partial\mathbb{F}_0\backslash\partial^*\mathbb{F}_0 $, the set of barrier functions $\{U_c-U_c^+\}$ are determined by countably many weak KAM solutions. For each edge $E_i$, there is  an open-dense set in $C^{r}$ space such that $\arg\min(U_c-U_c^+)\subsetneq \mathbb{T}^2$ holds for each $c\in E_i$ if the potential $V$ takes value in this set. Therefore, in virtue of the lemma \ref{beltlem1}, we have

\begin{theo}\label{beltthm1}
Let $\bar L=\frac 12\langle A^{-1}\dot x,\dot x\rangle-V(x)$.  A residual set  $\mathfrak{V}\subset C^r(\mathbb{T}^2,\mathbb{R})$ exists such that for each $V\in\mathfrak{V}$ and for each $c\in\partial\mathbb{F}_0$, the Ma\~n\'e set does not cover the whole configuration space: $\arg\min(U_c^--U_c^+)\subsetneq \mathbb{T}^2$.
\end{theo}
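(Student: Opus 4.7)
The plan is to partition $\partial\mathbb{F}_0$ into $\partial^*\mathbb{F}_0$ and $\partial\mathbb{F}_0\setminus\partial^*\mathbb{F}_0$, produce a residual set of potentials for each piece, and take the intersection. The first piece is already handled: Lemma \ref{beltlem1} furnishes an open dense set $\mathfrak{V}_*\subset C^r(\mathbb{T}^2,\mathbb{R})$ on which $\mathcal{N}(c)\subsetneq\mathbb{T}^2$ simultaneously for all $c\in\partial^*\mathbb{F}_0$. So the work is to deal with the complementary piece.

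By Theorem \ref{flatthm3} (and the surrounding discussion), $\partial\mathbb{F}_0\setminus\partial^*\mathbb{F}_0$ decomposes into at most countably many open edges $E_i$ meeting at countably many vertices $\{c_j\}$. For a class $c=\lambda c_i+(1-\lambda)c_{i+1}$ in the interior of $E_i$, Proposition \ref{beltpro1} gives a partition $\mathbb{T}^2=D_{i,\lambda}^{\pm}\cup D_{i+1,\lambda}^{\pm}$ and identifies $U_c^{\pm}$ with the globally elementary weak KAMs $U_i^{\pm}$, $U_{i+1}^{\pm}$ attached to the two endpoints on the respective pieces. Consequently the barrier function
$$
B_c(x)=U_c^-(x)-U_c^+(x)
$$
is a mosaic of the four candidate functions $U_j^-(x)-U_k^+(x)$ with $j,k\in\{i,i+1\}$, restricted to the intersections $D_{j,\lambda}^-\cap D_{k,\lambda}^+$. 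Hence the entire family $\{B_c\}_{c\in\partial\mathbb{F}_0\setminus\partial^*\mathbb{F}_0}$ is built from a countable pool $\{U_j^--U_k^+:j,k\in\Lambda\}$ indexed by the (countable) set of vertices $\Lambda$, and the desired genericity is reduced to countably many conditions of the form ``$U_j^--U_k^+$ does not vanish identically on any of the admissible pieces $D_{j,\cdot}^-\cap D_{k,\cdot}^+$ that can arise for some $\lambda$''.

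For each pair of vertices $(c_j,c_k)$ appearing in such a piece, I would construct an open dense $\mathfrak{V}_{j,k}\subset C^r(\mathbb{T}^2,\mathbb{R})$ mimicking the proof of Lemma \ref{beltlem1}. The Aubry set at a vertex consists of the fixed point plus two minimal homoclinic curves (Theorem \ref{flatthm3}); a non-negative bump added to $V$ with support disjoint from $\gamma_{j-1}\cup\gamma_j\cup\{0\}$ leaves the Aubry set unchanged yet destroys the sector-shaped regions of Cases 1--4 on which $U_j^-$ and $U_k^+$ could agree. Taking
$$
\mathfrak{V}\;:=\;\mathfrak{V}_*\cap\bigcap_{j,k\in\Lambda}\mathfrak{V}_{j,k}
$$
and noting that a countable intersection of open dense sets is residual yields the theorem.

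The main obstacle is bookkeeping compatibility of all the perturbations. Two issues have to be checked: (i) at a fixed vertex $c_j$, the four sector configurations (two for the stable and two for the unstable manifold, as enumerated in Cases 1--4) must be broken \emph{simultaneously} by a single bump, which is possible because only finitely many sectors of angle $\geq\pi/2$ occur in $B_r(0)$; (ii) perturbations associated to different vertices must not undo each other. The latter uses that the homoclinic curves at distinct vertices carry distinct irreducible homology classes (Lemma \ref{flatlem3}), so their geometric traces in $\mathbb{T}^2$ are distinct and one can place the bumps in pairwise disjoint neighborhoods. Once these two points are secured, the standard openness of the ``some sector-shaped coincidence region is absent'' condition together with density of the constructed perturbations produces each $\mathfrak{V}_{j,k}$, and the residuality of $\mathfrak{V}$ follows.
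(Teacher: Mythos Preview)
Your proposal follows essentially the same strategy as the paper: split $\partial\mathbb{F}_0$ into $\partial^*\mathbb{F}_0$ (handled by Lemma~\ref{beltlem1}) and its complement, invoke Proposition~\ref{beltpro1} to reduce the uncountable family of barrier functions along each edge $E_i$ to the finitely many vertex functions $U_j^{\pm}$, obtain an open-dense set for each edge, and intersect countably many such sets to get a residual $\mathfrak{V}$.

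Two minor corrections are in order. First, your appeal to ``Cases~1--4'' is misplaced: that case analysis in Subsection~5.1 is specific to $c\in\partial^*\mathbb{F}_0$, where an additional minimal measure of nonzero rotation vector produces the strip bounded by $\xi_c,\xi'_c$ in which the four sector configurations live. At a vertex of $\partial\mathbb{F}_0\setminus\partial^*\mathbb{F}_0$ no such strip exists; the Aubry set is just the fixed point plus two homoclinic curves, and the density argument is correspondingly simpler --- a non-negative bump supported off $\{0\}\cup\gamma_{j-1}\cup\gamma_j$ leaves the Aubry set and $\alpha(c)$ unchanged while strictly raising the barrier on its support, so that support drops out of $\arg\min(U_c^--U_c^+)$. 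Second, your compatibility concern~(ii) is unnecessary and slightly misleading: you do not need a single perturbation (with infinitely many disjoint bumps) working for all edges simultaneously. It suffices that each $\mathfrak{V}_{j,k}$ is individually open and dense; Baire then gives residuality of the countable intersection automatically, without any geometric disjointness of the bumps across different edges.
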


Because of this theorem, we have generic hypothesis

\noindent ({\bf H5}): The potential is chosen so that the Ma\~n\'e set does not cover the whole torus $\mathcal{N}(c)\subsetneq \mathbb{T}^2$ for each $c\in\partial\mathbb{F}_0$.

\subsection{Thickness of the annulus}

The Hamiltonian under consideration is given by the formula (\ref{homogenizedeq1}) which is autonomous. To start with, let us assume assume $\min\alpha_G=0$ and study elementary weak KAM solutions of the Hamilton-Jacobi equation
\begin{equation}\label{belteq0}
\partial_{\tau}u+G(x,\tau,\partial_xu+c)=\epsilon\Delta, \qquad c\in\partial\mathbb{F}_0
\end{equation}
where $G$ solves the equation $H(x,x_3,y,G)=\tilde E$ and $\tau=-x_3$. Expanding the Hamiltonian into Taylor series of $\epsilon$, replacing $u$ by $\sqrt{\epsilon}u$ and rescaling $\tau$ by $s=\sqrt{\epsilon}\tau$ we obtain the equation
\begin{equation}\label{belteq1}
\frac{\partial u}{\partial s}+\frac 12\Big\langle A\Big(\frac{\partial u}{\partial x}+c\Big),\frac{\partial u}{\partial x}+c\Big\rangle +V(x)+O(\sqrt{\epsilon})=\Delta
\end{equation}
where $A$ is the Hessian matrix of $h$ in $y$ at $y=0$, $V(x)=Z(x, 0)$ (cf. formula (\ref{cylindereq3})). In this equation, only higher order term depends on the time $s$. As the first step to study the weak KAM of this Hamilton-Jacobi equation, we omit the higher order term and let $\Delta=0$. Since the potential $V$ is independent of the time $s$, all elementary weak KAM solutions discussed in the last two subsections solve the equation
\begin{equation}\label{belteq2}
\frac 12\Big\langle A\frac{\partial u}{\partial x}+c, \frac{\partial u}{\partial x} +c\Big\rangle +V(x)=0,
\end{equation}
and for each $V\in\mathfrak{V}\subset C^r(\mathbb{T}^2,\mathbb{R})$ and each $c\in\partial\mathbb{F}_0$, the weak KAM solutions of this equation define a Ma\~n\'e set which does not cover the whole torus: $\arg\min(U_c^--U'^+_c)\subsetneq\mathbb{T}^2$. By the upper semi-continuity of the set of semi-static curves, some small $\Delta_0>0$ exists such that for each positive $\Delta\le\Delta_0$ and each $c\in\alpha^{-1}_{\bar G}(\Delta)$ (We use $\alpha_{\bar G}$ to denote the $\alpha$-function determined by the Hamiltonian $\bar G$) the Ma\~n\'e set does not cover the torus. It implies that $\arg\min(U_c^--U'^+_c) \subsetneq\mathbb{T}^2$ holds if $c\in\alpha_{\bar G}^{-1}(\Delta)$, both $U^-_c$ and $U'^+_c$ are the weak KAM solutions of the equation \ref{belteq2} with $\Delta\le\Delta_0$.

For each average action $\Delta>\min\alpha_{\bar G}$, the dynamics on the energy level $\bar G^{-1}(\Delta)$ is similar to twist and area-preserving maps. First of all, the rotation vector of each minimal measure is not zero. Thus, any minimal measure is not supported on fixed points. Secondly, for each class $c\in\alpha_{\bar G}^{-1}(\Delta)$, all $c$-minimal measures share the same rotation rotation direction, otherwise, the Lipschitz graph property will be violated. If the rotation direction is rational, each ergoidc minimal measure is supported on a periodic orbit.

From these properties one derives the following: for each $c\in\alpha^{-1}_{\bar G}(\Delta)$, there exists a circle $\Gamma_c\subset\mathbb{T}^2$ such that each semi-static curve passes through it transversally and $\arg\min(U_c^--U_c^+)\cap\Gamma_c\subsetneq\Gamma_c$. Since the set $\arg\min(U_c^--U_c^+)$ is closed, there exist finitely many intervals $I_{c,i}\subset\Gamma_c$ disjoint to each other such that $(\arg\min(U_c^--U_c^+)\cap\Gamma_c)\subset\cup I_{c,i}$.

As these functions are independent of $s=\sqrt{\epsilon}\tau$, all of these functions can be thought as the weak KAM solutions of the homogenized Hamilton-Jacobi equation
$$
\frac{\partial u}{\partial s}+\frac 12\Big\langle A\Big(\frac{\partial u}{\partial x}+c\Big),\frac{\partial u}
{\partial x}+c\Big\rangle +V(x)=\Delta
$$
if they are thought as the function of the variable $(x,s)$. Here, the cohomology class takes value on the circle:
$c\in\alpha^{-1}_{\bar G}(\Delta)$. It follows that, for each class $c\in\alpha^{-1}_{\bar G}(\Delta)$, there exists non-degenerate embedded two-torus $\Gamma_c\times\mathbb{T}\subset\mathbb{T}^3$ and finitely many intervals $I_{c,i}\subset\Gamma_c$ disjoint to each other such that each $c$-semi-static curve passes through the two-torus transversally and $\arg\min(U_c^--U_c^+)\cap(\Gamma_c\times\mathbb{T})\subset\cup I_{c,i}\times\mathbb{T}$. Here the circle $\mathbb{T}$ is for the time $s=\sqrt{\epsilon}\tau$.

Let us return back to the Hamilton-Jacobi equation (\ref{belteq1}). Recall the normal form of the Hamiltonian, we see that in the remainder $O(\sqrt{\epsilon})$, one contribution is from $R(x,\sqrt{\epsilon}p,\sqrt{\epsilon}^{-1}s)$ (see the formula (\ref{cylindereq3})), other contributions are independent of $s$. Again, by the upper semi-continuity of the set of semi-static curves, we have

\begin{theo}\label{beltthm2}
Under the hypotheses $(${\bf H1$\sim$H3, H5}$)$, some positive numbers $\Delta_0>0$ and $\epsilon_0>0$ exist, depending on the potential $V$, such that for each $\Delta<\Delta_0$, each $\epsilon\in (0,\epsilon_0)$ and each $c\in\alpha^{-1}(\Delta)$, all semi-static curves pass through  transversally the two-torus $\Gamma_c\times\{s\in\mathbb{T}\}$,
\begin{equation}\label{belteq3}
\arg\min(U_c^--U_c^+)\cap(\Gamma_c\times\{s=\text{\rm const}.\})\subset\bigcup I_{c,i}
\end{equation}
where $\Gamma_c$ is a circle located in a 2-torus $\{s=\rm constant\}\subset\mathbb{T}^3$,  $I_{c,i}\subset\Gamma_c$ are closed intervals, disjoint to each other and independent of the time $s$.
\end{theo}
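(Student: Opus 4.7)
The plan is to reduce the full time-dependent equation (\ref{belteq1}) to the autonomous Hamilton-Jacobi equation (\ref{belteq2}) by treating the $O(\sqrt{\epsilon})$ remainder as a perturbation, and then to transfer the structural conclusion obtained in Subsections 5.1--5.2 (genericity of $\arg\min(U_c^- - U_c^+)\subsetneq \mathbb{T}^2$ for $c\in\partial\mathbb{F}_0$) to a full annulus around $\mathbb{F}_0$ via upper semi-continuity.

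First I would fix the unperturbed situation. Lemma \ref{beltlem1} together with Theorem \ref{beltthm1} supplies, under \text{\rm ({\bf H5})}, a fact that for every $c\in\partial\mathbb{F}_0$ the set $\arg\min(U_c^--U_c^+)$ is a proper closed subset of $\mathbb{T}^2$. By the upper semi-continuity of the Ma\~n\'e set in the cohomology class (Proposition \ref{weakpro1} and the stability lemma for $h_c^\infty$ in Theorem \ref{weakthm1}), this property propagates to an open neighborhood: there exists $\Delta_0>0$ such that for every $c$ with $0<\alpha_{\bar G}(c)-\min\alpha_{\bar G}\le\Delta_0$ one still has $\arg\min(U_c^--U_c^+)\subsetneq\mathbb{T}^2$, where $U_c^\pm$ now denote the weak KAM solutions of (\ref{belteq2}). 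For such $c$, the rotation vector of any $c$-minimal measure is nonzero, all $c$-minimal measures share a common rotation direction (Lipschitz graph property on $\mathbb{T}^2$), and by the two-dimensional twist-map-like structure one produces a non-contractible circle $\Gamma_c\subset\mathbb{T}^2$ transverse to every $c$-semi-static curve. Closedness of $\arg\min(U_c^--U_c^+)$ combined with its proper inclusion yields the finite union $\bigcup I_{c,i}\subset\Gamma_c$ of disjoint closed intervals.

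Next I would promote the autonomous picture to the time-periodic Hamiltonian (\ref{belteq1}). Since (\ref{belteq1}) differs from the autonomous equation (\ref{belteq2}) only by a $C^1$-small, $s$-dependent term of order $\sqrt{\epsilon}$, the corresponding Tonelli Lagrangians are $C^1$-close (this is the content of the normal form estimates in Theorem \ref{normalthm1}). The autonomous weak KAM $U_c^\pm$ become, via $u(x,s)=U_c^\pm(x)$, weak KAM solutions on $\mathbb{T}^2\times\mathbb{T}$, and $\Gamma_c$ lifts to the non-degenerate two-torus $\Gamma_c\times\mathbb{T}\subset\mathbb{T}^3$, transverse to every autonomous semi-static curve. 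By Theorem \ref{weakthm1} and its Corollary, the function $h_c^\infty$ is continuous under Lagrangian perturbations once the $c$-minimal measure has finitely many ergodic components, so the weak KAM solutions (and therefore the barrier $U_c^--U_c^+$) vary upper semi-continuously with $\epsilon$. Consequently, for $\epsilon$ sufficiently small, the $\epsilon$-semi-static curves stay $C^1$-close to the autonomous ones on compact time windows, so they still cut $\Gamma_c\times\mathbb{T}$ transversally, and
\[
\arg\min\bigl(U_{c,\epsilon}^--U_{c,\epsilon}^+\bigr)\cap\bigl(\Gamma_c\times\{s=\mathrm{const}\}\bigr)\ \subset\ \bigcup I_{c,i}
\]
with $I_{c,i}$ independent of $s$, thanks to the translation invariance of the autonomous limit in the $s$-variable and the closedness of the $\arg\min$ set.

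The subtle point is uniformity: I need $\Delta_0$ and $\epsilon_0$ to work for \emph{every} $c\in\alpha^{-1}(\Delta)$ simultaneously, and the circles $\Gamma_c$ and intervals $I_{c,i}$ to depend continuously (or at least upper semi-continuously) on $c$ along the level set. Here the main obstacle is that the number of connected components of $\bigcup I_{c,i}$ could in principle jump as $c$ varies on the circle $\alpha^{-1}(\Delta)$, and transversality of $\Gamma_c$ to the $c$-semi-static flow must be kept quantitative. I would handle this by covering the compact set $\alpha^{-1}(\Delta)\subset H^1(\mathbb{T}^2,\mathbb{R})$ by finitely many open arcs on each of which $\Gamma_c$ can be chosen as a fixed non-contractible smooth curve (using that the rotation direction of the $c$-minimal measures varies continuously on $\alpha^{-1}(\Delta)$ by Proposition \ref{flatpro3}), and then using upper semi-continuity of the Ma\~n\'e set on this finite cover to get a common $\Delta_0$ and $\epsilon_0$. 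The intervals $I_{c,i}$ are obtained as the finitely many connected components of $\arg\min(U_c^--U_c^+)\cap\Gamma_c$, whose $s$-independence is inherited from the autonomous structure. This completes the outline.
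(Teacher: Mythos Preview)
Your proposal is correct and follows essentially the same route as the paper: establish the result on $\partial\mathbb{F}_0$ via ({\bf H5}) and Theorem \ref{beltthm1}, extend to an annulus by upper semi-continuity of the Ma\~n\'e set in $c$, construct $\Gamma_c$ and the intervals $I_{c,i}$ from the twist-map structure at positive energy, lift trivially to $\mathbb{T}^2\times\mathbb{T}$ using $s$-independence of the autonomous weak KAM solutions, and finally absorb the $O(\sqrt\epsilon)$ time-dependent remainder by another application of upper semi-continuity in the Lagrangian. One minor remark: the upper semi-continuity you invoke is not the content of Proposition \ref{weakpro1} (which concerns existence of semi-static curves) but rather the general upper semi-continuity of $c\mapsto\tilde{\mathcal N}(c)$ and $L\mapsto\tilde{\mathcal N}(L)$, as in Theorem \ref{semicontinuitythm2}.
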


Here, the semi-static curves are in the sense of extended configuration space, i.e. if $\gamma:\mathbb{R}\to M$ is a curve, we also call its graph a curve $\tilde{\gamma}(s)=(\gamma(s),s)\in M\times\mathbb{T}$.

Let us go back to the original scale. By Theorem \ref{beltthm2}, there exist a annulus-shaped region
$$
\mathbb{A}=\{c:0<\alpha_G(c)-\min\alpha_G\le\epsilon\Delta_0\}
$$
such that the condition (\ref{belteq3}) holds for each $c\in\mathbb{A}$. Recall this time-periodic system is deduced from the autonomous system restricted on certain energy level $H^{-1}(\tilde E)$. In virtue of Theorem \ref{flatthm5}, the counterpart of $\mathbb{A}$ in $H^1(\mathbb{T}^3,\mathbb{R})$ is
$$
\tilde{\mathbb{A}}=\{\tilde c=(c,c_3)\in\alpha^{-1}_H(\tilde E):0<c_3\le\epsilon\Delta_0\},
$$
where we notice that the sphere $\alpha^{-1}_H(\tilde E)$ is located in the upper half space of $\mathbb{R}^3$ and touches the plane $\{c_3=0\}$ where $c\in\mathbb{F}_0$.

In the original coordinates $(\tilde x,\tilde y)=(x,x_3,y,y_3)$, Theorem \ref{beltthm2} states such a fact: for each $\tilde c\in\tilde{\mathbb{A}}$, all $\tilde c$-semi static curves pass through the 2-torus $\Gamma_c\times\{x_3\in\mathbb{T}\}$ transversally and all intersection points are restricted in the strips $\cup I_{c,i}\times\{x_3\in\mathbb{T}\}$. However, the condition (\ref{belteq3}) dost not guarantee complete ``intersection" of the stable set with unstable set, in the sense that the set $\arg\min(U_c^--U_c^+)\cap\{\tau=0\}$ contains some disconnected points. Therefore, we call $\tilde{\mathbb{A}}$ the annulus of incomplete intersection. In this case, we do not expect to construct orbits connecting each Aubry set to any other Aubry set nearby, possible incompleteness may block some direction. However, once non-trivial intersection exists, it opens way to connect some Aubry set nearby. We shall show it in the subsection 7.2.

As $\epsilon\Delta_0\gg 2\epsilon^{1+d}$ provided $\epsilon>0$ is sufficiently small and $d>0$, we obtain the following:

{\bf Overlap Property}: {\it Given any two irreducible $g,g'\in H_1(\mathbb{T}^2,\mathbb{Z})$, there exists a positive number $\epsilon_0= \epsilon_0(V,g,g')>0$ such that the wedge-shaped regions intersects the annulus-shaped region: $\mathbb{W}_g\cap\mathbb{A} \ne\varnothing$ and $\mathbb{W}_{g'}\cap\mathbb{A}\ne\varnothing$} provided $0<\epsilon\le\epsilon_0$.

\section{\ui Local connecting orbits}
\setcounter{equation}{0}

To construct orbits connecting some Aubry set to another one nearby, we introduce two types of modified Tonelli Lagrangian, namely, the time-step and the space-step Lagrangian. They satisfy the conditions of positive definiteness, super-linear growth and completeness. The time-step Lagrangian $L:TM\times\mathbb{R}\to\mathbb{R}$ is not periodic in $t$ on the whole $\mathbb{R}$, instead, it is periodic when it is restricted either on $(-\infty,-\delta)$ or on $(\delta,\infty)$, i.e.
$L(\cdot,t)=L(\cdot,t+1)$ if $t,t+1\in (-\infty, -\delta)$ or $t,t+1\in (\delta,\infty)$. The second type of Lagrangian is defined on some covering space. Let $\pi :\bar M=\mathbb{R}\times\mathbb{T}^{n-1}\to M$. The space-step Lagrangian $L:T\bar M\times\mathbb{T}\to\mathbb{R}$ is not periodic in one
component of spaces coordinates $x_1$. It is periodic in $x_1$ when it is restricted either on $(-\infty,-\delta)$ or on $(\delta,\infty)$, i.e. $L(x_1,\cdot)=L(x_1+1,\cdot)$ if $x_1,x_1+1\in (-\infty, -\delta)$ or $x_1,x_1+1\in (\delta,\infty)$.

The existence of local connecting orbits is established based on some upper semi-continuity of minimal curves for the modified Lagrangian.

\subsection{Upper semi-continuity of minimal curves}
\noindent{\bf Time-step Lagrangian}: Let us consider time-step Lagrangian first. A curve $\gamma:\mathbb{R}\to M$ is called minimal if
$$
\int_{\tau}^{\tau'}L(\gamma(t),\dot\gamma(t),t)dt\le\int_{\tau}^{\tau'}L(\zeta(t),\dot\zeta(t),t)dt
$$
holds for any $\tau<\tau'$ and for any absolutely continuous curve $\zeta:[\tau,\tau']\to M$ with $\zeta(\tau)=\gamma(\tau)$ and $\zeta(\tau')=\gamma(\tau')$. Let $\mathscr{G}(L)$ denote the set of minimal curves for $L$. Let $\tilde{\mathcal{G}}(L)=\bigcup_{\gamma\in\mathscr{G}(L)}(\gamma(t),
\dot\gamma(t),t)$, $\mathcal{G}(L)=\pi\tilde{\mathcal{G}}(L)$ where $\pi:TM\times\mathbb{R}\to M\times\mathbb{R}$ is the standard projection.

\begin{theo}\label{semicontinuitythm1}
The set-valued map $L\to\mathscr{G}(L)$ is upper semi-continuous. Consequently, the map $L\to\tilde{\mathcal{G}}(L)$ is also upper semi-continuous.
\end{theo}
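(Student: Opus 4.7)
The plan is to prove the sequential form of upper semi-continuity: if $L_n\to L$ in $C^2$ uniformly on compact subsets of $TM\times\mathbb{R}$, $\gamma_n\in\mathscr{G}(L_n)$ and $\gamma_n\to\gamma$ in $C^1_{\mathrm{loc}}$, then $\gamma\in\mathscr{G}(L)$; moreover any sequence of minimizers admits such a convergent subsequence. The three ingredients are (i) a uniform a priori velocity bound, (ii) $C^1_{\mathrm{loc}}$ compactness of $\{\gamma_n\}$, and (iii) stability of the minimality inequality in the limit.

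First I would establish the a priori bound: for every compact $I\subset\mathbb{R}$ and every $L$ in a fixed $C^2$-neighborhood of $L_0$, there is $R=R(I,L_0)$ such that every $\gamma\in\mathscr{G}(L)$ satisfies $\|\dot\gamma(t)\|\le R$ on $I$. This is the classical Tonelli/Mather estimate, adapted here: on each unit sub-interval $[t_0,t_0+1]\subset I$, comparison of $\gamma$ with a short geodesic joining $\gamma(t_0)$ to $\gamma(t_0+1)$ (such a geodesic exists with uniformly bounded length since $M$ is compact) yields
\begin{equation*}
\int_{t_0}^{t_0+1}L(d\gamma(t),t)\,dt\le K_0,
\end{equation*}
where $K_0$ depends only on $L_0$ and $I$. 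Super-linear growth then forces $\|\dot\gamma(s_0)\|\le R_1$ at some $s_0\in[t_0,t_0+1]$, and the Euler-Lagrange equation propagates this bound on the entire $I$ (the right-hand side is controlled by $\|L\|_{C^2}$ on bounded velocities). The time-step structure of $L$ plays no role: the argument is local in $t$, and eventual periodicity in $t$ only helps through uniformity of $C^2$-bounds.

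With the uniform velocity bound, the Euler-Lagrange equation gives uniform $C^2$-estimates on $\gamma_n$ restricted to any compact interval, and Arzel\`a-Ascoli extracts a $C^1_{\mathrm{loc}}$-convergent subsequence; the limit $\gamma$ solves the Euler-Lagrange equation for $L$. For (iii), given $\tau<\tau'$ and any absolutely continuous competitor $\zeta:[\tau,\tau']\to M$ with $\zeta(\tau)=\gamma(\tau)$ and $\zeta(\tau')=\gamma(\tau')$, construct $\zeta_n$ by replacing $\zeta$ on two shrinking sub-intervals at the endpoints with short geodesic corrections joining $\gamma_n(\tau)\to\zeta(\tau)$ and $\zeta(\tau')\to\gamma_n(\tau')$. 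Since $\gamma_n(\tau)\to\gamma(\tau)$ and $\gamma_n(\tau')\to\gamma(\tau')$, these corrections have length $o(1)$ and contribute $o(1)$ to the action. Minimality of $\gamma_n$ yields
\begin{equation*}
\int_\tau^{\tau'}L_n(d\gamma_n(t),t)\,dt\le\int_\tau^{\tau'}L_n(d\zeta_n(t),t)\,dt,
\end{equation*}
and the $C^0$-convergence of $L_n$ to $L$ on the compact sets traversed by $d\gamma_n$ and $d\zeta_n$, combined with $C^1_{\mathrm{loc}}$-convergence of $\gamma_n$, lets both sides pass to the limit; this shows $\gamma$ is minimal on $[\tau,\tau']$, hence $\gamma\in\mathscr{G}(L)$. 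Upper semi-continuity of $\tilde{\mathcal{G}}(L)=\bigcup(\gamma,\dot\gamma,t)$ then follows because $C^1_{\mathrm{loc}}$-convergence of curves is precisely convergence of the associated orbits in $TM\times\mathbb{R}$.

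The main obstacle is the \emph{uniformity} of the velocity bound as $L$ varies. Pointwise the Tonelli estimate is routine; what must be checked is that the super-linearity modulus $\inf_{\|\dot x\|\ge R}L(\cdot,\dot x,\cdot)/\|\dot x\|$ and the competitor-action bound $K_0$ depend continuously on $L$ in $C^2$. Since $L_n\to L$ uniformly in $C^2$ on compact subsets of $TM\times\mathbb{R}$, the super-linearity rates of $L_n$ on any compact time window converge to that of $L$, so a uniform lower bound persists on a neighborhood of $L$; this suffices. Once this uniformity is in hand, steps (ii)--(iii) are essentially mechanical.
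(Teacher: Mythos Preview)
Your proposal is correct and follows essentially the same approach as the paper: both establish a uniform velocity bound via geodesic comparison plus super-linear growth, propagate it with the Euler--Lagrange equation to get $C^2$ bounds, extract a $C^1_{\mathrm{loc}}$-convergent subsequence by Arzel\`a--Ascoli, and check that the limit is minimal. You give more detail than the paper on step (iii) (the endpoint-correction for competitors) and on the uniformity-in-$L$ of the constants, whereas the paper simply asserts these; but the underlying argument is the same.
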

\begin{proof}
Let $K$ be the diameter of the closed manifold $M$, namely,
$$
K=\max_{x,x'\in M}\ell(x,x')
$$
where $\ell(x,x')$ denotes the length of the shortest geodesic connecting $x$ with $x'$. Let
$$
K_1=\sup_{\stackrel{(x,t)\in M\times\mathbb{R}}{\scriptscriptstyle \|v\|\le K}}L(x,v,t).
$$
As $L$ is assumed periodic for $t\le 0$ as well as for $t\ge 1$, $K_1$ is finite.

Let $\gamma$ be a shortest geodesic connecting the point $x$ to the point $x'$. Given time interval $[\tau,\tau']$ with $\tau'-\tau\ge 1$, we re-parameterize the geodesic $\gamma(s)$ by $\gamma'(\ell(x,x')(t-\tau)/(\tau'-\tau))$, then $\gamma'$: $[\tau,\tau']\to M$ is $C^1$-curve such that $\gamma'(\tau)=x$, $\gamma'(\tau')=x'$. Clearly, the action along this curve is not bigger than $K_1(\tau'-\tau)$. Obviously, there is an upper bound uniformly for all minimizing action of $L'$ if it is close to $L$ on $\{\|v\|\le K\}$, still denoted by
$$
h_{L'}((x,\tau),(x',\tau'))\le K_1(\tau'-\tau).
$$
If the Lagrangian has super-linear growth, some positive numbers $C,D>0$ exist such that $L'(x,\dot x,t)\ge C\|\dot x\|-D$ for all $(x,\dot x,t)\in TM\times\mathbb{R}$ and for all $L'$ close to $L$.
Therefore, if $\gamma$ is a minimizer, one obtains
\begin{equation}\label{semicontinuityeq1}
\frac{\text{\rm dist}(\gamma(\tau),\gamma(\tau'))}{\tau'-\tau}\le\frac 1{\tau'-\tau}\int_{\tau}^{\tau'}\|d\gamma\|\le\frac{K_1+D}C.
\end{equation}
As (\ref{semicontinuityeq1}) holds for any $\tau'-\tau\ge 1$, it implies that there must be some $t_i\in [\tau+i,\tau+i+1]$ for each $i\in\mathbb{Z}$ such that $\|\dot\gamma(t_i)\|\le C^{-1}(K_1+D)$. As it holds for any $x,x'\in M$, therefore, some positive number $K_2>0$ exists such that
$$
\phi^s\Big(\Big\{x,v,t_i:\|v\|\le\frac {K_1+D}C\Big\}\Big)\subset\Big\{x,v, t_i+s:\|v\|\le K_2\Big\}
$$
holds for all $s\in [0,2]$ and for all relevant $i$. It implies that $\|\dot\gamma(t)\|\le K_2$ holds for all $t\in[\tau,\tau']$.

Let $L_i\in C^r(TM\times\mathbb{R},\mathbb{R})$ be a sequence converging to $L$ in the following sense: there exists some $U_k\supset \{x,v,t:\| v\|\le K_2\}$, as well as a sequence of $\epsilon _i\to 0$ as $i\to\infty$ such that $\|L-L_i\|_{C^2(U_k,\mathbb{R})}\le\epsilon _i$. Let $\gamma _i$: $[\tau,\tau']\to M$ be the minimizer of $L_i$ with $\tau'- \tau\ge 1$, we then have $\|\dot\gamma_i(t)\|\le K_2$ for all $t\in [\tau,\tau']$. The set $\{\gamma_i\}$ is compact in the $C^1([\tau,\tau'],M)$-topology. Indeed, since $\partial ^2L/\partial\dot x^2$ is positive definite one can write the Lagrange equations in the form of $\ddot x=f(x,\dot x,t)$, which implies $\gamma _i$ is bounded in $C^2$-topology.

Let $\gamma $: $[\tau,\tau']\to M$ be one of the accumulation points of this set. Clearly, $\gamma$: $[\tau,\tau']\to M$ is the minimizer of $L$. Let $I_i=[\tau_i,\tau'_i]$ and let $\tau_i\to-\infty$ and $\tau'_i\to\infty$, we obtain a sequence of minimizers of $L_i$, $\gamma_i$: $I_i\to M$. By diagonal extraction argument some subsequence of $\gamma_i$ which converges $C^1$-uniformly on each compact set to a $C^1$-curve $\gamma$: $\mathbb{R}\to M$. Obviously, it is a minimal curve of $L$. This proves the upper semi-continuity.
\end{proof}

In application, the set $\mathscr{G}(L)$ seems too big for the construction of connecting orbits. For time-periodic Lagrangian, Ma\~n\'e set can be a proper subset of $\tilde{\mathcal{G}}(L)$,
$\tilde{\mathcal{N}}(L)\subsetneq\tilde{\mathcal{G}}(c)$. It is closely related to the problem whether the Lax-Oleinik semi-group converges or not (cf. \cite{FM}). For time-step Lagrangian, pseudo connecting curve is introduced to play roles similar to what semi-static curve does.

Each time-step Lagrangian $L$ uniquely determines two time-periodic Lagrangian $L^+$ and $L^-$ such that $L^+|_{(\delta,\infty)}=L|_{(\delta,\infty)}$ and $L^-|_{(-\infty,-\delta)}=L|_{(-\infty,-\delta)}$. Let
$-\alpha^{\pm}$ denote the minimal average action of $L^{\pm}$. For $m_0,m_1\in M$ and $T_0,T_1>0$, we define
\begin{equation}
h_{L}^{T_0,T_1}(m_0,m_1)=\inf_{\stackrel{\gamma(-T_0)=m_0}{\scriptscriptstyle\gamma(T_1)=m_1}} \int_{-T_0}^{T_1}L(d\gamma(t),t)dt+T_0\alpha^-+T_1\alpha^+.\notag
\end{equation}
Clearly the limit infimum is bounded
\begin{equation*}
|h_{L}^{\infty}(m_0,m_1)|=|\liminf_{T_0,T_1\to\infty}h_{L}^{T_0,T_1}(m_0,m_1)|<\infty.
\end{equation*}
Let $\{T_0^i\}_{i\in\mathbb{Z}_+}$ and $\{T_1^i\}_{i\in\mathbb{Z}_+}$ be the sequence of positive integers such that $T_j^i\to\infty$ ($j=0,1$) as $i\to\infty$ and the following limit exists
$$
\lim_{i\to\infty}h_{L}^{T_0^i,T_1^i}(m_0,m_1)=h_{L}^{\infty}(m_0,m_1).
$$
Let $\gamma_i(t,m_0,m_1)$: $[-T^i_0,T^i_1]\to M$ be a minimizer connecting $m_0$ and $m_1$
$$
h^{T_0^i,T_1^i}_{L}(m_0,m_1)=\int_{-T_0^i}^{T_1^i}L(d\gamma_i(t),t)dt +T^i_0\alpha^-+T^i_1\alpha^+.
$$
From the proof of Theorem \ref{semicontinuitythm1} one can see that for any compact interval $[a,b]$ there is some $I\in\mathbb{Z}_+$ such that the set $\{\gamma_i\}_{i\ge I}$ is pre-compact in $C^1([a,b],M)$.

\begin{lem}\label{semicontinuitylem1}
Let $\gamma$: $\mathbb{R}\to M$ be an accumulation point of $\{\gamma_i\}$. Then for $s,\tau\ge\delta$
\begin{align}\label{semicontinuityeq2}
A_{L}(\gamma|[-s,\tau])=&\inf_{\stackrel{s_1-s\in\mathbb{Z}, \tau_1-\tau\in\mathbb{Z}} {\stackrel{s_1,\tau_1\ge\delta}{\stackrel{\gamma^*(-s_1)=\gamma(-s)}{\scriptscriptstyle \gamma^*(\tau_1) =\gamma(\tau)}}}}\int_{-s_1}^{\tau_1}L(d\gamma^*(t),t)dt \\
&+(s_1-s)\alpha^-+(\tau_1-\tau)\alpha^+.\notag
\end{align}
\end{lem}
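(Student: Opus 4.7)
The strategy is to perturb the minimizers $\gamma_i$ on their domains $[-T_0^i, T_1^i]$ by splicing in an arbitrary admissible competitor $\gamma^*$ and compare normalized actions. Since $L$ is $1$-periodic in $t$ on both tails $(-\infty,-\delta]$ and $[\delta,\infty)$, integer-time translations of $\gamma_i$ within a tail preserve Lagrangian action. This will let me absorb the length mismatches $p := s_1-s$ and $q := \tau_1-\tau$ between the two competing segments at zero cost.

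Concretely, for each large $i$ and small $\eta>0$ I construct $\tilde\gamma_i : [-T_0^i-p,\, T_1^i+q] \to M$ joining $m_0$ to $m_1$ by setting $\tilde\gamma_i(t)=\gamma_i(t+p)$ on $[-T_0^i-p,-s_1-\eta]$, $\tilde\gamma_i(t)=\gamma^*(t)$ on $[-s_1,\tau_1]$, $\tilde\gamma_i(t)=\gamma_i(t-q)$ on $[\tau_1+\eta,T_1^i+q]$, and short Lipschitz connectors on $[-s_1-\eta,-s_1]$ and $[\tau_1,\tau_1+\eta]$ bridging $\gamma_i(-s-\eta)$ to $\gamma^*(-s_1)=\gamma(-s)$ and $\gamma^*(\tau_1)=\gamma(\tau)$ to $\gamma_i(\tau+\eta)$. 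Because $s,s_1,\tau,\tau_1\ge\delta$, the shifted portions lie in the periodic tails, and since $p,q\in\mathbb{Z}$ the time-periodicity of $L$ gives
\[
\int_{-T_0^i-p}^{-s_1-\eta} L(d\tilde\gamma_i,t)\,dt = \int_{-T_0^i}^{-s-\eta} L(d\gamma_i,t)\,dt,
\]
and the analogous identity on the right tail. The connector costs tend to $0$ first as $i\to\infty$ by the $C^1$-convergence $\gamma_i\to\gamma$ on $[-s-\eta,\tau+\eta]$ (pre-compactness already noted after the definition of $\gamma_i$), and then as $\eta\to 0$.

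Subtracting the normalized action of $\gamma_i$, which realizes $h_L^{T_0^i,T_1^i}(m_0,m_1)$, from that of $\tilde\gamma_i$, which bounds $h_L^{T_0^i+p,\,T_1^i+q}(m_0,m_1)$ from above, the shifted pieces cancel and leave
\[
h_L^{T_0^i+p,T_1^i+q}(m_0,m_1) - h_L^{T_0^i,T_1^i}(m_0,m_1) \le \int_{-s_1}^{\tau_1}\! L(d\gamma^*,t)\,dt - \int_{-s}^{\tau}\! L(d\gamma_i,t)\,dt + p\alpha^- + q\alpha^+ + o_i(1) + O(\eta).
\]
Passing to $\liminf$ in $i$, the right-hand side tends to $\int L(d\gamma^*) + (s_1-s)\alpha^- + (\tau_1-\tau)\alpha^+ - A_L(\gamma|_{[-s,\tau]}) + O(\eta)$. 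Since $(T_0^i+p,T_1^i+q)\to(\infty,\infty)$, the very definition of $h_L^\infty$ as a $\liminf$ forces $\liminf_i h_L^{T_0^i+p,T_1^i+q}\ge h_L^\infty = \lim_i h_L^{T_0^i,T_1^i}$, so the left-hand side has nonnegative $\liminf$. Letting $\eta\to 0$ and taking infimum over admissible $(\gamma^*,s_1,\tau_1)$ yields one inequality in \eqref{semicontinuityeq2}; the reverse is immediate because the triple $(\gamma|_{[-s,\tau]},s,\tau)$ is itself admissible.

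The main technical obstacle is ensuring the splice respects the periodic tails: the joint times $-s_1-\eta,-s_1,\tau_1,\tau_1+\eta$ must avoid $(-\delta,\delta)$, which is guaranteed by $s_1,\tau_1\ge\delta$ and $\eta$ sufficiently small, and the shifts by $p,q$ must preserve the Lagrangian, which uses integrality of $p,q$ together with the fact that both $t$ and $t+p$ (resp.\ $t-q$) lie in the same periodic tail. The cases $p<0$ or $q<0$ (where the side intervals are shortened rather than extended) are handled by the mirror-image construction; integrality of $p,q$ and the constraint $s_1,\tau_1\ge\delta$ again keep every spliced segment inside the periodic regime, and the argument goes through unchanged.
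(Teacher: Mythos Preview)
Your proposal is correct and follows essentially the same strategy as the paper: splice the competitor $\gamma^*$ into the minimizer $\gamma_i$ using integer time-shifts on the periodic tails, then exploit the $\liminf$ definition of $h_L^\infty$. The only cosmetic differences are that the paper argues by contradiction and perturbs $\gamma^*$ to match the endpoints $\gamma_{i_k}(-s),\gamma_{i_k}(\tau)$, whereas you work directly and use short $\eta$-connectors; both devices serve the same purpose and the core mechanism is identical.
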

\begin{proof}: To prove the lemma let us suppose the contrary. Thus there would exist $\Delta>0$, $s_1,\tau_1\ge\delta$, $s_1-s\in\mathbb{Z}$, $\tau_1-\tau\in\mathbb{Z}$ and a curve $\gamma^*$: $[s_1,\tau_1]\to M$ with $\gamma^*(-s_1)=\gamma(-s)$, $\gamma^*(\tau)=\gamma(\tau_1)$ such that
$$
A_{L}(\gamma|[-s,\tau])\ge\int_{-s_1}^{\tau_1}L(d\gamma^*(t),t)dt+(s_1-s)\alpha^-+(\tau_1-\tau)\alpha^+ +\Delta.
$$
Let $\epsilon=\frac 13\Delta$. By the definition of limit infimum there exist $T^{i_0}_0>s$ and $T^{i_0}_1>\tau$ such that
\begin{equation*}
h_{L}^{T_0,T_1}(m_0,m_1)>h_{L}^{\infty}(m_0,m_1)-\epsilon, \qquad \forall \ \ T_0\ge T_0^{i_0}, \ T_1\ge T_1^{i_0},
\end{equation*}
and there exist subsequences $T_j^{i_k}$ ($j=0,1$; $k=0,1,2,\cdots$) such that $T_0^{i_k}-T_0^{i_0}\ge |s-s_1|$, $T_1^{i_k}-T_1^{i_0}\ge |\tau-\tau_1|$ and
\begin{equation*}
|h_{L}^{T_0^{i_k},T_1^{i_k}}(m_0,m_1)-h_{L}^{\infty} (m_0,m_1)|<\epsilon
\end{equation*}
holds for each $k>0$. Let $\gamma_{i_k}$ be the minimizer of $h_{L}^{T_0^{i_k},T_1^{i_k}}(m_0,m_1)$. By taking a subsequence further one can assume $\gamma_{i_k}\to\gamma$. In this case, for sufficiently large $k$, we are able to construct a curve $\gamma_{i_k}^*$: $[s_1,\tau_1]\to M$ which has the same endpoints as $\gamma_{i_k}$: $\gamma_{i_k}^*(-s_1)=\gamma_{i_k}(-s)$, $\gamma_{i_k}^*(\tau_1) =\gamma_{i_k}(\tau)$ and satisfies the following
\begin{equation*}
A_{L}(\gamma_{i_k}|[-s,\tau])\ge\int_{-s_1}^{\tau_1}L(d\gamma_{i_k}^*(t),t)dt +(s_1-s)\alpha^-+(\tau_1-\tau)\alpha^++\frac 23\Delta.
\end{equation*}
Extending $\gamma_{i_k}^*$ from $[s_1,\tau_1]$ to the $[-T_0^{i_k}-(s_1-s), T_1^{i_k}+(\tau_1-\tau)]$ by
$$
\gamma_{i_k}^*=\begin{cases}\gamma_{i_k}(t+s_1-s),\hskip 0.95 true cm t\le -s_1,\\
\gamma_{i_k}^*(t),\hskip 2.35 true cm -s_1\le t\le\tau_1,\\
\gamma_{i_k}(t-\tau_1+\tau),\hskip 0.9 true cm t\ge\tau_1,
\end{cases}
$$
and defining $T'_0=T_0^{i_k}+(s_1-s)$, $T'_1=T_1^{i_k}+(\tau_1-\tau)$ we find that
\begin{align*}
h_{L}^{T'_0,T'_1}(m_0,m_1)\le & A_{L}(\gamma_{i_k}^*|[-T'_0,T'_1])+T'_0\alpha^-+T'_1\alpha^+\\
\le &A_{L}(\gamma_{i_k}|[-T_0^{i_k},T_1^{i_k}])+T^{i_k}_0\alpha^-+T_1^{i_k}\alpha^+-\frac 23\Delta\\
\le &h_{L}^{\infty}(m_0,m_1)-\epsilon.
\end{align*}
But this contradicts the definition of the limit infimum as $T'_0\ge T_0$ and $T'_1\ge T_1$.
\end{proof}

We define so-called pseudo connecting curve set
$$
\mathscr{C}(L)=\{\gamma\in\mathscr{G}(L):\ (\ref{semicontinuityeq2})\ \text{\rm hold}\ \}.
$$
In application, we usually choose time-step Lagrangian so that the Aubry set of $L^-$ is different from that of $L^+$. Clearly, for $\gamma\in\mathscr{C}(L)$, the orbit $(\gamma(t),\dot\gamma(t))$ approaches the Aubry set $\tilde{\mathcal{A}}(L^-)$ as $t\to -\infty$ and approaches $\tilde{\mathcal{A}}(L^+)$ as $t\to \infty$. That is why we call it pseudo connecting curve. Let
$$
\tilde{\mathcal{C}}(L)=\bigcup_{\gamma\in\mathscr{C}(L)}(\gamma(t),\dot\gamma(t),t),\qquad \mathcal{C}(L)=\bigcup_{\gamma\in\mathscr{C}(L)}(\gamma(t),t).
$$
Clearly, if $L$ is periodic in $t$, then $\tilde{\mathcal{C}}(L)=\tilde{\mathcal{N}}(L)$ and $\mathcal{C}(L)=\mathcal{N}(L)$.

\begin{theo}\label{semicontinuitythm2}
The map $L\to\mathscr{C}(L)$ is upper semi-continuous. As the special case, the map $c\to\tilde{\mathcal{N}}(c)$ as well as the map $c\to\mathcal{N}(c)$ is upper semi-continuous.
\end{theo}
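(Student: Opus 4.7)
The plan is to lift the upper semi-continuity from $\mathscr{G}(L)$ (Theorem \ref{semicontinuitythm1}) to its distinguished subset $\mathscr{C}(L)$ by showing that the defining property (\ref{semicontinuityeq2}) is preserved under limits of converging Lagrangians. Let $L_i\to L$ in $C^2$ on bounded sets of $TM\times\mathbb{R}$, and let $\gamma_i\in\mathscr{C}(L_i)$. By Theorem \ref{semicontinuitythm1}, after passing to a subsequence via the diagonal argument, $\gamma_i\to\gamma$ in $C^1$ on every compact interval, and $\gamma\in\mathscr{G}(L)$. I need to verify $\gamma\in\mathscr{C}(L)$, i.e.\ the variational identity (\ref{semicontinuityeq2}) holds for $\gamma$ at every pair $s,\tau\geq\delta$. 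A preliminary remark I will use throughout is that the minimal average actions $\alpha^\pm$ depend continuously on the Lagrangian (both $L_i^\pm$ converge to $L^\pm$ on the corresponding periodicity regime), so $\alpha_i^\pm\to\alpha^\pm$ as $i\to\infty$.

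The argument proceeds by contradiction in the spirit of Lemma \ref{semicontinuitylem1}. Suppose (\ref{semicontinuityeq2}) fails for $\gamma$ at some $s,\tau\geq\delta$: there exist $s_1,\tau_1\geq\delta$ with $s_1-s,\tau_1-\tau\in\mathbb{Z}$, a competitor $\gamma^*\colon[-s_1,\tau_1]\to M$ with $\gamma^*(-s_1)=\gamma(-s)$ and $\gamma^*(\tau_1)=\gamma(\tau)$, and some $\Delta>0$ such that
\[
A_L(\gamma|_{[-s,\tau]})\,\ge\,\int_{-s_1}^{\tau_1}L(d\gamma^*(t),t)\,dt+(s_1-s)\alpha^-+(\tau_1-\tau)\alpha^++\Delta.
\]
I will transplant this competitor to $\gamma_i$. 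Because $\gamma_i(-s)\to\gamma(-s)$ and $\gamma_i(\tau)\to\gamma(\tau)$, I fix small buffer intervals of fixed integer length at both ends and, using shortest geodesics reparametrized on those buffers (the construction at the beginning of Theorem \ref{semicontinuitythm1}'s proof), I build $\gamma_i^*\colon[-s_1-1,\tau_1+1]\to M$ that agrees with $\gamma^*$ on $[-s_1,\tau_1]$ up to a boundary layer and satisfies $\gamma_i^*(-s_1-1)=\gamma_i(-s-1)$, $\gamma_i^*(\tau_1+1)=\gamma_i(\tau+1)$. The geodesic segments have uniformly bounded velocities, so $L_i\to L$ uniformly on the image of $\gamma_i^*$; the contribution of the boundary layers to the $L_i$-action converges to the corresponding contribution of $\gamma$'s trajectory on the matching intervals $[-s-1,-s]$ and $[\tau,\tau+1]$.

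Combining these approximations with $\alpha_i^\pm\to\alpha^\pm$ and $A_{L_i}(\gamma_i|_{[-s-1,\tau+1]})\to A_L(\gamma|_{[-s-1,\tau+1]})$ (uniform $C^1$-convergence of $\gamma_i$ on compact sets and uniform $C^0$-convergence of $L_i$ on bounded velocities), for all sufficiently large $i$ one obtains
\[
A_{L_i}(\gamma_i|_{[-s-1,\tau+1]})\,\ge\,\int_{-s_1-1}^{\tau_1+1}L_i(d\gamma_i^*(t),t)\,dt+(s_1-s)\alpha_i^-+(\tau_1-\tau)\alpha_i^++\tfrac{\Delta}{2}.
\]
Since $s_1+1-(s+1),\,\tau_1+1-(\tau+1)\in\mathbb{Z}$ and $s+1,\tau+1\geq\delta$, this is exactly the failure of (\ref{semicontinuityeq2}) for $\gamma_i$ and the Lagrangian $L_i$ at the points $-(s+1),\tau+1$, contradicting $\gamma_i\in\mathscr{C}(L_i)$. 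Thus $\gamma\in\mathscr{C}(L)$, proving upper semi-continuity of $L\mapsto\mathscr{C}(L)$. The special case $c\mapsto\tilde{\mathcal{N}}(c)$ follows because for a time-periodic $L_c=L-\eta_c$ the set $\mathscr{C}(L_c)$ coincides with the set of $c$-semi-static curves, and $c\mapsto L_c$ is continuous.

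The main technical obstacle is the matching at the endpoints: $\gamma^*$ is adapted to $\gamma$ rather than to $\gamma_i$, so one must absorb the $O(\|\gamma_i-\gamma\|_{C^0})$ discrepancy without enlarging the velocity range on which the $C^0$-approximation $L_i\to L$ is valid. Using integer-length buffer intervals of size one at each end reconciles the time-step periodicity constraint $s_1-s\in\mathbb{Z}$ with the endpoint correction via geodesic splicing, and the uniform $C$-Lipschitz estimate on minimizers from the proof of Theorem \ref{semicontinuitythm1} guarantees the buffers stay inside a fixed compact subset of $TM$, which is where all the convergence estimates are uniform.
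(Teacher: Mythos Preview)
Your overall approach matches the paper's: a contradiction argument, assuming $\gamma\notin\mathscr{C}(L)$ with witnessing competitor $\gamma^*$, then transferring that competitor to $\gamma_i$ to contradict $\gamma_i\in\mathscr{C}(L_i)$. The paper's proof is two sentences and leaves the endpoint-matching implicit.

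However, your specific construction of $\gamma_i^*$ has a gap. You extend $\gamma^*$ by unit-length geodesic buffers on $[-s_1-1,-s_1]$ and $[\tau_1,\tau_1+1]$ and then assert that the buffer's $L_i$-action converges to $A_L(\gamma|_{[-s-1,-s]})$ (and similarly on the right). This is false: the left buffer is a geodesic from $\gamma_i(-s-1)$ to $\gamma(-s)$, which in the limit becomes a geodesic from $\gamma(-s-1)$ to $\gamma(-s)$. These are two \emph{fixed distinct} points (one unit of time apart along $\gamma$), so the limiting geodesic has some action $B_1$ that in general strictly exceeds $A_1:=A_L(\gamma|_{[-s-1,-s]})$, precisely because $\gamma\in\mathscr{G}(L)$ is already the action minimizer between those endpoints over that time. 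The extra cost $B_1-A_1>0$ (and likewise $B_2-A_2>0$ on the right) makes your competitor $\gamma_i^*$ more expensive, and if $B_1-A_1+B_2-A_2>\Delta$ your displayed inequality fails. There is no mechanism forcing these gaps to be small relative to $\Delta$.

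The buffer detour is unnecessary: the integrality constraint $s_1-s\in\mathbb{Z}$ does not obstruct direct endpoint modification on the \emph{original} interval $[-s_1,\tau_1]$. Replace $\gamma^*$ on a short segment $[-s_1,-s_1+\epsilon]$ by any curve from $\gamma_i(-s)$ to $\gamma^*(-s_1+\epsilon)$ (and analogously near $\tau_1$). Since $\gamma_i(-s)\to\gamma(-s)=\gamma^*(-s_1)$, this correction has vanishing action cost as $i\to\infty$, and the resulting $\gamma_i^*$ is a valid competitor testing (\ref{semicontinuityeq2}) for $\gamma_i$ at the same $s,\tau$. This is what the paper does implicitly.
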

\begin{proof}: Let $L_i\to L$ be a sequence of time-step Lagrangian, let $\gamma_i\in\mathscr{C}(L_i)$ and let $\gamma$ be an accumulation point of the set $\{\gamma_i\in\mathscr{C}(L_i)\}_{i\in\mathbb{Z}^+}$. We claim that $\gamma\in\mathscr{C}(L)$. If $\gamma\notin\mathscr{C}(L)$, there would be two point $\gamma(s)$,$\gamma(\tau)\in M$ connected by another curve $\gamma^*$: $[-s-n_1,\tau+n_2]\to M$ and $\Delta>0$ such that
$$
A_{L}(\gamma^*)<A_{L}(\gamma|[-s,\tau])-n_1\alpha^--n_2\alpha^++\Delta
$$
where $s,s+n_1\ge\delta$, $\tau,\tau+n_2\ge\delta$. Since $\gamma$ is an accumulation point of $\gamma_i$, for any small $\epsilon>0$, there would be sufficiently large $i$ such that $\|\gamma-\gamma_i\|_{C^1[s,t]}<\epsilon$, and above inequality also holds for $A_{L_i}(\gamma_i|_{[s,\tau]})$. It follows that $\gamma_i\notin\mathscr{C}(L_i)$, contradicting to the assumption.
\end{proof}

\noindent{\bf Space-step Lagrangian}: Let $M=\mathbb{T}^n$ and $\pi :\bar M=\mathbb{R}\times\mathbb{T}^{n-1}\to M$, where $\mathbb{R}$ is for the coordinate $x_1$. The space-step Lagrangian $L$ is introduced to handle the problem of incomplete intersection. A space-step Lagrangian also uniquely determines two Lagrangian $L^-$ and $L^+$: $TM$ such that $L^-(x_1,\cdot)|_{(-\infty,-\delta)}=L(x_1,\cdot)|_{(-\infty,-\delta)}$ and $L^+(x_1,\cdot)|_{(\delta,\infty)}=L(x_1,\cdot)|_{(\delta,\infty)}$ if we treat $L^{\pm}$ as its natural lift to $T\bar M$. Let $\mu^{\pm}$ denote minimal measure of $L^{\pm}$ with $0$-cohomology class, $\omega(\mu^{\pm})=(\omega_1(\mu^{\pm}),\cdots,\omega_n(\mu^{\pm}))$ denote the rotation vector. We assume some conditions on the Lagrangian:

1, $\omega_1(\mu^{\pm})>0$ for each ergodic minimal measure $\mu^{\pm}$;

2, $\min\beta_{L^-}=\min\beta_{L^+}$, without losing of generality, it equals zero;

3, $|L^--L^+|\le\frac 12\min_{\omega_1=0}\{ \beta_{L^-}(\omega'),\beta_{L^+}(\omega')\}$.

\noindent It is shown in \cite{Lx} that some coordinates exists such that the first condition holds provided $\alpha(0)>\min\alpha$. As the minimal average action of $L^{\pm}$ is achieved on $\text{\rm supp}\mu^{\pm}$ with $\omega_1(\mu^{\pm})\neq 0$, one can see that $\min_{\omega_1(\nu)=0}\int L^{\pm}d\nu>\min\int L^{\pm}d\nu$, so the third condition makes sense. To introduce minimal curve for space-Lagrangian, we define
\begin{equation*}
h_{L}^{T}(\bar m_0,\bar m_1)=\inf_{\stackrel{\bar\gamma(-T)=\bar m_0} {\scriptscriptstyle \bar\gamma(T)=\bar m_1}}\int_{-T}^{T}L(\bar\gamma(t),\dot{\bar\gamma}(t))dt, \qquad \forall\ \bar m_0,\bar m_1\in\bar M.
\end{equation*}

\begin{lem}\label{semicontinuitylem2} If the rotation vector of each ergodic minimal measure has positive first component $\omega_1(\mu^{\pm})>0$, $\bar m_0\neq\bar m_1$, then
$$
\lim_{T\to 0}h_{L}^{T}(\bar m_0,\bar m_1)=\infty \ \ \ \
 and\ \ \ \ \lim_{T\to\infty}h_{L}^{T}(\bar m_0,\bar m_1)=\infty.
$$
\end{lem}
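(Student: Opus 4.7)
The plan is to handle the two limits separately. For $T\to 0$ the blow-up follows from coercivity alone: with $d=\mathrm{dist}(\bar m_0,\bar m_1)>0$ fixed, any admissible $\bar\gamma\colon[-T,T]\to\bar M$ has total length at least $d$, so the Tonelli super-linear growth condition furnishes a convex, non-decreasing function $f$ with $f(s)/s\to\infty$ and a constant $K_0$ such that $L\ge f(\|v\|)-K_0$. Jensen's inequality applied on $[-T,T]$ then yields
$$
\int_{-T}^{T}L(\bar\gamma,\dot{\bar\gamma})\,dt \;\ge\; 2T\,f\!\left(\frac{d}{2T}\right)-2TK_0
\;=\;d\cdot\frac{f(d/(2T))}{d/(2T)}-2TK_0,
$$
which diverges as $T\downarrow 0$.

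For $T\to\infty$ I would argue by contradiction. Suppose $h_L^{T_k}(\bar m_0,\bar m_1)\le C$ along some $T_k\to\infty$, and let $\bar\gamma_k$ be the minimizers. Project to $\gamma_k=\pi\bar\gamma_k\colon[-T_k,T_k]\to M$ and form normalized occupation measures $\mu_k$ on $TM$. Each $\mu_k$ is holonomic in Ma\~n\'e's sense, with rotation vector $(\bar m_1-\bar m_0)/(2T_k)$; in particular $\omega_1(\mu_k)\to 0$. Super-linearity together with the bound $\int L\,d\mu_k\le C/(2T_k)\to 0$ gives tightness, so along a subsequence $\mu_k\rightharpoonup\mu$, a holonomic probability measure on $TM$ with $\omega(\mu)=0$. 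The hypothesis $\omega_1(\mu^\pm)>0$ for every ergodic minimizer of $L^\pm$, combined with $\min\beta_{L^\pm}=0$, forces the strict positivity
$$
\delta_0:=\min_{\omega_1(\nu)=0}\{\beta_{L^-}(\omega(\nu)),\beta_{L^+}(\omega(\nu))\}>0.
$$
Hypothesis~(3), $|L^--L^+|\le\delta_0/2$, then allows one to replace $L$ by $L^+$ globally at cost at most $\delta_0/2$ per unit time in the half-space $\{x_1<-\delta\}$, plus a contribution from the strip $\{|x_1|\le\delta\}$. Using the variational inequality $\int L^+\,d\nu\ge\beta_{L^+}(\omega(\nu))$ together with lower semi-continuity of the action $\nu\mapsto\int L^+\,d\nu$, one gets $\liminf_k\int L^+\,d\mu_k\ge\beta_{L^+}(0)\ge\delta_0$, whence
$$
\liminf_{k\to\infty}\frac{1}{2T_k}A_L(\bar\gamma_k)\;\ge\;\delta_0-\frac{\delta_0}{2}\;=\;\frac{\delta_0}{2}\;>\;0,
$$
contradicting the action bound.

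The hard part will be controlling the transition strip $\{|x_1|\le\delta\}$, where $L$ coincides with neither $L^+$ nor $L^-$ and the global $L^+$-replacement is controlled only by the Tonelli regularity of $L$, not by hypothesis~(3). The argument must show that the fraction of time $|I^0|/(2T_k)$ spent in the strip tends to zero, equivalently that the strip contribution to $\int(L-L^+)\,dt$ is $o(T_k)$. This should follow from the geometric fact that each connected component of $I^0$ has its entry and exit at $|x_1|=\delta$, together with a Cauchy-Schwarz-type bound on the total $x_1$-variation in terms of $A_L(\bar\gamma_k)\le C$, which limits the number of strip crossings; the residual $x_1$-displacement is then pinned by the fixed endpoints $\bar m_{0,1},\bar m_{1,1}$. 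The combination of this geometric control with the $\delta_0/2$ slack from hypothesis~(3) is what ultimately forces the contradiction.
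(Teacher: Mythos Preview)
Your approach is essentially the same as the paper's for both limits. The $T\to 0$ case is identical (super-linear growth), and for $T\to\infty$ both arguments compare $L$ with $L^+$, push the resulting measure onto the hyperplane $\{\omega_1=0\}$, and invoke $\beta_{L^+}\ge\delta_0$ there.

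The paper's execution is cleaner in two respects. First, instead of arguing by contradiction and taking a weak limit, it closes the projected curve $\pi\bar\gamma^T$ in $M$ by a bounded segment $\zeta:[0,1]\to M$ joining $m_1$ back to $m_0$; the concatenation is a genuine closed curve, so the resulting measure $\nu_L^T$ is manifestly holonomic and has $\omega_1(\nu_L^T)\to 0$. This avoids your handwaving step ``$\mu_k$ is holonomic in Ma\~n\'e's sense'' for open curves. Second, the paper shows directly that $\tfrac{1}{2T}h_L^T\ge\int L^+\,d\nu_L^T-\tfrac12\delta_0-\epsilon>0$, which immediately gives divergence; no contradiction setup, no tightness argument, no limit measure.

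Your ``hard part'' is a non-issue in the paper's reading. The paper simply bounds $\tfrac{1}{2T}\int(L-L^+)(d\bar\gamma^T)\,dt\ge-\tfrac12\delta_0$; this uses that $|L-L^+|\le|L^--L^+|\le\tfrac12\delta_0$ holds \emph{globally}, including on the strip, because in every application (and implicitly in the lemma's setup) the space-step Lagrangian interpolates between $L^-$ and $L^+$ on $|x_1|\le\delta$. So the replacement cost is uniformly $\le\tfrac12\delta_0$ per unit time and no separate strip analysis is needed. Your proposed Cauchy--Schwarz fix would not work anyway: the action bound $A_L(\bar\gamma_k)\le C$ together with coercivity gives only $\int|\dot{\bar\gamma}_{k,1}|^2\,dt=O(T_k)$, hence total $x_1$-variation $O(T_k)$ by Cauchy--Schwarz, which does not control the number of strip crossings.

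Minor slip: you write $\omega(\mu)=0$, but only $\omega_1(\mu)=0$ follows from the fixed endpoints in $\bar M=\mathbb{R}\times\mathbb{T}^{n-1}$; the curve may wind arbitrarily in the $\mathbb{T}^{n-1}$ factor. This is harmless since $\omega_1=0$ is exactly what hypothesis~(3) and the $\beta$-bound use.
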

\begin{proof}
Let $\bar\gamma^{T}_{L}$: $[-T,T]\to\bar M$ be the minimizer of $h_{L}^{T}(\bar m_0,\bar m_1)$. Let $m_0=\pi \bar m_0$, $m_1=\pi \bar m_1$, $\zeta$: $[0,1]\to M$ be a smooth curve connecting $m_1$ to $m_0$, $\dot\zeta(0)=\dot{\bar\gamma}^{T}_{L}(T)$ and $\dot\zeta(1)=\dot{\bar\gamma}^{T}_{L}(-T)$. The action of $L^+$ along $\zeta$ is clearly bounded, thus for any $\epsilon>0$, one has $A_{L^+}(\zeta)\le 2T\epsilon$ provided $T$ is sufficiently large. The curve $\xi=\zeta\ast\pi \bar\gamma^{T}_{L}$ determines a holonomic probability measure $\nu^{T}_{L}\in\mathfrak{H}$ such that
$$
\int fd\nu^{T}_{L}=\frac 1{2T+1}\int_{-T}^{T+1}f(\xi(t),\dot\xi(t))dt\qquad \forall\ f\in C(TM,\mathbb{R}).
$$
Since $|\bar\gamma_L^T(T)-\bar\gamma_L^T(-T)|$ is bounded for any $T>0$, one has $\omega_1(\nu^{T}_{L})\to 0$ as $T\to\infty$. By using the third condition, we obtain
\begin{align*}
\frac 1{2T}h_{L}^{T}(\bar m_0,\bar m_1)=&\frac{2T+1}{2T}\int L^+d\nu^{T}_{L}-\frac 1{2T}
\int_0^1 L^+(\zeta(t),\dot\zeta(t))dt \\
&+\frac 1{2T}\int_{-T}^{T}(L-L^+)(\bar\gamma^{T}_{L}(t), \dot{\bar\gamma}^{T}_{L}(t))dt\\
\ge&\int L^+d\nu^{T}_{L}-\frac 12\min_{\omega_1=0}\beta_{L^+}(\omega)-\epsilon>0.
\end{align*}
It implies that $\lim_{T\to\infty}h_{L}^{T}(\bar m_0,\bar m_1)=\infty$. The case for $T\to 0$ is a consequence of the super-linear growth of $L$ in $\dot x$.
\end{proof}

As an intermediate step in introducing pseudo-connecting curve, we define a set of minimal curve $\mathscr{G}(L)$.

\begin{defi}A curve $\bar\gamma:\mathbb{R}\to\bar M$ is in $\mathscr{G}(L)$ if
$$
A_L(\bar\gamma|_{[-T,T]})=\inf_{T'\in\mathbb{R}_+} h_{L}^{T'}(\bar\gamma(-T),\bar\gamma(T)).
$$
\end{defi}
We claim that $\mathscr{G}(L)\neq\varnothing$. Indeed, denote by $\bar\gamma_{L}(\cdot,\bar m_0,\bar m_1):[-T,T]\to M$ the minimizer such that $\bar\gamma_{L}(-T)=\bar m_0$, $\bar\gamma_{L}(T)=\bar m_1$ and
$$
A(\bar\gamma_{L})=\int_{-T}^{T}L(\bar\gamma_{L}(t),\dot{\bar\gamma}_{L}(t))dt
=\inf_{T'\in\mathbb{R}_+} h_{L}^{T'}(\bar m,\bar m').
$$
Because of Lemma \ref{semicontinuitylem2}, this infimum is attained for finite $T>0$ if $\bar m_0$ and $\bar m_1$ are two different points in $\bar M$. The super-linear growth of $L$ in $\dot x$ guarantees that $T\to\infty$ as $-\bar m_{01},\bar m_{11}\to\infty$, where $\bar m_{i1}$ denotes the first coordinate of $\bar m_i$. Given an interval $[-T,T]$, for sufficiently large $-\bar m_{01},\bar m_{11}$, the set $\{\bar\gamma_{L}(\cdot,\bar m_0,\bar m_1)|_{[-T,T]}\}$ is
pre-compact in $C^1([-T,T],\bar M)$. Let $T\to\infty$. By diagonal extraction argument, there is a subsequence of $\{\bar\gamma_{L}(\cdot,\bar m_0,\bar m_1)\}$ which converges
$C^1$-uniformly on any compact set to a $C^1$-curve $\bar\gamma$: $\mathbb{R}\to\bar M$. Obviously, $\bar\gamma\in\mathscr{G}(L)$, and
\begin{pro}\label{semicontinuitypro1}
 Some number $K>0$ exists so that $|h_{L}^{T}(\bar\gamma(-T), \bar\gamma(T))|\le K$ holds for any curve $\bar\gamma\in \mathscr{G}(L)$ and any $T>0$.
\end{pro}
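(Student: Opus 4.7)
The plan is as follows. By the defining property of $\mathscr{G}(L)$ together with Lemma~\ref{semicontinuitylem2}, for every $\bar\gamma\in\mathscr{G}(L)$ and every $T>0$ one has
$$
A_L(\bar\gamma|_{[-T,T]})\;=\;h_L^T(\bar\gamma(-T),\bar\gamma(T))\;=\;\inf_{T'>0}h_L^{T'}(\bar\gamma(-T),\bar\gamma(T)),
$$
since $\bar\gamma|_{[-T,T]}$ is an admissible competitor for $h_L^T$ while the middle quantity is by definition the infimum. The task therefore reduces to producing two-sided bounds on this infimum that depend only on $L$ (not on $\bar\gamma$ or $T$).

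For the upper bound I would build an explicit competitor $\zeta:[-T'',T'']\to\bar M$ with $\zeta(\mp T'')=\bar\gamma(\mp T)$ whose $L$-action is bounded by a constant. Using hypothesis~(1) that $\omega_1(\mu^{\pm})>0$, I pick lifted minimal orbits $\bar\eta^{\pm}:\mathbb{R}\to\bar M$ of $L^{\pm}$ with monotone positive drift in $x_1$; hypothesis~(2) gives $\int L^{\pm}d\mu^{\pm}=\min\beta_{L^{\pm}}=0$, so each full period of $\bar\eta^{\pm}$ contributes zero $L^{\pm}$-action. The competitor $\zeta$ is then the concatenation of five pieces: (i) a unit-time connector in $\bar M$ from $\bar\gamma(-T)$ to an appropriate $\mathbb{Z}$-translate of $\bar\eta^{-}$ placed well inside $\{\bar x_1<-\delta\}$, possible because the projection $\pi_M(\bar\gamma(-T))$ lies in the compact base $M$; (ii) an integer-period segment along this translated $\bar\eta^{-}$ reaching $\{\bar x_1=-\delta\}$, contributing at most one period's fluctuation; (iii) a bounded-action crossing of the compact strip $\{|\bar x_1|\le\delta\}$; (iv) a symmetric segment along a translate of $\bar\eta^{+}$; (v) a bounded-action connector to $\bar\gamma(T)$. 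Each piece contributes action bounded by a constant $K_{+}=K_{+}(L)$, so $\inf_{T'}h_L^{T'}(\bar\gamma(-T),\bar\gamma(T))\le h_L^{T''}(\bar\gamma(-T),\bar\gamma(T))\le K_{+}$.

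For the lower bound I would use weak KAM. Since $\alpha_{L^{\pm}}(0)=-\min\beta_{L^{\pm}}=0$, Fathi's theory yields bounded weak KAM solutions $u^{\pm}:M\to\mathbb{R}$ satisfying
$$
\int_s^t L^{\pm}(x(\tau),\dot x(\tau),\tau)\,d\tau\;\ge\;u^{\pm}(x(t))-u^{\pm}(x(s))
$$
for every absolutely continuous $x$; let $\bar u^{\pm}$ be the bounded $x_1$-periodic lifts to $\bar M$. Partition $[-T,T]$ into $J^{-}=\{\bar\gamma_1\le-\delta\}$, $J^{0}=\{|\bar\gamma_1|<\delta\}$, $J^{+}=\{\bar\gamma_1\ge\delta\}$, apply the domination inequality on each connected component of $J^{\pm}$, and use standard a priori velocity bounds for minimizers to control the middle region, yielding
$$
A_L(\bar\gamma|_{[-T,T]})\;\ge\;-2N^{-}\|\bar u^{-}\|_\infty-2N^{+}\|\bar u^{+}\|_\infty-L_{\max}|J^{0}|,
$$
where $N^{\pm}$ is the number of connected components of $J^{\pm}$ and $L_{\max}$ bounds $|L|$ on the Tonelli velocity-compact set.

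The main obstacle will be a no-oscillation claim: for every $\bar\gamma\in\mathscr{G}(L)$ the counts $N^{\pm}$ and the total time $|J^{0}|$ are uniformly bounded in $T$. I would establish this by a splicing argument tailored to the definition of $\mathscr{G}(L)$: two visits of $\bar\gamma$ to the section $\{\bar x_1=-\delta\}$ separated by an excursion in $J^{-}\cup J^{0}$ making no net positive progress in $x_1$ can be replaced by a shortcut that follows an integer number of periods of $\bar\eta^{-}$ between them. Hypothesis~(3), $|L^{-}-L^{+}|\le\tfrac12\min_{\omega_1=0}\{\beta_{L^{-}},\beta_{L^{+}}\}$, combined with the strict positivity of $\beta_{L^{\pm}}$ on $\{\omega_1=0\}$ guaranteed by~(1), ensures that such a shortcut strictly reduces the $L$-action, contradicting the optimality condition $A_L=\inf_{T'}h_L^{T'}$ built into $\mathscr{G}(L)$. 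Once this claim is in hand one obtains $A_L(\bar\gamma|_{[-T,T]})\ge -K_{-}$; setting $K=\max\{K_{+},K_{-}\}$ completes the proof.
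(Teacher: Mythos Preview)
Your upper bound is essentially the paper's argument: both build an explicit competitor by concatenating a piece of $L^{-}$-minimal dynamics from $\bar\gamma(-T)$ into the strip, a bounded-action crossing of $\{|x_1|\le 1\}$, and a piece of $L^{+}$-minimal dynamics out to $\bar\gamma(T)$. The paper uses backward (forward) semi-static curves of $L^{-}$ ($L^{+}$) rather than lifted periodic minimal orbits; this is slightly cleaner, since a backward semi-static curve for $L^{-}$ through \emph{any} point drifts to $x_1=-\infty$ (its $\alpha$-limit set has $\omega_1>0$), so one does not need to assume $\bar\gamma_1(-T)$ already lies in $\{x_1<-\delta\}$.

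The paper's written proof treats only the upper bound; you go further and attack the lower bound via weak-KAM domination plus a no-oscillation claim, which is the right framework. However, your splicing argument has a gap. You propose to replace an excursion between two visits to $\{x_1=-\delta\}$ by ``an integer number of periods of $\bar\eta^{-}$,'' but $\bar\eta^{-}$ has strictly positive $x_1$-drift, so any nonzero number of periods moves off the section and cannot join two points at the same $x_1$-level; the correct shortcut is simply a bounded-time, bounded-action connector inside the compact section $\{x_1=-\delta\}\cong\mathbb{T}^{n-1}$. More importantly, even with this fix you only conclude that each individual excursion has action $\le C$, which does not bound $N^{\pm}$ or $|J^{0}|$: there could be many excursions each of small action. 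The repair is global rather than local: replace the entire segment from the \emph{first} to the \emph{last} crossing of $\{x_1=-\delta\}$ by a single connector; a Lemma~\ref{semicontinuitylem2}-type estimate (the segment has zero net $x_1$-progress, hence action $\ge c_0 S-C'$ for large duration $S$, using hypothesis~(3)) then bounds the total oscillation time $S$, and the a~priori velocity bound for minimizers converts this into bounds on $N^{\pm}$ and $|J^{0}|$.
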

\begin{proof}
By the assumption, one has $\alpha_{L^{\pm}}(0)=\min\beta_{L^{\pm}}=0$. So, some $K'>0$ exists such that $|A(\gamma|_I)|\le K'$ holds for any interval $I\subset\mathbb{R}_+(\mathbb{R}_-)$ provided it is a forward (backward) semi-static curves for $L^{+}$ ($L^-$). Also, some $K''>0$ exists such that
$$
-K''\le\max_{\bar x,\bar x'\in \{x\in\bar M:|x_1|\le 1\}}\inf_{T\ge 0}h^T_L(\bar x,\bar x')\le K''.
$$
We claim that $K\le 2K'+K''$.

If there exists some $\bar\gamma\in\mathscr{G}(L)$ and some $T>0$ such that  $h_{L}^{T}(\bar\gamma(-T), \bar\gamma(T))>2K'+K''$, we join $\bar\gamma(-T)$ to $\bar\gamma(T)$ by another curve $\xi=\bar\gamma_-\ast\zeta\ast\bar\gamma_+$ where $\bar\gamma_-$ is a lift of backward semi-static curve $\gamma_-$ for $L_-$ such that $\bar\gamma(-T)=\bar\gamma_-(0)$, denote by $\bar x_-$ the intersection point of this curve with the section $\{\bar x\in\bar M:\bar x_1=-1\}$,  $\bar\gamma_+$ is a lift of forward semi-static curve $\gamma_+$ for $L_+$ such that $\bar\gamma(T)=\bar\gamma_+(0)$, denote by $\bar x_+$ the intersection point of this curve with the section $\{\bar x\in\bar M:\bar x_1=1\}$, $\zeta$ is a minimal curve of $L$ that connects the point $\bar x_-$ to $\bar x_+$. Obviously, one has $A_L(\xi)\le 2K'+K''<h_{L}^{T}(\bar\gamma(-T), \bar\gamma(T))$, but it contradicts the definition of $\mathscr{G}(L)$.
\end{proof}

Each $k\in\mathbb{Z}$ defines a Deck transformation ${\bf k}:\bar M\to\bar M$: ${\bf k}x=(x_1+k,x_2,\cdots,x_n)$. Let $\bar M^-_{\delta}=\{x\in\bar M:x_1<-\delta\}$, $\bar M^+_{\delta}=\{x\in\bar M:x_1>\delta\}$.

\begin{defi}\label{semicontinuitydef2}
A curve $\bar\gamma\in\mathscr{G}(L)$ is called pseudo connecting curve if the following holds
$$
A_L(\bar\gamma|_{[-T,T]})=\inf_{\stackrel{\stackrel{T'\in\mathbb{R}_+} {\scriptscriptstyle {\bf k}^-\bar\gamma(-T)\in \bar M^-_{\delta}}}{\scriptscriptstyle {\bf k}^+ \bar\gamma(T)\in \bar M^+_{\delta}}} h_{L}^{T'}({\bf k}^- \bar\gamma(-T),{\bf k}^+\bar\gamma (T))
$$
for each $\bar\gamma(T)\in \bar M^-_{\delta}$ and $\bar\gamma(T)\in\bar M^+_{\delta}$. Denote by $\mathscr{C}(L)$ the set of pseudo connecting curves.
\end{defi}

\begin{lem}\label{semicontinuitylem3}
The set $\mathscr{C}(L)$ is non-empty.
\end{lem}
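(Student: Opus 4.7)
The plan is to build a pseudo connecting curve as the $C^1_{\mathrm{loc}}$-limit of a sequence of finite-time minimizers, each of which is itself chosen to be optimal over all integer Deck translates of its two endpoints; the Deck-minimality of the approximants will then transfer to the limit.

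First, for each positive integer $n$, I would fix lifts $\bar m_0^n\in\bar M^-_\delta$ and $\bar m_1^n\in\bar M^+_\delta$ with $(\bar m_0^n)_1\le -n$ and $(\bar m_1^n)_1\ge n$, and consider
$$\Phi_n=\inf_{\substack{T>0\\ \mathbf{k}^{\pm}\in\mathbb{Z},\ \mathbf{k}^-\bar m_0^n\in\bar M^-_\delta,\\ \mathbf{k}^+\bar m_1^n\in\bar M^+_\delta}} h_L^T\bigl(\mathbf{k}^-\bar m_0^n,\mathbf{k}^+\bar m_1^n\bigr).$$
The claim is that $\Phi_n$ is attained. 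Since $\alpha^\pm=-\min\beta_{L^\pm}=0$ by hypothesis (2) and the perturbation $L-L^\pm$ is uniformly bounded on the finite strip $\{|x_1|\le\delta\}$, the action $h_L^T$ is bounded below. For any fixed pair $(\mathbf{k}^-,\mathbf{k}^+)$ in the admissible set, Lemma \ref{semicontinuitylem2} rules out minimizing sequences with $T\to 0$ or $T\to\infty$. The admissibility constraints bound $\mathbf{k}^-$ above and $\mathbf{k}^+$ below; pushing $\mathbf{k}^-\to-\infty$ or $\mathbf{k}^+\to+\infty$ drives the endpoint $x_1$-distance to infinity, and the argument in the proof of Lemma \ref{semicontinuitylem2}, combined with hypothesis (1), shows the corresponding action also diverges. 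Hence the admissible pairs yielding $\Phi_n$ lie in a finite set, and a minimizing triple $(T_n,\mathbf{k}^-_n,\mathbf{k}^+_n)$ exists. After absorbing $\mathbf{k}^\pm_n$ into the endpoints, let $\bar\gamma_n:[-T_n,T_n]\to\bar M$ denote the corresponding minimizing curve.

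Next, Proposition \ref{semicontinuitypro1} applied to $\bar\gamma_n\in\mathscr{G}(L)$ gives a uniform bound $|A_L(\bar\gamma_n)|\le K$. The super-linear growth of $L$ and positive definiteness of $\partial^2_{\dot x\dot x}L$, as in the proof of Theorem \ref{semicontinuitythm1}, then provide uniform $C^2$ bounds for $\bar\gamma_n$ on each compact subinterval, once we reparameterize so that the crossing of $\{|x_1|\le\delta\}$ occurs near $t=0$. A diagonal extraction therefore produces a limit $\bar\gamma:\mathbb{R}\to\bar M$ in $C^1_{\mathrm{loc}}$. Because each segment $\bar\gamma_n|_{[-T,T]}$ minimizes $h_L^T$ between its endpoints, upper semi-continuity of minimizers (Theorem \ref{semicontinuitythm1}) yields $\bar\gamma\in\mathscr{G}(L)$.

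To verify $\bar\gamma\in\mathscr{C}(L)$, I would argue by contradiction. Suppose for some $T>0$ with $\bar\gamma(\pm T)\in\bar M^\pm_\delta$ there exist $T'>0$ and Deck translates $\mathbf{k}^\pm$ (with $\mathbf{k}^-\bar\gamma(-T)\in\bar M^-_\delta$, $\mathbf{k}^+\bar\gamma(T)\in\bar M^+_\delta$) and a curve $\xi:[-T',T']\to\bar M$ from $\mathbf{k}^-\bar\gamma(-T)$ to $\mathbf{k}^+\bar\gamma(T)$ whose action is strictly below $A_L(\bar\gamma|_{[-T,T]})$ by some $\epsilon>0$. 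For $n$ large, the $C^1$-closeness of $\bar\gamma_n$ to $\bar\gamma$ near $t=\pm T$ produces a nearby $\xi_n$ from $\mathbf{k}^-\bar\gamma_n(-T)$ to $\mathbf{k}^+\bar\gamma_n(T)$ with action at most $A_L(\bar\gamma_n|_{[-T,T]})-\epsilon/2$. Splice $\xi_n$ with the Deck-translated tails $\mathbf{k}^-\bar\gamma_n|_{[-T_n,-T]}$ and $\mathbf{k}^+\bar\gamma_n|_{[T,T_n]}$; by enlarging $T$ slightly if necessary so that the tails sit well inside $\bar M^\pm_\delta$, and taking $n$ large, these tails lie in the regions where $L=L^\pm$ is $\mathbb{Z}$-periodic in $x_1$, so the Deck shifts preserve their $L$-action. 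The resulting spliced curve connects $\mathbf{k}^-\bar m_0^n\in\bar M^-_\delta$ to $\mathbf{k}^+\bar m_1^n\in\bar M^+_\delta$ with total action strictly less than $A_L(\bar\gamma_n)=\Phi_n$, contradicting the definition of $\Phi_n$.

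The main obstacle is the first step: establishing that $\Phi_n$ is attained, which hinges on a lower bound for $h_L^T$ that grows with the $x_1$-distance between endpoints. This is where all three structural hypotheses on $L^\pm$ (positivity of $\omega_1(\mu^\pm)$, equality of $\min\beta_{L^\pm}$, and the comparison of $|L^--L^+|$ with $\min_{\omega_1=0}\beta_{L^\pm}$) must be used simultaneously, in the spirit of the proof of Lemma \ref{semicontinuitylem2}.
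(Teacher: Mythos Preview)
Your approach has a gap in the first step: the claim that $\Phi_n$ is attained because $\inf_T h_L^T(\mathbf{k}^-\bar m_0^n,\mathbf{k}^+\bar m_1^n)$ diverges as $k^-\to-\infty$ or $k^+\to+\infty$ is not correct. Pushing $k^-\to-\infty$ moves the left endpoint deeper into $\bar M^-_\delta$, where $L=L^-$; a connecting curve then travels in the direction of \emph{increasing} $x_1$, which is precisely the drift direction $\omega_1(\mu^-)>0$ of the $L^-$-minimal measures. Along a nearly semi-static curve of $L^-$ the action per unit time is close to $-\alpha_{L^-}(0)=0$, so the total action stays bounded --- indeed Proposition~\ref{semicontinuitypro1} is essentially this observation. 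Lemma~\ref{semicontinuitylem2}, which you cite, proves divergence only when the endpoints are held fixed and $T\to\infty$, forcing the average $\omega_1$ to zero; it gives nothing when the endpoint separation grows along the preferred direction. The gap is easily repaired: take $\bar\gamma_n$ with action within $1/n$ of $\Phi_n$ rather than an exact minimizer, and your splicing contradiction in the last step still manufactures, for large $n$, an admissible competitor with action strictly below $\Phi_n$. A smaller point: the splicing step also needs the tail $\bar\gamma_n|_{[-T_n,-T]}$ and its $\mathbf{k}^-$-translate to lie on the same side of the strip for the Deck shift to preserve the $L$-action; this follows once you argue (via Lemma~\ref{semicontinuitylem2} applied to the limit curve) that $\bar\gamma_1(t)\to-\infty$ as $t\to-\infty$, but it deserves a sentence.

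The paper takes a different route that sidesteps the attainment question entirely. It starts from an arbitrary $\bar\gamma\in\mathscr{G}(L)$ and performs surgery: for each fixed $\Delta>0$, only finitely many disjoint time-intervals can admit a Deck-translated replacement lowering the action by $\Delta$, since otherwise the total action would fall below $-K$, contradicting Proposition~\ref{semicontinuitypro1}. Carrying out those finitely many surgeries yields a curve satisfying the $\mathscr{C}(L)$ inequality up to error $\epsilon_i$; letting $\epsilon_i\to 0$ and passing to a $C^1_{\mathrm{loc}}$ limit finishes the proof. Your global-optimization approach is conceptually more direct once patched, while the paper's iterative surgery extracts the needed finiteness straight from the uniform action bound without ever asking whether an infimum over Deck translates is achieved.
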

\begin{proof} Let us start with a curve $\bar\gamma\in\mathscr{G}(L)$. Given $\Delta>0$, if some interval $[t^-_i,t^+_i]$  exists such that ${\bf k}^-_i \bar\gamma(t^-_i)$ can be connected to ${\bf k}^+_i\bar\gamma(t_i^+)$ by another curve $\zeta_i$ with smaller action
$$
A_L(\gamma|_{[t_i^-,t_i^+]})-A_L(\zeta_i)\ge\Delta>0,
$$
then one obtain a curve $\bar\gamma_i={\bf k}^-_i \bar\gamma|_{(-\infty,t^-_i]}\ast\zeta\ast{\bf k}^-_i \bar\gamma|_{[t^+_i,\infty)}$ by one step of such surgery.

Given any $\Delta>0$, we claim that there are finitely many intervals $[t_i^-,t_i^+]$ with $t_i^+\le t_{i+1}^-$ such that ${\bf k}^-_i \bar\gamma(t^-_i)$ can be connected to ${\bf k}^+_i\bar\gamma(t_i^+)$ by another curve $\zeta_i$ with the action $\Delta$ smaller than the original one.
Let us assume the contrary. Then, for any positive integer $m$, some large $T>0$ exists such that $[-T,T]\supset \cup_{i=1}^m [t_i^-,t_i^+]$. We can choose arbitrarily many of such intervals such that  either $t_1^->\delta$ or $t^+_m<-\delta$. In the first case, let $\bar x^-=\bar\gamma(-T)$ and $\bar x^+=\Pi_{\ell=1}^m{\bf k}^-_{\ell}{\bf k}^+_{\ell}\bar\gamma(T)$. By assumption, these two points can be connected by a curve $\zeta$ along which the action $A_L(\zeta)\le K-m\Delta$ as it follows from Proposition \ref{semicontinuitypro1} that $A_L(\bar\gamma|_{[-T,T]})\le K$. Since $m$ can be arbitrarily large, it implies the existence of a curve along which the action of $L$ approaches to minus infinity, it also contradicts Proposition \ref{semicontinuitypro1}.

Given a curve $\bar\gamma\in\mathscr{G}(L)$ and any small $\epsilon_i>0$, by finitely many steps of such surgery, we obtain a curve $\bar\gamma_i:\mathbb{R}\to\bar M$ with following properties:

1, for each small $\epsilon_i>0$, some large $T_i$ exists such that $\bar\gamma(-T_i)\in\bar M^-_{\delta}$, $\bar\gamma(T)\in\bar M^+_{\delta}$ and
$$
A_L(\bar\gamma_i|_{[-T_i,T_i]})\le\inf_{\stackrel{\stackrel{T'\in\mathbb{R}_+} {\scriptscriptstyle {\bf k}^-\bar\gamma(-T)\in \bar M^-_{\delta}}}{\scriptscriptstyle {\bf k}^+ \bar\gamma(T)\in \bar M^+_{\delta}}}h_{L}^{T'}({\bf k}^-\bar\gamma_i(-T),{\bf k}^+\bar\gamma_i(T)) +\epsilon_i.
$$

2, $\bar\gamma_i$ is smooth everywhere except for two points which fall beyond the region $\{x\in\bar M: |x_1|\le\Theta_i\}$, and $\Theta_i\to\infty$ as $\epsilon_i\to 0$.

Let $T'_i>0$ such that $\bar\gamma_{i1}(\pm T'_i)=\pm\Theta_i$. Because of Lemma
\ref{semicontinuitylem2}, we see that $T'_i\to\infty$ as $\Theta_i\to\infty$. In virtue of the argument before, for any large $T$ $\exists$ $i_0>0$ such that the set $\{\bar\gamma_i|_{[-T,T]}:i\ge i_0\}$ is pre-compact in $C^1([-T,T],\bar M)$. Let $T\to\infty$, by diagonal extraction argument, there is a subsequence of $\{\bar\gamma_i\}$ which converges $C^1$-uniformly on each compact set to a $C^1$-curve $\bar\gamma$: $\mathbb{R}\to\bar M$. Obviously, $\bar\gamma\in\mathscr{C}(L)$.
\end{proof}

\begin{theo}\label{semicontinuitythm3}
The map $L\to\mathscr{C}(L)$ is upper semi-continuous.
\end{theo}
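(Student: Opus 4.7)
The plan is to follow the template of Theorem \ref{semicontinuitythm2}, with Deck transformations in $\bar M$ playing the role that integer time shifts played in the time-step setting. I would start with a sequence $L_i\to L$ in $C^2$ on compact subsets of $T\bar M\times\mathbb{T}$, for which the three standing hypotheses on rotation vectors, on $\min\beta_{L_i^\pm}=0$, and on the smallness of $|L_i^--L_i^+|$ continue to hold for $i$ large, and pick $\bar\gamma_i\in\mathscr{C}(L_i)$. The first task is to extract a subsequence that converges in $C^1$ on compact sets to some candidate $\bar\gamma\colon\mathbb{R}\to\bar M$. By time-periodicity of $L_i$ and by precomposing each $\bar\gamma_i$ with an appropriate Deck transformation ${\bf k}_i$, I may assume $(\bar\gamma_i(0))_1\in[0,1)$, so the family $\{\bar\gamma_i(0)\}$ lies in a compact set of $\bar M$. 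Proposition \ref{semicontinuitypro1} yields a uniform action bound $|A_{L_i}(\bar\gamma_i|_{[-T,T]})|\le K$, and combining this with Lemma \ref{semicontinuitylem2} (applied to $L_i^\pm$, whose rotation vectors still have positive first components) confines $\bar\gamma_i([-T,T])$ to a bounded region of $\bar M$ for every fixed $T$. The superlinear growth argument from the proof of Theorem \ref{semicontinuitythm1} then bounds $\|\dot{\bar\gamma}_i\|$ uniformly, and the Euler--Lagrange equation bounds $\|\ddot{\bar\gamma}_i\|$, so a diagonal extraction produces the desired limit $\bar\gamma$.

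Next, I would verify $\bar\gamma\in\mathscr{C}(L)$ by contradiction. If not, Definition \ref{semicontinuitydef2} fails at some times $-S,T_1$ with $\bar\gamma(-S)\in\bar M^-_\delta$ and $\bar\gamma(T_1)\in\bar M^+_\delta$: there exist Deck transformations ${\bf k}^\pm$, a time $T'>0$, a competitor $\xi\colon[-T',T']\to\bar M$ with $\xi(-T')={\bf k}^-\bar\gamma(-S)$, $\xi(T')={\bf k}^+\bar\gamma(T_1)$, and $\Delta>0$ such that
\[
A_L(\xi)\le A_L(\bar\gamma|_{[-S,T_1]})-\Delta.
\]
For $i$ large, $C^1$-convergence places $\bar\gamma_i(-S)\in\bar M^-_\delta$ and $\bar\gamma_i(T_1)\in\bar M^+_\delta$, and I can perturb $\xi$ on short subintervals near the endpoints $\pm T'$ to obtain $\xi_i$ with $\xi_i(-T')={\bf k}^-\bar\gamma_i(-S)$, $\xi_i(T')={\bf k}^+\bar\gamma_i(T_1)$; the perturbation changes the action by $o(1)$ because $\bar\gamma_i(\pm S,\pm T_1)\to\bar\gamma(\pm S,\pm T_1)$ and $L_i\to L$ in $C^0$ on the relevant compact set. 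This yields
\[
A_{L_i}(\xi_i)\le A_{L_i}(\bar\gamma_i|_{[-S,T_1]})-\tfrac12\Delta
\]
for $i$ large, contradicting $\bar\gamma_i\in\mathscr{C}(L_i)$. Hence $\bar\gamma\in\mathscr{C}(L)$ and the upper semi-continuity follows.

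The main obstacle is the compactness step, which is genuinely harder than in Theorem \ref{semicontinuitythm2}: the target $\bar M$ is non-compact and a typical $\bar\gamma_i$ traverses all of $\bar M$ as $t$ ranges over $\mathbb{R}$, so without first absorbing the $x_1$-translation freedom by a Deck transformation there is no hope of extracting a convergent subsequence. The delicate point is that after this normalization one still needs a quantitative spatial confinement on every finite window $[-T,T]$, and this is exactly where Lemma \ref{semicontinuitylem2} is indispensable: it converts the uniform action bound from Proposition \ref{semicontinuitypro1} into a bound on $|\bar\gamma_i(\pm T)_1|$. A minor subtlety to check is that when shortening $T'$ to accommodate the perturbed endpoints, the time-periodicity of $L_i$ remains respected (so that the action computation is well posed); this is arranged by keeping the endpoint perturbations on intervals of length smaller than $1$ and absorbing the small time-discrepancy into the $o(1)$ error.
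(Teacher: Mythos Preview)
Your contradiction argument in the second paragraph is correct and is exactly what the paper's two-line proof leaves implicit: the paper simply assumes $\bar\gamma_i\to\bar\gamma$ in $C^1$ on compact sets and declares it ``obvious'' that $\bar\gamma\in\mathscr{C}(L)$. Your endpoint-perturbation of a competitor $\xi$ is the natural way to unpack that claim, and it mirrors the structure of the paper's proof of Theorem~\ref{semicontinuitythm2}.

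Where you go beyond the paper is in the compactness step, which the paper does not address at all---its proof takes convergence as a hypothesis and only verifies the closed-graph condition. Your attempt to supply compactness contains a slip: space-step Lagrangians are by construction \emph{not} periodic in $x_1$, so applying a Deck transformation ${\bf k}_i$ to $\bar\gamma_i$ (which is what you must mean; ``precomposing'' a curve $\mathbb{R}\to\bar M$ with a map $\bar M\to\bar M$ is a dimensional mismatch) does not produce another element of $\mathscr{C}(L_i)$, because the step region $\{|x_1|\le\delta\}$ sits at a fixed location in $\bar M$ and the shifted curve would see the step in the wrong place. The correct normalization is a \emph{time} shift: every $\bar\gamma_i\in\mathscr{C}(L_i)$ transitions from $\bar M^-_\delta$ to $\bar M^+_\delta$ and therefore crosses $\{|x_1|\le\delta\}$ at some time $t_i$; replacing $\bar\gamma_i$ by $t\mapsto\bar\gamma_i(t+t_i)$ (valid for autonomous $L_i$, or by an integer shift in the time-periodic case) anchors $\bar\gamma_i(0)$ near the step region without leaving $\mathscr{C}(L_i)$. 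With this correction your subsequent use of Proposition~\ref{semicontinuitypro1} and Lemma~\ref{semicontinuitylem2} to confine $\bar\gamma_i$ on finite time windows is sound.
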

\begin{proof}
Let $\bar\gamma_i\in\mathscr{C}(L_i)$, $L_i\to L$. If $\{\bar\gamma_i\}$ converges $C^1$-uniformly on each compact set to a $C^1$-curve $\bar\gamma$, it is obvious that $\bar\gamma\in\mathscr{C}(L)$.
\end{proof}
It is an immediate consequence of Definition \ref{semicontinuitydef2} that \begin{pro}\label{semicontinuitypro} If the space-step Lagrangian $L$ is periodic in $x_1$, then a curve $\bar\gamma\in\mathscr{C}(L)$ if and only if its projection $\gamma=\pi \bar\gamma$: $\mathbb{R}\to M$ is semi-static.
\end{pro}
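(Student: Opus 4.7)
The proof should be a direct definition-chase. Two observations make it short. First, when $L$ is periodic in $x_1$, the $L$-action is invariant under any Deck transformation ${\bf k}$, so the pair $({\bf k}^-,{\bf k}^+)$ in Definition 6.2 affects only the relative displacement $k^+-k^-\in\mathbb{Z}$, and every such integer displacement can be realized while still satisfying the constraints ${\bf k}^\pm\bar\gamma(\mp T)\in\bar M^\mp_\delta$ by choosing $|k^\pm|$ sufficiently large. Second, the space-step quantity $h_L^T(\bar m_0,\bar m_1)$ was defined without explicit time argument, so for the purposes of this proposition $L$ is treated as autonomous; in particular the semi-static condition to match is the autonomous one: $[A_0(\gamma)|_{(t,t')}]=F_0(\gamma(t),\gamma(t'))$ with $F_0(x,x')=\inf_{\tau>0}h_0((x,0),(x',\tau))$, recalling that the assumption $\min\beta_L=0$ gives $\alpha_L(0)=0$.

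For the ($\Rightarrow$) direction, assume $\bar\gamma\in\mathscr{C}(L)$, and fix $T$ with $\bar\gamma(\mp T)\in\bar M^\mp_\delta$. Combining the two observations, the infimum appearing in Definition 6.2 reduces, after a time translation $s=t+T'$, to
$$\inf_{T'>0}\ \inf_{\substack{\xi:[-T',T']\to M\\ \xi(\mp T')=\gamma(\mp T)}}\int_{-T'}^{T'}L(\xi,\dot\xi)\,dt\;=\;F_0(\gamma(-T),\gamma(T)),$$
since $L$ is $x_1$-invariant on $\bar M$ and $\bigcup_k {\bf k}^{+-}\bar\gamma(T)$ enumerates all lifts of $\gamma(T)$. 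On the other hand $A_L(\bar\gamma|_{[-T,T]})=A_L(\gamma|_{[-T,T]})$. Thus the pseudo-connecting equality is exactly the 0-semi-static equality on $[-T,T]$. Passing to arbitrary sub-intervals $[t,t']\subset[-T,T]$ is immediate from additivity of $[A_0(\gamma)|_{(\cdot,\cdot)}]$ together with the triangle inequality $F_0(x,x')\le F_0(x,y)+F_0(y,x')$ applied at $y=\gamma(t),\gamma(t')$; since $T$ may be taken arbitrarily large with $\bar\gamma(\mp T)\in\bar M^\mp_\delta$ (possible because $\omega_1(\mu^\pm)>0$ forces escape to $\pm\infty$ in $x_1$ for curves in $\mathscr{C}(L)$), $\gamma$ is 0-semi-static on all of $\mathbb{R}$.

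For the ($\Leftarrow$) direction, assume $\gamma$ is 0-semi-static and let $T$ be as above. The same reduction shows the infimum over $(T',{\bf k}^\pm)$ coincides with $F_0(\gamma(-T),\gamma(T))$, which by hypothesis equals $A_L(\gamma|_{[-T,T]})=A_L(\bar\gamma|_{[-T,T]})$; this is the pseudo-connecting equality. To verify the companion condition $\bar\gamma\in\mathscr{G}(L)$ (no ${\bf k}^\pm$-freedom), sandwich $\inf_{T'>0}h_L^{T'}(\bar\gamma(-T),\bar\gamma(T))\le A_L(\bar\gamma|_{[-T,T]})$ (take $T'=T$ and the curve $\bar\gamma$ itself) with the reverse inequality: any competitor $\bar\xi:[-T',T']\to\bar M$ with fixed endpoints $\bar\gamma(\mp T)$ projects to some $\xi$ in $M$ with matching projected endpoints, hence $\int L(\bar\xi,\dot{\bar\xi})dt=\int L(\xi,\dot\xi)dt\ge F_0(\gamma(-T),\gamma(T))=A_L(\gamma|_{[-T,T]})$ by semi-staticity.

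The only subtlety — and the sole reason anything needs to be said at all — is the role of the $\bar M^\mp_\delta$ constraints on ${\bf k}^\pm$, which one checks do not exclude any integer displacement; everything else is bookkeeping, which is why the paper records the statement as \emph{immediate} from Definition 6.2.
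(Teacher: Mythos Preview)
Your overall approach matches the paper's intent (the paper simply records the proposition as an immediate consequence of Definition~\ref{semicontinuitydef2}), and your $(\Leftarrow)$ direction is correct. There is, however, a genuine error in your key claim for the $(\Rightarrow)$ direction.

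You assert that ``every such integer displacement can be realized while still satisfying the constraints ${\bf k}^\pm\bar\gamma(\mp T)\in\bar M^\mp_\delta$ by choosing $|k^\pm|$ sufficiently large.'' This is false. The constraints read $k^- < -\delta-\bar\gamma_1(-T)$ and $k^+ > \delta-\bar\gamma_1(T)$; the first forces $k^-$ to be bounded \emph{above}, the second forces $k^+$ to be bounded \emph{below}. Consequently $d=k^+-k^-$ is bounded below by (roughly) $2\delta-(\bar\gamma_1(T)-\bar\gamma_1(-T))$, and sending $|k^\pm|\to\infty$ cannot produce an arbitrary negative $d$: if $k^-\to-\infty$ then $k^+=k^-+d\to-\infty$ violates the lower bound on $k^+$, and symmetrically for $k^+\to+\infty$. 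So for a fixed $T$ the $\mathscr{C}$-infimum ranges only over displacements above a negative cutoff, and your identification of that infimum with $F_0(\gamma(-T),\gamma(T))$ does not follow.

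The repair uses exactly the escape-to-infinity you already invoke, but at a different point in the argument. Suppose $\gamma$ fails to be semi-static on some $[t,t']$: there is a competitor $\xi$ in $M$ with $A_L(\xi)<A_L(\gamma|_{[t,t']})$, and its lift starting at $\bar\gamma(t)$ ends at ${\bf d_0}\bar\gamma(t')$ for some fixed $d_0\in\mathbb{Z}$. Now let $T\to\infty$ with $\bar\gamma(\mp T)\in\bar M^\mp_\delta$; since $\bar\gamma_1(T)-\bar\gamma_1(-T)\to+\infty$ (this is where $\omega_1(\mu)>0$ and the bounded-action property of $\mathscr{G}(L)$ from Proposition~\ref{semicontinuitypro1} are used), the cutoff on $d$ tends to $-\infty$, so eventually the concatenation $\bar\gamma|_{[-T,t]}\ast\bar\xi\ast({\bf d_0}\bar\gamma)|_{[t',T]}$ gives an admissible competitor in the $\mathscr{C}$-infimum with action strictly below $A_L(\bar\gamma|_{[-T,T]})$, contradicting $\bar\gamma\in\mathscr{C}(L)$. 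With this correction, the rest of your write-up stands.
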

Similar to the definition for time-step Lagrangian, we define
$$
\tilde{\mathcal{C}}(L)=\bigcup_{\bar\gamma\in\mathscr{C}(L)}(\bar\gamma(t),\dot{\bar\gamma}(t)), \qquad \mathcal{C}(L)=\bigcup_{\bar\gamma\in\mathscr{C}(L)}\bar\gamma(t).
$$
If $L$ is periodic in $x_1$, then $\pi \tilde{\mathcal{C}}(L)=\tilde{\mathcal{N}}(L)$ and $\pi \mathcal{C}(L)=\mathcal{N}(L)$.

\subsection{Local connecting orbits of type-$c$}
An orbit $d\gamma$ (A curve $\gamma$) is said connecting one Aubry set $\tilde{\mathcal{A}}(c)$ to another one $\tilde{\mathcal{A}}(c')$ if the $\alpha$-limit set of the orbit $d\gamma$ is contained in $\tilde{\mathcal{A}}(c)$ and the $\omega$-limit set  is contained in $\tilde{\mathcal{A}}(c')$. It is called local connecting orbit if these two classes are close to each other. It is called global when the two  classes  are far away from each other. In this subsection, we show how to construct local connecting orbits of type-$c$ by using so-called $c$-equivalence. This type of connecting orbits are found in the annulus of incomplete intersection and plays key role in establishing transition chain crossing strong double resonance.

For this purpose, we use the new version of $c$-equivalence introduced in \cite{LC}. The concept of $c$-equivalence was introduced in \cite{Ma2} for the first time, but it does not apply in interesting problems of autonomous system. The new version is defined not on the whole $M$, but on a non-degenerate embedded $(n-1)$-dimensional torus. We call $\Sigma_c$ non-degenerately embedded  ($n-1$)-dimensional torus by assuming a smooth injection $\varphi$: $\mathbb{T}^{n-1}\to\mathbb{T}^n$ such that $\Sigma_c$ is the image of $\varphi$, and the induced map $\varphi_*$: $H_1(\mathbb{T}^{n-1}, \mathbb{Z})\to H_1(\mathbb{T}^{n},\mathbb{Z})$ is an injection.

Let $\mathfrak{C}\subset H^1(\mathbb{T}^n,\mathbb{R})$ be a connected set where we are going to define $c$-equivalence. For each class $c\in \mathfrak{C} $, we assume that there exists a non-degenerate embedded $(n-1)$-dimensional torus $\Sigma_c\subset\mathbb{T}^n$ such that each $c$-semi static curve $\gamma$ transversally intersects $\Sigma_c$. Let
$$
\mathbb{V}_{c}=\bigcap_U\{i_{U*}H_1(U,\mathbb{R}): U\, \text{\rm is a neighborhood of}\, \mathcal {N}(c) \cap\Sigma_c\},
$$
here $i_U$: $U\to M$ denotes inclusion map. $\mathbb{V}_{c}^{\bot}$ is defined to be the annihilator of $\mathbb{V}_{c}$, i.e. if $c'\in H^1(\mathbb{T}^n,\mathbb{R})$, then $c'\in \mathbb{V}_{c}^{\bot}$ if and only if $\langle c',h \rangle =0$ for all $h\in \mathbb{V}_c$. Clearly,
$$
\mathbb{V}_{c}^{\bot}=\bigcup_U\{\text{\rm ker}\, i_{U}^*: U\, \text{\rm is a neighborhood of}\, \mathcal {N}(c) \cap\Sigma_c\}.
$$
Note that there exists a neighborhood $U$ of $\mathcal {N}(c)\cap\Sigma_c$ such that $\mathbb{V}_c=i_{U*}H_1(U,\mathbb{R})$ and $\mathbb{V}_{c}^{\bot}=\text{\rm ker}i^*_U$ (see \cite{Ma2}).

We say that $c,c'\in H^1(M,\mathbb{R})$ are $c$-equivalent if there exists a continuous curve $\Gamma$: $[0,1]\to \mathfrak{C}$ such that $\Gamma(0)=c$, $\Gamma(1)=c'$, $\alpha(\Gamma(s))$ keeps constant for all $s\in [0,1]$, and for each $s_0\in [0,1]$ there exists $\delta>0$ such that $\Gamma(s)-\Gamma(s_0)\in \mathbb{V}_{{\Gamma}(s_0)}^{\bot}$ whenever $s\in [0,1]$ and $|s-s_0|<\delta$.

Let $\{e_i\}_{1\le i\le n-1}$ be the standard basis of $H_1(\mathbb{T}^{n-1}, \mathbb{Z})$, one obtains $n$-dimensional vectors $\{g_{i+1}=\varphi_*(e_i)\in H_1(\mathbb{T}^{n}, \mathbb{Z})\}_{1\le i\le n-1}$. Because $\varphi$ is injection, there is a vector $g_1\in\mathbb{Z}^n$ such that the $n\times n$ matrix $G=(g_1,g_2,\cdots, g_n)$ is uni-module, i.e. $\text{\rm det}G=\pm 1$. In new coordinates system $x\to G^{-1}x$, the Lagrangian $\tilde L(\dot x,x)=L(G\dot x,Gx)$ is also $2\pi$-periodic in $x$. In new coordinates, let $\bar M=\mathbb{R}\times\mathbb{T}^{n-1}= \{x_1\in\mathbb{R},(x_2,\cdots,x_n)\in \mathbb{T}^{n-1}\}$ be a covering space of $\mathbb{T}^n$, $\pi:\bar M\to M=\mathbb{T}^n$. The lift of $\Sigma_c$, $\pi^{-1}(\Sigma_c)$ has infinitely many compact components $\{\Sigma_c^i\}_{i\in\mathbb{Z}}$. If $\varphi$ is linear, $\Sigma_c^i=\{x_1=2i\pi\}$. For the section $\Sigma=\{x_1=0\mod 1\}$ we have $\pi^{-1}(\Sigma)=\cup_{k\in\mathbb{Z}}\{x_1=k\}$ while for $\Sigma=\{x_1=x_2\}$, the lift $\pi^{-1}(\Sigma)$ consists of only one connected component.

\begin{theo}\label{typecthm1}
Assume the cohomology class $c^*$ is $c$-equivalent to the class $c'$ through the path $\Gamma$: $[0,1]\to H^1(\mathbb{T}^n,\mathbb{R})$. For each $s\in [0,1]$, the following are assumed:

1, there exists a coordinate systems $G_s^{-1}x$ where the first component of rotation vector is positive, $\omega_1(\mu_{\Gamma(s)})>0$ for each ergodic $\Gamma(s)$-minimal measure $\mu_{\Gamma(s)}$;

2, for the covering space $\bar M_s=\mathbb{R}\times\mathbb{T}^{n-1}$ in the coordinate system the lift of non-degenerately embedded codimension-one torus $\Sigma_{\Gamma(s)}$ has infinitely many connected and compact components, each of which is also a codimension-one torus.

Then there exist some classes $c^*=c_0, c_1,\cdots,c_k=c'$ on this path,  closed 1-forms $\eta_i$ and $\bar\mu_i$ on $M$ with $[\eta_i]=c_i$ and $[\bar\mu_i]=c_{i+1}-c_i$,  and smooth functions $\varrho_i$ on $\bar M$ for $i=0,1,\cdots,k-1$, such that the pseudo connecting curve set $\mathscr{C}(L_i)$ for the space-step Lagrangian
$$
L_i=L-\eta_i-\varrho\bar\mu_i
$$
possesses the properties:

\text{\rm (i)}, each curve $\bar\gamma\in\mathscr{C}(L_i)$ determines an orbit $(\gamma,\dot{\gamma})$ of $\phi_L^t$;

\text{\rm (ii)}, the orbit $(\gamma,\dot{\gamma})$ connects $\tilde{\mathcal{A}}(c_{i})$ to $\tilde{\mathcal{A}}(c_{i+1})$, i.e., the $\alpha$-limit set $\alpha(d\gamma)\subseteq\tilde{\mathcal{A}} (c_{i})$ and $\omega$-limit set $\omega(d\gamma)\subseteq\tilde{\mathcal{A}}(c_{i+1})$.
\end{theo}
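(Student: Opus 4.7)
The plan is to partition the path $\Gamma$ into finitely many short arcs so that each consecutive pair of endpoints represents an ``elementary'' step of $c$-equivalence, then to build a single space-step Lagrangian for each step whose pseudo-connecting curves, projected back to $M$, are honest $L$-orbits transiting from $\tilde{\mathcal{A}}(c_i)$ to $\tilde{\mathcal{A}}(c_{i+1})$.

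First I would fix the partition. By compactness of $\Gamma([0,1])$ and the local form of $c$-equivalence, choose $0=s_0<s_1<\cdots<s_k=1$ so that $c_{i+1}-c_i\in\mathbb{V}_{c_i}^{\perp}$, where $c_i=\Gamma(s_i)$. Using the characterization $\mathbb{V}_{c_i}^{\perp}=\ker i_{U_i}^{\ast}$ for an open neighborhood $U_i$ of $\mathcal{N}(c_i)\cap\Sigma_{c_i}$, pick a closed $1$-form $\bar\mu_i$ with $[\bar\mu_i]=c_{i+1}-c_i$ that vanishes pointwise on $U_i$, and pick $\eta_i$ closed with $[\eta_i]=c_i$. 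By refining the partition further we may assume $\|\bar\mu_i\|_{C^{0}}$ is as small as we like.

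Next, work in the coordinates $G_{s}^{-1}x$ in which the first component of every rotation vector is positive (hypothesis 1), and in $\bar M_i=\mathbb{R}\times\mathbb{T}^{n-1}$ lift $\Sigma_{c_i}$ to its connected, compact components (hypothesis 2). Choose a smooth $\varrho_i:\bar M_i\to[0,1]$ with $\varrho_i\equiv0$ on $\bar M_{\delta}^{-}$, $\varrho_i\equiv1$ on $\bar M_{\delta}^{+}$, and $\mathrm{supp}\,d\varrho_i$ contained in the lift of $U_i$ on which $\bar\mu_i$ vanishes. Set
$$
L_i=L-\eta_i-\varrho_i\bar\mu_i.
$$
Then $L_i^{-}=L-\eta_i$ has cohomology $c_i$, and $L_i^{+}=L-\eta_i-\bar\mu_i$ has cohomology $c_{i+1}$. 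The constancy of $\alpha$ along $\Gamma$ yields $\min\beta_{L_i^-}=\min\beta_{L_i^+}$, hypothesis 1 gives positivity of $\omega_1(\mu^{\pm})$, and the $C^0$-smallness from Step 1 gives $|L_i^--L_i^+|\le\tfrac12\min_{\omega_1=0}\{\beta_{L_i^-},\beta_{L_i^+}\}$. The three standing assumptions for the space-step theory are therefore met, so Lemma 6.3 produces $\mathscr{C}(L_i)\ne\varnothing$.

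To finish, a curve $\bar\gamma\in\mathscr{C}(L_i)$ is a local minimizer of $\int L_i$, hence satisfies the Euler--Lagrange equation of $L_i$. Since $\eta_i$ is closed and $\bar\mu_i$ vanishes on the region where $d\varrho_i\ne0$, the perturbation $\eta_i+\varrho_i\bar\mu_i$ is closed along $\bar\gamma$; therefore $\gamma=\pi\bar\gamma$ solves the Euler--Lagrange equation of $L$, proving (i). For (ii), Definition 6.2 forces $\bar\gamma$ to eventually enter $\bar M_{\delta}^{\pm}$, where $L_i=L_i^{\mp}$ is time-periodic; applying the characterization in Proposition 6.1 (or using the upper semi-continuity of $\mathscr{C}(\cdot)$ from Theorem 6.3), the tail $\bar\gamma|_{(-\infty,-T]}$ becomes a backward semi-static curve for $L_i^-$, so $\alpha(d\gamma)\subseteq\tilde{\mathcal{A}}(c_i)$; symmetrically $\omega(d\gamma)\subseteq\tilde{\mathcal{A}}(c_{i+1})$.

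The main obstacle is coordinating three ingredients simultaneously along the whole path $\Gamma$: the representative $\bar\mu_i$ must vanish on a sufficiently large neighborhood of $\mathcal{N}(c_i)\cap\Sigma_{c_i}$ (so that a pseudo-minimizer really is an $L$-orbit); the cut-off $\varrho_i$ must transition inside that neighborhood (which forces the partition $\{s_i\}$ fine enough that nearby Ma\~n\'e sets $\mathcal{N}(\Gamma(s))\cap\Sigma_{\Gamma(s)}$ and the transversals $\Sigma_{\Gamma(s)}$ deform only slightly, invoking the upper semi-continuity of $\mathcal{N}(\cdot)$); and $\bar\mu_i$ must be $C^0$-small. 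A uniform finite-subcover argument along $\Gamma([0,1])$, combined with Ma\~n\'e's construction of closed $1$-form representatives supported away from a prescribed closed set, is what makes all three compatible and is the technical heart of the proof.
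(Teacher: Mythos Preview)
Your overall architecture (partition the path, build a space-step Lagrangian on each arc, invoke nonemptiness of $\mathscr{C}(L_i)$, read off the asymptotics from the definition) matches the paper. The gap is in your construction of $\varrho_i$ and the argument for (i).

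You require that $\mathrm{supp}\,d\varrho_i$ lie inside the lift of $U_i$, so that $d(\varrho_i\bar\mu_i)=d\varrho_i\wedge\bar\mu_i$ vanishes identically and $\varrho_i\bar\mu_i$ is a globally closed $1$-form. But this is topologically impossible whenever $U_i$ is a \emph{proper} neighborhood of $\mathcal{N}(c_i)\cap\Sigma_{c_i}$ in $\Sigma_{c_i}$ (which is exactly the situation of interest, since $\mathbb{V}_{c_i}^{\perp}\neq 0$ only when the Ma\~n\'e trace does not fill $\Sigma_{c_i}$). Indeed, pick any $p\in\Sigma_{c_i}^{0}$ outside the lift of $U_i$ and take a short arc through $p$ transverse to $\Sigma_{c_i}^{0}$ joining $\bar M_{\delta}^{-}$ to $\bar M_{\delta}^{+}$; along this arc $\varrho_i$ must pass from $0$ to $1$, so $d\varrho_i$ is nonzero somewhere on it, yet the arc never meets the lift of $U_i$. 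Hence $\varrho_i\bar\mu_i$ cannot be made globally closed, and your deduction ``$\eta_i+\varrho_i\bar\mu_i$ is closed along $\bar\gamma$'' is unsupported.

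The paper fixes this not by shrinking $\mathrm{supp}\,d\varrho_i$, but by controlling where the pseudo-connecting curves go. One lets $\varrho_i$ transition across the \emph{entire} slab $\Sigma_{c_i}^{0}+\delta$, so $\varrho_i\bar\mu_i$ is genuinely not closed there. Then one observes that for the unperturbed (periodic) Lagrangian $L-\eta_i$ one has $\pi\mathcal{C}(L-\eta_i)=\mathcal{N}(c_i)$, so $(\Sigma_{c_i}^{0}+\delta)\cap\mathcal{C}(L-\eta_i)\subset U_i$ by the choice of $U_i$. Now apply the upper semi-continuity of $L\mapsto\mathscr{C}(L)$ (Theorem~\ref{semicontinuitythm3}), with $\varrho_i\bar\mu_i$ taken $C^{0}$-small, to conclude
\[
(\Sigma_{c_i}^{0}+\delta)\cap\mathcal{C}(L-\eta_i-\varrho_i\bar\mu_i)\subset U_i.
\]
Thus every $\bar\gamma\in\mathscr{C}(L_i)$ crosses the slab only inside $U_i$, where $\bar\mu_i\equiv 0$; outside the slab $\varrho_i$ is locally constant. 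In both regions $d(\varrho_i\bar\mu_i)$ vanishes \emph{along $\bar\gamma$}, which is all that is needed to conclude $\gamma=\pi\bar\gamma$ solves the Euler--Lagrange equation of $L$. In short: the upper semi-continuity is used for (i), not merely to keep the partition fine; it replaces the impossible global closedness by closedness along the relevant curves.
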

\begin{proof}
By the definition of $c$-equivalence, there exists a path $\Gamma$: $[0,1]\to H^1(M,\mathbb{R})$ with $\Gamma(0)=c^*$, $\Gamma(1)=c'$ such that for each $c=\Gamma(s)$ ($s\in [0,1]$) on the path, there exists $\epsilon>0$ such that $\Gamma(s')-c\in \mathbb{V}_{{\Gamma}(s)}^{\bot}$ whenever $s'\in [0,1]$ and
$|s-s'|<\epsilon$. Thus, there exist a non-degenerately embedded  ($n-1$)-dimensional torus $\Sigma_c$,  a closed form $\bar\mu_{c}$ and a neighborhood $U$ of $\mathcal {N}(c)\cap \Sigma_{c}$ such that $[\bar\mu_{c}]=\Gamma(s')-c$ and $\text{\rm supp}\bar\mu_{c}\cap
U=\varnothing$.

In the new coordinates $x\to G^{-1}_{c}x$ on the torus as above, the codimension one hypersurface $\Sigma_c^0$ separates $\bar M$ into two parts, the upper part $\bar M^+$ and the lower part $\bar
M^-$. $\bar M^{\pm}$ extends to where the first coordinate $x_1\to\pm\infty$. Let $\Sigma_c^0+\delta$ denotes the $\delta$-neighborhood of $\Sigma_c^0$ in $\bar M$, we introduce a smooth function $\varrho\in C^r(\bar M,[0,1])$ such that $\varrho=0$ if $x\in\bar M^- \backslash (\Sigma_c^0+\delta)$, $\varrho=1$ if $x\in\bar M^+\backslash(\Sigma_c^0+\delta)$. Let $\eta$ and $\bar\mu$ are closed 1-forms on $M$ such that $[\eta]=c$ and $[\eta+\bar\mu]=c'$. These forms have natural lift on $\bar M$, with the same notation.

A sufficiently small $\delta>0$ can be chosen so that
$$
(\Sigma_{c}^0+\delta)\cap(\mathcal {C}(L+\eta)+2\delta)\subset U,
$$
It follows from the upper semi-continuity of $\mathcal {C}(L)$ w.r.t. $L$, we find
\begin{equation}\label{typeceq1}
(\Sigma_{c}^0+\delta)\cap(\mathcal{C}(L+\eta+\varrho\bar\mu)+\delta)\subset U,
\end{equation}
if $\varrho\bar\mu$ is $C^0$-sufficiently small. As $\bar\mu$ is carefully chosen so that its support is disjoint from $U$, each curve $\bar\gamma\in\mathscr{C}(L+\eta+\varrho\bar\mu)$ is clearly a solution of the Euler-Lagrange equation determined by $L$, the term $\varrho\bar\mu$ has no contribution to the equation along $\bar\gamma$. In other words, each curve in
$\mathscr{C}(L+\eta+\varrho\bar\mu)$ generates an orbit $d\gamma$ of $\phi_L^t$: $\mathbb{R}\to TM$.

The definition of $\mathscr{C}$ tells us that for each curve $\bar\gamma\in\mathscr{C}$, $\gamma|_{(-\infty,t_0]}$ is backward $\Gamma(s)$-semi static once $\bar\gamma|_{(-\infty,t_0]}$ falls entirely into
$\bar M^-\backslash (\Sigma_c^0+\delta)$, $\gamma|_{[t_1,\infty)}$ is forward $\Gamma(s')$-semi static once $\bar\gamma|_{[t_1,\infty)}$ falls entirely into $\bar M^+\backslash (\Sigma_c^0+\delta)$. Therefore,
$(\gamma(t),\dot\gamma(t))\to\tilde{\mathcal {A}}(\Gamma(s))$ as $t\to -\infty$ and $(\gamma(t),\dot\gamma(t))\to\tilde{\mathcal {A}}(\Gamma(s'))$ as $t\to\infty$.

Because of the compactness of $[0,1]$, there are finitely many numbers $s_0,\cdots,s_k\in [0,1]$ such that above argument applies if $s$ and $s'$ are replaced respectively by $s_i$ and $s_{i+1}$. Set $c_i=\Gamma(s_i)$.
\end{proof}

\begin{cor}\label{typeccor1} Let $c_i$, $\eta_i$, $\bar\mu_i$ and $\varrho_i$ be evaluated as in  Theorem \ref{typecthm1}. Let $U_i$ be an open neighborhood of $\mathcal {N}(c_i)\cap \Sigma_{c_i}^0$ such that $U_i\cap\text{\rm supp}\bar\mu_i=\varnothing$. Then, there exist large $K_i>0$, $T_i>0$ and small $\delta>0$ such that for each $\bar m,\bar m'\in\bar M$, with $-K_i\le\bar m_1\le -K_i+2\pi$, $K_i-2\pi \le\bar m'_1\le K_i$, the quantity $h_{\eta_i,\mu_i}^{T}(\bar m,\bar m')$ reaches its minimum at some $T<T_i$ and the corresponding minimizer $\bar\gamma_{i}(t,\bar m,\bar m')$ satisfies the condition
\begin{equation}\label{typeceq2}
\text{\rm Image}(\bar\gamma_{i})\cap(\Sigma_{c_i}^0+\delta)\subset U_i.
\end{equation}
\end{cor}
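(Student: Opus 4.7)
The plan is to exploit the upper semi-continuity of the pseudo-connecting curve set $\mathscr{C}(L_i)$ together with the containment (\ref{typeceq1}) inherited from the proof of Theorem \ref{typecthm1}, so as to pass from the infinite-time, globally minimizing setting of Theorem \ref{typecthm1} to the finite-time setting with endpoints confined to the compact slabs. The Lagrangian $L_i = L - \eta_i - \varrho_i\bar\mu_i$ is a space-step Lagrangian of the type studied in Section 6.1: the positivity $\omega_1(\mu^{\pm}) > 0$ comes from hypothesis (1) of Theorem \ref{typecthm1}, and the smallness assumption $|L^- - L^+| \le \tfrac{1}{2}\min_{\omega_1=0}\beta_{L^{\pm}}$ can be arranged by subdividing the path $\Gamma$ further, so that $[\bar\mu_i]=c_{i+1}-c_i$ is as small as needed. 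In particular, Lemma \ref{semicontinuitylem2} applies, and for any $\bar m \neq \bar m'$ the function $T \mapsto h^T_{L_i}(\bar m,\bar m')$ is proper on $(0,\infty)$ and attains its infimum at some finite $T^*(\bar m,\bar m') > 0$.

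To bound $T^*$ uniformly over the compact slabs, I would fix any $\bar\gamma \in \mathscr{C}(L_i)$ (non-empty by Lemma \ref{semicontinuitylem3}) and construct a test curve running from $\bar m$ to the first intersection of $\bar\gamma$ with $\bar M^-_\delta$, along $\bar\gamma$ until it enters the opposite slab, and then to $\bar m'$. Its action is bounded by a constant $A = A(K_i)$ depending only on $K_i$, while the asymptotic lower bound $h^T_{L_i}/(2T) \ge c > 0$ for large $T$ from the proof of Lemma \ref{semicontinuitylem2} holds uniformly in endpoints drawn from a compact set. This forces $T^*(\bar m,\bar m') \le T_i := A(K_i)/(2c) + O(1)$ for all pairs of endpoints in the prescribed slabs.

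For the containment (\ref{typeceq2}), I would argue by contradiction: if no triple $(K_i,T_i,\delta)$ satisfied the statement, one could extract sequences $K_n \to \infty$, $\delta_n \to 0$, endpoints $(\bar m_n,\bar m'_n)$ in the slabs of width $2\pi$ at $\pm K_n$, minimizers $\bar\gamma_n$ of $h^{T_n^*}_{L_i}$ (with $T_n^* \le T_i(K_n)$ by the previous step, and $T_n^* \to \infty$ by the bounded-speed estimate), and crossing points $\bar z_n \in \bar\gamma_n \cap (\Sigma_{c_i}^0 + \delta_n) \setminus U_i$. Time-shift so $\bar\gamma_n(0) = \bar z_n$; the uniform $C^1$-bounds on minimizers established in the proof of Theorem \ref{semicontinuitythm1} allow extraction of a subsequence converging on every compact interval to a $C^1$ curve $\bar\gamma_\infty \colon \mathbb{R} \to \bar M$ whose $x_1$-coordinate tends to $\pm\infty$ at the two ends. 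Every finite restriction of $\bar\gamma_\infty$ is a minimizer, so $\bar\gamma_\infty \in \mathscr{G}(L_i)$; the additional global optimality over Deck translates required by Definition \ref{semicontinuitydef2} passes to the limit, because any action-reducing surgery in the style of Lemma \ref{semicontinuitylem3} would, by continuity of the action, already supply a strictly better competitor to $\bar\gamma_n$ for all large $n$. Hence $\bar\gamma_\infty \in \mathscr{C}(L_i)$ and $\bar z_\infty := \lim \bar z_n \in \Sigma_{c_i}^0 \cap \mathcal{C}(L_i)$. The containment (\ref{typeceq1}) then forces $\bar z_\infty \in U_i$, contradicting $\bar z_n \in \bar M \setminus U_i$ and the closedness of $\bar M \setminus U_i$.

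The main obstacle I expect is precisely the verification that the limit $\bar\gamma_\infty$ belongs to the pseudo-connecting set $\mathscr{C}(L_i)$ rather than merely to the larger set $\mathscr{G}(L_i)$: the pseudo-connecting property is a global minimality statement taken over all admissible Deck translates of both endpoints, and one has to argue that any hypothetical action-reducing translate survives as a concrete finite-time surgery on some $\bar\gamma_n$ with $n$ large. This step will require a careful quantitative use of the uniform $C^1$-bounds on minimizers and the bounded-speed estimate in order to control how candidate Deck translates interact with the slab boundaries as $K_n \to \infty$.
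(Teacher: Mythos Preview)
Your overall strategy is the natural one and is what the paper implicitly intends: the corollary is stated without proof, so it is meant to follow from the argument for Theorem \ref{typecthm1} together with a limit of finite-time minimizers as the slabs recede to $\pm\infty$. Your first two steps (properness of $T\mapsto h^T_{L_i}$ via Lemma \ref{semicontinuitylem2}, and the uniform bound on the optimal time via a test curve built from a pseudo-connecting curve and Proposition \ref{semicontinuitypro1}) are correct.

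The obstacle you flag is real, and your proposed resolution of it has a gap. A Deck-translate surgery on $\bar\gamma_\infty$ in the style of Lemma \ref{semicontinuitylem3} produces, after gluing with the translated tails of $\bar\gamma_n$, a curve joining ${\bf k}^-\bar m_n$ to ${\bf k}^+\bar m'_n$, not $\bar m_n$ to $\bar m'_n$. Hence it is \emph{not} a competitor for the minimization problem that $\bar\gamma_n$ solves, and no contradiction with the minimality of $\bar\gamma_n$ follows. Since the corollary is asserted for \emph{every} pair of endpoints in the slabs, you cannot sidestep this by choosing the endpoints more favourably.

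One route that closes the gap: do not try to place $\bar\gamma_\infty$ in $\mathscr{C}(L_i)$; instead show that the containment analogous to (\ref{typeceq1}) already holds with $\mathscr{G}$ in place of $\mathscr{C}$. The map $L\to\mathscr{G}(L)$ is upper semi-continuous for space-step Lagrangians by the same compactness argument as in Theorem \ref{semicontinuitythm1}, so it suffices to treat the periodic case $L-\eta_{c_i}$. There, use Proposition \ref{semicontinuitypro1} (which gives $|A(\bar\gamma|_{[-T,T]})|\le K$ for every $\bar\gamma\in\mathscr{G}$) together with the hypothesis $\omega_1(\mu_{c_i})>0$: the induced sequence of occupation measures on $TM$ must converge to a $c_i$-minimal measure, and then the time-free minimality on every subinterval forces $\pi\bar\gamma$ to be $c_i$-semi-static, i.e., $\pi\mathscr{G}(L-\eta_{c_i})\subset\mathcal{N}(c_i)$. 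The crossing with $\Sigma_{c_i}^0$ then lands in $U_i$, and the perturbation to small $\varrho_i\bar\mu_i$ goes through by upper semi-continuity.
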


There is some flexibility to choose the coordinate system and the non-degenerately embedded codimension one torus. Let $\pi_s$: $\bar M_s\to M=\mathbb{T}^n$ be a covering space such that $\bar M_s=\mathbb{R} \times\mathbb{T}^{n-1}$ in the coordinate system $G_s^{-1}x$.

\begin{defi}\label{typecdef1}
For $s\in [0,1]$, the non-degenerately embedded codimension one torus $\Sigma_s$ is called admissible for the coordinate system $G_s^{-1}x$ if the lift of $\Sigma_s$ to the covering space $\bar M_s$ consists of infinitely many connected and compact components, the first component of the rotation vector is positive $\omega_1(\mu_{\Gamma(s)})$ for each ergodic $\Gamma(s)$-minimal measure.
\end{defi}

Let us describe how the equivalence relation is established between two classes near strong double resonance. Let $\Gamma\subset\mathbb{A}\subset\alpha^{-1}(E)$ be a curve skirting around the flat $\mathbb{F}_0$, along which the $\alpha$-function keeps constant and the third coordinate $c_3$ keeps constant as well. For each $c\in\Gamma$, there exists certain coordinate system and finitely many intervals $I_{c,i}$ for $x_2$-coordinates such that each $c$-semi static curve passes through the section $\Sigma_c=\{x_1=0\}$ and
$$
\mathcal{N}(c)\cap\Sigma_c\subset\{(x_1,x_2,x_3):x_1=0,x_2\in\cup I_{c,i},x_3\in\mathbb{T}\}.
$$
Clearly, some open set $U\supset\mathcal{N}(c)\cap\Sigma_c$ such that $V_c=i_{U*}H_1(U,\mathbb{R})=\text{\rm span}\{(0,0,1)\}$, from which one obtains that $V_c^{\perp}=\text{\rm span}\{(1,0,0),(0,1,0)\}$. For each class $c'\in\Gamma$ very close to $c$, one has $c'-c=(\Delta c_1,\Delta c_2,0)\in V_c^{\perp}$, thus, there exists a closed 1-form $\bar\mu$ such that $[\bar\mu]=c'-c$ and
$$
\text{\rm supp}\bar\mu\cap\mathcal{N}(c)\cap\Sigma_c=\varnothing.
$$
Therefore, all classes along the curve $\Gamma$ are equivalent in this case.

\subsection{Local connecting orbits of type-$h$}
Another type of local connecting orbits look like heteroclinic orbits. Therefore, we call them local connecting orbits of type-$h$.

It is used to handle a typical case when an Aubry set falls in a neighborhood $N$ of some lower dimensional torus such that $H_1(M,N,\mathbb{Z})\neq 0$. Equivalence relation seems not exist among those classes if the Aubry sets is located in $N$. However, each of these Aubry sets has homoclinic orbit, it may lead to the existence of heteroclinic orbits. Towards this goal, let us work in suitable finite covering manifold $\check{\pi}$: $\check{M}\to M$. In this covering space, these homoclinic orbits turn out to be semi-static orbits.  We assume that the Aubry set $\mathcal{A}(c,\check{M})$ consists of finitely many classes $\mathcal{A} (c)=\mathcal{A}_1\cup\cdots\cup\mathcal{A}_k$ $(k>1)$,  $\check{M}$ is chosen so that the lift of $N$, $\check{N}=N_1\cup\cdots\cup N_k$ with $k>1$, $\check{\pi}N_i=N$ and $\text{\rm dist}(N_i,N_j)>0$ provided $i\neq j$. In the following, we denote by $N_i$ the open neighborhood such that each $N_i$ contains one Aubry class $N_i\supset\mathcal{A}_i$.

If an Aubry set contains finitely many static classes only, denoted by $\tilde{\mathcal{A}}_i$ ($i=1,2,\cdots,k$), then these classes are transitive in the following sense: by rearranging the
subscripts, there exist $k$ semi-static curves $\gamma_{i,i+1}$ $(\text{\rm mod}\ k)$ such that $\omega(d\gamma_{i,i+1})\subseteq\tilde{\mathcal{A}}_{i+1}$ and $\alpha(d\gamma_{i,i+1})\subseteq
\tilde{\mathcal{A}}_i$ \cite{CP}. It does not exclude the case that some semi-static curve $\gamma_{i,j}$ exists such that $j\neq i+1$ $(\text{\rm mod}\ k)$, $\alpha(d\gamma_{i,j})\subseteq \tilde{\mathcal{A}}_{i}$ and $\omega(d\gamma_{i,j})\subseteq\tilde{\mathcal{A}}_j$. We say that $\tilde{\mathcal{A}}_i$ is connected to $\tilde{\mathcal{A}}_j$ through $\tilde{\mathcal{A}}_{i'}$ with $i'=i+1,i+2,\cdots,j-1$ if there exist semi-static curves $\gamma_{i',i'+1}$ such that $\omega(d\gamma_{i',i'+1})\subseteq \tilde{\mathcal{A}}_{i'+1}$ and $\alpha(d\gamma_{i',i'+1})\subseteq \tilde{\mathcal{A}}_{i'}$.

The Aubry set $\mathcal{A}_i$ is said to be directly connected to the Aubry set $\mathcal{A}_j$ if a semi-static curve $\gamma$: $\mathbb{R}\to M$ exists such that $\omega(d\gamma)\subseteq \tilde{\mathcal{A}}_{j}$ and
$\alpha(d\gamma)\subseteq \tilde{\mathcal{A}}_{i}$. That $\mathcal{A}_i$ is directly connected to $\mathcal{A}_j$ does not imply that $\mathcal{A}_j$ is directly connected to $\mathcal{A}_i$.

Pick up two points $x_i\in\mathcal{A}_i$, $x_j\in\mathcal{A}_j$, we consider the quantity
$$
h_c^{T}(x_i,x_j)=\inf_{\stackrel{\gamma(-T) =x_i}{\scriptscriptstyle \gamma(T)=x_j}} \int_{-T}^{T}
L_{c}(d\gamma(t))dt +2T\alpha(c).
$$
By standard notation,
$$
h_c^{\infty}(x_i,x_j)=\liminf_{T\to\infty}h_c^{T}(x_i,x_j).
$$
Let $\gamma^T$: $[-T,T]\to M$ be the minimal curve realizing the quantity $h_c^{T}(x_i,x_j)$. Let $[t_{i,T},t_{j,T}]$ be the sub-interval of $[-T,T]$ such that $\gamma^{T}(t)\notin N_i\cup N_j$ for $t\in(t_{i,T},t_{j,T})$ but $\gamma^{T}(t_{i,T})\in\bar N_i$ and $\gamma^{T}(t_{j,T})\in\bar N_j$. In the case that $\mathcal{A}_i$ is directly connected only to $\mathcal{A}_j$, $t_{j,T}-t_{i,T}$ is upper bounded uniformly for $T>0$. Some sequence of time $t_{T}$ and a positive number $\Delta>0$ such that $[t_{T}-\Delta, t_{T}+\Delta]\subset (t_{i,T},t_{j,T})$ for sufficiently large $T$. The set of curves $\{\gamma^{T}(t-t_{T}) |_{[-\Delta,\Delta]}\}$ is compact in $C^1$-topology. Let $\gamma|_{[-\Delta, \Delta]}$
be the accumulation point which can be uniquely extended to whole line $\gamma$: $\mathbb{R}\to M$. Clearly, $\alpha(d\gamma)\subset\tilde{\mathcal{A}}_i$ and $\omega(d\gamma)\subset \tilde{\mathcal{A}}_j$. If $\mathcal{A}_i$ is directly connected also to other $\mathcal{A}_k$, one can also obtain such a sequence of curves by introducing small perturbation so that $\mathcal{A}_i$ is directly connected only to $\mathcal{A}_j$ and the support of the perturbation does not touch the semi-static curves connecting $\mathcal{A}_i$ to $\mathcal{A}_j$.

Given a semi-static curve one can choose an $(n-1)$-dimensional disk $\Sigma$ intersecting the curve transversally.  This disk also intersects semi-static curves nearby. A semi-static curve is said {\it disconnected} to other semi-static curves if the intersection point is disconnected to the intersection points of all other semi-static curves.

\begin{theo}\label{typehthm1} {\rm (Connecting Lemma)}
Assume that the Aubry set contains finitely many classes $\mathcal{A}(c)=\mathcal{A}_1\cup\cdots \cup\mathcal{A}_k$, there exist open domains $N_1\cdots N_k$ such that $\mathcal{A}_i\subset N_i$ for each $1\le i\le k$ and $\text{\rm dist}(N_i,N_j)>0$ provided $i\neq j$. If each semi-static curves connecting different Aubry sets is disconnected to all other semi-static curve, then there exists some orbit $d\gamma'$ of $\phi_L^t$ connecting $\tilde{\mathcal{A}}(c)$ to $\tilde{\mathcal {A}}(c')$ provided $\alpha(c)=\alpha(c')$, the class $c'$ is close to the class $c$, $\mathcal{A}(c')\subset \cup_{i=1}^k N_i$  and two sets $N_i$, $N_j$ exist such that $\mathcal{A}(c')\cap N_i\neq\varnothing$ and $\mathcal{A}(c')\cap N_j\neq\varnothing$.
\end{theo}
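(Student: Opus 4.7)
The plan is to apply the upper semi-continuity of pseudo connecting curves (Theorem 6.1.2) to a time-step Lagrangian that interpolates between $L-\eta_c$ and $L-\eta_{c'}$. Specifically, choose closed 1-forms $\eta_c,\eta_{c'}$ with $[\eta_c]=c$, $[\eta_{c'}]=c'$, a smooth cut-off $\chi\colon\mathbb{R}\to[0,1]$ equal to $0$ for $t\le -\delta$ and to $1$ for $t\ge\delta$, and set
$$
\tilde L(x,\dot x,t)=L(x,\dot x)-\bigl((1-\chi(t))\eta_c+\chi(t)\eta_{c'}\bigr)\dot x.
$$
Then $\tilde L^-=L-\eta_c$ and $\tilde L^+=L-\eta_{c'}$ are time-periodic Tonelli Lagrangians with $-\alpha^-=-\alpha(c)=-\alpha(c')=-\alpha^+$ by hypothesis, so the set $\mathscr{C}(\tilde L)$ of pseudo connecting curves of Section~6.1 is well defined, and outside $[-\delta,\delta]$ each curve in $\mathscr{C}(\tilde L)$ solves the Euler-Lagrange equation of $L$. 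By construction, the backward half of such a curve is $c$-semi-static for $L-\eta_c$ and the forward half is $c'$-semi-static for $L-\eta_{c'}$, so $\alpha$-limits fall in $\tilde{\mathcal A}(c)$ and $\omega$-limits in $\tilde{\mathcal A}(c')$. The only issue is therefore non-emptiness of $\mathscr{C}(\tilde L)$ and control of which components $N_i,N_j$ the endpoints land in.

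Next I would produce a specific element of $\mathscr{C}(\tilde L)$ with prescribed endpoint components by a constrained minimization. Using the transitivity result from \cite{CP} for the finitely many Aubry classes of $\mathcal A(c)$, fix an index $i_0$ and points $\bar m\in\mathcal A_{i_0}\subset N_{i_0}$, $\bar m'\in\mathcal A(c')\cap N_{j_0}$ where $j_0\ne i_0$ is chosen from the at least two indices provided by the hypothesis $\mathcal A(c')\cap N_i\ne\varnothing$, $\mathcal A(c')\cap N_j\ne\varnothing$. Consider, for $T_0,T_1\to\infty$,
$$
h_{\tilde L}^{T_0,T_1}(\bar m,\bar m')=\inf_{\stackrel{\gamma(-T_0)=\bar m}{\scriptscriptstyle\gamma(T_1)=\bar m'}}\int_{-T_0}^{T_1}\tilde L(d\gamma,t)\,dt+T_0\alpha(c)+T_1\alpha(c'),
$$
which is uniformly bounded (by joining near-Aubry segments for $\tilde L^\pm$ with a short bridging curve across $[-\delta,\delta]$). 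The standard diagonal extraction used in the proof of Lemma~6.1.1, together with the argument of Lemma~6.1.3, produces a $C^1$ accumulation curve $\gamma^\ast\in\mathscr{C}(\tilde L)$, which gives the desired orbit.

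The main obstacle is that a priori the backward (resp.\ forward) tail of $\gamma^\ast$ could accumulate on a different Aubry class than $\mathcal A_{i_0}$ (resp.\ $N_{j_0}$); this is precisely where the disconnectedness hypothesis is essential. Fix a codimension-one transversal section $\Sigma$ hitting every semi-static curve that joins distinct Aubry classes of $\mathcal A(c)$. The hypothesis says $\mathcal N(c)\cap\Sigma$ is totally disconnected in the sense that the trace of each such semi-static curve is isolated from the traces of the others. By the upper semi-continuity Theorems 6.1.2 and 6.1.3, for $c'$ sufficiently close to $c$ any pseudo connecting curve of $\tilde L$ meets $\Sigma$ arbitrarily close to these isolated traces; since the $N_i$ are pairwise separated by a positive distance, a small-neighborhood choice around each isolated trace uniquely selects one connecting component. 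Running the construction with $\bar m$ forced to lie in $N_{i_0}\cap\mathcal A(c)$ and $\bar m'$ in $N_{j_0}\cap\mathcal A(c')$ then yields a limit orbit whose $\alpha$-limit sits in $\tilde{\mathcal A}_{i_0}\subseteq\tilde{\mathcal A}(c)$ and whose $\omega$-limit sits in $\tilde{\mathcal A}(c')\cap(N_{j_0}\times\mathbb R^n)$, which is non-empty and non-trivial because $i_0\ne j_0$. This produces the required connecting orbit and completes the argument; the delicate point throughout is verifying that the bridging piece on $[-\delta,\delta]$ cannot destroy the isolation, which I would handle by shrinking $\delta$ and exploiting the $C^1$-convergence of the minimizers together with positivity of $\operatorname{dist}(N_i,N_j)$.
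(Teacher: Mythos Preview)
Your approach has a genuine gap that prevents it from producing orbits of $\phi_L^t$. On the transition interval $[-\delta,\delta]$ the interpolated form $(1-\chi(t))\eta_c+\chi(t)\eta_{c'}$ is not closed on $M\times\mathbb{R}$: the Euler--Lagrange equation for $\tilde L$ picks up the extra term $\chi'(t)(\eta_{c'}-\eta_c)$, so a pseudo connecting curve for $\tilde L$ satisfies the equation for $L$ only outside $[-\delta,\delta]$. Shrinking $\delta$ does not help, since the curve must still cross that interval and the extra force does not vanish. Thus the limit curve $\gamma^\ast$ you construct is not an orbit of the original autonomous flow. Relatedly, your use of the disconnectedness hypothesis is only heuristic: you invoke it to control which $N_i,N_j$ the tails land in, but not to make the modification invisible to the Euler--Lagrange equation, which is the real obstruction.

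The paper's proof avoids this by using a \emph{space-step} Lagrangian on the covering $\bar M=\mathbb{R}\times\mathbb{T}^{n-1}$ (available because $\alpha(c)>\min\alpha$ forces $\omega_1(\mu_c)>0$ in suitable coordinates, Proposition~\ref{typehpro1}). The transition from $c$ to $c'$ is implemented via $\mu=\rho(x)\bar\mu$ with $\rho$ a cutoff in the spatial variable $x_1$, together with an auxiliary function $\psi=\sum\upsilon_{ij}w_{ij}\langle\partial\varrho_{ij},\dot x\rangle$. The disconnectedness hypothesis is used precisely here: it allows one to choose small disks $D_{ij}$ isolating individual semi-static curves, to support $\bar\mu$ away from $\cup D_{ij}$, and to make $\psi$ a closed $1$-form on $D_{ij}$. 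The function $\psi$ then acts as a bonus that funnels every curve in $\mathscr{C}_{\eta,\mu,\psi}$ through $\cup D_{ij}$; since $\bar\mu$ vanishes there and $\psi$ is closed there, neither term contributes to the Euler--Lagrange equation along the curve, and one obtains a genuine orbit of $\phi_L^t$. Upper semi-continuity of $\mathscr{C}$ in the space-step setting (Theorem~\ref{semicontinuitythm3}) then finishes the argument for $c'$ close to $c$.
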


\begin{proof}
In autonomous case, $\tilde{\mathcal{A}}(c)$ can be connected to $\tilde{\mathcal{A}}(c')$ only if $\alpha(c)=\alpha(c')$. If both $c$ and $c'$ are the minimal points of the $\alpha$-function, then
$\tilde{\mathcal {A}}(c)\cap\tilde{ \mathcal {A}}(c')\neq\varnothing$ (see \cite{Ms}), it is trivial to connect an Aubry set to itself. Thus we only need to work on the energy level set $H^{-1}(E)$ with $E>\min\alpha$, the minimum of the $\alpha$-function. In this case, we obtain from \cite{Lx} that
\begin{pro}\label{typehpro1}
Let $L:\mathbb{T}^n\to\mathbb{R}$ be an autonomous Lagrangian of Tonelli type, the class $c$ not be the minimal point of the $\alpha$-function, and $\Omega_c$ be the flat of the $\beta$-function such that
$$
\omega\in\Omega_c\ \ \Rightarrow\ \ \alpha(c)+\beta(\omega)=\langle c,\omega\rangle.
$$
Then, there exists a coordinate system such that each rotation vector in this flat has positive first component $\omega_1>0$.
\end{pro}

The existence of such connecting orbits is derived from the upper-semi continuity of pseudo-connecting orbit set (see Definition \ref{semicontinuitydef2}). For the definition of this set in autonomous case, we need to work in certain covering space $\pi:\bar M=\mathbb{R}\times\mathbb{T}^{n-1}$ where $\omega_1(\mu_c)>0$ holds for each ergodic minimal measure $\mu_c$. By Proposition \ref{typehpro1}, it is possible if we choose suitable coordinate system. Let $\bar\gamma$ denote the lift of the curve $\gamma:\mathbb{R}\to M$, $\bar\gamma_1$ denote the first coordinate.

Let $\Sigma_0=\{x:x_1=0\}$ be a codimension one hyperplane separating $\bar M$ into two parts, the upper part $\bar M^+$ connected to $\{x_1=\infty\}$ and the lower part $\bar M^-$ connected to $\{x_1=-\infty\}$. Let $\Sigma_0+\delta$ denote the $\delta$-neighborhood of $\Sigma_0$ in $\bar M$, we introduce a smooth function $\rho\in C^r(\bar M,[0,1])$ such that $\rho=0$ if $x\in\bar M^- \backslash (\Sigma_0+\delta)$, $\rho=1$ if $x\in\bar M^+\backslash (\Sigma_0+\delta)$. Let $\eta$ and $\bar\mu$ be closed 1-forms on $M$ such that $[\eta]=c$ and $[\eta+\bar\mu]=c'$. They have natural lift on $\bar M$. Let $\mu=\rho\bar\mu$. We carefully choose smooth function $\psi=\psi(x,\dot x)$ such that $\psi=0$ as $x_1\in (-\infty,-1)\cup (1,\infty)$ (the construction will be demonstrated later) and let
$$
L_{\eta,\mu,\psi}=L-\eta-\mu-\psi.
$$
Let $\bar m,\bar m'$ be two points in $\bar M$, we define
$$
h_{\eta,\mu,\psi}^{T}(\bar m,\bar m')=\inf_{\stackrel{\gamma(-T)=\bar m}{\scriptscriptstyle \gamma(T)=\bar m'}}\int_{-T}^T(L_{\eta,\mu,\psi}(d\gamma(t))+\alpha(c))dt.
$$
For small $\mu$ and $\psi$, the Lagrangian $L_{\eta,\mu,\psi}$ satisfies the conditions required for space-step Lagrangian. In the following we shall use the notation $\mathscr{C}_{\eta,\mu,\psi}= \mathscr{C}(L_{\eta,\mu,\psi})$ to denote the relevant set of the pseudo-connecting curves.

Let us recall a graph property. Given two Aubry classes $\tilde{\mathcal{A}}_i$ and $\tilde{\mathcal{A}}_j$, let $\tilde{\mathcal{N}}_{ij}$ be the set of all semi-static orbits whose $\alpha$-limit set is in $\tilde{\mathcal{A}}_i$ and the $\omega$-limit set is in $\tilde{\mathcal{A}}_j$. Let $\mathcal{N}_{ij}=\pi_x\tilde{\mathcal{N}}_{ij}$, where $\pi_x$ denotes the standard projection $TM\to M$. Then, the inverse of $\pi_x$, restricted on $\mathcal{N}_{ij}$, is Lipschitz. The proof
is the same as that for the graph property of Aubry set.

If $\tilde{\mathcal{A}}_i$ is directly connected to $\tilde{\mathcal{A}}_j$, there exists a semi-static orbit $d\zeta_{ij}$ connecting $\tilde{\mathcal{A}}_i$ to $\tilde{\mathcal{A}}_{j}$. Pick up a curve $\bar\zeta_{ij}$ in the lift of $\zeta_{ij}$ to $\bar M$ such that its intersection point $x_0$ with the section $\{x:x_1=0\}$ is not close to ${\cup\bar N_i}$, the lift of ${\cup N_i}$ to $\bar M$. Denote by $v_0=\dot{\zeta}_{ij}$ the velocity of $\zeta_{ij}$ at $\pi x_0$, obviously, $v_0\neq 0$. Let $\varrho'$ be a smooth function in $s$ such that $\varrho'=0$ for $s\le 0$, $\varrho'=1$ for $s>\delta$ and $\dot\varrho'>0$ for $s\in (0,\delta)$, where $\delta>0$ is suitably small. Let $\varrho_{ij}(x)=\varrho'(\langle x-x_0,v_0\rangle)$, then $\langle\partial\varrho_{ij}(x),v \rangle=\langle v_0,v\rangle\dot\varrho'(s)$ where $s=\langle x-x_0,v_0\rangle$.

We choose an $(n-1)$-dimensional plane $\Sigma_{ij,s}=\{x:\langle x-x_0,v_0\rangle=s\}$. Since the set of semi-static curves is totally disconnected, we can choose, for each $s\in [0,\delta]$, two suitably small $(n-1)$-dimensional topological disks $D'_{ij,s},D_{ij,s}$ located in $\Sigma_{ij,s}$ and small $\delta_1>0$ such that $D'_{ij,s} \cap(\cup N_j)=\varnothing$, $D'_{ij,s}\supset D_{ij,s}+\delta_1$, certain semi-static curve $\zeta_{ij}$ passes through the disk $D_{ij,s}$ and no semi-static curve in $\mathcal{N}_{ij}$ passes through $D'_{ij,s}\backslash D_{ij,s}$. These disks can be chosen so that the Hausdorff distance $d_H(D_{ij,s},D_{ij,s'})\to 0$ and $d_H(D'_{ij,s},D'_{ij,s'})\to 0$ as $s'\to s$. Let $D'_{ij}=\cup_{s\in [0,\delta]}D'_{ij,s}$, $D_{ij}=\cup_{s\in [0,\delta]}D_{ij,s}$. We choose a smooth non-negative function $w_{ij}$: $\bar M\to\mathbb{R}$ such that $\text{\rm supp}w_{ij}\cap\{x:0\le\langle x-x_0,v_0\rangle \le\delta\}=D'_{ij}$ and $w_{ij}\equiv\lambda$ if $x\in D_{ij}$.

For different $(i,j)\neq(i',j')$, it is possible that $D_{ij}\cap\mathcal{N}_{i',j'}\neq\varnothing$. But it does not make trouble, as $\tilde{\mathcal{N}}_{ij}\cap\tilde{\mathcal{N}}_{i'j'}=\varnothing$. Let $S_{ij}$ be the graph of a Lipschitz map $x\to\dot x$ containing $\tilde{\mathcal{N}}_{ij}$. Therefore, we can
choose a smooth function $\upsilon_{ij}$: $TM\to [0,1]$ such that $\upsilon_{ij}\equiv 1$ when $(x,\dot x)\in (S_{ij}+\delta_2)\cap TD'_{ij}$ and $\upsilon_{ij}\equiv 0$ when $(x,\dot x)\notin S_{ij}+\delta_3$, where $\delta_3>\delta_2>0$ are small numbers. As there are finitely many Aubry classes, we have $\text{\rm supp}\upsilon_{ij}\cap \text{\rm supp}\upsilon_{i'j'}=\varnothing$ if $(i,j)\neq(i',j')$.

Let us consider what curves contained in the set $\mathscr{C}_{\eta,0,\psi}$ by assuming
$$
\psi=\sum\upsilon_{ij}w_{ij}\langle\partial\varrho_{ij},\dot x\rangle.
$$
Since the term $\langle\partial\varrho_{ij},\dot x\rangle=0$ for $\{\langle x-x_0,v_0\rangle\le 0\}\cup \{ \langle x-x_0,v_0\rangle\ge\delta\}$, we do not care about how $w_{ij}$ is defined on those $(n-1)$-dimensional plane $\Sigma_{ij,s}$ with $s\notin [0,\delta]$. The set  $\{\langle x-x_0,v_0\rangle=s\}\subset\bar M$ may extend to infinity, but it does not make trouble since the support of $w_{ij}$ is contained in $D_{ij,s}$ when $\langle x-x_0,v_0\rangle=s$.

Let $\psi_0$ be the function defined on $TM$ such that $\pi\psi|_{\{x_1\in [-\pi,\pi]\}} =\psi_0$. By the construction of $\psi$ we see that $\psi_0$ is well-defined and smooth. The Aubry set for the Lagrangian $L-{\eta}-\psi_0$ is the same as for $L-{\eta}$. As there is no semi-static curve of $L-{\eta}$ touches the tube $D'_{ij}\backslash D_{ij}$, each semi-static curve of $L-\eta$ also solves the Euler-Lagrange equation determined by $L-{\eta}-\psi_0$. Because of the upper semi-continuity of $L\to\mathcal{N}(L)$, each semi-static curve for $L-\eta-\psi_0$ stays in a small neighborhood of $\mathcal{N}(L)$. Since $\psi<0$ if $x\in\cup D_{ij}$ and $\psi=0$ if $x\notin \cup D'_{ij}$,  a curve is still semi-static for $L-{\eta}-\psi_0$ if it is semi-static for $L-{\eta}$ and passes through the solid cylinder $D_{ij}$. It is based on following argument. Since the 1-form $w_{ij}\langle\partial \varrho_{ij}, dx\rangle$ is closed in $D_{ij}$, $\langle\partial\varrho_{ij},\dot x\rangle=\dot\varrho'(s)\langle\dot x,v_0\rangle=0$ on each $\Sigma_{ij,s}$ with $s\notin [0,\delta]$, this term has no contribution to the Euler-Lagrange equation along this semi-static curve, i.e. this curve solves the Euler-Lagrange equation determined by $L-\eta-\psi_0$ also. Any other semi-static curve for $L-\eta$ is no longer minimal for $L-\eta-\psi_0$ if it connects $\mathcal{A}_i$ to $\mathcal{A}_{j}$ but does not pass through $D_{ij}$, for there exists some semi-static curve $\zeta_{ij}$ of $L-\eta$ passing through $D_{ij}$, along which the action is smaller than the action along $\gamma_{ij}$.

Let us go back to the covering space $\bar M$. For small $\psi$, realized by choosing small $w_{ij}$, each curve $\bar\gamma\in\mathscr{C}_{\eta,0,\psi}$ stays in a small neighborhood of certain curve belong to the lift of the semi-static curve. It is due to the upper semi-continuity of $L\to\mathscr{C}(L)$.  By the discussion above, a curve does not belong to $\mathscr{C}_{\eta,0,\psi}$ if its projection does not belong to the Aubry set for $L-\eta$, or dose not pass through $D_{ij}$ although it is semi-static and connects $\mathcal{A}^i$ to $\mathcal{A}^j$.

Let $\bar\zeta_{ij}$ be a curve in $\mathscr{C}_{\eta,0,\psi}$ passing through $D_{ij}$. Its projection $\pi\bar\zeta$ connects $\mathcal{A}^i$ to $\mathcal{A}^j$. Let $k^*\bar\zeta_{ij}=\bar\zeta_{ij} +(k,0,\cdots,0)$ with $k\in\mathbb{Z}$ denote its shift. Each of these curves solves the Euler-Lagrange equation determined by $L_{\eta,0,\psi}$. However, except for $\bar\zeta_{ij}$, any other curve $k^*\bar\zeta_{ij}$ with $k\neq 0$ does not belong to $\mathscr{C}_{\eta,0,\psi}$ because they do not pass through $D_{ij}$, the action along these curves is bigger than the action along $\bar\zeta_{ij}$. It can be easily seen from the definition \ref{semicontinuitydef2}: minimal property persists under translation.

In the cylinder $\bar M$ we choose two sections $\Sigma^+$ and $\Sigma^-$ such that:

1, both are the deformation of the section $\{x:x_1=\text{\rm constant}\}$, they divide $\bar M$ into three parts, $\bar M^+$, $\bar M^-$ and $\bar M_0$. $\bar M^+$ is homeomorphic $(0,\infty)\times\mathbb{T}^{n-1}$, $\bar M^-$ is homeomorphic $(-\infty,0)\times\mathbb{T}^{n-1}$ and $\bar M_0$ is homeomorphic to $(0,1)\times\mathbb{T}^{n-1}$. Let $\Sigma_{\pm}$ denote the boundary of $\bar M^{\pm}$ shared with $\bar M_0$;

2, there exists $\delta_4>0$ such that $\cup D'_{ij}+\delta_4\subset\bar M_0$;

3, for each $\bar\zeta_{ij}\in\mathscr{C}_{\eta,0,\psi}$, both $\text{\rm Image}\bar\zeta\cap\bar M^+$ and $\text{\rm Image}\bar\zeta\cap\bar M^-$ are connected, i.e. if one moves into $\bar M_{\pm}$ along the curve as $t\to\pm\infty$ then it stays in $\bar M_{\pm}$ forever.

Let $U^+_{ij}$ be the tube connecting $D_{ij}$ to $\bar M^+$, $U^+_{ij}\cap D_{ij}=D_{ij,\delta}$, each $\bar\zeta_{ij}\in\mathscr{C}_{\eta,0,\psi}$ passes through $U^+_{ij}$, does not touch the boundary of $U^+_{ij}$ before it moves forward into $\bar M^+$. Similarly, we define the tube $U^-_{ij}$ connecting $D_{ij}$ to $\bar M^-$ such that $U^-_{ij}\cap D_{ij}=D_{ji,0}$, each of those curve passes through $U^-_{ij}$, does not touch the boundary of $U^-_{ij}$ before it is going to retreat back into $\bar M^-$.

Since there are finitely many Aubry classes only, by choosing suitably small $D'_{ij}$ we can assume $\text{\rm dist}(D'_{ij},D'_{i'j'})>0$ if $(i,j)\neq (i',j')$. A closed 1-form $\bar\mu$ clearly exists such that $[\bar\mu]=c'-c$ and $\text{\rm supp}\bar\mu \cap(\cup D_{ij})=\varnothing$. Let $\rho':\mathbb{R}\to [0,1]$ be a smooth function such that $\rho'=0$ for $s\le 0$, $\rho'=1$ for $s>\delta$ and let $U'_{ij}$ be an open set containing the closure of $U^+_{ij} \cup D'_{ij}\cup U^-_{ij}$ and $\text{\rm dist}(U'_{ij},U'_{i'j'})>0$ if $(i,j)\neq (i',j')$. We define a smooth
function $\rho$: $\bar M\to [0,1]$ such that $\rho(x)=\rho'(\langle x-x_0,v_0\rangle)$ if $x\in D_{ij}$ where $x_0=\bar\zeta_{ij} (t_0)$ and $v_0=\dot{\bar\zeta}_{ij}(t_0)$, $\rho=1$ if $x\in \bar M^+\cup (\cup U^+_{ij})$ and $\rho(x)=0$ if $x\in \bar M^-\cup(\cup U^-_{ij})$. By the construction of $\bar M^{\pm}$, $U^{\pm}_{ij}$ and $D_{i,j}$, we see the existence of such function.

Let us now study the Lagrangian $L_{\eta,\mu,\psi}$ with $\mu=\rho\bar\mu$. By condition, $\mathcal{A}(c')\cap N_i\neq\varnothing$, $\mathcal{A}(c')\cap N_j\neq\varnothing$ and $i\neq j$. Thus, there exist $x_i\in\mathcal{M}(c)\cap N_i$ and $x_j\in\mathcal{M}(c')\cap N_j$. Let $\bar x_i$ and $\bar x_j$ be two points in $\bar M$ such that $\pi\bar x_i=x_i$ and $\pi\bar x_j=x_j$ and let $\bar x_{ik}=\bar x_i-ke_1$ and $\bar x_{jk}=\bar x_j+ke_1$ where $e_1=(1,0,\cdots,0)$. Let $\bar\gamma_k$: $[-T,T] \to\bar M$ be the minimizer of
$$
\inf_{T'>0}h_{\eta,\mu,\psi}^{T'}(\bar x_{ik},\bar x_{jk})=
\int_{-T}^{T} L_{\eta,\mu,\psi}(d\bar\gamma_k(t))dt+2T\alpha(c),
$$
and let $k\to\infty$, we obtain a sequence of $\{\bar\gamma_k\}$. Let $\bar\gamma$: $\mathbb{R}\to \bar M$ be the accumulation point of the sequence. Due to the upper semi-continuity of $\mathscr
{C}_{\eta,\mu,\psi}$ with respect to $(\eta,\mu,\psi)$, the curve $\bar\gamma$ must pass through $\cup D_{ij}$ if $|c'-c|$ is suitably small. Thus, along the curve $\bar\gamma$ the term $\rho\bar\mu$ does not contribute the Lagrange equation, namely, the curve determines an orbit of $\phi_L^t$. Since this curve is in the set $\mathscr{C}_{\eta,\mu,\psi}$, therefore, it connects $\tilde{\mathcal{A}}(c)$ to $\tilde{\mathcal{A}}(c')$. This completes the proof.
\end{proof}

\subsection{Locally minimal property}

The orbit $d\gamma$ obtained in Theorem \ref{typehthm1} is locally minimal in the sense we define in the following. It is crucial for the variational construction of global connecting orbits. The set of local minimal curve will not be empty if the Aubry set $\mathcal{A}(c)$ has some totally disconnected minimal homoclinic orbit, the 1-form $\mu$ as well as the function $\psi$ is carefully chosen for the modified Lagrangian.

Here is the definition for autonomous systems:

\begin{defi}\label{localdef2} Let $N_1,\cdots,N_k\subset M$ be open domains such that $\text{\rm dist}(N_i,N_j)>0$ $(k>1)$. We assume that $\mathcal{A}(c), \mathcal{A}(c')\subset\cup N_i$, $[\eta]=c$, $[\eta+ \bar\mu]=c'$, $\alpha(c)=\alpha(c')$ and the first component of both $c$- and $c'$-minimal measures is positive $\omega_1(\mu_c)>0$, $\omega_1(\mu_{c'})>0$. Let $\pi:\bar
M=\mathbb{R}\times\mathbb{T}^{n-1}\to M$ be the covering space, denote by $\bar\gamma$ the lift of a curve $\gamma:\mathbb{R}\to M$. Then, $d\gamma$: $TM\to\mathbb{R}$ is called local minimal orbit of type-$h$ that connects $\tilde{\mathcal{A}}(c)$ to $\tilde{\mathcal{A}}(c')$ if

1, $d\gamma$ is an orbit of $\phi_L^t$, $\alpha(d\gamma)\subset\tilde{\mathcal{A}}(c)$ and $\omega(d\gamma)\subset\tilde{\mathcal{A}}(c')$. There exist $1\le i\neq j\le k$ such that $\alpha(d\gamma)\subset TN_i$ and $\omega(d\gamma)\subset TN_j$;

2, there exist two $(n-1)$dimensional disks $V_i^-$, $V_j^+\subset\bar M$ and positive numbers $T,d>0$ such that $\pi V_i^-\subset N_i\backslash\mathcal{A}(c)$, $\pi V_j^+\subset N_j\backslash\mathcal{A}(c')$, $\gamma$ transversally  passes $\pi V_i^-$ and $\pi V_j^+$ at the time $-T$ and $T$  respectively, and
\begin{align}\label{localmineq1}
&h_c^{\infty}(x^-,\pi\bar m_0)+h_{\eta,\mu,\psi}^{T'}(\bar m_0,\bar m_1)+h_{c'}^{\infty}(\pi\bar m_1,x^+) \\
&-\lim_{\stackrel{t^-_i\to\infty}{\scriptscriptstyle t^+_i\to\infty}} \int_{-t^-_i}^{t^+_i}L_{\eta,\mu,\psi} (d\gamma(t))dt-(t_i^-+t_i^+)\alpha(c)>0\notag
\end{align}
holds for each $(\bar m_0,\bar m_1,T')\in\partial(V_i^-\times V_j^+\times [T-d,T+d])$, $x^-\in N_i\cap\pi_x(\alpha(d\gamma))$ and $x^+\in N_j\cap\pi_x(\omega(d\gamma))$. Where $t^-_i\to\infty$ and $t^+_i\to\infty$ are the sequences such that $\gamma(-t^-_i)\to x^-$ and $\gamma(t^+_i)\to x^+$.
\end{defi}

In this definition, the term $h_c^{\infty}(x^-,\pi\bar m_0)+ h_{\eta,\mu,\psi}^{T'}(\bar m_0,\bar m_1)+h_{c'}^{\infty} (\pi\bar m_1,x^+)$ measures the smallest action of $L_{\eta,\mu,\psi}$ along those curves which join $m_0$ to $m_1$ with time $2T'$ such that $x^-$ is an accumulation point of these curves as $t\to-\infty$ and $x^+$ is an accumulation point of the curves as $t\to\infty$.

\noindent{\bf Remark}. In the space of curves, a neighborhood of the curve $\gamma$ consists of those curves that start from $V^-$ and reach $V^+$ within a time between $2(T-d)$ and $2(T+d)$. Different time scale determine orbits in different energy levels, that is why we consider the time scale $T'\in [T-d,T+d]$ as variable while we search for the local minimum.

\noindent{\bf Remark}. This definition applies also to the case that there exists only one Aubry class staying in the small neighborhood of lower-dimensional torus. In that case, we can consider a suitable finite covering of the configuration manifold. In the finite covering configuration space, there are more than one Aubry class.

The following is the version for time-periodic systems

\begin{defi}\label{localdef3}
Let $N_1,\cdots,N_k\subset M$ $(k>1)$ be open domains with$\text{\rm dist}(N_i,N_j)>0$. We assume that $\mathcal{A}_0(c), \mathcal{A}_0(c')\subset\cup N_i$, $[\eta]=c$, $[\eta+ \bar\mu]=c'$. Then, $d\gamma$: $TM\to\mathbb{R}$ is called  local minimal orbit of type-$h$ that connects
$\tilde{\mathcal{A}}(c)$ to $\tilde{\mathcal{A}}(c')$ if

1, $d\gamma$ is an orbit of $\phi_L^t$, the $\alpha$-limit and the $\omega$-limit sets of $d\gamma$ are contained in $\tilde{\mathcal{A}}(c)$ and $\tilde{\mathcal{A}}(c')$ respectively, $\alpha(d\gamma)|_{t=0}\subset TN_i$ and $\omega(d\gamma)|_{t=0}\subset TN_j$ with $i\neq j$;

2, there exist two open balls $V_i^-$, $V_j^+$ and two positive integers $t^-,t^+$ such that $\bar V_j^-\subset N_i\backslash\mathcal {A}_0(c)$, $\bar V_j^+\subset N_j\backslash\mathcal {A}_0(c')$, $\gamma(-k^-)\in V_i^-$, $\gamma(k^+)\in V_j^+$ and
\begin{align}\label{localeq1}
&h_c^{\infty}(x^-,m_0)+h_{\eta,\mu,\psi}^{k^-,k^+}(m_0,m_1)+h_{c'}^{\infty}(m_1,x^+)\notag\\
&-\liminf_{\stackrel {k^-_i\to\infty}{\scriptscriptstyle k_i^+\to\infty}}\int_{-k^-_i}^{k^+_i}
L_{\eta,\mu,\psi}(d\gamma(t),t)dt-k^-_i\alpha(c)-k^+_i\alpha(c')>0\notag
\end{align}
holds $\forall$ $(m_0,m_1)\in\partial(V_i^-\times V_j^+)$, $x^-\in N_i\cap\pi_x(\alpha(d\gamma))_{t=0}$, $x^+\in N_j\cap\pi_x(\omega(d\gamma))|_{t=0}$, where $k^-_i, k^+_i\in\mathbb{Z}^+$ are the sequences such that $\gamma(-k^-_i)\to x^-$ and $\gamma(k^+_i)\to x^+$.
\end{defi}

The set of curves starting from $V^-$ and reaching $V^+$ with time $k^-+k^+$ constitutes a neighborhood of the curve $\gamma$ in the space of curves. Once a curve $\tilde\gamma$ touches the boundary of this neighborhood, the action of $L_{\eta,\mu,\psi}$ along $\tilde\gamma$ will be larger than the action along $\gamma$. As $V^-$, $V^+$ and therefore $d>0$ can be chosen arbitrarily small, it is reasonable to call it locally minimal.

\section{\ui Variational construction of global connecting orbits}
\setcounter{equation}{0}

In this section we show how to construct global connecting orbits by variational method, provided a generalized transition chain exists. In the next section, the main result (Theorem \ref{mainthm}) is proved by showing the genericity of such transition chain.

\subsection{Generalized Transition chain}

The concept of transition chain was proposed by Arnold in his celebrated paper \cite{Ar1} where it is formulated in geometric language. The generalized transition chain formulated in our previous work \cite{CY1,CY2} is in variational version which need less information about the geometric structure.

\begin{defi}\label{chaindef1} {\rm (Autonomous Case)}
Let $c$, $c'$ be two cohomolgy classes in $H^1(M,\mathbb{R})$. We say that $c$ is joined with $c'$ by a generalized transition chain if a continuous curve $\Gamma$: $[0,1]\to H^1(M,\mathbb{R})$ exists such that $\Gamma(0)=c$, $\Gamma(1)=c'$, $\alpha(\Gamma(s))\equiv E>\min\alpha$ and for each $s\in [0,1]$  at least one of the following cases takes place:

$\text{\rm (H1)}$, the Aubry set is composed of finitely many classes only. There exist certain finite covering: $\check{\pi}:\check{M}\to M$, two open domains $N_1,N_2\subset\check M$ with $d(N_1,N_2)>0$, an $(n-1)$ dimensional disk $D_{s}$ and small numbers $\delta_s,\delta'_s>0$ such that

{\rm i}, the Aubry set $\mathcal{A}(\Gamma(s))\cap N_1\neq\varnothing$, $\mathcal{A}(\Gamma(s))\cap N_2\neq\varnothing$ and $\mathcal{A}(\Gamma(s'))\cap (N_1\cup N_2)\neq\varnothing$ for each $|s'-s|<\delta_s$,

{\rm ii}, $\check\pi\mathcal{N} (\Gamma(s),\check M)|_{D_{s}}\backslash (\mathcal{A}(\Gamma(s))+\delta'_s)$ is non-empty and totally disconnected;

$\text{\rm (H2)}$, For each $s'\in (s-\delta_s,s+\delta_s)$, $\Gamma(s')$ is equivalent to $\Gamma(s)$. Some section $\Sigma_s$ and some neighborhood $U$ of $\mathcal{N}(\Gamma(s))\cap \Sigma_{s}$ exist such that  $\Gamma(s')-\Gamma(s)\in\text {\rm ker}\,i^*_U$. Each class $\Gamma(s')$ is associated with an admissible section $\Sigma_{s'}$ and an admissible coordinate system $G_{s'}^{-1}x$.
\end{defi}

\noindent{\bf Remark}. Because of upper semi-continuity of Ma\~n\'e set, it is possible that there exist some classes for which both cases take place.

In the case (H1), if the Aubry set contains only one Aubry class, one can take some finite covering $\check\pi:\check M\to M$ non trivial if $H_1(M,\mathcal{A},\mathbb{Z})\neq 0$. A typical case is that $\mathcal{A}(\Gamma(s))$ is contained in a small neighborhood of lower dimensional torus. One takes suitable finite covering space so that $\mathcal{A}(\Gamma(s),\check M)$ contains exactly two connected components. If $\mathcal{A}(\Gamma(s))$ contains more than one class, we choose $\check M=M$.

The existence of generalized transition chain implies that there exists a sequence of local connecting orbits. More precisely, there exists a sequence of locally minimal curve $\gamma_i$, a sequence of numbers $s_i$ $(s=0,1,\cdots,m)$ such that $\alpha(d\gamma_i)\subset \mathcal{A}(\Gamma(s_i))$ and $\omega(d\gamma_i))\subset\mathcal {A}(\Gamma(s_{i+1}))$. Global connecting orbits are constructed shadowing these local connecting orbits.

One can also define generalized transition chain for time-periodic systems.
\begin{defi}{\rm (Time-periodic Case)}
Let $c$, $c'$ be two classes in $H^1(M,\mathbb{R})$. We say that $c$ is joined with $c'$ by a generalized transition chain if a continuous curve $\Gamma$: $[0,1]\to H^1(M,\mathbb{R})$ exists such that $\Gamma(0)=c$, $\Gamma(1)=c'$ and for each $s\in [0,1]$  at least one of the following cases takes place:

$\text{\rm (H1)}$, the Aubry set is composed of finitely many classes only. There exist certain finite covering: $\check{\pi}:\check{M}\to M$, two open domains $N_1, N_2$ with $d(N_1,N_2)>0$ and small number $\delta_s,\delta'_s>0$ such that

{\rm i}, the Aubry set $\mathcal{A}_0(\Gamma(s))\cap N_1\neq\varnothing$, $\mathcal{A}_0(\Gamma(s))\cap N_2\neq\varnothing$ and $\mathcal{A}_0(\Gamma(s'))\cap(N_1\cup N_2)\neq\varnothing$ for each $|s'-s|<\delta_s$,

{\rm ii}, $\check\pi\mathcal{N}_0 (\Gamma(s),\check M)\backslash(\mathcal{A}_0(\Gamma(s))+\delta'_s)$ is non-empty and totally disconnected;

$\text{\rm (H2)}$, For each $s'\in (s-\delta_s,s+\delta_s)$, $\Gamma(s')$ is equivalent to $\Gamma(s)$, namely, there exists a neighborhood of $\mathcal{N}_0(\Gamma(s))$, denoted by $U$,  such that  $\Gamma(s')-\Gamma(s)\in\text {\rm ker}\,i^*_U$.
\end{defi}

\subsection{Variational construction}

Given $x\in M$ and $c\in H^1(M,\mathbb{R})$, there exists at least a forward (backward) $c$-semi static curve $\gamma^+_c$: $[0,\infty)\to M$ ($\gamma^-_c$: $(-\infty,0]\to M$) such that $\gamma_c ^{\pm}(0)=x$. It determines certain velocity $v_{x,c}^{\pm}=\dot\gamma_c^{\pm}(0)$, for almost all points, the velocity is uniquely determined. Before proving the main theorem of this subsection, let us formulate and prove a proposition.
\begin{pro}\label{constructionpro1}
Given an Aubry set, the Aubry distance from any Aubry class $\mathcal{A}^i$ to all other Aubry classes is assumed have positive lower bound, namely, some $d>0$ exists such that $d_c(\mathcal{A}^i,\mathcal{A}^j)\ge d>0$ for all $j\neq i$. Let
$$
N_i=\{m\in M: h^{\infty}(m,x)+h^{\infty}(x,m)\le\frac d6,\ \forall\ x\in\mathcal{A}^i\},
$$
then for all $m_0,m_1\in N_i$ and for any $x\in\mathcal{A}^i$ one has
\begin{equation}\label{constructioneq1}
h^{\infty}(m_0,x)+h^{\infty}(x,m_1)=h^{\infty}(m_0,m_1);
\end{equation}
for any $m_0,m_1\in N_i$ and any $x\in\mathcal{A}\backslash\mathcal{A}^i$ one has
\begin{equation}\label{constructioneq2}
h^{\infty}(m_0,x)+h^{\infty}(x,m_1)\ge h^{\infty}(m_0,m_1)+\frac d2.
\end{equation}
\end{pro}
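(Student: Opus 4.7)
The plan is to recast both identities in terms of an auxiliary quantity
\[
C_j(m_0,m_1) := h^\infty(m_0,x)+h^\infty(x,m_1),\qquad x\in\mathcal{A}^j,
\]
and to reduce the proposition to the identity $h^\infty(m_0,m_1)=\min_j C_j(m_0,m_1)$ combined with a handful of triangle estimates for the pseudo-metric $d_c$.

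First I would check that $C_j(m_0,m_1)$ does not depend on the choice of $x\in\mathcal{A}^j$: for $x,x'$ in the same Aubry class one has $h^\infty(x,x')+h^\infty(x',x)=0$, so inserting the zero-cost round trip $x\to x'\to x$ into $h^\infty(m_0,x)+h^\infty(x,m_1)$ and invoking sub-additivity of $h^\infty$ produces $h^\infty(m_0,x)+h^\infty(x,m_1)\le h^\infty(m_0,x')+h^\infty(x',m_1)$, and by symmetry equality. The triangle inequality for $h^\infty$ then gives the trivial upper bound $h^\infty(m_0,m_1)\le C_j(m_0,m_1)$ for every $j$.

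The decisive estimate is a lower bound on $C_j$ when $j\ne i$ and $m_0,m_1\in N_i$. From $d_c(\mathcal{A}^i,\mathcal{A}^j)\ge d$ together with $d_c(m_0,\mathcal{A}^i)\le d/6$ (and likewise for $m_1$), the triangle inequality for $d_c$ yields $d_c(m_0,x_j)\ge 5d/6$, while $d_c(m_0,m_1)\le d_c(m_0,x)+d_c(x,m_1)\le d/3$ for any $x\in\mathcal{A}^i$. Unfolding $d_c(x_j,m_0)=h^\infty(m_0,x_j)+h^\infty(x_j,m_0)$ and using $h^\infty(x_j,m_0)\le h^\infty(x_j,m_1)+h^\infty(m_1,m_0)$, together with $h^\infty(m_1,m_0)=d_c(m_0,m_1)-h^\infty(m_0,m_1)$, I obtain
\[
C_j(m_0,m_1)\ge d_c(x_j,m_0)-h^\infty(m_1,m_0)\ge \tfrac{5d}{6}-d_c(m_0,m_1)+h^\infty(m_0,m_1)\ge h^\infty(m_0,m_1)+\tfrac{d}{2},
\]
which is exactly the second claimed inequality. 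For the complementary upper bound on $C_i$, the same triangle trick with $x\in\mathcal{A}^i$ gives $C_i(m_0,m_1)\le d_c(x,m_0)+h^\infty(m_0,m_1)\le d/6+h^\infty(m_0,m_1)$, so $C_i<C_j$ for every $j\ne i$.

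The only remaining step, and the main obstacle, is to show $h^\infty(m_0,m_1)\ge \min_j C_j(m_0,m_1)$. The plan here is a standard compactness / diagonal extraction on a minimizing sequence $\gamma_n:[0,T_n]\to M$ with $T_n\to\infty$ and $\int L_c(d\gamma_n)dt+T_n\alpha(c)\to h^\infty(m_0,m_1)$: Tonelli bounds force $\{\dot\gamma_n\}$ to be equibounded, and diagonal extraction produces a bi-infinite accumulation minimizer whose $\alpha$- and $\omega$-limits lie in the Aubry set. Choosing an accumulation point $x_\star\in\mathcal{A}^{j_\star}$ on that orbit and splitting the action of $\gamma_n$ at a time $s_n$ with $\gamma_n(s_n)\to x_\star$, one arrives at $h^\infty(m_0,m_1)\ge h^\infty(m_0,x_\star)+h^\infty(x_\star,m_1)=C_{j_\star}(m_0,m_1)\ge\min_j C_j$. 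Together with $C_i<C_j$ for $j\ne i$, this forces $\min_j C_j=C_i$ and hence $h^\infty(m_0,m_1)=C_i(m_0,m_1)$, giving the first equation. The remaining numerical bookkeeping with the constants $d/6$, $d/3$, $d/2$, $5d/6$ is precisely the content of the estimates above.
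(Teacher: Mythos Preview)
Your argument is correct, and it shares the essential ingredient with the paper's proof: both hinge on the fact that $h^\infty(m_0,m_1)=C_{j_\star}(m_0,m_1)$ for \emph{some} Aubry class $\mathcal{A}^{j_\star}$, obtained by extracting an accumulation point of a minimizing sequence in the Aubry set. The paper states and proves this in its first paragraph in exactly the way you sketch as your ``main obstacle''.

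Where you diverge is in the derivation of the numerical estimates. You extend $d_c(a,b)=h^\infty(a,b)+h^\infty(b,a)$ to all of $M$ (which is legitimate: non-negativity and the triangle inequality follow immediately from sub-additivity of $h^\infty$), and then obtain \eqref{constructioneq2} in one stroke from
\[
C_j(m_0,m_1)\ge d_c(m_0,x_j)-h^\infty(m_1,m_0)\ge \tfrac{5d}{6}-d_c(m_0,m_1)+h^\infty(m_0,m_1)\ge h^\infty(m_0,m_1)+\tfrac{d}{2}.
\]
The paper instead first treats the special case $m_0=m_1$, then for distinct $m_0,m_1$ builds explicit concatenated loops through $m_0,x_0,x_1,m_1,x$ (with $x_0,x_1\in\mathcal{A}^i$) and compares their $h^{k}$-actions, arriving at the pair of inequalities labeled \eqref{constructioneq3} and \eqref{constructioneq4}; subtracting these yields $C_j\ge h^\infty(m_0,m_1)+\tfrac{2d}{3}$. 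So the paper's concrete path-splicing gives a slightly sharper constant ($2d/3$ versus your $d/2$), but both suffice for the statement, and your route is considerably shorter. Conceptually the two are the same triangle-inequality computation; yours is the distilled version, the paper's is the explicit-minimizer version.
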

\begin{proof}: For each pair of points $(m_0,m_1)\in M\times M$, we claim that there exists some Aubry class $\mathcal{A}^j$ such that
$$
h^{\infty}(m_0,m_1)=h^{\infty}(m_0,x)+h^{\infty}(x,m_1)
$$
holds for each $x\in \mathcal{A}^j$. Indeed, let $k_i\to\infty$ be a subsequence of integers such that
$$
\lim_{i\to\infty}h^{k_i}(m_0,m_1)=h^{\infty}(m_0,m_1),
$$
let $\gamma^{k_i}$: $[-k_i,k_i]\to M$ be the minimizer for $h^{k_i}(m_0,m_1)$. There exists at least one point $x\in\mathcal{A}$ which is the accumulation point of $\{\gamma^{k_i}(t_i)\}_{i\in\mathbb{Z}}$. Otherwise, the quantity $h^{k_i}(m_0,m_1)\to\infty$ as $k_i\to\infty$.

Given $m\in N_i$, we claim that (\ref{constructioneq1}) and (\ref{constructioneq2}) hold if $m_0=m_1=m$. Let $k_{\ell}\to\infty$ be a sequence such that $\lim_{k_{\ell}\to\infty}h^{k_{\ell}}(m,m)
=h^{\infty}(m,m)$ and let $\gamma^{k_{\ell}}_m(t)$: $[-k_{\ell}, k_{\ell}]\to M$ be the minimizer of $h^{k_{\ell}}(m,m)$. There is a positive number $d_1>0$ such that the ordinary distance $d(\gamma^{k_{\ell}}_m(t), \mathcal{A}^j)\ge d_1>0$ for any $t\in [-k_{\ell},k_{\ell}]$ and $j\neq i$. Otherwise along the curve $\gamma_m^{k_{\ell}}(t)$ there exists a point getting closer and closer to a point $x_j\in\mathcal{A}^j$. Consequently, one would obtain from the property that $d_c(\mathcal{A}^i,\mathcal{A}^j)\ge d>0$ for each $j\neq i$ that
\begin{align*}
h^{\infty}(m,m)=& h^{\infty}(m,x_j)+h^{\infty}(x_j,m)\\
\ge &h^{\infty}(x_i,x_j)-h^{\infty}(x_i,m)+h^{\infty}(x_j,x_i)-h^{\infty}(m,x_i) \\
\ge &\frac 56d
\end{align*}
where $x_i\in\mathcal{A}^i$. On the other hand, we have
$$
h^{\infty}(m,m)\le h^{\infty}(m,x_i)+h^{\infty}(x_i,m)\le\frac 16d.
$$
It is a contradiction. Therefore, some $x_i\in\mathcal{A}^i$ and $t_{\ell}\in [0,k_{\ell}]$ exist such that $t_{\ell}\to\infty$ as $k_{\ell}\to\infty$ and $\gamma^{k_{\ell}}_m(t_{\ell})\to x_i$. This proves (\ref{constructioneq1}) in case $m_1=m_2$.

For different points $m_0,m_1\in N_i$ and $x\in\mathcal{A}^j$ with $j\neq i$, let $\zeta^{k}_s(t,m_0,x)$: $[-k,k]\to M$ be the curve which minimizes the quantity $h^{k}(m_0,x)$, let $k_{j}$ be the subsequence of $k$ such that $\lim_{k_{j}\to\infty}h^{k_{j}}(m_0,x) =h^{\infty}(m_0,x)$. In
autonomous case, it converges as $k\to\infty$. Similarly, we let $\zeta^{k}_u(t,x,m_1)$: $[-k,k]\to M$ be the curve which minimizes the quantity $h^{k}(x,m_1)$, let $k'_j$ be the sequence of $k$ such that $\lim_{k'_{j}\to\infty}h^{k'_{j}}(x,m_1) =h^{\infty}(x,m_1)$. Let $\ell=0,1$, $\gamma_{\ell}^{k}$: $[-k,k]\to M$ be the minimizer of $h^{k}(m_{\ell},m_{\ell})$ and let $k_{\ell}$ be the subsequence of $k$ such that $h^{k_{\ell}}(m_{\ell},m_{\ell})\to h^{\infty}(m_{\ell},m_{\ell})$. By the proof we just finished, there exists $x_{\ell}\in\mathcal{A}^i$ and integer $t_{\ell}^i\in [-k_{\ell},k_{\ell}]$ such that $\gamma_{\ell}^{k_{\ell}} (t^i_{\ell})\to x_{\ell}$ and $t^i_{\ell}\to\infty$ as $k_{\ell}\to\infty$. Let $\xi^k_{01}$: $[-k,k]\to M$ be the minimizer of $h^{k}(x_0,x_1)$, $k_{01}^i$ be the subsequence of $k$ such that $h^{k^i_{01}}(x_0,x_1)\to h^{\infty} (x_0,x_1)$, let $\xi^i_{10}$: $[0,k]\to M$ be the minimizer of $h^{k}(x_1,x_0)$, $k_{10}^i$ be the subsequence of $k$ such that $h^{k^i_{1}}(x_1,x_0)\to h^{\infty} (x_1,x_0)$. Given arbitrarily small $\delta>0$, we have sufficiently large $k_j$, $k'_j$, $k_0^i$, $k_1^i$, $k^i_{01}$ and $k^i_{10}$ such that
\begin{align*}
&|h^{\infty}(m_0,x)-h^{k_j}(m_0,x)|<\delta,\\
&|h^{\infty}(x,m_1)-h^{k'_j}(x,m_1)|<\delta,\\
&|h^{\infty}(m_{\ell},m_{\ell})-h^{k^i_{\ell}}(m_{\ell},m_{\ell})|
<\delta,\qquad \ell=0,1 \\
&|h^{\infty}(x_0,x_1)-h^{k^i_{01}}(x_0,x_1)|<\delta,\\
&|h^{\infty}(x_1,x_0)-h^{k^i_{10}}(x_1,x_0)|<\delta.
\end{align*}
Since $x_0,x_1\in \mathcal{A}^i$, we have $d_c(x_1,x_0)=0$. Consequently,
\begin{align}\label{constructioneq3}
&h^{t_0^i}(m_0,x_0)+h^{k^i_{01}}(x_0,x_1)+h^{k_1^i-t_1^i}(x_1,m_1)\\
+&h^{t_1^i}(m_1,x_1)+h^{k^i_{10}}(x_1,x_0)+h^{k_0^i-t_0^i}(x_0,m_0)\notag\\
\le&\frac 13d+6\delta.\notag
\end{align}
Since $x$ is in Aubry class $\mathcal{A}^j$, while $x_0,x_1\in\mathcal{A}^i$, one has
\begin{align}\label{constructioneq4}
&h^{k'_j}(x,m_1)+h^{t_1^i}(m_1,x_1)+h^{k^i_{10}}(x_1,x_0)\\
&+h^{k_0^i-t_0^i}(x_0,m_0)+h^{k_j}(m_0,x)\notag\\
\ge&d-5\delta.\notag
\end{align}
Because $\delta$ can be arbitrarily small, by subtracting (\ref{constructioneq3}) from (\ref{constructioneq4}) we obtain
\begin{align*}
&h^{\infty}(m_0,x)+h^{\infty}(x,m_1)-\frac 23d\\
\ge &h^{\infty}(m_0,x_0)+h^{\infty}(x_0,x_1)+h^{\infty}(x_1,m_1)\\
\ge &h^{\infty}(m_0,m_1)
\end{align*}
it verifies (\ref{constructioneq2}). Since (\ref{constructioneq2}) holds for each $x\in\mathcal{A}^j$ with $j\neq i$ and for any $m_0,m_1\in N_i$, (\ref{constructioneq1})  hold for each $x\in\mathcal{A}^i$  and for any $m_0,m_1\in N_i$. This completes the proof of the proposition.
\end{proof}
\begin{theo}\label{constructionthm1}
If $c$ is connected to $c'$ by a generalized transition chain, then

1, there exists an orbit of the Lagrange flow $\phi_L^t$, $d\gamma$: $\mathbb{R}\to TM$ which connects the Aubry set $\tilde{\mathcal{A}}(c)$ to $\tilde{\mathcal{A}}(c')$, namely, $\alpha(d\gamma)\subseteq\tilde{\mathcal{A}}(c)$ and $\omega(d\gamma)\subseteq\tilde{\mathcal{A}}(c')$;

2, given $x,x'\in M$ and arbitrarily small $\delta>0$, there exists an orbit  $(\gamma,\dot\gamma)$ of $\phi_L^t$ passing through $\delta$-neighborhood of the points $(x,v_{x,c}^{+})$  and  $(x',v_{x,c'}^{-})$ successively, namely, $t<t'$ such that $(\gamma(t),\dot\gamma(t))\in B_{\delta}(x,v_{x,c}^{+})$ and $(\gamma(t'),\dot\gamma(t'))\in B_{\delta}(x',v_{x,c'}^{-})$.
\end{theo}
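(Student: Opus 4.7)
The plan is to reduce the global construction to a finite concatenation of local connecting problems, then glue them together via a single variational principle whose minimizer is forced, by the local minimality established in Section 6, to shadow the chain.

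First, by compactness of $[0,1]$ and the definition of a generalized transition chain, I select finitely many parameters $0 = s_0 < s_1 < \cdots < s_k = 1$ with associated classes $c_i = \Gamma(s_i)$ so that each consecutive pair $(c_i, c_{i+1})$ falls under one of the two scenarios. Under case (H2) I apply Theorem \ref{typecthm1} to obtain a closed 1-form $\eta_i$ with $[\eta_i] = c_i$, a closed 1-form $\bar\mu_i$ with $[\bar\mu_i] = c_{i+1} - c_i$, and a bump function $\varrho_i$ on the admissible cover $\bar M_{s_i}$ such that every pseudo-connecting curve of $L_i = L - \eta_i - \varrho_i\bar\mu_i$ projects to an orbit of $\phi_L^t$ running from $\tilde{\mathcal{A}}(c_i)$ to $\tilde{\mathcal{A}}(c_{i+1})$, together with the quantitative localization (\ref{typeceq2}). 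Under case (H1) I apply the Connecting Lemma (Theorem \ref{typehthm1}) in the appropriate finite cover $\check M$ to obtain a local minimal orbit of type-$h$ with data $(\eta_i, \bar\mu_i, \psi_i, V_i^-, V_i^+, T_i, d_i)$ realizing the strict local minimum (\ref{localmineq1}).

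Second, I construct the global shadowing orbit variationally. On a long interval $[-T, T]$ I consider the action
\begin{equation*}
\mathcal{A}_T(\gamma; \tau_1, \ldots, \tau_{k-1}) = \sum_{i=0}^{k-1} \int_{\tau_i}^{\tau_{i+1}} \bigl(L_{\eta_i, \mu_i, \psi_i}(d\gamma(t)) + \alpha(c_i)\bigr)\,dt,
\end{equation*}
with $\tau_0 = -T$, $\tau_k = T$ and intermediate times $\tau_1, \ldots, \tau_{k-1}$ treated as free variables; the curve $\gamma$ is lifted to the appropriate cover at each step, and endpoints are pinned in small neighborhoods of $\mathcal{A}(c)$ and $\mathcal{A}(c')$. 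Proposition \ref{constructionpro1} provides neighborhoods $N_j$ of the Aubry classes where the $h^\infty$-cost decomposes additively through any point of the correct class but is strictly larger through any other class, by a gap $\geq d/2$. Letting $T \to \infty$ and extracting a $C^1_{\mathrm{loc}}$-limit using the $C^2$ bounds from the proof of Theorem \ref{semicontinuitythm1}, I obtain a curve $\gamma_\infty$ whose image lies outside $\mathrm{supp}(\bar\mu_i) \cup \mathrm{supp}(\psi_i)$ on the portions governing the Euler-Lagrange equation, so $\gamma_\infty$ is a genuine orbit of $\phi_L^t$ connecting $\tilde{\mathcal{A}}(c)$ to $\tilde{\mathcal{A}}(c')$.

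The main obstacle is ruling out shortcuts: a priori the global minimizer could skip some intermediate class $c_j$ and pass directly from a neighborhood of $\tilde{\mathcal{A}}(c_{j-1})$ to a neighborhood of $\tilde{\mathcal{A}}(c_{j+1})$, sidestepping the prescribed gate. This is excluded precisely by the strict inequalities (\ref{localmineq1}) and (\ref{typeceq2}): any variation that pushes an intermediate time $\tau_j$ or an endpoint outside the corresponding $V_j^\pm$ incurs a cost bounded below by a positive constant independent of $T$, while the Aubry-class gap from Proposition \ref{constructionpro1} absorbs any cancellation from neighboring segments. A secondary difficulty is the compatibility of the successive covering spaces $\bar M_{s_i}$; this is resolved by gluing along the admissible sections $\Sigma_{s_i}$ of Definition \ref{typecdef1}, which are transverse to all semi-static curves of $\Gamma(s_i)$.

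For part 2, I augment the variational problem by inserting two additional pieces at the ends of the chain: a backward semi-static curve $\gamma_c^-$ of length $T$ ending at a point near $x$ with velocity close to $v_{x,c}^+$, and a forward semi-static curve $\gamma_{c'}^+$ of length $T$ starting at a point near $x'$ with velocity close to $v_{x',c'}^-$, enforced by fixing the minimizer to pass through prescribed small balls around $x$ and $x'$ at free times $t_0 < t_1$. Since these added pieces are already calibrated (semi-static) for the respective cohomology classes, the local-minimality argument extends verbatim, and the resulting orbit of $\phi_L^t$ passes through the prescribed $\delta$-neighborhoods of $(x, v_{x,c}^+)$ and $(x', v_{x',c'}^-)$ successively.
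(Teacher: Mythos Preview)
Your overall architecture matches the paper's: pick finitely many $c_i$, invoke the local connecting orbits of Section~6 for each consecutive pair, glue via a single variational problem, and pass to a limit. But the implementation has genuine gaps.

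First, your action functional is underconstrained. You minimize over $\gamma$ and the free times $\tau_i$ with no condition on $\gamma(\tau_i)$; nothing then forces the minimizer through the narrow tubes where $\mu_i,\psi_i$ are transparent to the Euler--Lagrange equation, so your claim that the limit ``lies outside $\mathrm{supp}(\bar\mu_i)\cup\mathrm{supp}(\psi_i)$'' is unjustified for the type-$h$ steps. The paper's functional (\ref{constructioneq23}) makes the junction positions $\tilde x_i^\pm\in\tilde V_i^\pm$ and the time-window constraints $\Delta t_i^\pm\in\Delta T_i^\pm$ explicit variables. Second, the real obstruction is not ``shortcuts'' but \emph{corners}: once the disk constraints are imposed, the minimizer is routed through every gate by fiat; the question is whether it is smooth there. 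The paper's mechanism is (i) the strict local-minimality inequality (\ref{constructioneq6}) forces the minimum to be attained in the \emph{interior} of $V_i^-\times V_{i+1}^+\times[T_i^+-d_i,T_i^++d_i]$, and (ii) interior minimum plus transversality of the disks yields smoothness by a local curve-shortening argument. Your proposal conflates these two steps. Third, you omit a substantial complication in the type-$h$ case: the $\omega$-limit of the step $c_{i-1}\to c_i$ may land in an Aubry class $\mathcal{A}_i^{j'}$ different from the Aubry class $\mathcal{A}_i^{j}$ where the step $c_i\to c_{i+1}$ originates. The paper explicitly inserts a chain of $c_i$-semi-static curves $\gamma_{i,j}$ connecting $\mathcal{A}_i^{j'}$ to $\mathcal{A}_i^{j}$, each with its own disks $V_{i,j}^\pm$ and local-minimality estimate (\ref{constructioneq8}); Proposition~\ref{constructionpro1} is used precisely here to prevent the minimizer from visiting a wrong class. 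Finally, the compatibility of the successive covers is handled in the paper not by ``gluing along sections'' but by working in the universal cover with carefully related fundamental domains $\tilde V_i^\pm$ and the estimates (\ref{constructioneq14})--(\ref{constructioneq15}) that keep the minimizer from escaping to a translated copy.
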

\begin{proof}
We only need to study the autonomous case. Time periodic case can be treated in the same way. Therefore, one has that $\alpha(\Gamma(s))\equiv E> \min\alpha$. By adding suitable constant on the Lagrangian, we assume $E=0$ to simplify notation.

First of all, as a generalized transition chain $\Gamma(s)$ is assumed, the Aubry set $\tilde{\mathcal{A}} (\Gamma(s))$ can be connected to some $\tilde{\mathcal{A}} (\Gamma(s'))$ by locally minimal orbits of  either type-$h$, or type-$c$ if $s'$ close to $s$. So, there is a sequence $0=s_0<s_1<\cdots<s_k=1$ such that for each $0\le j<k$, $\tilde{\mathcal{A}} (\Gamma(s_j))$ is connected to $\tilde{\mathcal{A}}(\Gamma(s_{j+1}))$ by some local minimal orbits. The global connecting orbits are constructed shadowing such a sequence of orbits.

Let $c_j=\Gamma(s_j)$. We divide the set $\{0,1,\cdots,k\}$ into $m$ parts
$$
\{0,1,\cdots
k\}=\{0,1,\cdots,i_1\}\cup\{i_1+1,\cdots,i_2\}\cup\cdots\cup\{i_{m-1}+1,\cdots, i_m=k\}.
$$
The rule to make such a partition is that for all $i=i_{j},i_{j}+1,\cdots, i_{j+1}-1$, $\tilde{\mathcal{A}} (c_i)$ is connected to
$\tilde{\mathcal{A}}(c_{i+1})$ by a local minimal orbit of the same type. More precisely, let $\Lambda_c$ and $\Lambda_h$ be the subset of $\{i_1,i_2,\cdots, i_m\}$, $\Lambda_c\cup\Lambda_h= \{i_1,i_2,\cdots,i_m\}$, $\Lambda_c\cap\Lambda_h=\varnothing$. If $i_j\in\Lambda_{\imath}$, then for all $i=i_j,i_j+1,\cdots,i_{j+1}-1$, $\tilde{\mathcal{A}} (c_i)$ is connected to $\tilde{\mathcal{A}}(c_{i+1})$ by a local minimal orbit of type-$\imath$ ($\imath$ =$c$, or $h$).

Since the map $c\to\tilde{\mathcal{N}}(c,M)$ is upper semi-continuous, once the Ma\~n\'e set $\tilde{\mathcal{N}}(\Gamma(s))$ is in the case (H1) (or H2), then for $s'$ sufficiently close to $s$, the set $\tilde{\mathcal{N}}(\Gamma(s))$ is also in the case (H1) (or H2). Thus, for each $i_j\in\Lambda_h$, by choosing $c_{i_j-1}$ and $c_{i_{j+1}}$ sufficiently close to $c_{i_j}$ and $c_{i_{j+1}-1}$ respectively, we can assume that both $c_{i_j-1}$ and $c_{i_{j+1}}$ satisfy the condition (H1) also.

With the class $c_i$ we associate an admissible coordinate system $x\to G_i^{-1}x$ and let $G_i^{-1}=[g_{i,1}^{-1}, g_{i,2}^{-1},\cdots, g_{i,n}^{-1}]^t$ denote the inverse of $G_i$. Because we consider the problem on $H^{-1}(E)$ with $E>\min\alpha$, we can choose $G_i$ for each $i\in\Lambda_c$ (see Proposition \ref{typehpro1}) such that, in the new coordinate system, the first component of $\omega(\mu_{c_i})$ is positive for each ergodic $c_i$-minimal measure. In virtue of the upper semi-continuity of Ma\~n\'e set on cohomology class, one can assume that
$$
\langle g_{j,1}^{-1},\omega(\mu_{c_i})\rangle >0, \qquad \forall \ j=i-1,i
$$
holds for each ergodic component $\mu_{c_i}$ as $c_{i-1}$ is chosen suitably close to $c_i$. It means that the $\omega_1(\mu_{c_i})>0$ holds in the coordinates not only determined by $G_i$, but also determined by $G_{i-1}$ as well as by $G_{i+1}$. Therefore, $\exists$ $x_{i,1}>0$ such that
\begin{align}\label{constructioneq5}
&\langle g_{i,1}^{-1}, \Delta\tilde\gamma_i\rangle\ge 2\pi,\ \ \ \text{\rm whenever}\ \ \langle g_{i-1,1}^{-1}, \Delta\tilde\gamma_i\rangle\ge x_{i,1}, \notag\\
&\langle g_{i-1,1}^{-1}, \Delta\tilde\gamma_i\rangle\ge 2\pi,\ \ \ \text{\rm whenever}\ \ \langle g_{i,1}^{-1}, \Delta\tilde\gamma_i\rangle\ge x_{i,1}
\end{align}
holds for each $c_i$-semi-static curve $\gamma_i$, where $\tilde\gamma_i$ denotes a curve in the lift of $\gamma_i$ to universal covering space and
$\Delta\tilde\gamma_i=\tilde\gamma_i(t')-\tilde\gamma_i(t)$ with $t'>t$.

As the second step, let us describe the minimal properties of local connecting orbits of type-$h$ as well as of type-$c$.

{\bf The case of type-$h$}. For each integer $i\in\bigcup_{i_j\in\Lambda_h}\{i_j,i_j+1,\cdots,i_{j+1}-1\}$, the condition (H1) holds for generalized transition chain. Namely, in certain finite covering space $\check{\pi}:\check{M}\to M$, the Aubry set for $i$ and $i+1$ consists of more than one but finitely many classes $\mathcal{A}(c_{\ell}, \check{M})=\cup \mathcal{A}_{\ell}^j$ for $\ell=i,i+1$. By the assumption of (H1), some open domains $N^-_{i},N^+_{i+1}\subset\check M$ exist such that $d(N^-_i,N^+_{i+1})>0$, $\mathcal{A}(c_i,\check{M})\cap N^-_{i}\neq\varnothing$, $\mathcal{A}(c_i,\check{M})\cap N^+_{i+1}\neq\varnothing$, $\mathcal{A}(c_{i+1},\check{M})\cap N^+_{i+1}\neq \varnothing$ and $\mathcal {N}(c_i,\check M)\backslash (\mathcal{A}(c_i)+\delta'_i)\neq\varnothing$ is totally disconnected, with small $\delta'_i>0$.

The new coordinate system $x\to G_i^{-1}x$ of $\mathbb{T}^n$ is introduced such that  (\ref{constructioneq5}) holds. If writing $\check M=\{(x_1,x_2,\cdots,x_n):x_i\in\mathbb{T}\}$, we introduce the covering space $\bar M=\{(x_1,x_2,\cdots,x_n):x_1\in\mathbb{R}, x_i\in\mathbb{T}\ \text{\rm for}\ i\ge 2\}$. We shall work with this covering space $\pi:\bar M\to\check M$. For closed 1-forms $\eta_i$ and $\bar\mu_i$ on $\check{M}$ we use the same symbol to denote their natural lift to $\bar M$.

Recall the proof of Theorem \ref{typehthm1}. Some decomposition of $\bar M$ exists such that
$\bar M=\bar M_i^+\cup\bar M_{i,0}\cup\bar M_i^-$ such that $\bar M_i^+$ is diffeomorphic to $[0,\infty)\times \mathbb{T}^{n-1}$, $\bar M_i^-$ is diffeomorphic to $(-\infty,0]\times\mathbb{T}^{n-1}$ and $\bar M_{i,0}$ is diffeomorphic to $(0,1)\times \mathbb{T}^{n-1}$. Some open and connected disks $U^+_i,U^-_i,D_i,D'_i\subset \bar M_{i,0}$ and $\delta_i>0$ exist such that $D_i+\delta_i \subset D'_i$, $(\pi D'_i+\delta_i)\cap N^-_i=\varnothing$ and $(\pi D'_i+\delta_i)\cap N^+_{i+1}=\varnothing$, the intersection of any two of these sets is empty and the closure of $\bar M_i^+\cup U^+_i\cup D_i\cup U^-_i\cup\bar M_i^-$, denoted by $\bar M^c_i$, is connected, see Figure \ref{fig11}.
\begin{figure}[htp] 
  \centering
  \includegraphics[width=10cm,height=2.2cm]{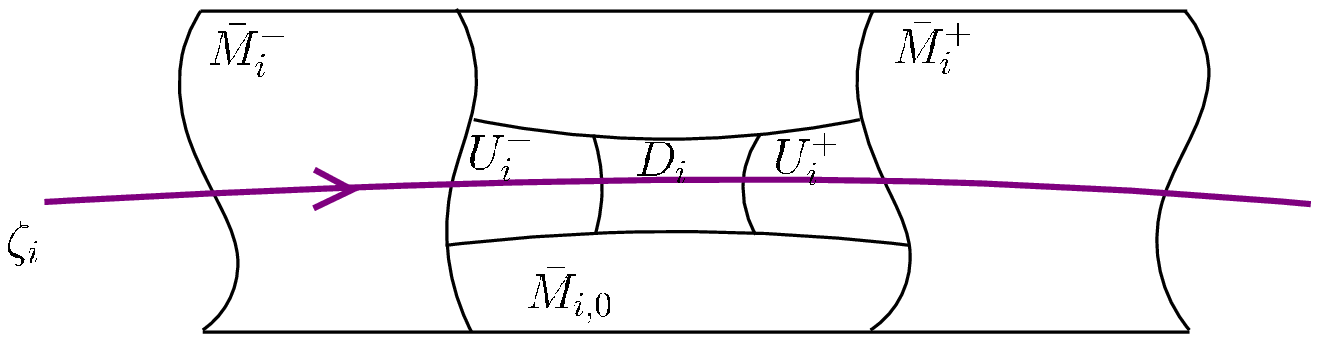}
  \caption{}
  \label{fig11}
\end{figure}

As it was studied in the subsection of 6.3 (local connecting orbit of type-$h$), some function $w_i$, $\rho_i$: $\bar M\to [0,1]$, some closed 1-form $\eta_i$, $\bar\mu_i$, $\varrho_i$ and some small constant $\delta_i>0$ exist such that $[\eta_i]=c_i$, $[\bar\mu_i]=c_{i+1}-c_i$, $\text{\rm supp}\bar\mu_i\cap\bar D_i=\varnothing$, $\text{\rm supp}w_i\subset D'_i$, $w_i|_{D_i}=\text{\rm constant}$, $\rho_i(x)=1$ if $x\in\bar M_i^+\cup U_i^+$ and $\rho_i(x)=0$ if $x\in U_i^-\cup\bar M_i^- $.

Let $\mu_i=\rho_i\bar\mu_i$, $\psi_i=w_i\varrho_i$ we introduce a space-step Lagrangian for the coordinate system $G_i^{-1}x$
$$
L_{\eta_i,\mu_i,\psi_i}=L-\eta_i-\mu_i-\psi_i.
$$
In virtue of Theorem \ref{typehthm1}, some curve $\bar\zeta_i\in\mathscr{C}_{\eta_i,\mu_i,\psi_i}$ (pseudo connecting orbit set) such that its projection down to $\check{M}$, $\zeta_i=\pi\bar\zeta_i$, determines an orbit $d\zeta_i=(\zeta_i,\dot\zeta_i)$ connecting certain Aubry class $\tilde{\mathcal{A}}^j_i$ to another Aubry class $\tilde{\mathcal{A}}^{j'}_{i+1}$. Here, the subscript $i$ indicates the Aubry set is for the cohomology class $c_i$, the superscript $j$ indicates which Aubry class it belongs to. Such curve stays entirely in the interior of $\bar M_i^c$. Therefore, along such curve both $\mu_i$ and $\psi_i$ do not contribute to the Euler-Lagrange equation, i.e. $d\zeta_i$ is an orbit of $\phi_L^t$.

As pointed out in Definition \ref{localdef2}, such local connecting orbit of type-$h$ is endowed with certain kind of local minimality. There exist two $(n-1)$-dimensional disks $V_i^-$ and $V_{i+1}^+$ with $\pi V^-_i \subset N^-_i\backslash\mathcal{A}^j_i$, $\pi V_{i+1}^+\subset N^+_{i+1}\backslash\mathcal{A}^{j'}_{i+1}$, large number $T_i^+>0$, suitably small $d_i>0$ and quite small $\epsilon_i^*>0$ such that $\bar\zeta_i(-T_i^+)\in V_i^-$, $\bar\zeta_i(T_i^+)\in V_{i+1}^+$ and
\begin{align}\label{constructioneq6}
\min&\Big\{h_{c_i}^{\infty}(x^-,m_0)+h_{\eta_i,\mu_i,\psi_i}^{T}(\bar m_0,\bar m_1)+h_{c_{i+1}}^{\infty} (m_1,x^+):\\
&(\bar m_0,\bar m_1,T)\in\partial (V_i^-\times V_{i+1}^+\times[T_i^+-d_i,T_i^++d_i])\Big\}\notag\\
\ge\min&\Big\{h_{c_i}^{\infty}(x^-,m_0)+h_{\eta_i,\mu_i,\psi_i}^{T}(\bar m_0,\bar m_1) +h_{c_{i+1}}^{\infty}(m_1,x^+):\notag\\
&(\bar m_0,\bar m_1,T)\in V_i^-\times V_{i+1}^+\times[T_i^+-d_i,T_i^++d_i] \Big\} +5\epsilon_i^*,\notag
\end{align}
where $x^-\in\pi_x\alpha(d\zeta_i)\subseteq\mathcal{A}^j_i$, $x^+\in\omega(d\zeta_i)\subseteq\pi_x \mathcal{A}^{j'}_{i+1}$. The disks $V^-_i$ and $V^+_{i+1}$ are chosen so that $\bar\zeta_i$ intersects them transversally, for each $(\bar m_0,\bar m_1,T')$ $\in V_i^-\times V_{i+1}^+\times[T_i^+-d_i,T_i^++d_i]$, the minimizer of $h_{\eta_i,\mu_i,\psi_i} ^{T}(\bar m_0,\bar m_1)$, $\bar\gamma_i(t,\bar m_0,$ $\bar m_1,T)$ has the property
\begin{equation}\label{constructioneq7}
\bar\gamma_i(t)\in \bar M_i^c\qquad\forall \ t\in [-T,T].
\end{equation}

Let $\zeta_{i-1}$ be a locally minimal curve such that the orbit $d\zeta_{i-1}$ connecting $\tilde{\mathcal{A}}_{i-1}$ to $\tilde{\mathcal{A}}_{i}$. Denote by $\tilde{\mathcal{A}}^{j'}_i$ the Aubry class which contains the $\omega$-limit set of $d\zeta_{i-1}$. It is possible that $\mathcal{A}^{j'}_i$ is different from $\mathcal{A}^j_i$ which contains the $\alpha$-limit set of $d\zeta_i$. Remember that we have assumed that each Aubry set consists of finitely many classes, let's say, $k_i$ classes. By the result in \cite{CP}, the subscript of Aubry classes can be rearranged such that some $c_i$-semi-static curve $\gamma_{i,j}$ exists such that $\alpha(d\gamma_{i,j})\subset\tilde{\mathcal{A}}^j_i$ and $\omega(d\gamma_{i,j}) \subset\tilde{\mathcal{A}}^{j+1}_i$ for $j=1,\cdots,k_i$ $(\text{\rm mod}\ k_i)$. So some positive integer $k\le k_i$ exists such that $j_i-j'_i=k\le k_i$ $(\text{\rm mod}\ k_i)$. Let $d_i=\min d_c(\mathcal{A}^j_i,\mathcal{A}^{j'}_i)$.

We choose some $(n-1)$-dimensional small disks $V^{\pm}_{i,j}$ with $j=j'_i,\cdots,j_i$ such that $V^+_{i,j'_i}=V^+_i$, $V^-_{i,j_i}=V^-_i$, $V_{i,j}^{\pm}$ is located within $N_{i,j}$, a small neighborhood of $\mathcal{A}^j_i$ such that $d_c(m,x)\le\frac{d_i}6$ holds for each $m\in N_{i,j}$ and each $x\in\mathcal{A}^j_i$ (the definition of $N_{i,j}$ is the same as $N_i$ in Proposition \ref{constructionpro1}), $\gamma_{i,j}$ intersects $V^-_{i,j}$ as well as $V^+_{i,j+1}$ transversally. These curves also have locally minimal property similar to the form of (\ref{constructioneq6}):
\begin{align}\label{constructioneq8}
\min&\Big\{h_{c_i}^{\infty}(x^-,m_0)+h_{c_i}^{T}(m_0,m_1)+h_{c_{i}}^{\infty}(m_1,x^+):\notag\\
&(\bar m_0,\bar m_1,T)\in\partial (V_{i,j}^-\times V_{i,j+1}^+\times[T^+_{i,j}-\tau_i,T^+_{i,j}+\tau_i]) \Big\}\\
\ge\min&\Big\{h_{c_i}^{\infty}(x^-,m_0)+h_{c_i}^{T}(m_0,m_1)+h_{c_{i}}^{\infty}(m_1,x^+):\notag\\
&(\bar m_0,\bar m_1,T)\in V_{i,j}^-\times V_{i,j+1}^+\times[T^+_{i,j}-\tau_i,T^+_{i,j}+\tau_i] \Big\}
+5\epsilon_i^*,\notag
\end{align}
where $x^-\in\mathcal{A}^j_i$, $x^+\in\mathcal{A}^{j+1}_i$, $T^+_{i,j}$ is the time such that
$\gamma_{i,j}(2T^+_{i,j})\in V_{i,j+1}^+$ if $\gamma_{i,j}(0)\in V^-_{i,j}$. As semi-static curves are totally disconnected, $V^-_{i,j}$ and $V^+_{i,j+1}$ are chosen so that any curve in $\mathcal{N}(c_i)$ does not touch the boundary of $V^-_{i,j}$ and of $V^+_{i,j+1}$.

Note that $h_c^{\infty}=\lim_{T\to\infty}h_c^T$ in the autonomous case \cite{Fa1}, we find from Proposition \ref{constructionpro1} that, for any $\epsilon^*_i>0$, there exists $T^-_{i,j}=T^-_{i,j}(\epsilon^*)>0$ such that
\begin{equation}
|h_{c_i}^T(m,m')-h^\infty_{c_i}(m,x)-h^\infty_{c_i}(x,m')|\le\epsilon^*_i\notag
\end{equation}
holds for each $T\ge T^-_{i,j}$, each $m,m'\in N_{i,j}$ and $x\in\mathcal{A}^j_i$.

Let $t^+_i=t^+_{i,j'}<t^-_{i,j'}<t^+_{i,j'+1}\cdots<t^+_{i,j}<t^-_{i,j}=t^-_i$, $2\Delta t^+_{i,j}=t^+_{i,j+1}-t^-_{i,j}$, $2\Delta t^-_{i,j}=t^-_{i,j}-t^+_{i,j}$. A curve $\gamma$: $[t^+_i,t^-_i]\to\check{M}$ is called {\it admissible} for $V^{\pm}_{i,j}$ if
\begin{equation}\label{constructioneq9}
\gamma(t^{\pm}_{i,j})=x^{\pm}_{i,j}\in V^{\pm}_{i,j}, \qquad \forall\ j'_i\le j\le
j_i,
\end{equation}
where $\Delta t^+_{i,j}$ and $\Delta t^-_{i,j}$ are  chosen to satisfy the condition
\begin{equation}\label{constructioneq10}
T^+_{i,j}-\tau_i\le\Delta t^+_{i,j}\le T^+_{i,j}+\tau_i
\end{equation}
where $\tau_i$ is chosen in the inequality (\ref{constructioneq8}), the condition
\begin{equation}\label{constructioneq11}
T^-_{i,j}+T^+_{i,j}+\tau_{i}\le \Delta t_{i,j}^-+\Delta t_{i,j}^+\le T^-_{i,j}+T^+_{i,j}+\tau^*_i
\end{equation}
which is set so that the minimal curve does not touch the boundary of $V^-_{i,j}$ provided it passes through $V^+_{i,j}$ at $t=t^+_{i,j}$ and through $V^+_{i,j+1}$ at $t=t^+_{i,j+1}$ and the condition
\begin{equation}\label{constructioneq12}
T^-_{i,j}+T^+_{i,j-1}+\tau_{i}\le \Delta t_{i,j}^-+\Delta t_{i,j-1}^+ \le T^-_{i,j}+T^+_{i,j-1}+\tau^*_i.
\end{equation}
which is set so that the minimal curve does not touch the boundary of $V^+_{i,j}$ provided it passes through $V^-_{i,j-1}$ at $t=t^-_{i,j-1}$ and through $V^-_{i,j}$ at $t=t^-_{i,j}$. These conditions define non-empty set for $(\Delta t^+_{i,j},\Delta t^-_{i,j})$ if we choose suitably large $\tau^*_i>0$.

We consider the minimum of the following action among all admissible curves:
$$
h_{c_i}^{t^+_i,t^-_i}(x^+_i,x^-_i)=\inf_{\stackrel{\scriptscriptstyle \gamma(t^+_i)=x^+_i\in V^+_{i,j'_i} } {\scriptscriptstyle \gamma(t^-_i)=x^-_i\in V^-_{i,j_i}}} \int_{t^+_i}^{t^-_i}(L-\eta_i)(d\gamma)dt.
$$
Let $\gamma(t,t^{\pm}_i,x^{\pm}_i)$: $[t^+_i,t^-_i]\to\check{M}$ be the minimizer of the action. If $t^-_{i,j}-t^+_{i,j}$ is sufficiently large, the minimizer is smooth at each $t^-_{i,j'_i}<t^+_{i,j'_i+1}<\cdots<t^+_{i,j_i}$. First of all, we claim that $(x_{i,j}^-,x^+_{i,j+1}, \Delta t^+_{i,j})\in\text{\rm int}(V^-_{i,j}\times V^+_{i,j+1}\times [T^+_{i,j}-\tau_i, T^+_{i,j}+\tau_i])$ holds for each $j'_i\le j<j_i$. If it does not hold for certain $j'_i\le j<j_i$, one obtains from (\ref{constructioneq8}) that
\begin{align}
&h_{c_{i}}^{\Delta t_{i}^-}(x^+_{i,j},x^-_{i,j})+h^{\Delta t_i^+}_{c_i}(x^-_{i,j},x^+_{i,j+1})+h^{\Delta t_{i+1}^-}_{c_{i}}(x^+_{i,j+1}, x^-_{i,j+1})\notag \\
\ge &h_{c_{i}}^{\infty}(\xi,x^-_{i,j})+ h^{\Delta t_i^+}_{c_i}(x^-_{i,j},x^+_{i,j+1})+h^{\infty}_{c_{i}} (x^+_{i,j+1},\zeta)\notag \\
&+h_{c_{i}}^{\infty}(x^+_{i,j},\xi) +h^{\infty}_{c_{i+1}}(\zeta,x^-_{i,j+1})-2\epsilon_i^*\notag \\
\ge &h_{c_{i}}^{\infty}(\xi,\hat x^-_{i,j})+ h^{\Delta t_i^+}_{c_i}(\hat x^-_{i,j},\hat x^+_{i,j+1})
+h^{\infty}_{c_{i}}(\hat x^+_{i,j+1},\zeta)\notag \\
&+h_{c_{i}}^{\infty}(x^+_{i,j},\xi) +h^{\infty}_{c_{i+1}}(\zeta,x^-_{i,j+1})+3\epsilon_i^* \notag\\
\ge &h_{c_{i}}^{\Delta t_{i}^-}(x^+_{i,j},\hat x^-_{i,j})+h^{\Delta t_i^+}_{c_i}(\hat x^-_{i,j},\hat x^+_{i,j+1}) +h^{\Delta t_{i+1}^-}_{c_{i}} (\hat x^+_{i,j+1}, x^-_{i,j+1})+\epsilon_i^*\notag
\end{align}
where $\xi\in\mathcal{A}^j_i$, $\zeta\in\mathcal{A}^{j+1}_i$, $\hat x_{i}^-$ as well as $\hat x_{i+1}^+$ is the intersection point of a semi-static curve $\gamma_{j,j+1}$ with $V^-_{i,j}$ and with $V^+_{i,j+1}$ respectively. The orbit $d\gamma_{j,j+1}$ connects $\mathcal{A}^j_i$ to $\mathcal{A}^{j+1}_i$. This contradicts the minimality of $\gamma$. The smoothness follows from the property that $(x_{i,j}^-,x^+_{i,j+1},\Delta t^+_{i,j})$ is in the interior of the domain. If the minimizer $\gamma(t,t^{\pm}_i,x^{\pm}_i)$ is not smooth at $x^-_{i,j}$, we join the points $\gamma(t^-_{i,j}-\delta,t^{\pm}_i,x^{\pm}_i)$ and $\gamma(t^-_{i,j}+\delta,t^{\pm}_i,x^{\pm}_i)$ by the minimizer of
$$
h_{c_i}^{\delta}(\gamma(t^-_{i,j}-\delta,t^{\pm}_i,x^{\pm}_i),\gamma(t^-_{i,j}+\delta,t^{\pm}_i,x^{\pm}_i)).
$$
As the minimizer approaches $x^-_{i,j}$ from both sides of $V^-_{i,j}$ as $t$ approaches $t^-_{i,j}$ from opposite direction \cite{BCV}, this minimizer also passes through $V^-_{i,j}$. Thus, one obtains a curve $\gamma'$ by replacing the segment of $\gamma(t,t^{\pm}_i,x^{\pm}_i)|_{t^-_{i,j}-\delta,t^-_{i,j} +\delta}$ with this minimizer. Let $t'$ be the time of this curve passing through $V^-_{i,j}$, clearly, $t^+_{i,j}-t'\in(T^+_{i,j}-\tau_i, T^+_{i,j}+\tau_i)$, the action along this curve is clearly smaller than the original one. But this is absurd.

{\bf The case of type-$c$}. For each integer $i\in\bigcup_{i_j\in\Lambda_c}\{i_j,i_j+1,\cdots,i_{j+1}-1\}$, there exist an admissible section $\Sigma_{c_i}$, a neighborhood $U_i$ of $\mathcal{N}(c_i)\cap \Sigma_{c_i}$, two closed 1-forms $\eta_i$ and $\bar\mu_i$ on $M$ with $[\eta_i]=c_i$, $[\bar\mu_i]=c_{i+1}-c_i$ and $\text{\rm supp}\bar\mu_i\cap U_i=\varnothing$. Correspondingly, an admissible coordinate system $q=G_i^{-1}x$ on $M$ is chosen such that in the new coordinates, one has a covering space $\pi_i$: $\bar M_i=\mathbb{R}\times\mathbb{T}^{n-1}\to M$,  the set $\pi^{-1}_i\Sigma_{c_i}$ consists of  infinitely many compact components $\Sigma_{c_i}^j= \{q=(q_1+j,q_2,\cdots,q_n):q\in\Sigma_{c_i}^0\}$ $(j\in\mathbb{Z})$. $\bar M_i$ is separated by $\Sigma_{c_i}^0$ into upper part $\bar M^+_i$ and lower part $\bar M^-_i$. A smooth function $\rho_i$: $\bar M\to [0,1]$ is constructed such that $\rho_i=0$ for $q\in\bar M^-_i \backslash(\Sigma_{c_i}^0 +\delta_i)$ and $\rho_i=1$ for $q\in\bar M^+_i\backslash (\Sigma_{c_i}^0+\delta_i)$. The number $\delta_i>0$ is  chosen so small such that $(\Sigma_{c_i}^0 +\delta_i)\cap(\mathcal{N}([\eta_i+\mu_i])+\delta_i) \subset U_i$, (cf. Formula (\ref{typeceq1})). Let $\mu_i=\rho_i\bar\mu_i$.

To make notation simpler, for each integer $i\in\bigcup_{i_j\in\Lambda_c} \{i_j,i_j+1,\cdots, i_{j+1}-1\}$, we let $\psi_i=0$, $V_i^{-}=\{q_1=-K_i\}$ and $V_{i+1}^{+}=\{q_1=K_i\}$ in the coordinate system $q=G_i^{-1}x$ (see the corollary \ref{typeccor1} for the definition of $K_i$). Again, let
$$
L_{\eta_i,\mu_i,\psi_i}=L-\eta_i-\mu_i-\psi_i.
$$
Since the class $c_i$ is equivalent to the class $c_{i+1}$ and they are close to each other, one sees from Theorem \ref{typecthm1} that each curve  $\bar\gamma \in\mathscr{C}_{\eta_i,\mu_i,\psi_i}$ determines a locally minimal orbit type-$c$ $d\gamma$ which is an orbit of $\phi_L^t$ and connects $\tilde{\mathcal{A}}(c_i)$ to $\tilde{\mathcal {A}}(c_{i+1})$.

Let $\bar m\in V_i^-$, $\bar m'\in V^+_{i+1}$ and let $\bar\gamma_i(t,\bar m,\bar m'):[-T,T]\to\bar M$ be the minimizer of
$$
h_{\eta_i,\mu_i,\psi_i}^T(\bar m,\bar m')=\inf_{T'>0}h_{\eta_i,\mu_i,\psi_i}^{T'}(\bar m,\bar m').
$$
According to Lemma \ref{semicontinuitylem2} and Corollary \ref{typeccor1}, $\exists$ $K_i>0$, $T_i^+=T_i^+(K_i)>0$, there exists $T< T_i^+$ such that $h_{\eta_i,\mu_i,\psi_i}^T(\bar m,\bar m')= \inf_{T'>0}h_{\eta_i,\mu_i,\psi_i}^{T'}(\bar m,\bar m')$ provided the first coordinate of $\bar m$ as well as of $\bar m'$ satisfies the condition that $\bar m_1\le -K_i$ and $\bar m'_1\ge K_i$. The  minimizer
$\bar\gamma_i(t,\bar m,\bar m'):[-T,T]\to\bar M$ satisfies
\begin{equation}\label{constructioneq13}
\bar\gamma_i(t,\bar m,\bar m')\in U_i, \qquad \text{\rm whenever}\ \ \bar\gamma_{i}(t,\bar m,\bar m')\in \Sigma_{c_i}^0+\delta_i.
\end{equation}

By the definition, the disks $V^{+}_i$ and $V^-_i$ are codimension one torus in different coordinate systems, their relative position needs to be fixed in the universal covering space. For this purpose, we define the following covering spaces:
$$
\mathbb{R}^n\xrightarrow{\bar\pi_i}\bar M_i\xrightarrow{\pi_i}\check{M}_i,
$$
where $\check M_i=\{(q_1,\cdots,q_n): q_i\ \text{\rm mod}\ 2i_j\pi \}$ and $\bar M_i=\mathbb{R}\times\{(q_2,\cdots,q_n): q_i\ \text{\rm mod}\ 2i_j\pi \}$ in the coordinate system $q=G_i^{-1}x$.  For simplicity of notation and without danger of confusion, we use the same symbol for a fundamental domain of $V^{\pm}_i$ in $\mathbb{R}^n$, i.e. restricted on $V^{\pm}_i$ the projection is a homeomorphism, $\bar\pi_iV^{-}_i=V^{-}_i$ and $\bar\pi_iV^-_{i+1}=V^-_{i+1}$. Both $V^-_i$ and $V^+_{i+1}$ are some translation of unit $(n-1)$-dimensional disk $\{q_1=0,q_i\in [0,2\pi)\,\text{\rm for}\, i=2,\cdots,n\}$. Clearly, $\bar\pi^{-1}_iV^-_i$ is parallel to $\bar\pi^{-1}_iV^+_{i+1}$ and there exist $(n-1)$ irreducible integer vectors $(v^i_2,v^i_3,\cdots v^i_n)$ tangent to $V^{-}_i$ ($V^+_{i+1}$) such that
$$
\bar\pi^{-1}_iV^-_i=\bigcup_{k_{\ell}\in\mathbb{Z},\ell=2,\cdots n}V^-_i+k_{\ell}v^i_{\ell},\qquad
\bar\pi^{-1}_iV^+_{i+1}=\bigcup_{k_{\ell}\in\mathbb{Z},\ell=2,\cdots n}V^+_{i+1}+k_{\ell}v^i_{\ell}.
$$

Given a fundamental domain $V^+_i$ and a $c_i$-semi static curve $\gamma_i$, there exists a curve in the lift $\gamma_i$ to the universal covering space, denoted by $\tilde\gamma_i$, which intersects $V^+_i\subset\mathbb{R}^n$. Clearly, one can choose a fundamental domain $V^-_i\subset\mathbb{R}^n$ (up to a translation) so that it intersects the curve $\tilde\gamma_i$ and $\bar\pi_{i-1}V^-_i$ is ``above" the $V^+_i\subset\bar M_{i-1}$ in the following sense: some suitably large $K'_i>0$ exists such that   $\min\{q^-_1-q^+_1:q^-\in\bar\pi_{i-1}V^-_i,q^+\in V^+_i\}=K'_i$. Note that $V^-_i$ is usually not parallel to $V^+_i$. We say that the two fundamental domains $V^+_i$ and $V^-_i$ are $(c_i,K'_i)$-related if they satisfy this condition.

Let $V^+_i$ and $V^-_i$ be $(c_i,K'_i)$-related fundamental domains. Given positive integers $k^+_i,k^-_i$, we define
$$
{\bf k}^{\pm}_iV^{\pm}_i=\bigcup_{|k_{\ell}|\le k^{\pm}_i,\ell=2,\cdots n}V^{\pm}_i+k_{\ell}v^i_{\ell},
$$
then $\bar\pi_{i-1}{\bf k}^{+}_iV^{+}_i=V^+_i\subset\bar M_{i-1}$. Let $\tilde x^+_i\in{\bf k}^{+}_iV^{+}_i$, $\tilde x^-_i\in{\bf k}^{-}_iV^{-}_i$ one defines the minimal action of $L_{c_i}$ connecting these two points
$$
h_{c_i}(\tilde x^+_i, \tilde x^-_i)=\inf_{T>0}\inf_{\stackrel{\scriptscriptstyle\tilde\zeta(-T) = \tilde x^+_i}{\scriptscriptstyle \tilde\zeta(T)=\tilde x^-_{i}}} \int_{-T}^TL_{c_i}(d\tilde\zeta(s))ds.
$$
Let
$$
h_{c_i}({\bf k}^{+}_iV^{+}_i, {\bf k}^{-}_iV^{-}_i)=\min_{\stackrel{\scriptscriptstyle\tilde x^+_i\in {\bf k}^{+}_iV^{+}_i}{\scriptscriptstyle \tilde x^-_i\in {\bf k}^{-}_iV^{-}_i}} h_{c_i}(\tilde x^+_i, \tilde x^-_i).
$$
Clearly, for fixed $k^+_i$, some positive number $\epsilon_i>0$ and suitably large integer $k^-_i$ exist such that ${\bf k}^{+}_iV^{+}_i$ does not touch ${\bf k}^{-}_iV^{-}_i$,
\begin{equation}\label{constructioneq14}
h_{c_i}(\tilde x^+_i, \tilde x^-_i)>h_{c_i}({\bf k}^{+}_iV^{+}_i, {\bf k}^{-}_iV^{-}_i)+\epsilon'_i, \qquad \text{\rm if}\  d(\tilde x^-_i,\partial{\bf k}^{-}_iV^{-}_i)\le 1.
\end{equation}
To understand this property let us consider those curves in the lift of $c_i$-semi static curves which pass through $V_i^+$. There exists $k_i>0$ such that the intersection points of these curves with $\bar\pi_i^{-1}V^-_i$ locate in the disk $\cup_{|k_{\ell}|\le k_i}V^-_i+k_{\ell}v^i_{\ell}$. The ``rotation vector" of this segment of the orbit can not be too far away from $\rho(\mu_i)$.

One can also define related fundamental domains $V^-_i$ and $V^+_{i+1}$. For a fundamental domain $V^-_i$ and a curve $\gamma\in\mathscr{C}_{\eta_i,\mu_i}$, we pick up a curve in the lift of this curve to the universal covering space, denoted by $\tilde\gamma$, which intersects the section $V^-_i$. Some fundamental domain $V^+_{i+1}$ exists where this curve intersects. Recall the projection of the two fundamental domains takes the form $V^+_{i+1}=\{q_1=K_i\}$ and $V^-_{i}=\{q_1=-K_i\}$ in the configuration space $\bar M_i$. We say that the two fundamental domains $V^-_i$ and $V^+_{i+1}$ are $(\eta_i,\mu_i,K_i)$-related.

As the Lagrangian $L_{\eta_i,\mu_i}$ is well-defined in the universal covering space, let us consider its action in the universal covering space:
$$
h_{\eta_i,\mu_i}(\tilde x^-_i,k^*\tilde x^+_{i+1})=\inf_{T>0}\inf_{\stackrel{\scriptscriptstyle\tilde\zeta(-T) = \tilde x^-_i}{\scriptscriptstyle \tilde\zeta(T)=k^*\tilde x^+_{i+1}}} \int_{-T}^TL_{\eta_i,\mu_i}(d\tilde\zeta(s))ds,
$$
where $k^*\tilde x^+_{i+1}=\tilde x^+_{i+1}+\sum_{\ell=2,\cdots n}k_{\ell}v^i_{\ell}$ stands for a translation of $\tilde x^+_{i+1}$ and $k=(k_2,\cdots k_n)$. Obviously, one has
$$
\inf_{k\in\mathbb{Z}^{n-1}}h_{\eta_i,\mu_i}(\tilde x^-_i,k^*\tilde x^+_{i+1})=h_{\eta_i,\mu_i}(\tilde x^-_i,\tilde x^+_{i+1})= \inf_{T>0}h_{\eta_i,\mu_i}^T(q^-_i,q^+_{i+1})
$$
where the term $\inf_{T>0}h_{\eta_i,\mu_i}^T(q^-_i,q^+_{i+1})$ was defined before by considering the action in the configuration space $\bar M_i$. As above, one defines
$$
h_{\eta_i,\mu_i}({\bf k}^{-}_iV^{-}_i,{\bf k}^+_{i+1}V^+_{i+1})=\min_{\stackrel{\scriptscriptstyle\tilde x^-_i\in {\bf k}^{-}_iV^{-}_i}{\scriptscriptstyle \tilde x^+_{i+1}\in {\bf k}^{+}_{i+1}V^{+}_{i+1}}} h_{\eta_i,\mu_i}(\tilde x^-_i, \tilde x^+_{i+1}).
$$
Again, for fixed $k^-_i$, some positive number $\epsilon_i>0$ and suitably large $k^+_{i+1}$ exist such that ${\bf k}^{-}_iV^{-}_i$ does not touch ${\bf k}^{+}_{i+1}V^{+}_{i+1}$ and
\begin{equation}\label{constructioneq15}
h_{\eta_i,\mu_i}(\tilde x^-_i, \tilde x^+_{i+1})>h_{\eta_i,\mu_i}({\bf k}^{-}_iV^{-}_i, {\bf k}^{+}_{i+1}V^{+}_{i+1})+\epsilon_i, \qquad \text{\rm if}\  d(\tilde x^+_{i+1},\partial{\bf k}^{+}_iV^{+}_{i+1})\le 1.
\end{equation}

Let $\tilde V^{\pm}_i={\bf k}^{\pm}_iV^{\pm}_i$. By induction, these sections $\tilde V^{\pm}_i$ are well defined such that $V^+_i$ and $V^-_i$ are $(c_i,K'_i)$-related, $V^-_i$ and $V^+_{i+1}$ are $(\eta_i,\mu_i,K_i)$-related, the formulae (\ref{constructioneq14}) and (\ref{constructioneq15}) are satisfied.

As the third step of the construction, let us clarify what conditions the candidates of minimal curve are required to satisfy.

Let $\gamma$: $[-K,K']\to M$ be an absolutely continuous curve joining $m$ to $m'$, i.e. $\gamma(-K)=m$ and $\gamma(K')=m'$.  We split the interval $[-K,K']$ into $2i_m+1$ subintervals
$$
[-K,K']=[t_0^+,t^-_0]\cup [t^-_0,t^+_1]\cup\cdots\cup [t^+_{i_m},t_{i_m}^-],
$$
where $t_0^+=-K$, $t_{i_m}^-=K'$. Correspondingly, we divide the curve into $2i_m+1$ segments $\gamma_i^-=\gamma|_{[t_{i}^+,t_i^-]}$, $\gamma_i^+=\gamma|_{[t_{i}^-,t_{i+1}^+]}$ for $i=0,1,2,\cdots, i_m-1$, and $\gamma_{i_m}^-=\gamma|_{[t_{i_m}^+,t_{i_m}^-]}$.

We fix a curve $\tilde\gamma$ in the lift of $\gamma$ to the universal covering space $\mathbb{R}^n$ by choosing $\bar\pi G_0^{-1}\tilde\gamma (t_0^-)\in V_0^-$. Correspondingly, each $\gamma_i^{\pm}$ has its lift $\tilde\gamma_i^{\pm}$ to $\mathbb{R}^n$.

The curve $\gamma$ is required to satisfy the conditions:

1, for each $i=0,1,2,\cdots i_m-1$,  there is some $k_i\in\mathbb{Z}$ such that
\begin{align}\label{constructioneq16}
&\bar\pi_iG_i^{-1}\tilde\gamma_i^+(t_i^-)-(2k_i\pi,0,\cdots,0)\in V_i^-,\notag \\
&\bar\pi_iG_i^{-1}\tilde\gamma_i^+(t_{i+1}^+)-(2k_i\pi,0,\cdots,0)\in V_{i+1}^+;
\end{align}

2, for $i\in\bigcup_{i_j\in\Lambda_c} \{i_j+1,\cdots,i_{j+1}-1\}$, $\tilde\gamma(t^{\pm}_i)\in\tilde V^{\pm}_i$. Let $\Delta t_i^+=\frac 12(t_{i+1}^+-t_{i}^-)$ and $\Delta t_i^-=\frac 12(t_i^--t_i^+)$. To formulate the conditions for $\Delta t_i^{\pm}$, let us consider the quantity
$$
h_{c_i}^{\Delta t}(\tilde x_i^+,\tilde x_i^-)=\inf_{\stackrel{\scriptscriptstyle\tilde\xi(-\Delta t)=\tilde x_i^+\in \tilde V^+_{i}} {\scriptscriptstyle \tilde\xi(\Delta t)=\tilde x_i^-\in \tilde V_i^-}}\int_{-\Delta t}^{\Delta t}(L-\eta_{i})(d\tilde\xi(t))dt.
$$
One obtains from the proof of Lemma \ref{semicontinuitylem2} that  $h_{c_i}^{\Delta t}(\tilde x_i^+, \tilde x_i^-)\to\infty$ as $\Delta t\to 0$ or $\to\infty$. Thus, if $T_i^-=T_i^-(\tilde x_i^+, \tilde x_i^-)$ is defined as the quantity such that $h_{c_i}^{T_i^-}(\tilde x_i^+, \tilde x_i^-)=\min_{\Delta t} h_{c_i}^{\Delta t}(\tilde x_i^+, \tilde x_i^-)$, then we find $0<T_i^-(\tilde x_i^+, \tilde x_i^-)<\infty$. Since both $V_{i,-}^+$ and $V_{i,+}^-$ are compact, there exist $0<\hat T_i^-<\breve{T}_i^-<\infty$ such that $\hat T_i^-<T_i^-(\tilde x_i^+, \tilde x_i^-)<\breve{T}_i^-$ holds for each $\tilde x_i^+\in\tilde V_{i,-}^+$ and $\tilde x_i^-\in\tilde V_{i,+}^-$. Let
\begin{equation}\label{constructioneq17}
\Delta T_i^-=[\hat T_i^-,\breve{T}_i^-].
\end{equation}

The range of $\Delta t_i^{\pm}$ is somehow implicitly defined. Let
\begin{equation}\label{constructioneq18}
\Delta T_i^+=[T_i^+-d_i,T_i^++d_i],\qquad \forall i\in\bigcup_{i_j\in\Lambda_h}\{i_j,i_j+1, \cdots,i_{j+1}-1\}
\end{equation}
\begin{equation*}
\Delta T_i^+=(0,T_i^+], \qquad \Delta T_i^-=[\hat T_i^-,\breve{T}_i^-],\qquad \text{\rm for other}\ i\le i_m.
\end{equation*}
See (\ref{constructioneq6}), (\ref{constructioneq13}) for the definition of $T_i^+$ and (\ref{constructioneq17}) for the definition of $\Delta T_i^-$ respectively.

The conditions for $\Delta t_i^{\pm}$ are the following:

1, $\Delta t^+_i\in\Delta T_i^+$ for all $0\le i<i_m$;

2, $\Delta t_i^-\in\Delta T_i^-$ for $i\in\bigcup_{i_j\in\Lambda_c}\{i_j+1, i_j+2,\cdots,i_{j+1}-1\}$;

3, for $i\in\bigcup_{i_j\in\Lambda_h}\{i_j, i_j+1,\cdots,i_{j+1}-1\}$, as it is assumed that the Aubry set $\mathcal{A}(c_i)$ contains finitely many classes, an orbit connects $\mathcal{A}(c_{i-1})$ to $\mathcal{A}(c_i)$ by approaching the Aubry class $\mathcal{A}^{j'}_i$ as $t\to\infty$, another orbit connects $\mathcal{A}(c_i)$ to $\mathcal{A}(c_{i+1})$ by approaching the Aubry class $\mathcal{A}^j_i$ as the time retreat back to $-\infty$. For the time interval $[t_i^+,t_i^-]$, one has the partition
$$
[t_i^+,t_i^-]= [t^+_i,t^-_{i,j'}]\cup[t^-_{i,j'},t^+_{i,j'+1}]\cup\cdots\cup[t^+_{i,j}, t^-_i],
$$
and has restrictions for these quantities, formulae (\ref{constructioneq10}) ,(\ref{constructioneq11}), (\ref{constructioneq12}) and
\begin{equation}\label{constructioneq19}
T^-_{i,j}+T_{i}^++d_{i}\le \Delta t_{i,j}^-+\Delta t_{i}^+\le T^-_{i,j}+T_{i}^++d^*_{i},
\end{equation}
\begin{equation}\label{constructioneq20}
T^-_{i+1,j'}+T_{i}^++d_{i}\le \Delta t_{i+1,j'}^-+\Delta t_{i}^+\le T^-_{i+1,j'}+T_{i}^++d^*_{i};
\end{equation}
with suitably large $d^*_i>0$.

4, for $i=i_j$ with $i_j\in\Lambda_h$, by definition, $c_i$ is equivalent to $c_{i-1}$, one has
\begin{equation}\label{constructioneq21}
\hat T_i^-+T_i^+\le\Delta t^+_i+\Delta t^-_i\le \breve{T}_i^-+T_i^++d^*_i;
\end{equation}

5, for $i=i_j$ with $i_j\in\Lambda_c$, by choosing $c_{i-1}$ suitable close to $c_i$ one can also assume that $c_i$ is equivalent to $c_{i-1}$. Thus, one has
\begin{equation}\label{constructioneq22}
\hat T_i^-+T_{i-1}^+\le\Delta t^+_{i-1}+\Delta t^-_i\le \breve{T}_i^-+T_{i-1}^++d^*_{i-1}.
\end{equation}

As the system is autonomous, by choosing sufficiently large $T_i^-$, these conditions defines non-empty set for $(\Delta t_i^+,\Delta t_i^-)$.

Finally, let us introduce a modified Lagrangian and verify the smoothness of the minimizer of the action. Recall $\mu_i$ and $\psi_i$ are defined on $\mathbb{R}\times\mathbb{T}^{n-1}$ in the coordinate system $q=G_i^{-1}x$, $G_i^*(\mu_i+\psi_i)
(d\tilde\gamma)=(\mu_i+\psi_i)(\bar\pi G_i^{-1}d\tilde\gamma)$ is well defined. We introduce a modified Lagrangian
$$
L_{\eta_i,\mu_i,\psi_i}\to L-\eta_i-(k_iG_i)^*(\mu_i+\psi_i)
$$
where $k_i^*$ is a translation of $q_1$: $(k_i)^*\phi(q,\dot q)=\phi(q_1-2\pi k_i,\hat q,\dot q)$ on $T\bar M_i$ and the integer $k_i$ is chosen so that (\ref{constructioneq16}) holds.

Let $\tilde\pi$: $\mathbb{R}^n\to M$ be the universal covering space. For a curve $\tilde\gamma$: $[-K,K']\to\mathbb{R}^n$, let $\gamma=\tilde\pi\tilde\gamma$: $[-K,K']\to M$. Let $\vec{t}=(t_0^-,t_1^{\pm},\cdots, t_{i_m-1}^{\pm},t_{i_m}^+)$, $\vec{x}=(\tilde x_0^-,\tilde x_1^{\pm},\cdots, \tilde x_{i_m-1}^{\pm},\tilde x_{i_m}^+)$, we consider the minimal action
\begin{align}\label{constructioneq23}
h_{L}^{K,K'}(m,m',\vec{x},\vec{t})&=\inf\sum_{i=0}^{i_m}\int^{t_{i}^-}_{t_i^+}(L-\eta_i) (d\tilde\gamma_i^-(t))dt \notag\\
&+\sum_{i=0}^{i_m-1}\int^{t_{i+1}^+}_{t_{i}^-}(L-\eta_i-(k_iG_i)^*(\mu_i+\psi_i))(d\tilde\gamma_i^+(t))dt
\end{align}
where the infimum is taken over all absolutely continuous curves $\tilde\gamma$: $[-K,K']\to\mathbb{R}^n$ with the boundary conditions $\tilde\gamma_i^+(t_i^-)=\tilde x_i^-$, $\tilde\gamma_i^+(t_{i+1}^+)=\tilde x_{i+1}^+$ for $i=0,1,\cdots, i_{m}-1$, $\gamma(-K)=m$, $\gamma(K')=m'$ and satisfying the condition (\ref{constructioneq16}). Moreover, restricted on $[t^+_i,t^-_i]$, $\gamma$ is admissible for the condition (\ref{constructioneq9}).

As the system is autonomous, the quantity $h_{L}^{K,K'}(m,m',\vec{z},\vec{t})$ remains constant if
$(\vec{t},K,K')$ is subject to a translation. Thus, it is a function of $K'-t_{i_m}^+$, $t_0^-+K$ and $\Delta\vec{t}=\{\Delta t^+_0,\Delta t^{\pm}_1,\cdots,\Delta t^{\pm}_{i_m-1}\}$. Denote by $\Delta\vec{T}$ the domain where $\Delta\vec{t}$ takes its value.  Let $\vec{V}=(\tilde V_0^-,\tilde V_1^{\pm},\cdots, \tilde V_{i_m-1}^{\pm}, \tilde V_{i_m}^+)$, where all entries have been well defined in the previous proof.

Denote by $\gamma(t;K,K',m,m',\vec{x},\Delta\vec{t})$ the curve along which the quantity of (\ref{construction 24}) is realized, it obviously depends on the value $K,K',m,m',\vec{x},\Delta\vec{t}$ and it may not be smooth at $\vec{t}$. Let $\vec{x}$ and $\Delta\vec{t}$ range over the set $\vec{V}$ and $\Delta\vec{T}$ respectively, one obtains a minimizer. The purpose of the following steps is to show that the minimizer is a solution of the Euler-Lagrange equation determined by $L$.

Let $h_{L}^{K,K'}(m,m')$ be the minimum of $h_{L}^{K,K'}(m,m',\vec{z},\vec{t})$ over $\vec{V}$ in $\vec{x}$ and over $\Delta\vec{T}$ in $\Delta\vec{t}$ respectively:
$$
h_{L}^{K,K'}(m,m')=\min_{\Delta\vec{t}\in\Delta\vec{T},\vec{x}\in\vec{V}} h_{L}^{K,K'} (m,m',\vec{x},\vec{t}),
$$
denote the minimal curve by $\gamma(t;K,K',m,m')$, we claim that $d\gamma(t;K,K',m,m')$ is a solution of the Euler-Lagrange equation of $L$ if $K$ and $K'$ are sufficiently large. To verify this claim, we need to show that

1, $d\gamma_i^+=d\gamma|_{\Delta t_i^+}$ solves the Euler-Lagrange equation determined by $L$. Restricted on $\Delta t_i^-$, it obviously solves the Euler-Lagrange equation.

2, $\gamma(t;K,K',m,m')$ has no corner at $\tilde x_i^-$ and $\tilde x_i^+$ for each $i=0,1,\cdots i_m-1$, i.e. it is smooth for the whole $t\in [-K,K']$. For each $i\in\Lambda_h$, as $\gamma_i^-=\gamma|_{\Delta t_i^-}$ is the minimizer for the curves admissible for the condition (\ref{constructioneq9}), it is smooth at each $t^+_{i,j'+1}<\cdots<t^+_{i,j}$.

Indeed, if $i\in\bigcup_{i_j\in\Lambda_h}\{i_j,i_j+1,\cdots,i_{j+1}-1\}$, we obtained from (\ref{constructioneq7}) that
$$
\bar\gamma_i^+(t)\in U_i\qquad \text{\rm when }\ \bar\gamma^+_{i,1}(t)-2k_i\pi\in\Sigma^0_{c_i}+\delta_i,
$$
where $\bar\gamma_i^+=\bar\pi_iG_i\tilde\gamma_i^+$. Since the support of $\bar\mu_i$ has no intersection with $U_i$ and $\psi_i$ is closed in $U_i$, while $\mu_i$ is closed and $\psi_i=0$ in the
region $\{\bar\gamma^+_{i,1}(t)-2k_i\pi\not\in [-\Delta_i,\Delta_i]\}$,  the term $\mu_i$ and $\psi_i$ have no contribution to the Euler-Lagrange equation along $\bar\gamma_i^+$. For other $i$, the conclusion is obtained from (\ref{constructioneq13}) by similar argument. This proves the first conclusion.

Recall the disks $\tilde V^-_i$ and $\tilde V^+_{i+1}$ are defined in the covering space $\mathbb{R}^n$. We claim that $\tilde\gamma$ does not touch the boundary of $\tilde V_i^-\times \tilde V_{i+1}^+\times [T_i^+-d_i,T_i^++d_i]$ for $i\in\bigcup_{i_j\in\Lambda_h}\{i_j, i_j+1,\cdots,i_{j+1}-1\}$. Let us assume the contrary, i.e. $(\tilde x_{i}^-,\tilde x_{i+1}^+, \Delta t_{i}^+)\in\partial (\tilde V_i^-\times \tilde V_{i+1}^+\times [T_i^+-d_i,T_i^++d_i])$ holds for some $i\in\bigcup_{i_j\in\Lambda_h}\{i_j, i_j+1,\cdots,i_{j+1}-1\}$. Let $\hat x_i^-=\bar\pi_i G_i^{-1}\tilde x_i^--(2k_i\pi,0,\cdots,0)$ and $\hat x_{i+1}^+=\bar\pi_i G_i^{-1}\tilde x_{i+1}^+-(2k_i\pi,0,\cdots,0)$. By the condition (\ref{constructioneq16}) we see that $\hat x_i^-\in V_i^-$ and $\hat x_{i+1}^+\in V^+_{i+1}$. Then, in $x\to G_i^{-1}x$-coordinates, we obtain from (\ref{constructioneq6}) and (\ref{constructioneq18}) that
\begin{align*}
&h_{c_{i}}^{\Delta t_{i}^-}(\pi_i\hat x_{i}^+,\pi_i\hat x_{i}^-)+h^{\Delta t_i^+} _{\eta_i,\mu_i,\psi_i}(\hat x_{i}^-,\hat x_{i+1}^+)+h^{\Delta t_{i+1}^-}_{c_{i+1}}(\pi_i\hat x_{i+1}^+,\pi_i\hat x_{i+1}^-)\\
\ge &h_{c_{i}}^{\infty}(\xi,\pi_i\hat x_{i}^-)+ h^{\Delta t_i^+}_{\eta_i,\mu_i,\psi_i} (\hat x_{i}^-,\hat x_{i+1}^+)
+h^{\infty}_{c_{i+1}}(\pi_i\hat x_{i+1}^+,\zeta)+h_{c_{i}}^{\infty}(\pi_i\hat x_{i}^+,\xi)\\
&+h^{\infty}_{c_{i+1}}(\zeta,\pi_i\hat x_{i+1}^-)-2\epsilon_i^*\\
\ge &h_{c_{i}}^{\infty}(\xi,\pi_i\bar x_{i}^-)+h^{T_i^+}_{\eta_i,\mu_i,\psi_i}(\bar x_{i}^-,\bar x_{i+1}^+) +h^{\infty}_{c_{i+1}}(\pi_i\bar x_{i+1}^+,\zeta)+h_{c_{i}}^{\infty}(\pi_i\hat x_{i}^+,\xi)\\
&+h^{\infty}_{c_{i+1}}(\zeta,\pi_i\hat x_{i+1}^-)+3\epsilon_i^* \\
\ge &h_{c_{i}}^{\Delta t_{i}^-}(\pi_i\hat x_{i}^+,\pi_i\bar x_{i}^-)+ h^{T_i^+}_{\eta_i,\mu_i,\psi_i} (\bar x_{i}^-,\bar x_{i+1}^+)
+h^{\Delta t_{i+1}^-}_{c_{i+1}}(\pi_i\bar x_{i+1}^+,\pi_i\hat x_{i+1}^-)+\epsilon_i^*
\end{align*}
where $\bar x_{i}^-$ and $\bar x_{i+1}^+$ are the intersection points of a curve in $\mathscr{C}_{\eta_i,\mu_i,\psi_i}$ with $V_{i}^-$ and with $V_{i+1}^+$ respectively, $\xi\in\mathcal{M}(c_{i-1})$ and $\zeta\in\mathcal{M}(c_i)$. This contradicts the minimality of $\gamma$, thus it verifies our claim.

To see that the curve $\tilde\gamma$ is smooth at $x_{i}^-$, let us assume the contrary again. Let $x'=\tilde\gamma(t_{i}^--\delta)$ and $x^*=\tilde\gamma(t_{i}^-+\delta)$, here $\delta$ is
chosen so small that $\Delta t_{i}^+\pm\delta\in [T_i^+-d_i,T_i^++d_i]$. This is possible since $(\hat x_{i}^-,\hat x_{i+1}^+,\Delta t_{i}^+)\not\in\partial (V_i^-\times V_{i+1}^+\times [T_i^+-d_i,T_i^++d_i])$ implies that $T_i^+-d_i<\Delta t_{i}^+<T_i^++d_i$. We join these two points by a minimizer
$\xi:[-\delta,\delta]\to M$ with $\xi(-\delta)=x'$ and $\xi(\delta)=x^*$
$$
[A_{c_{i}}(\xi|_{[-\delta,\delta]})]=\inf_{\stackrel{\zeta(-\delta)=x'}{\scriptscriptstyle \zeta(\delta)=x^*}} \int_{-\delta}^{\delta}(L-\eta_{i})(d\xi(s))ds.
$$
If $\xi$ passes through $V^-_{i}$, we obtain a curve $\gamma'$ by replacing the segment of the minimizer $\gamma|_{[t_{i}^--\delta,t_{i}^-+\delta]}$ with $\xi:[-\delta,\delta]\to M$. Let $t'^-_i$ be the time for $\gamma'$ passing through $V^-_i$, then $\frac 12(t_{i+1}^+-t'^-_i)\in\Delta T_i^+$ and $\frac 12(t'^-_{i}-t^+_i)\in\Delta T_i^-$. Thus, we obtain an absolutely continuous curve which is admissible for each required condition (see (\ref{constructioneq18}) and (\ref{constructioneq20})). Along this curve we obtain smaller action $h_{L}^{K,K'}(m,m')$, but this is absurd. So, we only need to show that $\xi$ passes through $V^-_{i}$. Indeed, as $V_i^-$ is chosen small and transversal to the local connecting curve in $\mathscr{C}_{\eta_i,\mu_i,\psi_i}$, $\gamma(t)$ approaches $V_i^-$ from different sides as $t\downarrow t_i^-$ and $t\uparrow t_i^-$ respectively. Otherwise, the minimality of $\gamma$ would be violated. One refers to \cite{BCV} for the details. The smoothness at $x_{i+1}^+$ can be proved similarly.

The smoothness of $\gamma$ at $t=t_i^{\pm}$ for $i\in\bigcup_{i_j\in\Lambda_c}\{i_j, i_j+1,\cdots,i_{j+1}-1\}$ is obvious.  Because of the formulae (\ref{constructioneq14}) and (\ref{constructioneq15}), $\tilde\gamma$ does not touch the boundary of $\tilde V^{\pm}_i$ at the time of $t^{\pm}_i$ respectively. Indeed, $\tilde\gamma$ approaches $\tilde V_i^{\pm}$ from different sides as $t\downarrow t_i^{\pm}$ and $t\uparrow t_i^{\pm}$ respectively. If $\tilde\gamma$ has a corner at $t=t^{\pm}_i$, let $\zeta$: $[t^{\pm}_i-\delta, t^{\pm}_i+\delta]\to\mathbb{R}^n$ be the minimizer of the action
$$
A(\zeta|_{[t^{\pm}_i-\delta, t^{\pm}_i+\delta]})=
\inf_{\stackrel{\scriptscriptstyle\xi(t^{\pm}_i-\delta) = \tilde\gamma(t^{\pm}_i-\delta)}{\scriptscriptstyle \xi(t^{\pm}_i+\delta) = \tilde\gamma(t^{\pm}_i+\delta)}} \int_{t^{\pm}_i-\delta}^{t^{\pm}_i+\delta}L_{c_i}(d\xi(s))ds,
$$
then the curve $\zeta$ passes through the disk $\tilde V^{\pm}_i$. Replacing $\tilde\gamma|_{[t^{\pm}_i-\delta, t^{\pm}_i+\delta]}$ by this minimizer $\zeta$ one obtains a curve with smaller action. The contradiction verifies the smoothness.

If $c_{i-1}$ is connected to $c_i$ by a type-$h$ orbit and $i\in\bigcup_{i_j\in\Lambda_c}\{i_j, i_j+1,\cdots,i_{j+1}-1\}$, then $V^+_i$ is a small disk. By the same argument as above, one obtains the smoothness of $\tilde\gamma$ at $t=t^-_i$ and the smoothness at $t=t^+_i$ from the arguments for type-$h$.

As the system is autonomous, the following limit exists
$$
h_{L}^{\infty}(m,m')=\lim_{K,K'\to\infty}h_{L}^{K,K'}(m,m').
$$
We pick out a sequence of $\gamma(t;K,K',m,m')$ for large $K$ and $K'$. Obviously, the set $\{\gamma (t;K,K',m,m')\}$ has at least one accumulation point $\gamma_{\infty}$: $\mathbb{R}\to M$ with the property $\alpha(d\gamma_{\infty})\subseteq\tilde{\mathcal{A}}(c)$ and $\omega(d\gamma_{\infty}) \subseteq\tilde{\mathcal{A}}(c')$. As we have shown, it is an orbit of $\phi_{L}^t$. This proves the first conclusion of the theorem.

For any two points $x,x'\in M$, the sequence $\{\gamma(t;K,K',x,x')|_{[0,K]}\}$ approaches to a forward $c$-semi static curve as $K\to\infty$, which starts from the point $x$, and $\{\gamma(t;K,K',x,x')|_{[t^-_{i_m},K']}\}$ approaches to a backward $c'$-semi static curve as $K'\to\infty$, which approach to the point $x'$. Therefore, for sufficiently large $K,K'$, the initial value $(\gamma,\dot\gamma)|_{t=0}$ falls into any prescribed $\delta$-neighborhood of the points $(x,v_{x,c}^{+})$  and the orbit reaches the $\delta$-neighborhood of $(x',v_{x,c'}^{-})$ at the time $t=K'$. This completes the proof.
\end{proof}

The proof for time-periodic system is similar, and a bit easier from technical point of view, since one can treat the time variable $t$ as the first angle variable and take $\{t=0\}$ the section for all classes. One does not need to introduce various coordinate systems $\{G_i^{-1}\}$ for different cohomology class. We omit the details here.

\section{\ui Proof of the main theorem}
\setcounter{equation}{0}

Once one obtains the existence of a generalized transition chain in the system (\ref{introeq1}),  Theorem \ref{mainthm} is proved by applying Theorem \ref{constructionthm1}. Therefore, the main purpose of this section is to show the genericity of such transition chains.

\subsection{Candidate of transition chain}
Let us consider the Hamiltonian (\ref{introeq1}). Any integer vector $\tilde k\in\mathbb{Z}^3$ determines a plane $\Sigma_{\tilde k}=\{\tilde\omega\in\mathbb{R}^3:\langle\tilde\omega,\tilde k\rangle=0\}$, which passes through the origin. Let $\Omega_E=\{\tilde\omega=\nabla h(\tilde y): \tilde y\in h^{-1}(E)\}\subset\mathbb{R}^3$, it is diffeomorphic to a $2$-sphere with the origin inside if $E>\min h$, because the Hamiltonian $h$ is assumed convex. Thus, the set $\Sigma_{\tilde k}\cap\Omega_E$ is a closed curve, denoted by $\Gamma_{\tilde\omega,\tilde k}$. Given any positive number $\delta>0$, some positive integer $K_{\delta}$ exists such  that $\cup_{\|\tilde k\|\le K_{\delta}}\Gamma_{\tilde\omega,\tilde k}$ constitutes a $\delta$-grid on $\Omega_E$ in the sense that the $M_h^{-1}\delta$-neighborhood of $\cup_{\|\tilde k\|\le K_{\delta}}\Gamma_{\tilde\omega,\tilde k}$ cover the whole sphere $\Omega_E$, where $M_h=\max_{\tilde y\in h^{-1}(E)}\|\partial^2 h(\tilde y)\|$. Therefore, there exists a resonant path
$$
\Gamma_{\tilde\omega}=\Gamma_{\tilde\omega,0}\ast\Gamma_{\tilde\omega,1}\ast\cdots\ast \Gamma_{\tilde\omega,m}.
$$
such that each rotation vector falls into its $M_h^{-1}\delta$-neighborhood, where $\Gamma_{\tilde\omega,\ell}$ represents a resonant path determined by one resonant relation (one integer vector). It is possible that $\Gamma_{\tilde\omega,\ell}$ and $\Gamma_{\tilde\omega,\ell'}$ are determined by the same resonant relation $\tilde k_{\ell}=\tilde k_{\ell'}$.
\begin{figure}[htp] 
  \centering
  \includegraphics[width=4cm,height=4cm]{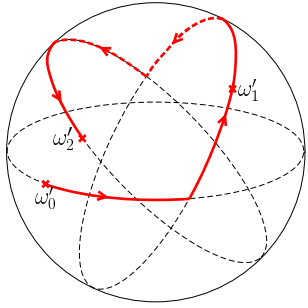}
  \caption{The resonant path in the surface of $h^{-1}(E)$.}
  \label{fig12}
\end{figure}

Obviously, the map $\partial h$ is a global diffeomorphism, which maps each closed curve $\Gamma_{\tilde\omega,\ell}$ onto the $2$-sphere $h^{-1}(E)$, $\Gamma_{\ell}=\partial h^{-1}\Gamma_{\tilde\omega,\ell}$. The $\delta$-grid on $\Omega_E$ induces a $\delta$-grid on $h^{-1}(E)$: the $\delta$-neighborhood of  $\cup_{\|\tilde k\|\le K_{\delta}}\Gamma_{\tilde k}$ covers the whole sphere.
Under the inverse of the frequency map $\tilde\omega\to\tilde y=(\nabla h)^{-1}(\tilde\omega)$ we obtain a path
$$
\Gamma=\Gamma_{0}\ast\Gamma_{1}\ast\cdots\ast\Gamma_{m}
$$
in action variable space, where $\Gamma_{\ell}=(\nabla h)^{-1}\Gamma_{\tilde\omega,\ell}$.

Let $\ell(\dot{\tilde x})=\max_{\tilde y}(\langle\dot{\tilde x},\tilde y\rangle-h(\tilde y))$ be the Lagrangian determined by the Hamiltonian $h$, $\phi_{\ell}^t$ be the Lagrange flow. As the system is integrable, the action variable $\tilde y$ keeps constant along each orbit of $\phi_{\ell}^t$ which obviously lies in the support of certain $c$-minimal measure with $\tilde c=\tilde y$. In this sense, one obtains a path $\Gamma_c\subset H^1(\mathbb{T}^3, \mathbb{R})$ and $\Gamma_c=\Gamma$ if we identify $H^1(\mathbb{T}^3,\mathbb{R})=\mathbb{R}^3$.

By the study of normal form, finitely many points $\tilde y_0,\tilde y_1,\cdots \tilde y_N\in\Gamma$ exist such that each $\tilde\omega_i=\nabla h(\tilde y_i)$ is rational frequency vector with period $T_i\le K_0\epsilon^{-\varrho}$ and
$$
\bigcup_{0\le i\le N}\{\tilde y:\|\tilde y-\tilde y_i\|\le\mu T_i^{-1}\epsilon^{\sigma}\}\supset \Gamma+\frac {\mu}2 \epsilon^{\frac 13},
$$
where $\varrho=(1-3\sigma)/3$, $\sigma<1/6$, see (\ref{normaleq9}). Obviously, $N$ depends on $\epsilon$, the size of perturbation. Under five steps of KAM iteration, we obtain the normal form
$$
H_i(\tilde x,\tilde y)=\tilde h(\tilde y)+\epsilon\tilde Z_{\epsilon,i}(\tilde x,\tilde y)+\epsilon\tilde R_{\epsilon,i}(\tilde x,\tilde y),
$$
which is valid in the domain $\{\tilde y:\|\tilde y-\tilde y_i\|\le\mu T_i^{-1} \epsilon^{\sigma}\}\times\mathbb{T}^3$. In which $\|\tilde R_{\epsilon,i}\|_{C^2}= O(\epsilon^{\frac 1{21}})$ and $\tilde Z_{\epsilon,i}$ is resonant with respect to $\omega_i$
$$
\tilde Z_{\epsilon,i}(\tilde x,\tilde y)=\sum_{\langle\tilde k,\omega_i\rangle=0}\tilde Z_{\epsilon,i,\tilde k}(\tilde y)e^{i\langle\tilde k, \tilde x \rangle},
$$
where the summation is made over all those $\tilde k$ spanned by $(\tilde k_i,\tilde k'_i)$: $\tilde k=j_1\tilde k_i+j_2\tilde k'_i$ in which $\|\tilde k_i\|,\|\tilde k'_i\|\le K_{\delta}$, $(\tilde k_i,\tilde k_i)$ is irreducible and the integer vector $\tilde k_i$ is used to determine the resonant path $\Gamma_i$, i.e. $\langle\nabla\tilde h(\tilde y),\tilde k_i\rangle=0$ holds for each $\tilde y\in\Gamma_i$.

For these two integer vectors $(\tilde k_i,\tilde k_i)$ there is another $\tilde k^*_i\in\mathbb{Z}^3$ such that the matrix $I_i=(\tilde k_i,\tilde k'_i,\tilde k^*_i)$ is uni-module. There are infinitely many $\tilde k^*_i$ satisfying the condition, we choose one so that its norm is as small as it can be. The coordinate transformation:
\begin{equation}\label{chaineq1}
\tilde q=I_i^t\tilde x,\qquad \tilde p=I_i^{-1}\tilde y.
\end{equation}
is obviously symplectic and $H_i(\tilde p,\tilde q)=H(I_i^{-t}\tilde x,I_i\tilde y)$ is also a function of $\tilde q$ defined in $\mathbb{T}^n$. Let $\tilde y$ be the point where $\nabla h_i(\tilde y)=\tilde\omega$, then the gradient of $h_i(\tilde p)=h(I_i\tilde y)$ satisfies
$$
\tilde\omega_i=\nabla h_i(p)=(0,0,\omega_{i3}),
$$
and $\tilde Z_{\epsilon,i}(\tilde p,\tilde q)=\tilde Z_{\epsilon,i}(I_i\tilde y,I_i^{-t}\tilde x)$ is independent of $q_3$, namely, $\tilde Z_{\epsilon,i}=\tilde Z_{\epsilon,i}(p,p_3,q)$ if we write $p=(p_1,p_2)$ and $q=(q_1,q_2)$.

Let us still use $(\tilde x,\tilde y)$ to denote the new coordinate system. Therefore, around a strong resonance point $\tilde\omega_i$ the normal form takes the form
\begin{equation}\label{chaineq2}
H_i(\tilde x,\tilde y)=h_i(\tilde y)+\tilde Z_{\epsilon,i}(x,y,y_3)+\tilde R_{\epsilon,i}(\tilde x,\tilde y)
\end{equation}
in the new coordinate system (\ref{chaineq1}). This form remains valid in $\mathbb{T}^3\times\{\|I_i(\tilde y-\tilde y_i)\|<T_i^{-1}\epsilon^{\sigma}\}$
and at $\tilde y=\tilde y_i$ one has $\nabla\tilde h_i=(0,0,\omega_3)$ with $\omega_3\neq 0$.

For our purpose, it is not necessary consider the Hamiltonian $H_i$ on the whole disk $\{\tilde y:\|\tilde y-\tilde y_i\|\le\mu T_i^{-1}\epsilon^{\sigma}\}$. Instead, we choose finitely many $\tilde y_{ij}\in\Gamma_i$ with $\tilde y_{i0}=\tilde y_i$ such that
$$
\cup_j\{\|\tilde y-\tilde y_{ij}\|<2K\sqrt{\epsilon}\}\supseteq\Gamma_i+ K\sqrt{\epsilon},
$$
and
$$
\text{\rm dist}(\tilde y_{ij'},\tilde y_{ij})\ge K\sqrt{\epsilon}\ \ \ \ \ \forall \ j'\neq j.
$$
where $K>0$ is a suitably large number. The results obtained in Section 4 and 5 can be applied to the Hamiltonian when it is restricted on each domain $\mathbb{T}^3\times\{\|y-y_{ij}\|<K\sqrt{\epsilon}\}$, especially on the domain $\mathbb{T}^3\times\{\|y-y_{i0}\|<K\sqrt{\epsilon}\}$.

Let $Y_{i}(x,y,\tau)$ be the solution of the equation $H_i(x,-\tau,y,Y_{i})=E$, where $H_i$ is given by (\ref{chaineq2}). It can be written in the form of
$$
Y_{i}=h_{i}(y)+\epsilon Z_{i}(x,y)+\epsilon R_{i}(x,y,\tau).
$$
The truncated form of $Y_{i}$
$$
Y_{i,T}=h_{i}(y)+\epsilon Z_{i}(x,y)
$$
is determined by $H_{i,T}=\tilde h(y)+\tilde Z_{\epsilon,i}(x,\tilde y)$, the truncated form of $H_i$. Denote $\tilde y_{ij}=(y_{ij},y_{ij,3})$ and let $y-y_{ij}=\sqrt{\epsilon}p$, $s=\sqrt{\epsilon}\tau$, we obtain from $Y_i$ the Hamiltonian
$$
G_{ij,\epsilon}=\frac 1{\sqrt{\epsilon}}\langle\omega_{ij},p\rangle+\frac 12\langle A_{ij}p,p\rangle+V_{ij}(x)+ Z_{ij,\epsilon}(x,\sqrt{\epsilon}p)+R_{ij,\epsilon}(x,\sqrt{\epsilon}p, s/\sqrt{\epsilon}),
$$
for $j=0$ we have
$$
G_{i0,\epsilon}=G_{i,\epsilon}=\frac 12\langle A_{i}p,p\rangle+V_{i}(x)+ Z_{i,\epsilon}(x,\sqrt{\epsilon}p) +R_{i,\epsilon}(x,\sqrt{\epsilon}p,s/\sqrt{\epsilon}),
$$
where $\omega_{ij}=\partial h_{i}(y_{ij})$, $A_{ij}=\partial^2 h_{i}(y_{ij})$, $A_{i}=\partial^2 h_{i}(y_i)$, $V_{ij}(x)=Z_{j}(x,y_{ij})$, $V_{i}(x)=Z_{j}(x,y_{i})$, $\|R_{ij,\epsilon}\|_{C^2},\|R_{i,\epsilon}\|_{C^2} =O(\epsilon^{\frac 1{21}})$ and $\|Z_{ij,\epsilon}\|_{C^2},\|Z_{i,\epsilon}\|_{C^2}=O(\sqrt{\epsilon})$ where the $C^2$-norm is with respect to $(x,p)$ only. By the choice of $y_{ij}$ and the convexity of $h$ we can see that
$$
\|\omega_{ij}\|\ge m_hK\sqrt{\epsilon}
$$
where $m_h$ is the lower bound of the eigenvalues of $\partial^2h$. Therefore, the $\alpha$-function $\alpha_{G_{ij,\epsilon}}$ for the Lagrangian determined by $G_{ij,\epsilon}$ does not reach its minimum when the action variable is restricted on the disk $\|p\|\le K$. As the frequency $\mathbb{R}^2\ni\omega_{ij}\neq 0$ satisfies certain resonant condition, the existence of normally hyperbolic cylinder is guaranteed by Theorem \ref{AppenHyperTh1} (see Appendix B) for generic $V_{ij}$. Therefore, all functions $G_{ij,\epsilon}$ ($j\neq 0$) are treated as {\it a priori} unstable Hamiltonian and the Hamiltonian $G_{i,\epsilon}$ is considered as the problem of double resonance.

Recall the Fenchel-Legendre transformation $\mathscr{L}_{\beta}$: $H_1(M,\mathbb{R})\to H^1(M,\mathbb{R})$, determined by the $\beta$-function. Let $\beta_h$, $\beta_{H_{i,T}}$ and $\beta_{H_{i}}$ be the $\beta$-function for $h$, $H_{i,T}$ and $H_{i}$ respectively. Obviously, $\mathscr{L}_{\beta_h}(\Gamma_{\omega,i})$ is still a curve. As it was studied in Subsection 4.3, $\mathscr{L}_{\beta_{H_{i}^T}}(\Gamma_{\omega,i})$ is composed of a flat $\mathbb{F}_0$ joined with two channels. See Figure \ref{fig13} below. These channels are joined to the flat either at a point or along an edge. The former case was thought difficult to handle.
\begin{figure}[htp] 
  \centering
  \includegraphics[width=9.5cm,height=4cm]{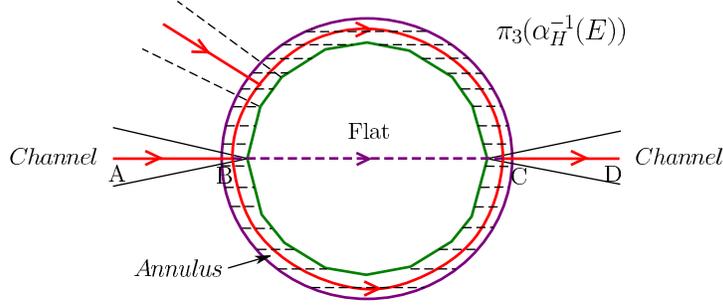}
  \caption{The transition chain under $\pi_3:\alpha^{-1}(E)\to\mathbb{R}^2$, represented by the thick solid red curve. Along the segment from $B$ to $C$, $c_3$ keeps constant. The purple dashed curve represents the curve $\mathscr{L}_{\beta_h}$.}
 \label{fig13}
\end{figure}

\subsection{Transition chain of incomplete intersection}

Let $\alpha_{H_i}, \alpha_{H_{i,T}}$ be $\alpha$-function determined by the Hamiltonian $H_i,H_{i,T}$  respectively. As it has been studied in the subsection 5.3, the double resonance corresponds to a flat $\mathbb{F}_0\subset\alpha_{H_{i}}^{-1}(E)$, around which there exists a annulus of incomplete intersection
$$
\tilde{\mathbb{A}}_T=\{(c_1,c_2,c_3)\in\alpha^{-1}_{H_{i,T}}(E):0<c_3\le\epsilon\Delta_0\}.
$$
The following has been proved generic in Theorem \ref{beltthm2}. For each $\tilde c\in \tilde{\mathbb{A}}_T$, the Ma\~n\'e set does not cover the whole $3$-torus. Thus, some $d_i>0$ exists such that for each $\tilde c\in \tilde{\mathbb{A}}_T$ the set
$$
N_{\tilde c,d_i}=\{x\in\mathbb{T}^3:U^-_{\tilde c}(x)-U'^+_{\tilde c}(x)<d_i\epsilon\}
$$
does not cover the whole $3$-torus, where $U^-_{\tilde c}$ and $U'^+_{\tilde c}$ are the elementary weak KAM solutions. Such results are obtained under the hypothesis ({\bf H1$\sim$4}) proposed in the section 5.

As the truncated system is independent of $x_3$, the Ma\~n\'e set for $H_{i,T}$ is independent of $x_3$
$$
\mathcal{N}_{H_{i,T}}(\tilde c)|_{\Sigma_s}=\mathcal{N}_{H_{i,T}}(\tilde c)|_{\Sigma_{s'}}, \qquad N_{\tilde c,d_i}|_{\Sigma_s}=N_{\tilde c,d_i}|_{\Sigma_{s'}}
$$
where $\Sigma_s$ is a co-dimension one section on which $x_3=s$. Let $\pi_3$: $\mathbb{R}^3\to\mathbb{R}^2$ be the standard projection: $\pi_3(x_1,x_2,x_3)=(x_1,x_2)$, let $\alpha_{Y_{i,T}}$ and $\alpha_{Y_{i}}$ be the Lagrangian determined by $Y_{i,T}$  and $Y_i$ respectively,  $\mathcal{N}_{Y_{i,T}}$ and $\mathcal{N}_{Y_i}$ denote the Ma\~n\'e set for the Lagrangian determined by $Y_{i,T}$ and $Y_i$ respectively. As $Y_{i,T}(x,y)$ solves the equation $H_{i,T}(x,y,Y_{i,T})=\alpha_{H_i}(c)$,  one has $\pi_3\mathcal{N}_{H_{i,T}}(\tilde c)= \mathcal{N}_{Y_{i,T}}(c)$. If $\mathcal{N}_{H_{i,T}}(\tilde c)$ does not cover the $3$-torus, $\mathcal{N}_{Y_{i,T}}(c)$ does not cover the $2$-torus. Because of $\alpha_{Y_{i,T}}(c)>\min\alpha_{Y_{i,T}}$, each $c$-minimal measure possesses non-zero rotation vector. Therefore, there exists some circle $\Sigma^1_c$ non-degenerately embedded into the $2$-torus such that each $c$-minimal curve passes through $\Sigma^1_c$ transversally and $\mathcal{N}_{Y_{i,T}}(c)|_{\Sigma^1_c}$ is topologically trivial, i.e. some open intervals $I_{\ell}\subset\Sigma^1_c$ exist such that
$$
\bigcup I_{\ell}\supset\mathcal{N}_{Y_{i,T}}(c)|_{\Sigma^1_c},\qquad I_{\ell}\cap I_{\ell'}=\varnothing,\ \ \forall\ \ell\neq\ell'.
$$
One can suitably choose $I_{\ell}$ so that
\begin{equation}\label{chaineq3}
\bigcup I_{\ell}\times\{x_3\in\mathbb{R}:\mod 2\pi\}\supset N_{\tilde c,d_i}.
\end{equation}

As $\|H_i- H_{i,T}\|_{C^2}\le O(\epsilon^{1+\frac 1{21}})$, some $\epsilon_i>0$ exists such that the Ma\~n\'e set for the Hamiltonian $H_i$
\begin{equation}\label{chaineq4}
\mathcal{N}_{H_i}(\tilde c)\subset N_{\tilde c,d_i}, \qquad \forall \ \epsilon<\epsilon_i.
\end{equation}

Let $\Gamma_{Y_i}=\tilde{\mathbb{A}}\cap\{c_3=Y_i\}$ where
$$
\tilde{\mathbb{A}}=\{(c_1,c_2,c_3)\in\alpha^{-1}_{H_i}(E):0<c_3\le\epsilon\Delta_0\}.
$$
It is a closed curve. By the preliminary works as above, some $c$-equivalence along the curve is established. Indeed, for each $\tilde c\in\Gamma_{Y_i}$, let
$$
\Sigma_{\tilde c}=\Sigma_c^1\times\{x_3\in\mathbb{R}\mod 2\pi\}.
$$
By the construction, each $\tilde c$-semi static curve passes through the section $\Sigma_{c}$ transversally. Recall
$$
V_{\tilde c}=\bigcap_U\{i_{U*}H_1(U,\mathbb{R}): U\, \text{\rm is a neighborhood of}\, \mathcal {N}(\tilde c)\cap\Sigma_{\tilde c}\},
$$
one sees that $\tilde c'-\tilde c\in V^{\perp}_{\tilde c}$ provided $\tilde c'$ is close to $\tilde c$, $c'_3=c_3$ and $\alpha_{H_i}(\tilde c')=\alpha_{H_i}(\tilde c)$, i.e. $c'\in\Gamma_{Y_i}$. In this case, some open set $U\supset\mathcal{N}_{H_i}(c)\cap\Sigma_{\tilde c}$ such that $V_{\tilde c}= i_{U*}H_1(U,\mathbb{R})=\text{\rm span}\{(0,0,1)\}$, from which one obtains that $V_{\tilde c}^{\perp} =\text{\rm span}\{(1,0,0),(0,1,0)\}$. For each class $\tilde c'\in\Gamma_{Y_i}$ close to $\tilde c$, one has $\tilde c'-\tilde c=(\Delta c_1,\Delta c_2,0)\in V_{\tilde c}^{\perp}$, thus, there exists a closed 1-form $\bar\mu$ such that $[\bar\mu]=c'-c$ and
$$
\text{\rm supp}\bar\mu\cap\mathcal{N}_{H_i}(\tilde c)\cap\Sigma_{\tilde c}=\varnothing.
$$
Thus, any two classes along the curve $\Gamma_{Y_i}$ is equivalent. Therefore, a transition chain for incomplete intersection is established, see Figure \ref{fig13}, the thick solid red curve from the point $B$ to the point $C$.

\subsection{Transition chain for complete intersection}

By the study in the subsection 4.2 (see Theorem \ref{cylinderthm2}), there are two wedge-shaped channels $\tilde{\mathbb{W}}_g =\cup_{\lambda\ge\lambda_0>0}\mathscr{L}_{\beta}(\lambda g)$ and $\tilde{\mathbb{W}}_{g'} =\cup_{\lambda\ge\lambda'_0>0} \mathscr{L}_{\beta}(\lambda g')$ which extend into the annulus $\tilde{\mathbb{A}}$. Corresponding to these two channels there exist two normally hyperbolic cylinder $\tilde\Pi_{E_0,E_1,g}$ and $\tilde\Pi_{E'_0,E'_1,g'}$ respectively, which are three-dimensional and invariant for the Hamiltonian flow: for each $\tilde c=(c_1,c_2,c_3)\in \tilde{\mathbb{W}}_g$, the Ma\~n\'e set $\tilde{\mathcal{N}}_{H_i}(\tilde c)\subset\tilde\Pi_{E_0,E_1,g}$ if $c_3\ge 2\epsilon^{1+d}$.

We are now in the situation that there is a normally hyperbolic cylinder $\tilde\Pi$ homeomorphic to $I\times\mathbb{T}^2$, the Aubry set is located on this cylinder for each cohomology class under consideration. If the Aubry set is a two-dimensional torus, it has its own stable and unstable manifold. It implies that the forward (backward) weak KAM solution is differentiable when it is restricted in a neighborhood of this 2-torus. Because weak KAM is a viscosity solution, any $C^1$ viscosity solution for Tonelli Hamiltonian must be $C^{1,1}$ \cite{CS,FS,Ri}. Therefore, in a small neighborhood of the Aubry set, the stable and unstable manifold are Lipschitz graphs. As the cylinder is smooth, the Aubry set is also a Lipschitz graph over two-torus.

Let $\Sigma\subset H^{-1}(E)$ be a four-dimensional section intersecting each orbit in the Aubry sets transversally. The set $\Pi=\Sigma\cap\tilde\Pi$ is a two-dimensional cylinder. In a neighborhood of $\Pi$ the Hamiltonian flow defines a return map on the section $\Sigma$. Restricted on the cylinder $\Pi$, each Aubry set is either periodic orbit, or Aubry-Mather set or invariant circle. Each circle is a Lipschitz curve.  A piece of the cylinder $\Pi$, bounded by two invariant circles, is invariant for the return map which preserves some ``area" element. Let $\psi$: $\Pi_0=[0,1]\times\mathbb{T}\to\Pi$ be the map, it pulls back the standard closed 2-form $\omega=dx\wedge dy$ to a 2-form on $\Pi$. Since the second de Rham cohomology of a cylinder is trivial, by Moser's theorem on the isotopy of symplectic forms, there exists a diffeomorphism $\psi_1$ which transforms this form to the standard 2-form, namely
$$
(\psi\circ\psi_1)^*\omega=d\theta\wedge dI.
$$
Since the return map $\Phi_H$ preserves the form $\omega$, one has
$$
((\psi\circ\psi_1)^{-1}\circ\Phi_H\circ(\psi\circ\psi_1))^*d\theta\wedge dI=d\theta\wedge dI.
$$
Let us consider those Aubry sets which are invariant two-torus, denoted by $\Upsilon_c$. We use the same notation for their intersection with $\Pi$, which are circles. Fix one circle $\Upsilon_{c_0}$, other circles are parameterized by the ``area" $\sigma$. Given any other circle $\Upsilon_c$, we obtain the algebraic area $\sigma$ of the region bounded by these two circles. If each circle is regarded as the graph of a function, then there is a regularity result \cite{CY1}
$$
\|\Upsilon_{c(\sigma)}-\Upsilon_{c(\sigma')}\|_{C^0}\le C_1\sqrt{|\sigma-\sigma'|}.
$$
Because the cylinder is normally hyperbolic, there is an segment of a line $I_{\sigma}\subset \alpha^{-1}(E)$ such that all cohomology classes located in this segment share the same Aubry set, an invariant 2-torus, so we have a map $\sigma\to I_{\sigma}$.

For a small segment of cylinder, some neighborhood $N\subset\mathbb{T}^3$ of a two-torus exists so that all Aubry sets on this cylinder fall into this neighborhood: $\mathcal{A}(c)\subset N$. In a suitably coordinate system we take a finite covering space $\check{M}$ so that the lift of $N$ consists of two connected components $N_l$ and $N_r$. The Ma\~n\'e set satisfies the condition
$$
\mathcal{N}(c,\check M)\backslash(N_{l}\cup N_{r})\neq\varnothing.
$$
To construct transition chain in this situation, one need to show it consists of totally disconnected semi-static curves when the Aubry set is a two-torus.

Let us consider the covering space $\pi_1: \bar M=\mathbb{R}\times\mathbb{T}^2$ such that the lift of $N$ contains infinitely many connected components, each of which is still a neighborhood of two-torus. We consider two adjacent components $N_l$ and $N_r$ in the lift of $N$, i.e. $\pi_{1}N_l=\pi_{1}N_r=N$ and no other component in the lift is located between them. The subscript $r$ means ``right" and $l$ means ``left". Correspondingly, denote by $\Upsilon_{l,\sigma}$ and $\Upsilon_{r,\sigma}$ the connected component in the lift of $\Upsilon_{\sigma}$ respectively, $\Upsilon_{l,\sigma}\subset N_l$ and $\Upsilon_{r,\sigma}\subset N_r$. The barrier function takes the form
$$
u^-_{l,\sigma}-u^+_{r,\sigma}\ \ \ \ \text{\rm or}\ \ \ \ u^-_{r,\sigma}-u^+_{l,\sigma}
$$
where $u^{\pm}_{l,\sigma}$ and $u^{\pm}_{r,\sigma}$ are the elementary weak KAM solution determined by $\Upsilon_{l,\sigma}$ and $\Upsilon_{r,\sigma}$ respectively. The elementary weak-KAM solution $u^{\pm}_{l,\sigma}$ is uniquely determined by $I_{\sigma}$, all classes in $I_{\sigma}$ share the same elementary weak-KAM solution. It is why we use the subscript $\sigma$. A point $\pi_1x\in\mathcal{N}(c)$ if and only if
$$
x\in\arg\min(u^-_{l,\sigma}-u^+_{r,\sigma}), \ \ \ \text{\rm or} \ \ \ x\in\arg\min(u^-_{r,\sigma}-u^+_{l,\sigma}).
$$
Let $M_0$ be a segment of $\mathbb{R}\times\mathbb{T}^2$ bounded by $\Upsilon_{l,c}$ and $\Upsilon_{r,c}$. The problem turns out to be the version: whether does the set $\arg\min(u^{-}_{l,\sigma}-u^{+}_{r,\sigma})|_{M_0\backslash(N_l\cup N_r)}$ consist of totally disconnected semi-static curves?

We only need to follow the argument in \cite{CY1,CY2,LC} if we are satisfied with the generic property in the category of Lagrangian, where the perturbations are functions also defined on $TM$: $L(x,\dot x)\to L(x,\dot x)-L_{\delta}(x,\dot x)$. In this paper, we are also going to prove the generic property in the sense of Ma\~n\'e, i.e. the perturbations are imposed on the potential $L(x,\dot x)\to L(x,\dot x)-V(x)$.

Let us construct the potential perturbations. Choose a 2-dimensional disk $D$ which transversally intersects the backward semi-static curves $\gamma_{x,\sigma_0}^-:(-\infty,0]\to\bar M$ with $\gamma_{x,\sigma_0}^-(0)=x\in D$. These curves approach $\Upsilon_{l,\sigma_0}$ as $t\to -\infty$. In suitable coordinate system we can assume that $D$ is located in the section
$$
D+d_1=\{(x_1,x_2,x_3):x_1=x_{10},|x_2-x_{20}|\le d+d_1, |x_3-x_{30}|\le d+d_1\}
$$
where $(x_{10},x_{20},x_{30})=x_0$. Let $D=(D+d_1)|_{d_1=0}$. We write the curve $\gamma^-_{x_0,\sigma_0}$ in the coordinate form
\begin{equation*}
\gamma^-_{x_0,\sigma_0}(t)=(x_{10}(t),x_{20}(t),x_{30}(t))
\end{equation*}
where $x_{10}$ is monotonely increases for $t\in [-T,0]$. Since continuous function can be approximated by smooth function, for any small $\delta>0$, a tubular neighborhood of the semi-static curve $\gamma_{x_0,\sigma_0}^-|_{[-T,0]}$ admits smooth foliation of curves $\zeta_{x}$: $(x,t)\in (D+d_1)\times[-T,0]\to\mathbb{T}^3$ such that each semi-static curve $\gamma^-_{x,\sigma_0}|_{[-T,0]}$ remains $\delta$-close to $\zeta_{x}$ in the sense that $d(\zeta_x(t),\gamma_{x,\sigma_0}^-(t))<\delta$ for all $t\in[-T,0]$. The tubular neighborhood is defined by the form
$$
\text{\uj C}=\cup_{-T\le t\le 0}\{\zeta_{x}(t):x\in D+d_1\}.
$$

Let $\rho$: $(D+d_1)\times\mathbb{R}\to\mathbb{R}$ be a smooth function such that $\rho(x,t)=\rho(x',t)$, $\rho(x,t)=0$ if $t\notin[-T+t_0,-t_0]$ with small $t_0>0$ and $\rho(x,t)>0$ if $x\in (-T+t_0,-t_0)$. As $\zeta_x$ is a smooth foliation of the tubular domain, it can be thought as a differeomorphism $\Psi$: $(D+d_1)\times [-T,0]\to\text{\uj C}$, namely, for $x'\in\text{\uj C}$ there exists unique $(x,t)\in (D+d_1)\times [-T,0]$ such that $\Psi(x,t)=\zeta_x(t)=x'$. With a smooth function $V$: $D+d_1\to\mathbb{R}$ we obtain a smooth function $\bar V$ defined on $\text{\uj C}$
\begin{equation}\label{completeeq5}
\bar V(x')=\rho(\Psi^{-1}(x'))V(\zeta_{x}(0)),
\end{equation}
Since unique $(x,t)\in (D+d_1)\times [-T,0]$ is determined by certain $x'\in\text{\uj C}$, some constant $C_2>0$ exists such that
\begin{equation}\label{completeeq6}
\int_{-T+t_0}^{-t_0}\bar V(\zeta_{x}(t))dt=C_2V(x), \qquad \forall x\in D+d_1.
\end{equation}

We construct the potential perturbation in the form of (\ref{completeeq5}) where $V$ ranges over the function space spanned by
\begin{align*}
\mathfrak{V}_{2}=&\varepsilon\Big(\sum_{\ell=1,2}a_{\ell}\cos2\ell\pi(x_2-x_{20}) +b_{\ell}\sin2\ell\pi(x_2-x_{20})\Big),\\
\mathfrak{V}_{3}=&\varepsilon\Big(\sum_{\ell=1,2}c_{\ell}\cos2\ell\pi(x_3-x_{30}) +d_{\ell}\sin2\ell\pi(x_3-x_{30})\Big),
\end{align*}
where each parameter of $(a_{\ell},b_{\ell},c_{\ell},d_{\ell})$ ranges over an unit interval $[1,2]$. If we construct a grid for the parameters $(a_{\ell},b_{\ell},c_{\ell},d_{\ell})$ by splitting the domain equally into a family of cubes and setting the size length by
$$
\Delta a_{\ell}=\Delta b_{\ell}=\Delta c_{\ell}=\Delta d_{\ell}=\varepsilon,
$$
the grid consists of as many as $[\varepsilon^{-8}]$ cubes.

Let us choose a neighborhood $\mathbb{I}_{\sigma_0}$ of the point $\sigma_0$ which satisfies the conditions:

1, for each $(x,\sigma)$ with $x\in D$ and $\sigma\in\mathbb{I}_{\sigma_0}$, there is a unique backward semi-static curve $\gamma^-_{x,\sigma}$ such that $\gamma^-_{x,\sigma}(0)=x$ and $\gamma^-_{x,\sigma}(t)\to\Upsilon_{l,\sigma}$ as $t\to -\infty$. It is guaranteed by the existence of unstable manifold and if $D$ is chosen close to $\Upsilon_{l,\sigma_0}$. By the definition, $\gamma_{x,\sigma}^-(t)\in\text{\uj C}$ for $t\in [-T,0]$ and $x\in D$, so each $\sigma\in\mathbb{I}_{\sigma_0}$ defines a linear operator
\begin{equation}\label{completeeq7}
\mathscr{K}_{\sigma}\bar V=\int_{-T}^0\bar V(\gamma_{x,\sigma}^-(t))dt;
\end{equation}

2, as each curve $\gamma_{x,\sigma_0}^-(t)$ stays in $\delta$-neighborhood of the fiber $\zeta_{x}$ for $t\in [-T,0]$ with small $\delta>0$, by choosing suitably small neighborhood $\mathbb{I}_{\sigma_0}$ (depending on the size of $D$) some constant $C_3>0$ exists such that
\begin{align}\label{completeeq8}
\text{\rm Osc}_{x\in D}(\mathscr{K}_{\sigma}\bar V-\mathscr{K}_{\sigma}\bar V')&=\max_{x,x'\in D}|\mathscr{K}_{\sigma}\bar V(x)-\mathscr{K}_{\sigma}\bar V'(x')|\notag\\
&>2^{-1}C_2\text{\rm Osc}_{x\in D}(V-V')\\
&>C_3\varepsilon\Delta\notag
\end{align}
with $\Delta=\max\{|a_{\ell}-a'_{\ell}|,|b_{\ell}-b'_{\ell}|,|c_{\ell}-c'_{\ell}|, |d_{\ell}-d'_{\ell}|\}$. Indeed, as $V$ is a linear combination of the functions $\{\sin\ell x_j,\cos\ell x_j:\ell =1,2,j=2,3\}$, there exists some number $d=d(D)>0$ depending on the size of $D$ only such that the Hausdorff distance
$$
d_H(V_{D}^{-1}(\min_DV+\frac 14\Delta),V_{D}^{-1}(\max_DV-\frac 14\Delta))\ge d(D)
$$
where $V_{D}^{-1}(\min_DV+\frac 14\Delta)=\{x\in D:V(x)\le\min _DV+\frac 14|(\max_DV-\min_DV)\}$ and $ V_{D}^{-1}(\max_DV-\frac 14\Delta)=\{x\in D:V(x)\ge\max _DV-\frac 14|(\max_DV-\min_DV)\}$. By requiring $\sigma$ suitably close to $\sigma_0$ and using the notation $\pi_x(x,t)=x$, we have
$$
\pi_x\Psi^{-1}\gamma_{x,\sigma}(t)\in V_{D}^{-1}(\min_DV+\frac 14\Delta)\qquad \text{\rm if}\ V(x)=\min_D V;
$$
and
$$
\pi_x\Psi^{-1}\gamma_{x,\sigma}(t)\in V_{D}^{-1}(\max_DV-\frac 14\Delta)\qquad \text{\rm if}\ V(x)=\max_D V.
$$
Therefore, one obtains (\ref{completeeq8}) from (\ref{completeeq5}), (\ref{completeeq6}) and (\ref{completeeq7});

3, for each $\sigma\in \mathbb{I}_{\sigma_0}$ and each $x\in D$, the forward semi-static curve $\gamma_{x,\sigma}^+$, determined by $u^+_{r,\sigma}$ with $\gamma_{x,\sigma}(0)=x\in D$, does not touch the support of $\rho\subset\text{\uj C}$ and approaches $\Upsilon_{r,\sigma}$ as $t$ increases to infinity.

For the perturbed system $L(\dot x,x)-\bar V(x)$, we use $u^+_{r,\sigma,\bar V}$ and $u^-_{l,\sigma,\bar V}$ to denote the weak KAM solution. By the construction of perturbation, the invariant cylinder remains unchanged. Restricted on the disk $D$, the forward weak-KAM solution $u^+_{r,\sigma,\bar V}$ is also unchanged $(u^+_{r,\sigma,V}-u^+_{r,\sigma})|_{x\in D}=0$, but the backward weak KAM solution undergoes small perturbation $u^-_{l,\sigma,\bar V}\neq u^-_{l,\sigma}$. To see how it is related to the potential, let us recall the following relations
$$
u^-_{l,\sigma}(\gamma_{x,\sigma}(0))-u^-_{l,\sigma}(\gamma_{x,\sigma}(-t))= \int_{-t}^0(L-\eta_{c})(d\gamma_{x,\sigma}(t))dt+Et
$$
if $\gamma_{x,\sigma}$ is a semi-static curve determined by $u^-_{l,\sigma}$ with $\gamma_{x,\sigma}(0)=x$ and $c\in I_{\sigma}$. We also have
$$
u^-_{l,\sigma,\bar V}(\gamma_{x,\sigma}(0))-u^-_{l,\sigma,\bar V}(\gamma_{x,\sigma}(-t))\le \int_{-t}^0(L-\bar V-\eta_{c})(d\gamma_{x,\sigma}(t))dt+Et.
$$
Clearly, for suitably large $t$ the backward weak-KAM solution $\gamma_{x,\sigma}(-t)$ shall retreat into a small neighborhood of $\Upsilon_{l,\sigma}$ where the weak KAM solution $u^-_{l,\sigma}$ also remains unchanged. Therefore we deduce from the last two formulae that
$$
u^-_{l,\sigma,\bar V}(x)-u^-_{l,\sigma}(x)\ge\int^0_{-T}\bar V(\gamma_{x,\sigma}(t))dt.
$$
In a similar way, we find
$$
u^-_{l,\sigma,\bar V}(x)-u^-_{l,\sigma}(x)\le\int^0_{-T}\bar V(\gamma_{x,\sigma,\bar V}(t))dt
$$
where $\gamma_{x,\sigma,\bar V}$ stands for the backward semi-static curve determined by the elementary weak-KAM solution $u^-_{l,\sigma,\bar V}$ with $\gamma_{x,\sigma,\bar V}(0)=x$. As $x$ is located in the region where the weak KAM solution is differentiable, we have $|\gamma_{x,\sigma,\bar V}(t)-\gamma_{x,\sigma}(t)| \to 0$ as $\bar V\to 0$, guaranteed by the upper-semi continuity of semi-static curves. Therefore, it follows that for $x\in D$
\begin{align}\label{completeeq9}
u^-_{l,\sigma,V}(x)-u^-_{l,\sigma,V'}(x)=&\int_{-T}^0(\bar V-\bar V')(\gamma_{x,\sigma,\bar V}^-(t))dt+o(\|\bar V-\bar V'\|),\\
=&(\mathscr{K}_{\sigma}+ \mathscr{R}_{\sigma})(\bar V-\bar V')\notag
\end{align}
where the linear operator $\mathscr{K}_{\sigma}$ is defined in (\ref{completeeq7}) and $\mathscr{R}_{\sigma}(\bar V-\bar V')=o(\|V-V'\|)$.

Next, let us consider all backward weak-KAM solutions for $\sigma\in\mathbb{I}_{\sigma}$.  Each parameter $\sigma\in\mathbb{I}_{\sigma}$ determines an interval $I_{c(\sigma)}$ for cohomology class. We restricted ourselves on a curve of first cohomology classes contained in the set $\cup I_{c(\sigma)}$ and intersecting each $I_{c(\sigma)}$ transversally. In this sense, we think the class defined on the interval $\mathbb{I}_c\ni c$ and the map $\sigma\to c(\sigma)$ is continuous. As  $h^{\infty}_{c(\sigma)}(x,x')=u^-_{l,\sigma}(x')-u^-_{l,\sigma}(x)$ if $x\in\Upsilon_{l,\sigma}$ and $h^{\infty}_{c(\sigma)}(x,x')=u^+_{r,\sigma}(x')-u^+_{r,\sigma}(x)$ if $x'\in\Upsilon_{r,\sigma}$, we obtain from Lemma 6.4 in \cite{CY2}
\begin{align}\label{completeeq10}
&|u^-_{l,\sigma}(x)-u^-_{l,\sigma'}(x)|\le
C_4(\sqrt{|\sigma-\sigma'|}+|c(\sigma)-c(\sigma')|),\\
&|u^+_{r,\sigma}(x)-u^+_{r,\sigma'}(x)|\le
C_4(\sqrt{|\sigma-\sigma'|}+|c(\sigma)-c(\sigma')|).\notag
\end{align}

We split the interval $\mathbb{I}_{\sigma}$ equally into $K_{\sigma}[\varepsilon^{-2}]$ parts and split the interval $\mathbb{I}_c$ equally into $K_c[\varepsilon^{-1}]$, where
$$
K_{\sigma}=\Big[L_{\sigma}\Big(\frac{12C_4}{C_3}\Big)^2\Big],\qquad
K_c=\Big[L_c\frac{12C_4}{C_3}\Big],
$$
$L_{\sigma}$ and $L_c$ are the length of $\mathbb{I}_{\sigma}$ and of $\mathbb{I}_c$ respectively. The grid over $\mathbb{I}_c\times\mathbb{I}_{\sigma}$ consists of as many as $K_{\sigma}K_c[\varepsilon^{-3}]$ cuboids in which $K_{\sigma},K_c$ are independent of $\varepsilon$. We pick up all cuboids which contain the points $(c,\sigma(c))$ and denote them by $\text{\uj c}_j$ with $j\in\mathbb{J}$, then the cardinality of the set $\mathbb{J}$ is not bigger than $K_{\sigma}K_c[\varepsilon^{-3}]$.

According to the definition, a point $(c_j,\sigma(c_j))\in\text{\uj c}_j$ corresponds to a barrier function $u^-_{l,\sigma_j}-u^+_{r,\sigma_j}$. Let us assume that some parameters $(a_{\ell,j},b_{\ell,j})$ exist such that
$$
\text{\rm Osc}_{x\in D}\min_{x_3}\Big(u^-_{l,\sigma_j}-u^+_{r,\sigma_j}-(\mathscr{K}_{\sigma_j}+ \mathscr{R}_{\sigma_j})\bar V_j\Big)=0
$$
where $\bar V_j=\rho\Psi^{-1} V_j$ is defined as in (\ref{completeeq5}) with $V_j\in\mathfrak{V}_2$ determined by the parameters. We consider another perturbation determined by the parameters $(a'_{\ell},b'_{\ell})$
$$
V'=\varepsilon\Big(\sum_{\ell=1,2}a'_{\ell}\cos2\ell \pi(x_2-x_{20})+b'_{\ell}\sin2\ell\pi(x_2-x_{20})\Big)
$$
and set $\bar V'=\rho\Psi^{-1} V'$. By using the formula (\ref{completeeq9}) we write the identity
\begin{align*}
u^-_{l,\sigma,\bar V'}-u^+_{r,\sigma,\bar V'}&=(u^-_{l,\sigma,\bar V'}-u^-_{l,\sigma_j,\bar V'})- (u^+_{r,\sigma,\bar V'}- u^+_{r,\sigma_j,\bar V'})\\
&+(u^-_{l,\sigma_j}-u^+_{r,\sigma_j})-(\mathscr{K}_{\sigma_j}+\mathscr{R}_{\sigma_j})\bar V_j\\
&+(\mathscr{K}_{\sigma_j}+\mathscr{R}_{\sigma_j})(\bar V_j-\bar V').
\end{align*}
For any point $(c,\sigma(c))\in\text{\uj c}_j$, in virtue of the formulae in (\ref{completeeq10}) the first term on the right-hand-side of the identity is not bigger than $C_3\varepsilon^2/3$. For
small $\|\bar V_j-\bar V'\|$ we have $\|(\mathscr{K}_{\sigma_j}+\mathscr{R}_{\sigma_j})(\bar V_j-\bar V')\|<\frac 13\|\mathscr{K}_{\sigma_j}(\bar V_j-\bar V')\|$. As both $V'$ and $V_j$ are independent of $x_3$, if the parameters $(a'_{\ell},b'_{\ell})$ satisfy
$$
\max\{|a_{\ell,j}-a'_{\ell}|,|b_{\ell,j}-b'_{\ell}|\}\ge\varepsilon
$$
we find from above identities and the estimate (\ref{completeeq8}) that
\begin{equation}\label{completeeq11}
\text{\rm Osc}_{x\in D}\min_{x_3}\Big(u^-_{l,\sigma}-u^+_{u,\sigma}-(\mathscr{K}_{\sigma}+ \mathscr{R}_{\sigma})\bar V'\Big)\ge \frac 13C_3\varepsilon^2>0.
\end{equation}
It implies that, for each small rectangle $\text{\uj c}_j$ we only need to cancel out at most $2^4$ $\varepsilon$-cubes from the grid for $\{\Delta a_{\ell},\Delta b_{\ell}:\ell=1,2\}$ so that the formula (\ref{completeeq11}) holds for the all other cubes. Let $j$ ranges over the set $\mathbb{J}$, we obtain a set $\text{\uj S}^c_2\subset\{a_{\ell}\in [1,2],b_{\ell}\in[1,2]:\ell=1,2\}$ with Lebesgue measure
$$
\text{\rm meas}\text{\uj S}^c_2\ge 1-2^4K_{\sigma}K_c\varepsilon,
$$
such that the formula (\ref{completeeq11}) holds for each $(a'_{\ell},b'_{\ell})\in\text{\uj S}^c_2$ and for each $\sigma\in\mathbb{I}_{\sigma_0}$.

By taking $V'\in\mathfrak{V}_3$, in the same way we can see that some set $\text{\uj S}^c_3\subset\{c_{\ell}\in [1,2],d_{\ell}\in[1,2]:\ell=1,2\}$ with Lebesgue measure
$$
\text{\rm meas}\text{\uj S}^c_3\ge 1-2^4K_{\sigma}K_c\varepsilon,
$$
such that the formula
\begin{equation}\label{completeeq12}
\text{\rm Osc}_{x\in D}\min_{x_2}\Big(u^-_{l,\sigma}-u^+_{u,\sigma}-(\mathscr{K}_{\sigma}+ \mathscr{R}_{\sigma})\bar V'\Big)>0
\end{equation}
for each $(c'_{\ell},c'_{\ell})\in\text{\uj S}^c_3$ and each $\sigma\in\mathbb{I}_{\sigma_0}$.

Therefore, for each $(a_{\ell},b_{\ell},c_{\ell},d_{\ell},)\in\text{\uj S}^c_2\times\text{\uj S}^c_3$, the formulae (\ref{completeeq11}) and (\ref{completeeq12}) implies that for all $\sigma\in\mathbb{I}_{\sigma_0}$ the diameter of each connected component of the set
$$
\arg\min(u^-_{l,\sigma,\bar V}-u^+_{r,\sigma,\bar V})|_D
$$
is smaller than $D$. As $\varepsilon>0$ can be arbitrarily small, for each disk $D$, an open-dense set $\mathfrak{V}_D$ exists such that this disconnect property holds for the system $L-\bar V$ with $\bar V\in\mathfrak{V}_D$. Since $\sigma$ is restricted on a closed set in the line which can be covered by finitely many $\mathbb{I}_{\sigma_i}$, this property is also open-sense for all $\sigma$ under our consideration.

Each section $D$ admits a hierachy of partition of small disks $D=\cup_jD_{kj}$ such that the size $D_{jk}$ approaches zero as $k\to\infty$, the intersection $\cap_k\mathfrak{V}_{D_{kj}}$ is a residual set. Therefore, we have proved
\begin{theo}\label{chainthm1}
It is an open-dense condition for $H$ such that the set
$$
\arg\min(u^-_{l,\sigma}-u^+_{r,\sigma})\backslash((\Upsilon_{l,\sigma}\cup\Upsilon_{r,\sigma})+\delta)
$$
consists of totally disconnected semi-static curves.
\end{theo}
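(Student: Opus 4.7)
The plan is to upgrade the pointwise construction carried out in formulas (\ref{completeeq11}) and (\ref{completeeq12}) to a genericity statement via a Baire-type argument. The preceding analysis already establishes the following local fact: for a fixed two-dimensional transversal disk $D\subset \bar M$ and a fixed parameter interval $\mathbb{I}_{\sigma_0}$, the set of potential perturbations $\bar V=\rho\,\Psi^{-1}V$ with $V\in \mathfrak{V}_2+\mathfrak{V}_3$ for which, uniformly for every $\sigma\in\mathbb{I}_{\sigma_0}$, both inequalities
\begin{equation*}
\mathrm{Osc}_{x\in D}\min_{x_3}(u^-_{l,\sigma,\bar V}-u^+_{r,\sigma,\bar V})>0,\qquad \mathrm{Osc}_{x\in D}\min_{x_2}(u^-_{l,\sigma,\bar V}-u^+_{r,\sigma,\bar V})>0
\end{equation*}
hold, has Lebesgue measure at least $1-2^5 K_\sigma K_c\varepsilon$ in the eight-dimensional parameter space. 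Let $\mathfrak{V}_{D,\mathbb{I}_{\sigma_0}}$ be the open set of Hamiltonians for which these oscillation bounds hold with strict inequality; by the upper semi-continuity of the Ma\~n\'e set in $c$ and the smooth dependence of $\mathscr{K}_\sigma$ on $\sigma$, this set is open in $C^r$, and letting $\varepsilon\downarrow 0$ shows it is dense.

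Next I would globalise the parameter $\sigma$. The admissible range of $\sigma$ (namely those labelling two-torus Aubry sets on the normally hyperbolic cylinder $\tilde\Pi$) is closed in $\mathbb{R}$, hence compact once restricted to the relevant piece of the cylinder, and can be covered by finitely many intervals $\mathbb{I}_{\sigma_0^{(1)}},\ldots,\mathbb{I}_{\sigma_0^{(N)}}$. Intersecting the finitely many open-dense sets $\mathfrak{V}_{D,\mathbb{I}_{\sigma_0^{(i)}}}$ produces an open-dense set $\mathfrak{V}_D$ for which the above oscillation bounds hold uniformly on the whole admissible $\sigma$-range. This is the content of the claim ``open-dense for a single $D$.''

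To pass from a single disk to the statement that $\arg\min(u^-_{l,\sigma}-u^+_{r,\sigma})\setminus((\Upsilon_{l,\sigma}\cup\Upsilon_{r,\sigma})+\delta)$ is totally disconnected, I would choose, for each $k\ge 1$, a finite covering of the compact set $(\bar M\setminus((\Upsilon_{l,\sigma}\cup\Upsilon_{r,\sigma})+\delta))$ by transversal disks $D_{k,j}$ of diameter at most $2^{-k}$ (the transversality to the semi-static foliation is preserved under $C^0$-small perturbations of the centre, so such a covering exists and is finite). Setting
\begin{equation*}
\mathfrak{V}=\bigcap_{k\ge 1}\bigcap_{j} \mathfrak{V}_{D_{k,j}},
\end{equation*}
one obtains a countable intersection of open-dense sets, hence a residual set; actually since each $k$ contributes only finitely many open-dense sets, the intersection over fixed $k$ is open-dense, and residuality comes from the countable union in $k$. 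For $H\in\mathfrak{V}$ and any connected component $C$ of the argmin set, $C$ must fit inside some $D_{k,j}$ for each $k$ (otherwise the oscillation bound forces a point outside the minimum), so $\mathrm{diam}(C)\le 2^{-k}$ for every $k$, i.e.\ $C$ is a point. Combined with the fact that the argmin set is a union of semi-static curves, this yields the total disconnectedness claimed.

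The main obstacle I anticipate is the uniformity step bridging the pointwise analysis of (\ref{completeeq11})--(\ref{completeeq12}) and the global disk covering: one must verify that the tubular-neighbourhood and foliation construction $\zeta_x$ underlying $\mathscr{K}_\sigma$ can be chosen consistently for a finite cover of disks $\{D_{k,j}\}$, and that the perturbations $\bar V_{k,j}$ supported on overlapping tubes do not cancel each other's oscillations. This is handled by choosing the tubes mutually disjoint (possible if the $D_{k,j}$ are transversal sections and the time intervals $[-T+t_0,-t_0]$ on which $\rho\ne0$ are translated apart along the corresponding semi-static curves), so that the perturbations act independently on the backward weak KAM solutions associated with different disks. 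Once this independence is established, the rest of the argument is the standard Baire-category packaging described above.
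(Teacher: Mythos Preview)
Your approach is essentially the same as the paper's: establish the oscillation bounds (\ref{completeeq11})--(\ref{completeeq12}) on a single transversal disk $D$ uniformly over a compact $\sigma$-range (via finitely many intervals $\mathbb{I}_{\sigma_0^{(i)}}$), deduce that each connected component of the argmin set restricted to $D$ has diameter strictly less than that of $D$, then take a hierarchy of finer partitions $D=\cup_jD_{k,j}$ and intersect the countably many open-dense sets $\mathfrak{V}_{D_{k,j}}$ to obtain total disconnectedness. The paper does exactly this (see the paragraph immediately preceding the theorem), and you have reproduced the packaging correctly.

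One remark on your final paragraph: the concern about overlapping tubes and mutual cancellation of the perturbations $\bar V_{k,j}$ is a non-issue, and your proposed fix (disjoint supports via time-translation) is unnecessary. The Baire argument never requires a \emph{single} perturbation to work for all sub-disks at once. For each fixed sub-disk $D_{k,j}$, the set $\mathfrak{V}_{D_{k,j}}$ consists of \emph{all} Hamiltonians for which the oscillation bound holds on that particular disk; density of this set is witnessed by the specific tube perturbation attached to $D_{k,j}$, but openness holds independently of any support condition. The countable intersection $\bigcap_{k,j}\mathfrak{V}_{D_{k,j}}$ is then residual by Baire, with no interaction between the witnessing perturbations. (Note also that both your argument and the paper's actually yield a \emph{residual} set rather than a literally open-dense one; this is consistent with how the result is used downstream in the paper, where condition ({\bf H6}) is explicitly described as a countable intersection of open-dense sets.)
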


\subsection{Criterion for strong and weak resonance}
Given a perturbation $\epsilon P(\tilde x,\tilde y)$, it is natural to ask, along the resonant path $\Gamma$, how many many double resonant points need to be treated as strong resonance. Along a segment of resonant path $\Gamma_{\tilde\omega,\ell}$ the resonance condition
$$
\langle\tilde  k,\tilde \omega\rangle=0
$$
is always satisfied and at each double resonant point some other $\tilde k'\in\mathbb{Z}^3$ exists such that $\tilde k'$ is linearly independent of $\tilde k$ and
$$
\langle\tilde  k',\tilde \omega\rangle=0.
$$
Recall the process of KAM iteration, the main part of the resonant term is obtained by averaging the perturbation over a circle determined by these two resonant relations. It takes the form
$$
Z=Z_{\tilde k}(\langle \tilde k,\tilde x\rangle,\tilde y)+Z_{\tilde k,\tilde k'}(\langle \tilde k,\tilde x\rangle,\langle \tilde k',\tilde x\rangle,\tilde y)
$$
where
$$
Z_{\tilde k}=\sum_{j\in\mathbb{Z}\backslash\{0\}}P_{j\tilde k}(\tilde y)e^{j\langle \tilde k,\tilde x\rangle i}, \qquad
Z_{\tilde k,\tilde k'}=\sum_{(j,l)\in\mathbb{Z}^2, l\neq 0}P_{jk+lk'}(\tilde y)e^{(j\langle \tilde k,\tilde x\rangle+l\langle\tilde  k_i,\tilde x\rangle)i}.
$$
Since $P$ is $C^r$-function, the coefficient $P_{j\tilde k+l\tilde k'}$ is bounded by
$$
|P_{j\tilde k+l\tilde k'}|\le 8\pi^3\|P\|_{C^r}\|j\tilde k+l\tilde k'\|^{-r},
$$
which deduces the estimation
\begin{equation}\label{criterioneq1}
\|Z_{\tilde k,\tilde k'}\|_2\le d\|P\|_{C^r}\|\tilde k'\|^{-r+2}
\end{equation}
where $d=d(\tilde k)$ depends on $\tilde k$. The function $Z_{\tilde k}$ is periodic in $\tilde q=\langle \tilde k,\tilde x\rangle$. In virtue of the theorem \ref{appenBthm1} (see Appendix B), the following hypotheses is obviously open and dense:

({\bf H1.1}): For each $\tilde y\in\Gamma_{\ell}$, $Z_{\tilde k}$ is non-degenerate at its maximal point, i.e. $\partial^2_{qq}Z_{\tilde k}(\tilde q)>0$ holds provided $\tilde q$ is a maximal point.

Given some $Z_{\tilde k}$ satisfying the hypothesis ({\bf H1.1}), certain $\lambda>0$ exists such that for each $\tilde y\in\Gamma_{\ell}$, $\partial^2_{qq}Z_{\tilde k}\ge\lambda$ holds at the maximal point. Assume at $\tilde y'\in\Gamma_{\ell}$, the second resonant condition $\langle\tilde  k',\tilde \omega(\tilde y')\rangle=0$ is also satisfied. One thus obtains the normal form (\ref{chaineq2}), by performing the coordinate transformation (\ref{chaineq1}).  The homogenized form of the truncated Hamiltonian takes the form
$$
G=\langle Ay,y\rangle+V_{\tilde k}(x_2)+V_{\tilde k,\tilde k'}(x).
$$
The Hamiltonian flow determined by $\langle Ay,y\rangle+V_{\tilde k}(x_2)$ admits a normally hyperbolic invariant cylinder $\Pi_{\tilde k,\tilde k'}^0=\{y=0,x_2=x^*_2\}\times\mathbb{T}$ if $x^*_2$ is a non-degenerate maximal point of $V_{\tilde k}$. Applying the theorem of normally hyperbolic manifold, one obtains from the estimate (\ref{criterioneq1}) that some positive number $d_1=d_1(\lambda)>0$ exists such that $\Phi_{\bar G}^t$ also admits a normally hyperbolic and invariant cylinder $\Pi_{\tilde k,\tilde k'}$ close to $\Pi_{\tilde k,\tilde k'}^0$ provided
\begin{equation}\label{criterioneq2}
\|\tilde k'\|^{r-2}\ge\frac d{d_1}\|P\|_{C^r}.
\end{equation}
It is a criterion to see whether the double resonance is thought as weak resonance and can be treated in the way for {\it a priori} unstable system. There are only finitely many $\tilde k'\in\mathbb{Z}^3$ not satisfying this condition, thus are treated as strong double resonance.

Therefore, once a perturbation $P$ is chosen so that ({\bf H1.1}) is satisfied, there are finitely many double resonant frequencies which need to be treated as strong double resonance. The number is independent of the size of $\epsilon$. At strong double resonance, it is also open and dense condition that

({\bf H1.2}): at each strong double resonance point, the maximal point of $Z_{\tilde k}+Z_{\tilde k,\tilde k'}$ is non-degenerate, two eigenvalues of the Hessian matrix are positive and different $\lambda_{k,j}>0$ for $j=1,2$. Indeed, there exists $\nu>0$ such that $\lambda_{\tilde k,2}\ge\nu\|\tilde k\|^{r-2}$ and $\lambda_{\tilde k,1}\ge\nu\|\tilde k\|^{r-2}\|\tilde k'\|^{r-2}$.

\subsection{Proof of the main theorem}
Given $y'_0,y'_1,\cdots,y'_k$ we have chosen a resonant path $\Gamma_{\omega}$ so that $\mathscr{L}_{\beta_h}(\Gamma_{\omega})$ passes through each $\delta$-neighborhood of these points. Let $\epsilon P$ satisfy all hypothesis above. In order to make things convenient for readers, we formulate them here again:

({\bf H1}) {\it for each strong double resonance, the potential $V_i$ attains its maximum at one point only, the Hessian matrix of $V_i$ at that point is negative definite. All eigenvalues are different: $-\lambda_2<-\lambda_1<0<\lambda_1<\lambda_2$}. (see {\bf H1} in Subsection 5.1, {\bf H1.1} and {\bf H1.2} in Subsection 8.4);

({\bf H2}) {\it for the Hamiltonian flow $\Phi_{Y_{i,T}}^t$, the stable and unstable manifold of the fixed point intersect transversally along each minimal homoclinic orbit. Each minimal homoclinic orbit approaches to the fixed point along the direction $\Lambda_1$: $\dot\gamma(t)/\|\dot\gamma(t)\| \to\Lambda_{x1}$ as $t\to\pm\infty$.} (see {\bf H2} in Subsection 4.1. The function $Y_{i,T}$ solves the equation $H_{i,T}(\tilde x,y,Y_{i,T})=E$, the transversality is in the sense that, at the intersection points, the tangent space of the stable and unstable manifold span the tangent space of the energy level set.)

({\bf H3}): {\it For each $c\in\partial^*\mathbb{F}_{0,i}$, the Aubry set  does not contain minimal curve homoclinic to the origin $($fixed point$)$.} (see {\bf H2} in Subsection 4.1, each strong double resonance is related to a flat $\mathbb{F}_{0,i}$ corresponding to the Hamiltonian $Y_{i,T}$.)

({\bf H4}): {\it For each $g\in H_1(\mathbb{T}^2,\mathbb{Z})$, there are finitely many $\theta_i\in\mathbb{R}$ such that, for each rotation vector $\theta_i g$, the Mather set consists of two periodic orbits, for other rotation vector $\theta g$, the Mather set consists of one periodic orbit only. All these periodic orbits are hyperbolic.} (see {\bf H4} in Subsection 4.2, also formulated for the Hamiltonian $Y_{i,T}$.)

({\bf H5}): {\it For each $c\in\partial^*\mathbb{F}_{0,i}$ there is a disk disjoint either with the support of $\mu_c$ or of $\mu$, restricted on which, the set $\arg\min(U_c^--U'^+_c)$ is non-empty. The size of the disk is independent of $c$.} (see {\bf H5} in Subsection 4.2, also formulated for the Hamiltonian $Y_{i,T}$.)

Along the resonant path $\Gamma_{\omega}$, the strong double resonance points are denoted by $\omega_0,\omega_1,\cdots, \omega_m$, where the number $m$ depends on $P$. Each flat $\mathscr{L}_{\beta_H}(\omega_i)$ is surrounded by a annulus $\tilde{\mathbb{A}}_i\subset\alpha^{-1}_H(E)$. For each segment of $\Gamma_{\omega}$ connecting $\omega_i$ to $\omega_{i+1}$, denoted by $\Gamma_{\omega,i}$, $\mathscr{L}_{\beta_H}(\Gamma_{\omega,i})$ constitutes a channel connecting $\tilde{\mathbb{A}}_i$ to $\tilde{\mathbb{A}}_{i+1}$.

Split the unit interval into $2m+1$ segments
$$
[0,1]=[0=s_{0,i},s_{0,c}]\cup[s_{0,c},s_{1,i}]\cup\cdots\cup [s_{m-1,i},s_{m,c}] \cup[s_{m,c},s_{m,i}=1]
$$
and let $\Gamma_{j,c}$: $[s_{j,i},s_{j,c}]\to\alpha^{-1}_H(E)$, $\Gamma_{j,i}$: $[s_{j,c},s_{j+1,i}]\to\alpha^{-1}_H(E)$ denote the paths such that $\Gamma_{j,c}(s_{j,c}) =\Gamma_{j,i}(s_{j,c})$, $\Gamma_{j,i}$ falls into the annulus $\tilde{\mathbb{A}}_j$ along which the component $c_3$ keeps constant in the local coordinate system and $\Gamma_{j,c}$ falls into the channel $\mathscr{L}_{\beta_H}(\Gamma_{\omega,j})$ connecting $\tilde{\mathbb{A}}_i$ to $\tilde{\mathbb{A}}_{i+1}$. Let $\Gamma_{0,c}(0)\in\mathscr{L}_{\beta_H}(\nabla h(y'_0))$ and $\Gamma_{m,c}(1)\in\mathscr{L}_{\beta_H}(\nabla h(y'_k))$. The subscript ``$c$" is used to indicate complete intersection and the subscript ``$i$" denotes the incomplete intersection. We choose the conjunction of these curves as candidate of generalized transition chain
\begin{equation}\label{proofeq1}
\Gamma=\Gamma_{0,c}\ast\Gamma_{0,i}\ast\cdots\ast\Gamma_{m-1,i}\ast\Gamma_{m,c}.
\end{equation}
Indeed, restricted on the segment $\Gamma_{j,i}$ ($j=0,\cdots, m-1$) it has been proved satisfying the condition (H2) in the Definition \ref{chaindef1} ($c$-equivalence) by using the hypothesis ({\bf H1$\sim$5}).  To guarantee the condition (H1) in the Definition \ref{chaindef1} when it is restricted on each segment $\Gamma_{j,c}$ ($j=0,\cdots, m$), one need to impose some condition which has been proved to be generic in Subsection 8.3, Theorem \ref{chainthm1}:

({\bf H6}): {\it if the Aubry set covers certain $2$-torus in $\mathbb{T}^3$ for $c\in\Gamma_{j,c}$, then certain finite covering manifold $\check M$ and certain two-dimensional section $\Sigma_c$ exist such that
$$
\mathcal{N}(c,\check M)|_{\Sigma_c}\backslash(\mathcal{A}(c,\check M)+\delta)|_{\Sigma_c}\neq\varnothing.
$$
is totally disconnected.}

Under these hypothesis, namely ({\bf H1$\sim$6}), the path $\Gamma$ defined in (\ref{proofeq1}) is a transition chain. Choose suitably many $c_i\in\Gamma$ ($i=0,1,\cdots,i_m)$ such that

1, each $\tilde{\mathcal{A}}(c_i)$ is connected to $\tilde{\mathcal{A}}(c_{i+1})$ by local minimal orbit either of type-$c$ or of type-$h$;

2, among these classes, some  classes $c_{i_j}$ $(j=0,1,\cdots,k)$ exist such that $c_{i_j}$ is very close to $y'_j$ (the prescribed action variables in Theorem \ref{mainthm}) if one thinks both $c_{i_j}$ and $y_j$ as points in $\mathbb{R}^3$.

Recall the proof of Theorem  \ref{constructionthm1}. Let $\gamma$: $[-K,K']\to\mathbb{T}^3$ be the minimizer of the action (see (\ref{constructioneq23})) satisfying the boundary conditions $\gamma(-K)=x_0$ and $\gamma(K')=x_k$. Dividing the time interval $[-K,K']$ into $2i_m+1$ parts
$$
[-K,K']=[t_0^+,t^-_0]\cup [t^-_0,t^+_1]\cup\cdots\cup [t^+_{i_m},t_{i_m}^-],
$$
imposing some constraints on $\gamma$ at $t=t_i^{\pm}$ and conditions on sufficiently large $t_{i+1}^+-t_i^-$ and $t^-_i-t^+_i$, one then proves that $\gamma$ is a solution of the Lagrange equation determined by $H$. The curve $\gamma$ determines an orbit of the Hamiltonian flow $\Phi_H^t$:
$$
x(t)=\gamma(t),\qquad y(t)=\frac {\partial L}{\partial \dot x}(\gamma(t),\dot\gamma(t)).
$$

For each $x\in M$, the set $V^-(c,x,L)\subset T_xM$ is defined as follows: a vector $v\in V^-(c,x,L)$ if and only if a backward $c$-semi static curve $\gamma^{-}$ for the Lagrangian $L$ exists such that $v=\dot\gamma^{-}(0)$. The set $V^+(c,x,L)$ is defined for forward semi-static curve similarly. Clearly, one has
\begin{pro}
The set-valued map $L\to V^{\pm}_{c,x,L}$ is upper-semi continuous.
\end{pro}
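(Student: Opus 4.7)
The plan is to deduce this proposition from the upper semi-continuity of the pseudo-connecting curve set $\mathscr{C}(L)$ (Theorem \ref{semicontinuitythm2}) together with continuous dependence of the Lagrange flow on $L$ and on initial data, plus the stability of the Peierls barrier $F_c$ under Tonelli perturbations of $L$. Given $L_n\to L$ in $C^2$-topology, $v_n\in V^-(c,x,L_n)$ and $v_n\to v$, by definition there exist backward $c$-semi-static curves $\gamma_n^-:(-\infty,0]\to M$ for $L_n$ with $\gamma_n^-(0)=x$ and $\dot\gamma_n^-(0)=v_n$, and the task is to produce a backward $c$-semi-static curve $\gamma^-$ for $L$ with $\gamma^-(0)=x$ and $\dot\gamma^-(0)=v$.

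First I would extract a limiting curve. The a priori velocity bound derived in the proof of Theorem \ref{semicontinuitythm1} (see \ref{semicontinuityeq1}) is uniform in $n$, hence so is the acceleration bound coming from the Euler-Lagrange equation for $L_n$. Arzel\`a-Ascoli together with a diagonal extraction produces a subsequence, still denoted $\{\gamma_n^-\}$, converging in $C^1_{\mathrm{loc}}((-\infty,0],M)$ to a $C^2$-curve $\gamma^-:(-\infty,0]\to M$ with $\gamma^-(0)=x$, $\dot\gamma^-(0)=v$, and $\gamma^-$ solves the Euler-Lagrange equation of $L$.

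Next I would verify that $\gamma^-$ is $c$-semi-static for $L$. By definition, for each $t<t'\le 0$,
$$
[A_c^{L_n}(\gamma_n^-|_{[t,t']})]=F_c^{L_n}((\gamma_n^-(t),t),(\gamma_n^-(t'),t')).
$$
The left-hand side converges to $[A_c^{L}(\gamma^-|_{[t,t']})]$ because $L_n\to L$ in $C^2$, $\gamma_n^-\to\gamma^-$ in $C^1_{\mathrm{loc}}$, and $\alpha(c,L_n)\to\alpha(c,L)$ (continuity of the $\alpha$-function in $L$, used implicitly in Theorem \ref{weakthm1}). For the right-hand side I will show that $F_c^{L_n}(z_n,z_n')\to F_c^L(z,z')$ whenever $z_n\to z$ and $z_n'\to z'$; passing to the limit then gives the semi-static identity for $\gamma^-$ with respect to $L$, so that $v\in V^-(c,x,L)$. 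The argument for $V^+(c,x,L)$ is identical after reversing time.

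The hard part will be justifying the joint convergence $F_c^{L_n}(z_n,z_n')\to F_c^L(z,z')$. Since $F_c=\inf_\tau h_c$ ranges over an unbounded time parameter $\tau$, pointwise continuity of $h_c^{L_n}$ under $C^2$-perturbation of $L$ is not enough by itself. The remedy is that the minimisers realising these infima satisfy the same uniform speed bound as in Theorem \ref{semicontinuitythm1}, so the infimum is attained on a range of $\tau$ confined to a fixed compact set independent of $n$; a standard compactness argument on minimising sequences then upgrades pointwise to uniform continuous dependence of $F_c$ on the Lagrangian. This localisation of $\tau$ is precisely where the Tonelli hypotheses on $\{L_n\}$ (uniform positive definiteness and super-linear growth, inherited from $L$ for large $n$) enter the proof.
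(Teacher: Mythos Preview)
The paper states this proposition with the preface ``Clearly, one has'' and gives no proof, so any correct argument will do. Your outline---extract a $C^1_{\mathrm{loc}}$ limit via uniform velocity/acceleration bounds, then verify the limit is semi-static---is the natural one and is essentially the content of Lemma~\ref{semicontinuitylem1} and Theorem~\ref{semicontinuitythm2} adapted from pseudo-connecting curves to backward/forward semi-static curves.

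There is, however, a genuine misstep in what you call the ``hard part''. You claim that the infimum defining $F_c^{L_n}$ is attained on a time interval bounded independently of $n$. This is false in general: if both endpoints lie in the Aubry set (think of an irrational flow on a KAM torus), $F_c(x,x')$ may be approached only as $\tau\to\infty$ and is never attained. Fortunately this claim is also unnecessary. You do not need the full convergence $F_c^{L_n}\to F_c^L$; you need only the easy direction
\[
\limsup_{n\to\infty} F_c^{L_n}(\gamma_n^-(t),\gamma_n^-(t')) \le F_c^{L}(\gamma^-(t),\gamma^-(t')),
\]
which follows by taking any near-minimiser $\xi$ for $F_c^L$ and modifying its endpoints slightly to produce competitors for $F_c^{L_n}$. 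The reverse inequality for the limit curve is automatic: since $F_c^L$ is an infimum over all curves and all times, one always has $F_c^L(\gamma^-(t),\gamma^-(t'))\le [A_c^L(\gamma^-|_{[t,t']})]$. Combining this with $[A_c^L(\gamma^-|_{[t,t']})]=\lim_n [A_c^{L_n}(\gamma_n^-|_{[t,t']})]=\lim_n F_c^{L_n}(\gamma_n^-(t),\gamma_n^-(t'))$ and the easy direction above yields equality, so $\gamma^-$ is $c$-semi-static for $L$. Drop the bounded-$\tau$ claim and the argument is complete.
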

To see that this orbit visits the ball $B_{\delta}(x_0,y_0),B_{\delta}(x_k,y_k)\subset\mathbb{T}^3 \times\mathbb{R}^3$ and the balls $B_{\delta}(y_i)\subset \mathbb{R}^3$ $(i=1,\cdots,k-1)$, we use this proposition. As $h$ is integrable, any backward (forward) $c$-semi static curve is $c$-static for all $t\in\mathbb{R}$. Along any $c$-minimal curve it holds that the action variable $y=c$. Since the perturbation $h\to h+\epsilon P$ is small, $\dot\gamma(-K)$ is close to $V^+_{x,c_0,L}$ if $t_0^--t_0^+$ is sufficiently large. It follows that $\|y(-K)-y_0\|$ is very small provided $\epsilon$ is sufficiently small. In the same way, one can see that $\|y(K')-y_k\|$ is also very small. Since Ma\~n\'e set is upper semi continuous with respect to Lagrangian, at the time $t_i=(t_i^-+t_i^+)/2$, $(\gamma(t_i),\dot\gamma(t_i))$ is very close to $\tilde{\mathcal{A}}(c_i)$, $\|y(t_i)-y_i\|$ is very small. This proves that the Hamiltonian flow $\Phi_H^t$ admits an orbit that visits these balls in turn.

To complete the proof of Theorem \ref{mainthm}, we only need to show the generic property. Towards this goal, let us observe a fact: under the rescaling $y\to\sqrt{\lambda}y$, $t\to\sqrt{\lambda}^{-1}t$, the Hamiltonian equation determined by $\frac12\langle Ay,y\rangle+\lambda V(x)$ is the same as it for the function $\frac12\langle Ay,y\rangle+V(x)$. Therefore, some open-dense set $\mathfrak{O}\subset\mathfrak{S}_1\subset C^r$ exists, some $\epsilon_P>0$ is associated to each $P\in\mathfrak{O}$ such that the Hamiltonian flow $\Phi_H^t$ satisfies the conditions {\bf H1$\sim$5} provided $\epsilon\le\epsilon_P$, because the number of strong double resonant points is independent of the size of $\epsilon$. From the proof of Theorem \ref{chainthm1} in the subsection 8.3, the condition {\bf H6} is required for the intersection of countably many open-dense set contained in $\mathfrak{B}_{\epsilon_0}$: $\cap_i\mathfrak{O}_i$. Clearly, there exists a residual set $\mathfrak{R}_{\epsilon_0}\subset \mathfrak{S}_{\epsilon_0}$, for each $P\in\mathfrak{R}_{\epsilon_0}$ there exists a set $R_P$ residual in $[0,\epsilon_0]$ such that $\{\lambda P:P\in\mathfrak{R}_{\epsilon_0},\lambda\in R_P\}\subset\cap_i\mathfrak{O}_i$. Take the intersection of these sets, we obtain the cusp-residual property. \eop

\appendix
\renewcommand{\theequation}{A.\arabic{equation}}
\section{Normal form}
\setcounter{equation}{0}

In this appendix, we study the normal form of nearly integrable Hamiltonian, from which one obtains some  information about the relevant Mather sets, Aubry sets as well as Ma\~n\'e sets. Here, the system is assumed to have arbitrary $n$-degrees of freedom
$$
H(x,y,t)=h(y)+P_{\epsilon}(x,y,t),\qquad (x,y,t)\in\mathbb{T}^n\times \mathbb{R}^n\times\mathbb{T}.
$$
The perturbation can be autonomous as well as time-1-periodic. As $t$ can be treated as the $(n+1)$-th angle coordinate, we replace $n$ by $n+1$ when we consider time-1-periodic perturbation $P_{\epsilon}(x,y,t)$. Thus, we consider autonomous Hamiltonian only.

\subsection{KAM iteration at strong resonance}

Let $\omega(y)=\nabla h(y)$ denote the frequency vector of the unperturbed system. For autonomous case, a frequency $\omega$ is called rational of (minimal) period $T$ if $T\omega\in\mathbb{Z}^n$
and $t\omega\notin\mathbb{Z}^n$ for each $t\in (0,T)$.

Let the frequency $\omega$ be rational of period $T$. With a function $g(x,y)$ on the torus one associates its time average $[g]$ along the orbits of the linear flow defined by $\omega$:  $x\to x+\omega t$
$$
[g](x,y)=\frac 1T\int_0^Tg(x+\omega t,y)dt.
$$
We say that $g$ is {\it resonant} (with respect to $\omega$) if $g=[g]$, it implies that $g$ is constant along the orbits of the linear flow $(x,y)\to (x+\omega t,y)$.

Let $B_R\subset\mathbb{R}^n$ be the ball of radius $R$ around the origin, then there are positive numbers $M=M(R)\ge m=m(R)>0$ such that
$$
m\|v\|^2\le\langle\nabla^2h(y)v,v\rangle\le M\|v\|^2, \qquad \forall\ y\in B_R,\ v\in\mathbb{R}^n.
$$
Let $\sigma$ and $\varrho$ denote positive number such that
$$
\sigma<\frac 13, \qquad K=K(\epsilon)=K_0\epsilon^{-\varrho},\qquad \varrho=\frac 13(1-3\sigma),
$$
the value of $\sigma$ will be specified later to satisfy certain covering property.

Denoted by $\{\omega_{\lambda}:\lambda\in\Lambda_{K,R}\}\subset B_{MR}$ the set of frequencies which are rational of period $T$ with $T\le K$. Clearly, $\Lambda_K$ is a finite index set. Let $y_{\lambda}=\nabla^{-1} h(\omega_{\lambda})$.

Let $i=(i_1,i_2,\cdots,i_n)\in \mathbb{Z}^n_+$, namely, $i_j$ is non-negative $\forall$ $j\in\{1,2,\cdots,n\}$. Let $|i|=\sum_{j=1}^ni_j$, $Y_i(y)= \prod_{j=1}^ny_{j}^{i_{j}}$. Let $\|\cdot\|_{j,D}$ denote the $C^j$-norm on the domain $D$, we omit the notation $D$ when it is clearly implied.

\begin{theo}\label{normalthm1}
For a nearly integrable Hamiltonian $H(x,y)=h(y)+P_{\epsilon}(x,y)$ we assume that both $h$ and $P_{\epsilon}$ are $C^r$-smooth with $r\ge 8$, and $\|P_{\epsilon}\|_{r,B_R\times\mathbb{T}^{n+1}}\le \epsilon$. Some small $\epsilon_0=\epsilon_0(M,m,n,r)>0$ exists such that for each $\epsilon\le\epsilon_0$ and each rational frequency $\omega_{\lambda} =\omega(y_{\lambda})$ with a period $T\le K(\epsilon)$, a canonical transformation $\mathscr{F}_{\lambda}$ is well defined on
$$
D_{y_{\lambda},\epsilon}=\{(x,y)\in\mathbb{T}^n\times\mathbb{R}^n: \|y-y_{\lambda}\|\le T^{-1}\epsilon^{\sigma}\}
$$
which reduce the Hamiltonian into the normal form
\begin{equation}\label{normaleq1}
H\circ\mathscr{F}_{\lambda}(x,y)=h(y)+Z(x,y)+R(x,y)
\end{equation}
where $Z$ is resonant with respect to $\omega_{\lambda}$ with $\|Z\|_r\le 2\epsilon$,  $R=R_h+R_r$ is a higher order term when it is restricted in $D_{y_{\lambda},\epsilon}:$
\begin{equation*}
R_h=\sum_{|i|=5}Y_i(y-y_{\lambda})R_{h,i}(x,y),
\end{equation*}
\begin{align*}
&\|R_{h}\|_{2,D_{y_{\lambda},\epsilon}}\le D\epsilon^{\frac 13+5\sigma}, \\
&\|R_r\|_{2,D_{y_{\lambda},\epsilon}}\le D\epsilon^{\frac 43+2\sigma}
\end{align*}
where the constant $D>0$ is independent of $\epsilon$. Restricted in the region $\{\|y-y_{\lambda}\|=O(\sqrt{\epsilon})\}$, we have a sharper estimate
$$
\|R_{h}\|_1\le D\epsilon^{\frac 43+5\sigma},\qquad \|R_{h}\|_2\le D\epsilon^{\frac 56+5\sigma}.
$$
\end{theo}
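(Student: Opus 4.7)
The plan is to perform one Lie-series canonical reduction killing the non-resonant Fourier modes of $P_\epsilon$ (those with $\langle k,\omega_\lambda\rangle\neq 0$), while tracking every error carefully in the shrunken disk $\|y-y_\lambda\|\le T^{-1}\epsilon^{\sigma}$. First, truncate in Fourier: decompose $P_\epsilon(x,y)=\sum_k \hat P_k(y)e^{i\langle k,x\rangle}$ and split at $|k|=K=K_0\epsilon^{-\varrho}$. Because $P_\epsilon\in C^r$ with $r\ge 8$, repeated integration by parts in $x$ yields $|\hat P_k(y)|\le \|P_\epsilon\|_{C^r}(1+|k|)^{-r}$, so the tail $P^{>K}$ satisfies $\|P^{>K}\|_{C^2}\le C\epsilon K^{-(r-n-2)}=C\epsilon^{1+\varrho(r-n-2)}$; for the given $\varrho=(1-3\sigma)/3$ and $n\le 3$ this is dominated by $\epsilon^{4/3+2\sigma}$, so it is absorbed into $R_r$.

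\textbf{Homological equation and Lie step.} Write $P^{\le K}=Z+Q$, where $Z:=\sum_{\langle k,\omega_\lambda\rangle=0,\,|k|\le K}\hat P_k(y)e^{i\langle k,x\rangle}$ is the part resonant with $\omega_\lambda$ and $Q$ is its complement. Since $T\omega_\lambda\in\mathbb{Z}^n$, each nonzero $\langle k,\omega_\lambda\rangle$ lies in $\tfrac{1}{T}\mathbb{Z}\setminus\{0\}$ and hence satisfies the small-divisor bound $|\langle k,\omega_\lambda\rangle|\ge 1/T$. I will solve the homological equation $\langle\omega_\lambda,\partial_x S\rangle=-Q$ mode by mode, $\hat S_k(y)=\hat P_k(y)/(-i\langle k,\omega_\lambda\rangle)$, and estimate $\|S\|_{C^j}\le D\,T\,K^{n+j-r}\|P_\epsilon\|_{C^r}$. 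The canonical transformation is then $\mathscr{F}_\lambda:=\Phi^1_S$, the time-$1$ flow of the Hamiltonian vector field of $S$; the estimates on $S$ ensure $\mathscr{F}_\lambda$ is well defined on a slightly smaller disk and maps it into $D_{y_\lambda,\epsilon}$.

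\textbf{Structure of the remainder.} Writing $h(y)=h(y_\lambda)+\langle\omega_\lambda,y-y_\lambda\rangle+\tilde h(y)$ with $\tilde h=O(\|y-y_\lambda\|^2)$, the Lie-series expansion gives
\[
H\circ\mathscr{F}_\lambda=h+Z+\bigl(\{S,\tilde h\}+\{S,Z\}+\tfrac12\{\{S,\omega_\lambda\cdot y\},S\}\bigr)+\sum_{j\ge 3}\tfrac{1}{j!}\mathrm{ad}_S^{\,j-1}(P)+P^{>K}\!\circ\mathscr{F}_\lambda,
\]
where the leading non-resonant contribution $\langle\omega_\lambda,\partial_xS\rangle+Q$ cancels by construction. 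Each remaining bracket either carries a factor $\tilde h=O(\|y-y_\lambda\|^2)$ or an extra power of $\|S\|$. Next I Taylor-expand $\tilde h$ to fourth order in $y-y_\lambda$ inside $\{S,\tilde h\}$: the polynomial piece (orders $2,3,4$) is absorbed into $Z$ by allowing $Z$ to depend on $y$, while the residual fifth-order Taylor tail of $h$ produces exactly $R_h=\sum_{|i|=5}Y_i(y-y_\lambda)R_{h,i}$. Bounding $\|y-y_\lambda\|\le T^{-1}\epsilon^\sigma$ on the whole of $D_{y_\lambda,\epsilon}$ gives the stated $C^2$ bound; on $\{\|y-y_\lambda\|=O(\sqrt{\epsilon})\}$ the two derivatives hit $Y_i$ at worst twice, leaving a factor of $\|y-y_\lambda\|^3=O(\epsilon^{3/2})$ and producing the sharper exponents $4/3+5\sigma$ and $5/6+5\sigma$.

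\textbf{Main obstacle.} The delicate part is the simultaneous book-keeping of (i) the small-divisor cost $T\le K=K_0\epsilon^{-\varrho}$, (ii) the derivative loss $K^j$ from differentiating a Fourier polynomial of degree $K$, (iii) the number $\lesssim K^n$ of non-resonant modes, and (iv) the shrinking radius $T^{-1}\epsilon^\sigma$. With the prescribed $\varrho=(1-3\sigma)/3$ these estimates are tight, and the hypothesis $r\ge 8$ is used critically: the truncation remainder already demands $\varrho(r-n-2)\ge 1/3+2\sigma$, after which at least two further derivative losses must still be absorbed by the decay $(1+|k|)^{-r}$ of $\hat P_k$. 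A secondary check is that, because $Z$ is resonant in $x$, the corrections added to it during the polynomial absorption keep it resonant; this uses only that the symplectic bracket of a resonant and a non-resonant Fourier monomial decomposes naturally along the $\omega_\lambda$-resonance lattice.
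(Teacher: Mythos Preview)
Your single Lie step cannot produce the claimed structure. The crucial error is the sentence ``the polynomial piece (orders $2,3,4$) is absorbed into $Z$'': the bracket $\{S,\tilde h\}=-\partial_xS\cdot\partial_y\tilde h$ is purely \emph{non-resonant} in $x$, because $S$ carries only Fourier modes $k$ with $\langle k,\omega_\lambda\rangle\neq 0$ while $\tilde h$ depends on $y$ alone. No part of it---whatever its Taylor order in $y-y_\lambda$---can be put into the resonant term $Z$. After one step this bracket survives as an error of size $\sim T\epsilon\cdot MT^{-1}\epsilon^\sigma=O(\epsilon^{1+\sigma})$ carrying a single factor of $y-y_\lambda$; this is both too large for $R_r$ (target $\epsilon^{4/3+2\sigma}$) and structurally wrong for $R_h$ (which needs five factors).

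The paper's proof is precisely the remedy: it iterates five times. At step $j$ one solves $\langle\omega_\lambda,\partial_xW_j\rangle=-R_{j,1}$ via the explicit time-average $W_j=-T^{-1}\int_0^T R_{j,1}(x+\omega s,y)\,s\,ds$ (no Fourier truncation is needed; the divisor $1/T$ is built into the period). The new non-resonant error is $R_{j+1,1}=\langle\partial_yh-\omega_\lambda,\,\partial_xW_j\rangle$, which inherits the $j$ factors of $y-y_\lambda$ already present in $W_j$ and acquires one more from $\partial_yh-\omega_\lambda=O(y-y_\lambda)$. After five passes, $R_h=\sum_{|i|=5}Y_i(y-y_\lambda)R_{h,i}$ carries five factors by construction, and the accumulated quadratic Lie remainders form $R_r$ with the $\epsilon^{4/3+2\sigma}$ bound. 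Your Taylor shortcut tries to harvest all five factors from $h$ in one pass, but the intermediate-order pieces are non-resonant obstructions that must be eliminated iteratively, not relabelled as part of $Z$.
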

\begin{proof}
The canonical transformation $\mathscr{F}_{\lambda}$ is the composition of five steps of coordinate transformations $\mathscr{F}_{\lambda}=\mathscr{F}_4\circ\cdots\circ\mathscr{F}_1 \circ\mathscr{F}_0$. Each step of transformation $\mathscr{F}_j$  is defined as the time-1-map of $\phi_{W_j}^t$, the Hamiltonian flow determined by the generating function $W_j$.
For the first step of coordinate transformation $\mathscr{F}_0$, we set
$$
W_0(x,y)=-\frac 1T\int_0^TP(x+\omega t,y)tdt
$$
which solves the equation
$$
\Big\langle\omega, \frac{\partial W_0}{\partial x}\Big\rangle=-P+[P].
$$
Let $Z=[P]$, it follows that
$$
H_1=h(y)+Z(x,y)+R_{1,1}(x,y)+R_{1,2}(x,y)
$$
where
\begin{align*}
R_{1,1}&=\Big\langle\frac{\partial h}{\partial y}-\omega,\frac{\partial W_0}{\partial x}\Big\rangle,\\
R_{1,2}&=\int_0^1(1-t)\{\{H,W_0\},W_0\}\circ\phi_{W_0}^tdt.
\end{align*}
Obviously, $[R_{1,1}]=0$ and one has the form $R_{1,1}=\sum_{|i|=1}Y_i(y-y_{\lambda})R_{1,1,i}(x,y)$. If we write $H_j=H_{j,1}+R_{j,2}$ where
$$
H_{j,1}=h(y)+Z(x,y)+R_{j,1}(x,y),
$$
and for the coordinate transformation $\mathscr{F}_j$ ($j=1,\cdots,4$), we set by induction
$$
W_j(x,y)=-\frac 1T\int_0^TR_{j,1}(x+\omega s,y)sds
$$
then the Hamiltonian $H_{j+1}$ takes the form
$$
H_{j+1}=h(y)+Z(x,y)+R_{j+1,1}(x,y)+R_{j+1,2}(x,y)
$$
where
\begin{align*}
R_{j+1,1}=&\Big\langle\frac{\partial h}{\partial y}-\omega,\frac{\partial W_j}{\partial x}\Big\rangle,\\
R_{j+1,2}=&\int_0^1(1-t)\{\{H_{j,1},W_j\},W_j\}\circ\phi_{W_j}^tdt\\
&+R_{j,2}\circ\mathscr{F}_j.
\end{align*}
Also, $[R_{j+1,1}]=0$ and we can write $R_{j+1,1}=\sum_{|i|=j+1}Y_i(y-y_{\lambda})R_{j+1,1,i}(x,y)$.

By the construction, we see that $H_{j,1}$ is $C^{r-j}$-smooth and $R_{j,2}$ is $C^{r-j-1}$-smooth. Some constants $D_{j}>0$, independent of $\epsilon$, exists such that for $(x,y)\in D_{y_{\lambda},\epsilon}:$
\begin{align}\label{estimate}
&\|W_0\|_1\le D_0K_0\epsilon^{1-\varrho},\notag\\
&\|W_j\|_{1}\le D_{j}K_0^{j+1}\epsilon^{1+(j-1)\sigma-2\varrho},\ \ \ (j\ge 1), \notag\\
&\|R_{j+1,1,i}\|_{2}\le D_{j}K_0^{j+1}\epsilon^{1-(j+1)\varrho},\\
&\|R_{j+1,2}\|_{2}\le D_{j}K_0^{2(j+1)}\epsilon^{2-2\varrho},\notag
\end{align}
where we have used the relations that $T\le K_0\epsilon^{-\varrho}$, $\varrho=\frac 13(1-3\sigma)$ and $\sigma<\frac13$. If we write the canonical transformation $\mathscr{F}_j$ in the form
$$
\mathscr{F}_j:(x,y)\Rightarrow (x+U_j(x,y),y+V_j(x,y))
$$
then
$$
(U_j,V_j)=\int_0^1\Big(\frac{\partial W_j}{\partial y},-\frac{\partial W_j}{\partial x}\Big)\circ\phi^t_{W_j}dt
$$
It maps $\mathbb{T}^n\times B_{\delta_{j+1}}\to \mathbb{T}^n\times B_{\delta_j}$ for $\epsilon\le\epsilon_0$ if we set
$$
\delta_j=\Big(2-\frac{j+1}5\Big)\frac {\epsilon^{\sigma}}T,\qquad \epsilon_0\le\max_{j\le 5}\frac{1}{(5D_jK_0^{j+2})^{\frac 1{(j+1)\sigma}}}.
$$
The estimate on $R_r=R_{5,1}$ and $R_h=R_{5,2}$ follows from the formulae (\ref{estimate}).
\end{proof}

If the system is real analytical, the higher order term can be reduced to the order $O(\exp(-\frac 1{\epsilon^\sigma}))$ (see \cite{Lo}), with which one obtains the Nekhoroshev's estimate.

\subsection{Covering property}

Recall that the set of frequencies $\{\omega_{\lambda}:\lambda\in\Lambda_{K,R}\}\subset B_{MR}$  each of which is rational of period $T$ with $T\le K$, and the domains $\{D_{y_{\lambda},\epsilon}:\ \lambda\in\Lambda_{K,R}\}$ where the iteration of KAM is carried (see Theorem \ref{normalthm1} for definition).

\begin{theo}
The following covering property holds
\begin{equation}\label{normaleq2}
\bigcup_{\lambda\in\Lambda_{K,R}}\mathscr{F}_{\lambda}^{-1}D_{y_{\lambda},\epsilon}\supset \mathbb{T}^{n}\times B_R \hskip 0.5 true cm \text{\rm provided}\hskip 0.4 true cm \sigma<\frac 1{3n+3}.
\end{equation}
\end{theo}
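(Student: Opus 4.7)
The plan is to reduce the covering property to a classical simultaneous rational approximation problem for frequency vectors, then verify that the exponent $\sigma<1/(3n+3)$ is exactly what balances the Dirichlet approximation rate against the allowed period $T\le K=K_0\epsilon^{-\varrho}$.

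First I would observe that each $\mathscr{F}_{\lambda}$ is a composition of five time-$1$ maps of the Hamiltonian flows $\phi_{W_j}^t$, whose generating functions satisfy $\|W_j\|_{1}\le D_jK_0^{j+1}\epsilon^{1+(j-1)\sigma-2\varrho}$ by the estimates~\eqref{estimate} in the proof of Theorem~\ref{normalthm1}. Since $\varrho<1/3$ and $\sigma>0$, each $\mathscr{F}_j$ displaces the $y$-coordinate by at most $O(\epsilon^{1-2\varrho})$, which is much smaller than the target radius $T^{-1}\epsilon^{\sigma}\ge K^{-1}\epsilon^{\sigma}=K_0^{-1}\epsilon^{\sigma+\varrho}$ whenever $\epsilon$ is small. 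Hence it suffices to prove the purely arithmetic covering statement
\begin{equation*}
\bigcup_{\lambda\in\Lambda_{K,R}}\Bigl\{y\in\mathbb{R}^{n}:\|y-y_{\lambda}\|\le \tfrac{1}{2}T^{-1}\epsilon^{\sigma}\Bigr\}\supset B_{R},
\end{equation*}
since after shrinking the radius by a factor $1/2$ the preimage under $\mathscr{F}_{\lambda}$ still contains the slightly smaller ball.

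Next I would transfer the problem from $y$-space to frequency space via the diffeomorphism $\nabla h\colon B_R\to \nabla h(B_R)\subset B_{MR}$, whose inverse is $m^{-1}$-Lipschitz. Thus the statement reduces to: for every $\omega\in\nabla h(B_R)$, there is a rational vector $\omega_{\lambda}=p/q$ (with $p\in\mathbb{Z}^n$, $1\le q\le K$, minimal period $T\le q$) such that $\|\omega-\omega_{\lambda}\|\le cT^{-1}\epsilon^{\sigma}$ for a constant $c$ depending only on $m,n$. This is exactly a Dirichlet-type simultaneous approximation question, which I would handle by invoking the classical Dirichlet theorem: for any positive integer $Q$ there exist $p\in\mathbb{Z}^n$ and $1\le q\le Q$ with $\|q\omega-p\|_{\infty}\le Q^{-1/n}$, so that
\begin{equation*}
\Bigl\|\omega-\tfrac{p}{q}\Bigr\|\le \frac{\sqrt{n}}{q\,Q^{1/n}}\le \frac{\sqrt{n}}{T\,Q^{1/n}},
\end{equation*}
using $T\le q$.

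Finally I would choose $Q=\lceil (2\sqrt{n}/m)^{n}\epsilon^{-n\sigma}\rceil$, which forces $\sqrt{n}Q^{-1/n}\le \tfrac{m}{2}\epsilon^{\sigma}$ and therefore $\|\omega-\omega_{\lambda}\|\le \tfrac{m}{2}T^{-1}\epsilon^{\sigma}$, giving the required approximation back in $y$-space. The hypothesis $Q\le K=K_{0}\epsilon^{-\varrho}$ then becomes $\epsilon^{-n\sigma}\lesssim \epsilon^{-\varrho}$, i.e.\ $n\sigma\le \varrho=\tfrac{1}{3}(1-3\sigma)$, which rearranges precisely to $(3n+3)\sigma\le 1$. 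This is the given condition $\sigma<1/(3n+3)$; the constant $K_0$ is then chosen large enough (depending on $n,m$) so that the integer $Q$ indeed lies in $[1,K]$ for all $\epsilon\le\epsilon_0$. There is no real obstacle: the argument is essentially bookkeeping, and the only delicate point is coordinating three different scales, namely the normal-form radius $T^{-1}\epsilon^{\sigma}$, the Dirichlet error $q^{-1}Q^{-1/n}$, and the period cutoff $K=K_{0}\epsilon^{-\varrho}$, which is exactly what pins down the threshold $\sigma=1/(3n+3)$.
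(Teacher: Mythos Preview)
Your proposal is correct and follows essentially the same route as the paper: both reduce the covering to Dirichlet's simultaneous approximation theorem in frequency space, pull back via $(\nabla h)^{-1}$, and balance the Dirichlet error $Q^{-1/n}$ against the target scale $\epsilon^{\sigma}$ under the constraint $Q\le K=K_0\epsilon^{-\varrho}$, arriving at exactly $(3n+3)\sigma<1$. Your opening paragraph, which explicitly bounds the $y$-displacement of $\mathscr{F}_{\lambda}$ to justify replacing $\mathscr{F}_{\lambda}^{-1}D_{y_{\lambda},\epsilon}$ by a slightly shrunken $D_{y_{\lambda},\epsilon}$, is a point the paper leaves implicit (it is tacitly covered by the $\delta_j$-nesting estimates in the proof of Theorem~\ref{normalthm1}).
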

\begin{proof}
To show the covering property, we use Dirichlet's approximation theorem. For real $x$ one has
$$
x=[x]+\{x\},
$$
where $[x]\in\mathbb{Z}$ the integer part, and $\{x\}\in(0,1)$. We use notation
$$
\|x\|_{\mathbb{Z}}=\inf\{\{x\},1-\{x\}\}=\text{\rm dist}(x,\mathbb{Z}).
$$
If $x=(x_1,x_2,\cdots,x_n)\in\mathbb{R}^n$ one sets
$$
\|x\|_{\mathbb{Z}}=\sup_{i=1,2,\cdots,n}\|x_i\|_{\mathbb{Z}}.
$$
\begin{pro}\label{normalpro1} {\rm (Dirichlet, see for examples, \cite{Cas, Sch})} Let $\omega\in\mathbb{R}^n$ and $K$ a real number with $K>1$. There exists an integer $k$, $1\le k<K$, such that
$$
\|k\omega\|_{\mathbb{Z}}\le K^{-\frac 1n}.
$$
\end{pro}
For any $\omega\in\mathbb{R}^n$, by applying Dirichlet's theorem, we find some rational vector $\omega^*$ existing such that $K'\omega^*\in \mathbb{Z}^n$ with $K'\le K$ and
$$
\text{\rm dist}(K'\omega,K'\omega^*)\le\sqrt{n}K^{-\frac 1n},
$$
here, $\omega^*$ is a rational vector of period $T$. Since $h$ is assumed strictly convex, there exist two points $y,y^*\in B_R$ such that $\nabla h(y)=\omega$, $\nabla h(y^*)=\omega^*$ and
$$
\text{\rm dist}(y,y^*)\le\frac 1m\text{\rm dist}(\omega,\omega^*).
$$
The condition $\|y-y^*\|\le T^{-1}\epsilon^{\sigma}$ is guaranteed if we choose
\begin{equation}\label{normaleq8}
K^{\frac 1n}=\frac {\sqrt{n}}{m}\epsilon^{-\sigma}.
\end{equation}
As $T\le K$ is required, the following should be satisfied:
$$
K\le K_0\epsilon^{-\frac 13(1-3\sigma)},
$$
that is, referring to (\ref{normaleq8}),
$$
\epsilon^{\frac{1-3\sigma}3-n\sigma}\le \Big(\frac {m}{\sqrt{n}}\Big)^nK_0,
$$
which determines a threshold for $\epsilon$ provided:
$$
\sigma<\frac 1{3n+3}.
$$
As we choose $\sigma$ satisfying this condition, the covering property (\ref{normaleq2}) is proved.
\end{proof}

For the purpose of this paper, the covering property (\ref{normaleq2}) for the whole space is not necessary, instead, it is good enough to cover a neighborhood of a resonant path. Denote by ${\bf k}=(k_1,\cdots,k_{n-1})$ a $n\times(n-1)$ matrix, where $k_1,\cdots,k_{n-1}$ are integer vectors. We consider the $n-1$ resonance line
$$
\Gamma_{\bf k}=\{y\in\mathbb{R}^n:\langle k_i,\partial h(y)\rangle=0\ \forall\ i=1,\cdots n-1\}.
$$
If the covering property (\ref{normaleq2}) in Theorem \ref{normalthm1} is replaced by covering a neighborhood of the line
\begin{equation}\label{normaleq9}
\bigcup_{\lambda\in\Lambda_{K,R}}D_{y_{\lambda},\epsilon}\supset\mathbb{T}^{n}\times\{\|y-y_0\|<\mu
K^{-1}\epsilon^{\sigma}:y_0\in\Gamma_{\bf k}\cap B_R\}
\end{equation}
then it works if
\begin{equation*}
\sigma<\frac 16.
\end{equation*}
Indeed, as all frequencies are on a $(n-1)$-resonance line, by using Dirichlet approximation theorem (Proposition \ref{normalpro1}) for $n=1$  we obtain a threshold $\sigma< 1/6$.

Recall that the term $Z$ in (\ref{normaleq1}) is resonant with respect to $\omega$, some rational frequency of period $T\le K$, namely, it has the form
$$
Z(x,y)=\sum_{\langle k,\omega\rangle=0}Z_{k}(y)e^{i\langle k,x\rangle}.
$$
Note that $T\omega$ is an indivisible integer vector, i.e. $\mu T\omega\notin \mathbb{Z}^n$ for any $\mu\in (0,1)$. There are $n-1$ integer vectors $I_2,I_3,\cdots,I_n$ such that the matrix $(I_2,I_3,\cdots,I_n)$ is indivisible, $\text{\rm rank}(I_2,I_3,\cdots,I_n)=n-1$ and $\langle I_i,\omega\rangle =0$ holds for each $i\in\{2,3,\cdots,n\}$. Clearly, there is another integer vector $I_1$ such that the matrix $I=(I_1,I_2,\cdots,I_n)$ is uni-module. Each integer vector $k\in\mathbb{Z}^n$ with $\langle k,\omega\rangle=0$ uniquely determines an integer vector $\bar k\in\mathbb{Z}^{n-1}$ such that $k=\sum_{j=1}^{n-1} \bar k_jI_{j+1}$.

We introduce a coordinate transformation: $(x,y)\to (p,q)$ such that
\begin{equation}\label{normaleq10}
\tilde q=I^tx,\qquad \tilde p=I^{-1}y.
\end{equation}
This coordinate transformation is symplectic, $H(I^{-t}\tilde q,I\tilde p)$ is also a function defined on $\mathbb{T}^n$ with respect to $\tilde q$. Let $y$ be the point where $\nabla h(y)=\omega$, then the gradient of $\tilde h(\tilde p)=h(I\tilde p)$ satisfies
$$
\tilde\omega=\nabla \tilde h(\tilde p)=(0,\cdots,0,\tilde\omega_n),
$$
and $Z(I\tilde p,I^{-t}\tilde q)$ is independent of $q_n$, thus we can write $\tilde Z(\tilde p,\tilde q) =\tilde Z(p,q,p_n)$ if we use the natation $\tilde p=(p,p_n)$ and $\tilde q=(q,q_n)$.

Next, let us consider the time-1-periodical non-autonomous case. Assume $T(\omega,1)\in \mathbb{Z}^{n+1}$ is an indivisible integer vector. As $Z$ is resonant with respect to $\omega$, we have
$$
Z(x,y,t)=\sum_{\langle k,\omega\rangle+l=0}Z_{k,l}(y)e^{i\langle k,x\rangle+lt}.
$$
Thus, there are $n$ integer vectors $I_1,\cdots,I_n,J\in\mathbb{Z}^n$ such that $\langle I_i, \omega \rangle+J_i=0$ for each $i\in\{1,\cdots,n\}$. For each $(k,l) \in\mathbb{Z}^{n+1}$ with $\langle k, \omega\rangle +l=0$, there is uniquely determined $(\bar k_1,\cdots,\bar k_n)\in\mathbb{Z}^n$ such that
$$
(k,l)=\sum_{i=1}^n\bar k_i(I_i,J_i).
$$
By choosing suitable $I_i$, we can make $I=(I_1,I_2,\cdots,I_n)$ be uni-module. Introduce the coordinate transformation (\ref{normaleq10}), let $y$ be the point where $\nabla h(y)=\omega$, then the gradient of $\bar h(p)=h(Ip)$ satisfies
$$
\bar\omega=\nabla \bar h(p)=-J.
$$
Note that each $(k,l)$ with $\langle k,\omega\rangle+l=0$ uniquely determines $\bar k\in \mathbb{Z}^n$ such that $(k,l)=\bar k(I^t,J)$. As we have
$$
Z(I^{-t}q,Ip,t)=\sum_{\bar k\in\mathbb{Z}^n}Z_{k,l}(Ip)e^{i\langle\bar k, q+Jt\rangle},
$$
in the new coordinates the resonant term $\bar Z=\bar Z(p,q+Jt)$. Let $q'=q+Jt, p'=p$ and let $h'(p')=\bar h(p')+\langle J,p\rangle$, we find the Hamiltonian equation of $h'(p')+\bar Z(p',q')$ is the
same as the Hamiltonian equation of $\bar h (p)+\bar Z(p,q+Jt)$.

\renewcommand{\theequation}{B.\arabic{equation}}
\section{Hyperbolicity of minimal periodic orbits}
\setcounter{equation}{0}

\centerline{{\it by} {\ui Chong-Qing Cheng and Min Zhou}}

In the section 4, we made the hypothesis ({\bf H4}) on the hyperbolicity of minimal periodic orbits in Hamiltonian systems with two degrees of freedom. In the subsetion 8.4, we need the hypothesis ({\bf H1.1}). In $C^r$-topology with $r\ge 4$ these hypotheses are shown generic in \cite{CZ}. For the sake of completeness and convenience of reader, we present the proof in this appendix.

\subsection{Non-degeneracy of global minimum}
\begin{theo}\label{appenBthm1}
Let $F_{\lambda}$: $\mathbb{T}\to\mathbb{R}$ be a family of $C^{r}$-functions depending on the parameter $\lambda\in [\lambda_0,\lambda_1]$ $(r\ge 4)$. If $F_{\lambda}$ is Lipschitz continuous in $\lambda$, then there exists an open-dense set $\mathfrak{O}\subset C^{r}(\mathbb{T},\mathbb{R})$ such that for each $V\in\mathfrak{O}$ and each $\lambda\in[\lambda_0,\lambda_1]$, each global minimum of $F_{\lambda}-V$ is non-degenerate, namely, the second derivative is positive at each global minimizer.
\end{theo}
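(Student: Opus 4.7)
The plan is to decompose the statement into openness and density of $\mathfrak{O}$, with density the real work. Openness is routine: if $V_n\to V$ in $C^r$ with each $V_n\notin\mathfrak{O}$, compactness of $\mathbb{T}\times[\lambda_0,\lambda_1]$ extracts a degenerate global minimizer $(x_n,\lambda_n)\to(x^*,\lambda^*)$. Lipschitz continuity of $F_\lambda$ in $\lambda$ together with $C^r$-convergence of $V_n$ gives continuity of $\alpha_V(\lambda):=\min_y(F_\lambda-V)(y)$ and $C^2$-convergence $F_{\lambda_n}-V_n\to F_{\lambda^*}-V$, so $x^*$ is a degenerate global minimizer of $F_{\lambda^*}-V$, showing the complement of $\mathfrak{O}$ is closed.

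The density argument rests on a single geometric observation: in a generic one-parameter family of $C^r$ functions on $\mathbb{T}$ ($r\ge 4$), every critical point with vanishing second derivative has non-vanishing third derivative, hence is an inflection point rather than a local (much less global) minimum. It therefore suffices, given $V_0$ and $\varepsilon>0$, to produce $W$ with $\|W\|_{C^r}<\varepsilon$ such that $V=V_0+W$ satisfies
\[
\bigl\{(x,\lambda)\in\mathbb{T}\times[\lambda_0,\lambda_1] : F'_\lambda(x)=V'(x),\ F''_\lambda(x)=V''(x),\ F'''_\lambda(x)=V'''(x)\bigr\}=\varnothing.
\]
For such $V$ any critical point of $F_\lambda-V$ with vanishing second derivative is automatically inflection-type, hence not a local minimum, and every global minimum is non-degenerate.

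To produce $W$, I would parametrize it as a trigonometric polynomial of degree $N$ (with $N\ge 3$, so $W$ ranges over $\mathbb{R}^{2N+1}$) and consider
\[
\Phi(x,\lambda,W)=\bigl(F'_\lambda(x)-V'(x),\ F''_\lambda(x)-V''(x),\ F'''_\lambda(x)-V'''(x)\bigr)\in\mathbb{R}^3.
\]
The map $\Phi$ is affine in $W$ for fixed $(x,\lambda)$, and the jet evaluation $W\mapsto(W'(x),W''(x),W'''(x))$ from enough Fourier modes is surjective onto $\mathbb{R}^3$, so the three equations $\Phi(x,\lambda,W)=0$ cut out an affine subspace $L_{x,\lambda}\subset\mathbb{R}^{2N+1}$ of codimension three. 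Hence $\Phi^{-1}(0)$ is the total space of an affine bundle of fiber dimension $2N-2$ over $\mathbb{T}\times[\lambda_0,\lambda_1]$; since $(x,\lambda)\mapsto L_{x,\lambda}$ is Lipschitz (the only entry of Lipschitz dependence), $\Phi^{-1}(0)$ has Hausdorff dimension at most $2N$. The projection $\Phi^{-1}(0)\to\mathbb{R}^{2N+1}$ is Lipschitz, so its image has Hausdorff dimension $\le 2N<2N+1$ and Lebesgue measure zero. For every $W$ off this null set, $\Phi_W^{-1}(0)=\varnothing$.

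The main obstacle is precisely the Lipschitz---rather than smooth---dependence of $F_\lambda$ on $\lambda$, which rules out a direct application of Sard's theorem in the parameter direction. The Hausdorff-dimension estimate above circumvents this by using that Lipschitz maps do not increase Hausdorff dimension, with all transversality encoded purely in the smooth $W$-direction. A standard Baire/diagonal argument over a shrinking sequence of $\varepsilon$ (with $N=N(\varepsilon)$) then upgrades density at each finite scale to density in $C^r(\mathbb{T},\mathbb{R})$.
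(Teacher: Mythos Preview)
Your reduction---arranging that no $(x,\lambda)$ has $(F_\lambda-V)'=(F_\lambda-V)''=(F_\lambda-V)'''=0$, so that any critical point with vanishing second derivative is an inflection and hence never a global minimum---is correct and clean. The gap is in the sentence ``since $(x,\lambda)\mapsto L_{x,\lambda}$ is Lipschitz.'' The linear part of $L_{x,\lambda}$ depends only on $x$ and varies smoothly, but its affine offset is determined by $\bigl(F_\lambda'(x),\,F_\lambda''(x),\,F_\lambda'''(x)\bigr)$; for this to be Lipschitz in $\lambda$ you need $\lambda\mapsto F_\lambda$ Lipschitz into $C^3$, not merely into $C^0$. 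The paper's hypothesis, as its own proof makes explicit (it invokes the box dimension of $\{F_\lambda\}$ in the $C^0$-topology), is only $C^0$-Lipschitz together with a uniform $C^r$ bound. Under that, Landau--Kolmogorov interpolation against the $C^4$ bound gives $F_\lambda'''$ only $\tfrac14$-H\"older in $\lambda$, and the covering count behind your Hausdorff estimate then yields only $\dim_H\le 1+(2N-2)+4=2N+3$, which is vacuous in $\mathbb{R}^{2N+1}$. With merely continuous $\lambda$-dependence of the offset there is no obstruction to $\bigcup_{\lambda} L_{x,\lambda}$ filling all of $\mathbb{R}^{2N+1}$ (run a space-filling curve through the three transverse directions).

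The paper's proof avoids this by never touching $x$-derivatives of $F_\lambda$ in the $\lambda$-sensitive step. Non-degeneracy is detected through the oscillation $\mathrm{Osc}_{I_k}(F_\lambda-\epsilon V)\ge Md^4$ on short intervals $I_k$, a condition on \emph{values} of $F_\lambda$ only; the $C^0$-Lipschitz hypothesis then gives box dimension one for the curve $\{F_\lambda|_{I_k}:\lambda\}$ in $C^0(I_k)$, and a pigeonhole count on a grid of Taylor coefficients of $V$ shows the bad perturbation parameters occupy arbitrarily small measure. If you read the hypothesis as Lipschitz into $C^3$---which does hold in every application in the paper, where $F_\lambda$ depends smoothly on $\lambda$---your argument goes through and is a genuinely shorter route; but under the $C^0$ reading that the paper actually uses, the Hausdorff-dimension step fails.
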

\begin{proof}
Since the openness is obvious, we only need to show the density. For this goal, we introduce a set of perturbations with four parameters:
$$
\mathfrak{V}=\Big\{V=\sum_{i=1}^2(A_i\cos ix+B_i\sin ix):\ (A_1,B_1,A_2,B_2)\in\mathbb{I}^4\Big\},
$$
where $\mathbb{I}^4=[1,2]\times[1,2]\times[1,2]\times[1,2]$. Let $M=12^{-1}\sup_{x,\lambda}|\partial^4_xF_{\lambda}|$, we are going to show that, for any small numbers $\epsilon,d>0$ there exists $(A_1,B_1,A_2,B_2)\in I^4$ such that
\begin{equation}\label{appenBeq1}
(F_{\lambda}-\epsilon V)(x)-\min_x(F_{\lambda}-\epsilon V)\ge M|x-x^*|^4, \qquad \forall\ |x-x^*|\le d
\end{equation}
holds for each $\lambda\in[\lambda_0,\lambda_1]$ whenever the point $x^*$ is a global minimizer of $F_{\lambda}-\epsilon V$. It implies the second derivative is positive. Indeed, if it equals zero, the third derivative will be zero also. Consequently, the above formula does not hold.

By choosing sufficiently large integer $k$, $\epsilon=\sqrt{\pi/k}$ can be arbitrarily small. Let $x_i=2i\pi/k$, $I_i=[x_i-d,x_i+d]$ and $d=\pi/k$, then $\epsilon=\sqrt{d}$ and
$$
\bigcup_{i=0}^{k-1}I_i=\mathbb{T}.
$$
Restricted on each interval $I_i$, each function $V\in\mathfrak{V}$ is approximated by Taylor series (module constant)
$$
V_i(x)=a_i(x-x_i)+b_i(x-x_i)^2+c_i(x-x_i)^3+O(|x-x_i|^4).
$$
Given two points $(a_i,b_i,c_i)$ and $(a'_i,b'_i,c'_i)$, we have two functions $V_i$ and $V'_i$ in the form of Taylor series. Let $\Delta V=V_i-V'_i$, $\Delta a=a_i-a'_i$, $\Delta b=b_i-b'_i$ and $\Delta c=c_i-c'_i$, we have $\Delta V(x_i)=0$ and
\begin{align*}
&\Delta V(x_i+d)+\Delta V(x_i-d)=2\Delta bd^2+O(d^4),\\
&\Delta V(x_i+d)-\Delta V(x_i-d)=2(\Delta a+\Delta cd^2)d+O(d^4),\\
&\Delta V\Big(x_i\pm\frac 12d\Big)=\Big(\pm\frac 12\Delta a+\frac 14\Delta bd\pm\frac 18\Delta cd^2\Big)d+O(d^4).
\end{align*}
Using the notation
$$
\text{\rm Osc}_{I}V=\sup\{V(x)-V(x'):x,x'\in I\},
$$
it follows from the identities above that
\begin{equation}\label{appenBeq2}
\text{\rm Osc}_{I_i}(V'_i-V_i)\ge\max\Big\{\frac 14|\Delta a|d,|\Delta b|d^2,\frac 12|\Delta c|d^3\Big\}+O(d^4).
\end{equation}
We construct a grid for the parameters $(a_i,b_i,c_i)$ by splitting the domain for $(a_i,b_i,c_i)$ equally into a family of cuboids and setting the size length by
$$
\Delta a_i=8Md^{5/2},\ \ \Delta b_i=2Md^{3/2},\ \ \Delta c_i=4Md^{1/2}.
$$
These cuboids are denoted by $\text{\uj c}_{ij}$ with $j\in\mathbb{J}_i=\{1,2,\cdots\}$, the cardinality of the set of the subscripts is bounded by
$$
\#(\mathbb{J}_i)=K[d^{-9/2}],
$$
where the integer $K$ is independent of $d$. Let $(a_{ij},b_{ij},c_{ij})$ denote the center of each cuboid and let
$$
V_{ij}(x)=a_{ij}(x-x_i)+b_{ij}(x-x_i)^2+c_{ij}(x-x_i)^3+O(|x-x_i|^4).
$$
Define
$$
\ell_{j,j'}=\max\Big\{\frac{|a_{ij}-a_{ij'}|}{8Md^{5/2}},\frac{|b_{ij}-b_{ij'}|}{2Md^{3/2}}, \frac{|c_{ij}-c_{ij'}|}{4Md^{1/2}}\Big\},
$$
we find from the formula (\ref{appenBeq2}) that following holds for suitably small $d>0$
\begin{equation}\label{appenBeq4}
\text{\rm Osc}_{I_i}(\epsilon V_{ij}-\epsilon V_{ij'})\ge 2\ell_{j,j'}Md^{4}.
\end{equation}

Let us define a subset $\mathbb{J}'_i\subset\mathbb{J}_i$ in the following way. A subscript $j\in\mathbb{J}'_i$ if and only the set
\begin{equation*}
\Lambda_j=\{\lambda\in[\lambda_0,\lambda_1]: \text{\rm Osc}_{I_i}(F_{\lambda}-\epsilon V_{ij})< 2Md^4\}\neq\varnothing.
\end{equation*}
is non-empty. By definition, we have
$$
\text{\rm Osc}_{I_i}(F_{\lambda}-\epsilon V_{ij})\ge 2Md^{4}\qquad \forall \ \lambda\in[\lambda_0,\lambda_1]\ \text{\rm and}\ j\in\mathbb{J}_i\backslash\mathbb{J}'_i.
$$
Using the Lipschitz property $\lambda\to F_{\lambda}$, we claim an estimate on the cardinality of this subset
$$
\#(\mathbb{J}'_i)\le 27K_d[d^{-2-\frac 12}], \ \ \ \text{\rm where}\ \ \ \frac{\log K_d}{|\log d|}\to 0 \ \ \text{\rm as}\ \ \ d\to 0.
$$
In order to prove it, let us replace $F_{\lambda}(x)$ by $F_{\lambda}(x)-F_{\lambda}(x_i)=\int_{x_i}^x\partial_xF_{\lambda}(x)dx$, which is still Lipschitz in $\lambda$, and denote the set of functions by
$$
\mathfrak{F}=\{F_{\lambda}:\lambda\in [\lambda_0,\lambda_1]\}.
$$
It follows from the Lipschitz property that the box dimension of the set $\mathfrak{F}$ equals one in $C^0$-topology. Let $\text{\uj C}_{D\epsilon d}(0)$ denote a cube in $C^0$-function space, centered at the origin with the size equal to $D\epsilon d$, where $D>0$ depends on the upper bound of $\{|a_{ij}|, |b_{ij}|, |c_{ij}|\}$. The set $\Lambda_j$  is non-empty only if $F_{\lambda}\in\text{\uj C}_{D\epsilon d}(0)$ holds for $\lambda\in\Lambda_j$. Since the box dimension of the set $\mathfrak{F}$ equals one, we see that the set
$$
\mathfrak{F}\cap\text{\uj C}_{D\epsilon d}(0),
$$
can be covered by as many as $K_d[\epsilon d^{-3}]$ cubes with the size of $2Md^{4}$,
where the number $K_d$ satisfies the condition that $\log K_d/|\log d|\to 0$ as $d\to 0$.

Let us keep in mind that, by the definition, each $j\in\mathbb{J}'_i$ corresponds to a non-empty set $\Lambda_j\subset [\lambda_0,\lambda_1]$. If the cardinality $\#(\mathbb{J}'_i)>28K_d[\epsilon d^{-3}]$, by Pigeonhole principle, there would be at least 28 different subscripts $j_m\in \mathbb{J}'_i$ such that certain $\lambda_{j_m}\in\Lambda_{j_m}$, and the 28 functions $\{F_{\lambda_{j_m}}:m=1,2,\cdots,28\}$ fall into one small cube with the size of $2Md^{4}$. On the other hand, since the parameter space is three dimensional, in these 28 different subscripts, there must be $m\neq m'$ such that
$$
\ell_{j_m,j_{m'}}=\max_{1\le \ell,\ell'\le 28} \Big\{\frac{|a_{ij_{\ell}}-a_{ij_{\ell'}}|}{8Md^{5/2}},\frac{|b_{ij_{\ell}}-b_{ij_{\ell'}}|}{2Md^{3/2}}, \frac{|c_{ij_{\ell}}-c_{ij_{\ell'}}|}{4Md^{1/2}}\Big\}\ge 4,
$$
it follows from (\ref{appenBeq4}) that
\begin{equation*}
\text{\rm Osc}_{I_i}(\epsilon V_{ij_m}-\epsilon V_{ij_{m'}})\ge 8Md^{4}.
\end{equation*}
On the other hand, as both $F_{\lambda_{j_m}}$ and $F_{\lambda_{j_{m'}}}$ fall into the same cube where
$$
\text{\rm Osc}_{I_i}|F_{\lambda_{j}}-\epsilon V_{ij}|<2Md^4, \qquad \text{\rm for}\ j=j_m,j_{m'}
$$
we find that
$$
\text{\rm Osc}_{I_i}(\epsilon V_{ij_m}-\epsilon V_{ij_{m'}})\le 6Md^4.
$$
This  contradiction proves the claim that $\#(\mathbb{J}'_i)\le 27 K_d[\epsilon d^{-3}]$.

By the definition of the cube, we  have that if $(a_i,b_i,c_i)\in\text{\uj c}_{ij}$ then
\begin{equation*}
\text{\rm Osc}_{I_i}(\epsilon V_{ij}-\epsilon V_{i})\le Md^{4}.
\end{equation*}
It follows from the definition for $\mathbb{J}'$ that for $(a_i,b_i,c_i)\in\text{\uj c}_{ij}$ with $j\in\mathbb{J}_i\backslash \mathbb{J}'_i$
\begin{equation}\label{appenBeq5}
\text{\rm Osc}_{I_i}(F_{\lambda}-\epsilon V_{i})\ge Md^4, \ \ \ \ \forall\ \lambda\in[\lambda_0,\lambda_1].
\end{equation}

The gird for $(a_i,b_i,c_i)$ induces a grid for the parameters $(A_1,B_1,A_2,B_2)$ determined by the equation
\begin{equation}\label{appenBeq6}
\left[\begin{matrix}a_i\\ b_i\\ c_i
\end{matrix}\right]=
\left[\begin{matrix}
-\sin x_i &  \cos x_i  & -2\sin 2x_i &  2\cos 2x_i \\
-\cos x_i & -\sin x_i  & -4\cos 2x_i & -4\sin 2x_i \\
 \sin x_i & -\cos x_i  &  8\sin 2x_i & -8\cos 2x_i
\end{matrix}\right]
\left[\begin{matrix} A_1\\  B_1\\  A_2\\  B_2
\end{matrix}\right]
\end{equation}
the coefficient matrix is non-singular for each $x_i\in\mathbb{T}$. Indeed, let ${\bf M}_1$ be the $3\times 3$ matrix formed by first three columns and let ${\bf M}_2$ be the matrix formed by the first, second and the fourth column, we have
$$
\text{\rm det}({\bf M}_1)(x_i)=6\sin 2x_i, \qquad \text{\rm det}({\bf M}_2)(x_i)=-6\cos 2x_i.
$$
Note that $\inf_{x_i}\{|{\rm det}{\bf M}_1(x_i)|,|{\rm det}{\bf M}_2(x_i)|\}=3\sqrt{2}$, the grid for $(a_i,b_i,c_i)$ induces a grid for $(A_1,B_1,A_2,B_2)$. It contains as many as $K[d^{-9/2}]$ 4-dimensional strips, denoted by $\text{\uj s}_{ij}$ with $j\in\mathbb{J}_i$. Each $\text{\uj s}_{ij}$ is mapped onto $\text{\uj c}_{ij}$ by the equation (\ref{appenBeq6}). For each $(A_1,B_1,A_2,B_2)\in\text{\uj s}_{ij}$ with $j\in\mathbb{J}_i\backslash\mathbb{J}'_i$, the inequality (\ref{appenBeq5}) holds for any $\lambda\in [\lambda_0,\lambda_1]$.

Let us consider all intervals $I_i$ with $i=0,1,\cdots, k-1$. Different $I_i$ induces different gird for the parameters $(A_1,B_1,A_2,B_2)$. In general, $\text{\uj s}_{ij}$ is not parallel to $\text{\uj s}_{i'j'}$ if $i\ne i'$. For each $I_i$, one can define two set of subscripts $\mathbb{J}_i\supset\mathbb{J}'_i$ in the way as above. Thus, one obtains the cardinality of the disjoint union set
$$
\#(\vee_{i=0}^{k-1}\mathbb{J}'_i)\le 27K_d\pi[\epsilon d^{-4}]=27K_d\pi[d^{-7/2}]\ll K[d^{-9/2}].
$$
Since $\log K_d/|\log d|\to 0$ as $d\to 0$, we obtain a Lebesgue measure estimate
$$
\text{\rm meas}\Big(\bigcup_{\stackrel {j\in\mathbb{J}'_i}{\scriptscriptstyle 0\le i\le k-1}}\text{\uj s}_{ij}\Big)\le\frac{27K_d\pi}{K}d\to 0\qquad \text{\rm as}\ d\to 0.
$$
Let $\text{\uj S}^c=I^4\backslash\cup_{j\in\mathbb{J}'_i,\, 0\le i\le k-1}\text{\uj s}_{ij}$, we obtain the Lebesgue measure estimate
$$
\text{\rm meas}(\text{\uj S}^c)\ge 1-\frac{27K_d\pi}{K}d\to 1,\qquad \text{\rm as}\ \ d\to 0.
$$
By the definition $\mathbb{J}_i$ and $\mathbb{J}'_i$, one can see that for any $(A_1,B_1,A_2,B_2)\in\text{\uj S}^c$ and any $\lambda\in [\lambda_0,\lambda_1]$  the variation of $F_{\lambda}-\epsilon V$ is bounded from below
\begin{equation*}
\text{\rm Osc}_{I_i}(F_{\lambda}-\epsilon V)\ge Md^4, \qquad \forall\ 0\le i<k.
\end{equation*}
It implies that the inequality (\ref{appenBeq1}) holds. This completes the proof.
\end{proof}

\subsection{Hyperbolicity of minimal periodic orbits}
Let $L\in C^r(T\mathbb{T}^2,\mathbb{R})$ be a Tonelli Lagrangian with two degrees of freedom ($r\ge 4$), let $H$ be the Hamiltonian determined by $L$. Because of topological property of two torus, each ergodic minimal measure is supported on closed orbits if the rotation vector satisfies certain resonant condition. It is a natural question whether these periodic orbits are hyperbolic. Once it is true, one then obtains normally hyperbolic cylinder composed by these periodic orbits.

Given a Lagrangian $L$ and a rotation direction $g\in H_1(\mathbb{T}^2, \mathbb{Z})$, by Fenchel-Legendre transformation, we obtain a channel in $H^1(\mathbb{T}^2,\mathbb{R})$
$$
\mathbb{C}_g=\bigcup_{\lambda>0}\mathscr{L}_{\beta_L}(\lambda g)\subset H^1(\mathbb{T}^2,\mathbb{R}).
$$
Typically, it is foliated into segments of line (flat of the $\alpha$-function), along which the $\alpha$-function keeps constant, all cohomology classes share the same Mather set. Thus, it makes sense to write $\tilde{\mathcal{M}}(c)=\tilde{\mathcal{M}}(E,g)$ with $E=\alpha(c)$ and $c\in\mathbb{C}_g$.

\begin{theo}\label{AppenHyperTh1}
Given a class $g\in H_1(\mathbb{T}^2, \mathbb{Z})$ and a closed interval $[E_a,E_d]\subset \mathbb{R}_+$ with $E_a>\min\alpha$, there exists an open-dense set $\mathfrak{O}\subset C^{r}(\mathbb{T}^2,\mathbb{R})$ with $r\ge 4$ such that for each $P\in\mathfrak{O}$, each $E\in[E_a,E_d]$, the Mather set $\tilde{\mathcal{M}}(E,g)$ for $L+P$ consists of hyperbolic periodic orbits. Indeed, except for finitely many $E_j\in[E_a,E_d]$ where the Mather set consists of two hyperbolic periodic orbits, for all other $E\in[E_a,E_d]$ it consists of exactly one hyperbolic periodic orbit.
\end{theo}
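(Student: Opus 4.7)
The plan is to reduce the statement to Theorem B.1 by expressing minimal periodic orbits of rotation direction $g$ as the global minimizers of a one-parameter family of $C^r$ functions on a transverse circle, and then engineering potential perturbations on $\mathbb{T}^2$ that induce a sufficiently rich perturbation on that circle.

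First I would fix a simple closed curve $\Sigma \subset \mathbb{T}^2$ whose homological intersection with $g$ equals one and which is everywhere transverse to the configuration-space projection of the invariant cylinder of minimal periodic orbits with rotation direction $g$ at energies in $[E_a,E_d]$. For each $E \in [E_a,E_d]$ and each $x \in \Sigma$, set
\[
F_E(x) = \inf_{T > 0}\, \inf_{\gamma(0)=\gamma(T)=x,\,[\gamma]=g}\ \int_0^T \bigl(L(\gamma,\dot\gamma) + E\bigr)\, dt.
\]
Since $E > \min \alpha$, Mather's theory guarantees this infimum is finite, and its value equals $E T_E - T_E\alpha(c_E)$ precisely when $x \in \pi\tilde{\mathcal{M}}(E,g) \cap \Sigma$; thus the global minimizers of $F_E$ on $\Sigma$ are in bijection with the orbits in the Mather set. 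Near any global minimizer, $F_E$ is $C^r$ by smooth dependence of the Euler--Lagrange flow on initial data and the implicit function theorem applied to the free-period condition $\partial_T = 0$. Joint Lipschitz continuity of $F_E$ in $(E,x)$ follows from Lipschitz dependence of $T_E$ and of the Aubry set on $E$, together with Mañé's standard estimates.

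Next I would engineer a perturbation scheme matching Theorem B.1. The envelope / first-variation argument gives, for $V \in C^r(\mathbb{T}^2,\mathbb{R})$ small,
\[
F_{E,\,P+V}(x) - F_{E,P}(x) = -\int_0^{T_{E,x}} V(\gamma_{E,x}(t))\, dt + O(\|V\|_{C^1}^2),
\]
where $\gamma_{E,x}$ is the minimizer realizing $F_{E,P}(x)$. Adapting the tubular construction from Section 8.3 (equations (completeeq5)--(completeeq8)), I would choose $V$ of the form $V(y) = \rho(y)\,\tilde V(\pi_\Sigma(y))$, where $\pi_\Sigma$ projects a tubular neighborhood of the minimal-orbit cylinder onto $\Sigma$ along the flow and $\rho$ is a smooth bump supported in that neighborhood. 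A computation then gives $F_{E,P+V}(x) - F_{E,P}(x) = -C_\rho\,\tilde V(x) + o(\|\tilde V\|_{C^0})$ with a constant $C_\rho > 0$ uniform in $E \in [E_a,E_d]$ and in $x$ near each argmin. Restricting $\tilde V$ to the four-parameter family $\sum_{i=1,2}(A_i \cos ix + B_i \sin ix)$ on $\Sigma \cong \mathbb{T}$, Theorem B.1 applied to the Lipschitz family $\{F_{E,P}/C_\rho\}_{E \in [E_a,E_d]}$ produces an open-dense set of four-parameter values such that every global minimum of $F_{E,P+V}$ is nondegenerate for every $E$; a countable intersection / Baire argument upgrades this to density of such $V$ in $C^r(\mathbb{T}^2,\mathbb{R})$.

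Finally I would convert nondegeneracy into hyperbolicity. A globally minimizing closed orbit of a Tonelli Lagrangian on a surface has no conjugate points, hence its Floquet multipliers are real positive reciprocals $\mu, \mu^{-1}$; the transverse Hessian of $F_E$ at its minimum controls precisely the deviation of $\mu$ from $1$, so nondegeneracy rules out the parabolic case and forces $\mu \ne 1$, i.e.\ hyperbolicity. The exceptional energies $E_j \in [E_a,E_d]$ at which $F_E$ attains its minimum at two distinct points form a discrete set by the implicit function theorem applied to the level-crossing of two nondegenerate branches of minima, and hence are finite on the compact interval $[E_a,E_d]$.

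The main obstacle I anticipate is that $F_{E,P}$ is generally only $C^r$ near global minimizers (corners may appear elsewhere on $\Sigma$ where the minimizing branch changes), so the localized perturbation $V$ above controls $F_{E,P+V}$ only in neighborhoods of the argmins, and one must cover the (compact) set of minimizing points in $[E_a,E_d] \times \Sigma$ by finitely many tubular patches and splice the perturbations without destroying the four-parameter richness in any patch. A related subtlety is that near a bifurcation energy $E_j$ two branches of minimizers approach each other and the box-dimension estimate in the proof of Theorem B.1 must be applied on short subintervals around each $E_j$ and then patched to the complement; here the uniform Lipschitz dependence of $F_{E,P}$ on $E$ and uniform $C^2$ bounds on the Poincaré return map near the minimizing cylinder, both of which follow from the standing Tonelli hypotheses, are what make the argument go through.
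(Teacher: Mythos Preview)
Your overall strategy matches the paper's: reduce to a one-parameter family of action functions on a transverse circle, perturb by a four-parameter trigonometric family pushed forward from $\Sigma$ to $\mathbb{T}^2$ along the flow, and conclude nondegeneracy of global minima for all $E$ simultaneously. The paper carries this out via a coordinate reduction to a time-$2\pi$-periodic one-degree-of-freedom system (so $F(x,E)$ has fixed period rather than your free-period Maupertuis form), but the two formulations are equivalent.

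There is, however, a genuine gap at the point where you write ``Theorem~B.1 applied to the Lipschitz family $\{F_{E,P}/C_\rho\}$ produces\ldots''. Theorem~B.1 concerns $F_\lambda-V$ with $V$ \emph{independent of $\lambda$}; in your setup the induced perturbation on $\Sigma$ is $(\mathscr{K}_E V)(x)=\int_0^{T_{E,x}}V(\gamma_{E,x}(t))\,dt$, and with your choice $V=\rho\cdot(\tilde V\circ\pi_\Sigma)$ this equals $\tilde V(x)\int_0^{T_{E,x}}\rho(\gamma_{E,x}(t))\,dt$. The last integral depends on $(x,E)$, so the error is not $o(\|\tilde V\|)$ but of order $(|x-x^*|+|E-E_0|)\|\tilde V\|$, and Theorem~B.1 does not apply directly. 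The paper fixes this in two ways: first, it inserts the weight $G_s^{-1}=-\partial_{y_2}H$ in the definition of the lifted potential $P=\mathscr{T}_{E_0}\bar P$ so that, after the energy reduction, $\mathscr{K}_E\mathscr{T}_{E_0}$ acts on $\cos\ell x,\sin\ell x$ as multiplication by a matrix $\mathbf{U}$ close to the identity plus a small matrix $\mathbf{C}_2$ (formulae~(B.4)--(B.5)); second, it does \emph{not} invoke Theorem~B.1 as a black box but reruns the grid-counting argument (Theorem~B.2) with these controlled error matrices built into the Jacobian~(B.8). Without the weight and without redoing the counting, your $C_\rho$ is not a constant and the conclusion does not follow.

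Your hyperbolicity step is a genuinely different route from the paper's: you invoke absence of conjugate points and Floquet theory, whereas the paper (Theorem~B.3) argues variationally by summing $F(x_i)-\min F$ over the successive crossings $x_i$ of a semi-static curve with $\Sigma$ and comparing to the semi-concave barrier $u^--u^+$. Your argument is standard and would work once you make precise that $\partial_x^2F$ at the minimum equals the quadratic form of the second variation on the one-dimensional transverse Jacobi field, hence controls $\mu+\mu^{-1}-2$. Both approaches yield the same conclusion; the paper's has the advantage of staying entirely within the action framework already set up.
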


This theorem will be proved by showing the non-degeneracy of the minimal point of certain action function. Toward this goal, let us split the interval into suitably many subintervals $[E_a,E_d]=\cup_{i=0}^{k}[E_i-\delta_{E_i},E_i+\delta_{E_i}]$ with suitably small $\delta_{E_i}>0$. Once the open-dense property holds for each small subinterval, then it hold for the whole interval.

Let us explain how the interval $[E_a,E_d]$ is split. In the channel, one can choose a path along which the $\alpha$-function monotonely increases. Restricted on this path, we obtain a family of Lagrangians with one parameter. By using the method of \cite{BC}, we can see that it is typical that the minimal measure is supported at most on two periodic orbits for each class on this path. Thus, the Mather set $\tilde{\mathcal{M}}(E,g)$ consists of at most two periodic orbits for each $E\in[E_a,E_d]$.

Without of losing generality, we assume $g=(0,1)$, all of these minimal curves are associated with the homological class. Restricted on the neighborhood $\mathbb{S}_{\gamma_{E_i}}\subset \mathbb{T}^2$ of a minimal curve $\gamma_{E_i}\in\mathcal{M}(E_i,g)$ for certain energy $E_i$, we introduce a configuration coordinate transformation $x=X(u)$ such that along the curve $\gamma_{E_i}$ one has $u_1=\text{\rm constant}$. In the new coordinates, the Lagrangian reads
$$
L'(\dot u,u)=L(DX(u)\dot u,X(u))
$$
which is obviously positive definite in $\dot u$. As $\gamma_E(t)$ is a solution of the Euler-Lagrange equation determined by $L$, the curve $X^{-1}(\gamma_E)(t)$ solves the equation determined by $L'$ and is minimal for the action of $L'$. As there are at most two minimal curves for each energy, the neighborhood of these two curves can be chosen not to overlap each other. Therefore, one can extend the coordinate transformation to the whole torus.

Let $H'$ be the Hamiltonian determined by $L'$ through the Legendre transformation. the minimal curve determines a periodic solution for the Hamiltonian equation. By construction, $\partial_{v_2}H'>0$ holds along the periodic solution which entirely stays in the energy level set $H'^{-1}(E)$. We choose suitably small $\delta_{E_i}>0$ such that for $E\in[E_i-\delta_{E_i},E_i+\delta_{E_i}]$ each minimal periodic curve in $\mathcal{M}(E,g)$ falls into the strip $\mathbb{S}_{\gamma_{E_i}}$ and $\partial_{v_2}H'>0$ holds along each minimal periodic orbit.

For brevity of notation, we still use $x$ to denote the configuration coordinates, use $L$ and $H$ to denote the Lagrangian and Hamiltonian, for which the condition $\partial_{y_2}H>0$ holds along minimal periodic orbits for $E\in[E_i-\delta_{E_i},E_i+\delta_{E_i}]$. Under such conditions, the Lagrangian as well as the Hamiltonian can be reduced to a time-periodic system with one degree of freedom when it is restricted on energy level set. The new Hamiltonian $\bar H(x_1,y_1,\tau,E)$ solves the equation $H(x_1,y_1,x_2,\bar H)=E$ with $\tau=-x_2$, from which one obtains a new Lagrangian $\bar L=\dot x_1y_1-\bar H(x_1,y_1,\tau,E)$ where $y_1=y_1(x_1,\dot x_1,\tau)$ solves the equation $\dot x_1=\partial_{y_1}\bar H(x_1,y_1,\tau)$. In the following we omit the subscript ``1", i.e.  let $(x,y,\dot x)=(x_1,y_1,\dot x_1)$ if no danger of confusion occurs.

We introduce a function of Lagrange action $F(\cdot,E)$: $\mathbb{T}\to\mathbb{R}$:
$$
F(x,E)=\inf_{\gamma(0)=\gamma(2\pi)=x} \int_{0}^{2\pi}\bar L(d\gamma(\tau),\tau,E)d\tau.
$$
A periodic curve $\gamma$ is called the minimizer of $F$ if the Lagrange action along this curve reaches the quantity $F(\gamma(0),E)$. As there might be two or more minimizers if $x$ is not a minimal point, the function $F$ may not be smooth in global. However, we claim that it is smooth in certain neighborhood of minimal point.

To verify our claim, we let $T_i=\frac {2\pi i}m$ and define the function of action $F_i(x,x',E)$
$$
F_i(x,x',E)=\inf_{\stackrel {\gamma(T_i)=x}{\scriptscriptstyle \gamma(T_{i+1})=x'}} \int_{T_i}^{T_{i+1}}\bar L(d\gamma(\tau),\tau)d\tau.
$$
There will be two or more minimizers of $F_i(x,x',E)$ if the point $x$ is in the ``cut locus" of the point $x'$. However, the minimizer is unique if $x$ is suitably close to $x'$, denoted by $\gamma_i(\cdot,x,x',E)$. In this case, it uniquely determines a speed $v=v(x,x')$ such that $\dot\gamma_i(T_i,x,x',E)=v(x,x')$. Let $\vec{x}= (x_0,x_1,\cdots,x_{m})$ denote a periodic configuration ($x_0=x_m$), we introduce a function of action
$$
\text{\bf F}(\vec{x},E)=\sum_{i=0}^{m-1}F_i(x_i,x_{i+1},E).
$$
As $T_{i+1}-T_i$ is suitably small and the Lagrangian is positive definite in the speed, the boundary condition $\gamma(T_j)=x_j$  for $j=i,i+1$ uniquely determines the speed $v_j=\dot\gamma(T_j)$ for $j=i, i+1$.  Indeed, the function $F_i$ generates an area-preserving twist map from the time-$T_i$-section to the time-$T_{i+1}$-section $\Phi_i$: $(x_i,y_i)\to(x_{i+1},y_{i+1})$
$$
y_{i+1}=\partial_{x_{i+1}}F_i(x_{i+1},x_i), \qquad y_{i}=-\partial_{x_{i}}F_i(x_{i+1},x_i).
$$
where $y_i=\partial_{\dot x}L(x_i,v_i,T_i)$. As the Lagrangian is positive definite in $\dot x$, it implies that the initial condition $(x_i,v_i)$ smoothly depends on the boundary condition $(x_i,x_{i+1})$ in this case. Because of the smooth dependance of solution of ordinary differential equation on initial condition, the function is smooth. Obviously, each minimal point of $\text{\bf F}(\cdot,E)$ uniquely determines a $c$-minimal measure for $c\in\alpha^{-1}(E)\cap \mathbb{C}_g$, supported on a periodic orbit $(\gamma_E,\dot\gamma_E)$ with $[\gamma_E]=g$. Let $x_i=\gamma_E(T_i)$, it satisfies the discrete Euler-Lagrange equation
$$
\frac{\partial F_i}{\partial x'}(x_{i-1},x_i,E)+\frac{\partial F_{i+1}}{\partial x}(x_{i},x_{i+1},E)=0.
$$

We shall show later that the periodic orbit is hyperbolic if and only if the minimal configuration is non-degenerate, namely, the Jacobi matrix is positive definite:
$$
\text{\bf J}=\left[\begin{matrix}
A_0 & B_0 & 0& \cdots & B_{m-1}\\
B_0 & A_1 & B_1 & \cdots & 0 \\
0 & B_1 & A_2 & \cdots & 0 \\
\vdots & \vdots & \vdots & \ddots & B_{m-2}\\
B_{m-1} & 0 & 0 & B_{m-2} & A_{m-1}
\end{matrix}\right]
$$
where
$$
A_i=\frac{\partial^2F_{i-1}}{\partial x'^2}(x_{i-1},x_i)+\frac{\partial^2F_{i}}{\partial x^2}(x_i,x_{i+1}),\ \  B_i=\frac{\partial^2F_{i}}{\partial x\partial x'}(x_{i},x_{i+1})
$$
and $x_{-1}=x_{m-1}$.

Let $\vec{x}=(x_0,x_1,\cdots,x_m=x_0)$ be a minimal configuration of the function $F(\vec{x},E)$, where the Jacobi matrix is non-negative and the smallest eigenvalue is simple. Indeed, as the Lagrangian is positive definite, the generating function $F_i(x,x',E)$ determines an area-preserving and twist map $\Phi_i$, we have $B_i<0$. Consequently, by using a theorem in \cite{vM}, we find that the smallest eigenvalue is simple. Let $\lambda_i$ denote the $i$-th eigenvalue of the matrix, at the minimal configuration one has
$$
0\le\lambda_0<\lambda_1\le\lambda_2<\cdots\le\lambda_{m-1}.
$$
Let $\xi_i=(\xi_{i,0},\xi_{i,1},\cdots,\xi_{i,m-1})$ be the eigenvector for $\lambda_i$. By choosing $\xi_{0,0}=1$ we have $\xi_{0,i}>0$ for $1\le i<m$ (see Lemma 3.4 in \cite{An}). At the minimal configuration, we find the following matrix is positive definite:
$$
\text{\bf J}_{m-1}=\left[\begin{matrix}
A_1 & B_1 & \cdots & 0 \\
B_1 & A_2 & \cdots & 0 \\
\vdots & \vdots & \ddots & B_{m-2}\\
0 & 0 & B_{m-2} & A_{m-1}
\end{matrix}\right].
$$
If not, there will be a vector $\hat v=(v_1,\cdots,v_{m-1})\in\mathbb{R}^{m-1}\backslash\{0\}$ such that $\hat v^t\text{\bf J}_m\hat v=0$. It follows that $v^t\text{\bf J}v=0$ if we set $v=(0,\hat v)\in\mathbb{R}^m$. As the matrix $\text{\bf J}$ is non-negative, it implies that $v=\mu\xi_0$, but it contradicts the fact that all entries of $\xi_{0}$ have the same sign, either positive or negative.

In a suitably small neighborhood $\vec{U}=U_0\times U_1\times\cdots\times U_{m-1}$ of the minimal configuration, let us consider the equations
\begin{equation}\label{variationeq1}
\frac{\partial\text{\bf F}}{\partial x_i}(x_0,x_1,\cdots,x_{m-1},E)=0,\qquad \forall\ i=1,2,\cdots,m-1.
\end{equation}
Since the matrix $\{\frac{\partial^2\text{\bf F}}{\partial x_i\partial x_j}\}_{i,j=1}^{m-1}=\text{\bf J}_{m-1}$ is positive definite at the minimal point, by the implicit function theorem, this equation has a unique smooth solution $x_i=X_i(x_0,E)$ when $x_0\in U_0$. Let $\gamma$: $[0,2\pi]\to\mathbb{R}$ be a minimizer of $F(x_0)$ with $\gamma(0)=\gamma(2\pi)=x_0$, we obtain a configuration $x_i=\gamma(2i\pi/m)$. Clearly, $\partial_{x_i}\text{\bf F}=0$ holds at this configuration for each $i\ge 1$. It implies the uniqueness of the minimizer of $F(x_0,E)$ for $x_0\in U_0$. The minimal point of $F$ uniquely determines a minimal configuration of $\text{\bf F}$, therefore, the function $F$ is smooth in certain neighborhood of its minimal point.

\noindent{\bf Non-degeneracy of minimizers}

In a neighborhood of the minimal point, let us study what change the function of action undergoes when the Lagrangian is under a perturbation of potential $L\to L+P$, where $P$: $\mathbb{T}^2\to\mathbb{R}$ is a potential. Let $\bar L'$ denote the reduced Lagrangian of $L+P$ and let $G=-(\partial_{y_2}H)^{-1}$, one has
$$
\bar L'=\bar L+GP+O(\|P\|^2).
$$
We denote the minimal curve of $F(x,E)$ by $\gamma(t,x,E)$ such that $\gamma(0,x,E)=x$. Let $\gamma'(t,x,E)$ and $F'(x,E)$ be the quantities defined for $\bar L'$ as the quantities $\gamma(t,x,E)$ and $F(x,E)$ defined for $\bar L$. By the definition of minimizer, we have
\begin{align*}
F'(x,E)-F(x,E)&=\int_0^{2\pi}\bar L'(d\gamma'(\tau))d\tau -\int_0^{2\pi}\bar L(d\gamma(\tau))d\tau\\
&\ge \int_0^{2\pi}G(d\gamma'(\tau),\tau)P(\gamma'(\tau))d\tau+o(\|\gamma'-\gamma\|,\|P\|),
\end{align*}
and
\begin{align*}
F'(x,E)-F(x,E)&=\int_0^{2\pi}\bar L'(d\gamma'(\tau))d\tau-\int_0^{2\pi}\bar L(d\gamma(\tau))d\tau\\
&\le \int_0^{2\pi}G(d\gamma(\tau),\tau)P(\gamma(\tau))d\tau+o(\|\gamma'-\gamma\|,\|P\|).
\end{align*}
Since the distance between these two curves $\gamma$ and $\gamma'$ is due to the perturbation $P$, we finally obtain
\begin{equation}\label{nondegenerate1}
F'(x,E)=F(x,E)+\mathscr{K}_EP(x)+\mathscr{R}_EP(x)
\end{equation}
where $\mathscr{R}_EP=o(\|P\|)$ and
$$
\mathscr{K}_EP(x)=\int_0^{2\pi}G(d\gamma(\tau,x,E),\tau)P(\gamma(\tau,x,E))d\tau.
$$
The field of smooth curves $\{\gamma(\cdot,x,E)\}$ defines an operator $P\to\mathscr{K}_EP$, which maps functions defined on $\mathbb{T}^2$ into the function space defined on $\mathbb{T}$. Obviously, both $\mathscr{K}_EP$ and $\mathscr{R}_EP$ are smooth in $x\in U_0$ and in $E$.

Unless the point $x$ is a minimizer of $F(\cdot,E)$, the curve $\gamma(\cdot,x,E)$ may have corner at $\tau=0\mod 2\pi$.
\begin{lem}\label{hyperlem1}
There exist constants $\varepsilon,\theta>0$ such that if $F(x,E)-\min F(\cdot,E)<\varepsilon$ and if $\gamma:$ $[0,2\pi]\to\mathbb{R}$ is a  minimizer of $F(x,E)$, then
\begin{equation*}\label{hyperbolicorbit2}
\|\dot\gamma(0)-\dot\gamma(2\pi)\|<\theta\sqrt{F(x,E)-\min F(\cdot,E)}.
\end{equation*}
\end{lem}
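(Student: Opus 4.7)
The plan is to identify the quantity $\|\dot\gamma(0)-\dot\gamma(2\pi)\|$ with a multiple of $|F'(x,E)|$ via the classical first-variation formula, and then apply a quadratic upper bound that holds for any $C^2$ function near its minimum. Concretely, let $x_*$ be a local minimizer of $F(\cdot,E)$, and recall that in the neighborhood $U_0$ of $x_*$ constructed earlier (via the implicit function theorem applied to $\partial_{x_i}\text{\bf F}=0$, using that $\text{\bf J}_{m-1}$ is positive definite), the minimizer $\gamma(\cdot,x,E)$ of $F(x,E)$ is unique, smooth, and depends smoothly on $x$. Integration by parts along $\gamma$, together with the Euler-Lagrange equation and the $2\pi$-periodicity of $\bar L$ in $\tau$, then gives the identity
$$F'(x,E)=\partial_{\dot x}\bar L(x,\dot\gamma(2\pi),0)-\partial_{\dot x}\bar L(x,\dot\gamma(0),0).$$
Because $\bar L$ is strictly convex in $\dot x$ and the relevant velocities lie in a compact set (energies belong to $[E_a,E_d]$, so the speeds are bounded), the Legendre map $\dot x\mapsto \partial_{\dot x}\bar L$ is bi-Lipschitz, yielding a uniform lower bound
$$|F'(x,E)|\ge c_1\|\dot\gamma(2\pi)-\dot\gamma(0)\|.$$

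Next, I would bound $|F'(x,E)|^2$ from above by $F(x,E)-\min F(\cdot,E)$. The key elementary fact is: if $F$ is $C^2$ on an open set with $|F''|\le M$ and $x_*$ is a critical point contained in this set, then for every $x$ with $x-F'(x)/M$ still in the set,
$$F(x)-F(x_*)\ge \frac{|F'(x)|^2}{2M}.$$
This follows by Taylor's theorem applied at the test point $y=x-F'(x)/M$: the quadratic upper bound $F(y)\le F(x)-|F'(x)|^2/(2M)$ combined with $F(y)\ge F(x_*)$ yields the claim. Since $F(\cdot,E)$ is $C^2$ in $U_0$, and since the construction depends continuously on $E$ over the compact interval $[E_i-\delta_{E_i},E_i+\delta_{E_i}]$, one obtains a uniform upper bound $M$ on $|F''|$ in a slightly smaller neighborhood $U_0'\Subset U_0$.

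Combining the two estimates, setting $\theta=\sqrt{2M}/c_1$, and choosing $\varepsilon>0$ small enough that the sublevel set $\{x:F(x,E)-\min F(\cdot,E)<\varepsilon\}$ is contained in $U_0'$ for every $E$ in the compact range, yields $\|\dot\gamma(0)-\dot\gamma(2\pi)\|\le |F'(x,E)|/c_1\le \theta\sqrt{F(x,E)-\min F(\cdot,E)}$, which is the desired inequality.

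The principal obstacle will be ensuring the uniformity of $\varepsilon$ and $\theta$ across $E$, particularly near a bifurcation energy $E_j$ where $F(\cdot,E_j)$ may have two distinct global minima (corresponding to the two hyperbolic periodic orbits in the Mather set allowed by hypothesis (\textbf{H4})). At such $E$, the two minima are isolated from each other; one chooses $\varepsilon$ smaller than half the value of $F$ at the lowest saddle separating them, so that $\{F-\min F<\varepsilon\}$ splits into two disjoint components, each contained in a neighborhood where the local implicit function construction and the $C^2$ bound apply. Since by (\textbf{H4}) there are only finitely many such $E_j$ in $[E_a,E_d]$, a finite covering of the compact energy interval by sub-intervals around the bifurcations and sub-intervals avoiding them produces the desired uniform constants $\varepsilon$ and $\theta$.
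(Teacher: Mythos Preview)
Your proof is correct and takes essentially the same approach as the paper's: both identify $|\partial_x F|$ with the momentum jump via the first-variation formula and the uniform convexity bound $\partial^2_{\dot x}\bar L\ge m_L$, and both then bound $|\partial_x F|$ by $\sqrt{F(x,E)-\min F(\cdot,E)}$ using the second-order Taylor bound $|F''|\le M$. The only cosmetic difference is that the paper runs the second step as a contradiction (plugging $x'=x\mp\sqrt{\Delta}$ into Taylor's formula to force $F(x',E)<\min F$), whereas you state the equivalent direct inequality $F(x)-F(x_*)\ge |F'(x)|^2/(2M)$; your extra paragraph on uniformity in $E$ near bifurcation energies is a reasonable elaboration that the paper leaves implicit.
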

\begin{proof}
Let us consider the derivative of $F(\cdot,E)$. As the Lagrangian is positive definite, some positive constants $m_L>0$ exist such that
$$
\frac{\partial ^2\bar L}{\partial\dot x^2}\ge m_L, \qquad \forall\ (x,\dot x)\in T\mathbb{T}^2.
$$
Since $\gamma(0,x,E)=\gamma(2\pi,x,E)=x$, one has $\partial_{x}\gamma(0)=\partial_{x}\gamma(2\pi)=1$ and
\begin{align*}
\Big|\frac{\partial F}{\partial x}\Big|&=\Big|\int_{0}^{2\pi}\Big (\frac{\partial\bar L}{\partial\dot x} (d\gamma(\tau),\tau)\frac{\partial\dot\gamma}{\partial x}+\frac{\partial\bar L}{\partial x} (d\gamma(\tau),\tau)\frac{\partial\gamma}{\partial x}\Big )d\tau\Big|\\
&=\Big|\frac{\partial\bar L}{\partial\dot x}(\dot\gamma(0),\gamma(0),0)-\frac{\partial\bar L}{\partial\dot x}(\dot\gamma(2\pi),\gamma(2\pi),2\pi)\Big|\\
&\ge m_L|\dot\gamma(0)-\dot\gamma(2\pi)|,
\end{align*}
where the second equality follows from that $\gamma$ solves the Euler-Lagrange equation. If $\frac{\partial F}{\partial x}>0$ and if the lemma does not hold, by choosing $x'-x=-\sqrt{\Delta}$ ($\Delta=F(x,E)-\min F(\cdot,E)$) we obtain from the Taylor series up to second order that
\begin{align*}
F(x',E)-\min F(\cdot,E)&=F(x',E)-F(x,E)+F(x,E)-\min F(\cdot,E)\\
&\le -\partial_{x}F(x,E)\sqrt{\Delta}+\frac M2\Delta+\Delta<0
\end{align*}
if $\theta>\frac 1{m_L}(1+\frac M2)$, where $M=\max\partial^2_{x}F$. But it is absurd. The case $\frac{\partial F}{\partial x}<0$ can be proved by choosing $x'-x=\sqrt{\Delta}$. This completes the proof.
\end{proof}

Let $x\in(x^*-\delta_{x^*},x^*+\delta_{x^*})$, where $x^*$ is the minimal point of $F(\cdot,E_0)$. As it was shown above, $\gamma(\frac{2i\pi}m,x,E_0)$ smoothly depends on $x$, we have a smooth foliation of curves in a neighborhood of the curve $\gamma(\cdot,x^*,E_0)$. The corner at $\gamma(0,x,E_0)$, i.e. $\dot\gamma(2\pi,x,E_0)-\dot\gamma(0,x,E_0)$ approaches to zero as $F(x,E_0)\downarrow\min F(\cdot,E_0)$. For each $x$, if there is a corner at $\gamma(0,x,E_0)=\gamma(2\pi,x,E_0)$, we construct a curve $\gamma_{x}$ that smoothly connects the point $\gamma(2\pi-\delta,x,E_0)$ to the point $\gamma(\delta,x,E_0)$ with $\gamma_x(0)=x$, where $\delta>0$ is suitably small. Replacing the segment $\gamma(\cdot,x,E_0)|_{[0,\delta]\cup[2\pi-\delta,2\pi]}$ by this curve, we obtain a smooth curve $\gamma_{x}$ such that $\gamma_x(t)=\gamma(t,x,E_0)|_{[\delta,2\pi-\delta]}$ and $\gamma_x(0)=x$. Indeed, as the curve $\gamma(t,x,E_0)$ is $C^3$-smooth in $x$ and $\gamma(t,x^*,E_0)$ is also $C^3$-smooth in $t$, for small number $\varepsilon$ some $\mu_0>0$ exists such that the quantities
$$
\Big|\frac{d^k\gamma}{dt^k}(t,x,E_0)-\frac{d^k\gamma}{dt^k}(t,x^*,E_0))\Big|< \varepsilon,\qquad \forall\ x\in[x^*-\delta_{x^*},x^*+\delta_{x^*}], \ k=0,1,2,3.
$$
Let the curve $\zeta_x(\cdot)$: $[-\delta,\delta]\to\mathbb{R}$ be an interpolation polynomial of degree eight such that
$$
\frac{d^k\zeta_x}{dt^k}(t)=\frac{d^k\gamma}{dt^k}(t,x,E_0)-\frac{d^k\gamma}{dt^k}(t,x^*,E_0) \qquad \forall\ t=\pm\delta,
$$
and $\zeta_x(0)=\gamma(0,x,E_0)-\gamma(0,x^*,E_0)$, then the coefficients of the polynomial are smooth in $x$. Let $\gamma_{x}(t)=\gamma(t,x^*,E_0)+\zeta_x(t)$, we see that the foliation of the curves $\gamma_{x}$ is smooth in $x$ and as a function of $t$, $\gamma_{x}-\gamma(\cdot,x,E_0)$ is small in $C^3$-topology.

For each point $(\tau,x)\in\mathbb{S}$, there is a curve $\gamma_{x_0}$ such that $x=\gamma_{x_0}(\tau)$. It uniquely determines a speed $v=v(x,\tau)=\dot\gamma_{x_0}(\tau)$. By the construction, $v(x,\tau)$ is $C^3$-smooth in $(x,\tau)$. As $-G^{-1}=\partial_{y_2}H>0$ when it is restricted to a neighborhood of the minimal curves, both $v$ and $G$ can be approximated by $C^r$-function $v_s$ and $G_s$ in $C^3$-topology respectively i.e. $\|v-v_s\|_{C^3}<\varepsilon$ and $\|G-G_s\|_{C^3}<\varepsilon$ hold for small $\varepsilon>0$. Given a $C^r$-function $\bar P$: $\mathbb{T}\to\mathbb{R}$ we obtain a $C^r$-function $P=\mathscr{T}_{E_0}\bar P$: $\mathbb{T}^2\to\mathbb{R}$ defined by
\begin{equation}\label{nondegenerate2}
P(x,\tau)=\mathscr{T}_{E_0}\bar P(x_0)=G^{-1}_s(v_s(x,\tau),x,\tau)\bar P(x_0),
\end{equation}
if $x=\gamma_{x_0}(\tau)$. By the definition, we have
\begin{equation}\label{nondegenerate3}
\mathscr{K}_E\mathscr{T}_{E_0}\bar P(x)= \int_0^{2\pi}\frac{G(d\gamma(\tau,x,E),\tau)}{G_s(v_s(\gamma (\tau,x,E),\tau),\gamma (\tau,x,E),\tau)}\bar P(x+\Delta\gamma (\tau,x,E))d\tau
\end{equation}
where $\Delta\gamma(\tau,x,E)$ is defined as follows: passing through the point $\gamma (\tau,x,E)$ there is a unique $x'$ such that $\gamma_{x'}(\tau)=\gamma(\tau,x,E)$. We set $\Delta\gamma(\tau,x,E)=x'-x$.

We introduce a set of perturbations with four parameters:
$$
\bar{\mathfrak{P}}=\Big\{\sum_{\ell=1}^2(A_{\ell}\cos \ell x+B_{\ell}\sin \ell x):\ (A_1,B_1,A_2,B_2)\in\mathbb{I}^4\Big\},
$$
where $\mathbb{I}^4=[1,2]\times[1,2]\times[1,2]\times[1,2]$. By applying the formula (\ref{nondegenerate3}) to the function $\cos\ell x$ and $\sin\ell x$  we find that
\begin{align}\label{nondegenerate4}
\mathscr{K}_E\mathscr{T}_{E_0}\cos\ell x&=u_{\ell}(x,E)\cos\ell x-v_{\ell}(x,E)\sin\ell x,\\
\mathscr{K}_E\mathscr{T}_{E_0}\sin\ell x&=u_{\ell}(x,E)\sin\ell x+v_{\ell}(x,E)\cos\ell x,\notag
\end{align}
where
\begin{align*}
u_{\ell}(x,E)&=\int_0^{2\pi}\frac{G(d\gamma(\tau,x,E),\tau)}{G_s(v_s(\gamma (\tau,x,E),\tau),\gamma (\tau,x,E),\tau)}\cos \ell\Delta\gamma(\tau,x,E)d\tau,\\
v_{\ell}(x,E)&=\int_0^{2\pi}\frac{G(d\gamma(\tau,x,E),\tau)}{G_s(v_s(\gamma (\tau,x,E),\tau),\gamma (\tau,x,E),\tau)}\sin \ell\Delta\gamma(\tau,x,E)d\tau.
\end{align*}
Let us study the dependence of the terms $u_{\ell}(x,E)$ and $v_{\ell}(x,E)$ on the point $x$. We claim that there exists constant $\theta_1>0$ as well as small numbers $\delta_{E_0}>0$ and $\delta_{x^*}>0$ such that for each $E\in (E_0-\delta_{E_0},E_0+\delta_{E_0})$, each $x\in(x^*-\delta_{x^*},x^*+\delta_{x^*})$, $j=0,1,2,3$ and $\ell=1,2$, we have
$$
|u_{\ell}(x,E)|\ge 1-\theta_1\delta, \qquad |v_{\ell}(x,E)|\le\theta_1\delta,
$$
\begin{equation}\label{nondegenerate5}
\max_{j=1,2,3}\Big\{\Big|\frac {\partial^ju_{\ell}}{\partial x^j}(x,E)\Big|,\Big|\frac {\partial^j v_{\ell}}{\partial x^j}(x,E)\Big|\Big\} \le\theta_1\delta.
\end{equation}

By the construction of the curves $\gamma_x$, for $\tau\in\mathbb{T}\backslash (-\delta,\delta)$ and for $x\in(x^*-\delta_{x^*},x^*+\delta_{x^*})$ we have
$$
\Delta\gamma(\tau,x,E_0)=0 \ \ \text{\rm and}\ \ \frac{G(d\gamma(\tau,x,E_0),\tau)}{G(v(\gamma (\tau,x,E_0),\tau),\gamma (\tau,x,E_0),\tau)}=1
$$
and $\partial_{x}^j\Delta\gamma (\tau,x,E_0)$ is small for $\tau\in(-\delta,\delta)$ and for $j=0,1,2,3$. Integrating the them over the set with Lebesgue measure $2\delta$, we find that some small $\theta_1>0$ exists such that the formulae in (\ref{nondegenerate5}) hold for $E=E_0$ with $\theta_1$ being replaced by $\theta_1/4$ if $G_s$ and $v_s$ in the formula (\ref{nondegenerate3}) are replaced by $G$ and $v$ respectively. As both $v$ and $G$ are approximated by $v_s$ and $G_s$ in $C^3$-topology, by choosing $\varepsilon>0$ suitably small, all formulae in (\ref{nondegenerate5}) hold for $E=E_0$ with $\theta_1$ being replaced by $\theta_1/2$.

For other energy $E$, let us recall the solution $x_i=X_i(x_0,E)$ of Eq. (\ref{variationeq1}) is smooth. As the map $\Phi_i$: $(x_i,y_i)\to(x_{i+1},y_{i+1})$ is area-preserving and twist, it uniquely determines the initial speed $v_0=v_0(x_0,E)$, namely, the initial speed smoothly depends on the initial position and such dependence is also smooth in the parameter $E$. As solution of ODE smoothly depends on its initial conditions, the minimal curve $\gamma(\cdot,x,E)$ of $F(x,E)$ smoothly depends on the parameters $x$ and $E$. Thus, the formulae in (\ref{nondegenerate5}) hold if the numbers $\delta_{E_0}>0$ and $\delta_{x^*}>0$ are suitably small.

\begin{theo}\label{theo2}
There exists an open-dense set $\mathfrak{O}\subset C^r(M,\mathbb{R})$ with $r\ge4$  such that for each $P\in\mathfrak{O}$ and each $E\in [E_0-\delta_{E_0},E_0+\delta_{E_0}]$, all minimizers of $F(\cdot,E)$, determined by $L+P$, are non-degenerate.
\end{theo}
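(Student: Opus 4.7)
The plan is to reduce the assertion to Theorem \ref{appenBthm1} applied to the one-parameter family $\{F(\cdot,E): E\in[E_0-\delta_{E_0},E_0+\delta_{E_0}]\}$. Since $F(\cdot,E)$ is $C^r$-smooth in a neighbourhood of each minimizer and depends smoothly on $E$ (by the implicit function theorem argument producing the local solution $x_i=X_i(x_0,E)$ of \eqref{variationeq1}), the family is Lipschitz in $E$, so the hypotheses of Theorem \ref{appenBthm1} are satisfied. The point is that perturbations of the Lagrangian $L\to L+P$ induce perturbations of $F$ governed by the first-order formula \eqref{nondegenerate1}, and we must ensure that the finite-parameter trigonometric family used in the proof of Theorem \ref{appenBthm1} can be realised via appropriate potentials $P$ on $\mathbb{T}^2$.

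The realisation is given by the operator $\mathscr{T}_{E_0}$ from \eqref{nondegenerate2}: for $\bar P\in\bar{\mathfrak{P}}$ set $P=\mathscr{T}_{E_0}\bar P\in C^r(\mathbb{T}^2,\mathbb{R})$. By \eqref{nondegenerate1} and \eqref{nondegenerate4},
\[
 F'(x,E) \;=\; F(x,E)+ \mathscr{K}_E\mathscr{T}_{E_0}\bar P(x) +\mathscr{R}_E\mathscr{T}_{E_0}\bar P(x),
\]
where $\mathscr{K}_E\mathscr{T}_{E_0}$ acts on the basis $\{\cos\ell x,\sin\ell x\}_{\ell=1,2}$ as a near-identity rotation whose coefficients $u_\ell,v_\ell$ satisfy \eqref{nondegenerate5}. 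Thus, viewed as a map $\mathbb{I}^4\to C^r(\mathbb{T})$, $\bar P\mapsto\mathscr{K}_E\mathscr{T}_{E_0}\bar P$ is a smoothly varying invertible linear map differing from the identity by $O(\delta)$, uniformly for $x\in(x^*-\delta_{x^*},x^*+\delta_{x^*})$ and $E\in(E_0-\delta_{E_0},E_0+\delta_{E_0})$. Consequently, perturbing $L$ by $\epsilon \mathscr{T}_{E_0}\bar P$ is essentially equivalent, at the level of the reduced action on any interval surrounding a minimizer, to perturbing $F(\cdot,E)$ by the trigonometric polynomial $\bar P$ itself plus a negligible error.

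The proof then runs the grid/Pigeonhole scheme of Theorem \ref{appenBthm1} on this induced family: split $\mathbb{T}$ into intervals $I_i$ of size $d=\pi/k$, expand $\bar P$ to third order on each $I_i$ to obtain parameters $(a_i,b_i,c_i)$, form a grid $\{\text{\uj c}_{ij}\}$ of cuboids of sizes $(8Md^{5/2},2Md^{3/2},4Md^{1/2})$, and pull the grid back to $\mathbb{I}^4$ via the non-singular linear map \eqref{appenBeq6}. For each $i$, the set $\mathbb{J}'_i$ of subscripts $j$ for which some $E\in[E_0-\delta_{E_0},E_0+\delta_{E_0}]$ makes $\mathrm{Osc}_{I_i}(F(\cdot,E)-\epsilon V_{ij})$ small has cardinality controlled by the one-dimensional box-dimension of $\{F(\cdot,E)\}$ (Lipschitz in $E$). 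The error term $\mathscr{R}_E\mathscr{T}_{E_0}\bar P=o(\epsilon)=o(\sqrt{d})$ is of smaller order than the $Md^4$-oscillation threshold and can be absorbed into the constants. This produces an open-dense set $\text{\uj S}^c\subset\mathbb{I}^4$ of parameters ensuring the $Md^4$-oscillation bound on every $I_i$ for every $E$, and hence non-degenerate global minimizers, for the perturbed Lagrangian.

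The delicate points that will require real work are the following. \emph{(i)} The operator $\mathscr{T}_{E_0}$ depends on the field of minimal curves, which itself depends on the perturbation, so for density one must construct $\mathscr{T}_{E_0}$ relative to $L+P_0$ for an arbitrary prescribed $P_0\in C^r(\mathbb{T}^2,\mathbb{R})$ and show that this depends well on $P_0$; this is the main obstacle, overcome by exploiting the hyperbolic-like smooth dependence of the minimal curves on the Lagrangian in a neighbourhood of an isolated minimizer. \emph{(ii)} One must verify that the minimizers of $F(\cdot,E)$ stay in the region where the foliation by $\gamma_x$ is smooth uniformly in $E$, which is guaranteed by choosing $\delta_{E_0}$ sufficiently small so that all minimal curves for $E\in[E_0-\delta_{E_0},E_0+\delta_{E_0}]$ lie in the strip $\mathbb{S}_{\gamma_{E_0}}$. \emph{(iii)} Openness is standard: non-degeneracy of a compact family of global minimizers is stable under $C^2$-small perturbations, since the Hessian at each minimizer varies continuously with $P$. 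Combining these yields the open-dense set $\mathfrak{O}\subset C^r(\mathbb{T}^2,\mathbb{R})$ as asserted.
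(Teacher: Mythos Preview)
Your overall strategy—lift trigonometric perturbations via $\mathscr{T}_{E_0}$ and run a grid/pigeonhole argument—is the right one, and it is what the paper does. But the proposal contains a concrete error in the handling of the remainder, and a related gap in how the $E$-dependence is absorbed.

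The sentence ``$\mathscr{R}_E\mathscr{T}_{E_0}\bar P=o(\epsilon)=o(\sqrt{d})$ is of smaller order than the $Md^4$-oscillation threshold'' is false: $o(\sqrt{d})\gg d^4$ as $d\to 0$, so the remainder cannot be discarded \emph{additively} at the $Md^4$ scale. The same objection applies to the $E$-dependent part $(\mathscr{K}_E\mathscr{T}_{E_0}-I)\epsilon\bar P$, whose oscillation on $I_k$ is of order $\epsilon\theta_1\delta d$, again much larger than $d^4$. Consequently you cannot simply write $F'(\cdot,E)\approx F(\cdot,E)+\epsilon\bar P$ and invoke Theorem~\ref{appenBthm1} verbatim: the ``negligible error'' is not negligible at the scale that matters, and the pullback of the cuboid grid via the map \eqref{appenBeq6} of Theorem~\ref{appenBthm1} is the wrong pullback—the actual Taylor coefficients of the induced perturbation are governed by a different, $E$-dependent map.

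The paper's proof fixes both issues at once. It works directly with the full map $(A_1,B_1,A_2,B_2)\mapsto(a_k,b_k,c_k)$ determined by the \emph{actual} perturbation $(\mathscr{K}_E+\mathscr{R}_E)\mathscr{T}_{E_0}(\epsilon\bar P)$, writing it as $({\bf C}_1{\bf U}+{\bf C}_2)\cdot(1+T_{\epsilon,E,x_k})$ with $T_{\epsilon,E,x_k}=O(\epsilon)$; the remainder $\mathscr{R}_E$ and the $E$-dependence of $u_\ell,v_\ell$ enter only as a small \emph{multiplicative} deformation of a nonsingular linear map, so preimages of cuboids are still strips of comparable measure. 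It then chooses $\epsilon=d^{1/4}$ (not $\sqrt d$), takes cuboid sizes $(9Md^{11/4},3Md^{7/4},6Md^{3/4})$ so that each strip has measure $\sim d^{21/4}$, and replaces the box-dimension step by a direct discretisation of $[E_0-\delta_{E_0},E_0+\delta_{E_0}]$ into $\sim d^{-4}$ pieces (using Lipschitz dependence on $E$). The count is then $(\text{energies})\times(\text{intervals})\times(\text{strip measure})\sim d^{-4}\cdot d^{-1}\cdot d^{21/4}=d^{1/4}\to 0$. With your scaling $\epsilon=\sqrt d$ and cuboid sizes $d^{9/2}$ this direct count diverges like $d^{-1/2}$, which is why Theorem~\ref{appenBthm1} needed the box-dimension refinement; but that refinement is awkward here because the strip in $(A,B)$-space corresponding to a fixed $(a,b,c)$-cuboid moves with $E$. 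If you want to keep $\epsilon=\sqrt d$ you must redo the pigeonhole argument for the $E$-dependent map, not for \eqref{appenBeq6}; the cleaner route is the paper's rescaling.
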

\begin{proof}
To show the non-degeneracy of the global minimum of $F(\cdot,E)$ located at the point $x$, we only need to verify that
\begin{equation}\label{nondegenerate6}
F(x+\Delta x,E)-F(x,E)\ge M|\Delta x|^4
\end{equation}
holds for small $|\Delta x|$, where $M=12^{-1}\max\partial^4_{x}F$. Assume $I$ is an interval, we define $\text{\rm Osc}_{I}F=\max_{x,x'}|F(x)-F(x')|$. To show the non-degeneracy, it is sufficient to verify that
$$
\text{\rm Osc}_{I}F(\cdot,E)\ge M|I|^4
$$
if the minimal point $x\in I$, where $|I|$ denotes the length of the interval.

The openness is obvious of $\mathfrak{P}$. To show the density, we are concerned only about the configurations where $F$ takes the value close to the minimum and consider small perturbations from the following set where the parameters $(A_1,B_1,A_2,B_2)$ range over the cube $\mathbb{I}^4=[1,2]\times[1,2]\times[1,2]\times[1,2]$
$$
\mathfrak{V}_E=\Big\{(\mathscr{K}_E+\mathscr{R}_E)\mathscr{T}_{E_0}\sum_{\ell=1}^2\epsilon (A_{\ell}\cos\ell x +B_{\ell}\sin\ell x):(A_1,B_1,A_2,B_2)\in\mathbb{I}^4\Big\}
$$
where each element is a function of $(x,E)$, see the formulae (\ref{nondegenerate4}). Recall that both  operators $\mathscr{K}_E$ and $\mathscr{T}_{E_0}$ are linear and  $\|\mathscr{R}_E(\epsilon P)\|=o(\epsilon)$, see the formula (\ref{nondegenerate1}) and the formula (\ref{nondegenerate2}).

We choose sufficiently large integer $K$ so that $\epsilon=\sqrt[4]{\pi/K}$ can be arbitrarily small. Let $x_k=\frac{2k\pi}K$, $I_k=[x_k-d,x_k+d]$ and $d=\pi/K$, then $\bigcup_{k=0}^{K-1}I_k=\mathbb{T}$. Restricted on each interval $I_k$, each $C^4$-function $V\in\mathfrak{V}_E$ is approximated by the Taylor series (module constant)
$$
V_k(x)=\epsilon\Big(a_k(x-x_{k})+b_k(x-x_{k})^2+c_k(x-x_{k})^3+O(|x-x_{k}|^4)\Big).
$$

Given two points $(a_k,b_k,c_k)$ and $(a'_k,b'_k,c'_k)$, we have two functions $V_k$ and $V'_k$ in the form of Taylor series. Let $\Delta V=V'_k-V_k$, $\Delta a=a'_k-a_k$, $\Delta b=b'_k-b_k$ and $\Delta c=c'_k-c_k$, we have $\Delta V(x_k)=0$ and
\begin{align*}
&\Delta V(x_k+d)+\Delta V(x_k-d)=2\epsilon\Delta bd^2+O(\epsilon d^4),\\
&\Delta V(x_k+d)-\Delta V(x_k-d)=2\epsilon(\Delta a+\Delta cd^2)d+O(\epsilon d^4),\\
&\Delta V\Big(x_k\pm\frac 12d\Big)=\epsilon\Big(\pm\frac 12\Delta a+\frac 14\Delta bd\pm\frac 18\Delta cd^2\Big)d+O(\epsilon d^4).
\end{align*}
It follows that
\begin{equation}\label{nondegenerate7}
\text{\rm Osc}_{I_k}(V'_k-V_k)\ge\epsilon\max\Big\{\frac 13|\Delta a|d,|\Delta b|d^2,\frac 12|\Delta c|d^3\Big\}.
\end{equation}

We construct a grid for the parameters $(a_k,b_k,c_k)$ by splitting the domain for them equally into a family of cuboids and setting the size length by
$$
\Delta a_k=9Md^{\frac{11}4},\ \ \Delta b_k=3Md^{\frac{7}4},\ \ \Delta c_k=6Md^{\frac 34}.
$$
These cuboids are denoted by $\text{\uj c}_{kj}$ with $j\in\mathbb{J}_k=\{1,2,\cdots\}$, the cardinality of the set of the subscripts is up to the order
$$
\#(\mathbb{J}_k)=N[d^{-\frac{21}4}],
$$
where the integer $0<N\in\mathbb{N}$ is independent of $d$. If $\text{\rm Osc}_{I_k}F(\cdot,E)\le Md^4$, we obtain from the formula (\ref{nondegenerate7}) that
$$
\text{\rm Osc}_{I_k}(F(x,E)+V(x))\ge 2Md^4
$$
if $V(x)=\epsilon(a(x-x_k)+ b(x-x_k)^2+c(x-x_k)^3+O(|x-x_k|^4))$ with
$$
\max\Big\{\frac 13|a|d^{-\frac{11}4},|b|d^{-\frac 74}, \frac 12|c|d^{-\frac 34}\Big\}\ge 3M.
$$

The coefficients $(a_k,b_k,c_k)$ depend on the parameters $(A_1,B_1,A_2,B_2)$, the energy $E$ and the position $x_k$. The gird for $(a_k,b_k,c_k)$ induces a grid for the parameters $(A_1,B_1,A_2,B_2)$, determined by the equation
\begin{equation}\label{nondegenerate8}
\left[\begin{matrix} a_k\\  b_k\\  c_k
\end{matrix}\right]=({\bf C_1}{\bf U}+{\bf C_2})
\left[\begin{matrix} A_1\\  B_1\\  A_2\\  B_2
\end{matrix}\right]\Big(1+T_{\epsilon,E,x_{k}}(A_1,B_1,A_2,B_2)\Big)
\end{equation}
where the map $T_{\epsilon,E,x_{k}}$: $\mathbb{R}^4\to\mathbb{R}^3$ is as small as of order $O(\epsilon)$,
$$
{\bf C_1}=\left[\begin{matrix}
-\sin x_{k} &  \cos x_{k}  & -2\sin 2x_{k} &  2\cos 2x_{k}\\
-\cos x_{k} & -\sin x_{k}  & -4\cos 2x_{k} & -4\sin 2x_{k} \\
 \sin x_{k} & -\cos x_{k}  &  8\sin 2x_{k} & -8\cos 2x_{k}
\end{matrix}\right],
$$
$$
{\bf U}=\text{\rm diag}\left\{
\left[\begin{matrix}
u_1(x_{k}) & v_1(x_{k})\\
-v_1(x_{k}) & u_1(x_{k})
\end{matrix}\right],
\left[\begin{matrix}
 u_2(x_{k}) & v_2(x_{k})\\
-v_2(x_{k}) & u_2(x_{k})
\end{matrix}\right]\right\},
$$
each entry of ${\bf C_2}$ is a linear function of $\partial^{j}_{x}u_{\ell}\cos \ell x_{k}$, $\partial^{j}_{x}v_{\ell}\cos\ell x_{k}$, $\partial^{j}_{x}u_{\ell}\sin\ell x_{k}$ and $\partial^{j}_{x}v_{\ell}\sin\ell x_{k}$ with $j=1,2,3$, $\ell=1,2$. Both matrices ${\bf U}$ and ${\bf C}_2$ depend on the energy $E$, ${\bf U}$ is close to the identity matrix.  Let ${\bf M_1}$ be the matrix composed by the first three columns of ${\bf C_1}{\bf U}+{\bf C_2}$, ${\bf M_2}$ be the matrix composed by the first, the second and the fourth column of ${\bf C_1}{\bf U}+{\bf C_2}$. As we are only concerned about those positions where $F$ takes value close to the minimum and about the energy $E$ close to $E_0$, in virtue of (\ref{nondegenerate5}) we obtain
\begin{align*}
\text{\rm det}({\bf M}_1)(x_k)&=6\sin 2x_{k}(1-O(\theta_1\delta)), \\
\text{\rm det}({\bf M}_2)(x_k)&=-6\cos 2x_{k}(1-O(\theta_1\delta)).
\end{align*}
Since $\inf_{x_{k}}\{|{\rm det}{\bf M}_1(x_k)|,|{\rm det}{\bf M}_2(x_k)|\}= 3\sqrt{2}(1-O(\theta_1\delta))$, the grid for $(a_k,b_k,c_k)$ induces a grid for $(A_1,B_1,A_2,B_2)$ which contains as many as $N_1[d^{-\frac{21}4}]$ 4-dimensional strips ($N_1>0$ is independent of $d$). Note that the induced partition for the parameters $(A_1,B_1,A_2,B_2)$ depends on the energy $E$.

Given an energy $E\in[E_0-\delta_{E_0},E_0+\delta_{E_0}]$, if there exist Taylor coefficients $(a_k,b_k,c_k)$ which determines a perturbation $V$ such that
$$
\text{\rm Osc}_{I_k}(F(\cdot,E)+V)\le Md^{4}
$$
then for $(a'_k,b'_k,c'_k)$ which determines a perturbation $\Delta V'$ and satisfies the condition
$$
\max\Big\{\frac{|a_k-a'_k|}{9Md^{\frac{11}4}},\frac{|b_k-b'_k|}{3Md^{\frac 74}}, \frac{|c_k-c'_k|}{6Md^{\frac 34}}\Big\}\ge 1
$$
one obtains from the formula (\ref{nondegenerate7}) that
\begin{equation}\label{nondegenerate9}
\text{\rm Osc}_{I_k}(F(\cdot,E)+V')\ge 2Md^{4}.
\end{equation}

Under the map defined by the formula (\ref{nondegenerate8}), the inverse image of a cuboid $\text{\uj c}_{k}$ with the size $18Md^{\frac{11}4}\times6Md^{\frac{7}4}\times12Md^{\frac{3}4}$ is a strip in the parameter space of $(A_1,B_1,A_2,B_2)$, denoted by $\text{\uj S}_{k}(E)$, with the Lebesgue measure as small as $N_1^{-1}d^{\frac{21}4}$. If the cuboid $\text{\uj c}_{k}$ is centered at $(a_k,b_k,c_k)$, then for $(a'_k,b'_k,c'_k)\notin\text{\uj c}_{k}$ the inequality (\ref{nondegenerate9}) holds.

Splitting the interval $[E_0-\delta_{E_0},E_0+\delta_{E_0}]$ equally into small sub-intervals $I_{E,j}$ with the size $|I_{E,j}|=M_1^{-1}d^4$, we obtain as many as $[M_1d^{-4}]$ small intervals. Since the function $F$ is Lipschitz in $E$, suitably large positive number $M_1$ can be chosen so that
$$
\max_{x\in I_k}|F(x,E)-F(x,E')|<\frac 12Md^4, \qquad \forall\ E,E'\in I_{E,j}.
$$
Therefore, for $V\in\mathfrak{V}_E$ with $(\Delta A_1,\Delta B_1,\Delta A_2,\Delta B_2)\notin\text{\uj S}_{k}(E)$, one has
\begin{equation}\label{nondegenerate10}
\text{\rm Osc}_{I_k}(F(\cdot,E)+\Delta V')\ge Md^{4}.
\end{equation}
Pick up one energy $E_{j}$ in each small interval $I_{E,j}$, there are $[M_1d^{-4}]$ strips $\text{\uj S}_{k}(E_j)$. Finally, by considering all small intervals $I_k$ with $k=0,1,\cdots K-1$, we find
$$
\text{\rm meas}\Big(\bigcup_{k,j}\text{\uj S}_{k}(E_j)\Big)\le M_1N_1^{-1}\sqrt[4]{d}.
$$
Let $\text{\uj S}^c=\mathbb{I}^4\backslash\cup_{j,k}\text{\uj S}_{kj}(E_j)$, we obtain the Lebesgue measure estimate
$$
\text{\rm meas}(\text{\uj S}^c)\ge 1-M_1N_1^{-1}\sqrt[4]{d}\to 1,\qquad \text{\rm as}\ \ d\to 0.
$$
Obviously, for each $(A_1,B_1,A_2,B_2)\in\text{\uj S}^c$, each $E\in [E_0-\delta_{E_0},E_0+\delta_{E_0}]$ and each $k=1,2,\cdots,K$ the formula (\ref{nondegenerate10}) holds. This proves that it is open-dense that all minimal points of $F(\cdot,E)$ are non-degenerate when the energy ranges over the interval $[E_0-\delta_{E_0},E_0+\delta_{E_0}]$.
\end{proof}

\noindent{\bf Hyperbolicity}

Let $x^*$ be a minimal point of the function $F(\cdot,E)$ and let the curve $\gamma(\cdot,x^*,E)$: $\mathbb{T}\to\mathbb{R}$ be the minimal curve of $F(x^*,E)$ which is smooth and determines a periodic orbit $(\tau,\gamma(\tau),\frac d{d\tau}\gamma(\tau))$ of the Lagrange flow $\phi^{\tau}_{\bar L}$. Back to the autonomous system, it determines a periodic orbit $(\gamma_1(t),\dot\gamma_1(t),\gamma_2(t),\dot\gamma_2(t))$ of the Lagrange flow $\phi_L^t$, where $\gamma_2(t)=-\tau$, $\gamma_1(t)=\gamma(\gamma_2(t))$.
\begin{theo}\label{theo3}
If $x^*$ is a non-degenerate minimal point of the function $F(\cdot,E)$, then the periodic orbit $\gamma(\cdot,x^*,E)$ is hyperbolic.
\end{theo}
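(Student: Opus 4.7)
The plan is to reduce hyperbolicity of the periodic orbit associated with $\gamma(\cdot,x^*,E)$ to positive definiteness of the full cyclic Jacobi matrix $\text{\bf J}$ at the minimal configuration $\vec{x}^*=(x^*,X_1(x^*,E),\ldots,X_{m-1}(x^*,E))$, and to derive the latter from non-degeneracy of $F(\cdot,E)$ at $x^*$ via a Schur-complement identity. Here non-degeneracy of $x^*$ is the condition $\partial^2_{x_0}F(x^*,E)>0$, which is meaningful because the excerpt already establishes, via the implicit function theorem applied to the system $\partial_{x_i}\text{\bf F}=0$, that $F$ is smooth in a neighbourhood of its minimum.

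Starting from the identity $F(x_0,E)=\text{\bf F}(x_0,X_1(x_0,E),\ldots,X_{m-1}(x_0,E),E)$, I differentiate twice and use $\partial_{x_i}\text{\bf F}\equiv 0$ ($i=1,\ldots,m-1$) along the implicit solution. A standard implicit-differentiation calculation, together with the fact that $x_0$ couples only to $x_1$ and $x_{m-1}$ in the cyclic stencil of $\text{\bf F}$, produces the Schur-complement identity
$$
\partial^2_{x_0}F(x^*,E)=A_0-b^{T}\text{\bf J}_{m-1}^{-1}b,\qquad b=(B_0,0,\ldots,0,B_{m-1})^{T},
$$
where $\text{\bf J}_{m-1}$ is the principal submatrix of $\text{\bf J}$ obtained by deleting the row and column indexed by $x_0$. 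Since the excerpt already proves $\text{\bf J}_{m-1}>0$ from minimality plus simplicity of the smallest eigenvalue, the Schur-complement criterion upgrades $\partial^2_{x_0}F(x^*,E)>0$ to positive definiteness of the full $m\times m$ cyclic matrix $\text{\bf J}$.

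It remains to deduce hyperbolicity of the periodic orbit from $\text{\bf J}>0$. The monodromy $M$ of the linearised Hamiltonian flow along one period is a $2\times 2$ symplectic matrix, and the kernel of $\text{\bf J}$ (with cyclic boundary) is in one-to-one correspondence with periodic Jacobi fields along $\gamma$, i.e.\ with the $1$-eigenspace of $M$; in particular, $\text{\bf J}>0$ already gives $1\notin\mathrm{spec}(M)$, ruling out the parabolic case with eigenvalue $+1$. To rule out the remaining non-hyperbolic cases (elliptic, or parabolic/negatively hyperbolic with eigenvalue $-1$), I use that the same configuration $\vec{x}^*$, traversed $k$ times, is still a minimal configuration of the associated $km\times km$ cyclic discrete action $\text{\bf F}^{(k)}$, so $\text{\bf J}^{(k)}\geq 0$ for every $k\in\mathbb{Z}_+$. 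If $M$ had a non-real eigenvalue $e^{\pm i\theta}$ with $\theta\in(0,\pi)$, or eigenvalue $-1$, then for a suitable $k$ the iterated monodromy $M^k$ would have $1$ as an approximate or exact eigenvalue, producing a periodic (or nearly periodic) Jacobi field along the $k$-fold cover; a standard cut-and-match test-function construction then yields a $C^1$ variation with strictly negative second variation, contradicting $\text{\bf J}^{(k)}\geq 0$. Hence $M$ is hyperbolic with real positive eigenvalues $\lambda>1$ and $1/\lambda$, and the periodic orbit $\gamma(\cdot,x^*,E)$ is hyperbolic as required.

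\textbf{Main obstacle.} The main subtle point is the last step: positive definiteness of the action Hessian only directly excludes the $+1$-parabolic case, and excluding elliptic (in particular irrationally elliptic) and negatively hyperbolic monodromies requires genuinely exploiting minimality under multiple covers together with a small-divisor-free approximation producing test variations with strictly negative second variation. One must also verify that the reduction from the two-degree-of-freedom autonomous Hamiltonian to the one-degree-of-freedom time-periodic Lagrangian $\bar L$ on $H^{-1}(E)$ preserves the minimising character of $\gamma$ in the form needed for these iterated-action arguments, so that the correspondence between $\vec{x}^*$-minimality for $\text{\bf F}$ and the hyperbolicity of the original Hamiltonian periodic orbit goes through.
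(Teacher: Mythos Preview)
Your approach is essentially correct but follows a completely different route from the paper. The paper never touches the Jacobi matrix $\text{\bf J}$ in the actual proof; instead it argues dynamically via weak KAM: if the orbit were not hyperbolic, the forward and backward semi-static curves $\gamma^\pm_x$ through a nearby point $x$ would approach $x^*$ only sub-exponentially, i.e.\ $|x_{i+1}-x^*|\ge(1-\lambda)|x_i-x^*|$ for arbitrarily small $\lambda$. Non-degeneracy gives $F(x_i)-F(x^*)\ge\lambda_0(x_i-x^*)^2$, so the sum $\sum_i(F(x_i)-F(x^*))$ is bounded below by $\lambda_0(x_0-x^*)^2/(1-(1-\lambda)^2)$, which can be made arbitrarily large. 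On the other hand, by a direct counting of how the circles built from segments of $\gamma^\pm_x$ cover the strip, this sum is bounded above by the barrier $u^-(x_0,0)-u^+(x_0,0)\le C_L(x_0-x^*)^2$ (semi-concavity of $u^--u^+$), a contradiction. Your Schur-complement reduction to $\text{\bf J}>0$ is clean and uses exactly the positivity of $\text{\bf J}_{m-1}$ already recorded in the text. For your final step, the cut-and-match sketch is the weakest link: a nearly periodic Jacobi field gives second variation close to zero, not obviously strictly negative, and you correctly flag this as the obstacle. A tighter way to close your argument, still in the discrete setting and avoiding covers altogether, is to use the Hill discriminant for the period-$m$ Jacobi operator with $B_i<0$: the smallest periodic eigenvalue $\lambda_0$ is simple with positive eigenvector (Perron--Frobenius), and $\Delta(\lambda)=\mathrm{tr}\,M(\lambda)>2$ for all $\lambda<\lambda_0$; hence $\text{\bf J}>0$ (i.e.\ $\lambda_0>0$) forces $\Delta(0)=\mathrm{tr}\,M>2$, which is exactly positive hyperbolicity. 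What the paper's route buys is that it stays inside the weak-KAM framework already built and gives, implicitly, the exponential rate; what your route buys is that it is purely finite-dimensional linear algebra once $\text{\bf J}_{m-1}>0$ is in hand and requires no barrier-function machinery.
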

\begin{proof}
If a periodic orbit is hyperbolic, it has its stable and unstable manifold in the phase space. Consequently, any orbit staying on the stable (unstable) manifold approaches to the periodic orbit exponentially fast as the time approaches to positive (negative) infinity.

In a neighborhood of the minimal periodic curve $\gamma$, each point $x$ on the section $\{\tau=0\}$ determines at least one forward (backward) semi-static curve $\gamma^+_x$: $\mathbb{R}_+\to\mathbb{T}$ ($\gamma^-_x$: $\mathbb{R}_-\to\mathbb{T}$) such that $\gamma^{\pm}_x(0)=x$. These curves determine forward (backward) semi-static orbits $d\gamma^{\pm}_x$ of which the $\omega$-set ($\alpha$-set) is the periodic orbit $d\gamma$. In the configuration space $(x,\tau)\in\mathbb{T}^2$, these two curves intersect with the section $\{\tau=0\}$ infinitely many times at the points $\gamma^+_x(2k\pi)$ and $\gamma^-_x(-2k\pi)$. These points are denoted by $x_i$, they are well ordered $\cdots\prec x_{i+1}\prec x_i\cdots\prec x_0$. It is possible that $\gamma^+_x(2k\pi)=\gamma^-_x(-2k'\pi)$. In this case, we count the point twice. For each point $x_i$, there is a curve joining $(x_i,0)$ to $(x_i,2\pi)$ which is composed by some segments of $\gamma^+_x$ as well as of $\gamma^-_x$. For instance, in the following figure, by starting from the point $(x_2,0)$ and  following a segment of $\gamma^-_x$ to the point $A$, then following a segment of $\gamma^+_x$ to the point $B$ and finally following a segment of $\gamma^-_x$ to the point $(x_2,2\pi)$, we obtain a circle. Clearly, the Lagrange action along this circle is not smaller than the quantity $F(x_2)$.
\begin{figure}[htp] 
  \centering
  \includegraphics[width=5.2cm,height=5.0cm]{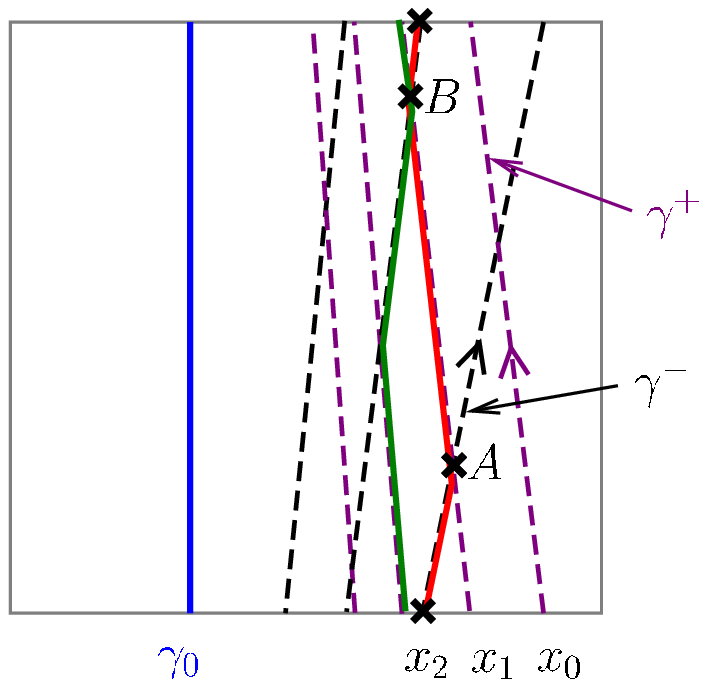}
  \label{}
\end{figure}

Let us consider the whole sequence $\{x_i\}$, we obtain infinitely many circles in that way. Therefore, the sum of the quantities $F(x_i)|_{i=0}^{\infty}$ is obviously not bigger than the total action along all of these circles
\begin{equation}\label{hypereq1}
\sum_{i=0}^{\infty}F(x_i)\le \lim_{k\to\infty}\Big\{\int_{0}^{2k\pi}L(d\gamma^+(\tau),\tau)d\tau+\int_{-2k\pi}^0L(d\gamma^-(\tau), \tau)d\tau\Big\}.
\end{equation}
The right hand side is nothing else but the barrier function valued at $x_0$.

As the periodic orbit supports the minimal measure, both $\gamma^+_x(2k\pi)$ and $\gamma^-_x(-2k\pi)$ approach the point $x^*$ where the periodic curve intersects the section $\{\tau=0\}$ as $k\to\infty$. If the periodic orbit is not hyperbolic, the sequence of $\{x_i\}$ approach $x$ slower than exponentially, i.e., for any small $\lambda>0$ there exists $\delta>0$ such that
\begin{align*}
|\gamma^+_x(2(k+1)\pi)-x^*)|&\ge (1-\lambda)|\gamma^+_x(2k\pi)-x^*)|,\\
|\gamma^-_x(-2(k+1)\pi)-x^*)|&\ge (1-\lambda)|\gamma^-_x(-2k\pi)-x^*)|,
\end{align*}
if $|\gamma^{\pm}_x(0)-x^*|\le\delta$. It follows that $|x_{i+1}-x^*|\ge(1-\lambda)|x_{i}-x^*|$. As the periodic curve is assumed non-degenerate minimizer, some $\lambda_0>0$ exists such that
\begin{equation}\label{hypereq2}
\sum_{i=0}^{\infty}(F(x_i)-F(x^*))\ge\lambda_0\sum_{i=0}^{\infty}(x_i-x^*)^2\ge \lambda_0\frac{(x_0-x^*)^2}{1-(1-\lambda)^2}.
\end{equation}
By subtracting $\min F$ from the Lagrangian $L$ we obtain that $F(x^*)=0$ and
$$
\text{\rm right-hand-side of (\ref{hypereq1})}=u^-(x,0)-u^+(x,0)
$$
where $u^{\pm}$ represents the backward (forward) weak-KAM solution. Since $u^-$ is semi-concave and $u^+$ is semi-convex, $u^--u^+$ is semi-concave. Since $(x^*,0)$ is a minimal point where $u^-(x^*,0)-u^+(x^*,0)=0$, there exists some number $C_L>0$ such that (cf. \cite{Fa2})
$$
u^-(x_0,0)-u^+(x_0,0)\le C_L(x_0-x^*)^2.
$$
Comparing this with the inequality (\ref{hypereq2}), we obtain from (\ref{hypereq1}) a contradiction
$$
\lambda_0\frac{(x_0-x^*)^2}{1-(1-\lambda)^2}\le C_L(x_0-x^*)^2
$$
if $\lambda>0$ is suitably small. This proves the hyperbolicity of the periodic orbit.
\end{proof}

We are now ready to prove the main result.

\noindent{\it Proof of Theorem \ref{AppenHyperTh1}}. According to Theorem \ref{theo2} and \ref{theo3}, for each $E_i\in[E_a,E_d]$, a neighborhood $[E_i-\delta_{E_i},E_i+\delta_{E_i}]$ of $E_i$ and an open-dense set $\mathfrak{O}(E_i)\subset C^r(\mathbb{T}^2,\mathbb{R})$ exist such that for each $P\in\mathfrak{O}(E_i)$ and each $E\in[E_i-\delta_{E_i},E_i+\delta_{E_i}]$ each minimal orbit of $\phi_{L+P}^t$ with homological class $g$ is hyperbolic. As each $\delta_{E_i}$ is positive, there exists finitely many $E_i$ such that $[E_a,E_d]\subset\cup_i[E_i-\delta_{E_i},E_i+\delta_{E_i}]$. We take $P\in\cap\mathfrak{O}(E_i)$, the hyperbolicity for $L+P$ holds for all $E\in[E_a,E_d]$.

Once a minimal point is non-degenerate for certain $E$, by the theorem of implicit function  it has natural continuation to a neighborhood of $E$. Namely, there exists a curve of minimal points passing through this point, it either reaches to the boundary of $[E_a,E_d]$, or extends to some point $E'$ where the critical point is degenerate. Since each global minimal point is non-degenerate, the critical point becomes local minimum when it enters into certain neighborhood of $E'$. As each non-degenerate minimal point is isolated to other minimal points for the same energy $E$, there are finitely many such curves, denoted by $\Gamma_i$.

For a curve $\Gamma_i$: $I_i=(E_i,E'_i)\to\mathbb{T}$, the definition domain $I_i$ contains finitely many closed sub-intervals $I_{i,j}$ such that $F(\Gamma_i(E),E)=\min_xF(\cdot,E)$ for all $E\in I_{i,j}$. By definition, $I_{i,j}\cap I_{i,j'}=\varnothing$ for $j\ne j'$. Let $\Gamma_{i,j}=\Gamma_i|_{I_{i,j}}$, we have finitely many curves $\{\Gamma_{i,j}\}$ such that $F(\cdot, E)$ reaches global minimum at the point $x$ if and only if $x=\Gamma_{i,j}(E)$ for certain subscript $(i,j)$.

For each $E\in\partial I_{i,j}$, by the definition of $I_{i,j}$, some other subscript $(i',j')$ exists such that $F(\cdot,E)$ reaches the global minimum at the points $\Gamma_{i,j}(E)$ and $\Gamma_{i',j'}(E)$. It is obviously an open-dense property that
$$
\frac {dF(\Gamma_{i,j}(E),E)}{dE}\ne \frac {dF(\Gamma_{i',j'})(E),E)}{dE}.
$$
Thus, it is also open-dense that $[E_a,E_d]=\cup I_{i,j}$ and $[E_a,E_d]\backslash\cup\text{\rm int}I_{i,j}$ contains finitely points. This completes the whole proof.\eop

\noindent{\bf Acknowledgement} This work is supported by NNSF of China (Grant 11171146, Grant 10531050), National Basic Research Program of China (973, 2007CB814800), Basic Research Program of Jiangsu Province (BK2008013) and a program PAPD of Jiangsu Province, China.

I would like to thank my colleagues J. Cheng, W. Cheng, X. Cui, J. Yan and M. Zhou for helpful discussions. I also thank Marc Chaperon, Alain Chenciner and Hakan Eliasson for inviting me to give talks on this result at their seminars while I was visiting University of Paris 7 for its hospitality. The main ideas of the proof were presented on the conferences at Edinburgh and at IAS, Princeton in October of 2011.


\begin{thebibliography}{WW}

\bibitem[Ar1]{Ar1} Arnol'd V. I., {\it Instability of dynamical systems with several degrees of freedom,} (Russian, English) Sov. Math., Dokl., {\bf 5}(1964), 581-585; translation from Dokl. Akad. Nauk SSSR, {\bf 156}(1964), 9-12.

\bibitem[Ar2]{Ar2} Arnol'd, V.I., {\it Small denominators and problems of stability of motion in classical and celestial mechanics,} Russ. Math. Survey {\bf 18} (1963) 85-192.

\bibitem[AKN]{AKN} Arnol'd V.I. Kozlov V.V. and Neishtadt A.I., {\it Mathematical Aspects of Classical and Celestial Mechanics}, {\rm Dynamical Systems III, Encyclopaedia of Mathematical Sciences,} {\bf 3} Springer-Verlag Berlin Heidelberg, (1988).

\bibitem[An]{An} Angenent S., {\it The periodic orbits of an area preserving twist map}, Commun. Math. Phys. {\bf 115} (1988) 353-374.

\bibitem[BLZ]{BLZ} Bates P., Lu. K. and Zeng C., {\it Persistence of overflowing manifolds for semiflow}, Commun. Pure Appl. Math. {\bf 52} (1999) 983-1046.

\bibitem[Be1]{Be1} Bernard P., {\it Homoclinic orbits to invariant sets of quasi-integrable exact maps}, Ergod. Theory Dynam. Syst. {\bf 20} (2000) 1583-1601.

\bibitem[Be2]{Be2} Bernard P., {\it Symplectic aspects of Mather theory}, Duke Math. J. {\bf 136} (2007) 401-420.

\bibitem[Be3]{Be3} Bernard P., {\it The dynamics of pseudographs in convex Hamiltonian systems}, Journal Amer. Math. Soc. {\bf 21} (2008) 615-669.

\bibitem[Be4]{Be4} Bernard P., {\it Large normally hyperbolic cylinders in a priori stable Hamiltonian systems}, Annales H. Poincare {\it 11} (2010) 929-942.

\bibitem[BC]{BC} Bernard P. \& Contreras G., {\it A generic property of families of Lagrangian systems}, Annals of Math. {\bf 167} (2008) 1099-1108.

\bibitem[BKZ]{BKZ} Bernard P., Kaloshin V. and Zhang K., {\it Arnold diffusion in arbitrary degrees of freedom and crumpled 3-dimensional normally hyperbolic invariant cylinder} arXiv: 1112.2773v1 (2011).

\bibitem[BK]{BK} Berstein  D. and Katok A., {\it Birkhoff periodic orbits for small perturbations of completely integrable Hamiltonian systems with convex Hamiltonians,} Invent. Math., {\bf 88} (1987) 225-241.

\bibitem[Bs]{Bs} Bessi U., {\it An approach to Arnold's diffusion through the calculus of variations,} Nonlinear Anal., {\bf 26(6)}(1996) 1115-1135.

\bibitem[BCV]{BCV} Bessi U., Chierchia L. and Valdinoci E., {\it Upper bounds on Arnold diffusion times via Mather theory,} J. Math. Pures Appl., {\bf 80(1)}(2001) 105-129.

\bibitem[Bo]{Bo} Bolotin S. {\it Homoclinic orbits in invariant tori of Hamiltonian systems,} Dynamical systems in classical mechanics, 21--90, Amer. Math. Soc. Transl. Ser. 2, {\bf 168}, Amer. Math. Soc., Providence, RI, 1995.

\bibitem[Cas]{Cas} Cassels J.W.S., {\it An introduction to Diophantine approximation,} Cambridge Tracts in Mathematics and Mathematical Physics 45, Cambridge Univ. Press, New York 1957.

\bibitem[CS]{CS} Cannarsa P. \& Sinestrari C., {\it Semiconcave functions, Hamilton-Jacobi equations and Optimal Control}, Progress in Nonlinear Differential Equations and Their Appications {\bf 58} (2004) Birkh\"auser.

\bibitem[Ch]{Ch} Cheng C.-Q., {\it Non-existence of KAM torus}, Acta Math. Sinica, {\bf 27} (2011) 397-404.

\bibitem[CY1]{CY1} Cheng C.-Q. \& Yan J., {\it Existence of diffusion orbits in a priori unstable Hamiltonian systems,} J. Differential Geometry, {\bf 67} (2004) 457-517.

\bibitem[CY2]{CY2} Cheng C.-Q. \& Yan J., {\it Arnold diffusion in Hamiltonian Systems: a priori Unstable Case,} J. Differential Geometry, {\bf 82} (2009) 229-277.

\bibitem[CZ]{CZ} Cheng C.-Q. \& Zhou M., {\it Hyperbolicity of minimal periodic orbits}, Preprint (2012).

\bibitem[CDI]{CDI} Contreras G., Delgado J. \& Iturriaga R., {\it Lagrangian flows: the dynamics of globally minimizing orbits II}, Bol. Soc. Bras. Mat. {\bf 28} (1997) 155-196.

\bibitem[CP]{CP} Contreras G. \& Paternain G. P., {\it Connecting orbits between static classes for generic Lagrangian systems,} Topology, {\bf 41} (2002) 645-666.

\bibitem[Cui]{Cui} Cui X. et al, {\it Existence of infinitely many homoclinic orbits to Aubry sets for positive definite Lagrangian systems}, J. Diff. Eqns. {\bf 214} (2005) 176-188.

\bibitem[DLS]{DLS} Delshams A., de la Llave R. and Seara T. M., {\it Geometric mechanism for diffusion in Hamiltonian systems overcoming the large gap problem: heuristic and rigorous verification of a model}, Memoirs Amer. Math. Soc. {\bf 179}(844), (2006).

\bibitem[DH1]{DH1} Delshames A. and Huguet G., {\it Geography of resonances and Arnold diffusion in a priori unstable Hamiltonian systems}, Nonlineairty {\bf 22} (2009) 1997-2077.

\bibitem[DH2]{DH2} Delshames A. and Huguet G., A geometric mechanism of diffusion: rigorous verification in a priori unstable Hamiltonian systems, J. Diff. Eqns. {\bf 250} (2011) 2601-2623.

\bibitem[E]{E} E W., {\it Aubry-Mather theory and periodic solutions of the forced Burgers equation}, Commun Pure Appl. Math. {\bf 52} (1999) 811-828.

\bibitem[Fa1]{Fa1} Fathi A., {\it Th\'eor\`eme KAM faible et th\'eorie de Mather sue les syst\`emes}, C. R. Acad. Sci. Paris S\'er. I Math. {\bf 324} (1997) 1043-1046.

\bibitem[Fa2]{Fa2} Fathi A., {\it Weak KAM Theorem in Lagrangian Dynamics}, Cambridge Studies in Adavnced Mathematics, Cambridge University Press, (2009).

\bibitem[FM]{FM} Fathi A. and Mather J., {\it Failure of convergence of the Lax-Oleinik semi-group in the time-periodic case,} Bull. Soc. Math. France, {\bf 128(3)}(2000) 473-483.

\bibitem[FS]{FS} Fathi A. and Siconolfi A., {\it Existence of $C^1$ critical subsolutions of the Hamilton-Jacobi equation}, Invent. Math. {\bf 155} (2004) 363-388.

\bibitem[Fo]{Fo} Fontich E. and Martin P., {\it  Arnold diffusion in perturbations of analytic integrable Hamiltonian systems}, Discrete Contin. Dyn. Syst. {\it 7} (2001) 61-84.

\bibitem[GL]{GL} Gidea M. and de la Llave R., {\it Topological methods in the instability problem of Hamiltonian systems}, Discrete and Continuous Dynamical Systems {\it 14} (2006) 294-328.

\bibitem[GR1]{GR1} Gidea M. and Robinson C., {\it Shadowing orbits for transitive chains of invariant tori alternating with Birkhoff zone of instability} Nonlinearity {\bf 20} (2007) 1115-1143.

\bibitem[GR2]{GR2} Gidea M. and Robinson C., {\it Obstruction argument for transition chains of tori interspersed with gaps},  Discrete and Continuous Dynamical Systems series S, {\bf 2} (2009) 393-416.

\bibitem[HPS]{HPS} Hirsch M. Pugh C. and Shub M., {\it Invariant Manifolds}, Lecture Notes Math. {\bf 583} (1977) Springer-Verlag.

\bibitem[KL1]{KL1} Kaloshin V. and Levi M., {\it An example of Arnold diffusion for near-integrable Hamiltonians}, Bulletin AMS, {\bf 45} (2008) 409-427.

\bibitem[KL2]{KL2} Kaloshin V. and Levi M., {\it Geometry of Arnold diffusion}, SIAM Review {\bf 50} (2008) 702-720.

\bibitem[Lx]{Lx} Li X. {\it On c-equivalence}, Science in China, series A, {\bf 52} (2009) 2389-2396.

\bibitem[LC]{LC} Li X. \& Cheng C-Q., {\it Connecting orbits of autonomous Lagrangian systems}, Nonlinearity {\bf 23} (2009) 119-141.

\bibitem[Lo]{Lo} Lochak P., {\it Canonical perturbation theory via simultaneous approximation}, Russian Math. Surveys {\bf 47} (1992) 57-133.

\bibitem[Mac]{Mac} Marco J.-P., {\it Generic hyperbolic propertires of classical systems on the torus $\mathbb{T}^2$}, preprint (2013).

\bibitem[Ma1]{Ma1} Mather J., {\it Action minimizing invariant measures for positive definite Lagrangian systems,} Math. Z., {\bf 207(2)}(1991) 169-207.

\bibitem[Ma2]{Ma2} Mather J., {\it Variational construction of connecting orbits,} Ann. Inst. Fourier (Grenoble), {\bf 43(5)}(1993) 1349-1386.

\bibitem[Ma3]{Ma3} Mather J., {\it Variational construction of trajectories for time-periodic Lagrangian systems on the two torus},  unpublished manuscript.

\bibitem[Ma4]{Ma4} Mather J., {\it Arnold diffusion, I: Announcement of results,} J. Mathematical Sciences, {\bf 124}(5) (2004) 5275-5289.(Russian translation in Sovrem. Mat. Fundam. Napravl, {\bf 2} (2003) 116-130).

\bibitem[Ma5]{Ma5} Mather J., {\it Examples of Aubry Sets}, Ergodic Theory and Dynamical Systems, {\bf 24} (2004) 1667-1723.

\bibitem[Ma6]{Ma6} Mather J., {\it Near double resonance}, talks at conferences on dynamical systems at Nice (2009), Toronto, Nanjing, Oberwolfach and Eidinburgh (2011).

\bibitem[Me]{Me} Ma\~n\'e R., {\it Generic properties and problems of minimizing measures of Lagrangian systems}, Nonlinearity {\bf 9} (1996) 273-310.

\bibitem[Ms]{Ms} Massart D., {\it On Aubry sets and Mather's action functional}, Israel J. Math. {\bf 134} (2003) 157-171.

\bibitem[Os]{Os} Osuna O., {\it Vertices of Mather's beta function}, Ergod. Theory Dynam. Syst. {\bf 25} (2005) 949-955.

\bibitem[Ri]{Ri} Rifford L., {\it On viscosity solutions of certain Hamilton-Jacobi equations: regularity results and generalized Sard's theorem}, Commun. Partial Diff. Eqns. {\bf 33} (2008) 517-559.

\bibitem[Sch]{Sch} Schmidt W.M., {\it Diophantine approximation,} Lecture Notes in Math. 785 (1980).

\bibitem[Tr]{Tr} Treschev D.V., {\it Evolution of slow variables in a priori unstable Hamiltonian systems}, Nonlinearity, {\bf 17} (2004) 1803-1841.

\bibitem[vM]{vM} van Moerbeke P., {\it The spectrum of Jacobi matrices}, Invent. Math., {\bf 37} (1976) 45-81.

\bibitem[X]{X} Xia Z., {\it Arnold diffusion: a variational construction}, Proceedings of the International
Congress of Mathematicians, Vol. II (Berlin, 1998), Doc. Math. 1998,  Extra Vol. II, 867-877.

\bibitem[Zha]{Zha} Zhang K., {\it Speed of Arnold diffusion for analytical Hamiltonian systems}, Invent. Math. {\bf 186} (2009) 255-290.

\bibitem[Zhe]{Zhe} Zheng Y. \& Cheng C.-Q., {\it Homoclinic orbits of positive definite Lagrangian systems}, J. Diff. Eqns. {\bf 229} (2006) 297-316.

\bibitem[Zho1]{Zho1} Zhou M., {\it H\"older regularity of barrier functions in a priori unstable case}, Math. Res. Lett., {\bf 18} (2011) 77-94.

\bibitem[Zho2]{Zho2} Zhou M., {\it Infinity of minimal homoclinic orbits}, Nonlinearity, {\bf 24} (2011) 931-939.

\end{thebibliography}
\end{document}